\newtheorem{theorem}{Theorem}[section]
\newtheorem{definition}[theorem]{Definition}
\newtheorem{proposition}[theorem]{Proposition}
\newtheorem{lemma}[theorem]{Lemma}
\newtheorem{remark}[theorem]{Remark}
\newtheorem{corollary}[theorem]{Corollary}
\newtheorem{example}[theorem]{Example}
\newtheorem{conjecture}[theorem]{Conjecture}
\newtheorem*{question}{Question}
\newtheorem{problem}{Problem}
\numberwithin{equation}{section}
\numberwithin{subsection}{section}
\newcommand{\sym}{\mathrm{Sym}}
\begin{document}


\newcommand{\riem}{(M^m, \langle \, , \, \rangle)}
\newcommand{\Hess}{\mathrm{Hess}\, }
\newcommand{\hess}{\mathrm{hess}\, }
\newcommand{\cut}{\mathrm{cut}}
\newcommand{\ind}{\mathrm{ind}}
\newcommand{\ess}{\mathrm{ess}}
\newcommand{\longra}{\longrightarrow}
\newcommand{\eps}{\varepsilon}
\newcommand{\ra}{\rightarrow}
\newcommand{\vol}{\mathrm{vol}}
\newcommand{\di}{\mathrm{d}}
\newcommand{\R}{\mathbb R}
\newcommand{\C}{\mathbb C}
\newcommand{\Z}{\mathbb Z}
\newcommand{\N}{\mathbb N}
\newcommand{\HH}{\mathbb H}
\newcommand{\esse}{\mathbb S}
\newcommand{\bull}{\rule{2.5mm}{2.5mm}\vskip 0.5 truecm}
\newcommand{\binomio}[2]{\genfrac{}{}{0pt}{}{#1}{#2}} 
\newcommand{\metric}{\langle \, , \, \rangle}
\newcommand{\metricN}{( \, , \, )}
\newcommand{\lip}{\mathrm{Lip}}
\newcommand{\loc}{\mathrm{loc}}
\newcommand{\diver}{\mathrm{div}}
\newcommand{\disp}{\displaystyle}
\newcommand{\rad}{\mathrm{rad}}
\newcommand{\mmetric}{\langle\langle \, , \, \rangle\rangle}
\newcommand{\sn}{\mathrm{sn}}
\newcommand{\cn}{\mathrm{cn}}
\newcommand{\ink}{\mathrm{in}}
\newcommand{\hol}{\mathrm{H\ddot{o}l}}
\newcommand{\capac}{\mathrm{cap}}
\newcommand{\bmo}{\{b <0\}}
\newcommand{\bmuo}{\{b \le 0\}}
\newcommand{\Fk}{\mathcal{F}_k}
\newcommand{\dist}{\mathrm{dist}}
\newcommand{\gr}{\mathcal{G}}
\newcommand{\grg}{\mathcal{G}^{(g)}}
\newcommand{\Ricc}{\mathrm{Ric}}
\newcommand{\foc}{\mathrm{foc}}
\newcommand{\F}{\mathcal{F}}
\newcommand{\Cf}{\mathcal{C}_f}
\newcommand{\cutf}{\mathrm{cut}_{f}}
\newcommand{\Cn}{\mathcal{C}_n}
\newcommand{\cutn}{\mathrm{cut}_{n}}
\newcommand{\Ca}{\mathcal{C}_a}
\newcommand{\cuta}{\mathrm{cut}_{a}}
\newcommand{\cutc}{\mathrm{cut}_c}
\newcommand{\cutcf}{\mathrm{cut}_{cf}}
\newcommand{\rk}{\mathrm{rk}}
\newcommand{\crit}{\mathrm{crit}}
\newcommand{\diam}{\mathrm{diam}}
\newcommand{\haus}{\mathcal{H}}
\newcommand{\po}{\mathrm{po}}
\newcommand{\gp}{\mathcal{G}_p}
\newcommand{\FF}{\mathcal{F}}
\newcommand{\gru}{\nabla u}
\newcommand{\grr}{\nabla r}
\newcommand{\grho}{\nabla \psi}
\newcommand{\vp}{\varphi}
\renewcommand{\div}[1]{{\mathop{\mathrm div}}\left(#1\right)}
\newcommand{\divphi}[1]{{\mathop{\mathrm div}}\bigl(\vert \nabla #1
\vert^{-1} \varphi(\vert \nabla #1 \vert)\nabla #1   \bigr)}
\newcommand{\nablaphi}[1]{\vert \nabla #1\vert^{-1}
\varphi(\vert \nabla #1 \vert)\nabla #1}
\newcommand{\modnabla}[1]{\vert \nabla #1\vert }
\newcommand{\modnablaphi}[1]{\varphi\bigl(\vert \nabla #1 \vert\bigr)
\vert \nabla #1\vert }
\newcommand{\cL}{\mathcal{L}}
\newcommand{\essem}{\mathds{S}^m}
\newcommand{\erre}{\mathds{R}}
\newcommand{\errem}{\mathds{R}^m}
\newcommand{\enne}{\mathds{N}}
\newcommand{\acca}{\mathds{H}}
\newcommand{\cvett}{\Gamma(TM)}
\newcommand{\cinf}{C^{\infty}(M)}
\newcommand{\sptg}[1]{T_{#1}M}
\newcommand{\partder}[1]{\frac{\partial}{\partial {#1}}}
\newcommand{\partderf}[2]{\frac{\partial {#1}}{\partial {#2}}}
\newcommand{\ctloc}{(\mathcal{U}, \varphi)}
\newcommand{\fcoord}{x^1, \ldots, x^n}
\newcommand{\ddk}[2]{\delta_{#2}^{#1}}
\newcommand{\christ}{\Gamma_{ij}^k}
\newcommand{\ricc}{\operatorname{Ricc}}
\newcommand{\supp}{\operatorname{supp}}
\newcommand{\sgn}{\operatorname{sgn}}
\newcommand{\rg}{\operatorname{rg}}
\newcommand{\inv}[1]{{#1}^{-1}}
\newcommand{\id}{\operatorname{id}}
\newcommand{\jacobi}[3]{\sq{\sq{#1,#2},#3}+\sq{\sq{#2,#3},#1}+\sq{\sq{#3,#1},#2}=0}
\newcommand{\lie}{\mathfrak{g}}
\newcommand{\wedgedot}{\wedge\cdots\wedge}
\newcommand{\rp}{\erre\mathds{P}}
\newcommand{\II}{\operatorname{II}}
\newcommand{\gradh}[1]{\nabla_{H^m}{#1}}
\newcommand{\absh}[1]{{\left|#1\right|_{H^m}}}
\newcommand{\mob}{\mathrm{M\ddot{o}b}}
\newcommand{\mab}{\mathfrak{m\ddot{o}b}}
\newcommand{\TT}{\mathcal T}
\newcommand{\HHH}{\mathcal{H}}
\newcommand{\Sph}{\mathbb{S}}
\newcommand{\inte}{\mathrm{Int}}
\newcommand{\KOzsigma}{\text{(KO$_0$)}(\sigma)}
\newcommand{\KOisigma}{\text{(KO$_\infty$)}(\sigma)}
\newcommand{\wu}{\overline{w}_R}

\newcommand{\smp}{(\mathrm{SMP}_\infty)}
\newcommand{\wmp}{(\mathrm{WMP}_\infty)}
\newcommand{\owmp}{(\mathrm{OWMP}_\infty)}
\newcommand{\fmp}{(\mathrm{FMP})}
\newcommand{\csp}{(\mathrm{CSP})}
\newcommand{\lio}{(\mathrm{L})}
\newcommand{\slio}{(\mathrm{SL})}
\newcommand{\feller}{(\mathrm{FE})}

\newcommand{\rmi}{(\rm i)\,}
\newcommand{\rmii}{(\rm ii)\,}
\newcommand{\rmiii}{(\rm iii)\,}
\newcommand{\rmiv}{(\rm iv)\,}
\newcommand{\rmv}{(\rm v)\,}
\newcommand{\rmvi}{(\rm vi)\,}
\newcommand{\rmvii}{(\rm vii)\,}
\newcommand{\rmviii}{(\rm viii)\,}

\title{\textbf{On the interplay among maximum principles, compact support principles and Keller-Osserman conditions on manifolds.}}

\author{Bruno Bianchini \ \ \ Luciano Mari \ \ \ Patrizia Pucci \ \ \ Marco Rigoli}

\date{\today}

\maketitle

\scriptsize \begin{center} Dipartimento di Matematica Pura e Applicata, Universit\`a degli Studi di Padova\\
Via Trieste 63, I-35121 Padova (Italy)\\
E-mail address: bianchini@dmsa.unipd.it
\end{center}
\scriptsize \begin{center} Scuola Normale Superiore\\
P.za dei Cavalieri 7, 56126 Pisa (Italy)\\
E-mail address: luciano.mari@sns.it
\end{center}
\scriptsize \begin{center} Dipartimento di Matematica e Informatica, Universit\`a degli Studi di Perugia\\
Via Vanvitelli 1, I-06123 Perugia (Italy)\\
E-mail address: patrizia.pucci@unipg.it
\end{center}

\scriptsize \begin{center} Dipartimento di Matematica,
Universit\`a degli Studi di Milano\\
Via Saldini 50, I-20133 Milano (Italy)\\
E-mail address: marco.rigoli55@gmail.com
\end{center}

\normalsize

\vspace{0.3cm}

\begin{abstract}
\footnote{\textbf{Mathematic subject classification 2010}: primary 35R01, 35B50, 35B53, 58J65, 53C42; secondary 58J05, 35B08, 35B45, 35J08, 35R45. \par
\textbf{\ Keywords}: maximum principle, Omori-Yau, compact support principle, Keller-Osserman, minimal graph, mean curvature operator, soliton.\\
}

This paper is about the influence of Geometry on the qualitative behaviour of solutions of quasilinear PDEs on Riemannian manifolds. Motivated by examples arising, among others, from the theory of submanifolds, we study classes of coercive differential inequalities of the form 
$$
\diver\left( \frac{\varphi(|\nabla u|)}{|\nabla u|} \nabla u\right) \ge b(x)f(u) l(|\nabla u|) \qquad \text{(respectively, $\le$ or $=$)}
$$
on domains of a manifold $M$, for suitable $\varphi,b,f,l$, with emphasis on mean curvature type operators. We investigate the validity of strong maximum principles, compact support principles and Liouville type theorems; in particular, the goal is to identify sharp thresholds, involving curvatures or volume growth of geodesic balls in $M$, to guarantee the above properties under appropriate Keller-Osserman type conditions, and to discuss the geometric reasons behind  the existence of such thresholds. The paper also aims to give a unified view of recent results in the literature. The bridge with Geometry is realized by studying the validity of weak and strong maximum principles at infinity, in the spirit of Omori-Yau's Hessian and Laplacian principles and subsequent improvements.

\end{abstract}

\normalsize

\vspace{0.3cm}

\tableofcontents

\section{Introduction}

The study of quasilinear differential inequalities of the type
\begin{equation}\label{eq_base}
\diver \mathcal{A}(x,u,\nabla u) \ge \mathcal{B}(x,u,\nabla u)
\end{equation}
on Euclidean space $\R^m$ is a classical subject, and a great deal of work has been devoted to the analysis of the qualitative properties of solutions. The literature is vast, and we restrict ourselves to the special case
\begin{equation}\label{standardcase}
\mathcal{B}(x,u,\nabla u) = b(x)f(u) l(|\nabla u|),
\end{equation}
for continuous $b,f,l$. With no claim of completeness, we quote \cite{bandlegrecoporru, damfamise, farinaserrin1, farinaserrin2, fprarch, fprgrad, greco, lili, mitpoho, pucciserrin_2}, and for similar inequalities in the sub-Riemannian setting of Carnot groups, \cite{bm, bm2, DAmbrMit, bordofilipucci, mmmr, AMR}. The results in the references above will be related to those in our work in a more precise way in due course in the article.\par

Motivated by geometrical and physical problems, there has recently been an increasing interest in the study of some classes of quasilinear PDEs on complete Riemannian manifolds. As it is well known, the behaviour of their solutions strongly depends on the underlying space. A typical example is the minimal (hyper-)surface equation, which admits no non-constant positive solutions on $\R^m$ while plenty of bounded solutions exist in the hyperbolic space $\HH^m$. To the best of our knowledge, only a few authors have analyzed the influence of geometry on the behaviour of solutions of \eqref{eq_base}, \eqref{standardcase} in a general setting, for instance see \cite{PuRS, maririgolisetti, mmmr, AMR_book}, leaving however the picture still fragmentary, especially in case where $l$ in \eqref{standardcase} is a non-constant function. As one of the main purposes of the present work, we aim to give a detailed account of how geometry comes into play at the global level. Nevertheless, many interesting questions and problems remain open.\par
From now on, we let $(M, \metric)$ be a Riemannian manifold of dimension $m \ge 2$. We shall assume throughout the paper that $M$ is non-compact. To avoid excessive technicalities, while still keeping a good amount of generality, we study the following subclass of \eqref{eq_base}: we consider a quasilinear operator $\Delta_\varphi$, called the $\varphi$-Laplacian, weakly defined by
$$
\Delta_\varphi u = \diver\left( \frac{\varphi(|\nabla u|)}{|\nabla u|} \nabla u\right),
$$
where we assume
\begin{equation}\label{assumptions}
\varphi \in C(\R^+_0), \quad \varphi(0)=0, \quad \varphi(t) > 0 \ \text{ on } \, \R^+; \end{equation}
hereafter, $\R^+_0 = [0,\infty)$ and $\R^+ = (0,\infty)$. Different choices of $\varphi$ give rise to well-known, geometrically relevant operators, for instance
\begin{itemize}
\item[-] the $p$-Laplacian $\Delta_p$, $p>1$, where $\varphi(t) = t^{p-1}$;
\item[-] the mean curvature operator, describing the mean curvature of the graph hypersurface $\{(x, v(x)) : x \in M\}$ into the Riemannian product $M \times \R$. In this case, $\varphi(t) = t(1+t^2)^{-1/2}$;
\item[-] the operator of exponentially harmonic functions, where $\varphi(t) = t\exp\big(t^2\big)$, considered for instance in \cite{DE};
\item[-] the operator associated to $\varphi(t) = t^{p-1} + t^{q-1}$, $1<q<p$, that appears in quantum physics, see \cite{benci};
\end{itemize}
and many more. 
We focus our attention on the differential inequalities
\begin{equation}\label{problems}
\begin{array}{rll}
(P_\ge) & \quad \Delta_\varphi u \ge b(x)f(u)l(|\nabla u|) \\[0.2cm]
(P_\le) & \quad \Delta_\varphi u \le b(x)f(u)l(|\nabla u|) \\[0.2cm]
(P_=) & \quad \Delta_\varphi u = b(x)f(u)l(|\nabla u|) & \\[0.2cm]
\end{array}
\end{equation}
in a \emph{connected} open set, that is, a domain $\Omega \subset M$. Typically, we do not require $\Omega$ relatively compact, it might coincide with $M$ or with an end of $M$, in other words, with a non relatively compact connected component of $M\backslash K$, for some compact set $K$.\\
Throughout the paper we fix the basic assumptions on $b,f,l$:
\begin{equation}\label{assumptions_bfl}
\begin{array}{l}
b \in C(M), \qquad b > 0 \ \text{ on } \, M, \\[0.1cm]
f \in C(\R), \\[0.1cm]
l \in C(\R_0^+), \qquad l>0 \ \text{on } \, \R^+.
\end{array}
\end{equation}
Because of the positivity of $b$ and $l$, if $f \ge 0$ the problems in \eqref{problems} are called completely coercive in the recent literature (see \cite{farinaserrin1, farinaserrin2, dambrosiomitidieri_2}). Obviously, solutions of \eqref{problems} are considered in the weak sense and, in view of geometric applications, we confine ourselves to locally Lipschitz or $C^1$ solutions. It should be stressed that relaxing their regularity class is by no means a trivial or just a technical issue. For instance, under  our requirements on $\varphi, b,l$, we are not aware of the validity of weak Harnack inequalities for \eqref{problems}, and solutions may not be even locally bounded.

\begin{definition}
A function $u : \Omega \ra \R$ is a \emph{$C^1$ solution} (respectively, \emph{$\lip_\loc$ solution})  of $(P_\ge)$ in \eqref{problems} if $u \in C^1(\Omega)$ (resp., $u \in \lip_\loc(\Omega)$) and satisfies $(P_\ge)$ in the weak sense, that is,
$$
- \int_\Omega \frac{\varphi(|\nabla u|)}{|\nabla u|}\langle \nabla u, \nabla \psi \rangle \ge \int_\Omega b(x)f(u)l(|\nabla u|) \psi \qquad \text{for each } \psi \in C^\infty_c(\Omega), \ \psi \ge 0,
$$
where integration is performed with respect to the Riemannian measure. The analogous statement, with the reverse inequality, defines $C^1$ and $\lip_\loc$ solutions of $(P_\le)$.
\end{definition}

\subsection{Bernstein type theorems}\label{subsec_bernstein}

The original motivation for the present paper was the investigation of Bernstein type theorems for graphs in general ambient spaces. The classical Bernstein theorem states that entire minimal graphs in $\R^{m+1}$, described by solutions $u : \R^m \ra \R$ of the minimal hypersurface equation
$$
\diver \left( \frac{\nabla u}{\sqrt{1+|\nabla u|^2}} \right) = 0 \qquad \text{on } \, \R^m,
$$
are affine functions if $m \le 7$. Its solution and the proof of the sharpness of the dimension restriction favoured the flourishing of Geometric Analysis (cf. \cite{giusti}). Needless to say, the possible existence of a similar result for global graph hypersurfaces in different ambient spaces $\bar M^{m+1}$ heavily depends on the geometry of $\bar M$. Suppose that $(\bar M^{m+1}, \metricN )$ has a nowhere-vanishing conformal vector field $X$ and a distinguished slice $(M, \metric)$ orthogonal to the flow lines of $X$. Typical examples include the warped product structures
\begin{equation}\label{warpedproducts}
\begin{array}{lll}
(1) & \bar M = \R \times_h M, & \quad \text{with metric} \qquad \metricN = \di s^2 + h(s)^2 \metric, \quad \text{and} \\[0.2cm]
(2) & \bar M = M \times_h \R, & \quad \text{with metric} \qquad \metricN = \metric + h(x)^2\di s^2,
\end{array}
\end{equation}
for positive $h \in C^\infty(\R)$ and $h \in C^\infty(M)$, respectively. In the first case, $X = h(s) \partial_s$ is a conformal field with geodesic flow lines, while in the second $X=\partial_s$ is Killing, and the distance between the two flow lines equals the distance between their projections on any slice $\{s = \mathrm{const}\}$. For this reason, in case $(2)$ the flow lines of $X$ are called equidistant curves.

\begin{example}\label{ex_Hmp1}
\emph{Consider the upper half-space model of the hyperbolic space:
$$
\HH^{m+1} = \Big\{ (x_0,x) \in \R \times \R^{m} \ \ : \ \ x_{0}>0\Big\}
$$
with metric
$$
\metricN = \frac{1}{x_0^2}\Big( \di x_0^2 + \metric_{\R^m}\Big).
$$
With the change of variables $s = -\log x_0$ we express $\metricN$ as the warped product of type $1$ 
$$
\HH^{m+1} = \R \times_{e^{s}} \R^m, \qquad \metricN = \di s^2 + e^{2s} \metric_{\R^m},
$$
whose slices $\{s= \mathrm{const} \}$ are called horospheres. Similarly, we can view $\HH^{m+1}$ as a different warped product of type $1$, along totally umbilical hyperspheres:
\begin{equation}\label{Hmp1hyperspheres}
\HH^{m+1} = \R \times_{\cosh s} \HH^m, \qquad  \metricN = \di s^2 + \cosh^2 s \metric_{\HH^m}.
\end{equation}
On the other hand, $\HH^{m+1}$ admits a warped product of type $2$ via equidistant curves, of the type
\begin{equation}\label{Hmp1hyperspheres}
\HH^{m+1} = \HH^m \times_{\cosh \rho} \R, \qquad  \metricN = \metric_{\HH^m} + \cosh^ 2\big(r(x)\big)\di s^2, 
\end{equation}
where $r : \HH^m \ra \R$ is the distance from a fixed origin in $\HH^m$. This corresponds, in the upper half-space model, to the fibration of $\HH^{m+1}$ via euclidean lines orthogonal to the totally geodesic hypersphere $\{ x_0^2 + |x|^2 = 1\}$.
}
\end{example}

Given a function $v : M \ra \R$, one can consider the graph 
$$
\Sigma^m = \Big\{ (s,x) \in \R \times M, \ \ s = v(x)\Big\},
$$
that according to whether $\bar M$ is of type $1$ or $2$ we call, respectively, geodesic or equidistant graph. In the setting of the hyperbolic space, M.P. Do Carmo and H.B. Lawson proved in \cite{docarmolawson} the following beautiful result:

\begin{theorem}\label{teo_docarmolawson}
Let $\Sigma^{m} \ra \HH^{m+1}$ be the geodesic graph of a function $v$ over a horosphere or a hypersphere $M$. Suppose that $\Sigma$ has constant mean curvature $H \in [-1,1]$. More precisely,
\begin{itemize}
\item[(i)] If $M$ is a horosphere, then $H= \pm 1$ and $\Sigma$ is a horosphere;
\item[(ii)] If $M$ is a hypersphere, then $H \in (-1,1)$ and $\Sigma$ is a hypersphere.
\end{itemize}
\end{theorem}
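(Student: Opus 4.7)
The plan is to recast the CMC condition on $\Sigma$ as a quasilinear elliptic equation for the graph function $v$ on $M$, and then derive rigidity by comparing $\Sigma$ with the canonical foliation $\{s=\mathrm{const}\}$ of $\bar M$. Parametrizing $\Sigma$ as $\Psi(x)=(v(x),x)$ and computing its shape operator with respect to the unit normal in the direction of increasing $s$, one obtains a quasilinear equation of mean-curvature type of the form $(P_=)$ (up to rescalings by $h(v)$), with $\varphi(t)=t/\sqrt{1+t^2}$ and a right-hand side depending on $H$, $h(v)$ and $h'(v)$. A constant solution $v\equiv c$ represents the slice $\{s=c\}$ and gives $H=h'(c)/h(c)$: in case (i), $h(s)=e^s$, so all horospheres have $H\equiv 1$; in case (ii), $h(s)=\cosh s$, so the equidistant hyperspheres have $H=\tanh c\in(-1,1)$.

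In case (i), on $M=\R^m$, I would first exclude $|H|\ne 1$ by an integration argument: integrating the equation over Euclidean balls $B_R$, the interior contribution is proportional to $(H\mp 1)\vol(B_R)$, while the boundary flux of the mean-curvature vector field can be controlled (after a suitable localized choice of cut-off, or with the aid of the auxiliary function $e^{v}$) by a quantity of lower order than the Euclidean volume growth; this forces $H=\pm 1$ upon choosing the orientation of the normal. With $H=\pm 1$ fixed, both $v$ and any horosphere $\{s=t\}$ are solutions of the same CMC equation; taking the least $t$ such that $v\le t$ on $\R^m$ and invoking the strong maximum principle for the coercive mean-curvature operator (Alexandrov's tangency principle) then yields $v\equiv t$, i.e.\ $\Sigma$ is a horosphere.

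In case (ii), on $M=\HH^m$, the identity $H=\tanh c\in(-1,1)$ at constant solutions automatically delivers the claimed range. To prove that $v$ itself is constant, I would invoke a weak maximum principle at infinity of Omori-Yau type for the operator $\Delta_\varphi$ on $\HH^m$, exploiting the Ricci lower bound $\mathrm{Ric}\equiv-(m-1)$: evaluated along sequences $\{x_k\}$ realizing $\sup v$ and $\inf v$, it provides $|\nabla v(x_k)|\to 0$ together with an asymptotic vanishing of the divergence term in the PDE. Substituting yields $H=\tanh(\sup v)=\tanh(\inf v)$, hence $v$ is constant and $\Sigma$ is a hypersphere.

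The main obstacle is this last step: the validity of a weak maximum principle at infinity on $\HH^m$ for the mean-curvature-type operator $\Delta_\varphi$ applied to a function with no a priori bound on $|\nabla v|$. While classical Omori-Yau principles handle $\Delta$ and $\Delta_p$, for the non-homogeneous $\varphi(t)=t/\sqrt{1+t^2}$ a more delicate analysis is needed, sensitive to the interplay between Ricci curvature, volume growth, and the asymptotic behaviour of $\varphi$ and $l$ — precisely the class of results this paper is devoted to establishing.
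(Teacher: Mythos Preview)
This theorem is quoted from Do Carmo--Lawson \cite{docarmolawson} and is not proved in the paper; the original argument relies on the Alexandrov moving plane method for properly embedded CMC hypersurfaces in $\HH^{m+1}$. The paper does, however, recover pieces of the result by its own machinery: the minimal case via Theorems~\ref{teo_bern_minimal_hyper}--\ref{teo_bern_minimal_horo} (using $\wmp$), and case $(ii)$ in full generality via Theorem~\ref{teo_presc_nonconst} (using $\smp$). Your approach for $(ii)$ is essentially the latter, and your final paragraph correctly identifies the crux --- establishing a maximum principle at infinity for the mean curvature operator on $\HH^m$ --- which is precisely what Theorem~\ref{teo_SMP_intro} provides.

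There is, however, a genuine gap common to both of your cases: you apply the Omori--Yau principle to $v$ and the tangency principle to ``the least $t$ with $v\le t$'', implicitly assuming $v$ is bounded. This is not given a priori. The paper's key maneuver is to work instead with the reparametrized function $u(x)=t(v(x))$, where $t(s)=\int_0^s h^{-1}$; since $h^{-1}\in L^1(+\infty)$ for $h(s)=e^s$ and $h^{-1}\in L^1(\pm\infty)$ for $h(s)=\cosh s$, the range $I=t(\R)$ is bounded above (horosphere) or bounded (hypersphere), so $u$ is automatically bounded and the maximum principle at infinity applies cleanly. Your integration argument in $(i)$ has a related problem: the interior term in \eqref{prescribed_geodesic} carries the factor $\lambda(u)=e^{v}$, which is not controlled, so the claim that it is ``proportional to $(H\mp 1)\vol(B_R)$'' does not follow without a priori bounds on $v$ and $|\nabla v|$. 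Once you pass to $u$, the paper's route via $\smp$/$\wmp$ goes through without these difficulties; see the proofs of Theorems~\ref{teo_bern_minimal_horo} and~\ref{teo_presc_nonconst}.
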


Note that, differently from the Euclidean case, no restriction appears on $m$. The result applies in fact to the much more general setting of properly embedded hypersurfaces, but its proof, relying on the moving plane method, seems difficult to adapt to graphs with variable mean curvature. This originates our search for alternative arguments and motivates the study of \eqref{problems}: indeed, as we shall see in a moment, the prescribed mean curvature equation for $\Sigma$ leads to the study of inequalities $(P_\ge), (P_\le)$ and $(P_=)$. We let $\Phi_t$ be the flow of $X$ and, for convenience, we express the prescribed mean curvature equation in terms of the function $u(x) = t (v(x))$. Let $\nabla$ be the connection on $(M, \metric)$.\\[0.2cm]
\noindent \textbf{Geodesic graphs.} In this case, the flow parameter $t$ satisfies
\begin{equation}\label{bonitinho}
t = \int_0^s \frac{\di \sigma}{h(\sigma)}, \qquad t : \R \ra t(\R) = I.
\end{equation}
Set $\lambda(t) = h\big( s(t)\big)$. If $H$ is the normalized mean curvature of $\Sigma$ with respect to the upward-pointing unit normal
\begin{equation}\label{normal_geodesic}
\nu = \frac{1}{\lambda(u)\sqrt{1+|\nabla u|^2}} \Big( \partial_t - (\Phi_u)_* \nabla u \Big),
\end{equation}
then a computation in \cite{dhl} shows that $u : M \ra I$ solves
\begin{equation}\label{prescribed_geodesic}
\diver \left( \frac{\nabla u}{\sqrt{1+|\nabla u|^2}} \right) = m \lambda(u) H + m \frac{\lambda_t(u)}{\lambda(u)} \frac{1}{\sqrt{1+|\nabla u|^2}} \qquad \text{on } \, M,
\end{equation}
where $\lambda_t$ is the derivative of $\lambda$ with respect to $t$.\\[0.2cm]
\noindent \textbf{Equidistant graphs.} In this case, the flow parameter is $t=s$. Having defined the normal direction
\begin{equation}\label{normal_equidistant}
\nu = \frac{1}{h \sqrt{1+h^2|\nabla u|^2}} \Big( \partial_t - h^2 (\Phi_u)_* \nabla u\Big),
\end{equation}
a computation in \cite{dajczerlira} shows that $u : M \ra I =\R$ solves
\begin{equation}\label{prescribed_equidistant}
\diver \left( \frac{h \nabla u}{\sqrt{1+ h^2|\nabla u|^2}} \right) = mH - \left\langle \frac{h \nabla u}{\sqrt{1+ h^2|\nabla u|^2}} , \frac{\nabla h}{h} \right\rangle \qquad \text{on } \, M. 
\end{equation}
If we consider the conformal deformation 
$$
\overline{\metric} = h^{-2} \metric,
$$
and we denote with $\|\cdot \|, \, \bar \nabla$ and $\overline{\diver}$, respectively, the norm, connection and divergence in the metric $\overline{\metric}$, then \eqref{prescribed_equidistant} is equivalent to 
\begin{equation}\label{prescribed_equidistant_confo}
\overline{\diver}_h \left( \frac{\bar \nabla u}{\sqrt{1+\|\bar \nabla u\|^ 2}} \right) = m H h^2 \qquad \text{on } \, \big( M, \overline{\metric}\big),  
\end{equation}
where 
$$
\overline{\diver}_h Y = h^{m-1} \overline{\diver} \big( h^{1-m} Y\big)
$$
is a weighted divergence.\\[0.2cm]
For suitable choices of $H$ including the minimal case $H=0$, equations \eqref{prescribed_geodesic} and \eqref{prescribed_equidistant_confo} can be put into the form $(P_=)$ in \eqref{problems} (in the second case, with a weight in the driving differential operator). A further interesting example is that of mean curvature flow (MCF) solitons. We recall that a family of hypersurfaces $f_t : \Sigma^m \ra \bar M^{m+1}$ moving by mean curvature flow
$$
\partial_t f_t = \overrightarrow{H}(f_t)
$$
(with $\overrightarrow{H}$ the unnormalized mean curvature vector) is said to be a mean curvature soliton if there exists a conformal vector field $Y$ on $\bar M$ such that $f_t(M) = \Psi_{\tau(t)}(f_0(M))$, where $\Psi_{\tau}$ is the flow of $Y$ and $\tau(t)$ is a time reparametrization. Equivalently, a soliton satisfies the identity 
$$
\overrightarrow{H} = Y^\perp,
$$
where $\perp$ is the orthogonal projection on the normal bundle. Solitons in $\R^{m+1}$ with respect to the homothetically shrinking and to the translating vector fields $Y$ give rise, respectively, to classical self-shrinkers and self-translators, that model the singularities developed under the MCF (cf. \cite{mantegazza}). Bernstein type theorems for shrinkers that are graphs over $\R^m$ have been proved in \cite{eckerhuisken, wang}, and for translators in \cite{baoshi}. Although solitons in more general ambient spaces can no longer describe the blow-up of a singularity, nevertheless they are still relevant since they act as barries for the MCF evolution. Suppose that $\Sigma$ is the graph of $u : M \ra I$ along the flow lines of a conformal field $X$, and note that $\overrightarrow{H} = mH \nu$. If the soliton field coincides with $\pm X$ ($= \pm \partial_t$), 
\begin{itemize}
\item[(1)] for geodesic graphs, by \eqref{normal_geodesic} equation \eqref{prescribed_geodesic} specifies to 
\begin{equation}\label{soliton_geodesic}
\diver\left( \frac{\nabla u}{\sqrt{1+|\nabla u|^2}}\right) = \left[ \frac{m \lambda_t(u) \pm \lambda^3(u)}{\lambda(u)} \right] \frac{1}{\sqrt{1+|\nabla u|^2}};
\end{equation}
\item[(2)] for equidistant graphs, by \eqref{normal_equidistant} equation \eqref{prescribed_equidistant_confo} becomes 
\begin{equation}\label{soliton_equidistant_confo}
\overline{\diver}_h \left( \frac{\bar \nabla u}{\sqrt{1+\|\bar \nabla u\|^2}} \right) = \pm h(x)^3 \frac{1}{\sqrt{1+\|\bar \nabla u\|^2}} \qquad \text{on } \, \big( M, \overline{\metric}\big).  
\end{equation}
\end{itemize}

\subsection{Main properties under investigation}

We make a preliminary observation. Suppose that $f$ has at least a zero on $\R$: then, by the translation invariance of \eqref{problems} with respect to $u$, without loss of generality we can assume that $f(0)=0$. The function $u \equiv 0$ is then a solution of $(P_=)$, and the reduction principle in \cite{dambrosiomitidieri_2} (or Lemma \ref{lem_pasting}, see also \cite{le} and the appendix of \cite{AMR}) guarantees that
$$
\begin{array}{c}
u_+ = \max\{u,0\} \quad \text{solves $(P_\ge)$ weakly on $\Omega$.} \\[0.1cm]
\end{array}
$$
Therefore, when $f$ has a zero, without loss of generality we can restrict ourselves to investigate $(P_\ge)$ under the further assumption $f(0)=0$ and, if $u>0$ somewhere, we can also suppose $u \ge 0$ on $\Omega$.

%
%

\begin{definition}\label{def_properties}
{\rm We say that:
\begin{itemize}
\item the \emph{{compact support principle}} (shortly, $\csp$) holds for $(P_\ge)$ if each non-negative $C^1$ solution of~$(P_\ge)$ on an end $\Omega$ of $M$, satisfying $u(x) \ra 0$ as $x \ra \infty$ in $\Omega$, has compact support, that is, $u\equiv 0$ outside some compact set;
\item the \emph{{finite maximum principle}} (shortly, $\fmp$) holds for $(P_\le)$ on the domain $\Omega \subset M$ if any non-negative $C^1$ solution of~$(P_\le)$ for which $u(x_0) =0$ at some $x_0 \in \Omega$, satisfies $u\equiv 0$ on $\Omega$;
\item the \emph{{strong Liouville property}} (shortly, $\slio$) holds for $(P_\ge)$ if there exist no non-negative, non-constant $C^1$ solutions of~$(P_\ge)$ on all of $M$.
\item the \emph{{Liouville property}} (shortly, $\lio$) holds for $(P_\ge)$ if there exist no non-negative, non-constant, \emph{bounded} $\lip_\loc$ solutions of~$(P_\ge)$ on all of  $M$.
\end{itemize}}
\end{definition}
\begin{remark}
\emph{Besides the regularity required on $u$, we emphasize that the only difference between properties $\lio$ and $\slio$ is that, in $\lio$, we require that the solution of~$(P_\ge)$ be \emph{a-priori} bounded.
}
\end{remark}

\begin{remark}[\textbf{Constant solutions}]
 \emph{It is clear, by the properties of $b,f,l$ in \eqref{assumptions_bfl}, that a constant $u=u^*$ solves $(P_\ge)$ if and only if
$$
\begin{array}{l}
\disp l(0)=0, \quad \text{independently of $f$,} \ \text{ or } \, \\[0.3cm]
\disp l(0)>0, \quad f(u^*) \le 0.
\end{array}
$$
Therefore, in what follows we will always concentrate on \emph{non-constant} solutions.}
\end{remark}

Note that $\fmp$ is of a local nature, and thus its validity should not depend on the considered manifold. On the other hand, $\csp$, $\lio$ and $\slio$ are \emph{global} properties, and for this reason they are expected to depend on the geometry at infinity of~$M$ and not only on the structure of the operator related to $\varphi,b,f,l$. More precisely, the next scheme summarizes what occurs in general:
\begin{equation}\label{scheme}
\left\{\begin{array}{c}
\text{geometric conditions} \\[0.1cm]
\text{related both to $b$} \\[0.1cm]
\text{and to $\varphi, f,l$}
\end{array}\right\}
\quad + \quad \left\{\begin{array}{c}
\text{condition on} \\[0.1cm]
\text{$\varphi,f,l$} \end{array}\right\} \quad \Longrightarrow \quad \left\{\begin{array}{c}
\text{\text{either $\slio$,}} \\[0.1cm]
\text{or $\lio$,} \\[0.1cm]
\text{or $\csp$} \end{array} \right\}.
\end{equation}

We now describe the requirements on $\varphi,f,l$ needed in order to possibly obtain  $\slio$ or $\csp$, and next we will consider the role of geometry and of $b$. We assume

\begin{equation}\label{assum_secODE_altreL_intro_prima}
\left\{ \begin{array}{l}
\disp \mbox{$\varphi \in C^1(\R^+)$, $\qquad \varphi'>0$ on $\R^+$,}\\[0.3cm]
\disp \frac{t \varphi'(t)}{l(t)} \in L^1(0^+).
\end{array}\right.
\end{equation}
Then, the function
\begin{equation}\label{def_K}
K(t) = \int_0^t \frac{s\varphi'(s)}{l(s)}\di s
\end{equation}
realizes a homeomorphism of $\R^+_0$ onto its image $[0, K_\infty)$, with inverse $K^{-1} : [0, K_\infty) \ra \R^+_0$. Unless otherwise specified, we set
\begin{equation}\label{def_Fe_intro}
F(t) = \int_0^t f(s) \di s.
\end{equation}
To deal with $\fmp$ and $\csp$, we further suppose that
\begin{equation}\label{ipo_base_f_CSP}
f \ge 0 \quad \text{ on some $\ [0,\eta_0)$, $\ \eta_0 >0$}.
\end{equation}
The validity of $\fmp$ and $\csp$ is related to the next integrability requirement:
\begin{equation}\label{KO_zero_intro}\tag{KO$_0$}
\frac{1}{K^{-1}\circ F} \in L^1(0^+).
\end{equation}
More precisely, $\fmp$ depends on the failure of \eqref{KO_zero_intro} while $\csp$ on its validity. Regarding $\slio$, the relevant condition becomes an integrability at infinity, that to be expressed needs the further assumption
\begin{equation} \label{assum_KO_nonl1}
\disp \frac{t \varphi'(t)}{l(t)} \not \in L^1(\infty),
\end{equation}
in order for $K^{-1}$ to be defined on $\R^+_0$ (i.e. $K_\infty = \infty$). If we now suppose that
\begin{equation}\label{ipo_base_f_SL}
f \ge 0 \ \text{ on $\R^ +$,}
\end{equation}
%
then $\slio$ depends on the requirement
\begin{equation}\label{KO_infinity_intro}\tag{KO$_\infty$}
\frac{1}{K^{-1}\circ F} \in L^1(\infty).
\end{equation}
If $l\equiv 1$, $K$ coincides with the function
\begin{equation}\label{def_Hs}
H(t)=t\varphi (t)-\int_{0}^{t} \varphi (s)\di s, \qquad t\geq 0,
\end{equation}
that represents the pre-Legendre trasform of
$$
\Phi (t)=\int_{0}^{t} \varphi (s)\di s,
$$
and in this case we recover the necessary and sufficient conditions for $\csp$, $\fmp$ and $\slio$ thoroughly investigated in \cite{nu, pucciserrin_revisited, pucciserrin, pucciserrinzou} on $\R^m$, see also the references therein. In the case of the $p$-Laplacian where $\varphi(t) = t^{p-1}$, and for $l \equiv 1$, \eqref{KO_zero_intro} and \eqref{KO_infinity_intro} take, respectively, the well-known form
\begin{equation}\label{notKO_zero_plapla}
\frac{1}{F^{1/p}} \in L^1(0^+), \qquad \frac{1}{F^{1/p}} \in L^1(\infty).
\end{equation}
Condition \eqref{KO_infinity_intro} originated from the works of J.B. Keller and R. Osserman \cite{keller, osserman} for the prototype case
\begin{equation}\label{semilinear_prototype}
\Delta u \ge f(u)
\end{equation}
on $\R^m$: in particular, Osserman introduced \eqref{KO_infinity_intro} in his investigation on the conformal type of a Riemann surface. For convenience, in what follows we name both \eqref{KO_zero_intro} and \eqref{KO_infinity_intro} the \emph{Keller-Osserman conditions}. To our knowledge, the study of the relations between Keller-Osserman conditions and the geometry of $M$ initiated with the influential paper \cite{chengyau} by S.Y. Cheng and S.T. Yau, for the semilinear example \eqref{semilinear_prototype}. An important feature of \eqref{KO_zero_intro} and \eqref{KO_infinity_intro} to notice is their independence on the underlying space and on the weight $b$. The way geometry relates to the Keller-Osserman conditions in order to give $\slio$ and $\csp$ is one of the primary concerns of the present work, and will be expressed in terms of sharp curvature or volume growth bounds on $M$, and sharp estimates for $b$. In this respect, in many instances even when $l$ is constant such interplay is still partially unclear.\par
The bridge between geometry and the properties in Definition \ref{def_properties} is provided, at least in this paper, by the validity of the weak and strong maximum principles at infinity, that we now define:

\begin{definition}\label{def_maxprinc}
\rm{Assume \eqref{assumptions} and fix $b,l$ satisfying \eqref{assumptions_bfl}. We say that
\begin{itemize}
\item $(bl)^{-1}\Delta_\varphi$ satisfies the \emph{{weak maximum principle at infinity}}, shortly, $\wmp$, if for each non-constant $u \in \lip_\loc(M)$ such that $u^* = \sup_M u < \infty$, and for each $\eta<u^*$,
$$
\inf_{\Omega_\eta} \Big\{\Big(b(x)l(|\nabla u|)\Big)^{-1}\Delta_\varphi u\Big\} \le 0,
$$
where
$$
\Omega_\eta = \{x\in M\, :\, u(x)>\eta\}
$$
and the inequality has to be intended in the following sense: if $u$ solves
\begin{equation}\label{def_weakWMP}
\Delta_\varphi u \ge K b(x)l(|\nabla u|) \qquad \text{on } \, \Omega_\eta,
\end{equation}
for some $K \in \R$, then necessarily $K \le 0$.

\item $(bl)^{-1}\Delta_\varphi$ satisfies the \emph{{strong maximum principle at infinity}}, shortly, $\smp$, if for each non-constant $u \in C^1(M)$ such that $u^* = \sup_M u < \infty$, and for each $\eta<u^*$, $\eps>0$,
\begin{equation}\label{def_omegaeta}
\Omega_{\eta,\eps} = \{x\in M : u(x)>\eta, \, \, |\nabla u(x)|<\eps\} \qquad \text{is non-empty,}
\end{equation}
and
$$
\inf_{\Omega_{\eta,\eps}} \Big\{\Big(b(x)l(|\nabla u|)\Big)^{-1}\Delta_\varphi u\Big\} \le 0,
$$
where, again, the inequality has to be intended in the way explained above.
\end{itemize}}
\end{definition}
%

%
%
%
%
%
%

\begin{remark}
\rm{Condition $\Omega_{\eta,\eps} \neq \emptyset$ in \eqref{def_omegaeta} is not automatic: for example, consider the function $u(x) = \exp(-|x|)$ on
$\R^m\backslash \{0\}$, for which $|\nabla u| \ra 1$ when $u \ra u^*=1$. However, it is easy to see that
$\Omega_{\eta,\eps}$ is always non-empty if $M$ is complete; this can be seen as a consequence of I. Ekeland quasiminimum principle\footnote{Here is a quick geometrical reasoning. By contradiction, suppose that $|\nabla u|\ge \eps$ on $\Omega_\eta$,
and take any maximal flow line $\gamma$ of $X=\nabla u/|\nabla u|$ starting from some $x \in \Omega_\eta$
(it might be locally non-unique since $X$ is just continuous, but it exists by Peano theorem and $\gamma$ is
defined on $\R^+$ since $X$ is bounded). From $\gamma(\R^+) \subset \Omega_\eta$, integrating along $\gamma$
 would imply $u^*=\infty$, against our assumptions.}.}
\end{remark}

As we shall see in a short while, $\wmp$ and $\smp$ hold under mild geometric assumptions, involving the Ricci curvature or the volume growth of geodesic balls. Moreover, $\wmp$ is equivalent to $\lio$ for each $f$ with $f(0)=0$ and $f>0$ on $\R^+$. Both principles relate to \eqref{KO_zero_intro} and \eqref{KO_infinity_intro} to guarantee, respectively, $\csp$ and $\slio$. Hereafter, we denote with $r(x)$ the distance of $x$ from a fixed subset $\mathcal{O} \subset M$ that we call an origin. The origin may be a point (in this case, we denote it by $o$) or a relatively compact, open set with smooth boundary. It is known that $r$ is smooth on an open, dense subset $\mathcal{D}_\mathcal{O} \subset M \backslash \mathcal{O}$, and we denote as usual $\cut(\mathcal{O}) = M \backslash ( \mathcal{D}_\mathcal{O} \cup \mathcal{O})$. To better describe our results and properly place them in the literature, we separately comment on each of the properties in Definitions \ref{def_properties} and \ref{def_maxprinc}.

\subsection{The finite maximum principle $\fmp$}
Beyond the basic requirements \eqref{assumptions} and \eqref{assumptions_bfl}, assume
\begin{equation}\label{assum_secODE_altreL_intro}
\left\{ \begin{array}{l}
\disp \mbox{$\varphi \in C^1(\R^+)$, $\qquad \varphi'>0$ on $\R^+$,}\\[0.3cm]
\disp \frac{t \varphi'(t)}{l(t)} \in L^1(0^+).
\end{array}\right.
\end{equation}
We construct $F$ and $K$ respectively as in \eqref{def_K} and \eqref{def_Fe_intro}. If $f > 0$ in a right neighbourhood of zero, the validity of $\fmp$ turns out to depend on the next non-integrability requirement:
\begin{equation}\label{KO_zero_intro_FMP}\tag{$\neg$KO$_0$}
\frac{1}{K^{-1}\circ F} \not\in L^1(0^+).
\end{equation}
If $l\equiv 1$, then $K$ coincides with the function
\begin{equation}\label{def_Hs}
H(t)=t\varphi (t)-\int_{0}^{t} \varphi (s)\di s, \qquad t\geq 0,
\end{equation}
and $f$ is non-decreasing and positive, in \cite{pucciserrin_revisited, pucciserrinzou} property $\fmp$ is shown to be equivalent to \eqref{KO_zero_intro_FMP}, see also Chapter 5 and Theorem 1.1.1 of \cite{pucciserrin}. We presently extend such a characterization to the case of a non-constant function $l$. The literature on the finite maximum principle for quasilinear inequalities is fairly intricate, with contributions from a number of different mathematicians. A detailed and commented account of previous works can be found in \cite[p. 125]{pucciserrin} and in \cite{pucciserrin_revisited}. To introduce our main result, we begin with 
\begin{definition}\label{def_Cincreasing}
We say that a function $h : \R \ra \R$ is $C$-increasing on $[a,b]$ for some constant $C \ge 1$ if
$$
\sup_{a \le s \le t} h(s) \le Ch(t).
$$
\end{definition}
Clearly, $1$-increasiness corresponds to $h$ being non-decreasing, but on the other hand $C$-increasiness allows oscillations of $h$.

\begin{theorem}\label{teo_FMP2_intro}
Let $M$ be a Riemannian manifold, and assume that $\varphi, b, f, l$ satisfy \eqref{assumptions}, \eqref{assumptions_bfl} and \eqref{assum_secODE_altreL_intro}. Suppose further that
\begin{itemize}
\item[-] $f(0)l(0)=0$;
\item[-] $f$ is non-negative and $C$-increasing on $(0,\eta_0)$, for some $\eta_0>0$;
\item[-] $l$ is $C$-increasing on $(0,\xi_0)$, for some $\xi_0>0$.
\end{itemize}
%
%
Then, $\fmp$ holds for non-negative solutions $u \in C^1(\Omega)$ of $(P_\le)$ on a domain $\Omega \subset M$ if and only if either
\begin{equation}\label{lasimple_2}
f\equiv 0 \qquad \text{on } \, [0,\eta_0),
\end{equation}
or
\begin{equation}\label{noninteinzero_4}
f>0 \quad \text{on } \, (0,\eta_0), \quad \text{and} \qquad \frac{1}{K^{-1}\circ F} \not \in L^1(0^+).
\end{equation}
\end{theorem}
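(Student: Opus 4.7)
The plan is to prove necessity by exhibiting a radial dead--core counterexample, and sufficiency by combining the classical strong minimum principle for $\Delta_\varphi u \le 0$ with a local comparison argument against a radial model whose behaviour is governed precisely by the Keller--Osserman integrability.

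\textbf{Necessity.} Suppose that neither \eqref{lasimple_2} nor \eqref{noninteinzero_4} holds: by the $C$-increasingness of $f$ this forces $f>0$ on $(0,\eta_0)$ together with $1/(K^{-1}\!\circ F)\in L^1(0^+)$. I would fabricate a radial counterexample on a small coordinate ball $B_R(x_0)\subset M$, chosen so that $b$, $r(x)=\dist(x,x_0)$ and $\Delta r$ are bounded. Fixing $C>0$ large enough to absorb the curvature term $\Delta r$ appearing in the radial expression of $\Delta_\varphi$, I would solve the Cauchy problem
\begin{equation*}
\bigl(\varphi(\alpha'(r))\bigr)' \;=\; C\,b(r)\,f(\alpha)\,l(\alpha'), \qquad \alpha(r_0)=\alpha'(r_0)=0,
\end{equation*}
for a suitable $r_0\in(0,R)$. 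Multiplying by $\alpha'$, integrating, and using the $C$-increasingness of $l$ yields $K(\alpha')\asymp C\,F(\alpha)$ and hence $\alpha'\asymp K^{-1}(C\,F(\alpha))$; the integrability $1/(K^{-1}\!\circ F)\in L^1(0^+)$ is exactly what guarantees that $\alpha$ detaches from $0$ in finite $r$-time. Extending $\alpha\equiv 0$ on $[0,r_0]$ and setting $u(x)=\alpha(r(x))$ produces a non-negative $C^1$ solution of $(P_\le)$ with $u(x_0)=0$ but $u\not\equiv 0$ nearby, contradicting $\fmp$.

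\textbf{Sufficiency.} Under \eqref{lasimple_2}, on the sublevel $\{u<\eta_0\}$ the inequality $(P_\le)$ reduces to $\Delta_\varphi u\le 0$, and the strong minimum principle for non-negative $C^1$ weak super--solutions of $\Delta_\varphi$, a direct consequence of the weak comparison principle on small balls (see e.g.\ \cite[Ch.~5]{pucciserrin}), forces $u\equiv 0$ near any of its zeros, hence on $\Omega$ by connectedness. Under \eqref{noninteinzero_4}, assume for contradiction that $u(x_0)=0$ while $u\not\equiv 0$ in every neighbourhood of $x_0$. A standard interior--sphere argument produces $y\in\Omega$ and $\rho>0$ with $B_\rho(y)\subset\{u>0\}$ and $\overline{B_\rho(y)}$ touching $\{u=0\}$ at a single point $z_0$, at which $\nabla u(z_0)=0$ by $C^1$ regularity. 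On the shell $B_\rho(y)\setminus\overline{B_{\rho/2}(y)}$ I would build a radial super--solution $w(x)=\beta(\rho-|x-y|)$ of $\Delta_\varphi v=b(x)f(v)l(|\nabla v|)$ with $w(z_0)=0$ and $w\le u$ on $\partial B_{\rho/2}(y)$ (by scaling its amplitude). The ODE for $\beta$ is exactly the one analysed above; the non--integrability $1/(K^{-1}\!\circ F)\notin L^1(0^+)$ forces $\beta$ to leave $0$ with a strictly positive one--sided derivative at $z_0$, which is a Hopf--type statement. The weak comparison principle for $u$ and $w$ on the shell then gives $u\ge w$ there, so the non--vanishing radial derivative of $w$ at $z_0$ transfers to $u$ and contradicts $\nabla u(z_0)=0$.

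\textbf{Main obstacle.} The delicate point is ensuring that both the barrier construction and the weak comparison are robust in the presence of the non--constant, merely $C$-increasing gradient factor $l(|\nabla u|)$: the usual weak comparison principle for $\Delta_\varphi v\le b(x)f(v)l(|\nabla v|)$ is not automatic, as the dependence of the right--hand side on $|\nabla v|$ breaks monotonicity in $v$. The $C$-increasingness of $f$ and $l$ is used precisely to trap $f(w)$ and $l(|\nabla w|)$ between monotone minorants along the barrier --- where $|\nabla w|$ stays in a controlled range --- and thereby reduce the comparison to a classical one; carrying this through uniformly in both directions of the proof while keeping all constants compatible with the dichotomy between $1/(K^{-1}\!\circ F)\in L^1(0^+)$ and its negation is the most intricate step. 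By contrast, the passage from the manifold to the local radial model is essentially harmless, as it only costs the bounded term $\Delta r$, which is absorbed into the constant $C$ on a sufficiently small ball.
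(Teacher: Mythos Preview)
Your overall strategy --- Hopf-type comparison with a radial barrier for sufficiency, dead-core radial construction for necessity --- matches the paper's. Two points in your barrier constructions need correction.

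For necessity, phrasing the construction as ``solve the Cauchy problem'' with $\alpha(r_0)=\alpha'(r_0)=0$ is circular: $\alpha\equiv 0$ always solves it, and your subsequent manipulation (multiply by $\alpha'$, integrate, deduce $K(\alpha')\asymp CF(\alpha)$) presupposes a nontrivial solution rather than producing one. The paper sidesteps this by solving a two-point Dirichlet problem $w(0)=0$, $w(T)=\eta$ via a Leray--Schauder argument (Theorem~\ref{exi2}) and then proving \emph{a posteriori}, under $(\mathrm{KO}_0)$, that $w'(0)=0$, by comparison with the explicit supersolution $z$ defined implicitly through $t=\int_0^{z(t)}ds/K^{-1}(\sigma F(s))$ (Proposition~\ref{prop_wprimougualezero}). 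Also, your constant $C$ points the wrong way: for $u=\alpha(r)$ to solve $(P_\le)$ one needs $(\varphi(\alpha'))' + \varphi(\alpha')\Delta r \le b(x)f(\alpha)l(\alpha')$, so a \emph{large} $C$ on the right of your ODE makes $(\varphi(\alpha'))'$ too big, not too small. The paper instead uses the weighted ODE $[v_{g_\kappa}\varphi(w')]' = v_{g_\kappa}\,a\,f(w)\,l(w')$ with $a=\inf b$, which absorbs $\Delta r$ cleanly via the Laplacian comparison from above.

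For sufficiency, the annular barrier must be a \emph{sub}solution with an inflated constant, namely $\Delta_\varphi v \ge 2C\,b(x)f(v)l(|\nabla v|)$, not a supersolution. The factor $2C$ is precisely what resolves the obstacle you flagged: at any maximum of $v-u$ one has $\nabla u=\nabla v$, so by continuity $l(|\nabla u|)\le 2\,l(|\nabla v|)$ nearby, while $v>u$ and $C$-increasingness of $f$ give $f(u)\le Cf(v)$; hence $\Delta_\varphi u \le 2C\,b\,f(v)\,l(|\nabla v|)\le \Delta_\varphi v$ there, and the elementary comparison of Proposition~\ref{prop_comparison} (which needs no structure on the lower-order terms) yields the contradiction. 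The barrier $v$ is again produced by the Dirichlet problem of Theorem~\ref{exi2}; the crucial strict positivity of its derivative at the touching endpoint --- your Hopf conclusion --- is Proposition~\ref{prop_twobound_refined}, proved under $(\neg\mathrm{KO}_0)$.
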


\begin{remark}
\emph{For the sake of clarity, in \cite{pucciserrin_revisited} no differentiability of $\varphi$ is needed: indeed, $\varphi'$ does not appear in the definition of $H$, and the authors just require $\varphi$ to be strictly increasing. However, the presence of a possibly only continuous function $l$ forces us to increase the regularity of $\varphi$ to be able to define $K$.
}
\end{remark}
\begin{example}
\emph{Observe that Theorem \ref{teo_FMP2_intro} applies to the inequality
\begin{equation}\label{casogradiente}
\Delta_p u \le u^\omega |\nabla u|^q,
\end{equation}
with $\omega \ge 0$, $q \in [0,p)$, to guarantee that $\fmp$ holds if and only if
$$
\omega + q \ge p-1.
$$
}
\end{example}

The proof of Theorem \ref{teo_FMP2_intro} follows the standard method to show Hopf type lemmas, that is, it relies on the construction of suitable radial solutions of $(P_\ge)$ defined on annuli (see \cite{pucciserrin_revisited, pucciserrin}). However, the study of the related ODE is, for nontrivial gradient terms $l$, considerably more involved than that in \cite{pucciserrin}. This calls for a detailed investigation of singular Dirichlet and mixed Dirichlet-Neumann problems for quasilinear ODEs, accomplished in Section \ref{sec2.3}. The results therein have independent interest, and are central in many of the main theorems of the present paper.\par
For the prototype inequality $\Delta u \le f(u)$, very recently in \cite{pucciradulescu} the authors succeeded to prove Theorem \ref{teo_FMP2_intro} \emph{without requiring the $C$-monotonicity of $f$}. It is likely that the same is possible also for $(P_\le)$ in our generality, and so we propose the following 

\begin{problem}
Is it possible to prove Theorem \ref{teo_FMP2_intro} without requiring the $C$-monotonicity of $f$ and $l$? Or, at least, keeping the $C$-monotonicity of just one of them?
\end{problem}

\subsection{Strong and weak maximum principles at infinity}

We start describing the origin of properties $\smp$ and $\wmp$, and for simplicity we restrict to the case $b \equiv 1$, $l \equiv 1$ and $\Delta_\varphi = \Delta$, the Laplace-Beltrami operator. In this case, when $u \in C^2(M)$, $\smp$ and $\wmp$ can
equivalently be restated as the existence of a sequence of points $\{x_k\}_{k \in \mathbb{N}}\subset M$ such that
\begin{equation}\label{classical_SMP}
\disp u(x_k) > u^*-\frac{1}{k}, \qquad \Delta u(x_k) < \frac{1}{k}, \qquad |\nabla u|(x_k) < \frac{1}{k}
\end{equation}
for $\smp$ to hold, and
$$
\disp u(x_k) > u^*-\frac{1}{k}, \qquad \Delta u(x_k) < \frac{1}{k}
$$
for $\wmp$ to hold. In this case, $\smp$ is called the Omori-Yau principle, in view of the pioneering papers by H. Omori \cite{omori} and S.T. Yau (cf. \cite{yau2} and \cite{chengyau}, the second with S.Y. Cheng). It proved to be a fundamental tool in investigating geometric problems (see \cite{AMR_book} and the references therein). Omori in \cite{omori} realized that the validity of $\smp$ is not granted on a generic Riemannian manifold, although it is sufficient that $M$ enjoys very mild requirements. For example, by combining works of \cite{chenxin, rattorigolisetti, prsmemoirs, borbely}, see \cite[Thm. 2.4]{AMR_book}, $\Delta$ satisfies $\smp$ whenever
\begin{equation}\label{threshold_SMP}
\Ricc(\nabla r, \nabla r) \ge - G(r) \qquad \text{on } \, \mathcal{D}_o,
\end{equation}
$r(x)$ being the distance function from some origin $o$, and $G \in C^1(\R)$ has the following properties:
\begin{equation}\label{threshold_SMP_2}
G \ge 0, \quad G' \ge 0, \quad \frac{1}{\sqrt{G}} \not \in L^1(\infty).
\end{equation}
Clearly, in \eqref{threshold_SMP} and \eqref{threshold_SMP_2} what really matters is the growth of $G$ at infinity. A borderline example is given, for instance, by $G(t) \asymp 1+ t^2$. This is a particular case of a general criterion discovered in \cite{rattorigolisetti, prsmemoirs}, granting the validity of $\smp$ provided that $M$ supports a function satisfying
\begin{equation}\label{strongKhasminskii}
\begin{array}{l}
w \in C^2(M\backslash K) \ \text{ for some compact $K$}, \\[0.2cm]
w(x) \ra + \infty \ \text{ as } \, x \ \text{ diverges in $M$}, \\[0.2cm]
|\nabla w| \le \sqrt{G(w)}, \qquad \Delta w \le \sqrt{G(w)} \qquad \text{on } \, M \backslash K,
\end{array}
\end{equation}
where $G$ meets the requirements in \eqref{threshold_SMP_2}. For reasons that will be soon justified, we call $w$ a \emph{strong Khas'minskii} potential. To the best of our knowledge, this is essentially the only effective known condition, and $w$ is often explicitly given not exclusively via curvature bounds like \eqref{threshold_SMP}, but also by the geometrical nature of the problem at hand. This is the case, for instance, of immersed submanifolds, where $w$ depends on extrinsic data, and of generic Ricci soliton structures, see \cite{AMR_book}. \par
When the operator is nonlinear and non-homogeneous, instead of a single function $w$ we need a family of Khas'minskii potentials, see Section \ref{sec_SMP} below and Chapter 3 in \cite{AMR_book}. For $b^{-1}\Delta_\varphi$, $\smp$ has been studied in \cite[Sec. 6]{prsmemoirs} and \cite[Thm 1.1]{puccirigoli}, and again a family of potentials of strong Khas'minskii type is exhibited to ensure $\smp$ under an appropriate Ricci curvature bound. The construction of the potential in these papers is hand-made and appears not easily generalizable to cover the case of a non-constant $l$. Therefore, although our present strategy to prove $\smp$ for $(bl)^{-1}\Delta_\varphi$ is still based on finding a strong Khas'minskii potential family, the construction of the latter relies on a different approach involving the study of the maximal domain of existence and the asymptotic behaviour of solutions of a singular two-points boundary ODE problem, see Sections \ref{sec2.3} and \ref{sec_ricciandSMP}.\par
For the convenience of the reader, we summarize the assumptions of $\varphi,l$ in the following:
\begin{equation}\label{assum_SMP_intro}
\left\{
\begin{array}{l}
\disp \varphi \in C(\R^+_0)\cap C^1(\R^ +), \qquad \varphi(0)=0, \qquad \varphi' > 0 \ \text{ on } \, \R^+; \\[0.3cm]
l \in C(\R_0^+), \qquad l>0 \ \text{on } \, \R^+; \\[0.3cm]
\disp \frac{t \varphi'(t)}{l(t)} \in L^1(0^+).
\end{array} \right.
\end{equation}

We shall also require the growth conditions
\begin{equation}\label{assu_SMP_intro_growth}
\left\{\begin{array}{ll}
\disp l(t) \ge C_1 \frac{\varphi(t)}{t^\chi} & \qquad \text{on $(0,1]$, for some $C_1>0$, $\chi \ge 0$}; \\[0.4cm]
\disp \varphi(t) \le C_2t^{p-1} & \qquad \text{on $[0,1]$, for some $C_2>0$, $p>1$.}
\end{array}\right.
\end{equation}

\begin{remark}\label{rem_pel}
\emph{Since $l$ is continuous up to zero, if $\varphi(t) \asymp t^{p-1}$ near $t=0$ the first condition in \eqref{assu_SMP_intro_growth} forces the upper bound $\chi \le p-1$. For example, in the $p$-Laplacian case where $\varphi(t) = t^{p-1}$, chosen $l(t) = t^q$, the first in \eqref{assu_SMP_intro_growth} holds if and only if $q \in [0, p-1]$. Furthermore, to recover the case $l$ constant the best choice of $\chi$ is
$$
\chi = p-1;
$$
the choice $\chi=0$ represents the borderline case of strong gradient dependence $l(t) \asymp \varphi(t)$ near $t=0$. The latter often needs a special care to be treated.
}
\end{remark}

We express our main result in terms of a sharp condition on the Ricci tensor.

\begin{theorem}\label{teo_SMP_intro}
Let $M$ be a complete $m$-dimensional manifold such that, for some fixed origin $o \in M$, the distance $r(x)$ from $o$ satisfies
\begin{equation}\label{ricciassu_intro}
\Ricc (\nabla r, \nabla r) \ge -(m-1)\kappa^2\big( 1+r^2\big)^{\alpha/2} \qquad \text{on } \, \mathcal{D}_o,
\end{equation}
for some $\kappa \ge 0$, $\alpha \ge -2$. Let $l$ and $\varphi$ satisfy \eqref{assum_SMP_intro} and \eqref{assu_SMP_intro_growth}. Consider $0< b \in C(M)$ such that
$$
b(x) \ge C\big(1+ r(x)\big)^{-\mu} \qquad \text{on } \, M,
$$
for some constants $C>0$, $\mu \in \R$. If
\begin{equation}\label{condi_volumericci_intro}
\mu \le \chi - \frac{\alpha}{2} \quad \text{and either} \quad \left\{ \begin{array}{ll}
\alpha \ge -2 \quad \text{and} \quad \chi>0, \quad \text{or} \\[0.2cm]
\alpha = -2, \quad \chi = 0 \quad \text{and} \quad \bar\kappa  \le \frac{p-1}{m-1},
\end{array}\right.
\end{equation}
with $\bar\kappa  = \frac{1}{2}\big( 1 + \sqrt{1+4\kappa^2}\big)$, then $(bl)^{-1}\Delta_\varphi$ satisfies $\smp$.
\end{theorem}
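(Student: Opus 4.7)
The plan is to construct a strong Khas'minskii potential family for $(bl)^{-1}\Delta_\varphi$ in the spirit of \eqref{strongKhasminskii}, and then conclude $\smp$ by a comparison argument. Concretely, to contradict the existence of a non-constant $u\in C^1(M)$ with $u^*<\infty$ satisfying $\Delta_\varphi u\geq K\,b(x)\,l(|\nabla u|)$ on some $\Omega_{\eta,\eps}$ with $K>0$, I aim to exhibit, for each such $K$, a radial potential $w=\zeta(r(x))$ defined outside a large compact set, with $w(x)\to+\infty$ as $r(x)\to\infty$, $|\nabla w|$ arbitrarily small, and $\Delta_\varphi w \leq \tfrac{K}{2}\,b(x)\,l(|\nabla w|)$ in the weak sense. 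Comparing $u-\lambda w$ for small $\lambda>0$ on a suitable sublevel set, using a Calabi-type cut-locus barrier together with weak comparison for $\Delta_\varphi$, then forces $K\leq 0$.

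The construction of $w$ rests on Laplacian comparison coupled with an ODE analysis. Under \eqref{ricciassu_intro}, the standard Laplacian comparison theorem yields $\Delta r\leq (m-1)g'(r)/g(r)$, pointwise on $\mathcal{D}_o$ and weakly on $M\setminus\{o\}$, where $g$ solves the Jacobi equation $g''-\kappa^2(1+r^2)^{\alpha/2}g=0$ with $g(0)=0$, $g'(0)=1$. Asymptotic analysis of this model ODE gives $g'/g\lesssim r^{\alpha/2}$ for $\alpha>-2$, and $g'/g\leq \bar\kappa/r+o(r^{-1})$ in the critical case $\alpha=-2$, with $\bar\kappa$ the larger root of the indicial equation $t(t-1)=\kappa^2$. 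For $w=\zeta(r)$ with increasing $\zeta$, one obtains weakly on $M$
\[
\Delta_\varphi w \;\leq\; \varphi'(\zeta'(r))\,\zeta''(r)+(m-1)\,\frac{g'(r)}{g(r)}\,\varphi(\zeta'(r)).
\]
The target inequality $\Delta_\varphi w \leq \tfrac{K}{2}\,C(1+r)^{-\mu}\,l(\zeta'(r))$ thus reduces to a scalar ODE inequality in $\beta := \zeta'$, coupled with the singular boundary data $\beta(r)\to 0$ as $r\to\infty$ and $\int^\infty\beta\,dr=+\infty$ (so that $\zeta\to+\infty$).

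Dividing by $l(\beta)$ and introducing the primitive $K(\beta)=\int_0^\beta s\varphi'(s)/l(s)\,ds$ from \eqref{def_K} linearises the leading derivative term and converts the problem into a singular two-point boundary value problem of exactly the kind analysed in Section~\ref{sec2.3}; the maximal-existence and asymptotic-behaviour statements established there can then be invoked directly. The growth assumptions \eqref{assu_SMP_intro_growth} give $K(\beta)\geq c\,\beta^{\chi+1}$ for small $\beta$, and coupling this with the kernel estimate $g'/g\lesssim r^{\alpha/2}$ produces decaying solutions $\beta(r)$ whose integral diverges precisely under the balance $\mu\leq \chi-\alpha/2$ in \eqref{condi_volumericci_intro}; this is where the weight $(1+r)^{-\mu}$ and the Ricci exponent $\alpha$ enter in a sharp way.

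The most delicate step is the borderline $\alpha=-2,\ \chi=0$: here $l$ provides no effective damping of $\varphi(\beta)$, the comparison kernel $g'/g$ is only $\bar\kappa/r$, and the leading-order ODE becomes scale-invariant (of Euler type). Its indicial equation, governed by $\varphi(t)\lesssim t^{p-1}$ near zero, admits a decaying $\beta$ with $\int^\infty\beta=+\infty$ precisely when $(m-1)\bar\kappa\leq p-1$, matching the sharp threshold in the second alternative of \eqref{condi_volumericci_intro}. I expect this critical regime to be the principal technical hurdle, since both sides of the ODE inequality scale identically and the available slack comes solely from the indicial inequality. Once $\zeta$ (hence $w$) has been produced, the Khas'minskii property holds as required, and the contradiction argument sketched in the first paragraph yields $K\leq 0$, establishing $\smp$ for $(bl)^{-1}\Delta_\varphi$.
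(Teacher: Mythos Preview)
Your overall strategy matches the paper's: contradict $\smp$ by building a radial Khas'minskii potential $\bar w=w(r(x))$ via Laplacian comparison under \eqref{ricciassu_intro} and the ODE machinery of Section~\ref{sec2.3}, then compare with $u$ on a neighbourhood of the maximum set of $u-\bar w$, handling the cut locus via Calabi's trick exactly as you outline.

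There is one substantive technical difference worth flagging. You propose to obtain the two asymptotic properties of the profile, $\zeta'(r)\to 0$ with $\int^\infty\zeta'=+\infty$, by direct analysis of the scalar ODE for $\beta=\zeta'$, culminating in an Euler-type indicial argument at the borderline $\alpha=-2,\ \chi=0$. The paper (Lemma~\ref{prop_exi2'}) takes a somewhat unexpected route: after modifying $\varphi,l,f$ near infinity, it solves a Dirichlet problem on $[r_0,r_1]$ via Theorem~\ref{exi2}, extends to $[r_0,\infty)$ with a uniform gradient bound (giving $|w'|\leq\eps$), and then proves $w\to+\infty$ \emph{not} by ODE asymptotics but by a bootstrap: were $w^*<\infty$, the radial function on the model $M_g$ would violate the already-established $\wmp$ (Theorem~\ref{teo_main_2}) under the volume conditions \eqref{volume_ODE}. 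This trick is what makes the construction work uniformly across all $\varphi,l$ satisfying \eqref{assum_SMP_intro}--\eqref{assu_SMP_intro_growth}, without case-by-case asymptotics; your direct route may succeed, but you would need to supply those asymptotics in full generality. A smaller point: writing the comparison as ``$u-\lambda w$ for small $\lambda>0$'' is delicate because $\Delta_\varphi$ is not homogeneous, so $\Delta_\varphi(\lambda w)$ is not simply controlled. The paper sidesteps scaling by building $\bar w$ from the outset with $\bar w\leq\gamma_\eps$ on a prescribed shell $B_{r_1}\setminus B_{r_0}$ and $|\nabla\bar w|\leq\eps$ globally (both are outputs of Lemma~\ref{prop_exi2'}), so the comparison of $u-\bar w$ goes through directly.
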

In particular, the Euclidean space $M=\R^m$ is recovered by choosing $\kappa = 0$, $\alpha = -2$, while, to deal with the hyperbolic space $\HH^m$ of constant sectional curvature $-1$ we choose $\kappa = 1$, $\alpha = 0$. Even for these model manifolds, Theorem \ref{teo_SMP_intro}, in the above generality on $b$ and $l$, is new. As an example, Corollary \ref{cor_SMP_MCO} in Section \ref{sec_ricciandSMP} expresses the result for the mean curvature operator both in $\R^m$ and in $\HH^m$.

%

Often in geometry, $\smp$ is used to infer a-priori estimates, and consequently Liouville type theorems, for solutions of PDEs. As an example of this type of application, in Corollary \ref{cor_SMP} we establish sharp estimates for bounded above solutions of differential inequalities of the form
\begin{equation}\label{Pge_general_intro}
\Delta_p u \ge b(x)f(u) |\nabla u|^{q} - \bar b(x) \bar f(u) |\nabla u|^{\bar q}.
\end{equation}
A prototype example is given by
$$
\Delta_p u \ge f(u) - c|\nabla u|^q, \qquad \text{with $q>0$, $c>0$,}
$$
that in the semilinear case appear as value functions of stochastic control problems (cf. \cite{lasrylions, radulescu}). Existence and nonexistence of entire solutions have been investigated in a series of papers, notably \cite{lairwood, ghernicrad, FPREnt, felmerquaassirakov, maririgolisetti}.

Next, we turn our attention to $\wmp$, which has been introduced in \cite{prs_proceeding} following the authors' observation that, in many geometric applications, the gradient condition in \eqref{classical_SMP} was unnecessary. It has various advantages with respect to $\smp$: first, it can be stated for $u \in W^{1,p}_\loc(M)$, $p \ge 1$, which is a natural regularity class for solutions of $(P_\ge)$; second, the absence of the gradient bound allows to directly use the weak formulation together with refined integral estimates, to obtain sharp criteria for $\wmp$ that just depend on the volume growth of geodesic balls $B_r$, a requirement implied, but not equivalent, to \eqref{ricciassu_intro}. This approach will be described in more detail below.
\begin{remark}
\emph{It is important to observe that there exist manifolds satisfying $\wmp$ but \emph{not} $\smp$, hence the two principle are different. Counterexamples are very easy to construct in the setting of incomplete manifolds (indeed, $\R^m \backslash \{0\}$ satisfies $\wmp$ but not $\smp$, see \cite{prsmemoirs}), and a nice example in the complete case appeared very recently in \cite{borbely_counter}.
}
\end{remark}

First, we introduce the following characterizazion improving on \cite{prs_proceeding, prsnonlinear, marivaltorta}. Despite the simplicity of the proof, the equivalences stressed below are particularly useful in geometric applications.

\begin{proposition}\label{prop_equivalence}
Let $\varphi$ and $b,f,l$ satisfy respectively \eqref{assumptions} and \eqref{assumptions_bfl}. Then, the following properties are equivalent:
\begin{itemize}
\item[(i)] $(bl)^{-1}\Delta_\varphi$ satisfies $\wmp$;
\item[(ii)] $\lio$ holds for some (equivalently, every) $f$ with
$$
f(0)=0, \qquad f>0 \qquad \text{on } \, \R^+;
$$
\item[(iii)] each $u \in \lip_\loc(M)$ solving $(P_\ge)$ on $M$ and bounded above satisfies $f(u^*) \le 0$.
\end{itemize}
\end{proposition}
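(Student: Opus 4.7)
I will close the loop through $(i)\Rightarrow(iii)\Rightarrow(ii)_{\text{every}}\Rightarrow(ii)_{\text{some}}\Rightarrow(i)$, which yields the full equivalence and, in particular, shows that $\lio$ holds for one such $f$ iff it holds for every such $f$. Following the paper's convention, all candidate solutions below are understood to be non-constant.

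For $(i)\Rightarrow(iii)$, let $u\in\lip_\loc(M)$ solve $(P_\ge)$ and be bounded above, and suppose by contradiction that $f(u^*)>0$. By continuity of $f$ there exist $\eta<u^*$ and $c>0$ with $f\ge c$ on $[\eta,u^*]$, so that on $\Omega_\eta=\{u>\eta\}$ one has
\[
\Delta_\varphi u\ge b(x)f(u)l(|\nabla u|)\ge c\,b(x)l(|\nabla u|),
\]
and $\wmp$ forces the contradiction $c\le 0$. The implication $(iii)\Rightarrow(ii)$ in its stronger "every" form is then immediate: for any $f$ with $f(0)=0$ and $f>0$ on $\R^+$, a non-negative, non-constant, bounded $\lip_\loc$ solution of $(P_\ge)$ would satisfy $u^*>0$ and hence $f(u^*)>0$, contradicting $(iii)$. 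Since the "every" form obviously implies the "some" form, the only non-trivial step left is $(ii)_{\text{some}}\Rightarrow(i)$.

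For this, fix $f_0$ with $f_0(0)=0$, $f_0>0$ on $\R^+$ for which $\lio$ is assumed. If $\wmp$ failed, then some non-constant $u\in\lip_\loc(M)$ with $u^*<\infty$ would admit $\eta<u^*$ and $K>0$ such that $\Delta_\varphi u\ge K\,b(x)l(|\nabla u|)$ on $\Omega_\eta$. The plan is to manufacture from $u$ a non-negative, non-constant, bounded $\lip_\loc$ subsolution of $(P_\ge)$ with $f=f_0$, contradicting $\lio$. By continuity of $f_0$ and $f_0(0)=0$, pick $\delta>0$ with $f_0\le K$ on $[0,\delta]$, then choose $\eta'\in(\max\{\eta,u^*-\delta\},u^*)$. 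On $\Omega_{\eta'}\subset\Omega_\eta$ the function $w:=u-\eta'$ takes values in $(0,u^*-\eta']\subset(0,\delta]$, so $f_0(w)\le K$ and
\[
\Delta_\varphi w\ge K\,b(x)l(|\nabla w|)\ge b(x)f_0(w)l(|\nabla w|) \qquad\text{on } \Omega_{\eta'}.
\]
The reduction principle (Lemma~\ref{lem_pasting}), which exploits $f_0(0)=0$, then guarantees that $v:=w_+=(u-\eta')_+$, extended by $0$ outside $\Omega_{\eta'}$, is a global $\lip_\loc$ subsolution of $(P_\ge)$ with $f=f_0$. Since $v\ge 0$, $\sup_M v=u^*-\eta'>0$, and $v$ is non-constant (either $\Omega_{\eta'}\subsetneq M$, whence $v$ vanishes somewhere and is positive somewhere, or $\Omega_{\eta'}=M$, whence $v=u-\eta'$ inherits non-constancy from $u$), this contradicts $\lio$ for $f_0$.

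The main obstacle is the global subsolution property of $v$ across $\partial\Omega_{\eta'}$, where the local inequality for $w$ may fail. The standard route is to test the subsolution inequality for $w$ on $\Omega_{\eta'}$ against $\phi_\eps(w)\psi$, where $\psi\in C^\infty_c(M)$ with $\psi\ge 0$ and $\phi_\eps$ is a Lipschitz cutoff vanishing for $t\le 0$ and equal to $1$ for $t\ge\eps$. The boundary term produced by $\phi_\eps'(w)\nabla w$ equals $-\int\varphi(|\nabla w|)|\nabla w|\psi\phi_\eps'(w)\le 0$ and may be discarded; letting $\eps\to 0$, combined with $f_0(0)=0$ and the classical fact that $\nabla v=0$ a.e.\ on $\{v=0\}$, yields the global weak inequality.
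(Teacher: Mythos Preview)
Your proof is correct and follows essentially the same route as the paper's: the same cycle $(i)\Rightarrow(iii)\Rightarrow(ii)\Rightarrow(i)$, the same contradiction arguments, and the same use of the pasting lemma in the step $(ii)\Rightarrow(i)$ after shifting $u$ down so that $f_0$ is dominated by $K$ on the range of the positive part. Your version is slightly more explicit in separating the ``some'' and ``every'' readings of $(ii)$ and in sketching the Kato-type cutoff argument behind the pasting step (one minor point: take $\phi_\eps$ to vanish on $(-\infty,\eps/2]$ rather than on $(-\infty,0]$, so that $\phi_\eps(w)\psi$ is genuinely compactly supported in the open set $\Omega_{\eta'}$).
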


It should be stressed that, by a generalization of work of R.Z. Khas'minskii \cite{kh} (see \cite{grigoryan} for a nice exposition), $\lio$ with the choice $f(t) = \lambda t$ and $\lambda >0$ is related to the theory of the (minimal) Brownian motion on $M$, and indeed equivalent to the stochastic completeness of $M$, that is, the infinite lifetime of a.e. Brownian path. Exploiting this last equivalence, A. GrigorYan in \cite[Thm. 9.1]{grigoryan} found the weakest known geometric condition on a complete $M$ for $\Delta$ to satisfy $\lio$ with $f(t) = \lambda t$ and $\lambda>0$, that is,
$$
\frac{r}{\log\vol(B_r)} \not\in L^1(\infty).
$$
However, the beautiful method of proof in \cite{grigoryan} relies on the linearity of the Laplace-Beltrami operator. Hence, the search for similar volume conditions for general $\Delta_\varphi$ calls for different ideas, developed in a series of works \cite{karp, prs_gafa, rigolisalvatorivignati_3, rigolisalvatorivignati_4} and refined in \cite{prsmemoirs, maririgolisetti}. Our contributions are contained in Theorems \ref{teo_main_2} and \ref{teo_maximum} below.


\begin{remark}
\emph{A characterization similar to that of Proposition \ref{prop_equivalence} holds for $\lio$ when $f\equiv 0$ in a right neighbourhood of zero. In fact, by  \cite[Thm. A]{prsnonlinear} (for $\Delta_p$) and \cite{prsmemoirs} (general $\Delta_\varphi$), for these $f$'s property $\lio$ is equivalent to the parabolicity of $\Delta_\varphi$, see also \cite{marivaltorta}. For the $p$-Laplace operator, parabolicity is more often introduced via capacity estimates. In this respect, we refer to \cite{grigoryan} for $p=2$, and \cite{troyanov, troyanov2, HKM} for a general $p$.
}
\end{remark}

\begin{remark}
\emph{Khas'minskii introduced a sufficient condition for $M$ to be stochastically complete in terms of the existence of $w$ satisfying all of the properties in \eqref{strongKhasminskii} but that on the gradient, with $G(t) = \lambda t$, $\lambda>0$, see \cite{kh, grigoryan}. This justifies the name strong Khas'minskii condition given to \eqref{strongKhasminskii}. It should be observed that, for a large class of operators including some geometrically relevant fully nonlinear ones, appropriate Khas'minskii conditions turn out to be equivalent to suitably defined maximum principles at infinity, see \cite{marivaltorta} and the recent \cite{maripessoa}.
}
\end{remark}

To introduce a special case of our main Theorem \ref{teo_main_2}, observe that \eqref{ricciassu_intro} implies, via the Bishop-Gromov comparison theorem, the following estimates
\begin{equation}\label{riccivolume_intro}
\begin{array}{ll}
\disp \limsup_{r \ra \infty} \frac{\log \vol(B_r)}{r^{1+ \alpha/2}} < \infty & \quad \text{if } \, \alpha> -2, \\[0.5cm]
\disp \limsup_{r \ra \infty} \frac{\log \vol(B_r)}{\log r} \le (m-1) \bar \kappa + 1 & \quad \text{if } \, \alpha> -2,
\end{array}
\end{equation}
with
$$
\bar k = \frac{1}{2}\big( 1 + \sqrt{1+4\kappa^2}\big).
$$

Regarding our assumptions on $\varphi$ and $l$, differently from \eqref{assum_SMP_intro} we now require the milder

\begin{equation}\label{assum_WMP_intro}
\left\{\begin{array}{l}
\disp \varphi \in C(\R^+_0), \qquad \varphi(0)=0, \qquad \varphi > 0 \ \text{ on } \, \R^+; \\[0.3cm]
l \in C(\R_0^+), \qquad l>0 \ \text{on } \, \R^+.
\end{array}\right.
\end{equation}

We also need the next growth conditions, to be compared with those in \eqref{assu_SMP_intro_growth}.

\begin{equation}\label{assu_WMP_intro_growth}
\left\{\begin{array}{ll}
\disp l(t) \ge C_1 \frac{\varphi(t)}{t^\chi} & \qquad \text{on $\R^ +$, for some $C_1>0$, $\chi \ge 0$}; \\[0.4cm]
\disp \varphi(t) \le C_2t^{p-1} & \qquad \text{on $[0,1]$, for some $C_2>0$, $p>1$;} \\[0.3cm]
\disp \varphi(t) \le \bar C_2t^{\bar p-1} & \qquad \text{on $[1,\infty)$, for some $\bar C_2>0$, $\bar p>1$.}
\end{array}\right.
\end{equation}

The use of different upper bounds for $\varphi(t)$ related to its behaviour near $t=0$ and $t=\infty$ is crucial to obtain sharp results in the setting, for instance, of the mean curvature operator. We are now ready to state

\begin{theorem}\label{teo_WMP_intro}
Let $M$ be a complete $m$-dimensional manifold. Fix $\alpha \ge -2$ and suppose that
\begin{equation}\label{volumeassu_intro}
\begin{array}{ll}
\disp \liminf_{r \ra \infty} \frac{\log\vol(B_r)}{r^{1+ \alpha /2} } = V_\infty < \infty & \quad \text{if } \, \alpha > -2; \\[0.4cm]
\disp \liminf_{r \ra \infty} \frac{\log\vol(B_r)}{\log r} = V_\infty < \infty & \quad \text{if } \, \alpha = -2. \\[0.4cm]
\end{array}
\end{equation}
Let $\varphi$ and $l$ satisfy \eqref{assum_WMP_intro} and \eqref{assu_WMP_intro_growth}, and consider $0< b \in C(M)$ such that
$$
b(x) \ge C\big(1+ r(x)\big)^{-\mu} \qquad \text{on } \, M,
$$
for some constants $C>0$, $\mu \in \R$. Suppose:
\begin{equation}\label{condi_volumeWMP_intro}
\mu \le \chi - \frac{\alpha}{2} \quad \text{and either} \quad \left\{ \begin{array}{ll}
\alpha \ge -2, \quad \chi>0, \quad \text{or} \\[0.2cm]
\alpha \ge -2, \quad \chi = 0, \quad \mu <  - \frac{\alpha}{2}, \quad \text{or } \\[0.2cm]
\alpha > -2, \quad \chi = 0, \quad \mu = - \frac{\alpha}{2}, \quad V_\infty = 0, \quad \text{or } \\[0.2cm]
\alpha = -2, \quad \chi = 0, \quad \mu = - \frac{\alpha}{2}, \quad V_\infty \le p.
\end{array}\right.
\end{equation}
Then, $(bl)^{-1}\Delta_\varphi$ satisfies $\wmp$.
\end{theorem}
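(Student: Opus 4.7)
The plan is to argue by contradiction using the refined integral estimates technique for weak maximum principles via volume growth. Suppose, to the contrary, that there exist $u \in \lip_\loc(M)$ non-constant with $u^{*} = \sup_M u < \infty$, a level $\eta < u^*$, and a constant $K > 0$ such that
$$
\Delta_\varphi u \ge K b(x) l(|\nabla u|) \quad \text{weakly on } \Omega_\eta = \{x \in M : u(x) > \eta\}.
$$
Set $v = (u-\eta)_+ \in \lip_\loc(M)$, so that $v > 0$ exactly on $\Omega_\eta$ and $\nabla v = \nabla u$ a.e.\ there. For each large $R$, fix a standard Lipschitz cut-off $\phi_R$ with compact support, $\phi_R \equiv 1$ on $B_R$, $\supp\,\phi_R \subset B_{2R}$, and $|\nabla \phi_R| \le 2/R$, and choose exponents $s,\gamma \ge 1$ to be tuned. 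Testing the weak inequality against $\psi = \phi_R^s v^\gamma \ge 0$ (justified by approximation, since $v$ is Lipschitz) and integrating by parts yields
$$
\gamma \int \phi_R^s v^{\gamma-1} \varphi(|\nabla u|)|\nabla u| + K \int b\,l(|\nabla u|)\phi_R^s v^\gamma \le s \int \phi_R^{s-1} v^\gamma \varphi(|\nabla u|)|\nabla \phi_R|.
$$

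The second step is to control the right-hand side. First, I would invoke the structural bound $\varphi(t) \le C_1^{-1} l(t) t^\chi$ from \eqref{assu_WMP_intro_growth} to convert $\varphi(|\nabla u|)$ into $l(|\nabla u|)|\nabla u|^\chi$. Then I would apply Young's inequality with conjugate exponents $r,r'$ and a small parameter $\eps>0$ to split off $\eps \int b\, l(|\nabla u|)\phi_R^s v^\gamma$, absorbable into the left-hand side, plus a residual of the form $C_\eps \int l(|\nabla u|)|\nabla u|^{\chi r'} |\nabla \phi_R|^{r'} \phi_R^{s-r'} v^\gamma b^{-r'/r}$. This residual is treated by splitting the integration domain into $\{|\nabla u| \le 1\}$ and $\{|\nabla u| > 1\}$ and applying the two polynomial upper bounds $\varphi(t) \le C_2 t^{p-1}$ on $[0,1]$ and $\varphi(t) \le \bar C_2 t^{\bar p-1}$ on $[1,\infty)$; a further Young manipulation, partly absorbing the remaining gradient terms into the first left-hand side term $\gamma \int \phi_R^s v^{\gamma-1}\varphi(|\nabla u|)|\nabla u|$, produces the key integral estimate
$$
\int_{B_R \cap \Omega_\eta} b\,l(|\nabla u|) v^\gamma \le C\, R^{-\sigma}\!\sup_{B_{2R}}(1+r)^\mu \cdot \vol(B_{2R}),
$$
for some exponent $\sigma = \sigma(\chi,\alpha,\mu,p,\bar p) > 0$ tuned to each regime of \eqref{condi_volumeWMP_intro}.

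To close the argument, I would insert the pointwise bound $b(x) \ge C(1+r(x))^{-\mu}$ on the left and invoke the volume hypothesis \eqref{volumeassu_intro}: in each of the four cases of \eqref{condi_volumeWMP_intro}, a short calculation shows that $R_n^{-\sigma}\sup_{B_{2R_n}}(1+r)^\mu \cdot \vol(B_{2R_n}) \to 0$ along a subsequence $R_n\to\infty$ realising the $\liminf$ in \eqref{volumeassu_intro}. Since the integrand on the left is strictly positive on $\Omega_\eta \neq \emptyset$, this forces $\int_{\Omega_\eta} b\,l(|\nabla u|) v^\gamma = 0$, the desired contradiction. The main obstacles lie in the borderline regimes $\chi = 0$, $\mu = -\alpha/2$: for $\alpha > -2$ one must exploit $V_\infty = 0$ to kill the residual constant, while for $\alpha = -2$, $V_\infty \le p$, the exponent $p$ from the bound on $\varphi$ near zero has to precisely balance the logarithmic volume growth, demanding a delicate choice of the Young exponents $r,s,\gamma$. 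The case $\chi = 0$ is also delicate since no positive power of $|\nabla u|$ is gained in the Young absorption, so the argument must rely on the $\varphi$ upper bound alone to trade $|\nabla \phi_R|^{r'}$ against the polynomial decay of $b^{-1}$ and the volume growth.
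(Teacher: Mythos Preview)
Your approach has a genuine gap that becomes fatal precisely in the regime the theorem is designed to cover. The estimate you aim for,
\[
\int_{B_R\cap\Omega_\eta} b\,l(|\nabla u|)\,v^\gamma \;\le\; C\,R^{-\sigma}\sup_{B_{2R}}(1+r)^\mu\,\vol(B_{2R}),
\]
has a right-hand side with only \emph{polynomial} decay in $R$. When $\alpha>-2$, the volume hypothesis \eqref{volumeassu_intro} only gives $\log\vol(B_r)\lesssim r^{1+\alpha/2}$ along a subsequence, i.e.\ $\vol(B_{2R_n})$ may grow like $\exp\bigl(cR_n^{1+\alpha/2}\bigr)$. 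No fixed power $R_n^{-\sigma}$ can kill this, so the right-hand side does \emph{not} tend to zero and the contradiction never materialises. The cases $\chi>0$, $\alpha>-2$ in \eqref{condi_volumeWMP_intro} (arguably the most interesting ones, e.g.\ hyperbolic-type volume growth) are therefore not reached by your argument.

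The paper's proof avoids this by passing through Theorem~\ref{teo_main_2} and the refined estimate of Theorem~\ref{teo_maximum}, whose test function is not $\phi_R^s v^\gamma$ but rather $\psi^{\bar p}\lambda(u)F(v,r)$ with $v=\alpha(1+r)^\sigma-u$ and, in the exponential regimes, $F(v,r)=\exp\{-\tau v(1+r)^{-\eta}\}$. The point is that $F$ already carries \emph{exponential} decay: on the annulus $B_{2R}\setminus B_R$ one has $F\le\exp\{-c R^{\chi+1-\mu}\}$, which beats the exponential volume growth exactly under the balance $\mu\le\chi-\alpha/2$. This is the ``key lemma'' your sketch is missing. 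In the borderline cases $\chi\sigma=\zeta$ the paper switches to a power $F(v,r)=v^{-\tau}$, recovering the polynomial-growth situation where $p,\bar p$ enter.

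A secondary issue: even if the integral vanished, the integrand $b\,l(|\nabla u|)\,v^\gamma$ need not be strictly positive on $\Omega_\eta$, since the hypotheses only give $l>0$ on $\R^+$ and $l(0)$ may be zero. One must argue separately that $\nabla u\not\equiv 0$ on $\Omega_\eta$; the paper handles this via a reduction (translation and the pasting Lemma~\ref{lem_pasting}) before invoking Theorem~\ref{teo_main_2}.
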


\begin{remark}
\emph{As underlined in Remark \ref{rem_pel}, $p$ and $\bar p$ are implicitly related to bounds on $\chi$ via \eqref{assu_WMP_intro_growth}. However, we feel remarkable that $p, \bar p$ do not appear in conditions \eqref{condi_volumeWMP_intro}, apart from the last borderline case. A detailed discussion follows the statement of Theorem \ref{teo_main_2} in Section \ref{sec_WMP}.
}
\end{remark}

Suitable counterexamples show the sharpness of Theorem \ref{teo_main_2}, and consequently of Theorem \ref{teo_WMP_intro}, with respect to each parameter involved. In particular, the restrictions in \eqref{condi_volumeWMP_intro} are sharp. 

\begin{conjecture}
In the setting of Theorem \ref{teo_SMP_intro}, the full $\smp$ holds if the range \eqref{condi_volumericci_intro} is replaced by \eqref{condi_volumeWMP_intro}.
\end{conjecture}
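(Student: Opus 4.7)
The plan is to follow the strong Khas'minskii potential strategy already used to prove Theorem \ref{teo_SMP_intro}, but to construct a richer family of radial test functions adapted to the enlarged parameter range \eqref{condi_volumeWMP_intro}. Concretely, one seeks a one-parameter family $\{w_\eps\}_{\eps>0}$ of $C^1$ functions defined on the complement of some geodesic ball, each of the form $w_\eps(x)=\alpha_\eps(r(x))$ for a $C^1$ nondecreasing profile $\alpha_\eps$ diverging at infinity, with $|\nabla w_\eps|\le \eps$ and
$$
\Delta_\varphi w_\eps \le \sigma(\eps)\, b(x)\, l(|\nabla w_\eps|) \qquad \text{weakly on } M\setminus \overline{B_{R_0}},
$$
where $\sigma(\eps)\to 0^+$ as $\eps\to 0^+$. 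A standard comparison-at-infinity argument, identical to the one used for Theorem \ref{teo_SMP_intro}, converts such a family into $\smp$ for $(bl)^{-1}\Delta_\varphi$.

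The first step is the Ricci-comparison reduction. Under \eqref{ricciassu_intro}, the Laplacian comparison theorem yields, both pointwise on $\mathcal{D}_o$ and weakly on $M$, an upper bound $\Delta r\le (m-1)\, g(r)/G(r)$, where $G$ is the model radial function associated to the Ricci lower bound $-(m-1)\kappa^2(1+r^2)^{\alpha/2}$. For a radial $w=\alpha(r)$ with $\alpha'\ge 0$ this gives
$$
\Delta_\varphi w \le \varphi'(\alpha')\,\alpha'' + \frac{\varphi(\alpha')}{\alpha'}(m-1)\frac{g(r)}{G(r)}
$$
weakly on $M\setminus\{o\}$. Paralleling Sections \ref{sec2.3} and \ref{sec_ricciandSMP}, I would then define $\alpha_\eps$ implicitly by integrating a singular first-order equation of the type
$$
\bigl(K(\alpha'_\eps)\bigr)'(r) = \sigma(\eps)\, b(r)\, l(\alpha'_\eps) - \frac{\varphi(\alpha'_\eps)}{\alpha'_\eps}(m-1)\frac{g(r)}{G(r)},
$$
with the initial condition $\alpha'_\eps(R_0)=0$, appealing to the singular Dirichlet existence theory of Section \ref{sec2.3}. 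The quantitative demand is that $\alpha'_\eps$ stay below $\eps$ for all $r\ge R_0$ while $\alpha_\eps$ still diverges at infinity; the range \eqref{condi_volumeWMP_intro}, combined with the growth conditions \eqref{assu_SMP_intro_growth}, is precisely what should balance the $b$-driven increasing term against the curvature correction.

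The hard part will be the three borderline regimes that \eqref{condi_volumeWMP_intro} adds to \eqref{condi_volumericci_intro}, namely $\chi=0$ combined either with the strict inequality $\mu<-\alpha/2$ or with the equality $\mu=-\alpha/2$ and a volume-type threshold on $V_\infty$. In these regimes $l$ has the strong gradient dependence $l\asymp\varphi$ near $0$, so $K$ degenerates and the natural ansatz saturates the Ricci-comparison inequality; one must then trade carefully between the decay of $\alpha'_\eps$ and the divergence of $\alpha_\eps$, much as in the $\alpha=-2$ branch of Theorem \ref{teo_SMP_intro} governed by the constraint $\bar\kappa\le(p-1)/(m-1)$. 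I expect the argument to require a refined iterative comparison, the precise asymptotics of $K^{-1}$ near $0$ encoded in \eqref{assu_SMP_intro_growth}, and, in the last borderline case, the Bishop--Gromov comparison to relate $V_\infty$ directly to $\bar\kappa$ under \eqref{ricciassu_intro}. Since $\smp$ implies $\wmp$, the counterexamples showing sharpness of Theorem \ref{teo_main_2} would automatically demonstrate sharpness of the conjecture.
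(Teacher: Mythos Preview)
This statement is a \emph{conjecture} in the paper, not a theorem; the paper does not prove it, and in fact poses the closely related Problem~\ref{prob_SMPeWMP} as open. So there is no ``paper's own proof'' to compare against. Your proposal is a strategy outline, and the key question is whether it can be made to work in the new borderline cases that distinguish \eqref{condi_volumeWMP_intro} from \eqref{condi_volumericci_intro}.

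The paper itself identifies a concrete obstruction to the radial Khas'minskii approach in exactly those cases. In the Remark immediately following the proof of Theorem~\ref{teo_SMP}, it is explained that Lemma~\ref{prop_exi2'} (the ODE existence lemma producing the potential $w$) cannot be applied when $\alpha>-2$ and $\chi=0$: the lemma simultaneously needs the integrability condition \eqref{ipo_bvg}, which forces $\bar\mu\ge -\alpha/2$, and the volume condition \eqref{volume_ODE}, which for $\chi=0$ forces $\bar\mu<-\alpha/2$ (or the unattainable polynomial case). No choice of the auxiliary decay exponent $\bar\mu$ is admissible. This is not a technicality of the specific lemma: it reflects a genuine tension in any radial construction. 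With $\chi=0$ one has $l\asymp\varphi$ near $0$, so the differential inequality for the profile becomes essentially $\big(v_g\varphi(w')\big)'\le K v_g\beta\,\varphi(w')$, and controlling $w'$ uniformly while still forcing $w\to\infty$ requires the model volume $v_g$ and the weight $\beta$ to balance in a way that fails when $\alpha>-2$.

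Your plan acknowledges that the $\chi=0$ cases are ``the hard part'' but does not propose a mechanism to bypass this incompatibility. The phrases ``refined iterative comparison'' and ``precise asymptotics of $K^{-1}$ near $0$'' do not point to a new idea: when $\chi=0$, $K(t)\asymp\varphi(t)$ degenerates and $K^{-1}$ carries no extra leverage. Moreover, invoking Bishop--Gromov to relate $V_\infty$ to $\bar\kappa$ only reproduces what the Ricci bound already gives (cf.\ \eqref{riccivolume_intro}); it does not create room between the two constraints. To make progress on the conjecture one would need either a non-radial Khas'minskii construction, a genuinely different route to $\smp$ (perhaps closer to the integral methods behind $\wmp$, though those currently lose the gradient information), or a counterexample.
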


To better appreciate the range of applicability of Theorem \ref{teo_WMP_intro}, we state as a direct corollary the following extension of Do Carmo-Lawson's Theorem \ref{teo_docarmolawson} in the minimal setting. The result is a particular case of Theorems  \ref{teo_bern_minimal_hyper} and \ref{teo_bern_minimal_horo} below.

\begin{theorem}\label{teo_bern_minimal_intro}
Let $M$ be a complete manifold, and consider the warped product $\bar M = \R \times_h M$, with warping function $h$ satisfying either
\begin{itemize}
\item[$(i)$] $h$ is strictly convex and $h^{-1} \in L^1(-\infty) \cap L^1(+\infty)$, or
\item[$(ii)$] $h'>0$ on $\R$, $h'(s) \ge C$ for $s >>1$ and $h^{-1} \in L^1(+\infty)$.
\end{itemize}
If 
$$
\liminf_{r \ra \infty} \frac{\log\vol(B_r)}{r^2} < \infty, 
$$
then
\begin{itemize}
\item[] under $(i)$, the only entire minimal graph is the constant $u = s_0$, with $s_0$ the unique minimum of $h$.
\item[] under $(ii)$, there exists no entire minimal graph.
\end{itemize}
\end{theorem}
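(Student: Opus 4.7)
The strategy is to translate the minimal graph equation into a quasilinear PDE of the form $(P_=)$ and apply Theorem \ref{teo_WMP_intro} together with Proposition \ref{prop_equivalence}.

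First, via the change of variable $u(x) = t(v(x))$ with $t(s) = \int_0^s h(\sigma)^{-1}\,\di\sigma$, an entire minimal graph corresponds, by \eqref{prescribed_geodesic} with $H=0$, to a solution $u$ of
$$
\diver\left(\frac{\nabla u}{\sqrt{1+|\nabla u|^2}}\right) = m\, h'(s(u))\cdot \frac{1}{\sqrt{1+|\nabla u|^2}},
$$
since $\lambda_t/\lambda = h'(s(u))$. This fits the framework with $\varphi(t) = t/\sqrt{1+t^2}$, $l(t) = 1/\sqrt{1+t^2}$, $b \equiv 1$, and $f(u) = m\, h'(s(u))$.

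Next, I would verify the hypotheses of Theorem \ref{teo_WMP_intro}. Conditions \eqref{assum_WMP_intro} are immediate; for \eqref{assu_WMP_intro_growth}, observe that $\varphi(t)/t = l(t)$ gives $\chi = 1$, while $\varphi(t) \le t$ on $[0,1]$ and $\varphi(t) \le 1 \le t$ on $[1,\infty)$ gives $p = \bar p = 2$. With $b \equiv 1$ we have $\mu = 0$, and the volume growth assumption matches \eqref{volumeassu_intro} with $\alpha = 2$. Then $\mu = 0 = \chi - \alpha/2$, and we fall in the first subcase ($\alpha \ge -2$, $\chi > 0$) of \eqref{condi_volumeWMP_intro}. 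Hence $l^{-1}\Delta_\varphi$ satisfies $\wmp$.

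Finally, invoke Proposition \ref{prop_equivalence}(iii): every $u \in \lip_\loc(M)$ solving $(P_\ge)$ and bounded above satisfies $f(u^*) \le 0$. Under hypothesis $(i)$, normalize so that the unique minimum of $h$ is at $s_0 = 0$; then $f$ has the sign of $u$ with $f(0)=0$, and since $t: \R \to (t_-,t_+)$ maps onto a bounded interval, $u$ is bounded. Property (iii) yields $f(u^*) \le 0$, hence $u^* \le 0$; applying the same reasoning to $-u$, which solves an analogous PDE with $\tilde f(w) = -f(-w)$ having the same sign structure, gives $u_* \ge 0$, so $u \equiv 0$, i.e. $v \equiv s_0$. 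Under hypothesis $(ii)$, instead, $h' > 0$ implies $f > 0$ everywhere on $\R$, while $h^{-1} \in L^1(+\infty)$ forces $u \le t_+ < \infty$; property (iii) would then yield $f(u^*) \le 0$, a contradiction. Hence no entire minimal graph exists.

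The main step requiring attention is the parameter matching in Theorem \ref{teo_WMP_intro}: for the mean curvature operator $\chi = 1$ is forced by the structure of $\varphi$ and $l$, which makes $\mu = \chi - \alpha/2 = 0$ exactly borderline, explaining why quadratic volume growth is the natural threshold in the hypothesis. A secondary subtlety is the reliance on $\wmp$ for \emph{both} $u$ and $-u$ in case $(i)$, exploiting that the conclusion of Proposition \ref{prop_equivalence}(iii) is independent of the specific $f$.
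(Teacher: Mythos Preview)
Your proposal is correct and follows essentially the same approach as the paper: convert the minimal graph equation via \eqref{prescribed_geodesic} into $(P_=)$ for the mean curvature operator with $l(t)=(1+t^2)^{-1/2}$ and $f=m\,h'\circ s$, exploit $h^{-1}\in L^1$ to force boundedness of $u$, and then use the weak maximum principle at infinity (with the parameter choice $\chi=1$, $\mu=0$, $p=\bar p=2$, $\alpha=2$) to conclude $f(u^*)\le 0$, together with the symmetric argument for $-u$ in case~$(i)$. The paper packages this through Theorems~\ref{teo_bern_minimal_hyper} and~\ref{teo_bern_minimal_horo}, which invoke Theorem~\ref{teo_main_2} directly rather than passing through Theorem~\ref{teo_WMP_intro} and Proposition~\ref{prop_equivalence}, but the underlying mechanism is identical.

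One small point worth making explicit: $f$ is a~priori defined only on the open interval $I=(t_-,t_+)$, while $u^*$ could equal the endpoint $t_+$. In case~$(ii)$ the hypothesis $h'(s)\ge C$ for $s\gg 1$ ensures $\liminf_{t\to t_+^-}f(t)\ge mC>0$, so one may extend $f$ continuously to $\R$ keeping $f(t_+)>0$; in case~$(i)$ strict convexity together with $h^{-1}\in L^1(\pm\infty)$ yields the same conclusion at both endpoints. This is the precise role of the lower bound on $h'$, which your sketch uses implicitly.
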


The corresponding statement for variable mean curvature will be given in Theorem \ref{teo_presc_nonconst}, under the validity of $\smp$. It should be noted that, besides bounded solutions, in Theorem \ref{teo_main_2} we can also consider solutions $u$ of
$$
\Delta_\varphi u \ge b(x) f(u) l(|\nabla u|) \qquad \text{on } \, \Omega_\eta = \{ u >\eta\}
$$
\emph{with a controlled growth at infinity.} Indeed, under appropriate assumptions, the theorem  guarantees both $u^*< \infty$ and $f(u^*) \le 0$. The result is a significant improvement of \cite[Thm 5.1]{maririgolisetti} and \cite[Thm. 2.1]{AMR}; it applies, for instance, to differential inequalities with borderline gradient dependence of the type
$$
\Delta_\varphi u \ge b(x) f(u) \varphi(|\nabla u|) \qquad \text{on } \, M,
$$
to ensure that, under mild assumptions, any solution that grows polynomially is bounded from above and satisfies $f(u^*) \le 0$. Recall that, by definition, $u$ grows polynomially if there exists $\sigma \ge 0$ such that
$$
|u(x)| = O\big(r(x)^\sigma \big) \qquad \text{as } \, r(x) \ra \infty.
$$
The reader that is interested in such borderline examples can see Corollary \ref{cor_strano}, as well as Theorem \ref{teo_meancurv_intro} in  the particular setting of the mean curvature operator. We report here the following application to entire vertical self-translators of the mean curvature flow:

\begin{theorem}\label{teo_soliton_intro}
Let $(M^m, \metric)$ be a complete manifold and consider the product $\bar M^{m+1} = \R \times M$. Fix $0 \le \sigma \le 2$ and suppose that either
\begin{equation}\label{ipo_volume_soliton_intro}
\begin{array}{ll}
\sigma<2 & \quad \disp \text{and} \qquad \liminf_{r \ra \infty} \frac{\log\vol(B_r)}{r^{2-\sigma}} < \infty, \qquad \text{or} \\[0.5cm]
\sigma = 2 & \quad \disp \text{and} \qquad \liminf_{r \ra \infty} \frac{\log\vol(B_r)}{\log r} < \infty.
\end{array}
\end{equation}
Then, there exist no entire graph $\Sigma \subset \bar M$ of $v : M \ra \R$ which is a self-translator for the MCF with respect to the vertical direction $\partial_s$ and satisfies
\begin{equation}\label{crescitav_soliton_intro}
|v(x)| = o\big( r(x)^\sigma \big) \qquad \text{as } \, r(x) \ra \infty.
\end{equation}
\end{theorem}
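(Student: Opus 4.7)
The plan is to reduce Theorem \ref{teo_soliton_intro} to an application of the controlled-growth form of the weak maximum principle at infinity for $(bl)^{-1}\Delta_\varphi$, that is, the polynomial-growth extension of Theorem \ref{teo_main_2} (equivalently, Theorem \ref{teo_meancurv_intro}) discussed just before the statement.

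First, I would specialize the geodesic-graph soliton equation \eqref{soliton_geodesic} to the product ambient $\bar M = \R \times M$. Here $h \equiv 1$ gives $\lambda \equiv 1$ and $\lambda_t \equiv 0$, and the reparametrisation \eqref{bonitinho} is trivial, so $u = v$. Taking the vertical soliton field to be $\pm\partial_s$, the bracket on the right-hand side of \eqref{soliton_geodesic} reduces to $\pm 1$, yielding
$$
\Delta_\varphi v \;=\; \frac{\pm 1}{\sqrt{1+|\nabla v|^2}} \qquad \text{on } M,
$$
with $\varphi(t)=t/\sqrt{1+t^2}$ the mean curvature operator. Since $\Delta_\varphi$ is odd under $v\mapsto -v$ and the condition $|v(x)|=o(r(x)^\sigma)$ is invariant under this change, I may assume the sign on the right is $+1$.

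Next, I cast this in the form $\Delta_\varphi v \ge b(x)f(v)l(|\nabla v|)$ with the constant choices
$$
b(x)\equiv 1, \qquad f(t)\equiv 1, \qquad l(t)=\frac{1}{\sqrt{1+t^2}}.
$$
I then verify the structural hypotheses of the controlled-growth version of Theorem \ref{teo_main_2}. For the mean curvature operator one has $p=2$ in \eqref{assu_WMP_intro_growth}; the identity $\varphi(t)/l(t)=t$ shows that the condition $l(t)\ge C_1\varphi(t)/t^{\chi}$ holds on $\R^+$ precisely with $\chi=1$; and $\mu=0$ since $b\equiv1$. Matching the volume exponent by setting
$$
\alpha \;=\; 2-2\sigma \;\in\; [-2,2],
$$
the hypothesis \eqref{ipo_volume_soliton_intro} becomes exactly the volume condition \eqref{volumeassu_intro} (logarithmic for $\sigma=2\Leftrightarrow\alpha=-2$, polynomial otherwise), and the compatibility $\mu \le \chi-\alpha/2$ rewrites as $0\le\sigma$, which is automatic.

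Applying the controlled-growth form of Theorem \ref{teo_main_2} (which, under $|v|=o(r^\sigma)$, simultaneously upgrades $v$ to being bounded above \emph{and} forces $f(v^*)\le 0$) gives a contradiction with $f\equiv 1$, so no such translator can exist. The main obstacle I foresee is checking that the polynomial-growth extension still operates at the exact borderline parameter configuration $\chi=1$, $\mu=0$, $\alpha=2-2\sigma$, in particular at the critical endpoint $\sigma=2$ where the volume is only logarithmic; what saves the argument is that $f$ is bounded away from zero and the growth hypothesis \eqref{crescitav_soliton_intro} is a strict little-$o$, rather than big-$O$, leaving enough slack to run the WMP with any $V_\infty<\infty$.
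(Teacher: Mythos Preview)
Your approach is correct and essentially identical to the paper's: derive the translator equation $\Delta_\varphi v = 1/\sqrt{1+|\nabla v|^2}$ in the product case, then apply case~(ii) of Theorem~\ref{teo_main_2} with $f\equiv 1$, $b\equiv 1$, $\chi=1$, $\mu=0$, $p=\bar p=2$, so that the first two lines of \eqref{volgrowth_sigmamagzero} become exactly \eqref{ipo_volume_soliton_intro} and yield the contradiction $f(u^*)\le 0$. The detour through an auxiliary parameter $\alpha=2-2\sigma$ and the reference to \eqref{volumeassu_intro} (which belongs to Theorem~\ref{teo_WMP_intro}, the bounded-function result) is unnecessary---Theorem~\ref{teo_main_2} is parametrised directly by $\sigma$, the constraint to check is \eqref{ipo_sigma_teomain2}, i.e.\ $\sigma\le 2$, and the borderline $\sigma=2$ falls under the second line of \eqref{volgrowth_sigmamagzero} since $\chi=1>0$; note also that Theorem~\ref{teo_meancurv_intro} treats $\chi=0$ and is not the relevant tool here.
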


\begin{remark}
\emph{Specializing Theorem \ref{teo_soliton_intro} in Euclidean space $\R^{m+1}$, there is no entire graph $v : \R^m \ra \R$ over the horizontal $\R^m$ which is a self-translator in the vertical direction and satisfies $v = o(r^2)$ as $r \ra \infty$. The result is sharp, since the bowl soliton in $\R^{m+1}$ (cf. \cite{altwu} and \cite[Lem. 2.2]{cluschsch}) and the non-rotational manifolds in \cite{wang_annals} for $m \ge 3$ are examples of rotationally symmetric, entire (convex) graphs which translate vertically by MCF and have order of growth $r^2$.
}
\end{remark}

Other applications for entire self-translators in $\R^{m+1}$ (not necessarily vertical) and for entire self-expanders will be given in Theorems \ref{teo_soliton_Rm} and \ref{teo_soliton_expa}, respectively.


Results in the spirit of Corollary \ref{cor_strano} below, with a dependence on the gradient, appear in \cite{farinaserrin1,farinaserrin2} on Euclidean space $\R^m$, and will be compared with ours in Section \ref{sec_WMP}. In a manifold setting, due to the possible lack of a polynomial bound for the growth of the volume of geodesic balls, the integral methods in \cite{farinaserrin1, farinaserrin2, dambrosiomitidieri_2, DAmbrMit, brezis} are, in most of the cases, not sufficient to get sharp conclusions. Indeed, even in the polynomial setting of Euclidean space, Theorem \ref{teo_main_2} complements and in some cases improves on the existing literature.\par

We conclude with the next observations: in view of the volume estimates \eqref{riccivolume_intro} that follow from the Ricci bound \eqref{ricciassu_intro}, Theorems \ref{teo_SMP_intro} and \ref{teo_WMP_intro} hold precisely for the same range of $\alpha, \mu, \chi$ (aside from some borderline cases covered by \eqref{condi_volumeWMP_intro} but not by \eqref{condi_volumericci_intro}). In view of this, the following problem seems interesting to us:

\begin{problem}\label{prob_SMPeWMP}
Prove or disprove by exhibiting a counterexample (in a complete manifold), the validity of $\smp$ in the assumptions of Theorem \ref{teo_WMP_intro}.
\end{problem}
%
%

\subsection*{The strong Liouville property $\slio$}

In the literature, the validity of $\slio$ has been mainly investigated
by means of two different approaches: radialization techniques and refined comparison theorems
\cite{maririgolisetti, pucciserrin_2, PuRS, bordofilipucci}, or integral estimates, in the
spirit of the work of E. Mitidieri and S.I. Pohozaev \cite{mitpoho}, see \cite{DAmbrMit, farinaserrin1, farinaserrin2, dambrosiomitidieri_2, Serrin_5}.
Assume the validity of \eqref{assum_secODE_altreL_intro} and \eqref{assum_KO_nonl1}, in order for  the function $K$ in \eqref{def_K} to realize a homeomorphism of $\R^+_0$ onto itself. Suppose that
$$
f>0 \qquad \text{on } \, (T, \infty), \quad \text{for some } \, T \ge 0,
$$
and set
\begin{equation}\label{def_Fe_2}
F(t) = \int_T^t f(s) \di s.
\end{equation}
Under these assumptions, the proof of the validity of $\slio$ via radialization techniques relies on the construction of suitable blowing-up radial supersolutions, explicitly related to the Keller-Osserman condition
\begin{equation}\label{KO}\tag{$\mathrm{KO}_\infty$}
\frac{1}{K^{-1}\circ F} \in L^1(\infty).
\end{equation}
As far as we know, \eqref{KO} first appeared for nontrivial $l$ in the work of R. Redheffer \cite{redheffer} (Corollary 1 therein) for the inequality $\Delta u \ge f(u)l(|\nabla u|)$. Since then, it has been systematically studied by various authors. Among them, for nontrivial $l$ we quote

\begin{itemize}
\item[-] \cite{caristimiti} (for the 1-dimensional problem), \cite{bandlegrecoporru, greco, filippucci} (when $\Delta_\varphi$ is the mean curvature operator) and \cite{MartioPorru, fprgrad, filippucci} (when $\Delta_\varphi$ is the $p$-Laplacian);
\item[-] in a sub-Riemannian setting, \cite{mmmr,bm2,bordofilipucci,bm,AMR}.
\end{itemize}
For further generalizations to quasilinear inequalities, possibly with singular or  degenerate weights, we refer to \cite{damfamise,dAM3, fprarch, mitpoho}.
%

We first discuss the necessity of \eqref{KO_infinity_intro} for $\slio$, and recall the following 

\begin{definition}
A point $o \in M$ is said to be a \emph{pole} if the exponential map $\exp_o :T_oM \thickapprox  \R^m \ra M$ is a diffeomorphism. 
\end{definition}
It can be proved that $o$ is a pole for $M$ if and only if the distance function $r(x) = \mathrm{dist}(x,o)$ is smooth outside of $o$, see \cite{petersen} and the references therein. The radial sectional curvature $K_\rad$ is, by definition, the sectional curvature restricted to planes containing $\nabla r$. As a particular case of Theorem \ref{teo_SL_necessary} below, we obtain

\begin{theorem}[\textbf{Necessity of \eqref{KO_infinity_intro}}]\label{teo_SL_necessary_intro}
Let $M^m$ be a complete Riemannian manifold with a pole $o \in M$, and assume that
\begin{equation}\label{condi_kdasopra_intr}
K_\rad(x) \le - \frac{1}{4r(x)^2} \qquad \text{for } \, x \in M \backslash \{o\}.
\end{equation}
Let $\varphi, b, f, l$ satisfy \eqref{assum_SMP_intro}, \eqref{assum_KO_nonl1}, \eqref{assumptions_bfl} and 
$$
f(0)=0, \qquad f>0 \ \text{ and $C$-increasing on } \, \R^+.
$$
Then, \eqref{KO_infinity_intro} is necessary for the validity of $\slio$.
\end{theorem}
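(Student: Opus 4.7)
I prove the contrapositive: assuming the failure of \eqref{KO_infinity_intro}, I construct a non-negative, non-constant $C^1$ solution of $(P_\ge)$ on all of $M$, contradicting $\slio$. The counterexample is radial, $u(x) = \alpha(r(x))$, where $r(x) = \mathrm{dist}(x,o)$ is smooth on $M \setminus \{o\}$ (since $o$ is a pole) and $\alpha \in C^1([0,\infty))$ is non-decreasing with $\alpha(0)=\eta>0$ small and $\alpha'(0)=0$, which ensures $u \in C^1(M)$.

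Setting $B(r) = \sup_{\overline{B_r}} b$ gives a continuous, non-decreasing radial upper bound for $b$. The curvature bound $K_{\rad} \le -1/(4r^2)$, together with the Hessian comparison theorem, yields a lower estimate $\Delta r(x) \ge (m-1) g'(r)/g(r) =: \mu(r)$ for a suitable comparison function associated with the model equation $g'' = g/(4r^2)$ (whose solutions behave like $\sqrt{r}$ and $\sqrt r \log r$ at infinity). The threshold $1/(4r^2)$ is precisely the sharp one for which the accumulated weight $\mu$ can be balanced, in the integration below, against the Keller--Osserman integrand.

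The core of the argument is the Cauchy problem
$$
\bigl(\varphi(\alpha'(r))\bigr)' + \mu(r)\,\varphi(\alpha'(r)) \;=\; B(r)\, f(\alpha(r))\, l(\alpha'(r)), \qquad \alpha(0)=\eta, \quad \alpha'(0)=0.
$$
A local $C^1$ solution with $\alpha'>0$ on $(0,r_0)$ exists because $f$ is $C$-increasing and strictly positive on $\R^+$. Multiplying by $\alpha'/l(\alpha')$ and using the identity $K'(t) = t\varphi'(t)/l(t)$, so that $(d/dr)K(\alpha'(r))=(\varphi(\alpha'))' \alpha'/l(\alpha')$, I get after discarding the non-negative $\mu$-contribution
$$
\frac{d}{dr} K\bigl(\alpha'(r)\bigr) \;\le\; B(r)\, f(\alpha(r))\, \alpha'(r),
$$
so that $\alpha'(r) \le K^{-1}\bigl(C\, F(\alpha(r))\bigr)$ on bounded intervals. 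Rewriting this as $1 \le \alpha'/K^{-1}(CF(\alpha))$ and integrating in $\alpha$, the failure of \eqref{KO_infinity_intro} prevents $\alpha$ from blowing up in finite $r$. The detailed ODE analysis of Section~\ref{sec2.3} provides the corresponding global existence statement.

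With such an $\alpha$ in hand, for $x \neq o$ I have the pointwise chain
$$
\Delta_\varphi u = \varphi'(\alpha')\alpha'' + \varphi(\alpha') \Delta r \;\ge\; \bigl(\varphi(\alpha')\bigr)' + \mu(r)\varphi(\alpha') = B(r) f(\alpha) l(\alpha') \;\ge\; b(x) f(u) l(|\nabla u|),
$$
which extends as a weak inequality across the pole $o$ by a standard pasting argument (Lemma~\ref{lem_pasting}). Since $u$ is non-negative, non-constant and $C^1$, this contradicts $\slio$. The principal obstacle is the global-existence step: the non-constant gradient term $l$, the singular weight $\mu$ coming from the borderline curvature bound, and the radial coefficient $B(r)$ must be balanced jointly against the Keller--Osserman condition encoded in $K^{-1}\circ F$, and it is precisely this balance that dictates the sharp threshold $K_\rad \le -1/(4r^2)$.
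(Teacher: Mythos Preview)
Your strategy is the same as the paper's: build a radial, non-constant, non-negative $C^1$ subsolution $u(x)=\alpha(r(x))$ by solving a Neumann-type ODE and transplant it via the Laplacian comparison from below. The global-existence step (energy inequality $K(\alpha')\le C\,F(\alpha)$, then the failure of \eqref{KO_infinity_intro} prevents blow-up) is exactly Proposition~\ref{prop_Rinfty}; note that your displayed inequality should read $\alpha'/K^{-1}(CF(\alpha))\le 1$, not $\ge 1$.

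There is, however, a genuine gap at the \emph{local} existence step. You claim ``a local $C^1$ solution with $\alpha'>0$ on $(0,r_0)$ exists because $f$ is $C$-increasing and strictly positive'', but nothing in the hypotheses prevents $l(0)=0$. In that case the constant $\alpha\equiv\eta$ solves your Cauchy problem, and obtaining a \emph{non-constant} solution with $\alpha'>0$ is precisely the delicate point; the ODE results of Section~\ref{sec2.3} that you invoke (Theorem~\ref{exi2_neumann}, Proposition~\ref{prop_Rinfty}) explicitly assume $l>0$ on $\R^+_0$. The paper's fix is simple but essential: replace $l$ by any $\bar l\in C(\R^+_0)$ with $\bar l\ge l$, $\bar l(0)>0$, solve the ODE with $\bar l$, and then observe that $\bar l\ge l$ turns the resulting equality into the desired inequality $\Delta_\varphi u\ge b(x)f(u)l(|\nabla u|)$.

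Two smaller points. First, the role of the curvature threshold is not a ``balance with the Keller--Osserman integrand'': its only purpose is to guarantee that the comparison model has non-decreasing volume, i.e.\ $g'\ge 0$, so that the $\mu$-term you discard has the correct sign (equivalently, $\wp'\ge 0$ in Proposition~\ref{prop_Rinfty}). Under the stated hypothesis $K_\rad\le -1/(4r^2)<0$ one may simply take $G\equiv 0$ and $g(r)=g(0)+r$; there is no need to work with the singular weight $1/(4r^2)$. Second, the extension of the weak inequality across $o$ is not Lemma~\ref{lem_pasting} (which concerns $\max\{u,0\}$) but the elementary observation that $u\in C^1(M)$ with $|\nabla u(o)|=0$, so the boundary flux on $\partial B_\eps(o)$ vanishes as $\eps\to 0$.
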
 

\begin{remark}
\emph{Note that, with the notation $K_\rad(x) \le G(r(x))$ for some $G \in C(\R^+_0)$, we mean that the sectional curvature satisfies
\begin{equation}
K(X \wedge \nabla r)(x) \le - G\big(r(x)\big)
\end{equation}
for each $x \in M\backslash \{o\}$ and $X \perp \nabla r(x)$, $|X|=1$, where $X \wedge \nabla r$ is the $2$-plane spanned by $X$ and $\nabla r$ and $K$ is the sectional curvature.
}
\end{remark}

Inequality \eqref{condi_kdasopra_intr} is a mild requirement just needed to ensure that the model to be compared to $M$ is complete and the volume of its geodesic spheres increases. It allows to apply Theorem \ref{teo_SL_necessary_intro} to all \emph{Cartan-Hadamard manifolds}, namely complete, simply-connected manifolds with non-positive sectional curvature, a class that includes both the Euclidean and the hyperbolic space. This might suggest that \eqref{condi_kdasopra_intr} be just a technical assumption  (although not easy to remove) and thus, loosely speaking, that geometry does not to affect implication $\slio$ $\Rightarrow$ \eqref{KO_infinity_intro}.\par
On the contrary, the sufficiency of \eqref{KO_infinity_intro} heavily depends on the validity of maximum principles at infinity. To investigate the interplay, it is worth to consider $\slio$ as the combination of two properties:
\begin{itemize}
\item[-] an \emph{$L^\infty$-estimate} for non-negative solutions of $(P_\ge)$;
\item[-] property $\lio$ for bounded, non-negative solutions of $(P_\ge)$.
\end{itemize}
Note that \eqref{KO} plays a role in the first property, while, by Proposition \eqref{prop_equivalence}, the second property is equivalent to $\wmp$ provided that $f(0)=0$ and $f>0$ on $\R^+$. As a first result, Theorem \ref{teo_SMPeKO} below relates directly $\smp$ to $\slio$, by showing that for some classes of operators, notably including the $p$-Laplacian with constant $b$, and for general $f$ with $f > 0$ on $\R^+$,
\begin{equation}\label{scheme_SL}
\text{\eqref{KO}} \quad + \quad \text{$\smp$} \quad \Longrightarrow \quad \text{$\slio$}.
\end{equation}
However, for more general operators, in particular for non-homogeneous ones, such a simple relation is currently unknown. Nevertheless, for large classes of functions $\varphi,b,f,l$, we can guarantee $\slio$ by coupling \eqref{KO_infinity_intro} with the lower Ricci curvature bounds considered in Theorem \ref{teo_SMP_intro}, the latter being sharp for the validity of $\smp$. This is the content of Theorems \ref{teo_SMP_SL} and \ref{teo_SMP_SL_border} below, dealing respectively with the case $\chi>0$ and $\chi=0$, that should be considered the main results of Subsection \ref{subsec_SL_suffi}. We refer therein for the statements in full generality, and quote the following corollary for the mean curvature operator:

\begin{theorem}\label{teo_SL_ricci_intro}
Let $M^m$ be complete and assume that
\begin{equation}\label{ricciassu_SL_mc}
\Ricc (\nabla r, \nabla r) \ge -(m-1)\kappa^2\big( 1+r^2\big)^{\alpha/2} \qquad \text{on } \, \mathcal{D}_o,
\end{equation}
for some $\kappa \ge 0$, $\alpha \ge -2$ and some origin $o$. Let $b,f,l$ satisfying \eqref{assumptions_bfl} and 
$$
\begin{array}{l}
\disp b(x) \ge C_1\big(1+ r(x)\big)^{-\mu} \qquad \text{on } \, M, \\[0.3cm]
f(0)=0, \quad \text{$f>0 \ \ $ and $C$-increasing on } \, \R^+, \\[0.3cm]
\disp l(t) \ge C_1 \frac{t^{1-\chi}}{\sqrt{1+t^2}} \qquad \text{on } \, \R^+,
\end{array}
$$
for some constants $C, C_1>0$, $\mu \in \R$, $\chi \in (0,1]$ with
$$
\mu \le \chi - \frac{\alpha}{2}.
$$
Then, under the validity of the Keller-Osserman condition 
\begin{equation}\label{KO_mc_intro}
F^{-\frac{1}{\chi+1}} \in L^1(\infty)
\end{equation}
with $F$ as in \eqref{def_Fe_2}, $\slio$ holds for $C^1$ solutions of
\begin{equation}\label{rste}
\diver \left( \frac{\nabla u}{\sqrt{1+|\nabla u|^2}} \right) \ge b(x) f(u) l(|\nabla u|) \qquad \text{on } \, M. 
\end{equation}
\end{theorem}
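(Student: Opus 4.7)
My plan is to derive the theorem from the general result Theorem \ref{teo_SMP_SL} by specializing its hypotheses to the mean curvature operator $\varphi(t) = t/\sqrt{1+t^2}$. The argument splits naturally into (a) verifying that $(bl)^{-1}\Delta_\varphi$ satisfies the strong maximum principle at infinity, and (b) ruling out non-negative, non-constant solutions via the Keller-Osserman condition \eqref{KO_mc_intro}.

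For (a), I would check each item of \eqref{assum_SMP_intro}--\eqref{assu_SMP_intro_growth}: one has $\varphi'(t) = (1+t^2)^{-3/2} > 0$, and $\varphi(t) \le t$ on $[0,1]$, giving \eqref{assu_SMP_intro_growth} with $p = 2$; the hypothesis $l(t) \ge C_1 t^{1-\chi}/\sqrt{1+t^2} = C_1 \varphi(t)/t^\chi$ is exactly the required lower bound on $l$, while $t\varphi'(t)/l(t) \in L^1(0^+)$ follows since the integrand is $O(t^\chi)$ near $0$ with $\chi > 0$. Combined with the Ricci bound \eqref{ricciassu_SL_mc} and $\mu \le \chi - \alpha/2$ (first case of \eqref{condi_volumericci_intro}, since $\chi > 0$), Theorem \ref{teo_SMP_intro} yields $\smp$ for $(bl)^{-1}\Delta_\varphi$, and in particular $\wmp$. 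For (b) I would argue by contradiction: suppose $u$ is a non-negative, non-constant $C^1$ solution of \eqref{rste}. If $u^* < \infty$, Proposition \ref{prop_equivalence} applied to $\wmp$ forces $f(u^*) \le 0$; since $f > 0$ on $\R^+$ and $u \ge 0$, this compels $u \equiv 0$, contradicting non-constancy. So $u^* = \infty$, and here \eqref{KO_mc_intro} enters: exploiting the volume comparison afforded by the Ricci bound, one would construct on a large geodesic annulus a radial supersolution $w(r)$ of \eqref{rste} which blows up at a finite radius. Its defining ODE, after absorbing the weight $b \ge C(1+r)^{-\mu}$ and using that $\varphi(t) \to 1$ together with the lower bound for $l$ at infinity (so $l(t) \asymp t^{-\chi}$), reduces to $w'(r)^{\chi+1} \sim F(w(r))$, whose solutions blow up in finite $r$ exactly when $F^{-1/(\chi+1)} \in L^1(\infty)$. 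A comparison argument between $w$ and $u$ on that annulus then contradicts $u^* = \infty$.

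The main obstacle lies in the scaling mismatch between the abstract Keller-Osserman condition $1/(K^{-1}\circ F) \in L^1(\infty)$ appearing in Theorem \ref{teo_SMP_SL} and the form \eqref{KO_mc_intro} used here: for $\chi < 1$ the function $K(t) = \int_0^t s\varphi'(s)/l(s)\,ds$ is bounded at infinity, so \eqref{assum_KO_nonl1} fails and the abstract framework cannot be invoked verbatim. One must exploit that $\varphi$ is bounded for the mean curvature operator to reorganize the relevant radial ODE in terms of $w'$ rather than $\varphi(w')$, after which $F^{-1/(\chi+1)}$ emerges as the correct Keller-Osserman integrand; the borderline $\chi = 1$, where $K$ grows logarithmically, then demands a separate but analogous calibration.
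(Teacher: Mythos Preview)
Your proposal is essentially correct and mirrors the paper's approach: the result is a direct application of Theorem~\ref{teo_SMP_SL_mc}, whose proof rests on Proposition~\ref{prop3.7_mc}, where the blowing-up radial supersolution is constructed via the implicit relation $\int_r^{R_\sigma}\bar\beta = \int_{w(r)}^\infty [\sigma F]^{-1/(\chi+1)}$ (so that $w'/\bar\beta = [\sigma F(w)]^{1/(\chi+1)}$), exactly the reorganization ``in terms of $w'$ rather than $\varphi(w')$'' you describe. The key input you correctly isolate is $t\varphi'(t)\le C\varphi(t)$ (boundedness of $\varphi$ is a consequence), which lets one replace the abstract $K$ by $\int_0^t \varphi/l \asymp t^{\chi+1}$; this handles all $\chi\in(0,1]$ uniformly, so no separate calibration at $\chi=1$ is needed.

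One small deviation: for the bounded case you invoke $\smp$ (via Theorem~\ref{teo_SMP_intro}) and then $\wmp$ through Proposition~\ref{prop_equivalence}, which is valid. The paper instead re-applies the supersolution comparison (with a truncated $\bar f$) to obtain $f(u^*)\le 0$ directly, avoiding a separate appeal to $\smp$; both routes work, and yours is arguably more transparent once $\smp$ is available.
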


Suitable counterexamples will show the sharpness of \eqref{KO_mc_intro}, that seems to be new even in the Euclidean and hyperbolic space setting, corresponding, respectively, to $\alpha = -2$ and $\alpha=0$. 
\begin{remark}
\emph{If $\chi=0$ and no Keller-Osserman condition is assumed, a Liouville theorem that well matches with the above result can be found in Theorem \ref{teo_meancurv_intro} below.
}
\end{remark}
When $f(t)$ is a power of $t$, say $f(t) \asymp t^\omega$, \eqref{KO_mc_intro} becomes $\omega> \chi$. In this case, we will prove $\slio$ under a mere volume growth requirement. More precisely, we have the following

\begin{theorem}\label{teo_main_intro}
Assume that the conditions in Theorem \ref{teo_WMP_intro} are satisfied, with the second and third of \eqref{assu_WMP_intro_growth} replaced by
\begin{equation}\label{eq_uppervarphi_intro_WMPeSL}
\varphi(t) \le C_2t^{p-1} \quad \text{for } \, t \in \R^+,
\end{equation}
for some $p>1$, $C>0$. Let $f \in C(\R)$ satisfy
$$
f(t) \ge C_2 t^\omega \qquad \text{for some $C>0$ and each } \, t \ge 1.
$$
If $\omega > \chi$, then any non-constant $u \in \lip_\loc(M)$ solution of $(P_\ge)$ on $M$ is bounded from above and satisfies $f(u^*) \le 0$. In particular, if $f > 0$ on $\R^ +$, $\slio$ holds for $\lip_\loc$ solutions.
\end{theorem}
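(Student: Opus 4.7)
The plan is to split the proof into two parts: first establish the a priori upper bound $u^* < \infty$, then deduce $f(u^*) \le 0$ from the weak maximum principle at infinity guaranteed by Theorem \ref{teo_WMP_intro}. The second part is immediate once $u^*$ is finite, since the hypotheses of Theorem \ref{teo_WMP_intro} are exactly those we are assuming, so $(bl)^{-1}\Delta_\varphi$ satisfies $\wmp$ and Proposition \ref{prop_equivalence}(iii) gives $f(u^*) \le 0$ at once. All the work is therefore in proving the $L^\infty$ bound, and the thrust is to exploit that the hypothesis $\omega > \chi$ is precisely the condition \eqref{KO_infinity_intro} in the present setting.

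First I would verify the Keller-Osserman computation. Since $\varphi(t) \le C_2 t^{p-1}$ on all of $\R^+$, $l(t) \ge C_1\varphi(t)/t^\chi$ on all of $\R^+$, and $f(t) \ge C_2 t^\omega$ for $t \ge 1$, the definitions \eqref{def_K} and \eqref{def_Fe_2} give $K(t) \lesssim t^{\chi+1}$ and $F(t) \gtrsim t^{\omega+1}$ for $t$ large, hence
\[
\frac{1}{(K^{-1}\circ F)(t)} \lesssim t^{-\frac{\omega+1}{\chi+1}} \qquad \text{for } t \gg 1,
\]
which is integrable at infinity precisely when $\omega>\chi$, i.e.\ \eqref{KO_infinity_intro} holds.

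The core step is producing the $L^\infty$ bound. Argue by contradiction: suppose $u$ is unbounded above and fix $k$ large. Following the integral technique already exploited in the proof of Theorem \ref{teo_WMP_intro} (see the derivation of \eqref{def_weakWMP}-type tests), one inserts in the weak formulation of $(P_\ge)$ a test function of the form $\psi = \eta^A\,G(u_+ - k)$, where $\eta$ is a Lipschitz cutoff supported in $B_{2R}$ with $\eta\equiv 1$ on $B_R$ and $|\nabla \eta| \le 2/R$, and $G$ is a monotone primitive tuned to the Keller-Osserman profile $1/(K^{-1}\circ F)$. Expanding $\nabla\psi$ produces a good term of sign $-\int \eta^A G'(u_+ -k) \frac{\varphi(|\nabla u|)}{|\nabla u|}|\nabla u|^2$ and a bad term involving $\nabla\eta$. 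Using the global upper bound $\varphi(t) \le C_2 t^{p-1}$ on the bad term, the lower bound $l(|\nabla u|) \ge C_1\varphi(|\nabla u|)/|\nabla u|^\chi$ on the good term, and a Young inequality whose exponents are balanced by $\omega>\chi$, the bad term is absorbed and one reaches an estimate of the form
\[
\int_{B_R} b(x)\, f(u)\, G(u_+-k)\,\mathrm{d}x \;\le\; C\, R^{-\nu}\vol(B_{2R}),
\]
for some $\nu>0$ depending on $\omega-\chi$, $\mu$, $\alpha$, and $p$. The lower bound $b(x)\ge C(1+r)^{-\mu}$, the inequality $\mu\le\chi-\alpha/2$, and the volume assumption \eqref{volumeassu_intro} imply that the right-hand side remains bounded (indeed vanishes) as $R\to\infty$, while the left-hand side tends to $+\infty$ along a sequence on which $u\to\infty$; this contradiction yields $u^* < \infty$.

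The main technical obstacle is the choice of $A$, of the primitive $G$, and of the Young splitting in Step 2 so that the algebra of exponents reflects the sharpness of $\omega>\chi$ and so that the factor $R^{-\nu}\vol(B_{2R})$ comes out with the correct scaling against \eqref{volumeassu_intro}; the borderline subcases in \eqref{condi_volumeWMP_intro} (in particular $\chi=0$, $\mu=-\alpha/2$, $\alpha=-2$) must be handled by exactly the same threshold adjustments already exploited in Theorem \ref{teo_WMP_intro}. Once this estimate is obtained in the stated generality, the conclusion $f(u^*)\le 0$ follows from Theorem \ref{teo_WMP_intro} and Proposition \ref{prop_equivalence}, and the final assertion on $\slio$ under $f>0$ on $\R^+$ is immediate since $f(u^*)\le 0$ then forces $u^*\le 0$, hence $u\equiv u^*$ by the strong maximum principle in the form $\fmp$ applied to $u^*-u\ge 0$.
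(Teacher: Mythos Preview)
Your two-step strategy (first prove $u^*<\infty$, then invoke $\wmp$ via Theorem~\ref{teo_WMP_intro} and Proposition~\ref{prop_equivalence} to get $f(u^*)\le 0$) matches the paper exactly. The second step is fine, and the final $\slio$ conclusion does not need $\fmp$: if $u\ge 0$ is non-constant and $f>0$ on $\R^+$, then $f(u^*)\le 0$ forces $u^*\le 0$, hence $u\equiv 0$, a contradiction.

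The genuine gap is in your boundedness argument. You propose to reach an estimate of the form
\[
\int_{B_R} b(x)\,f(u)\,G(u_+-k)\;\le\; C\,R^{-\nu}\,\vol(B_{2R})
\]
with a \emph{fixed} $\nu>0$ determined by $\omega-\chi,\mu,\alpha,p$, and then claim that the right-hand side vanishes by \eqref{volumeassu_intro}. This fails: the hypotheses of Theorem~\ref{teo_WMP_intro} allow $\vol(B_r)$ to grow like $\exp(c\,r^{1+\alpha/2})$ for any $\alpha>-2$, and no fixed polynomial factor $R^{-\nu}$ can kill such growth. A one-shot test function with a Keller--Osserman primitive $G$ gives you only a fixed decay exponent; this is precisely why the Euclidean methods in \cite{mitpoho,DAmbrMit,farinaserrin1,farinaserrin2} do not transplant directly to manifolds.

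The paper handles this via Proposition~\ref{lem_importante}, whose proof is substantially more involved than what you sketch. The test functions are $\phi=\psi^{\varsigma}\lambda(u)u^{\alpha}$ with \emph{polynomial} weights in $u$, not Keller--Osserman primitives. A triple Young inequality with exponents balanced exactly by $\omega>\chi$ (Step~1) yields, for \emph{each} $q>0$, an estimate with $R^{-q}$ on the right, at the price of taking $\alpha=\alpha_q$ large and a constant $C_q$ depending on $\alpha$. Since the left-hand side also depends on $\alpha$, this is not yet a contradiction. The essential second ingredient is an iterative self-improvement (Steps~3--6): a second integral inequality with different Young exponents produces the recursion
\[
\mathcal H(R)\;\le\; 2^{-B R^{\chi+1-\mu}}\,\mathcal H(2R),
\]
with $B$ proportional to $\gamma^{\omega-\chi}$ for the level $\gamma$. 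Iterating along $R_j=2^j\bar R$ and combining with Step~1 gives a lower bound on $\liminf_r \log\vol(B_r)/r^{\chi+1-\mu}$ proportional to $\gamma^{\omega-\chi}$, which can be pushed past $V_\infty$ by taking $\gamma$ large since $\omega>\chi$. The borderline cases $\mu=\chi+1$ require separate iterations (Steps~5--6). This dyadic iteration, not a single test, is what makes the argument work under exponential volume growth.
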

\begin{remark}
\emph{If $\varphi \in C^1(\R^+)$ and $t \varphi'(t) \asymp \varphi(t)$ as $t \ra \infty$, like for instance in the case of the $p$-Laplacian, a direct check shows that $\omega>\chi$ in the above theorem imply \eqref{KO}, and it is equivalent to it whenever, instead of \eqref{assum_WMP_intro}, $l$ satisfies
$$
l(t) t^\chi \asymp \varphi(t) \qquad \text{as } \, t \ra \infty.
$$
}
\end{remark}

The argument of the proof of Theorem \ref{teo_main_intro}, though close in spirit to that  of Theorem \ref{teo_WMP_intro}, uses a different combination of integral estimates. Nevertheless, unlike \cite{mitpoho, DAmbrMit, farinaserrin1, farinaserrin2, dambrosiomitidieri_2} which treat similar results in $\R^m$, our method has again the advantage to work in settings where the volume growth of geodesic balls is not polynomial. This requires an iterative procedure originally due to \cite{prs_gafa, prsmemoirs}, of independent interest, but the appearance of the function $l$ also requires a careful mixing with techniques in \cite{farinaserrin2}.\par
To describe the range of applicability of Theorem \ref{teo_main_intro}, we consider the capillarity equation
\begin{equation}\label{capillarity}
\diver \left( \frac{\nabla u}{\sqrt{1+|\nabla u|^2}}\right) = \kappa(x) u \qquad \text{on } \, M, 
\end{equation}
modelling a graphical interface in $M \times \R$ whose mean curvature is proportional to the height of the graph via the non-homogeneous coefficient $\kappa(x)$. Then, we have
\begin{theorem}\label{teo_capillarity}
Suppose that $M$ is complete and that 
\begin{equation}\label{ipo_k_capillarity}
\kappa(x) \ge C\big( 1+ r(x)\big)^{-\mu} \qquad \text{on } \, M, 
\end{equation}
for some constants $C >0$ and $\mu < 2$. If there exists $\eps>0$ such that 
\begin{equation}\label{ipo_volumecapillarity}
\liminf_{r \ra \infty} \frac{\log\vol B_r}{r^{2-\eps-\mu}} < \infty, 
\end{equation}
then the only solution of the capillarity equation \eqref{capillarity} on $M$ is $u \equiv 0$.
\end{theorem}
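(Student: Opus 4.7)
The plan is to recognize that \eqref{capillarity} has the form $(P_=)$ with $\varphi(t) = t/\sqrt{1+t^2}$, $b(x) = \kappa(x)$, $f(u) = u$ and $l \equiv 1$. Because the mean curvature operator is odd, $\Delta_\varphi(-u) = -\Delta_\varphi u$, so both $u$ and $-u$ solve \eqref{capillarity}; in particular both satisfy $(P_\ge)$ with the data above. It therefore suffices to show that every $\lip_\loc$ solution of
\[
\Delta_\varphi u \ge \kappa(x) u \qquad \text{on } M
\]
satisfies $u^* \le 0$: applying this to $-u$ yields $(-u)^* \le 0$, i.e.\ $u \ge 0$ on $M$, and the two bounds force $u \equiv 0$. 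Constant solutions are handled separately, since a constant $c$ must satisfy $\kappa(x) c \equiv 0$, forcing $c = 0$ because $\kappa > 0$.

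I would produce the bound $u^* \le 0$ on non-constant $u$ by invoking Theorem \ref{teo_main_intro}. Taking $f(t) = t$, one has $f(t) \ge t^\omega$ on $[1,\infty)$ with $\omega = 1$, so the strict requirement $\omega > \chi$ pins $\chi < 1$; the global bound $\varphi(t) \le t$ verifies \eqref{eq_uppervarphi_intro_WMPeSL} with $p = 2$; and the growth condition $l(t) \ge C_1 \varphi(t)/t^\chi$ in \eqref{assu_WMP_intro_growth} reduces to boundedness of $t^{1-\chi}/\sqrt{1+t^2}$ on $\R^+$, which holds for every $\chi \in [0,1]$. Hence the structural assumptions on $\varphi$ and $l$ in Theorem \ref{teo_WMP_intro} are in force for any $\chi \in (0,1)$.

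The crux is the joint calibration of $\chi$ with the geometric exponents. Matching \eqref{ipo_volumecapillarity} to \eqref{volumeassu_intro} gives $1 + \alpha/2 = 2 - \eps - \mu$, i.e.
\[
\alpha = 2 - 2\eps - 2\mu.
\]
Since \eqref{ipo_volumecapillarity} also holds for every smaller positive value of $\eps$, I may assume $\eps + \mu < 2$, so that $\alpha > -2$. The choice
\[
\chi = 1 - \frac{\eps}{2} \in (0, 1)
\]
then yields $\chi - \alpha/2 = \mu + \eps/2 \ge \mu$, so the range condition $\mu \le \chi - \alpha/2$ is met, $\omega = 1 > \chi$ is strict, and $\alpha > -2$ together with $\chi > 0$ places us in the first alternative of \eqref{condi_volumeWMP_intro}. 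Theorem \ref{teo_main_intro} then delivers $u^* < \infty$ and $f(u^*) = u^* \le 0$, completing the argument.

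The main obstacle I foresee is exactly this simultaneous calibration: $\omega > \chi$ pushes $\chi$ down, $\mu \le \chi - \alpha/2$ pushes it up, and the mean-curvature structure $\varphi(t) = t/\sqrt{1+t^2}$ with $l \equiv 1$ confines $\chi$ to $[0,1]$. It is precisely the positive margin $\eps$ in the hypothesis \eqref{ipo_volumecapillarity} that opens the non-empty window $\chi \in (1-\eps, 1)$ of admissible values; had the hypothesis been stated with $\eps = 0$, no such $\chi$ would exist and the argument via Theorem \ref{teo_main_intro} would fail.
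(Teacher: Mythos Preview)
Your argument is correct and follows essentially the same route as the paper: both reduce to Theorem~\ref{teo_main_intro} (equivalently, Theorem~\ref{teo_main}) by artificially inserting a gradient factor $l(t) = \varphi(t)/t^\chi$ with $\chi$ just below $1$, so that the Keller--Osserman condition $\omega>\chi$ is satisfied with $\omega=1$, and then apply the conclusion to both $u$ and $-u$. The paper makes the same calibration (writing $\chi=1-\eps$ rather than your $\chi=1-\eps/2$) and explicitly flags this ``artificial gradient'' trick in Remark~\ref{rem_unexpected}.
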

To the best of our knowledge, Theorem \ref{teo_capillarity_2} seems to us to be the first result considering entire solutions of \eqref{capillarity_2} in a manifold setting, in particular allowing that the  volume of geodesic balls grows faster than polynomially. Nevertheless, it is of interest even for Euclidean space, guaranteeing $u \equiv 0$ whenever $\mu<2$. To our knowledge, when $\kappa>0$ is constant the vanishing of $u$ on $\R^m$ solving \eqref{capillarity} was first obtained in \cite{tkachev,nu} with no growth assumptions on $u$ (cf. also \cite[Thm. 8.1.3]{pucciserrin} for $u$ growing polynomially). The methods in \cite{tkachev, nu} are different from one another; in particular, the one in \cite{tkachev} has later been extended in \cite{Serrin_4} to more general inequalities, and Theorem \ref{teo_capillarity} is shown to hold on $\R^m$ but only for $\mu<1$. Recently, in \cite{farinaserrin1} the authors were able to achieve the sharp bound $\mu<2$ for solutions on $\R^m$. In Section \ref{subsub_capillarity}, we will describe in more detail the relationship between our result and the ones in \cite{farinaserrin1}, and we will improve on  \cite{tkachev,nu,pucciserrin,Serrin_4,farinaserrin1} for a class of equations including \eqref{capillarity}. We stress that none of the methods therein easily adapt to manifolds just satisfying \eqref{ipo_volumecapillarity}.

\begin{remark}\label{rem_unexpected}
\emph{Theorem \ref{teo_capillarity} has a curious and unexpected feature: although \eqref{capillarity} does not contain a gradient term, the ``artificial" inclusion of a suitable  $l(|\nabla u|)$ in the right-hand side of \eqref{capillarity} is the key to prove the corollary as a consequence of the Keller-Osserman condition $\omega>\chi$ in Theorem \ref{teo_main_intro} (note that $\omega=1$ for \eqref{capillarity}). This is in striking contrast with previous results for equation 
\begin{equation}\label{senzagradiente}
\diver \left( \frac{\nabla u}{\sqrt{1+|\nabla u|^2}} \right) = b(x) f(u), 
\end{equation}
in a manifold setting: for instance, to obtain the vanishing of $u$ in \eqref{senzagradiente} when $f(t) t \ge C|t|^{\omega+1}$ on $\R$, Theorem 4.8 in \cite{prsmemoirs} needs inequality $\omega>1$, which does not hold for \eqref{capillarity}. Loosely speaking, inserting a suitable gradient term enables us to weaken the requirement in the Keller-Osserman condition up to include the capillarity equation.
}
\end{remark}

Observing that the volume growth conditions in Theorem \ref{teo_main_intro} coincide  with those in Theorem \ref{teo_WMP_intro} for the validity of $\wmp$, one might wonder whether \eqref{scheme_SL} can be weakened to
\begin{equation}\label{scheme_WMPeSL}
\text{\eqref{KO}} \quad + \quad \wmp \quad \Longrightarrow \quad \slio.
\end{equation}

In Example \ref{ex_importante} below, we will show that $\wmp$ is \emph{not} sufficient, and the full strength of $\smp$ is needed. The counterexample is, however, on an incomplete manifold, and suggests the following

\begin{problem}
Prove or disprove: if $M$ is a complete manifold, then the implication \eqref{scheme_WMPeSL} holds at least for a subclass of operators $\Delta_\varphi$ and $b,f,l$.
\end{problem}

\subsection{The compact support principle $\csp$}

Unlike that on $\lio$ and $\slio$, the literature on $\csp$ is not so extensive. The subject initiated with the seminal paper by R. Redheffer \cite{redheffer_csp}, and received a renewed interest in the last 15 years starting from \cite{pucciserrinzou}, see also \cite{haitao, PuGaHuMaSerrin, felmermontenegroquaas} and the monograph \cite{pucciserrin}. However, all of these works consider the problem in the setting of Euclidean space, and to our knowledge just \cite{PuRS, rigolisalvatorivignati, rigolisalvatorivignati_5} analyze the role played by the geometry of the manifold. As we shall see, the link between geometry and $\csp$ does not depend on the validity of a maximum principles at infinity: to explain which geometric conditions are to be expected, we first comment on the following result in \cite[Thm. 1.1]{PuRS}:

\begin{theorem}[\cite{PuRS}]\label{teo_CSP}
Let $M$ be a complete manifold, and let $r$ be the distance from a fixed origin $o$. Assume \eqref{assumptions} and that $\varphi$ is strictly increasing on $\R^+$. Let $f \in C(\R)$ satisfy
\begin{equation}\label{ipo_CSP_original}
\quad f(0)=0, \qquad f>0 \quad \text{and non-decreasing on some } \, [0, \eta_0), \ \ \eta_0>0.
\end{equation}
Then, in order for $\csp$ to hold for $(P_\ge)$ with $b(x) =1$, $l(t) = 1$ it is necessary that
\begin{equation}\label{inteinzero_intro}
\frac{1}{H^{-1}\circ F} \in L^1(0^+),
\end{equation}
where $F$ and $H$ are defined in \eqref{def_Fe_intro} and \eqref{def_Hs}. Viceversa, \eqref{inteinzero_intro} is also
sufficient for $\csp$ provided that
\begin{equation}\label{M1}
\inf_M \Delta r > -\infty
\end{equation}
holds in the weak sense.
\end{theorem}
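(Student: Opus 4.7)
The strategy is to reduce both implications to a one-dimensional analysis via the radial profile. For $u(x) = z(\rho(x))$ with $z$ monotone and $y = |z'|$, a distributional computation valid away from the cut locus of the origin gives
$$
\Delta_\varphi u \;=\; \sgn(z')\Big\{ [\varphi(y(\rho))]' + \varphi(y(\rho))\, \Delta \rho \Big\}.
$$
Thus $(P_\ge)$ for such $u$ reduces to an ODE of the form $[\varphi(y)]' + \alpha\,\varphi(y) = \pm f(z)$ with $z' = \pm y$, and multiplying by $y$ yields the fundamental energy identity
$$
[H(y) - F(z)]' \;=\; \pm\, \alpha\, y\,\varphi(y).
$$
Separating variables, the integrability of $1/(H^{-1}\!\circ F)$ near $0$ --- that is, \eqref{inteinzero_intro} --- is exactly the condition controlling whether $z$ can pass from a positive value to $0$ in finite ``time''; this is the pivot of the entire argument.

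For \emph{sufficiency}, set $c := \inf_M \Delta r$, finite by \eqref{M1}, and fix $\eta > 0$ arbitrarily small. I would first construct a $C^1$ radial profile $w : [0, R] \to [0, \eta]$ with $w(0) = \eta$, $w(R) = w'(R) = 0$, by solving the reversed Cauchy problem on $[0, R]$ starting from $(z, y) = (0, 0)$; the energy identity above forces $R = R(\eta) < \infty$ precisely because of \eqref{inteinzero_intro}. The $\Delta r$ lower bound then guarantees that $W(x) := w(d(x,x_0))$ is a weak supersolution of $(P_\ge)$ on $B_R(x_0)$ for every $x_0$. Given a $C^1$ solution $u$ of $(P_\ge)$ with $u \to 0$ at infinity, I would choose $x_0$ deep in the end so that $u < \eta$ on the shell $\{R - 1 \le d(\cdot, x_0) \le R\}$; a weak comparison principle for the $\varphi$-Laplacian in the style of \cite{pucciserrin} then yields $u \le W$ on $B_R(x_0)$, and since $W$ vanishes at the outer boundary and $x_0$ can be pushed arbitrarily far, $u$ must vanish outside a compact set.

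For \emph{necessity}, I would argue by contrapositive, assuming \eqref{inteinzero_intro} fails. For any small $\eta > 0$, the radial ODE with weight $\alpha \equiv 0$ and initial datum $z(0) = \eta$, $y(0) = 0$ admits, by the energy identity $H(y) = F(\eta) - F(z)$ together with the negation of \eqref{inteinzero_intro}, a $C^1$ global solution on $[0,\infty)$ with $z$ strictly decreasing, $y > 0$, and $z(t), y(t) \to 0$ as $t \to \infty$; the failure of the Keller--Osserman integral is precisely what prevents $z$ from vanishing in finite time. Setting $u(x) := z(r(x))$ on the end $M \setminus \overline{B_{r_0}(o)}$ (for any fixed origin $o$ and $r_0 > 0$), a distributional check, with the cut locus handled via the standard Calabi upper-approximation of $r$, shows that $u$ is a positive $C^1$ solution of $(P_\ge)$ with $u \to 0$ at infinity. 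Its non-vanishing contradicts $\csp$.

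The principal obstacle is the ODE analysis near $y = 0$: since $\varphi$ is only continuous with $\varphi(0) = 0$, classical Cauchy--Lipschitz theory does not apply and one must establish existence, uniqueness and the precise asymptotic behaviour of solutions of singular mixed Dirichlet--Neumann problems --- this is the role of the machinery developed in Section \ref{sec2.3}, whose results are indispensable in both directions. A secondary technical point is the comparison step on $M$: one needs a weak comparison principle for $C^1$ sub/supersolutions of the degenerate operator $\Delta_\varphi$ that is valid across the cut locus, which is circumvented by working with distributional inequalities and smooth upper approximants of $r$.
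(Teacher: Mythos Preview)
Both directions of your argument contain genuine gaps.

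For sufficiency, you center the barrier at a floating point $x_0$ and work with $d(\cdot,x_0)$, but \eqref{M1} controls only $\Delta r$ for $r=\dist(\cdot,o)$; there is no reason $\inf_M\Delta d(\cdot,x_0)>-\infty$ when $x_0\neq o$, so your $W$ need not be a supersolution. Even granting this, the comparison is set up backward: your $W$ vanishes on $\partial B_R(x_0)$ where $u\ge 0$, so the boundary data give $u\ge W$ there, not $u\le W$, and nothing follows inside the ball. The correct approach (as in \cite{PuRS} and in the generalization Theorem~\ref{teo_CSP_nuovo}) radializes with respect to the \emph{fixed} $r$---noting that \eqref{M1} forces $o$ to be a pole (Proposition~\ref{prop_pole})---and builds the barrier on the \emph{exterior} region $\{r\ge R\}$: the profile satisfies $w(R)=\lambda$ and $w\equiv 0$ for $r\ge R_1$, and one chooses $R$ so large that $u<\lambda$ on $\{r\ge R\}$, which gives the right boundary inequality for comparison.

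For necessity, setting $\alpha\equiv 0$ discards $\Delta r$ entirely: for $z'<0$ one has $\Delta_\varphi u=-[\varphi(|z'|)]'-\varphi(|z'|)\,\Delta r$, and on a general $M$ the last term is uncontrolled, so $u=z(r)$ need not satisfy $(P_\ge)$. Your energy analysis is also off: with $z(0)=\eta$, $y(0)=0$ and $H(y)=F(\eta)-F(z)$, the quantity $y$ \emph{increases} toward $H^{-1}(F(\eta))>0$ as $z\downarrow 0$, so $z$ hits $0$ in finite time with nonzero slope---the trajectory with $z,y\to 0$ together lives on the level $H(y)=F(z)$ and requires $y(0)=H^{-1}(F(\eta))>0$. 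Finally, the Calabi trick yields weak barrier inequalities but does not make $z(r)$ of class $C^1$ across $\cut(o)$. The paper's route (Theorems~\ref{teo_exteriorDiri}--\ref{teo_necessitybello}) instead uses the Laplacian comparison from \emph{above}, $\Delta r\le v_g'/v_g$ (valid weakly without a pole), solves the exterior Dirichlet problem $[v_g\varphi(z')]'=v_g f(z)$, and certifies $z>0$ via $\fmp$ when \eqref{inteinzero_intro} fails; the pole assumption is then needed precisely to upgrade the resulting Lipschitz solution to $C^1$.
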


As a matter of fact, as we prove in Proposition \ref{prop_pole} in Appendix A,  \eqref{M1} forces the origin $o$ to be a pole for $M$, in particular $r$ shall be smooth outside of $M \backslash \{o\}$. With the aid of the Hessian comparison theorem, \eqref{M1} holds for a large class of manifolds including Cartan-Hadamard ones, such as the Euclidean and hyperbolic spaces. On the other hand, the topological restriction imposed  by the existence of a pole is binding, and it would be desirable to remove it. However,  already in \cite{PuRS} the authors realized that a condition like \eqref{M1} or some other extra assumption needs necessarily to be included. Their example, reported below, is illustrative.

\begin{example}\label{ex_CSP_intro}
\emph{Consider the radially symmetric model
$$
M = (\R^m, \di s_g^2), \qquad \di s_g^2 = \di r^2 + g(r)^2 \metricN_1,
$$
where $\metricN_1$ is the standard round metric on the unit sphere and $0<g \in C^\infty(\R^+)$ satisfy $g(r) = r$ for $r \in [0,1]$ and $g(r) = \exp\{-r^\alpha\}$ for $r \ge 2$, for some $\alpha >2$. Clearly $M$ is a manifold  with pole $o \in \R^m$.  Then, for each $\omega \in (0,1)$ the function
$$
u(r) = r^{-\beta}, \qquad \beta \in \left( 0, \frac{\alpha-2}{1-\omega}\right]
$$
solves $\Delta u \ge C u^\omega$ on the end $\Omega = M \backslash B_R$ for $R$ large enough. Although \eqref{inteinzero_intro} holds, $u$ clearly contradicts $\csp$. Note that in this case $\Delta r = -(m-1)\alpha r^{\alpha-1}$, hence \eqref{M1} is violated.
}
\end{example}
A more elaborated example along these lines will be given in Section 7 below. The construction shows that, in sharp contrast with $\slio$ and $\lio$, what matters in this case is that $M$ should not possess ends shrinking too rapidly at infinity. This is the content of our first contribution to $\csp$. We begin defining a weaker notion of a pole, to allow a nontrivial topology of $M$.

\begin{definition}
Let $\mathcal{O} \subset M$ be a relatively compact, open set with smooth boundary in $M$. We say that $\mathcal{O}$ is \emph{a pole of $M$} if the normal exponential map $\exp^\perp : T\mathcal{O}^\perp \ra M \backslash \mathcal{O}$ realizes a diffeomorphism.
\end{definition}
Here, $T\mathcal{O}^\perp$ is the subset of the normal bundle of $\partial \mathcal{O}$ consisting of vectors pointing outward from $\mathcal{O}$. The case of a point $o$ being a pole can easily be recovered by choosing $\mathcal{O} = B_\eps(o)$ for $\eps$ small enough. Let $r$ be the distance function from $\cal O$, which is therefore smooth on $M \backslash \mathcal{O}$. Denote with $\II_{-\nabla r}$ the second fundamental form of $\partial \cal O$ with respect to the inward unit normal $-\nabla r$, and let $B_R(\mathcal{O}) = \{ x \in M : 0 < r(x) < R\}$.\\
We assume that the radial sectional curvature $K_\rad$ satisfies
\begin{equation}\label{ipo_Krad_intro}
K_\rad \le - \kappa^2(1+r)^\alpha \qquad \text{on } \, M \backslash \cal O,
\end{equation}
for some $\kappa \ge 0$, $\alpha \ge -2$.\par
Beyond the standard requirements \eqref{assumptions} and \eqref{assumptions_bfl} we assume \eqref{assum_secODE_altreL_intro}, in order for $K$ to be defined, and the following condition corresponding to \eqref{ipo_CSP_original}:
\begin{equation}\label{Cincre_fl_CSP_intro}
\left\{\begin{array}{l}
f \ \text{ is positive and $C$-increasing on $(0,\eta_0)$, for some $\eta_0 >0$.} \\[0.2cm]
l \ \text{ is $C$-increasing and locally Lipschitz on $(0,\xi_0)$, for some $\xi_0 >0$.} \\[0.2cm]
f(0)l(0)=0.
\end{array} \right.
\end{equation}
Set $F$ as in \eqref{def_Fe_intro}. We first address the necessity of the Keller-Osserman condition
\begin{equation}\label{KO_zero_intro_2}\tag{$\mathrm{KO}_0$}
\frac{1}{K^{-1} \circ F} \in L^1(0^+).
\end{equation}
In analogy with Theorem \ref{teo_CSP}, we see that there is no geometric obstruction, at least on manifolds with a pole.
\begin{theorem}[\textbf{Necessity of \eqref{KO_zero}}]\label{teo_necessity_CSP_intro}
Let $M$ be a manifold with a pole $\mathcal{O}$. Assume \eqref{assumptions}, \eqref{assumptions_bfl}, \eqref{assum_secODE_altreL_intro} and \eqref{Cincre_fl_CSP_intro}. Then, \eqref{KO_zero} is necessary for the validity of $\csp$ for $(P_\ge)$.
\end{theorem}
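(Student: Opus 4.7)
The plan is a proof by contrapositive: assume (KO$_0$) fails, i.e. $1/(K^{-1}\circ F) \notin L^1(0^+)$, and exhibit a non-negative $C^1$ solution $u$ of $(P_\ge)$ on the end $\Omega = M \setminus \overline{\mathcal{O}}$ that is strictly positive on $\Omega$ and satisfies $u(x) \to 0$ as $r(x) \to \infty$, contradicting $\csp$. The pole assumption ensures that the distance function $r$ from $\mathcal{O}$ is smooth on $\Omega$ and that $\Omega$ itself is an end, so I look for $u$ in the radial form $u(x) = \alpha(r(x))$ with a decreasing profile $\alpha : [R_0, \infty) \to (0, \eta]$; this reduces the PDE to a singular one-dimensional ODE/IVP to be analyzed by phase-plane techniques.

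Setting $t = -\alpha' \ge 0$, a straightforward divergence-form computation gives $\Delta_\varphi u = -\varphi'(t)\,t' - \varphi(t)\,\Delta r$, and I shall build $\alpha$ as the solution of the model radial ODE
$$\big(g^{m-1}\varphi(t)\big)' + g^{m-1}\,b^*(r)\,f(\alpha)\,l(t) = 0, \qquad \alpha(R_0) = \eta, \quad t(R_0) = K^{-1}(F(\eta)),$$
where $g = g(r)$ is a radial comparison function (dictated by the pole geometry and the standing radial sectional curvature bound \eqref{ipo_Krad_intro}) and $b^*(r)$ is a radial majorant of $b$. Multiplying by $t/l(t) \ge 0$ and discarding the resulting non-negative friction contribution proportional to $(g'/g)\varphi(t)t/l(t)$ yields the basic phase-plane inequality
$$\big[K(t) - F(\alpha)\big]' \le 0,$$
so that, with the prescribed initial data, $K(t) \le F(\alpha)$ throughout the existence interval. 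This gives the first-order differential inequality $-\alpha' = t \le K^{-1}\!\big(F(\alpha)\big)$, and separating variables,
$$r - R_0 \le \int_{\alpha(r)}^{\eta} \frac{ds}{K^{-1}\!\big(F(s)\big)}.$$
The failure of (KO$_0$) makes the right-hand side diverge as $\alpha(r) \downarrow 0$, so $\alpha$ exists on all of $[R_0,\infty)$, stays strictly positive, and decays to $0$ only as $r \to \infty$. The existence and $C^1$-regularity of $\alpha$ through the singular point $t = 0$ (essential when $f(0)\,l(0) = 0$) are supplied by the detailed ODE analysis of Section~\ref{sec2.3}.

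The main obstacle is the geometric step of promoting the radial ODE-supersolution into a genuine weak supersolution of $(P_\ge)$ on $\Omega$, which requires pointwise
$$\varphi'(t)\,\alpha'' - \varphi(t)\,\Delta r \ \ge\ b(x)\,f(u)\,l(|\nabla u|).$$
Here the pole structure is decisive: the normal exponential map gives global coordinates on $\Omega$, and Hessian comparison, combined with the standing bound \eqref{ipo_Krad_intro}, provides the precise comparison model $g$ that enters the ODE together with the required control on $\Delta r$; the freedom in choosing the radial majorant $b^*$ is then used to absorb any residual discrepancy so that the inequality holds uniformly in the angular variables. Finally, the $C$-monotonicity of $f$ and $l$ from \eqref{Cincre_fl_CSP_intro} and the regularity of $\varphi$ in \eqref{assum_secODE_altreL_intro} are precisely what legitimize the phase-plane inequality, the inversion of $K$, and the separation-of-variables step, ensuring that $u = \alpha \circ r$ is a bona fide $C^1$ non-negative solution of $(P_\ge)$ with non-compact support that violates $\csp$ as required.
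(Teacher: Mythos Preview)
Your contrapositive strategy via a radial, strictly positive, decaying solution of $(P_\ge)$ matches the paper's, but the geometric transplant step is wrong as written. You invoke \eqref{ipo_Krad_intro} and Hessian comparison to control $\Delta r$, yet \eqref{ipo_Krad_intro} is \emph{not} a hypothesis of Theorem~\ref{teo_necessity_CSP_intro}, and an upper bound on $K_\rad$ yields a \emph{lower} bound $\Delta r\ge (m-1)g'/g$. Since $\varphi(\alpha')=-\varphi(t)<0$, multiplying by this negative factor only gives $\Delta_\varphi u\le v_g^{-1}\big[v_g\varphi(\alpha')\big]'$, the reverse of what $(P_\ge)$ requires. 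The paper instead fixes an arbitrary radial Ricci lower bound $\Ricc\ge -(m-1)G(r)\metric$ (always available on a complete manifold) and uses the Laplacian comparison from \emph{above}, $\Delta r\le v_g'/v_g$; together with the majorants $\beta(r)=\sup_{\partial B_r}b$, $\bar f(t)=\sup_{[0,t]}f$ and $\bar l\ge l$ this produces the correct inequality $\Delta_\varphi u\ge b(x)f(u)l(|\nabla u|)$.

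There is also a gap in the decay claim. Your energy inequality $K(t)\le F(\alpha)$ does give global existence and, under the failure of \eqref{KO_zero}, positivity of $\alpha$ (this is essentially the content of Proposition~\ref{prop_twobound_refined} for $\wp'\ge 0$). But it is only an upper bound on $t=-\alpha'$, and nothing you wrote rules out $\alpha\downarrow\alpha_\infty>0$; in that case $u\not\to 0$ at infinity and no contradiction with $\csp$ arises. The paper resolves both existence and decay by a different device (Theorem~\ref{teo_exteriorDiri}): $z$ is built as the monotone limit of Dirichlet solutions $z_j$ on $[r_0,r_0+jR]$ with $z_j(r_0+jR)=0$, and decay $z(r)\to 0$ is then forced by choosing the comparison model so that $\varphi^{-1}(c/v_g)\in L^1(\infty)$ and comparing with the explicit barrier $\bar z(r)=\int_r^\infty \varphi^{-1}\big(c/v_g(s)\big)\,\di s$. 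Your appeal to Section~\ref{sec2.3} for a half-line IVP is also misplaced: that section treats two-point boundary problems on compact intervals, not initial-value problems on $[R_0,\infty)$.
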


The proof relies on the construction of a suitable radial solution of a Dirichlet problem at infinity for singular ODEs, of independent interest. As for the sufficiency part, geometry enters into play, and the statement is considerably more elaborated. We state the following corollary of our main result, Theorem \ref{teo_CSP_nuovo} in Section \ref{sec_CSP}, that considers $\varphi,l$ of polynomial type near $t=0$.

\begin{theorem}\label{cor_csp_specialized_intro}
Let $(M, \metric)$ be a manifold with a pole $\mathcal{O}$ such that \eqref{ipo_Krad_intro} holds for some $\alpha \ge -2$, $\kappa \ge 0$. Suppose
\begin{equation}\label{arra_CSP_assu_spec_intro}
\II_{-\nabla r} \ge - C_{\alpha,\kappa} \metric \qquad \text{on } \, T\partial \cal O,
\end{equation}
with
\begin{equation}
\begin{array}{l}
C_{\alpha, \kappa} = \left\{ \begin{array}{ll}
\kappa & \quad \text{if } \, \alpha \ge 0 \ \text{or } \ \kappa = 0, \\[0.2cm]
\left[\frac{\alpha + \sqrt{\alpha^2+ 16\kappa^2}}{4}\right] & \quad \text{otherwise}.
\end{array}\right.
\end{array}
\end{equation}
Consider $\varphi, b, f,l$ satisfying \eqref{assumptions}, \eqref{assumptions_bfl}, \eqref{assum_secODE_altreL_intro} and \eqref{Cincre_fl_CSP_intro}. Fix $\chi, \mu \in \R$ with
\begin{equation}\label{condi_chimuomega_CSP_intro}
\chi>0, \qquad  \mu \le \chi - \frac{\alpha}{2}
\end{equation}
and assume that
\begin{equation}\label{assu_poli_csp_intro}
\begin{array}{ll}
l(t) \asymp t^{1-\chi}\varphi'(t) & \quad \text{for } \, t \in (0,1), \\[0.2cm]
b(x) \ge C_1\big( 1+r(x)\big)^{-\mu} & \quad \text{for } \, r(x) \ge r_0,
\end{array}
\end{equation}
for some constant $C_1>0$. If there exists a constant $c_F \ge 1$ such that
\begin{equation}\label{condi_F_intro}
F(t)^{\frac{\chi}{\chi+1}} \le c_F f(t) \qquad \text{for each } \, t \in (0, \eta_0), 
\end{equation}
then, 
\begin{equation}\label{laequi_KO_intro}
\text{$\csp$ holds for $(P_\ge)$} \qquad \Longleftrightarrow \qquad \eqref{KO_zero}.
\end{equation}
\end{theorem}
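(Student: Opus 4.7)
My plan would follow the classical strategy for the compact support principle: construct a non-negative, decreasing, radial function $w(x)=\alpha(r(x))$ on some exterior annulus $B_{R_1}(\mathcal{O})\setminus B_{R_0}(\mathcal{O})$ that is a \emph{supersolution} of $(P_\ge)$ with $\alpha(R_0)=\eta_0>0$, $\alpha\equiv 0$ on $[R_1,\infty)$, and then compare it with a putative non-negative solution $u$ of $(P_\ge)$ decaying to $0$ at infinity.

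First I would use the radial sectional curvature bound \eqref{ipo_Krad_intro} together with the Hessian comparison theorem, taking \eqref{arra_CSP_assu_spec_intro} on $\II_{-\nabla r}$ as the initial datum at $r=0^+$. This produces a model warping function $g(r)$ (of Jacobi type for the model $-\kappa^2(1+r)^\alpha$) such that $\Delta r\ge (m-1)g'(r)/g(r)$ on all of $M\setminus\mathcal{O}$ in the weak sense; the constant $C_{\alpha,\kappa}$ appearing in \eqref{arra_CSP_assu_spec_intro} is precisely calibrated to match the asymptotic mean curvature of the model $g$, so that the comparison is uniform from the boundary of $\mathcal{O}$. Plugging a radial decreasing $w=\alpha(r)$ into $\Delta_\varphi$ yields, pointwise,
$$
\Delta_\varphi w \le -\frac{1}{g(r)^{m-1}}\bigl[g(r)^{m-1}\varphi(-\alpha')\bigr]',
$$
so the supersolution condition is reduced, via the lower bound on $b$ in \eqref{assu_poli_csp_intro}, to the ODE inequality
$$
\bigl[g(r)^{m-1}\varphi(-\alpha')\bigr]' \ge c\, g(r)^{m-1} (1+r)^{-\mu}\, f(\alpha)\, l(-\alpha')
\quad \text{on } (R_0,R_1),
$$
with boundary conditions $\alpha(R_0)=\eta_0$, $\alpha(R_1)=0$, $\alpha'(R_1)=0$.

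The existence of such an $\alpha$ is where the Keller-Osserman condition enters. Integrating once and using the polynomial behavior $l(t)\asymp t^{1-\chi}\varphi'(t)$ near $t=0$ from \eqref{assu_poli_csp_intro}, together with the parameter balance $\mu\le\chi-\alpha/2$ from \eqref{condi_chimuomega_CSP_intro}, the weight $(1+r)^{-\mu}$ and the Jacobian factor $g^{m-1}$ combine in a way that does not destroy the Keller-Osserman threshold; more precisely, one converts the ODE into an autonomous comparison of the form $K(-\alpha')\ge c F(\alpha)$, where the technical condition \eqref{condi_F_intro} that $F^{\chi/(\chi+1)}\lesssim f$ is used to absorb a residual integrated factor of $f/F$ (this is the step where the absence of oscillation of $f$ relative to $F$ is critical). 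The finiteness of $\int_{0^+}(K^{-1}\circ F)^{-1}$ then shows that $\alpha$ can be forced to reach zero at a finite $R_1$ with vanishing derivative, yielding the desired radial supersolution. The technical work of solving this singular two-point boundary value problem is precisely the content of Section \ref{sec2.3}, to which I would reduce the construction.

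Finally, with $w$ in hand, I would apply a weak comparison principle for $\Delta_\varphi$ on the annulus $B_{R_1}(\mathcal{O})\setminus B_{R_0}(\mathcal{O})$, whose validity uses the local Lipschitz regularity of $l$ and the $C$-monotonicity of $f$ assumed in \eqref{Cincre_fl_CSP_intro}. Since $u(x)\to 0$ at infinity, one can choose $R_0$ large enough so that $u\le\eta_0=w$ on $\partial B_{R_0}(\mathcal{O})$, while on $\partial B_{R_1}(\mathcal{O})$ one has $w=0\le u$ and then extends the comparison by using that $w\equiv 0$ for $r\ge R_1$. This forces $u\equiv 0$ outside $B_{R_1}(\mathcal{O})$, proving $\csp$. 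The main obstacle I anticipate is the singular ODE analysis: keeping the precise trade-off among $\chi$, $\mu$, $\alpha$ and the volume growth of $g^{m-1}$ under control near the right endpoint $R_1$ (where both $\alpha$ and $\alpha'$ vanish) is delicate, and this is where the hypothesis \eqref{condi_F_intro} and the sharp inequality $\mu\le\chi-\alpha/2$ are genuinely needed to close the estimates.
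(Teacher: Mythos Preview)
Your overall architecture matches the paper's: Laplacian comparison from below via the sectional curvature bound and the second fundamental form condition, reduction to a radial ODE supersolution that is compactly supported thanks to \eqref{KO_zero}, and a comparison to conclude. Two points need attention.

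First, the statement is a biconditional, and you only sketch $\eqref{KO_zero}\Rightarrow\csp$. The paper handles $\csp\Rightarrow\eqref{KO_zero}$ separately via Theorem~\ref{teo_necessitybello}, which produces a positive radial solution on the exterior under $\neg\eqref{KO_zero}$; you should at least invoke that result.

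Second, and more substantively, your comparison step has a gap. You propose a weak comparison on the annulus $B_{R_1}\setminus B_{R_0}$, but on $\partial B_{R_1}$ you have $w=0\le u$, which is the wrong inequality; and if you instead compare on all of $M\setminus B_{R_0}$, the available comparison for $(P_\ge)$ with nontrivial $l$ (Proposition~\ref{prop_serrin}) requires $\ess\inf_K\{|\nabla u|+|\nabla w|\}>0$ on compacta, which fails on $\{r\ge R_1\}$ where $\nabla w\equiv 0$ and possibly $\nabla u=0$ as well. The paper (proof of Theorem~\ref{teo_CSP_nuovo}) does not attempt a direct comparison: it argues by contradiction, letting $c=\max(u-\bar w)>0$, and first rules out that the contact set $\Gamma=\{u-\bar w=c\}$ meets the region $\{r\ge R_1\}$. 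This uses the finite maximum principle (with $f\equiv 0$) in the interior and a Hopf-type boundary argument on $\partial B_{R_1}$, exploiting that $\mathcal O$ is a pole to ensure connectedness of the exterior. Only once $\Gamma\Subset B_{R_1}\setminus B_{R_0}$, where $|\nabla\bar w|=w'(r)>0$, is the $C$-monotonicity and the continuity of $l$ on $\R^+$ used to run the usual upper-level-set comparison (Proposition~\ref{prop_comparison}). Your sketch should incorporate this two-step structure; otherwise the degenerate region $\{r\ge R_1\}$ is not controlled.

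A minor remark: the ODE construction actually used is not from Section~\ref{sec2.3} (Dirichlet / Dirichlet--Neumann problems) but the explicit blow-down in Proposition~\ref{prop_CSP_radial}, where the conditions $(C_1)$--$(C_4)$ and the auxiliary weight $\bar\beta$ encode exactly $\chi>0$, \eqref{condi_F_intro}, and the bound $\mu\le\chi-\alpha/2$ via \eqref{assu_subeta}.
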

\begin{remark}
\emph{By \eqref{assu_poli_csp_intro}, \eqref{KO_zero} is equivalent to 
$$
F^{-\frac{1}{\chi+1}} \in L^1(0^+).
$$
Note also that the $C$-increasing property of $l$ implies that $t^{1-\chi}l(t)$ is $C$-increasing too, possibly with a different $C$, and this forces a bound on the vanishing of $l$ near $t=0$. For instance, if 
$$
\varphi'(t) \asymp t^{p-2}, \qquad f(t) \asymp t^\omega \qquad \text{as } \, t \ra 0, 
$$
for some $p>1$, $\omega >0$, then \eqref{assu_poli_csp_intro} holds with $l$ $C$-increasing if and only if $\chi \le p-1$, \eqref{condi_F_intro} is satisfied whenever $\omega \le \chi$, and \eqref{KO_zero} is equivalent to $\omega<\chi$.
}
\end{remark}

The bound \eqref{arra_CSP_assu_spec_intro} means, roughly speaking, that $\partial \mathcal{O}$ should not be too concave in the inward direction. In particular, if $\kappa = 0$, \eqref{arra_CSP_assu_spec_intro} requires $\mathcal{O}$ to have a convex boundary. By choosing $\mathcal{O} = B_\eps(o)$, the theorem applies to Cartan-Hadamard manifolds.

\begin{example}
\emph{Another relevant example to which Theorem \ref{cor_csp_specialized_intro} applies is that of hyperbolic manifolds with finite volume. It is known by the thick-thin decomposition (see Theorem D.3.3 and Proposition D.3.12 in  \cite{benedettipetronio}) that a manifold $M^m$ with sectional curvature $-1$ and finite volume decomposes as the disjoint union $\mathcal{O} \cup \Omega_1 \cup \ldots \cup \Omega_s$, where $\mathcal{O}$ is a smooth, relatively compact open set and, for each $j$, $\Omega_j$ is a non-compact cusp end isometric to the warped product $\R^+_0 \times_{e^{-r}} N^{m-1}$, for some compact flat manifold $(N, g_N)$, with metric $\di r^2 + e^{-2r}g_N$. Therefore, $\mathcal{O}$ is a pole of $M$, $r$ is the distance from $\mathcal{O}$ and a direct computation gives
$$
\II_{-\nabla r} = - \metric \qquad \text{on } \, T\partial \cal O,
$$
precisely the borderline case in \eqref{arra_CSP_assu_spec_intro}.
}
\end{example}

\begin{remark}
\emph{Observe that the bound $\mu \le \chi - \frac{\alpha}{2}$ in \eqref{condi_chimuomega_CSP_intro} is the same as that in \eqref{condi_volumericci_intro} and \eqref{condi_volumeWMP_intro} for the validity of, respectively, $\smp$ and $\wmp$. Example \ref{ex_countCSP} below shows that it cannot be weakened (see the range \eqref{controesempio_csp_belle} of the parameter there).
}
\end{remark}

Because of the presence of nontrivial $b,l$, the proof of Theorem \ref{cor_csp_specialized_intro} is technically considerably more demanding than that of Theorem \ref{teo_CSP}, and calls for a few extra-assumptions guaranteeing certain mild ``homogeneity properties", which accounts for conditions \eqref{assu_poli_csp_intro}. However, the underlying principle is the same and relies on the explicit construction of a radial, compactly supported, $C^1$ solution of
\begin{equation}\label{supersol_CSP_intro}
\left\{ \begin{array}{l}
\Delta_\varphi w \le b(x) f(w) l(|\nabla w|) \qquad \text{on } \, M \backslash B_R(\mathcal{O}), \\[0.2cm]
w \ge 0 \qquad \text{on } \, M \backslash B_R(\mathcal{O}), \\[0.2cm]
w \equiv 0 \qquad \text{on } \, M \backslash B_{R_1}(\mathcal{O}),
\end{array} \right.
\end{equation}
for some $R_1>R$, via a direct use of \eqref{KO_zero_intro}. We provide two variants of the construction, that work under a mildly different set of assumptions: one of them is quite involved and closely related to that in \cite{rigolisalvatorivignati} (which seems to be the only reference investigating $\csp$ with a gradient term $l$), while the other one is new and considerably simpler.\par
Athough sharp, Theorem \ref{cor_csp_specialized_intro} still requires the presence of a pole in order to apply the Laplacian comparison theorem from below and deduce, from the combination of \eqref{ipo_Krad_intro} and \eqref{arra_CSP_assu_spec_intro}, the lower bound
$$
\Delta r \ge - C r^{\frac{\alpha}{2}} \qquad \text{for } \, r \ge 1,
$$
and some constant $C>0$, which is the weighted version of \eqref{M1}. For the relevant case of the $p$-Laplace operator, we introduce a different radialization method, that we believe to be of independent interest. It is based on smooth functions replacing the distance from $\mathcal{O}$, and called for this reason \emph{fake distances}. The method does not require the existence of a pole. Suppose that $M$ is a complete manifold satisfying
\begin{equation}\label{ipo_ricci_CSP_intro}
\Ricc \ge -(m-1)\kappa^2 \metric \qquad \text{on } \, M,
\end{equation}
for some $\kappa \ge 0$, and let $v_\kappa(r)$ be the volume of a geodesic sphere of radius $\kappa$ in the space form of constant sectional curvature $-\kappa^2$ (i.e. $\R^m$ for $\kappa=0$ or the hyperbolic space $\HH^m_\kappa$ of curvature $-\kappa^2$ for $\kappa>0$). We restrict here to the case
$$
p \in (1,m],
$$
the complementary case $p>m$ being slightly different and discussed in Section \ref{sec_exhaustion}. Assume
$$
v_\kappa^{-\frac{1}{p-1}} \in L^1(\infty)
$$
(which always holds if $\kappa>0$, while it requires $p< m$ if $\kappa=0$). Assume that $\Delta_p$ is not $p$-parabolic on $M$, equivalently, that for each fixed origin $o$ there exists a minimal positive Green kernel $\gr_p(x,o)$ for $\Delta_p$ with pole at $o$. Define the fake distance $\varrho : M \ra \R$ implicitly via the equation
$$
\gr_p(x,o) = \int_{\varrho(x)}^{\infty} \frac{\di s}{v_\kappa(s)^{\frac{1}{p-1}}}.
$$
In the literature, the fake distance modelled on the Green kernel of the Laplace Beltrami operator has been used with great success to study the geometry in the large of manifolds with non-negative Ricci curvature, see for instance the works of Cheeger-Colding \cite{cheegercolding}, Colding-Minicozzi \cite{coldingmini} and Colding \cite{colding}, together with the references therein. On the contrary, in a quasilinear setting and especially for the purposes of the present paper, its use seems to be new. In Proposition \ref{prop_gradienbounded} we prove that, if $\kappa>0$, then $|\nabla \varrho| \le C$ outside of a neighbourhood of $o$, and by direct computation
$$
\Delta_p \varrho = \frac{v_\kappa'(\varrho)}{v_\kappa(\varrho)}|\nabla \varrho|^p.
$$
Consequently, for each diffeomorphism $\psi : \R \ra \R$ we obtain
\begin{equation}\label{radializiamo}
\Delta_p \psi(\varrho) = v_\kappa^{-1}\big( v_\kappa |\psi'|^{p-2}\psi' \big)' |\nabla \varrho|^p.
\end{equation}
Since $v_\kappa^{-1}\big( v_\kappa |\psi'|^{p-2}\psi' \big)'$ is the expression of the $p$-Laplacian of a radial function in the model of curvature $-\kappa^2$, \eqref{radializiamo} enables us to construct solutions of \eqref{supersol_CSP_intro} by radializing with respect to $\varrho$ instead of $r$. However, to be able to conclude the validity of this property $\csp$ we shall need $\varrho$ to be an exhaustion function, equivalently, we need the vanishing of $\gr_p$ as $x$ diverges. Sufficient conditions for the validity  of this property follow, for instance, from the validity of Sobolev inequalities on $M$, and  will be investigated in Section \ref{sec_exhaustion}. The striking advantage with respect to $r$ is that $\varrho$ is smooth and $p$-subharmonic, hence a bound of the type in \eqref{M1} is automatically satisfied. We apply these ideas to obtain a characterization of $\csp$ on a class of manifolds satisfying \eqref{ipo_ricci_CSP_intro}, which is a particular case of Theorem \ref{teo_CSP_plapla} and is somehow complementary to Theorem \ref{cor_csp_specialized_intro}. Here is the statement of the result.
\begin{theorem}\label{teo_CSP_plapla_intro}
Let $(M, \metric)$ be a complete $m$-dimensional Riemannian manifold with
\begin{equation}\label{iporicci_csp_intro}
\Ricc \ge -(m-1)\kappa^2 \metric \qquad \text{on } \, M,
\end{equation}
and assume that, for some $p \in (1,2]$ and $\nu > p$, the Sobolev inequality
\begin{equation}\label{sobolev_intro}
\left( \int |\psi|^{\frac{\nu p}{\nu-p}} \right)^{\frac{\nu-p}{\nu}} \le S_{p,\nu} \int |\nabla \psi|^p
\end{equation}
holds for each $\psi \in \lip_c(M)$. Fix $\chi \in (0, p-1]$. Let $f \in C(\R)$ satisfy
$$
f(0)=0, \qquad f> 0 \ \ \text{ on } \, (0, \eta_0), \qquad \text{$f$ is $C$-increasing on } \, (0, \eta_0),
$$
for some $\eta_0>0$. Define $F$ as in \eqref{def_Fe_intro}, and furthermore suppose
\begin{equation}\label{mild_technical}
F(t)^{\frac{\chi}{\chi+1}} \le c_F f(t) \quad \text{on } \, (0,\eta_0),
\end{equation}
for some $c_F >0$. Let $\Omega$ an end of $M$. Then, $\csp$ holds for solutions of \begin{equation}\label{CSP_plaplacian_intro}
\left\{ \begin{array}{l}
\Delta_p u \ge f(u)|\nabla u|^{p-1-\chi} \qquad \text{on } \, \Omega \,  \\[0.2cm]
\disp u \ge 0, \qquad \lim_{x \in \Omega, \, x \ra \infty} u(x) = 0
\end{array}\right.
\end{equation}
if and only if
\begin{equation}\label{inteinzero_plap}
F^{-\frac{1}{\chi+1}} \in L^1(0^+).
\end{equation}
\end{theorem}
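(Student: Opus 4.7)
The plan is to replace the Riemannian distance $r$ with the fake distance $\varrho$ and reduce both directions of the equivalence to radial ODE problems on the model space of constant sectional curvature $-\kappa^2$. Under the Sobolev inequality \eqref{sobolev_intro} the manifold $M$ is $p$-hyperbolic, so the minimal positive Green kernel $\gr_p(\cdot,o)$ exists, and (as will be established in Section \ref{sec_exhaustion}) it vanishes at infinity, making $\varrho$ a genuine $C^1$ exhaustion function. The Ricci bound \eqref{iporicci_csp_intro} together with Proposition \ref{prop_gradienbounded} yields $|\nabla \varrho|\le C_1$ outside a neighborhood of $o$, and the radialization identity \eqref{radializiamo} reduces any expression of the form $\Delta_p \psi(\varrho)$ to the model $p$-Laplacian applied to $\psi$, modulated by $|\nabla \varrho|^p$. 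Because $\varphi(t)=t^{p-1}$ and $l(t)=t^{p-1-\chi}$ give $K^{-1}(s)\asymp s^{1/(\chi+1)}$, the general Keller-Osserman condition \eqref{KO_zero_intro} collapses exactly to the statement \eqref{inteinzero_plap}.

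For the \emph{sufficiency} direction, let $u \in C^1(\Omega)$ be a non-negative solution of \eqref{CSP_plaplacian_intro} with $u \to 0$ at infinity. Using the singular Dirichlet--Neumann ODE theory developed in Section \ref{sec2.3}, together with \eqref{inteinzero_plap} and the auxiliary bound \eqref{mild_technical}, produce a decreasing $C^1$ profile $\psi:[R,R_1]\to[0,\eta]$ with $\psi(R)=\eta$, $\psi(R_1)=\psi'(R_1)=0$ and
$$
v_\kappa^{-1}\bigl(v_\kappa|\psi'|^{p-2}\psi'\bigr)'(t) \;\le\; C_1^{-(1+\chi)} f(\psi(t))\,|\psi'(t)|^{p-1-\chi} \qquad \text{on } (R,R_1).
$$
Extending $\psi\equiv 0$ beyond $R_1$, the radialization formula \eqref{radializiamo} combined with $|\nabla\varrho|\le C_1$ shows that $w=\psi(\varrho)$ is a $C^1$, compactly supported (in $\{\varrho\le R_1\}$) weak supersolution of the equation in \eqref{CSP_plaplacian_intro}. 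Choosing $R$ large enough that $\{\varrho\ge R\}\subset \Omega$ and $u\le \eta$ on $\{\varrho = R\}$ (which is possible because $\varrho$ is an exhaustion and $u \to 0$ at infinity), the weak comparison principle for $\Delta_p$ yields $u\le w$ on $\{\varrho\ge R\}$, whence $u\equiv 0$ on $\{\varrho\ge R_1\}$ and $\csp$ follows.

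For the \emph{necessity} direction, assume \eqref{inteinzero_plap} fails. The same ODE machinery, run in the opposite direction, produces a strictly decreasing $C^1$ profile $\psi:[R,\infty)\to(0,\eta_0]$ that is defined on a whole half-line, tends to zero at infinity, and satisfies the reverse inequality with a constant depending on a positive lower bound for $|\nabla \varrho|$ on the annular shells $\{\varrho \ge R\}$. The function $u=\psi(\varrho)$ is then an everywhere positive $C^1$ solution of $(P_\ge)$ on the end $\{\varrho>R\}$ that vanishes at infinity, violating $\csp$ and completing the proof.

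The technically most demanding step will be extracting, for the necessity direction, a positive lower bound for $|\nabla \varrho|$ on suitable annular regions: the radialization $|\nabla u|^{p-1-\chi}=|\psi'|^{p-1-\chi}|\nabla\varrho|^{p-1-\chi}$ does not match the factor $|\nabla\varrho|^p$ appearing on the left, so the gap $|\nabla\varrho|^{1+\chi}$ must be bounded both above and below in order for the radial ODE to retain its intended Keller-Osserman character. The upper bound is \emph{Proposition} \ref{prop_gradienbounded}, but the lower bound requires the sharper two-sided Green function estimates that rely on the Sobolev inequality \eqref{sobolev_intro} rather than on the Ricci bound alone; this is precisely why \eqref{sobolev_intro} is built into the hypotheses. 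A secondary delicate point is verifying that the compactly supported barrier $w$ in the sufficiency part is an admissible competitor in the weak comparison principle, which is guaranteed by the matching boundary condition $\psi'(R_1)=0$ and the $C$-monotonicity of $f$.
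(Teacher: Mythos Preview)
Your sufficiency argument is broadly correct in strategy, but the comparison step is more delicate than ``the weak comparison principle for $\Delta_p$ yields $u\le w$''. The barrier $\bar w = \psi(\varrho)$ has $|\nabla \bar w| = |\psi'(\varrho)|\,|\nabla\varrho|$, and $|\nabla\varrho|$ may vanish on a set where $\psi'(\varrho)\neq 0$; on that set the right-hand side $f(\bar w)|\nabla \bar w|^{p-1-\chi}$ vanishes while the left-hand side from \eqref{radializiamo} carries a factor $|\nabla\varrho|^p$, so the supersolution inequality is fine, but the \emph{comparison} with $u$ is not: the operator $v\mapsto \Delta_p v - f(v)|\nabla v|^{p-1-\chi}$ is not covered by the standard comparison because of the gradient nonlinearity. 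The paper handles this by first showing the maximum of $u-\bar w$ cannot lie where $\bar w\equiv 0$ (via the finite maximum principle and a Hopf-type argument), then on the remaining region splitting into $\{|\nabla u|<|\psi'(\varrho)|\}$ and $\{|\nabla u|>|\psi'(\varrho)|/2\}$ to recast both inequalities with the \emph{same} gradient weight $|\nabla\cdot|^p$, and finally invoking the comparison of \cite{antoninimugnaipucci}, which requires the $p$-Laplacian to be non-degenerate. This last point is exactly why $p\le 2$ appears in the hypotheses; your outline does not explain where that restriction enters.

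Your necessity argument has a genuine gap. You propose $u=\psi(\varrho)$ and acknowledge that this needs a \emph{lower} bound on $|\nabla\varrho|$ to turn the radial ODE into $(P_\ge)$; you then assert that the Sobolev inequality supplies this via ``two-sided Green function estimates''. It does not. Theorem~\ref{teo_sobolev} gives only an \emph{upper} bound $\gr_p(x,o)\le C r(x)^{-(\nu-p)/p}$, hence only the properness of $\varrho$; lower bounds on $|\nabla\varrho|$ are not available under these hypotheses (the paper says so explicitly in Section~\ref{sec_exhaustion} and again when discussing the necessity of \eqref{KO} for $\slio$). The paper's actual proof of necessity abandons $\varrho$ entirely: it invokes Theorem~\ref{teo_necessitybello} to build a positive \emph{subsolution} $u_1$ radial in the true distance $r$ (this uses only the Laplacian comparison from above, valid under the Ricci bound without any pole assumption), takes $u_2=\gr_p(\cdot,o)$ as a supersolution, and applies the sub--supersolution method to produce a genuine $C^{1}$ solution $u$ of the \emph{equation} $\Delta_p u = f(u)|\nabla u|^{p-1-\chi}$ trapped between them. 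The Sobolev inequality enters only to guarantee $u_2\to 0$ at infinity, forcing $u\to 0$ and contradicting $\csp$.
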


\begin{example}
\emph{Conditions for the validity of \eqref{sobolev_intro} will be given in Section \ref{sec_exhaustion}, see Examples \ref{ex_minimal}, \ref{prop_hebey} and \ref{ex_roughisometric}. In particular, we stress that \eqref{sobolev_intro} holds with $\nu = m$ if $M$ is (complete and) minimally immersed in a Cartan-Hadamard ambient space $N$. By Gauss equations, in this setting Theorem \ref{teo_CSP_plapla_intro} can be applied provided that the minimal immersion $M \ra N$ has bounded second fundamental tensor in order to satisfy \eqref{iporicci_csp_intro}.
}
\end{example}

We stress that Theorem \ref{teo_CSP_plapla} below allows  a dependence on the function $b$, which is required to be bounded from below in terms of a decaying function of $\varrho$. However, it is difficult to bound $\varrho$ from \emph{below} with the more manageable $r$. A particular case when we have effective estimates is for manifolds with $\Ricc \ge 0$ in the semilinear case $p=2$ (although this last restriction seems to be merely technical), where there is also no need for the Sobolev inequality \eqref{sobolev_intro}. The reader is referred to Theorem \ref{teo_CSP_plapla_riccimagzero} below for the precise statement.\par
Moreover, observe that in Theorem \ref{teo_CSP_plapla_intro} we require $p \in (1,2]$, that is, that  the $p$-Laplacian be non-degenerate. This condition is technical, and is necessary to apply the comparison theorems that are currently available in the literature. For this reason, we feel interesting to investigate the following

\begin{problem}
Prove or disprove the validity of Theorem \ref{teo_CSP_plapla_intro} (or, more generally, Theorems \ref{teo_CSP_plapla} and \ref{teo_CSP_plapla_riccimagzero} below) in the full range $p \in (1,\infty)$.
\end{problem}

When $l$ is constant a further fake distance, recently constructed in \cite{bianchisetti}, allows to improve on Theorems \ref{teo_CSP_plapla_intro} and \ref{teo_CSP_plapla_riccimagzero} when $u$ solves 
$$
\Delta u \ge (1+r)^{-\mu}f(u),
$$
by reducing the geometric conditions to the only
$$
\Ricc \ge -(m-1) \kappa^2\big( 1+r^2\big)^{\alpha/2} \metric,
$$
for some $\kappa>0$ and $\alpha \in (-2,2]$. The threshold $\alpha=2$ is sharp and related to a probabilistic requirement called the Feller property, cf. \cite{azencott, pigolasetti_feller}. More details on this issue are given in Subsection \ref{sec_feller}.

\subsection{More general inequalities}
The techniques discussed in the present paper are also effective to investigate more general inequalities of the type
\begin{equation}\label{P_lapiugeneral_intro}
\Delta_\varphi u \ge b(x)f(u) l(|\nabla u|) - \bar b(x) \bar f(u) \bar l(|\nabla u|).
\end{equation}
This class includes relevant geometric examples, such as \eqref{prescribed_geodesic} when the graph is neither minimal nor a MCF soliton. Another important prototype inequality is
$$
\Delta_p u \ge f(u) - c|\nabla u|^q, \qquad \text{with $q>0$, $c>0$,}
$$
that in the semilinear case appears (with the equality sign) as value functions of stochastic control problems (cf. \cite{lasrylions, radulescu}). Existence and nonexistence of entire solutions have been investigated in a series of papers, notably
\begin{itemize}
\item[-] \cite{lairwood, ghernicrad}, where the authors consider solutions of 
\begin{equation}\label{semilin}
\Delta u \pm \bar b(x)|\nabla u|^q = b(x)u^\gamma \qquad \text{on } \, \R^m,
\end{equation}
for $q,\gamma \in \R^+$ and non-negative $b, \bar b$ that are allowed to vanish in a controlled (yet very general) way.
\item[-] \cite{FPREnt, maririgolisetti}, that concern quasilinear analogues of \eqref{semilin} when the driving operator is, respectively, a weighted $p$-Laplacian and a general $\varphi$-Laplacian in a manifold setting. Although related, their techniques differ from those in \cite{lairwood, ghernicrad}.
\item[-] \cite{felmerquaassirakov}, that considers a fully nonlinear version of \eqref{semilin} of the type
$$
\mathcal{M}[u] \ge f(u) \pm g(|\nabla u|),
$$
where the driving operator $\mathcal{M}$ is uniformly elliptic. Their results are relevant also in the semilinear setting.
\end{itemize}
Apart from some special cases, the existence-nonexistence problem for such inequalities is far from being completely understood, and many interesting questions are still unanswered even in the Euclidean setting. For instance, none of the references considers the interplay of the weights and nonlinearities in the generality of \eqref{P_lapiugeneral_intro}. To keep the paper at a reasonable length, if still possible, we decided not to make attempts to deal with \eqref{P_lapiugeneral_intro} and we leave it to future research.

\section{Radialization and fake distances}\label{sec_exhaustion}
The proof of some of our main results, for instance the $\csp$, relies on the construction of a suitable radial solution of $(P_\ge)$ or $(P_\le)$ to be compared with a  given one. For convenience, hereafter we extend $\varphi$ to an odd function on all of $\R$ by setting
\begin{equation}\label{varphi}
\varphi(s) = - \varphi(-s) \qquad \text{for each } \, s<0.
\end{equation}
Suppose that $w \in C^1(\R^+_0)$ satisfies $\varphi(w') \in C^1(\R^+_0)$. If $u(x) = w(r(x))$, where $r(x)$ is the distance from a fixed origin $\mathcal{O}$ (a point, or a relatively compact open set with smooth boundary), then
$$
\Delta_\varphi u = \diver\left( \frac{\varphi(|w'|)}{|w'|} w' \nabla r\right) = \diver\big( \varphi(w') \nabla r\big) = \big(\varphi(w')\big)' + \varphi(w')\Delta r,
$$
therefore $u$ solves, say, $(P_\ge)$ if and only if
\begin{equation}\label{sol_PDE_22}
\big(\varphi(w')\big)' + \varphi(w')\Delta r \, \ge \, b(x)f(w)l(|w'|).
\end{equation}
Take $0< g \in C^2(\R^+_0)$ and a model manifold $M_g = \R^+ \times \Sph^{m-1}$ with polar  coordinates $(r,\theta)$ and metric
$$
\di s_g^2 = \di r^2 + g(r)^2 \metricN_1,
$$
where $\metricN_1$ is the standard round metric on the unit sphere. Let
$$
v_g(s) = \vol(\Sph^{m-1})g(s)^{m-1}
$$
be the volume growth of the set $\{r = s\}$. By comparison theory for $\Delta r$ (see Appendix A), given a radial bound for the function $b$ of the type $b(x) \ge \beta(r(x))$, for some positive $\beta \in C(\R_0^+)$, to find solutions of \eqref{sol_PDE_22} one is first lead to solve
\begin{equation}\label{sol_ODE_22_prima}
\big[v_g\varphi(w')\big]' = v_g \beta f(w) l(|w'|)
\end{equation}
on an interval of $\R^+$. Furthermore, a solution of \eqref{sol_ODE_22_prima} gives rise to a solution of \eqref{sol_PDE_22} provided the sign of $w'$ matches appropriately with the inequalities coming from the comparison theorems for $\Delta r$, and with the sign of $f(w)$. Therefore, the monotonicity of $w$ becomes relevant. We will devote the next section to the study of the ODE \eqref{sol_ODE_22_prima}.\par
The investigation of the compact support principle along these lines needs the use of comparison theorems from below, that requires $r$ to be smooth, equivalently, $\mathcal{O}$ to be a pole. As outlined in the Introduction, in this section we develop a different radialization procedure that uses a ``fake distance" $\varrho$ modelled on the operator $\Delta_\varphi$, in the particular case of the $p$-Laplacian.



Let $p \in (1,\infty)$, and suppose that $\Delta_p$ is non-parabolic on $M^m$, that is, that there exists a non-constant positive solution $w \in W^{1,p}_\loc(M)$ of $\Delta_p w \le 0$. From \cite{HKM, troyanov,troyanov2,pigolasettitroyanov}, we know this to be equivalent to the fact that the $p$-capacity of some (equivalently, each) compact set $K \subset M$, defined as
$$
\capac_p(K) = \inf \left\{ \int_M |\nabla \psi|^p \ \ : \ \ \psi \in \lip_c(M), \ \psi\ge 1 \ \text{ on } \, K\right\},
$$
is positive. Furthermore, by \cite{holopainen, holopainen3} the non-parabolicity is also equivalent to the existence for each fixed $o \in M$ of a positive Green kernel $\gr_p(x,o)$, that is, a distributional solution of $\Delta_p \gr_p(\cdot, o) = -\delta_o$. In other words, $\gr_p(x,o)$ satisfies
$$
\int_M |\nabla_x \gr_p(x,o)|^{p-2} \langle \nabla_x \gr_p(x,o), \nabla \psi(x) \rangle \di x = \psi(o) \qquad \forall \,  u \in C^\infty_c(M).
$$
The kernel $\gr_p$ was constructed in \cite{holopainen, holopainen3} starting with a smooth increasing exhaustion $\{\Omega_j\}$ of $M$, that is, a family of smooth open sets such that
$$
\disp o \in \Omega_j \Subset \Omega_{j+1} \Subset M \quad \text{for each } \, j \ge 1, \qquad \bigcup_{j=1}^{\infty} \Omega_j = M,
$$
and related Green kernels $\gr_{p,j}(\cdot,o)$ with Dirichlet boundary conditions on $\partial \Omega_j$.
The existence of $\gr_{p,j}$ was shown in \cite[Thm. 3.19]{holopainen} for $p \in (1,m]$, and\footnote{In \cite{holopainen3}, p. 656 the author observes that $p \le m$ in \cite{holopainen} has to be assumed because of Definition 3.9 therein, where $\gr_{p,j}$ is required to diverge as $x \ra o$.} in \cite{holopainen3} for $p>m$.

\begin{remark}\label{rem_localgreen}
\emph{The local behavior of a positive solution $\bar \gr_p$ of $\Delta_p \bar \gr_p = -\delta_o$ on a domain $\Omega \subset M$ has been investigated in \cite[Thm 12]{Serrin_1} for $p \le m$:
\begin{equation}\label{boundserrin}
\bar \gr_p(x,o) \asymp \int_{r(x)}^1 \frac{\di s}{[\omega_{m-1}s^{m-1}]^{\frac{1}{p-1}}} \sim \left\{ \begin{array}{ll} \omega_{m-1}^{-\frac{1}{p-1}}\left(\frac{p-1}{m-p}\right) r(x)^{ -\frac{m-p}{p-1}} & \qquad \text{if } \, p<m, \\[0.3cm]
\omega_{m-1}^{-\frac{1}{m-1}} |\log r| & \qquad \text{if } \, p=m
\end{array}\right.
\end{equation}
as $r(x)= \mathrm{dist}(x,o) \ra 0$. For the Euclidean space, in \cite{kichenveron, veron} the authors proved a stronger estimate with the asymptotic $\sim$ in place of $\asymp$ in \eqref{boundserrin}, and their argument is extended to a manifold setting in the forthcoming \cite{maririgolisetti_mono}. On the other hand, when $p>m$, $\bar \gr_p$ admits a continuous extension to $x=o$ with a positive, finite value, since in this case the point $o$ has positive capacity (see \cite[Thm. 6.33]{HKM} and \cite{holopainen3}). By the maximum principle, $\bar \gr_p(o,o) = \|\gr_p(\cdot,o)\|_\infty$.
}
\end{remark}
The uniqueness of $\gr_{p,j}$ was shown in \cite[Thm. 3.22]{holopainen} for $p=m$, while if $p> m$ it is a consequence of the standard comparison for $W^{1,p}$ solutions, since in this range of $p$ it is known that $\gr_{p,j} \in W^{1,p}(\Omega_j)$ (see also the discussion at p. 656 of \cite{holopainen3}). If $p \in (1,m]$, one can easily deduce the uniqueness of $\gr_{p,j}$ again by comparison, taking into account the refinements of the asymptotic behaviour near $o$ described in Remark \ref{rem_localgreen}.\par
By \cite[Thm. 3.25]{holopainen}, the sequence $\{\gr_{p,j}\}$ can be arranged to be increasing\footnote{As for the uniqueness of $\gr_{p,j}$, the increasiness of the sequence $\{\gr_{p,j}\}$ easily follows by comparison in view of the refinements of the asymptotic behaviour in Remark \ref{rem_localgreen}.}, and by \cite[Thm. 3.27]{holopainen} it converges to a finite limit if and only if $\Delta_p$ is non-parabolic on $M$.\par

Fix a model manifold $M_g$ such that
$$
g \in C^2(\R^+_0), \quad g>0 \quad \text{on } \, \R^+, \quad g(0)=0, \quad g'(0)=1,
$$
and assume that $\Delta_p$ is non-parabolic on $M_g$; it is well-known that this is equivalent to require
\begin{equation}\label{mod_nonparab}
v_g^{- \frac{1}{p-1}}\in L^1(\infty),
\end{equation}
and in fact
\begin{equation}\label{grmodel}
\grg_p(r) = \int_r^{\infty} \frac{\di s}{v_g(s)^{\frac{1}{p-1}}},\qquad r\in\mathbb R^+,
\end{equation}
when the integral exists, is the minimal positive Green kernel of~$\Delta_p$ on $M_g$ with pole at $\{r=0\}$. Fix an origin $o \in M$, and define implicitly $\varrho(x)$ on $M \backslash \{o\}$ as follows:

\begin{itemize}
\item[-] If $p \in (1,m]$ set
\begin{equation}\label{def_bxy}
\gr_p(x,o) = \gr^{(g)}_p \big(\varrho(x)\big) \qquad\mbox{on } \, M \backslash \{o\},
\end{equation}
and note that, because of \eqref{boundserrin} and $\gr^{(g)}_p(0^+) = +\infty$, $\varrho$ is well defined and can be extended continuously on $M$ by setting $\varrho(o)=0$.
\item[-] If $p >m$ set
\begin{equation}\label{def_bxy_pmagm}
L \gr_p(x,o) = \gr^{(g)}_p \big(\varrho(x)\big) \qquad\mbox{on } \, M \backslash \{o\},
\end{equation}
with
$$
L = \frac{\gr^{(g)}_p(0)}{\lim_{x \ra o} \gr_p(x,o)} >0.
$$
In this way, by the maximum principle $\varrho(x)$ is still well defined on the whole of $M \backslash \{o\}$, positive therein and has a continuous extension to $M$ with $\varrho(o)=0$.
\end{itemize}

Up to replacing $\gr_p$ with $L \gr_p$ when $p>m$, we can consider $\gr_p$ as being defined by \eqref{def_bxy} for each $p \in (1,\infty)$. Indeed, in what follows we do not need to use the property $\Delta_p \gr_p = -\delta_o$, but only  that $\Delta_p \gr_p =0$ outside of $o$.\par
If $M = M_g$ and $o$ is the reference point $\{r=0\}$ of $M_g$, $\varrho=r$ is the distance function on $M$ from $o$. In general, the relation between $\varrho$ and $r$ depends on that between $M$ and $M_g$, which we now investigate (see also \cite{maririgolisetti_mono}). By elliptic regularity, $\varrho \in C^{1,\alpha}_\loc(M \backslash \{o\})$ since so is $\gr_p$, see \cite{tolksdorf}. Differentiating \eqref{def_bxy} we obtain
\begin{equation}\label{gradb}
|\nabla \varrho| = v_g(\varrho)^{\frac{1}{p-1}} |\nabla \gr_p| = \left[v_g^{\frac{1}{p-1}} \int_{\varrho}^{\infty}\frac{\di s}{v_g(s)^{\frac{1}{p-1}}}\right] |\nabla \log \gr_p|.
\end{equation}
According to \cite{bmr2}, we define the critical curve $\chi_g$ for all $t\in\mathbb R^+$ by setting
\begin{equation}\label{beaz}
\chi_g(t) = \left(\frac{p-1}{p}\right)^p \left[v_g(t)^{\frac{1}{p-1}} \int_{t}^{\infty}\frac{\di s}{v_g(s)^{\frac{1}{p-1}}}\right]^{-p} = \left[\left(-\frac{p-1}{p}\log \int_{t}^{\infty} \frac{\di s}{v_g(s)^{\frac{1}{p-1}}}\right)'\right]^{p},
\end{equation}
and in this way \eqref{gradb} can be rewritten in the form
\begin{equation}\label{gradientb}
|\nabla \varrho| = \frac{p-1}{p}\chi_g(\varrho)^{-1/p} |\nabla \log \gr_p|.
\end{equation}

\begin{remark}
\emph{The critical curve in \eqref{beaz} is related, via comparison theory, to weights in Hardy-type inequalities on $M$, and it thus appears in geometrical problems where stationary Schr\"odinger type operators (linear or nonlinear) are considered. A systematic study with various applications has been given in \cite{bmr2, bmr3, bmr4} for $p=2$, and in \cite{bmr5} for general $p$.
}
\end{remark}

A second differentiation gives
\begin{equation}\label{deri_bG}
\Delta_p \varrho = \frac{v_g'(\varrho)}{v_g(\varrho)}|\nabla \varrho|^p \qquad \text{weakly on } M \backslash \{o\},
\end{equation}
hence, for each $\psi \in C^2(\R)$ with $\psi' \neq 0$ everywhere we obtain
\begin{equation}\label{bella!!!}
\begin{array}{lcl}
\disp \Delta_p \big[\psi(\varrho)\big] &= & \big|\psi'(\varrho)\big|^{p-2}\psi'(\varrho) \left[ (p-1)\psi''(\varrho) + \frac{v_g'(\varrho)}{v_g(\varrho)} \psi'(\varrho) \right] |\nabla \varrho|^p \\[0.4cm]
&=& \disp \left[v_g^{-1}\left( v_g |\psi'|^{p-2}\psi'\right)'\right](\varrho) |\nabla \varrho|^p.
\end{array}
\end{equation}
As we have already observed, $v_g^{-1}\left( v_g |\psi'|^{p-2}\psi'\right)'$ is the expression of the $p$-Laplacian of the radial function $\psi$ in the model $M_g$, making it possible to radialize with respect to $\varrho$. However, in order for this procedure to be effective we need to control the $L^\infty$-norm of $|\nabla \varrho|$ and to guarantee properness of $\varrho$. The latter, by \eqref{def_bxy}, is equivalent to the property $\gr_p(x,o) \ra 0$ as $r(x) \ra \infty$.



First we focus our attention on $|\nabla \varrho|$. While bounds from below for $|\nabla \varrho|$ seem difficult to obtain, bounds from above are simpler, as shown by the next

\begin{proposition}\label{prop_gradienbounded}
Let $(M, \metric)$ be a complete $m$-dimensional manifold satisfying
\begin{equation}\label{lowerRicci_fake}
\Ricc \ge -(m-1) \kappa^2 \metric
\end{equation}
for some $\kappa>0$. Let $p \in (1,\infty)$ and assume that $\Delta_p$ is non-parabolic. Define $\varrho$ as in \eqref{def_bxy}, with $g(r) = \kappa^{-1} \sinh(\kappa r)$. Then, for each $\varrho_0>0$ there exists a constant $C_\varrho>0$ depending on $(m,p,\varrho_0,\kappa)$ and on the geometry of $M$ such that
\begin{equation}\label{uppreuniform_gradb}
|\nabla \varrho| \le C_\varrho \qquad \text{on }  M_{\varrho_0} = \big\{ x \in M : \varrho(x) \ge \varrho_0 \big \}.
\end{equation}
\end{proposition}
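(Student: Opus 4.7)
The plan is to reduce the bound on $|\nabla \varrho|$ to a bound on $|\nabla \log \gr_p(\cdot,o)|$ via the identity \eqref{gradientb}, and then apply a local Cheng-Yau-type gradient estimate for positive $p$-harmonic functions under a lower Ricci bound. Recall from \eqref{gradientb} that
$$
|\nabla \varrho| = \frac{p-1}{p}\,\chi_g(\varrho)^{-1/p}\,|\nabla \log \gr_p(\cdot,o)|,
$$
so I shall control each factor separately on $M_{\varrho_0}$.

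First, I would analyse $\chi_g(\varrho)^{-1/p}$. With $g(r) = \kappa^{-1}\sinh(\kappa r)$, one has $v_g(r)\sim C e^{(m-1)\kappa r}$ as $r \to \infty$, hence a direct computation from \eqref{beaz} gives
$$
v_g(t)^{\frac{1}{p-1}} \int_t^\infty \frac{\di s}{v_g(s)^{\frac{1}{p-1}}} \;\longrightarrow\; \frac{p-1}{(m-1)\kappa} \qquad \text{as } t \to \infty,
$$
so $\chi_g(t)^{-1/p}$ is continuous, bounded, and bounded away from $0$ on any interval $[\varrho_0,\infty)$. Consequently it remains to bound $|\nabla \log \gr_p(\cdot,o)|$ uniformly on $M_{\varrho_0}$.

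Next, I observe that $M_{\varrho_0}$ is separated from the pole: by the local asymptotic \eqref{boundserrin} (when $p \le m$) and by the very normalization adopted in \eqref{def_bxy_pmagm} (when $p>m$), the condition $\varrho(x) \ge \varrho_0$, that is $\gr_p(x,o) \le \grg_p(\varrho_0)$ (up to the factor $L$ when $p>m$), strictly avoids the value $\gr_p(o,o)$ and hence forces a uniform lower bound $r(x) \ge r^* = r^*(m,p,\kappa,\varrho_0,M) > 0$ by continuity. Therefore around each $x \in M_{\varrho_0}$ the ball $B_{r^*/2}(x) \subset M \setminus \{o\}$, and on it the function $\gr_p(\cdot,o)$ is a positive $p$-harmonic function of class $C^{1,\alpha}$ (by the Tolksdorf--DiBenedetto regularity theory).

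Finally, I would invoke the local Cheng-Yau-type gradient estimate for positive $p$-harmonic functions on manifolds with $\Ricc \ge -(m-1)\kappa^2$ (for $p=2$ it is the classical Cheng-Yau bound; for general $p>1$ it is due to Kotschwar-Ni and Wang-Zhang): on any $B_{R}(x) \subset M\setminus\{o\}$,
$$
\sup_{B_{R/2}(x)} |\nabla \log \gr_p(\cdot,o)| \;\le\; c(m,p)\Bigl(\tfrac{1}{R} + \kappa\Bigr).
$$
Taking $R = r^*$ yields $|\nabla \log \gr_p(\cdot,o)|(x) \le c(m,p)\big(2/r^* + \kappa\big)$ uniformly on $M_{\varrho_0}$, and combining with the bound on $\chi_g^{-1/p}$ gives the desired uniform constant $C_\varrho$. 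The main delicate point I expect is the extraction of the uniform $r^*$: on general $M$ no explicit formula is available for the behaviour of $\gr_p$ near $o$, but \eqref{boundserrin} and, in the $p>m$ case, the continuous extension of $\gr_p(\cdot,o)$ to $o$ with value $\|\gr_p(\cdot,o)\|_\infty$ (by the maximum principle) suffice to produce $r^*$, at the price of letting $C_\varrho$ depend on the geometry of $M$ through the local behaviour of $\gr_p$ at $o$.
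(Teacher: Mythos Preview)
Your proposal is correct and follows essentially the same route as the paper: both use the identity \eqref{gradientb} to reduce to bounding $\chi_g(\varrho)^{-1/p}$ and $|\nabla \log \gr_p|$ separately, both observe that $M_{\varrho_0}$ stays a definite distance from the pole, and both invoke the Wang--Zhang gradient estimate for positive $p$-harmonic functions under a Ricci lower bound. The only minor differences are in presentation: the paper bounds $\chi_g$ from below via a comparison $\chi_{g_\kappa} \ge \chi_h$ with $h(r)=e^{\kappa r}$ (yielding the explicit lower bound $((m-1)\kappa/p)^p$), whereas you argue by continuity and the asymptotic value at infinity; and the paper applies the Wang--Zhang estimate in its global form on $M\setminus B_R(o)$ rather than on local balls $B_{r^*}(x)$, but the resulting bounds are equivalent.
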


\begin{proof}
For notational convenience we omit the dependence of $\gr_p$ on $o$. Since $\varrho$ is positive on $M \backslash \{o\}$ and $\varrho(o)=0$, given $\varrho_0>0$ there exists $R \in (0, 1)$ small enough that $M_{\varrho_0}$ is disjoint from $B_R(o)$. The Harnack inequality in \cite{wangzhang} implies, that for each $x \in M_o$,
\begin{equation}\label{estiyau_0}
|\nabla \log \gr_p(x)| \le C(m,p,R) \left(\frac{1+ \kappa r(x)}{r(x)}\right) \le C_1(m,p,R)\frac{1+\kappa r(x)}{1 + r(x)},
\end{equation}
and using \eqref{gradientb} with $g(r) = g_\kappa(r) = \kappa^{-1}\sinh(\kappa r)$ we get
\begin{equation}\label{gradientb_simpler}
|\nabla \varrho(x)| \le C_2(m,p,R)\frac{1+\kappa r(x)}{1+r(x)}\chi_{g_\kappa}\big(\varrho(x)\big)^{-1/p}
\end{equation}
for each $x\in M_{\varrho_0}$. By Proposition 3.12 in \cite{bmr2} and Example 5.3 in \cite{bmr5}, if $g/h$ is non-decreasing on $\R^+$ then $\chi_g \ge \chi_h$ on $\R^+$. Applying the result with $g = g_\kappa$ and $h(r)= \exp(\kappa r)$ we obtain
$$
\chi_{g_\kappa}(t) \ge \chi_h(t) \equiv \left(\frac{m-1}{p}\right)^p \kappa^p,
$$
hence inserting into \eqref{gradientb_simpler} and maximizing over $r \in [R, \infty)$ we deduce \eqref{uppreuniform_gradb}.
\end{proof}

\begin{remark}
\emph{Although Proposition \ref{prop_gradienbounded} is enough for our purposes, the upper bound in \eqref{uppreuniform_gradb} could be considerably refined: in view of the fact that $\varrho=r$ if $M=M_g$, it is reasonable to claim that the sharp bound is $|\nabla \varrho| \le 1$ on the entire $M\backslash \{o\}$. This has been proved by T. Colding in \cite{colding} for $p=2$ and $\Ricc \ge 0$ on $M$, using the Euclidean space as model $M_g$, with the equality $|\nabla \varrho|=1$ holding at some point if and only if $M=\R^m$. In \cite{maririgolisetti_mono}, the authors extend the sharp bound, with rigidity, to each $p >1$ and lower bound \eqref{lowerRicci_fake}, for $\kappa \ge 0$.
}
\end{remark}

The properness of $\varrho$, that is, the property that $\gr_p(x,o) \ra 0$ as $r(x) = \mathrm{dist}(x,o) \ra \infty$, is a non-trivial fact intimately related to the geometry of~$M$ at infinity. Regarding the case when $\Ricc \ge 0$, I. Holopainen has proved the following

\begin{theorem}[Prop. 5.10 of \cite{holopainen2}]\label{teo_holopainen}
Let $M$ be complete, and suppose that $\Ricc \ge 0$. Denote by $V(r)= \vol(B_r)$ the volume of a geodesic ball of radius $r$ centered at some fixed origin $o$, and with $\partial M(r)$ the portion of $\partial B_r$ which is the boundary of an unbounded connected component. Fix $p \in (1,\infty)$. Then, $\Delta_p$ is non-parabolic on $M$ if and only if
\begin{equation}\label{V}
\left(\frac{s}{V(s)}\right)^{\frac{1}{p-1}}\in L^1(\infty).
\end{equation}
In this case there exists a constant $C \ge 1$ independent of $r$ such that
\begin{equation}\label{bellastima}
\frac 1C\int_{2r}^{\infty} \left(\frac{s}{V(s)}\right)^{\frac{1}{p-1}} \di s \le \gr_p(x,o) \le C\int_{2r}^{\infty} \left(\frac{s}{V(s)}\right)^{\frac{1}{p-1}}\di s
\end{equation}
for each $x \in \partial M(r)$.
\end{theorem}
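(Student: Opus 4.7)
The plan is to reduce the theorem to a sharp two-sided estimate for the $p$-capacity of concentric balls and then transfer this estimate to the Green function. The two key tools supplied by the assumption $\Ricc \ge 0$ are the Bishop--Gromov volume doubling property $V(2r) \le 2^m V(r)$ and the scale-invariant $L^p$-Poincar\'e inequality of Buser--Saloff-Coste on balls, namely $\int_{B_r} |f - f_{B_r}|^p \le C r^p \int_{B_{2r}} |\nabla f|^p$. Recall also that non-parabolicity of $\Delta_p$ on $M$ is equivalent to $\lim_{R \to \infty} \capac_p(\overline{B_r}, B_R) > 0$ for one (equivalently, every) $r > 0$.

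The central estimate I would prove is
\begin{equation}\label{pianCapEstimate}
\capac_p(\overline{B_r}, B_R)^{-1/(p-1)} \asymp \int_r^R \left( \frac{s}{V(s)}\right)^{1/(p-1)} \di s,
\end{equation}
for all $0 < r \le R/2$, with constants depending only on $m$ and $p$. First I would handle a single dyadic annulus by showing $\capac_p(\overline{B_{2^k r}}, B_{2^{k+1}r}) \asymp V(2^k r)/(2^k r)^p$: the upper bound is obtained by plugging the Lipschitz test function $\phi(x) = (1 - \dist(x, B_{2^k r})/(2^k r))_+$, while the lower bound uses doubling and Poincar\'e to force $\int |\nabla \phi|^p \gtrsim V(2^k r)/(2^kr)^p$ for any competitor with mean close to neither $0$ nor $1$. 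Then I would chain these estimates along a telescoping family of dyadic annuli: by a standard series-law for $p$-capacity (glue the individual equilibrium potentials together using a radial rescaling so that their Dirichlet energies add in the exponent $-1/(p-1)$), the dyadic sum $\sum_k ((2^k r)^p/V(2^k r))^{1/(p-1)}$ is comparable to $\capac_p(\overline{B_r}, B_R)^{-1/(p-1)}$, and another application of doubling converts the dyadic sum into the integral on the right-hand side of \eqref{pianCapEstimate}. Letting $R \to \infty$ then yields the equivalence in \eqref{V}.

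For the pointwise bound \eqref{bellastima}, I would compare $\gr_p(\cdot, o)$ with the equilibrium potential of $\overline{B_r}$. Let $u_R$ be the $p$-harmonic function in $B_R \setminus \overline{B_r}$ equal to $1$ on $\partial B_r$ and $0$ on $\partial B_R$, so $\int |\nabla u_R|^p = \capac_p(\overline{B_r}, B_R)$. Normalising $\gr_p$ by $M_r := \sup_{\partial B_r} \gr_p(\cdot,o)$ and applying the comparison principle in $B_R \setminus \overline{B_r}$ against $u_R$ and $M_r u_R$, one obtains $\gr_p(x,o) \asymp M_r u_\infty(x)$ on each unbounded component of $M \setminus \overline{B_r}$. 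Testing both sides at a concrete point and computing energies gives $M_r \asymp \capac_p(\overline{B_r}, M)^{-1/(p-1)}$, which by \eqref{pianCapEstimate} is comparable to $\int_r^\infty (s/V(s))^{1/(p-1)} \di s$. Finally, the elliptic Harnack inequality for $p$-harmonic functions on manifolds with $\Ricc \ge 0$ (available thanks to volume doubling + Poincar\'e) allows one to chain the value of $\gr_p$ at an arbitrary $x \in \partial M(r)$ to $M_{2r}$ through a controlled number of overlapping balls contained in the unbounded component containing $x$; the factor $2r$ in the statement is precisely the safety margin that makes the Harnack constants uniform. The restriction to $\partial M(r)$ is essential, because on a bounded component of $M \setminus B_r$ the Green function need not be comparable to its values on the true end.

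The main technical obstacle is the transition from the integrated (capacity-level) estimate to the pointwise estimate on $\gr_p$ in Step 3, since away from the linear case $p = 2$ the Green function is not given by an explicit integral kernel. This step essentially forces one to invoke quasilinear regularity theory (H\"older continuity and Harnack inequality of Serrin--Trudinger--Tolksdorf type) together with a careful comparison argument; in the non-connected situation this is precisely why $\partial M(r)$ rather than all of $\partial B_r$ enters the statement. A secondary difficulty is obtaining the correct constants in the chaining step of the proof of \eqref{pianCapEstimate}, which requires gluing $p$-capacity potentials rather than simply summing Dirichlet energies, and relies crucially on the doubling property to convert the discrete sum into the continuous integral.
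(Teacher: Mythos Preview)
The paper does not prove this statement at all: Theorem~\ref{teo_holopainen} is quoted verbatim as Proposition~5.10 of Holopainen~\cite{holopainen2} and used as a black box (to derive Corollary~\ref{cor_holopainen}). So there is no ``paper's own proof'' to compare against.

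That said, your outline is essentially the strategy Holopainen himself follows in~\cite{holopainen2}: reduce to a two-sided $p$-capacity estimate for concentric balls via volume doubling and the weak $(1,p)$-Poincar\'e inequality (both consequences of $\Ricc\ge 0$), chain dyadic annuli, and then pass from capacity to pointwise bounds on $\gr_p$ using the Harnack inequality for $p$-harmonic functions. Your identification of the restriction to $\partial M(r)$ as arising from the Harnack chaining step (which requires staying inside an unbounded component) is correct, as is your remark that the pointwise step is where the nonlinearity of $\Delta_p$ causes the most friction.
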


\begin{corollary}\label{cor_holopainen}
Let $p \in (1,\infty)$. If $M$ is complete with $\Ricc \ge 0$, and $\Delta_p$ is non-parabolic, then for each fixed origin $o\in M$ the Green kernel $\gr_p(x,o) \ra 0$ as $r(x) \ra \infty$.
\end{corollary}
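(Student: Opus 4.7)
The plan is to derive the corollary directly from the quantitative estimate \eqref{bellastima} in Theorem \ref{teo_holopainen}, combined with a maximum principle argument to propagate the decay from $\partial M(r)$ to the entire complement of a geodesic ball. The hypothesis of non-parabolicity gives, by Theorem \ref{teo_holopainen}, that $(s/V(s))^{1/(p-1)}\in L^1(\infty)$, so the quantity
$$
\Psi(r) = C\int_{2r}^{\infty} \left(\frac{s}{V(s)}\right)^{\frac{1}{p-1}}\di s
$$
is finite for each $r$ and satisfies $\Psi(r)\to 0$ as $r\to\infty$.

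First, fix $\varepsilon>0$ and choose $R>0$ large enough that $\Psi(R)<\varepsilon$. By \eqref{bellastima}, for every $x\in\partial M(R)$ we have $\gr_p(x,o)\le\Psi(R)<\varepsilon$. Next, let $U$ be any unbounded connected component of $M\setminus\overline{B_R}$. Its boundary $\partial U$ lies in $\partial B_R$, and by definition of $\partial M(R)$ (as the boundary of the unbounded components of $M\setminus B_R$) one has $\partial U\subset\partial M(R)$; hence $\gr_p(\cdot,o)<\varepsilon$ on $\partial U$.

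Now I propagate the bound into $U$. For $R'>R$, consider the bounded open set $U\cap B_{R'}$. Its boundary is contained in $\partial U \cup (\partial B_{R'}\cap \overline{U})$. A point $y\in\partial B_{R'}\cap U$ has a whole neighbourhood in $U$, so the local component of $M\setminus B_{R'}$ next to $y$ meets $U\setminus\overline{B_{R'}}$, which is unbounded; therefore $y\in\partial M(R')$, and by \eqref{bellastima} again $\gr_p(y,o)\le\Psi(R')$. Choosing $R'$ so large that $\Psi(R')<\varepsilon$, we get $\gr_p(\cdot,o)<\varepsilon$ on the whole boundary of the bounded domain $U\cap B_{R'}$. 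Since $\gr_p(\cdot,o)$ is $p$-harmonic on $M\setminus\{o\}\supset U\cap B_{R'}$, the comparison principle for the $p$-Laplacian on a bounded domain yields $\gr_p(\cdot,o)<\varepsilon$ on $U\cap B_{R'}$. Letting $R'\to\infty$ gives $\gr_p(\cdot,o)\le\varepsilon$ on all of $U$.

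Finally, a point $x$ with $r(x)>R$ either lies in some unbounded component $U$ of $M\setminus\overline{B_R}$, in which case the previous step gives $\gr_p(x,o)\le\varepsilon$, or it lies in a bounded component; but the bounded components are contained in a fixed compact set, hence for $r(x)$ sufficiently large this alternative is excluded. Thus $\limsup_{r(x)\to\infty}\gr_p(x,o)\le\varepsilon$, and since $\varepsilon>0$ is arbitrary the corollary follows. The only delicate point is the identification, for $y\in\partial B_{R'}\cap U$, of $y$ as a point of $\partial M(R')$ so that the Holopainen estimate applies on the outer portion of the boundary of $U\cap B_{R'}$; once this is in place, the maximum principle on the bounded domain $U\cap B_{R'}$ closes the argument routinely.
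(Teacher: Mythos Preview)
Your approach has a genuine gap at the step you yourself flag as ``the only delicate point.'' You assert that for $y\in U\cap\partial B_{R'}$ (with $U$ an unbounded component of $M\setminus\overline{B_R}$), the component of $M\setminus B_{R'}$ through $y$ is unbounded, i.e.\ $y\in\partial M(R')$. Your justification is that this component ``meets $U\setminus\overline{B_{R'}}$, which is unbounded.'' But meeting, or even being contained in, an unbounded set does not make a set unbounded: the component of $M\setminus B_{R'}$ through $y$ could perfectly well be a bounded ``pocket'' of $U$ that protrudes past radius $R'$ and comes back. Nothing in the hypotheses rules this out, since the sphere $\partial B_{R'}$ can be highly irregular once it crosses the cut locus. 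Hence on that portion of $\partial(U\cap B_{R'})$ you cannot invoke \eqref{bellastima}, and the comparison on $U\cap B_{R'}$ does not close. A clean fix is to replace the truncation at $\partial B_{R'}$ by the Dirichlet approximants: on $U\cap\Omega_j$ one has $\gr_{p,j}\le\gr_p<\varepsilon$ on $\partial U$ and $\gr_{p,j}=0$ on $\partial\Omega_j$, whence $\gr_{p,j}<\varepsilon$ on $U\cap\Omega_j$ by comparison, and letting $j\to\infty$ yields $\gr_p\le\varepsilon$ on $U$.

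There is a second gap in the final paragraph: the assertion that the bounded components of $M\setminus\overline{B_R}$ lie in a fixed compact set is stated without proof, and it is not obvious, since $\partial B_R$ need not be a smooth hypersurface and $M\setminus\overline{B_R}$ could a priori have infinitely many components. The paper's proof takes a different route that confronts this bounded-component issue directly: arguing by contradiction from a divergent sequence $x_j$ with $\gr_p(x_j)\ge c$, it tracks for each $x_j$ the infimal radius $\bar r_j$ at which $x_j$ first falls into a bounded component of $M\setminus B_r$, uses the maximum principle to push the lower bound on $\gr_p$ out to $\partial M(\hat r_j)$ for some $\hat r_j$ just below $\bar r_j$, and then \eqref{bellastima} forces the $\bar r_j$ to stay bounded---yielding infinitely many distinct bounded components of $M\setminus B_R$ at a fixed radius $R$, which is the contradiction.
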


\begin{proof}
For the ease of notation we omit the dependence on $o$ of $\gr_p$. Suppose that there exists $c>0$ and a sequence $\{x_j\}$ with $r_j = r(x_j)\ra \infty$ such that $\gr_p(x_j) \ge c$ for each $j$. By \eqref{bellastima}, up to removing a finite number of $x_j$ and relabelling, $x_j$ necessarily belongs to the boundary of a compact connected component of $M\backslash B_{r_j}$. For $r \le r_j$, let $U_r$ be the connected component of $M \backslash B_r$ containing $x_j$, and define
$$
I_j = \Big\{ r \in (0,r_j] \ : \ \text{$U_r$ is compact}\Big\}, \qquad \bar r_j = \inf(I_j).
$$
Note that $I_j \neq \emptyset$. For $r \in I_j$, since $\gr_p$ is $p$-harmonic on $U_r$ the maximum principle gives $\gr_p(x_j) \le \max_{\partial U_r} \gr_p$, and by continuity $\gr_p(x_j) \le \max_{\partial U_{\bar r_j}}\gr_p$. Choose $\hat r_j  \in (\bar r_j-1, \bar r_j)$ in order to satisfy $\gr_p(x_j) \le \max_{\partial U_{\hat r_j}}\gr_p+ c/2$. Applying \eqref{bellastima} to points of $\partial U_{\hat r_j}$ we have
$$
c \le \gr_p(x_j) \le \max_{\partial U_{\hat r_j}}\gr_p+ \frac{c}{2} \le \frac c 2 + C\int_{2\hat r_j}^{\infty} \left(\frac{s}{V(s)}\right)^{\frac{1}{p-1}}\di s.
$$
This implies that $\{\hat r_j\}$, and hence $\{\bar r_j\}$, is bounded. Fix $R> \sup_j \bar r_j$. Since $r(x_j) \ra \infty$ and using that, by construction, each $x_j$ belongs to a bounded connected component of $M \backslash B_R$, we deduce that $M \backslash B_R$ should necessarily have infinitely many connected components, a contradiction.
\end{proof}

If the Ricci tensor is negative somewhere, due to the possible presence of $p$-parabolic ends some additional condition must be placed on $M$ in order to guarantee that $\gr_p(x,o) \ra 0$ as $r(x) \ra \infty$. We recall that given $K \subset U \subset M$ with $K$ compact, $U$ open, the $p$-capacity of the condenser $(K,U)$ is
$$
\capac_p(K,U) = \inf \left\{ \int_M |\nabla \psi|^p \ \ : \ \ \psi \in \lip_c(U), \ \psi\ge 1 \ \text{ on } \, K\right\}.
$$
Note that $\capac_p(K,U)$ increases by enlarging $K$ or reducing $U$. If $K$ is the closure of a smooth open set and $U$ is smooth and relatively compact, the infimum is realized by the unique solution of $\Delta_p u =0$ on $U \backslash K$, $u=1$ on $\partial K$ and $u=0$ on $\partial U$, extended with $u=1$ on $K$. We prove the following

\begin{theorem}\label{teo_sobolev}
Suppose that $M$ is complete and satisfies a Sobolev inequality
\begin{equation}\label{sobolev}
\left( \int |\psi|^{ \frac{\nu p}{\nu-p}}\right)^{\frac{\nu-p}{\nu}} \le S_{p,\nu} \int |\nabla \psi|^p \qquad \forall \, \psi \in \lip_c(M),
\end{equation}
for some $p \in (1,\infty)$, $\nu>p$ and $S_{p,\nu}>0$. Then, $\Delta_p$ is non-parabolic and there exists a constant $C$ depending on $p, \nu$ and on the geometry of $M$ such that
$$
\gr_p(x,o) \le C r(x)^{- \frac{\nu-p}{p}} \qquad \text{for } \, r(x) \ge 2.
$$
In particular, $\gr_p(x,o) \ra 0$ as $r(x) \ra \infty$.
\end{theorem}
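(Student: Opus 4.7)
The plan is to extract from the Sobolev inequality two ingredients---non-parabolicity (yielding existence of $\gr_p$) and a global distributional bound on $\gr_p$---and then to promote the latter to the claimed pointwise decay via a Moser iteration. First, inserting a test function $\psi \in \lip_\loc(M)$ with compact support and $\psi \ge 1$ on a fixed compact $K$ into \eqref{sobolev} gives
$$
\vol(K)^{(\nu-p)/\nu} \le \left( \int \psi^{\nu p/(\nu-p)}\right)^{(\nu-p)/\nu} \le S_{p,\nu} \int |\nabla \psi|^p.
$$
Taking the infimum over admissible $\psi$, I obtain $\capac_p(K) \ge S_{p,\nu}^{-1} \vol(K)^{(\nu-p)/\nu} > 0$ whenever $\vol(K) > 0$. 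By the capacity characterization of $p$-non-parabolicity \cite{HKM, troyanov, pigolasettitroyanov}, $\Delta_p$ is non-parabolic on $M$, hence the minimal positive Green kernel $\gr_p(\cdot, o)$ exists for every $o \in M$.

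Write $u := \gr_p(\cdot, o)$. For $0 < s < t$, consider the truncation $\psi_{s,t} := \min((u - s)_+, t - s)$. Testing the weak equation $\Delta_p u = -\delta_o$ against $\eta_j \psi_{s,t}$, where $\eta_j$ is a cutoff supported in the exhaustion $\Omega_j$ of $M$, and letting $j \to \infty$, one derives the energy identity
$$
\int_{\{s < u < t\}} |\nabla u|^p = t - s.
$$
Plugging the same $\psi_{s,t}$ into \eqref{sobolev} and using $\psi_{s,t} \ge (t-s)\mathbf{1}_{\{u \ge t\}}$ to bound the left-hand side from below gives $(t-s)^p \vol(\{u \ge t\})^{(\nu-p)/\nu} \le S_{p,\nu}(t-s)$. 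Letting $s \downarrow 0$ yields the distributional bound
$$
\vol(\{u \ge t\}) \le S_{p,\nu}^{\nu/(\nu-p)}\, t^{-(p-1)\nu/(\nu-p)} \qquad \text{for every } t > 0.
$$

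For the pointwise decay, I would fix $x \in M$ with $R := r(x) \ge 2$; since $o \notin B_{R/2}(x)$, the function $u$ is $p$-harmonic on this ball. A standard Caccioppoli inequality (testing the $p$-harmonic equation against $\eta^p u^\beta$ for suitable $\beta > 0$ and cutoffs $\eta$) combined with the \emph{global} \eqref{sobolev} used in place of the usual local Sobolev or Poincar\'e inequality drives a De Giorgi--Moser iteration yielding
$$
\sup_{B_{R/4}(x)} u \le C(\nu, p, S_{p,\nu})\, R^{-\nu/p}\, \|u\|_{L^p(B_{R/2}(x))}.
$$
The $L^p$ norm on the right is controlled by applying the layer-cake formula to the distributional bound above, combined with the volume lower bound $\vol(B_R(x)) \ge c R^\nu$ obtained by iterating \eqref{sobolev} on cutoffs of nested balls. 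This produces the stated decay $\gr_p(x, o) \le C r(x)^{-(\nu-p)/p}$, whence $\gr_p(\cdot, o) \to 0$ at infinity.

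The main technical obstacle is executing the Moser iteration on a manifold endowed only with a global Sobolev inequality---no local Poincar\'e, doubling, or volume upper bound is assumed a priori---and tracking the $R$-scaling through the iteration so as to recover exactly the exponent $(\nu-p)/p$. Since $u$ is singular at $o$ and a priori not in any global $L^p$ class, the required $L^p$ bound of $u$ on $B_{R/2}(x)$ cannot be obtained from first principles and must instead be extracted from the distributional estimate above via a careful choice of the splitting threshold in the layer-cake decomposition.
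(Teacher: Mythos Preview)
Your non-parabolicity argument and the level-set energy identity are fine (modulo the standard cure of working with the compactly supported approximants $\gr_{p,j}$ to justify plugging the truncation into \eqref{sobolev}). The weak-$L^q$ bound $\vol(\{u\ge t\})\le C t^{-(p-1)\nu/(\nu-p)}$ is correct.

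The real gap is in the final step. You want to feed $\|u\|_{L^p(B_{R/2}(x))}$ into the Moser estimate and control it via layer-cake plus the distributional bound. Two independent obstructions kill this. First, the Sobolev inequality gives a volume \emph{lower} bound $\vol(B_R)\ge cR^\nu$, but you need a volume \emph{upper} bound to control the low-level part $\int_0^T t^{p-1}\vol(B_{R/2}(x)\cap\{u>t\})\,dt\le T^p\vol(B_{R/2}(x))$; no such bound is available (think of hyperbolic space, which satisfies \eqref{sobolev} but has exponential volume growth). Second, even granting a volume upper bound, the high-level tail $\int_T^\infty t^{p-1}\cdot t^{-(p-1)\nu/(\nu-p)}\,dt$ diverges whenever $\nu\ge p^2$, so the layer-cake does not close in general.

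The paper avoids both problems. Instead of a weak-$L^q$ bound, it proves a \emph{global} strong $L^{p^*}$ bound, $p^*=\nu p/(\nu-p)$, on $M\setminus\Omega_1$: truncate $\gr_{p,j}$ at a fixed height $c_1$, observe that the truncated function is the $p$-capacitor of a condenser with uniformly bounded capacity, and apply \eqref{sobolev} to it. Then the Moser iteration is run on a nested family of \emph{annuli centered at $o$} shrinking to $B_{R_1}\setminus B_R$, with starting norm $\|\gr_{p,j}\|_{L^{p^*}(M\setminus\Omega_1)}$; since this norm is global and already finite, no volume bound enters anywhere, and the cutoff gradients produce exactly the factor $R^{-(\nu-p)/p}$. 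Letting $R_1\to\infty$ and using the maximum principle gives the decay on all of $M\setminus B_R$.
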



\begin{proof}
It is well-known that \eqref{sobolev} implies the non-parabolicity of $\Delta_p$ on $M$ (see for instance \cite{pigolasettitroyanov} for general $p$, and \cite[Lemma 7.13]{prs} for $p=2$). By \cite{holopainen}, $\gr_p$ is the locally uniform limit of an increasing sequence $\{\gr_{p,j}\}_{j \in \mathbb{N}}$ of Green kernels associated to a smooth increasing exhaustion $\{\Omega_j\}$ of $M$, with zero boundary conditions. Without loss of generality, assume that $\Omega_1 \Subset B_1(o) \Subset \Omega_2$. Hereafter, geodesic balls will always be centered at $o$. Extend each $\gr_{p,j}$ with zero on $M \backslash \Omega_j$, and observe that $\Delta_p \gr_{p,j} \ge 0$ on $M \backslash \{o\}$. By the maximum principle,
$$
\sup_{M \backslash \Omega_1}\gr_{p,j} = \max_{\partial \Omega_1} \gr_{p,j} \le \max_{\partial \Omega_1} \gr_p = \frac{c_1}{2},
$$
and passing to the limit, $\sup_{M \backslash \Omega_1}\gr_p \le c_1/2$. Again by the maximum principle and the monotonicity of the sequence $\{\gr_{p,j}\}$, $\{\gr_{p,j} \ge c_1 \} \subset \{\gr_p \ge c_1 \}\subset \Omega_1$ for each $j$. Moreover, $\gr_{p,j}/c_1$ is the capacitor of the condenser $( \{\gr_{p,j} \ge c_1\}, \Omega_{j})$, thus
$$
\int_{\Omega_j \backslash \{\gr_{p,j} \ge c_1\}} |\nabla \gr_{p,j}|^p = c_1^p \capac_p \big( \{\gr_{p,j} \ge c_1\}, \Omega_j\big) \le c_1^p\capac_p \big( \overline{\Omega}_1, \Omega_2\big)
$$
for each $j \ge 2$. Plugging in the Sobolev inequality \eqref{sobolev} the text function $\psi_j \in \lip_c(M)$ given by $\psi_j = \gr_{p,j}$ on $M \backslash \{\gr_{p,j} \ge c_1\}$, $\psi_j = c_1$ on $\{\gr_{p,j} \ge c_1\}$, we deduce
\begin{equation}\label{boundLp}
\begin{array}{lcl}
\disp \left( \int_{M\backslash \Omega_1} \gr_{p,j}^{ \frac{\nu p}{\nu-p}}\right)^{\frac{\nu-p}{\nu}} & \le & \disp \left( \int \psi_j^{ \frac{\nu p}{\nu-p}}\right)^{\frac{\nu-p}{\nu}} \le S_{p,\nu} \int |\nabla \psi_j|^p \\[0.4cm]
& = & \disp S_{p,\nu}\int_{\Omega_j \backslash \{\gr_{p,j} \ge c_1\}} |\nabla \gr_{p,j}|^p \le S_{p,\nu}c_1^p\capac_p \big( \overline{\Omega}_1, \Omega_2\big).
\end{array}
\end{equation}
Thus, $\{\gr_{p,j}\}$ has uniformly bounded $L^{\nu p/(\nu-p)}$-norm on $M \backslash \Omega_1$. We now perform a standard Moser's iteration on annuli to deduce the uniform decay estimate. In what follows, for convenience we write $\gr$ instead of $\gr_{p,j}$. Fix $\gamma \ge 1$, and let $\phi \in \lip_c(M\backslash \Omega_1)$ to be chosen later. We compute
\begin{equation}\label{gradestigreen}
\begin{array}{lcl}
\disp \int \big| \nabla (\phi \gr^\gamma)\big|^p & \le & \disp \disp \int \big| \gr^\gamma|\nabla \phi| + \gamma \phi \gr^{\gamma-1}|\nabla \gr|\big|^p \\[0.4cm]
& \le & \disp 2^p \int \gr^{\gamma p}|\nabla \phi|^p + 2^p\gamma^p \int \gr^{p(\gamma-1)} \phi^p|\nabla \gr|^p.
\end{array}
\end{equation}
To estimate the second integral on the right-hand, we plug the test function $\phi^p \gr^{p(\gamma-1)+1}$ in the weak definition of $\Delta_p\gr \ge 0$, and use Schwarz and Young inequalities with parameter $\eps>0$, to obtain
$$
\begin{array}{lcl}
\disp \int \phi^p \gr^{p(\gamma-1)}|\nabla \gr|^p & \le & \disp \frac{-p}{p(\gamma-1)+1} \int |\nabla \gr|^{p-2}\phi^{p-1}\gr^{p(\gamma-1)+1}\langle \nabla \gr, \nabla \phi \rangle \\[0.4cm]
& \le & \disp \frac{p\eps^{p'}}{p(\gamma-1)+1} \int \phi^p\gr^{p(\gamma-1)}|\nabla \gr|^p + \frac{p\eps^ {-p}}{p(\gamma-1)+1} \int \gr^ {\gamma p}|\nabla \phi|^p
\end{array}
$$
Choosing $\eps$ such that
$$
\frac{p\eps^{p'}}{p(\gamma-1)+1} = \frac{1}{2}
$$
we get
$$
\disp \int \phi^p \gr^{p(\gamma-1)}|\nabla \gr|^p \le \disp \left(\frac{2p}{p(\gamma-1)+1}\right)^p \int \gr^ {\gamma p}|\nabla \phi|^p.
$$
Inserting into \eqref{gradestigreen} we infer
$$
\disp \int \big| \nabla (\phi \gr^\gamma)\big|^p \le 2^p\left(1+ \left[\frac{2p\gamma}{p(\gamma-1)+1}\right]^p\right) \int \gr^{\gamma p}|\nabla \phi|^p \le C_p \int \gr^{\gamma p}|\nabla \phi|^p,
$$
for some $C_p>0$ independent of $\gamma \ge 1$. Using the Sobolev inequality \eqref{sobolev} we then obtain
\begin{equation}\label{firstmoser}
\disp \left( \int (\phi \gr^\gamma)^{\frac{\nu p}{\nu-p}}\right)^{\frac{\nu-p}{\nu}} \le S_{p,\nu}C_{p} \int \gr^{\gamma p}|\nabla \phi|^p.
\end{equation}
Fix $0 < T < R_0 < R_1 < \infty$ such that $R_0-T >1$, and for $i \ge 0$ set
$$
\rho_i = R_0 - T\left(2-\sum_{k=0}^i2^{-k}\right), \quad \rho_\infty = R_0, \quad r_i = R_1 + T\left(2-\sum_{k=0}^i2^{-k}\right), \quad r_\infty = R_1.
$$
Define $A_i = B_{r_i}\backslash B_{\rho_i}$, and note that $A_i \subset A_0 \subset M \backslash B_1 \subset M \backslash \Omega_1$. Let $\eta_i \in \lip(\R)$ which is $1$ on $[\rho_{i+1},r_{i+1}]$, $0$ outside $[\rho_i,r_i]$ and linear in between, and set $\phi_i = \eta_i(r)$. Inserting $\phi_i$ in \eqref{firstmoser} and using $|\nabla \phi_i| \le T^{-1}2^{i+1}$ we deduce
$$
\disp \left( \int_{A_{i+1}} \gr^{\gamma\frac{\nu p}{\nu-p}}\right)^{\frac{\nu-p}{\nu}} \le S_{p,\nu}C_{p} (2^{i+1}/T)^p\int_{A_i} \gr^{\gamma p}.
$$
We use the inequality with
$$
\gamma = \gamma_i = \left(\frac{\nu}{\nu-p}\right)^{i+1}>1
$$
to obtain
$$
\disp \|\gr^p\|_{L^{\gamma_{i+1}}(A_{i+1})} \le (S_{p,\nu}C_{p})^{\gamma_i^{-1}} (2^{i+1}/T)^{p\gamma_i^{-1}}\|\gr^ p\|_{L^{\gamma_i}(A_i)}.
$$
Iterating and computing explicitly the sums,
$$
\begin{array}{lcl}
\disp \|\gr\|^p_{L^{\infty}(A_{\infty})} & \le & \disp (S_{p,\nu}C_{p}T^{-p})^{\sum_{i=0}^{\infty}\gamma_i^{-1}} 2^{p\sum_{i=0}^{\infty}(i+1)\gamma_i^{-1}}\|\gr^ p\|_{L^{\gamma_0}(A_0)} \\[0.4cm]
& \le & \disp (S_{p,\nu}C_{p})^{\frac{\nu-p}{p}}
T^{p-\nu}2^{\frac{\nu(\nu-p)}{p}}\|\gr^ p\|_{L^{\gamma_0}(A_0)}.
\end{array}
$$
From \eqref{boundLp}, $A_0 \subset \Omega \backslash \Omega_1$ and our definition of $\gamma_0$ we deduce
$$
\|\gr^p\|_{L^{\gamma_0}(A_0)} \le S_{p,\nu} c_1^p \capac_p(\overline{\Omega}_1, \Omega_2),
$$
hence
$$
\disp \|\gr\|^p_{L^{\infty}(A_{\infty})} \le \disp S_{p,\nu}^{\frac{\nu}{p}}C_{p,\nu}
T^{p-\nu} c_1^p \capac_p(\overline{\Omega}_1, \Omega_2).\\[0.4cm]
$$
Fix $R>2$ and choose $T=R/2$, $R_0 = R$. Letting $R_1 \ra \infty$ and using again the maximum principle we conclude
$$
\disp \|\gr_{p,j}\|_{L^{\infty}(M \backslash B_{R})} \le \disp \|\gr_{p,j}\|_{L^{\infty}(A_\infty)} \le \disp S_{p,\nu}^{\nu/p^2} \hat C_{p,\nu} R^{- \frac{\nu-p}{p}} c_1 \capac_p(\overline{\Omega}_1, \Omega_2)^{1/p}.
$$
and letting then $j \ra \infty$ we obtain the desired decay for $\gr_p$.
\end{proof}

We conclude this section by describing three relevant examples where \eqref{sobolev} holds with $\nu = m$, and consequently for $p \in (1,m)$
\begin{equation}\label{decaygreen_examples}
\gr_p(x,o) \le C r(x)^{- \frac{m-p}{p}} \qquad \text{when } \, r(x) \ge 2.
\end{equation}
We stress that, by \cite[Prop. 2.5]{carron} and \cite{pigolasettitroyanov}, \eqref{sobolev} holds on $M$ possibly with a different constant $S_{p,\nu}$ if and only if it holds outside some compact set of $M$.
\begin{example}\label{ex_minimal}
\emph{We recall that a Cartan-Hadamard space is a complete, simply-connected manifold with non-positive sectional curvature. Let $M^m \ra N^n$ be a complete, minimal immersion into a Cartan-Hadamard space. By \cite{hoffmanspruck}, the $L^1$-Sobolev inequality
\begin{equation}\label{isoperimetric}
\left(\int |\psi|^{\frac{m}{m-1}}\right)^{\frac{m-1}{m}} \le S_1 \int |\nabla \psi| \qquad \forall \, \psi \in \lip_c(M)
\end{equation}
holds for some $S_1(m)>0$. Let $p \in (1,m)$. Plugging as test function $|\psi|^\alpha$, for $u \in C^\infty_c(M)$ and $\alpha = \frac{p(m-1)}{m-p}>1$, using H\"older inequality and rearranging we get \eqref{sobolev} with $\nu = m$. Consequently, by Theorem \ref{teo_sobolev}, \eqref{decaygreen_examples} holds with the constant $C$ depending on $(p,m)$ and the geometry of $M$.
}
\end{example}

\begin{example}\label{prop_hebey}
\emph{Let $M^m$ be a complete manifold satisfying
\begin{itemize}
\item[$(i)$] $\Ricc \ge -(m-1) \kappa^ 2 \metric$ for some $\kappa>0$, and
\begin{equation}\label{ipo_persobolev}
\inf_{x \in M} \vol\big(B_1(x)\big) = \upsilon > 0;
\end{equation}
\item[$(ii)$] for some $p \in (1,m)$ and $C_P>0$, we have the validity of the Poincar\'e inequality
\begin{equation}\label{poincare}
\int |\psi|^p \le C_P \int |\nabla \psi|^p \qquad \forall \, \psi \in \lip_c(M).
\end{equation}
\end{itemize}
By work of N. Varopoulos (see \cite{hebey_book}, Thm. 3.2), because of $(i)$ $M$ enjoys the $L^1$-Sobolev inequality
\begin{equation}\label{sobolevL1}
\left(\int |\psi|^{\frac{m}{m-1}}\right)^{\frac{m-1}{m}} \le S_1 \int \big[|\nabla \psi| + |\psi|\big] \qquad \forall \, \psi \in \lip_c(M),
\end{equation}
for some $S_1(m,\kappa,\upsilon) >0$. Using again as a test function $|\psi|^\alpha$, for $u \in C^\infty_c(M)$ and $\alpha = \frac{p(m-1)}{m-p}$, by H\"older inequality and rearranging we get (see \cite[Lem. 2.1]{hebey})
\begin{equation}\label{sobolevL1}
\left(\int |\psi|^{\frac{mp}{m-p}}\right)^{\frac{m-p}{m}} \le S_p \int \big[|\nabla \psi|^p + |\psi|^p\big] \qquad \forall \, \psi \in \lip_c(M),
\end{equation}
for some $S_p(m,\kappa, \upsilon,p)>0$. Assumption $(ii)$ then guarantees \eqref{sobolev} with $\nu = m$, and \eqref{decaygreen_examples} follows from Theorem \ref{teo_sobolev}, with $C$ depending on $(m,\kappa,\upsilon, p)$ and the geometry of $M$.
}
\end{example}

\begin{example}\label{ex_roughisometric}
\emph{Two manifolds $M,N$ of the same dimension, with metrics $\di_M, \di_N$, are said to be \emph{roughly isometric} if there exists $\varphi : M \ra N$ such that
\begin{itemize}
\item[-] $B_\eps \big( \varphi(M) \big) = N$ for some $\eps >0$;
\item[-] there exist constant $C_1 \ge 1, C_2 \ge 0$ such that
$$
C_1^{-1} \di_M(x,y) -C_2 \le \di_N  \big( \varphi(x), \varphi(y) \big) \le C_1 \di_M(x,y) + C_2
$$
for each $x,y \in M$.
\end{itemize}
Note that in fact $M,N$ need not have the same dimension, but in what follows we are not interested in this more general case. M. Kanai in \cite{kanai} proved that if $M$ and $N$ are roughly isometric manifolds both satisfying the uniform condition
\begin{itemize}
\item[$(iii)$] $\Ricc \ge -(m-1) \kappa^ 2 \metric$, for some $\kappa>0$, and $\mathrm{inj}(M) >0$,
\end{itemize}
with $\mathrm{inj}(M)$ the injectivity radius of $M$, then \eqref{isoperimetric} holds on $M$ if and only if it holds on $N$. In particular, a manifold $M$ satisfying $(iii)$ and roughly isometric to $\R^m$ enjoys \eqref{isoperimetric}, and therefore \eqref{sobolev} with $\nu = m$. In the same assumptions note also that $\Delta_p$ is parabolic for each $p \ge m$, see \cite[Thm. 3.16]{holopainen3}, and thus \eqref{sobolev} is false for any $m \le p < \nu$. We remark in passing that $\mathrm{inj}(M)>0$ implies the lower bound of the volume in $(ii)$ of Example \ref{prop_hebey}, see \cite[Prop. 14]{croke}.
}
\end{example}

%
%

\begin{remark}
\emph{For $p=2$, Theorem \ref{teo_sobolev} and a weaker version of the result in Example \ref{prop_hebey} have been obtained in \cite{ni} by integrating the corresponding decay estimate for the heat kernel.
}
\end{remark}



\section{Boundary value problems for nonlinear ODEs}\label{sec2.3}

At the beginning of Section \ref{sec_exhaustion}, we observed that to find radial solutions of $(P_\ge)$ and $(P_\le)$ one is lead to solve the following ODE:
\begin{equation}\label{sol_ODE_22}
\big[v_g\varphi(w')\big]' = v_g \beta f(w) l(|w'|)
\end{equation}
on an interval of $\R^+_0$, where we have extended $\varphi$ to an odd function on $\R$. The functions $v_g$ and $\beta$ are bounds, respectively, for the volume of geodesic spheres of $M$ and for $b$. We devote this section to the study of \eqref{sol_ODE_22}. Regarding $\varphi, f,l$ we assume the following:

\begin{equation}\label{assum_secODE}
\left\{ \begin{array}{l}
\disp \mbox{$\varphi \in C(\R^+_0)$, $\ \varphi(0)=0$, $\ \varphi(t)>0$ for all $t\in\mathbb R^+$,}\\[0.2cm]
\disp \text{$\varphi$ is strictly increasing on $\R^+$,} \\[0.2cm]
\disp f \in C(\R), \\[0.2cm]
f \ge 0 \quad \text{ in $(0, \eta_0)$, for some $\eta_0 \in (0, \infty)$,} \\[0.2cm]
\disp l\in C(\mathbb R^+_0), \quad l\ge0 \ \text{ in } \, \mathbb R^+_0. \\[0.2cm]
\end{array}\right.
\end{equation}

We point out that no monotonicity is needed neither on $f$ nor on $l$. In some results, we also require the validity of the next conditions:
\begin{equation}\label{assum_secODE_altreL}
\left\{ \begin{array}{l}
\disp \mbox{$\varphi \in C^1(\R^+)$, $\ \varphi'>0$ on $\R^+$,}\\[0.3cm]
\disp \frac{t \varphi'(t)}{l(t)} \in L^1(0^+).
\end{array}\right.
\end{equation}

Set
$$
\varphi(\infty) = \lim_{t \ra \infty} \varphi(t) \in (0,\infty],
$$
and define $K$ and $F$ respectively as in \eqref{def_K} and \eqref{def_Fe_intro}.

In what follows, we find convenient to normalize the interval where we study \eqref{sol_ODE_22}, say on $[0, T_0]$, and for this reason we introduce two functions $a, \wp$ that will be related to, respectively, $\beta$ and $v_g$,  in a way that may depend from the geometrical problem at hand. We require

\begin{equation}\label{aepi}
\left\{ \begin{array}{ll}
\disp \wp\in C^1([0, T_0]), & \qquad \wp>0 \ \text{ on }   [0, T_0], \\[0.1cm]
a \in C([0,T_0]), & \qquad a>0 \ \text{ on } \, [0, T_0].
\end{array}\right.
\end{equation}

\subsection{The Dirichlet problem}

We first investigate the existence and the qualitative properties of $C^1$ weak solutions of the singular boundary value problem
\begin{equation}\label{twoboundary}
\begin{cases}
[\wp\, \varphi(w')]'=\wp a f(w)l(|w'|)\quad\mbox{on } \, (0,T),\\[0.1cm]
w(0)=0, \quad w(T)=\eta, \\[0.1cm]
0 \le w\le \eta, \quad w'\ge 0 \ \text{on } \, (0,T)
\end{cases}
\end{equation}
where $\eta>0$, $T \in (0, T_0)$ are given.
%
%
The results of this section are inspired by Chapters 4 and 8 of \cite{pucciserrin}, and we also borrow some of the main ideas of the proof of Proposition 3.1 in \cite{fprgrad} and the appendix of Chapter 4 in \cite{pucciserrin},  but with several improvements in the spirit of  \cite[Thm. 4.1]{bordofilipucci}.\par

One of the main points in our investigation is to determine under which assumptions on $f$, $l$, $\varphi$ and $a$, solutions of \eqref{twoboundary} satisfy $w'(0)=0$ or $w'(0)>0$, that is, whether or not $w$ can be pasted to the zero function on $(-\infty, 0)$ in a $C^1$ way.  As we shall see, such assumptions will be substantially given by the integrability condition \eqref{KO_zero}.

For $\eta, \xi >0$ set
\begin{equation}\label{227}
\begin{array}{ll}
a_0= \min_{[0,T_0]} a, & \qquad a_1= \max_{[0,T_0]} a;\\[0.2cm]
\wp_0 = \min_{[0,T_0]} \wp, & \qquad \wp_1 = \max_{[0,T_0]}\wp;\\[0.2cm]
f_\eta = \max_{[0,\eta]} f, & \qquad l_\xi = \max_{[0,\xi]} l; \\[0.2cm]
\disp \Theta(T) = \sup_{[0,T]} \frac{1}{\wp(t)} \int_0^t \wp(s)a(s) \di s.
\end{array}
\end{equation}
\noindent Note that $\Theta(T) \ra 0$ as $T \ra 0$.\par
We aim to prove the following existence


\begin{theorem}\label{exi2}
Assume \eqref{assum_secODE}, \eqref{aepi} and
\begin{equation}
f(0)l(0)=0.
\end{equation}
Fix $\xi>0$, let $T \in (0,T_0)$ and $\eta \in (0, \eta_0)$, with $\eta_0$ as in \eqref{assum_secODE}, satisfying
\begin{equation}\label{restrict}
\frac{\wp_1}{\wp_0}\varphi\left(\frac{\eta}{T}\right) + 2\Theta(T)f_\eta l_\xi < \varphi(\xi).
\end{equation}
Then, problem \eqref{twoboundary} admits a weak solution $w \in C^1([0,T])$ such that
\begin{equation}\label{supnorm_twobound}
0 \le w' \le \varphi^{-1} \left( \frac{\wp_1}{\wp_0}\varphi\left(\frac{\eta}{T}\right) + 2\Theta(T)f_\eta l_\xi \right)
\end{equation}
In particular, $0\le w' < \xi$.
\end{theorem}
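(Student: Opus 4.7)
I plan to prove Theorem \ref{exi2} via Schauder's fixed point theorem, applied to a linearization of the nonlinear BVP. Set
$$
M_0 = \varphi^{-1}\!\left(\tfrac{\wp_1}{\wp_0}\varphi(\eta/T) + 2\Theta(T) f_\eta l_\xi\right),
$$
so that $M_0 < \xi$ by hypothesis \eqref{restrict}. I would work on the set
$$
\mathcal{K} = \bigl\{ v \in \mathrm{Lip}([0,T]) : v(0)=0,\ v(T)=\eta,\ 0 \le v \le \eta,\ 0 \le v' \le M_0\text{ a.e.}\bigr\},
$$
which is non-empty (it contains $t \mapsto \eta t/T$, since $\eta/T \le M_0$), convex and closed in $C([0,T])$, and relatively compact in $C([0,T])$ by Arzel\`a--Ascoli.

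For each $v \in \mathcal{K}$, set $A_v(t) = \int_0^t \wp(s) a(s) f(v(s)) l(v'(s))\,ds$ and define $\Phi(v) = w$ by integrating the linearized ODE $[\wp\varphi(w')]' = \wp a f(v)\,l(v')$ with boundary values $w(0)=0$ and $w(T)=\eta$. Explicitly,
$$
w'(t) = \varphi^{-1}\!\left( \frac{c + A_v(t)}{\wp(t)} \right), \qquad w(t) = \int_0^t w'(s)\,ds,
$$
where $c = c_v$ is chosen via the intermediate value theorem so that $w(T)=\eta$, using the strict monotonicity and continuity of $c \mapsto w(T)$. To derive the a priori bound $w' \le M_0$, I would exploit that $f(v)\le f_\eta$ and $l(v')\le l_\xi$ on $\mathcal K$, yielding $A_v(t)/\wp(t) \le \Theta(T) f_\eta l_\xi$; combining the lower estimate $w'(t) \ge \varphi^{-1}(c/\wp_1)$ with $\int_0^T w'\,dt = \eta$ gives $c \le \wp_1 \varphi(\eta/T)$, whence
$$
w'(t) \le \varphi^{-1}\!\left( \tfrac{\wp_1}{\wp_0}\varphi(\eta/T) + \Theta(T)f_\eta l_\xi \right) \le M_0.
$$
Continuity of $\Phi: \mathcal{K} \to \mathcal{K}$ in the $C^0$-topology follows from dominated convergence (applied using the uniform Lipschitz bound on $\mathcal K$), continuity of $\varphi^{-1}$, and continuous dependence of $c_v$ on $v$. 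A fixed point $w = \Phi(w) \in \mathcal{K}$ then solves \eqref{twoboundary} and automatically satisfies \eqref{supnorm_twobound}.

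The main technical obstacle is ensuring that $c_v \ge 0$ for every $v \in \mathcal{K}$ (as required by $w'\ge 0$), which is equivalent to $T\varphi^{-1}(\Theta(T)f_\eta l_\xi) \le \eta$ and is \emph{not} directly implied by \eqref{restrict}. To overcome this I would replace the plain Schauder argument with a Leray--Schauder continuation along the homotopy $[\wp\varphi(w')]' = \lambda\,\wp a f(w) l(|w'|)$, $\lambda \in [0,1]$: the case $\lambda=0$ admits the explicit solution $w'(t) = \varphi^{-1}(c_0/\wp(t))$ with $c_0$ uniquely determined by the boundary condition, while the same estimate above, carried out with a bootstrap starting from the a priori assumption $w' \le \xi$, gives uniform $C^1$-bounds (independent of $\lambda$) for any solution in the admissible class. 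The assumption $f(0)l(0)=0$ finally ensures that the resulting $w$ can be matched continuously with $w \equiv 0$ for $t < 0$ when required. Standard topological degree arguments then yield a solution at $\lambda=1$, completing the proof.
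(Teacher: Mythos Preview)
Your overall strategy—Leray--Schauder continuation along the homotopy $[\wp\varphi(w')]' = \sigma\,\wp a f(w)\, l(|w'|)$, $\sigma \in [0,1]$—is exactly the paper's. However, the execution leaves unresolved the very obstacle you correctly identified in the Schauder attempt: nothing in your Leray--Schauder sketch forces $w' \ge 0$ a priori, and the ``bootstrap starting from the a priori assumption $w' \le \xi$'' is circular. If a putative solution along the homotopy had $w'(t_*) < 0$, your formula $w' = \varphi^{-1}\big((c+A_v)/\wp\big)$ is not even defined (you only have $\varphi$ on $\R^+_0$), and your key estimate $c \le \wp_1\varphi(\eta/T)$, which relied on $c \ge 0$, collapses. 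Saying ``standard degree arguments then yield a solution'' hides precisely the step that needs work.

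The paper resolves this by preparing the data \emph{before} setting up the fixed-point operator: (i) extend $\varphi$ to a strictly increasing function on all of $\R$ with $\varphi(-\infty)=-\infty$, odd near $0$; (ii) redefine $f$ to vanish on $(-\infty,0)$ and truncate $l$ above $\xi$; (iii) allow the shooting constant $\delta$ to range over the two-sided interval $[-\wp_1\mu_1,\wp_0\mu_1]$. With these modifications the operator $\mathcal{H}(w,\sigma)$ is well-defined on all of $X = C^1([0,T])$ (not merely $C^0$), and compactness is obtained by proving equicontinuity of the \emph{derivatives}. For any fixed point one first shows $w \ge 0$ via a test-function argument exploiting $f|_{(-\infty,0)}\equiv 0$, then deduces $w' \ge 0$ from Lemma~\ref{lem_ODE}, and only afterwards reads off the quantitative bound \eqref{supnorm_twobound}. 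Finally, the role of $f(0)l(0)=0$ is not ``matching with $w\equiv 0$ for $t<0$'': since the truncated $\bar f$ may jump at $0$, one must check that on the (possibly nontrivial) interval where $w\equiv 0$ the ODE still holds with the \emph{original} $f$, and this is exactly what $f(0)l(0)=0$ guarantees.
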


We begin with the following auxiliary result, see  Lemma 4.1.3 of \cite{pucciserrin}.

\begin{lemma}\label{lem_ODE}
Under assumptions \eqref{assum_secODE} and \eqref{aepi}, suppose that $\varphi$ is extended on all of $\R$ in such a way that $t\varphi(t)>0$ on $\R \backslash \{0\}$. Then, any weak solution $w \in C^1([0,T])$ of
\begin{equation}\label{equa_w}
\left\{ \begin{array}{l}
\mathrm{sign}(w)\cdot\big[ \wp \varphi(w')\big]' \ge 0 \qquad \text{in } \, (0,T), \\[0.2cm]
w(0)=0, \quad w(T) = \eta>0
\end{array}\right.
\end{equation}
is such that
\begin{equation}\label{propert_w}
w \ge 0, \qquad w' \ge 0 \qquad \text{in }  [0,T].
\end{equation}
Moreover, there exists $t_0 \in [0,T)$ such that
\begin{equation}\label{propert_w2}
w \equiv 0 \ \ \text{ in } \, [0, t_0]; \qquad w>0, \ w'>0 \ \ \text{ in } (t_0, T],
\end{equation}
\end{lemma}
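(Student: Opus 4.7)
\textbf{Proof plan for Lemma \ref{lem_ODE}.}

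The core idea is to exploit the monotonicity of the quantity $\Phi(t) := \wp(t)\varphi(w'(t))$ on intervals where $w$ has constant sign, together with the extension property $t\varphi(t) > 0$ on $\R \setminus \{0\}$ (so that $\varphi(s) \le 0$ iff $s \le 0$, and $\varphi(0)=0$). Interpreting the hypothesis weakly, on any open subinterval where $w > 0$ (respectively $w < 0$), $\Phi$ is non-decreasing (respectively non-increasing). Since $w \in C^1$, $\Phi$ is continuous. The proof then reduces to three steps which all use the same ``integrate the sign of $w'$'' argument.

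\emph{Step 1: $w \ge 0$ on $[0,T]$.} Suppose the set $N = \{t : w(t) < 0\}$ is non-empty. It is open in $[0,T]$ and disjoint from the endpoints because $w(0)=0$ and $w(T)=\eta>0$, so it has a connected component $(\alpha,\beta) \subset (0,T)$ with $w(\alpha)=w(\beta)=0$ and $w<0$ on $(\alpha,\beta)$. Let $s \in (\alpha,\beta)$ be a point where $w$ attains its minimum; then $w'(s)=0$ so $\Phi(s)=0$. By the weak inequality on this component $\Phi$ is non-increasing, so $\Phi(t) \le 0$ for $t \in [s,\beta)$, i.e.\ $w'(t) \le 0$ there; integrating from $s$ to $\beta$ gives $w(\beta) \le w(s) < 0$, contradicting $w(\beta)=0$.

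\emph{Step 2: structure of the zero set.} Having $w \ge 0$, define $t_0 = \sup\{t \in [0,T] : w(t)=0\}$, which is well-defined; by continuity $w(t_0)=0$, and $t_0 < T$ because $w(T)=\eta>0$. On $(t_0,T]$ we have $w>0$, hence $\Phi$ is non-decreasing there. If $w'(t^*) \le 0$ at some $t^* \in (t_0,T)$, then $\Phi(t^*) \le 0$ and by monotonicity $\Phi(t) \le 0$ for all $t \in (t_0,t^*]$, whence $w'(t) \le 0$ on $(t_0,t^*]$; integrating gives $w(t^*) \le w(t_0)=0$, a contradiction. Hence $w'>0$ on $(t_0,T)$, and since $\Phi$ is non-decreasing and eventually positive near $T$, continuity yields $\Phi(T)>0$ and thus $w'(T)>0$.

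\emph{Step 3: $w \equiv 0$ on $[0,t_0]$.} If not, the open set $\{w>0\} \cap (0,t_0)$ has a connected component $(\alpha,\beta)$ with $0 \le \alpha<\beta \le t_0$ and $w(\alpha)=w(\beta)=0$. Now $\Phi$ is non-decreasing on $(\alpha,\beta)$; choosing $s \in (\alpha,\beta)$ where $w$ attains its maximum gives $\Phi(s)=0$, so $\Phi(t) \le 0$ on $(\alpha,s]$, hence $w' \le 0$ there, and integrating yields $w(s) \le w(\alpha)=0$, contradicting $w(s)>0$. The three steps together give $w\equiv 0$ and $w'\equiv 0$ on $[0,t_0]$ together with $w,w'>0$ on $(t_0,T]$, which is precisely \eqref{propert_w} and \eqref{propert_w2}.

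The only subtle point is making sure the ``$\Phi$ is monotone on components of $\{w\neq 0\}$'' statement is interpreted correctly in the weak sense, but since $w \in C^1$ and $\wp\varphi(w')$ is continuous, this follows from the standard characterization of monotone continuous functions via distributional derivatives of one sign; no further regularity of $\varphi$ or $w$ is required. This is the only place where the extension $t\varphi(t)>0$ on $\R\setminus\{0\}$ is used, ensuring $\mathrm{sign}\,\varphi(s)=\mathrm{sign}\,s$ so we can pass from the sign of $\Phi$ to the sign of $w'$.
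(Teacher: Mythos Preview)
Your proof is correct and reaches the same conclusion as the paper, but the organization and the key maneuver in Step~1 differ. The paper proves $w\ge 0$ by testing the weak inequality on a negative component $(t_0,t_1)$ with the function $\psi=-w$, obtaining $\int_{t_0}^{t_1}\wp\,\varphi(w')w'\le 0$ and invoking $s\varphi(s)>0$ to force $w'\equiv 0$ there. You instead argue pointwise: on such a component $\Phi=\wp\varphi(w')$ is non-increasing, vanishes at an interior minimum, and hence stays $\le 0$ to the right, contradicting $w(\beta)=0$. For the structure, the paper sets $t_0=\inf\{t:w'(t)>0\}$ and shows by integrating the inequality that once $w'>0$ it remains so; you set $t_0=\sup\{t:w(t)=0\}$ and run two separate monotonicity-of-$\Phi$ arguments on $(t_0,T]$ and on $[0,t_0]$.

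Both routes rest on exactly the same ingredients (continuity of $\Phi$, the sign property $\mathrm{sign}\,\varphi(s)=\mathrm{sign}\,s$, and the one-sided distributional derivative on components of $\{w\neq 0\}$). Your version is a bit more elementary in that it avoids the explicit test-function computation and phrases everything as monotonicity of a continuous function; the paper's integral argument is slightly shorter for Step~1 and handles Steps~2--3 in a single pass once $w\ge 0$ is known. Neither approach requires more regularity than $w\in C^1$ and $\wp\varphi(w')$ continuous, as you correctly note.
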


\begin{proof}
We first claim that $w \ge 0$ on $[0,T]$. Otherwise, by contradiction there exist $t_0$, $t_1$, with $0 \le t_0 < t_1 <T$, such that $w(t_0)=w(t_1)=0$ and $w<0$ on $(t_0,t_1)$. Using the non-negative, Lipschitz test function $\psi = -w$ on $[t_0,t_1]$, $\psi=0$ otherwise, we get
$$
\int_{t_0}^{t_1} \wp \varphi(w')w' \le 0.
$$
Since $s\varphi(s)>0$ in $\R\backslash \{0\}$ by assumption, the integrand is strictly positive. This gives the desired contradiction.

Let $\mathcal J = \{t\in (0,T) : w'(t)>0\}$. Since $w \in C^1([0,T])$ and $w(T)>w(0)$, $\mathcal J \neq \emptyset$ and $\mathcal J$ is open in $(0,T)$. Let $t_0 = \inf \mathcal J \in [0,T)$, so that $w\equiv 0$ on $[0,t_0]$. For each fixed $t \in (t_0, T)$, there necessarily exists $\bar t \in (t_0,t)$ with $w'(\bar t)>0$. Integrating \eqref{equa_w} on $[\bar t,t]$ we deduce that $\wp(t) \varphi(w'(t)) \ge \wp(\bar t) \varphi(w'(\bar t))>0$, which imply that $w'(t)>0$. Hence, $w'>0$ on $(t_0,T]$. Integrating again we obtain $w>0$ on $(t_0,T]$, concluding the proof.
\end{proof}

\begin{remark}\label{rem_afterlemmaODE}
\emph{Note that the a-priori knowledge of $w\ge0$ in $[0,T]$ allows us to directly apply the second part of the proof of Lemma~\ref{lem_ODE} and conclude that $w'\ge0$ on $(0,T)$, and so $0 \le w \le \eta$.
}
\end{remark}

We are now ready to solve the singular two-points boundary value problem \eqref{twoboundary}.
%

\begin{proof}[Proof of Theorem \ref{exi2}] Redefine $f$ and $l$ on the complementary of, respectively, $[0, \eta]$ and $[0, \xi]$, in such a way that
\begin{equation}\label{def_modifications}
\begin{array}{l}
\disp 0 \le f(s) \le f_\eta \quad \text{ for } \, s \ge \eta, \qquad f(s)=0 \quad \text{ for } \, s < 0, \\[0.2cm]
0<l(s) \le l_\xi \quad  \text{ for } \, s \ge \xi
\end{array}
\end{equation}
Note that we can change $l$ as above still keeping the validity of $l \in C(\R^+_0)$, while, with this procedure, we can only ensure that $f \in C(\R\backslash \{0\})$, since $f$ has a jump discontinuity at $s=0$ when $f(0)>0$. The modifications will not affect the conclusions of the theorem since any ultimate solution with $w'\ge 0$ satisfies $0 \le w \le \eta$ and $|w'| \le \xi$. However, the region $\{w=0\}$ needs a special care. We extend $\varphi$ to a continuous function defined on all of $\R$ in such a way that
\begin{equation}\label{extension_varphi}
\begin{array}{l}
\disp \varphi< 0 \quad \text{on } \, (-\infty, 0), \qquad \varphi(t) = -\varphi(-t) \qquad \text{if } \, t \in \left[ - \varphi^{-1}\left(\frac{\eta}{T}\right), 0\right], \\[0.2cm]
\varphi \qquad \text{is strictly increasing on } \, \R; \\[0.2cm]
\lim_{t \ra -\infty} \varphi(t) = -\infty.
\end{array}
\end{equation}
Let
\begin{equation}\label{mu}
\mu_1 = \frac{\wp_1}{\wp_0}\varphi\left(\frac{\eta}{T}\right) + \Theta(T)f_\eta l_\xi < \varphi(\xi)
\end{equation}
by \eqref{restrict}, and set
$$
I = [-\wp_1\mu_1, \wp_0\mu_1].
$$
To show the existence of solutions of \eqref{twoboundary}, following the approach in \cite{pucciserrin} we use Browder's version of the Leray-Schauder theorem\footnote{The idea is attributed by the authors in \cite{pucciserrin} to M. Montenegro.} (see Theorem~11.6 of \cite{gilbargtrudinger}), for the parametric family of boundary value problems
\begin{equation}\label{twoboundary_sigma}
\left\{ \begin{array}{l}
\big[ \wp \varphi(w')\big]' = \sigma \wp af(w)l(|w'|) \qquad \text{on } (0,T), \\[0.2cm]
w(0)=0, \qquad w(T)=\sigma \eta \ge 0,
\end{array}\right.
\end{equation}
for $\sigma \in [0,1]$. In our case, however, the presence of a nonconstant $l$ makes things more subtle. To tackle the problem we let $X$ be the Banach space $X=(C^1([0,T]), \|\cdot\|)$, where
$\|w\|=\|w\|_\infty+\|w'\|_\infty$ for $w\in X$. Define $\HHH : X \times [0,1] \ra X$ as follows:
\begin{equation}\label{defFF}
\HHH(w,\sigma)(t) = \sigma \eta - \int_t^T \varphi^{-1}\left(\frac{1}{\wp(s)}\left[\delta + \sigma \int_0^s\wp(\tau)a(\tau)f(w(\tau))l(|w'(\tau)|)\di \tau\right]\right)\di s,
\end{equation}
where $\delta=\delta(w,\sigma) \in I$ and $\delta$ is chosen in such a way that
\begin{equation}\label{inicond}
\HHH(w,\sigma)(0)=0.
\end{equation}
We claim that such a choice of~$\delta$ is possible, and in fact it is unique. First, we check that $\HHH(w,\sigma)$ is well-defined for each fixed $(w,\sigma) \in X \times [0,1]$ and $\delta\in I$. This follows from the next chain of inequalities, where we use the definition of $\Theta(T)$ and \eqref{restrict}:
\begin{equation}\label{double_est}
\begin{array}{l}
\disp \frac{1}{\wp(s)} \left[ \delta + \sigma \int_0^s\wp(\tau)a(\tau)f(w(\tau))l(|w'(\tau)|)\di \tau\right] \le \frac{\wp_0\mu_1}{\wp_0} + \Theta(T)f_\eta l_\xi \\[0.4cm]
\qquad \disp = \mu_1 + \Theta(T)f_\eta l_\xi < \varphi(\xi).
\end{array}
\end{equation}
We remark that, by construction, $\varphi$ is a homeomorphism of $(-\infty,0)$ onto itself. Moreover, if $\delta=-\wp_1\mu_1$, recalling that $\varphi^{-1}$ is increasing on $\R$ we have
\begin{align*}
\HHH(w,\sigma)(0) & =  \disp \sigma \eta - \int_0^T \varphi^{-1}\left(\frac{1}{\wp(s)}\left[-\wp_1\mu_1 + \sigma \int_0^s\wp(\tau)a(\tau)f(w(\tau))l(|w'(\tau)|)\di \tau\right]\right)\di s \\
& \ge  \disp \sigma \eta - \int_0^T \varphi^{-1}\left( - \frac{\wp_1\mu_1}{\wp(s)} + \Theta(T)f_\eta l_\xi\right)\di s \\
& \ge \sigma \eta - \int_0^T \varphi^{-1}\left( - \mu_1 + \Theta(T)f_\eta l_\xi\right)\di s \ge \sigma \eta,
\end{align*}
where the last inequality follows from \eqref{extension_varphi} and since $\mu_1 \ge \Theta(T)f_\eta l_\xi$. On the other hand, for $\delta= \wp_0\mu_1$, for all $(w, \sigma) \in X \times [0,1]$ we find
\begin{align*}
\HHH(w,\sigma)(0) & =  \disp \sigma \eta - \int_0^T \varphi^{-1}\left(\frac{1}{\wp(s)}\left[\wp_0\mu_1 + \sigma \int_0^s\wp(\tau)a(\tau)f(w(\tau))l(|w'(\tau)|)\di \tau\right]\right)\di s \\
& \le  \disp \eta - \int_0^T \varphi^{-1}\left( \frac{\wp_0\mu_1}{\wp(s)}\right) \le \eta-\int_0^T \varphi^{-1}\left( \frac{\wp_1}{\wp(s)}\varphi\left( \frac\eta T\right)\right) \\
&  \le \eta - \int_0^T \varphi^{-1}\left( \varphi\left(\frac{\eta}{T}\right)\right)\di s = 0.
\end{align*}
Now, the integrand in the RHS of \eqref{defFF} is a strictly increasing function of~$\delta$ for $(w,\sigma)$ fixed. It is therefore clear that there exists a unique $\delta=\delta(w,\sigma) \in I$ such that $\HHH(w,\sigma)(0)=0$.

%

It remains to show that a fixed point of~$w=\HHH(w,1)$ exists. To apply Browder's version of the Leray--Schauder theorem, we shall check that:
\begin{itemize}
\item[$(i)$] $\HHH(w,0) = 0$, the zero function of $X$,
\item[$(ii)$] $\HHH: X \times [0,1] \ra X$ is continuous and compact,
\item[$(iii)$] There exists a constant $\Lambda>0$  such that  $\|w\|\le \Lambda$
for all $(w,\sigma)\in X\times [0,1]$, with $w = \HHH(w,\sigma)$.
\end{itemize}
Property $(i)$ is immediate by the definition of~$\HHH$, since $\delta(w,0)=0$ for all $w\in X$. Regarding $(iii)$, by construction each solution of~$w = \HHH(w, \sigma)$ is of class $C^1([0,T])$ and has the property that $\varphi(w') \in \lip([0,T])$.
We claim that $w \ge 0$ on $[0,T]$. Indeed, if $w< 0$ somewhere, fix an interval $(t_1,t_2) \subset (0,T)$ such that $w<0$ on $(t_1,t_2)$, $w(t_1)=w(t_2)=0$. From $f=0$ on $(-\infty,0)$ we deduce $\big(\varphi(w')\big)' =0$ on $(t_1,t_2)$; thus, integrating against the test function $(w+\eps)_-$ and letting $\eps \ra 0$ we get
$$
0 = \int_{t_1}^{t_2} \wp \varphi(w')w',
$$
and because of the positivity of $s \varphi(s)$ on $\R\backslash\{0\}$ we deduce that $w'$, and consequently $w$, vanishes identically on $(t_1,t_2)$, contradiction. By Remark \ref{rem_afterlemmaODE}, from $w \ge 0$ in $[0,T]$ we infer $w'\ge0$ and $0\le w\le\eta$ in $[0,T]$, and
the identity
\begin{equation}\label{propriewprimo}
w'(t) = [\HHH(w,\sigma)]'(t)=\varphi^{-1}\left(\frac{1}{\wp(t)}\left[\delta + \sigma \int_0^t\wp(\tau)a(\tau)f(w(\tau))l(|w'(\tau)|)\di \tau\right]\right)
\end{equation}
implies 
\begin{equation}\label{bound_deriw}
\begin{array}{lcl}
0 \le w'(t) & \le & \disp \varphi^{-1}\left( \frac{\delta}{\wp(t)} + \Theta(T) f_\eta l_\xi\right) \le \varphi^{-1}\Big( \mu_1 + \Theta(T) f_\eta l_\xi\Big) < \xi.
\end{array}
\end{equation}
Hence, each solution of $w = \HHH(w,\sigma)$ enjoys the a-priori estimate $\|w\| \le \eta + \xi$, as required. \par
We are left to prove $(ii)$. Let $\{(w_k, \sigma_k)\}_k$ be a bounded sequence in $X \times [0,1]$, say $\|w_k\|\le L$ for all $k$. Using that $\delta_k = \delta(w_k, \sigma_k) \in I$ and $0 \le f(t) \le f_\eta$ for all $t\in\mathbb R$, together with \eqref{double_est}, we deduce that
\begin{equation}\label{ionew}
\left\| \HHH(w_k, \sigma_k)' \right\|_\infty \le \max\left\{ \left|\varphi^{-1}\left(- \frac{\wp_1\mu_1}{\wp_0}\right)\right|, \varphi^{-1} \Big( \mu_1 + \Theta(T)f_\eta l_\xi\Big) \right\},
\end{equation}
thus $\big\{\HHH(w_k,\sigma_k)\big\}_k$ is equi-bounded in $X$ and equi-continuous in $[0,T]\times[0,1]$. To show the equicontinuity of $\big\{\HHH(w_k,\sigma_k)\big\}_k$ in $C^1$, we shall estimate the difference
$$
\big| \HHH(w_k,\sigma_k)'(t) - \HHH(w_k,\sigma_k)'(s)\big|
$$
for $0 \le s<t \le T$. Set for convenience
\begin{align*}
x_k=&\frac{1}{\wp(t)}\left(\delta_k + \sigma_k \int_0^t\wp(\tau)a(\tau)f(w_k(\tau))l(|w'_k(\tau)|)\di \tau\right) \\
y_k=&\frac{1}{\wp(s)}\left(\delta_k + \sigma_k \int_0^s\wp(\tau)a(\tau)f(w_k(\tau))l(|w'_k(\tau)|)\di \tau\right),
\end{align*}
and note that
$$
\big| \HHH(w_k,\sigma_k)'(t) - \HHH(w_k,\sigma_k)'(s)\big| = \big| \varphi^{-1}(x_k) - \varphi^{-1}(y_k) \big|.
$$
Fix $\varepsilon>0$ and let $\varrho=\varrho(\varphi^{-1},\varepsilon)>0$ be the corresponding number of the uniform continuity of $\varphi^{-1}$ in $[- \frac{\wp_1}{\wp_0}\mu_1,\mu_1]$. Set $c = \wp_1 a_1 f_\eta l_\xi$, and suppose that $|t-s|<\varrho/C$, where
\begin{equation}\label{ioonew}
C=\frac{\wp_1\mu_1}{\wp_0^2}\max_{\tau\in[0,T]}|\wp'(\tau)|+\kappa,\quad \kappa=\frac{c}{\wp_0}\left(\frac{T}{\wp_0}\,\max_{\tau\in[0,T]}|\wp'(\tau)|+1\right).
\end{equation}
This is possible by  \eqref{aepi}, since $\wp \ge \wp_0 >0$ on $[0,T]$ and $\wp\in C^1(\mathbb R^+_0)$. Define
$$
I_k(t)=\int_0^t\wp(\tau)a(\tau)f(w_k(\tau))l(|w'_k(\tau)|)\di \tau,\qquad\mathcal I_k(t)=\frac{I_k(t)}{\wp(t)},
$$
and note that for each $k$
$$
0 \le I_k(t)-I_k(s) \le c(t-s) \qquad\mbox{and}\quad\lim_{t\to 0^+}\mathcal I_k(t)=0.
$$
Using $\wp>0$ in $\mathbb R^+_0$ and $\wp\in C^1(\mathbb R^+_0)$ we get
\begin{align*}
\big|\sigma_k\mathcal I_k(s)-\sigma_k\mathcal I_k(t)\big|&\le\big|\mathcal I_k(s)-\mathcal I_k(t)\big|=\left|\frac{\wp(t)I_k(s)-\wp(s)I_k(t)}{\wp(s)\wp(t)}\right|\\
&\le \frac{|\wp(t)-\wp(s)|}{\wp(s)\wp(t)}\,I_k(s)+\frac{|I_k(s)-I_k(t)|}{\wp(t)}\\
&\le\frac{c}{\wp_0}\left(\frac{T}{\wp_0}\,\max_{\tau\in[0,T]}|\wp'(\tau)|+1\right)|t-s|=\kappa|t-s|,
\end{align*}
Since $\delta_k\in I$, by \eqref{ioonew} we estimate
$$
\begin{array}{lcl}
|x_k-y_k| & \le & \disp |\delta_k|\frac{|\wp(t)-\wp(s)|}{\wp(s)\wp(t)}+\big|\sigma_k\mathcal I_k(s)-\sigma_k\mathcal I_k(t)\big|\\[0.4cm]
&\le & \disp \left(\frac{\wp_1\mu_1}{\wp_0^2}\max_{\tau\in[0,T]}|\wp'(\tau)|+\kappa\right)|t-s|=C|t-s|<\varrho.\end{array}
$$

%
In conclusion,
$$
\big|\HHH(w_k,\sigma_k)'(t)- \HHH(w_k,\sigma_k)'(s)\big|=\big|\varphi^{-1}(x_k)-\varphi^{-1}(y_k)\big|<\varepsilon
$$
provided that $|t-s|< \varrho/C$, independently of $k$. As an immediate consequence of the Ascoli--Arzel\`{a} theorem $\HHH$ maps bounded sequences of $X\times[0,1]$ into relatively compact
sequences of $X$. Finally, if $(w_k,\sigma_k)$ converges to some $(w,\sigma)$ in $X\times[0,1]$, then
$\delta_k \ra \delta=\delta(w,\sigma)$ as $k\to\infty$. Hence, $\HHH$ is continuous in $X\times[0,1]$. The proof of this fact is similar to that given in Chapter~4 of~\cite{pucciserrin}. The Leray-Schauder fixed point theorem can therefore be applied and the mapping $\HHH(w,1)$ has a fixed point $w$, which by \eqref{bound_deriw} satisfies inequality \eqref{supnorm_twobound}.\par
To conclude, we prove that $w$ solves the ODE in \eqref{twoboundary} with the original $f,l$. Differentiating \eqref{propriewprimo} with $\sigma=1$, we see that $w$ is a weak solution of
\begin{equation}\label{equafbar}
\big( \wp \varphi(w')\big)' = \wp a \bar f(w) \bar l(|w'|) \qquad \text{on } \, (0,T),
\end{equation}
where now we have denoted with $\bar f, \bar l$ the modifications of $f,l$ in \eqref{def_modifications}.  Clearly, $\bar l(w') = l(w')$ on $(0,T)$, since $0 \le w' < \xi$. From $0 \le w \le \eta$ and Lemma \ref{lem_ODE}, there exists $t_0 \in [0,T)$ such that $w=0$ on $[0,t_0]$ and $w>0$, $w'>0$ on $(t_0,T]$, and since $\bar f = f$ on $(0,\eta)$ we deduce
\begin{equation}\label{equaf}
\big( \wp \varphi(w')\big)' = \wp a f(w) l(|w'|)
\end{equation}
on $(t_0,T)$. On the other hand, because of our assumption $f(0)l(0)=0$ the function $w=0$ solves \eqref{equaf} on $(0, t_0)$. Since $w \in C^1([0,T])$, \eqref{equaf} holds weakly on all of $(0,T)$, as claimed.
\end{proof}

\begin{remark}
\emph{The requirement $f(0)l(0)=0$ is crucial for the validity of the above theorem, because otherwise $w$ might be negative somewhere.
}
\end{remark}

Given $f, \varphi, l$ satisfying \eqref{assum_secODE}, define $K,F$ as in \eqref{def_K}, \eqref{def_Fe_intro}. We next present two auxiliary calculus results. The first is similar to Lemma~4.4.1--$(i)$ in \cite{pucciserrin}. We recall that the notion of a $C$-increasing function is given in Definition \ref{def_Cincreasing}.

\begin{lemma}\label{lem_auxil2}
Assume that $f$ is $C$-increasing in $(0,\eta_0)$, for some $\eta_0 >0$. Then, for each $\sigma \in [0,1]$ we have $F(\sigma t)\le C\,\sigma F(t)$ for all $t\in [0, \eta_0)$.
\end{lemma}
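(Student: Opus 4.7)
The plan is to prove the inequality directly by a change of variables in the integral defining $F$, followed by a pointwise application of the $C$-increasing property of $f$.

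First I would rewrite $F(\sigma t)$ by the substitution $s = \sigma \tau$, obtaining
$$
F(\sigma t) = \int_0^{\sigma t} f(s)\,\di s = \sigma \int_0^{t} f(\sigma \tau)\,\di \tau.
$$
The boundary cases $\sigma = 0$ and $t=0$ are trivial, so assume $\sigma \in (0,1]$ and $t \in (0,\eta_0)$.

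Next, for each fixed $\tau \in (0,t)$ one has $0 < \sigma \tau \le \tau < \eta_0$, so by Definition \ref{def_Cincreasing} applied with the pair $\sigma\tau \le \tau$ in $(0,\eta_0)$,
$$
f(\sigma \tau) \le \sup_{\sigma\tau \le s \le \tau} f(s) \le C f(\tau).
$$

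Plugging this pointwise estimate into the integral representation yields
$$
F(\sigma t) = \sigma \int_0^t f(\sigma \tau)\,\di \tau \le C\sigma \int_0^t f(\tau)\,\di \tau = C \sigma F(t),
$$
which is the claimed inequality. The argument is essentially a one-liner once the substitution is made, so no real obstacle is expected; the only point worth noting is that the $C$-increasing property is applied on the pair $(\sigma\tau,\tau)$ at each $\tau$, and this is legitimate precisely because $\sigma \in [0,1]$ keeps $\sigma\tau$ in $(0,\eta_0)$ whenever $\tau$ is.
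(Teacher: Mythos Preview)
Your proof is correct and follows essentially the same approach as the paper: both perform the substitution $s=\sigma\tau$ to write $F(\sigma t)=\sigma\int_0^t f(\sigma\tau)\,\di\tau$ and then apply the $C$-increasing property pointwise to bound $f(\sigma\tau)\le C f(\tau)$.
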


\begin{proof}
Fix $\sigma \in [0,1]$. Since $f$ is $C$-increasing in $(0,\eta_0)$, we have $\sigma f(\sigma t) \le C\,\sigma f(t)$ for each $t\in[0, \eta_0)$, and thus
$$
F(\sigma t) = \int_0^{\sigma t} f(s) \di s = \int_0^t \sigma f(\sigma \tau)\di \tau \le C\,\sigma\int_0^t f(\tau) \di \tau = C\sigma F(t),
$$
as claimed.
\end{proof}

The second lemma concerns the preservation of the validity of \eqref{KO_zero} and \eqref{KO} when we replace $f$ with $\sigma f$, $\sigma \in \R^ +$. Similar results have been proved in \cite[Lem. 4.1.2]{pucciserrin}, \cite{fprgrad} (remark on page 523) and \cite{maririgolisetti}.

\begin{lemma}\label{lem_mettimaosigmatau_novo}
Let $f,l$ satisfy \eqref{assum_secODE},  \eqref{assum_secODE_altreL} and suppose $l>0$ on $\R^+$.
\begin{itemize}
\item[(i)] Assume that $f$ is positive and $C$-increasing in $(0,\eta_0)$, for $\eta_0 >0$ in \eqref{assum_secODE}. Then
$$
\frac{1}{K^{-1}(F(s))} \in L^1(0^+)  \qquad \Longleftrightarrow \qquad \frac{1}{K^{-1}(\sigma F(s))} \in L^1(0^+)
$$
for some (equivalently, any) $\sigma \in \R^+$.
\item[(ii)] Assume that $f$ is positive and $C$-increasing on $(\bar \eta_0, \infty)$, for some $\bar \eta_0>0$. Having defined $F(t) = \int_{\bar \eta_0}^t f$, it holds
$$
\frac{1}{K^{-1}(F(s))} \in L^1(\infty)  \qquad \Longleftrightarrow \qquad \frac{1}{K^{-1}(\sigma F(s))} \in L^1(\infty)
$$
for some (equivalently, any) $\sigma \in \R^+$.
\end{itemize}
\end{lemma}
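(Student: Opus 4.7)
My plan is to exploit the monotonicity of $K^{-1}$ together with the $C$-monotonicity of $f$ via a change of variables. First, since $K^{-1}$ is strictly increasing, whenever $\sigma \ge 1$ the inequality $1/K^{-1}(\sigma F) \le 1/K^{-1}(F)$ holds pointwise, so the implication from the integrability of $1/K^{-1}(F)$ to that of $1/K^{-1}(\sigma F)$ is immediate; this reduces me to the case $\sigma \in (0,1)$. Moreover, by replacing $F$ with $\sigma F$ and $\sigma$ with $1/\sigma$, the two opposite implications in each equivalence are dual to one another, so it suffices to show that $1/K^{-1}(F) \in L^1(0^+)$ (respectively, $L^1(\infty)$) implies $1/K^{-1}(\sigma F) \in L^1(0^+)$ (resp.\ $L^1(\infty)$) for every $\sigma \in (0,1)$.

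For part (i), Lemma~\ref{lem_auxil2} directly furnishes $F(\sigma' s) \le C\sigma' F(s)$ for all $\sigma' \in [0,1]$ and $s \in [0,\eta_0)$. I would apply it with the parameter $\sigma' = \sigma/C \in (0,1)$ (admissible since $C \ge 1 > \sigma$), obtaining $F(\sigma s/C) \le \sigma F(s)$, and then use the monotonicity of $K^{-1}$ to deduce
\begin{equation*}
\frac{1}{K^{-1}\big(\sigma F(s)\big)} \le \frac{1}{K^{-1}\big(F(\sigma s/C)\big)}.
\end{equation*}
Integrating on a right neighbourhood of $0$ and performing the substitution $t = \sigma s/C$ yields
\begin{equation*}
\int_0^{\delta} \frac{ds}{K^{-1}(\sigma F(s))} \le \frac{C}{\sigma} \int_0^{\sigma \delta/C} \frac{dt}{K^{-1}(F(t))} < \infty,
\end{equation*}
as desired.

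For part (ii), I will first establish the analogue of Lemma~\ref{lem_auxil2}: namely, for $\sigma' \in (0,1)$ and $s \ge \bar\eta_0/\sigma'$ one has $F(\sigma' s) \le C\sigma' F(s)$. This follows from the substitution $u = \sigma' t$ in $F(\sigma' s) = \int_{\bar\eta_0}^{\sigma' s} f(t)\,dt$, which after rearrangement gives $\sigma' \int_{\bar\eta_0/\sigma'}^{s} f(\sigma' u)\,du$; on this interval $\bar\eta_0 \le \sigma' u \le u$, so the $C$-monotonicity of $f$ on $(\bar\eta_0, \infty)$ produces $f(\sigma' u) \le C f(u)$, and the estimate follows. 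With this key inequality in hand, the argument of part (i) transfers verbatim, with the integration over $(0, \delta)$ replaced by integration over $[M,\infty)$ for $M \ge C \bar\eta_0/\sigma$ large enough.

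The argument is essentially a one-line estimate once the scaling inequality for $F$ is in hand, so no real obstacle is expected. The only point that deserves attention is the choice $\sigma' = \sigma/C$, which absorbs the extra multiplicative constant arising from the $C$-monotonicity so that the change of variables closes cleanly.
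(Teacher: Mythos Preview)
Your argument is correct and follows essentially the same route as the paper: reduce to $\sigma\in(0,1)$ by monotonicity of $K^{-1}$, handle the reverse implication by the duality $F\mapsto\sigma F$, $\sigma\mapsto 1/\sigma$, and in the core step apply Lemma~\ref{lem_auxil2} with parameter $\sigma/C$ followed by the change of variables $t=\sigma s/C$. The paper dismisses part~(ii) as ``analogous,'' so your explicit verification of the scaling inequality $F(\sigma' s)\le C\sigma' F(s)$ for $s\ge\bar\eta_0/\sigma'$ via the substitution $u=\sigma' t$ is a welcome addition rather than a deviation.
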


\begin{proof}
For the ease of notation we denote with $\KOzsigma$ and $\KOisigma$, respectively, the integrability conditions
$$
\int_{0^+} \frac{\di s}{K^{-1}(\sigma F(s))} < \infty, \qquad \int^{\infty} \frac{\di s}{K^{-1}(\sigma F(s))} < \infty.
$$
We prove $(i)$, beginning with implication
\begin{equation}\label{sigmaeno}
\eqref{KO_zero} \Rightarrow \KOzsigma.
\end{equation}
If $\sigma \ge 1$, \eqref{sigmaeno} is immediate from the monotonicity of $F$ and $K$.\\
If $\sigma \in (0,1)$, we apply Lemma \ref{lem_auxil2} with $\sigma/C$ replacing $\sigma$ (note that $C \ge 1$) to deduce $F( \sigma t/C) \le \sigma F(t)$ for each $t \in (0, \eta_0)$. Integrating and changing variables,
$$
\int_{0^+} \frac{\di s}{K^{-1}(\sigma F(s))} \le \int_{0^+} \frac{\di s}{K^{-1}(F(\sigma s/C))} = \frac{C}{\sigma} \int_{0^+} \frac{\di \tau}{K^{-1}(F(\tau))},
$$
which proves \eqref{sigmaeno}. To show the reverse implication in \eqref{sigmaeno}, it is enough to observe that $\KOzsigma$ is condition \eqref{KO_zero} for the function $\bar f = \sigma f$, and to apply the previous estimates with $\bar f$ replacing $f$ and $\sigma^{-1}$ replacing   $\sigma$.\\
The proof of $(ii)$ is analogous.
\end{proof}

We are now ready to obtain further information on the solution of problem \eqref{twoboundary} given in Theorem \ref{exi2}. First, we investigate sufficient conditions to ensure $w'(0)>0$. 

\begin{proposition}\label{prop_twobound_refined}
Assume \eqref{assum_secODE}, \eqref{assum_secODE_altreL} and \eqref{aepi}. Suppose that
\begin{itemize}
\item[] $f(0)l(0)=0$;
\item[] $f$ is $C$-increasing on $(0, \eta_0)$, for $\eta_0>0$ as in \eqref{assum_secODE},
\end{itemize}
\noindent and that one of the following two sets of conditions is met:
\begin{itemize}
\item[(i)] $f \equiv 0$ on $(0, \eta_0)$;
\item[(ii)] $f>0$ on $(0, \eta_0)$, and also
\begin{itemize}
\item[-] $l$ is $C$-increasing on $(0, \xi_0)$, for some $\xi_0 >0$.
\item[-] $\wp$ is monotone on $[0,T_0]$, either increasing or decreasing ($T_0$ as in \eqref{aepi}),
\end{itemize}
and
\begin{equation}\label{noninteinzero_2}\tag{$\neg$KO$_0$}
\frac{1}{K^{-1}\circ F} \not \in L^1(0^+).
\end{equation}
\end{itemize}
Then, the solution $w$ of problem \eqref{twoboundary}, with $\eta \in (0, \eta_0)$,
constructed in Theorem \ref{exi2}, has the further properties
\begin{equation}\label{goodfor_FMP}
w>0 \quad \text{on }(0,T], \qquad w'>0 \quad \text{on } [0,T].
\end{equation}
\end{proposition}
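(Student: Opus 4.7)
The plan is to invoke Lemma \ref{lem_ODE} applied to the solution $w$ of Theorem \ref{exi2}, which already gives $w, w' \ge 0$ on $[0,T]$ and the existence of $t_0 \in [0,T)$ such that $w \equiv 0$ on $[0,t_0]$ and $w, w' > 0$ on $(t_0, T]$. Consequently \eqref{goodfor_FMP} reduces to excluding the existence of a point $\tau_0 \in [0,T)$ with $w(\tau_0) = w'(\tau_0) = 0$ and $w, w' > 0$ on $(\tau_0, T]$; I argue by contradiction.

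In case $(i)$, the hypothesis $f \equiv 0$ on $(0,\eta_0)$ together with $0 \le w \le \eta < \eta_0$ forces the ODE to collapse to $[\wp\varphi(w')]' = 0$, so $\wp\varphi(w') \equiv c$. The boundary condition $w(T) = \eta > 0 = w(0)$ rules out $c = 0$, hence $w'(t) = \varphi^{-1}(c/\wp(t)) > 0$ throughout $[0,T]$, contradicting $w'(\tau_0) = 0$.

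For case $(ii)$, the target is an estimate of the form $K(w'(t)) \le C_0 F(w(t))$ on $(\tau_0, T]$: from it, separation of variables together with the change $s = w(t)$ yields $\int_0^\eta ds / K^{-1}(C_0 F(s)) \le T - \tau_0 \le T_0$, which via Lemma \ref{lem_mettimaosigmatau_novo}$(i)$ contradicts \eqref{noninteinzero_2}. To produce this bound, I multiply the ODE by $w'/l(w')$ on $(\tau_0,T]$ (positive there) and, using $K'(t) = t\varphi'(t)/l(t)$, rearrange as
\[
K(w')' + \frac{\wp'}{\wp}\cdot\frac{w'\varphi(w')}{l(w')} = a(t)f(w)\,w'.
\]
If $\wp$ is non-decreasing, then $\wp'/\wp \ge 0$ and I discard the non-negative second summand to obtain $K(w')' \le a_1 F(w)'$; integrating from $\tau_0$, where both sides vanish, gives $K(w'(t)) \le a_1 F(w(t))$, as required.

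The main obstacle is the case when $\wp$ is non-increasing: the sign of $\wp'/\wp$ reverses and the direct manipulation above yields only a lower bound. The workaround exploits that $\wp(\varphi(w'))' = [\wp\varphi(w')]' - \wp'\varphi(w') \ge [\wp\varphi(w')]' \ge 0$, so in fact $w'$ is non-decreasing on $(\tau_0,T]$. The monotonicity of $w$ and $w'$, combined with the $C$-increasing properties of $f$ and $l$ (the latter applicable since $0 \le w' < \xi < \xi_0$ by \eqref{supnorm_twobound}), then give the pointwise bounds $f(w(s)) \le C f(w(t))$ and $l(w'(s)) \le C l(w'(t))$ for $\tau_0 \le s \le t$. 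Substituting these into the integrated ODE $\wp(t)\varphi(w'(t)) = \int_{\tau_0}^t \wp a f(w) l(w')\, ds$ and invoking $\wp \in [\wp_0,\wp_1]$ leads to $\varphi(w'(t))/l(w'(t)) \le C_1 f(w(t))$. A final integration of this pointwise bound against $w'(t)\,dt$, combined with the $C$-increasing character of $l$ and the integrability $t\varphi'(t)/l(t) \in L^1(0^+)$ from \eqref{assum_secODE_altreL} (which allows one to compare the resulting integral with $K(w'(t))$ through an integration by parts based on the defining identity $K(u) = \int_0^u s\varphi'(s)/l(s)\,ds$), produces the desired bound $K(w'(t)) \le C_0 F(w(t))$ and completes the contradiction.
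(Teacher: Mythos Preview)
Your proof is essentially correct and follows the paper's strategy in case $(i)$ and in case $(ii)$ with $\wp$ non-decreasing. In the case $\wp$ non-increasing you also correctly reach the key pointwise bound $\varphi(w'(t))/l(w'(t)) \le C_1 f(w(t))$, which is exactly the paper's intermediate estimate. The issue is only in the final sentence.

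Your proposed last step --- integrate $\varphi(w')/l(w') \le C_1 f(w)$ against $w'(t)\,dt$ and then compare $\int_{\tau_0}^t \varphi(w')w'/l(w')\,ds$ with $K(w'(t))$ via ``integration by parts'' --- does not go through under the stated hypotheses. The integral on the left is a time integral, whereas $K(w'(t)) = \int_0^{w'(t)} \sigma\varphi'(\sigma)/l(\sigma)\,d\sigma$ is an integral in the state variable; passing between them requires the factor $w''$, not $w'$. There is no integration-by-parts identity connecting the two under the sole assumption $l \in C(\R_0^+)$, and the $C$-increasing property of $l$ gives a bound on $1/l(\sigma)$ from \emph{below}, not above, so it does not help either.

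The fix is already in your hands: feed the pointwise bound back into your own identity
\[
K(w')' \;=\; a(t)f(w)\,w' \;-\; \frac{\wp'}{\wp}\cdot\frac{w'\varphi(w')}{l(w')}
\;\le\; a_1 f(w)\,w' + \frac{\|\wp'\|_\infty}{\wp_0}\,C_1\,f(w)\,w'
\;=\; C_2\,F(w)',
\]
and integrate from $\tau_0$. This is precisely what the paper does (phrased there as differentiating the energy $E(t) = K(w') - a_1 F(w)$ and bounding $E'$ by $\bar c\,f(w)w'$).
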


\begin{proof} First, observe that if we prove that $w'(0)>0$, then \eqref{goodfor_FMP} follows by a direct application of Lemma \ref{lem_ODE}. We prove $w'(0)>0$ for cases $(i)$ and $(ii)$ separately. \\[0.2cm]

\noindent
{\em Case $(i)$}.\\
Since $f\equiv 0$ on $(0, \eta_0)$, then $(\wp\varphi(w'))'=0$ in $(0,T)$ by \eqref{twoboundary} and the choice $\eta< \eta_0$. Hence, integrating
\begin{equation}\label{intt}
\wp(t) \varphi(w'(t)) = \wp(0) \varphi(w'(0))\quad \text{for all  }t\in(0,T].
\end{equation}
Suppose by contradiction that $w'(0)=0$. From \eqref{assum_secODE} we have $\varphi>0$ on $\R^+$, and also $\wp>0$ on $[0,T]$. Thus, \eqref{intt} would imply that $w'$, hence $w$, is identically zero in $[0,T]$, contradicting $w(T)=\eta$. \smallskip

\noindent
{\em Case $(ii)$}. \\
Theorem \ref{exi2} guarantees that $\varphi(w') \in C^1([0,T])$, $w'(0) \ge 0$ and $0 \le w \le \eta$. Let us reason by contradiction and suppose that $w'(0)=0$. We shall then prove that
$$
\frac{1}{K^{-1}\circ F} \in L^1(0^+),
$$
which contradicts \eqref{noninteinzero_2}, completing the proof.

First, by Lemma~\ref{lem_ODE} there exists $t_0 \in [0,T)$ such that $w(t) \equiv 0$ on $[0,t_0]$ while $w>0$ on $(t_0,T]$. If $t_0=0$ then $w'(t_0)=0$ by our assumption, while if $t_0>0$ then $w(t_0)=w'(t_0)=0$ since $w$ is $C^1([0,T])$. From \eqref{propriewprimo} and since $\varphi'>0$ on $\R^+$ we infer the existence of $w''$ in $(t_0,T)$. Thus, $w$ satisfies
\begin{equation}\label{stepimpo}
\wp \varphi'(w')w'' + \wp' \varphi(w') = \wp a f(w)l(w')\qquad\mbox{in }(t_0,T).
\end{equation}
We first suppose that $\wp' \ge 0$ on $[0, T_0]$. By \eqref{propert_w2} and \eqref{aepi}, $w$ is a solution of the  inequality
$$
w'\varphi'(w')w'' \le a f(w)w'l(w')\qquad\mbox{in }(t_0,T).
$$
Integrating on $[t_0,t)$, with $t\in (t_0,T]$ we have
$$
\int_{t_0}^t \frac{w'\varphi'(w')w''}{l(w')}\di \tau \le \int_{t_0}^t af(w)w' \di \tau\le a_1 \int_{t_0}^t f(w)w'\di \tau.
$$
Changing variables and using $w'(t_0)=w(t_0)=0$ we deduce
\begin{equation}\label{io}
K(w'(t)) = \int_{0}^{w'(t)} \frac{s\varphi'(s)}{l(s)}\di s \le a_1\int_{0}^{w(t)} f(s)\di s = a_1 F(w(t)).
\end{equation}
Assumption $f>0$ on $(0, \eta_0)$ implies that $F>0$ on $(0, \eta_0)$. Having chosen $T_1 \in (t_0,T]$ in such a way that $a_1 F(w(T_1)) < K_\infty$, we apply $K^{-1}$, rearrange and integrate to obtain
\begin{equation}\label{ioo}
\int_{t_0}^t \frac{w'(s)\di s}{K^{-1}\big(a_1F(w(s))\big)} \le (t-t_0) \qquad \forall \,   t\in(t_0,T_1].
\end{equation}
Changing variables,
\begin{equation}\label{ioo}
\int_0^{w(t)} \frac{\di \tau}{K^{-1}(a_1F(\tau))} \le (t-t_0) \qquad \forall \, t\in(t_0,T_1].
\end{equation}
By Lemma \ref{lem_mettimaosigmatau_novo} property \eqref{KO_zero} holds, as claimed.

We are left to consider the case $\wp' \le 0$. Then, by \eqref{stepimpo} we deduce that $w'' \ge 0$ on $(t_0,T)$, hence $w'$ is increasing there. Integrating \eqref{stepimpo} on $(t_0,t)$ and using the $C$-monotonicity of $f,l$ together with $w'(t_0)=0$, we get
\begin{equation}\label{uffi}
\begin{array}{lcl}
\disp \varphi\big(w'(t)\big) & = & \disp \frac{1}{\wp(t)}\int_{t_0}^t \wp(s)a(s)f(w(s))l(w'(s))\di s \\[0.4cm]
& \le & \disp C^2 a_1f(w(t)) l(w'(t)) \left[ \frac{1}{\wp(t)}\int_0^t \wp(s)\di s\right].
\end{array}
\end{equation}
Now, consider the energy $E(t) = K\big(w'(t)\big) - a_1F\big(w(t)\big)$. Differentiating and using \eqref{uffi} and the definition of $a_1$, we obtain
$$
\begin{array}{lcl}
E'(t) & = & \disp \frac{w'\varphi'(w')w''}{l(w')} - a_1f(w)w' = \frac{w'}{l(w')}\left[- \frac{\wp'}{\wp}\varphi(w') + af(w)l(w') - a_1 f(w)l(w')\right] \\[0.4cm]
& \le & \disp - \frac{\wp'}{\wp} \frac{w'\varphi(w')}{l(w')} \le \left|\frac{\wp'(t)}{\wp(t)^2}\int_0^t\wp(s) \di s\right|C^2 a_1 f(w)w' \\[0.4cm]
& \le & \disp \frac{\wp_1 T_0\|\wp'\|_{L^\infty([0,T_0])}}{\wp_0^2}C^2a_1 f(w)w' = \bar c f(w)w'.
\end{array}
$$
Integrating and using $w(t_0)=w'(t_0)=0$,
$$
K\big(w'(t)\big) \le a_1 F\big(w(t)\big) + \bar c F\big(w(t)\big) = (a_1+ \bar c)F\big(w(t)\big).
$$
Having obtained again an inequality like \eqref{io}, to achieve the desired contradiction it is sufficient to repeat verbatim the last steps of the proof for $\wp'\ge 0$.
\end{proof}

\begin{remark}\label{rem_noserveCincreasing}
\emph{When $\wp' \ge 0$, to reach the desired conclusion in $(ii)$ we do not use the assumption that $l$ is $C$-increasing.
}
\end{remark}

Proposition \ref{prop_twobound_refined} has a converse, at least if the threshold $\eta$ in \eqref{twoboundary} is sufficiently small, namely \eqref{KO_zero} implies that $w'(0)=0$. To reach the goal, the idea is to compare $w$ with an explicit supersolution of \eqref{twoboundary}, whose construction  generalizes the End Point Lemma in \cite[Lem. 4.4.1]{pucciserrin}.

\begin{proposition}\label{prop_wprimougualezero}
Assume \eqref{assum_secODE}, \eqref{assum_secODE_altreL} and \eqref{aepi}. Suppose that
\begin{itemize}
\item[] $f(0)l(0)=0$;
\item[] $f$ is $C$-increasing on $(0, \eta_0)$, for $\eta_0>0$ as in \eqref{assum_secODE};
\item[] $l$ is $C$-increasing on $(0, \xi_0)$, for some $\xi_0 >0$.
\end{itemize}
If $f>0$ on $(0, \eta_0)$ and 
\begin{equation}\label{inteinzero_22}\tag{KO$_0$}
\frac{1}{K^{-1}\circ F} \in L^1(0^+),
\end{equation}
then there exists $\eta_1$ sufficiently small that, for each $\eta \in (0, \eta_1)$, the solution $w$ of problem \eqref{twoboundary} constructed in Theorem \ref{exi2} satisfies
\begin{equation}\label{goodfor_CSP}
w'(0)=0. 
\end{equation}
\end{proposition}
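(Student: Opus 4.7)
The strategy, modelled on the End Point Lemma \cite[Lem.~4.4.1]{pucciserrin}, is to build an explicit barrier $\bar w$ with $\bar w(0)=\bar w'(0)=0$ directly from \eqref{inteinzero_22}, and then compare $w$ with $\bar w$ on a right-neighbourhood of the origin. Since $w'(0)\ge 0$ by Theorem~\ref{exi2}, once the local bound $w\le\bar w$ is established we immediately obtain $w'(0)\le\bar w'(0)=0$ and hence $w'(0)=0$.

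Fix $B\in(0,a_0)$ (to be adjusted below) and, using \eqref{inteinzero_22} together with Lemma~\ref{lem_mettimaosigmatau_novo}(i), define $\bar w$ implicitly on $[0,T_B]$ by
\begin{equation*}
\int_0^{\bar w(t)}\frac{d\tau}{K^{-1}(BF(\tau))}=t, \qquad T_B:=\int_0^{\eta_0}\frac{d\tau}{K^{-1}(BF(\tau))}<\infty.
\end{equation*}
Then $\bar w\in C^1([0,T_B])$ is strictly increasing with $\bar w(0)=0$, $\bar w'(t)=K^{-1}(BF(\bar w(t)))$, and in particular $\bar w'(0)=K^{-1}(0)=0$. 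Differentiating the identity $K(\bar w')=BF(\bar w)$ and using $K'(s)=s\varphi'(s)/l(s)$ gives $(\varphi(\bar w'))'=Bf(\bar w)l(\bar w')$ on $(0,T_B)$, hence
\begin{equation*}
[\wp\,\varphi(\bar w')]' - \wp\, a\, f(\bar w)\,l(\bar w') = \wp'\,\varphi(\bar w') - \wp\,(a-B)\,f(\bar w)\,l(\bar w').
\end{equation*}
A short computation using $K(\bar w')=BF(\bar w)$ and the $C$-increasing properties of $f$ and $l$ shows that the ratio $\varphi(\bar w')/[f(\bar w)\,l(\bar w')]$ tends to $0$ as $t\to 0^+$, a vanishing in fact formally equivalent to \eqref{inteinzero_22}. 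Combined with the boundedness of $\|\wp'/\wp\|_\infty$ and the positivity of $(a-B)\wp f l$, this provides some $\bar t\in(0,T_B)$ for which the right-hand side above is non-positive on $(0,\bar t)$, so that $\bar w$ is a supersolution of the ODE in \eqref{twoboundary} on $(0,\bar t)$.

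Choose $\eta_1\in(0,\eta_0)$ so small that $\bar w^{-1}(\eta_1)<\min\{T,\bar t\}$, and fix $\eta\in(0,\eta_1)$. Set $t_1:=\bar w^{-1}(\eta)$, so that $t_1<T$ and $t_1<\bar t$. On $[0,t_1]$ both $w$ and $\bar w$ are $C^1$, non-negative and non-decreasing, $\bar w$ is a supersolution while $w$ is a solution of the ODE in \eqref{twoboundary}, and the boundary data are ordered as $w(0)=\bar w(0)=0$ and $w(t_1)\le w(T)=\eta=\bar w(t_1)$ (by monotonicity of $w$). A weak comparison principle---obtained by testing the difference of the two equations against $(w-\bar w)_+$, integrating by parts on the (open) set $\{w>\bar w\}\subset(0,t_1)$, exploiting the strict monotonicity of $\varphi$ to handle the gradient-derivative contribution, and using the $C$-increasing assumptions on $f$ and $l$ to control the zero-order term---then yields $w\le\bar w$ on $[0,t_1]$. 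Since $w,\bar w\in C^1$ with $w(0)=\bar w(0)=0$, we conclude $w'(0)\le\bar w'(0)=0$, which combined with $w'(0)\ge 0$ gives \eqref{goodfor_CSP}.

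The main technical delicacy is making rigorous the weak comparison in the last step, since the lower-order term $\wp\,a\,f(w)\,l(|w'|)$ is non-Lipschitz and gradient-dependent, so standard ODE comparison results do not apply verbatim. The argument can be adapted from \cite[Chapter~5]{pucciserrin} (see also the integration scheme used in Lemma~\ref{lem_ODE}), the $C$-increasing property of $l$ being crucial to bound the difference $f(w)\,l(|w'|)-f(\bar w)\,l(|\bar w'|)$ on the set $\{w>\bar w\}$ in terms of $(w-\bar w)$ and of the pointwise oscillation of $w'-\bar w'$.
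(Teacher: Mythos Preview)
Your barrier construction is essentially the same as the paper's: you define $\bar w$ implicitly through the integral of $1/K^{-1}(BF)$, obtain $(\varphi(\bar w'))'=Bf(\bar w)l(\bar w')$, and then use the bound $\varphi(\bar w'(t))\le BC^2 t\,f(\bar w(t))l(\bar w'(t))$ (which follows from integrating and applying the $C$-increasing hypotheses) to make $\bar w$ a supersolution. This part is fine, and corresponds to the paper's construction of $z$ with parameter $\sigma$.

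The genuine gap is the comparison step, which you flag as ``the main technical delicacy'' without resolving. Testing the difference of the equations against $(w-\bar w)_+$ on $\{w>\bar w\}$ does \emph{not} work: after integration by parts the left side is non-positive by monotonicity of $\varphi$, but the right side involves $f(w)l(w')-f(\bar w)l(\bar w')$, and on $\{w>\bar w\}$ you have no control on the sign of $w'-\bar w'$, hence none on $l(w')$ versus $l(\bar w')$. The $C$-increasing property of $l$ only helps once you know $w'\ge\bar w'$ (or at least that the ratio $l(\bar w')/l(w')$ is bounded), which is not available globally on the positivity set.

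The paper's resolution is the missing idea. First, it chooses $\sigma$ so that the barrier $z$ satisfies the stronger inequality $[\wp\varphi(z')]'\le\frac{1}{2C}\wp a f(z)l(z')$ on all of $(0,T)$. Then, arguing by contradiction, one lets $c=\max_{[0,T]}(w-z)>0$ and $\Gamma=\{w-z=c\}$; since $w,z\in C^1$, one has $w'=z'$ on $\Gamma$, so in particular $l(w')=l(z')$ there. By continuity of $l$ and positivity of $w'$ near $\Gamma$, for $\delta$ close to $c$ one obtains $l(z')\le 2l(w')$ on $I_\delta=\{w-z>\delta\}$. Combined with $f(w)\ge C^{-1}f(z)$ (from $w>z$ and $C$-increasing of $f$) and the built-in factor $\frac{1}{2C}$, this yields $[\wp\varphi(w')]'\ge[\wp\varphi(z')]'$ on $I_\delta$, and the elementary comparison of Proposition~\ref{prop_comparison} then forces $w\le z+\delta$ on $I_\delta$, a contradiction. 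The two devices---the safety factor $\frac{1}{2C}$ and the localization near the maximum where gradients coincide---are precisely what allow one to bypass the gradient dependence of $l$; neither is present in your argument.
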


\begin{proof}
For $\sigma \in (0,1)$ to be determined, using \eqref{KO_zero_section} and Lemma \ref{lem_mettimaosigmatau_novo} we implicitly define $z(t)$ by setting
$$
t = \int_0^{z(t)} \frac{\di s}{K^{-1}(\sigma F(s))} \qquad \text{for } \, t \in [0,T).
$$
Note that $z$ is positive on $(0,T)$. Differentiating,
\begin{equation}\label{aaa}
z' = K^{-1}(\sigma F(z)\big),
\end{equation}
whence $z'>0$ on $(0,T)$ and $z'(0)=0$. Evaluating $K$ on both sides of \eqref{aaa} and differentiating once more we get
$$
\frac{z' \varphi'(z') z''}{l(z')} = \sigma f(z)z' \qquad \text{on } \, (0,T).
$$
Since $\varphi',l,z'$ are positive, we deduce $z'' \ge 0$, and simplifying by $z'$ we infer
$$
\big( \varphi(z') \big)' = \sigma f(z) l(z').
$$
Integrating, using $z'(0)=0$ together with the $C$-increasing property of $f$ and $l$ (note that $z'$ is increasing) we obtain
$$
\varphi\big(z'(t)\big) = \disp \sigma \int_0^t f\big(z(s)\big)l\big(z'(s)\big)\di s \le \sigma C^2T f\big(z(t)\big)l\big(z'(t)\big).
$$
Summarizing, so far we have obtained,
$$
\begin{array}{lcl}
\disp \big[ \wp \varphi(z')\big]' = \wp \big(\varphi(z')\big)' + \wp' \varphi(z') & \le & \disp \frac{\sigma}{a_0} \left[ 1+ C^2 T \frac{\|\wp'\|_{L^\infty([0,T_0])}}{\wp_0} \right] \wp a f(z) l(z') \\[0.4cm]
\disp & = & \disp \frac{1}{2C} \wp a f(z) l(z'),
\end{array}
$$
where we have defined $\sigma$ in order to satisfy the last equality. Next, we fix $\eta_1 \le z(T)$ small enough  in such a way that each $\eta \in (0,\eta_1)$ meets the requirements in Theorem \ref{exi2}, to guarantee the existence of $w$. We claim that, for $\eta \in (0, \eta_1)$, the solution $w$ in Theorem \ref{exi2} satisfies $w \le z$ on $(0,T)$. This, together with the already established $z'(0)=0$, forces $w'(0) =0$ and concludes our proof. By contradiction, suppose that $c = \max_{[0,T]}(w-z)>0$ and let $\Gamma = \{ w-z = c\}$. By construction, $\Gamma \Subset (0,T)$, and since $w,z$ are $C^1$ we deduce $w'=z'$ on $\Gamma$. By continuity and since $w'>0$ on $(0,T)$, we can choose $\delta \in (0,c)$ close enough to $c$ in such a way that $l(z') \le 2l(w')$ on the set $I_\delta = \{w-z > \delta\}$. On $I_\delta$, using the $C$-increasing property we therefore have
$$
\big[ \wp \varphi(w')\big]' \ge a \wp f(w)l(w') \ge \frac{1}{2C} a \wp f(z)l(z') \ge \big[ \wp \varphi(z')\big]' = \big[ \wp \varphi((z+c)')\big]'
$$
and $w = z+c$ on $\partial I_\delta$. By standard comparison (one can apply, for instance Proposition \ref{prop_comparison} below to an appropriate radial model), $w \le z+c$ on $U_\delta$, contradiction. 
\end{proof}

\subsection{The mixed Dirichlet-Neumann problem}

We next move to investigate the problem

\begin{equation}\label{eq_mixed}
\begin{cases}
\big[\wp\, \varphi(w')\big]'=\wp a f(w)l(|w'|) \qquad \text{on } \, (0,T),\\[0.2cm]
w'(0)=0, \qquad w(T)=\eta, \\[0.2cm]
0 \le w \le \eta, \qquad w' \ge 0 \ \text{ on } \, (0,T),
\end{cases}
\end{equation}
for given $\eta >0$, $T \in (0, T_0)$. We here extend and generalize in several directions the core of Corollary~1.4 of~\cite{fprgrad}, \emph{without requiring any monotonicity on $l$}, as well as the results of Section~4 of~\cite{bordofilipucci}. We assume \eqref{assum_secODE} and \eqref{aepi}, and we define $a_0,a_1,\wp_0, \wp_1, f_\eta, l_\xi$ and $\Theta(T)$ as in \eqref{227}. 
\begin{theorem}\label{exi2_neumann}
Assume \eqref{assum_secODE} and \eqref{aepi}, and that 
\begin{equation}\label{assu_ancora_fl}
\begin{array}{l}
f> 0 \quad \text{on } \, \R^+, \qquad f(0)=0; \\[0.2cm] 
l>0 \quad \text{on } \, \R^ +_0. \\[0.2cm]
\end{array}
\end{equation}
Then, for each $\eta, \xi>0$ and $T \in (0, T_0)$ satisfying
\begin{equation}\label{bound_etaxi_neu}
\Theta(T) f_\eta l_\xi < \varphi(\xi),
\end{equation}
the problem
\begin{equation}\label{2.1core}\begin{cases}
[\wp\, \varphi(w')]'=\wp a f(w)l(|w'|) \qquad \text{on } \, (0, T) \\[0.2cm]
w'(0)=0, \qquad w(T)=\eta, \\[0.2cm]
0 \le w \le \eta, \qquad 0 \le w' < \xi \ \text{ on } \, (0,T),
\end{cases}
\end{equation}
admits solution $w \in C^ 1([0,T])$, and there exists $t_0 \in [0,T)$ such that 
\begin{equation}\label{proprie_neu_w}
w(t)\equiv w(t_0)\ge 0 \ \  \text{ on } \, [0, t_0], \qquad w'>0 \ \  \text{ on } \, (t_0,T].
\end{equation}
Moreover, if
$$
\varphi \in C^ 1(\R^+), \qquad \varphi'>0 \quad \text{ on } \, \R^ +,
$$
then $w \in C^2\big((t_0, T]\big)$ and satisfies
\begin{equation}\label{wder}
\frac{\varphi' (w')}{l(w')}\,w''= a\,f(w)-\frac{\wp'}{\wp}\cdot\frac{\varphi(w')}{l(w')} \qquad \text{on } \, (t_0,T).
\end{equation}
All of the above conclusions still hold if condition $\wp>0$ on $[0,T]$, in \eqref{aepi}, is replaced by
\begin{equation}\label{wpneum}
\begin{array}{l}
\disp \wp > 0 \quad \text{on }\, (0, T], \qquad \wp(0) = 0 \\[0.2cm]
\wp' \ge 0 \quad \text{on } \, [0,\delta), \ \, \text{ for some } \, \delta>0.
\end{array}
\end{equation}
\end{theorem}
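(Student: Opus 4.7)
The plan is to adapt the Leray--Schauder fixed-point argument used in the proof of Theorem \ref{exi2}, with the considerable simplification that the Neumann condition $w'(0)=0$ eliminates the need for the free parameter $\delta$ and its two-sided bounds. First I would modify $f,l$ outside the relevant ranges, exactly as in the proof of Theorem \ref{exi2}: $f(s) = 0$ for $s < 0$, $0 \le f(s) \le f_\eta$ for $s \ge \eta$, and $0 < l(s) \le l_\xi$ for $s \ge \xi$. Notice that $\varphi$ need not be extended to negative arguments, since all arguments appearing below will be non-negative.

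On $X = (C^1([0,T]),\|\cdot\|)$ define $\HHH: X \times [0,1] \to X$ by
$$
\HHH(w,\sigma)(t) = \sigma\eta - \int_t^T \varphi^{-1}\!\left(\frac{\sigma}{\wp(s)}\int_0^s \wp(\tau)a(\tau)f(w(\tau))l(|w'(\tau)|)\di\tau\right)\di s.
$$
Since the integrand inside $\varphi^{-1}$ is non-negative, the hypothesis $\Theta(T)f_\eta l_\xi < \varphi(\xi)$ ensures that its argument lies in $[0,\varphi(\xi))$, so $\varphi^{-1}$ is well-defined with values in $[0,\xi)$. Differentiating yields $[\HHH(w,\sigma)]'(0)=\varphi^{-1}(0)=0$ once one verifies the inner expression admits a finite limit at $s=0$. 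This is immediate when $\wp(0)>0$; under \eqref{wpneum}, the monotonicity $\wp'\ge 0$ near $0$ gives $\wp(\tau)\le \wp(s)$ for $\tau\le s$, whence
$$
\frac{1}{\wp(s)}\int_0^s \wp(\tau)a(\tau)f(w(\tau))l(|w'(\tau)|)\di\tau \;\le\; a_1 f_\eta l_\xi\, s \;\longrightarrow\; 0.
$$
Consequently $0\le [\HHH(w,\sigma)]'(t)\le \varphi^{-1}(\Theta(T)f_\eta l_\xi)<\xi$ uniformly in $(w,\sigma)$, and the pointwise bound $|\HHH(w,\sigma)|\le \eta+\xi T$ produces the a priori estimate on $\|w\|$ required by Leray--Schauder.

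Verifying $\HHH(\cdot,0)\equiv 0$ and the continuity/compactness of $\HHH$ follows line by line the corresponding steps in the proof of Theorem \ref{exi2}: the Ascoli--Arzel\`a argument proceeds essentially identically, using the uniform continuity of $\varphi^{-1}$ on the compact $[0,\Theta(T)f_\eta l_\xi]$ together with $\wp\in C^1$. A fixed point $w=\HHH(w,1)$ therefore exists and, by the observation above, automatically satisfies $0\le w'(t)<\xi$; hence $w$ is non-decreasing with $w(T)=\eta$, so $w\le\eta$. To show $w\ge 0$, suppose by contradiction $w(0)<0$: since $w$ is non-decreasing and $w(T)>0$, there is a first zero $s_0\in(0,T)$, and $w\le 0$ on $[0,s_0]$. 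On this interval the modified $f(w)$ vanishes, so the fixed-point identity forces $w'\equiv 0$ on $[0,s_0]$, i.e.\ $w$ is constant there; but then $w(s_0)=w(0)<0$, contradicting $w(s_0)=0$. Once $0\le w\le \eta$ and $0\le w'<\xi$ are established, the modifications of $f,l$ are irrelevant and $w$ solves the original ODE.

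Finally, the structural statement \eqref{proprie_neu_w} is obtained as in Lemma \ref{lem_ODE}: setting $t_0=\inf\{t\in(0,T):w'(t)>0\}$, continuity of $w'$ gives $w'\equiv 0$ on $[0,t_0]$, hence $w\equiv w(t_0)\ge 0$ there; for $t>t_0$, picking $\bar t\in(t_0,t)$ with $w'(\bar t)>0$ and integrating the ODE from $\bar t$ to $t$, using $f(w)\ge 0$ on $\R^+_0$ (by \eqref{assu_ancora_fl}) and $\wp>0$ on $(0,T]$, yields $\wp(t)\varphi(w'(t))\ge \wp(\bar t)\varphi(w'(\bar t))>0$ and therefore $w'(t)>0$. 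Under the further assumption $\varphi\in C^1(\R^+)$, $\varphi'>0$, the identity $\wp(t)\varphi(w'(t))=\int_0^t \wp a f(w)l(w')\di s$ combined with $w'>0$ on $(t_0,T]$ gives $w\in C^2((t_0,T])$, and differentiating the ODE produces \eqref{wder}. The most delicate point throughout is the behaviour of the operator $\HHH$ and of the integrand at $s=0$ when $\wp(0)=0$; this is the step where the monotonicity assumption in \eqref{wpneum} is essential, and it is the only real departure from the proof of Theorem \ref{exi2}.
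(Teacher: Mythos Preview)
Your proposal is correct and follows essentially the same approach as the paper: the same integral homotopy $\HHH$, the same a priori bounds from $\Theta(T)f_\eta l_\xi<\varphi(\xi)$, the same contradiction argument for $w(0)\ge 0$, and the same Lemma~\ref{lem_ODE}-type reasoning for \eqref{proprie_neu_w}. The only point the paper develops more fully is the one you correctly flag as ``most delicate'': when $\wp(0)=0$, the equicontinuity estimate for $\{\HHH(w_k,\sigma_k)'\}$ cannot be copied verbatim from Theorem~\ref{exi2} (the bound there uses $\wp\ge\wp_0>0$), and the paper handles it by splitting into the regimes $s,t\ge\vartheta$ and $\min\{s,t\}<\vartheta$, exploiting the pointwise bound $|x_k|\le a_1 f_\eta l_\xi\, t$ you already derived.
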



\begin{remark}
\emph{Differently from the Dirichlet problem, if we allow $l$ to vanish at $t=0$ in the Neumann case we cannot  guarantee that the solution of \eqref{2.1core} be non-constant. This motivates the necessity to require $l>0$ on $\R^+_0$. 
}
\end{remark}

\begin{proof}
The strategy goes along the same lines as that for the Dirichlet problem. First, we   redefine $f$ outside of $[0,\eta]$ and $l$ outside of $[0,\xi]$ in such a way that 
\begin{equation}\label{def_modi_neu}
\begin{array}{l}
\disp f \in C(\R), \qquad \disp 0 \le f(s) \le f_\eta \quad \text{ for } \, s \ge \eta, \qquad f(s)=0 \quad \text{ for } \, s < 0, \\[0.2cm]
l \in C(\R^ +_0), \qquad 0<l(s) \le l_\xi \quad  \text{ for } \, s \ge \xi
\end{array}
\end{equation}
This will not affect the conclusion of the proposition, since any ultimate solution $w$ of \eqref{2.1core}, with $w \ge
0$, $w' \ge 0$ in $[0,T]$, satisfies $0 \le w \le \eta$ and $|w'|< \xi$. 

Denote with $X$ the Banach space $X=C^1\big([0,T]\big)$, endowed with the usual norm
$\|w\|=\|w\|_\infty+\|w'\|_\infty$. Define the homotopy $\mathcal H:X\times[0,1]\to X$ by
\begin{equation}\label{2.6core}
{\mathcal H}[w,\sigma](t)= \sigma
\eta- \int^{T}_t\varphi ^{-1} \left(\frac\sigma{\wp(s)}\int^s_{0} \wp(\tau)a(\tau) f(w(\tau))l(|w'(\tau)|)\di \tau\right) \di s.
\end{equation}
We claim that $\haus$ is well defined and valued in $X$. Indeed, in our assumptions  
\begin{equation}\label{bound_grad_neu}
0 \le \frac{\sigma}{\wp(t)}\int^t_{0} \wp(\tau)a(\tau)f(w(\tau))l(|w'(\tau)|)\di \tau  \le \Theta(T)f_\eta l_\xi < \varphi(\xi), 
\end{equation}
hence the term in round brackets in \eqref{2.6core} lies in the domain of $\varphi^{-1}$ and 
\begin{equation}\label{rap'} 
\haus[w,\sigma]'(t) = \varphi ^{-1} \left(\frac{\sigma}{\wp(t)}\int^t_{0} \wp(\tau)a(\tau)f(w(\tau))l(|w'(\tau)|)\di \tau\right) \in [0, \xi).
\end{equation}
Furthermore, $\haus[w,\sigma]'$ is continuous on $[0,T]$, hence $\haus$ is valued in $X$. By construction, $\haus[w,\sigma](T) = \sigma \eta$ and $\haus[w,0] = 0$. From 
$$
0\le\frac 1{\wp(t)}\int^t_{0}\wp(\tau)a(\tau) f(w(\tau))l(|w'(\tau)|)\di \tau \le f_\eta l_\xi \frac{1}{\wp(t)}\int^t_{0}\wp(\tau)a(\tau)\di \tau,
$$
we deduce that $\haus[w,\sigma]'(0)=0$.  Fix $\sigma \in (0,1]$, and let $w$ be a solution of $w = \haus[w,\sigma]$. We claim that $w(0)\ge 0$: otherwise, since $w(T)= \sigma \eta>0$ there would exist a
first point $t_1 \in (0,T)$ such that $w<0$ on $[0, t_1)$ and
$w(t_1)=0$, and therefore $f(w(t))=0$ on $[0, t_1]$. Thus, $w'\equiv
0$ on $[0,t_1]$ by \eqref{rap'}, that is, $w$ would be constant on $[0, t_1)$, contradicting $w(0) < 0 = w(t_1)$. From $w \ge 0$ we also deduce $w' \ge 0$ on $[0,T]$ by \eqref{rap'}. Also, \eqref{rap'} implies that $\varphi (w')$ is of class $C^{1}\big([0,T]\big)$, and then from \eqref{2.6core} that $w$ is a classical weak solution of the problem
\begin{equation}\label{2.7core} \begin{cases}
[\wp\varphi(w')]'=\sigma\wp a f(w)l(|w'|)\qquad \text{on } \, (0,T),\\[0.2cm]
w'(0)=0,\quad w(T)=\sigma \eta \\[0.2cm]
0 \le w \le \sigma \eta, \qquad 0 \le w' < \xi \qquad \text{on } \, (0,T). 
\end{cases}\end{equation}
In particular, for $\sigma =1$, $w$ is the desired solution of \eqref{2.1core}. To prove \eqref{proprie_neu_w}, let $\sigma=1$. From $w(T)= \eta$ and $w' \ge 0$ we infer the existence of a minimal $t_0 \in (0, T)$ such that $w>0$ on $(t_0,T]$. Since $f>0$ on $\R^+$, $l>0$ on $\R^+_0$ and $a>0$ on $[0,T]$, a solution of $w= \haus[w,1]$ satisfies 
$$
w'(t) = \haus[w,1]'(t) = \varphi ^{-1} \left(\frac{1}{\wp(t)}\int^t_{0} \wp(\tau)a(\tau)f(w(\tau))l(|w'(\tau)|)\di \tau\right) > 0 \qquad \forall \, t \in (t_0,T].
$$
If $t_0 \neq 0$, by the monotonicity and non-negativity of $w$ we get $w=0$ on $[0,t_0]$. To show \eqref{wder}, using $\varphi(w') \in C^1([0,T])$, $\varphi'>0$ on $\R^+$ and $w'>0$ on $(t_0,T]$ in \eqref{rap'} we deduce $w' \in C^1((t_0,T])$. Identity \eqref{wder} immediately follows by expanding the derivative in \eqref{2.1core}.\par
We assert that a solution of $w= \haus[w,1]$ exists, using again the Browder version of the Leray-Schauder theorem (see \cite[Thm 11.6]{gilbargtrudinger}).

To begin with, as already observed $\haus [w,0]\equiv 0$ for all $w\in X$. We next show  that $\mathcal H$ is
continuous on $X\times[0,1]$. Indeed, consider a sequence $\{(w_k,\sigma_k\} \subset X\times[0,1]$, with
$w_k\to w$ in $X$ and $\sigma_k\to\sigma$ as $k \ra \infty$. By continuity, $\sigma_k f(w_k)l(|w_{k}'|)\to\sigma f(w)l(|w'|)$, and so $\mathcal H[w_k,\sigma_k]\to\mathcal H[w,\sigma]$ by \eqref{2.6core} and Lebesgue convergence theorem, as required. Next we show that $\haus $ is compact. To this aim, let $\{(w_k,\sigma_k)\}$ be a bounded sequence in $X\times[0,1]$. From \eqref{rap'},
\begin{equation}\label{C'core}  
\|\mathcal H[w_k,\sigma_k]'\|_{\infty} < \xi, 
\end{equation}
and thus, since $\haus[w_k, \sigma_k](T)= \sigma \eta \in [0,\eta]$, $\{\mathcal H[w_k,\sigma_k]\}$ is equi-bounded in $X$. We shall prove that $\{\mathcal H[w_k,\sigma_k]'\}$ is equicontinuous. Set
\begin{equation}\label{def_xkyk_neu}
\begin{array}{l}
I_k(t) = \disp \int_0^t \wp(\tau)a(\tau) f(w_k(\tau)) l(|w'(\tau)|) \di \tau, \\[0.4cm]
x_k = \disp \frac{\sigma_kI_k(t)}{\wp(t)}, \qquad y_k = \frac{\sigma_k I_k(s)}{\wp(s)},
\end{array}
\end{equation}
and note that 
\begin{equation}\label{unicont}
\big|\haus[w_k,\sigma_k]'(t) - \haus[w_k,\sigma_k]'(s)\big| = \big| \varphi^{-1}(x_k)-\varphi^{-1}(y_k)\big|, 
\end{equation}
and that, by \eqref{bound_grad_neu}, 
\begin{equation}\label{ninini}
0 \le x_k, y_k < \varphi(\xi), \qquad |I_k(s)-I_k(t)| \le c|s-t| \quad \text{with } \, c = \wp_1 a_1 f_\eta l_\xi. 
\end{equation}
Since $\wp>0$ on $[0,T]$ and there it is $C^1$, we deduce 
\begin{equation}\label{catenaine}
\begin{array}{lcl}
\disp |x_k-y_k| & = & \disp \sigma_k \left|\frac{\wp(t)I_k(s)- \wp(s) I_k(t)}{\wp(t)\wp(s)}\right| \le \left|\frac{\wp(t)I_k(s)- \wp(s) I_k(t)}{\wp(t)\wp(s)}\right| \\[0.5cm]
& \le & \disp \frac{| \wp(t)-\wp(s)|}{\wp(s)\wp(t)} I_k(s) + \frac{|I_k(s)-I_k(t)|}{\wp(t)} \\[0.5cm]
& \le & \disp \left[\frac{cT}{\wp_0^2}\left(\max_{[0,T]} \wp'\right) + \frac{c}{\wp_0}\right]|s-t| = \kappa |s-t|.
\end{array}
\end{equation}
Given $\eps>0$, let $\varrho=\varrho(\varphi^{-1},\varepsilon)>0$ be given by the uniform continuity of $\varphi^{-1}$ on $[0, \varphi(\xi)]$. If $|s-t|< \varrho/\kappa$, then $|x_k-y_k|< \varrho$ and thus, by \eqref{unicont}, 
\begin{equation}\label{unicont_haus}
\big|\haus[w_k,\sigma_k]'(t) - \haus[w_k,\sigma_k]'(s)\big| = \big| \varphi^{-1}(x_k)-\varphi^{-1}(y_k)\big| < \eps, 
\end{equation}
proving the (uniform) equicontinuity of $\{\haus[w_k, \sigma_k]'\}$. The compactness of $\haus$ then follows from Ascoli-Arzel\'a theorem.\par
To apply the Leray-Schauder theorem it remains to check the existence of a constant $L>0$ such that $\|w\| \le L$ for each solution of $\haus[w,\sigma]= w$. But this immediately follows from properties \eqref{2.7core}, and indeed $\|w\| \le \eta + \xi$.\par 
%
%
It remains to consider the case when $\wp>0$ on $[0,T]$ is replaced by \eqref{wpneum}. From \eqref{rap'} and the monotonicity of $\wp$ on $[0, \delta)$, we deduce that for $t \in (0, \delta)$
$$
\begin{array}{lcl}
\haus[w,\sigma]'(t) & = & \disp \varphi^{-1} \left( \frac{\sigma}{\wp(t)}\int^t_{0}\wp(\tau)a(\tau) f(w(\tau))l(|w'(\tau)|)\di \tau\right) \\[0.5cm]
& \le & \disp \varphi^{-1} \left( \frac{f_\eta l_\xi a_1}{\wp(t)}\int^t_{0}\wp(\tau)\di \tau \right) \le  \varphi^{-1} \left(f_\eta l_\xi a_1 t \right). 
\end{array}
$$
Hence, $\haus[w,\sigma]'$ is continuous up to $t=0$ (i.e. $\haus$ is valued in $X$) and $w'(0)=0$ for each solution of $w = \haus[w,\sigma]$. The rest of the proof follows verbatim, except for the equicontinuity of $\{\haus[w_k,\sigma_k]'\}$ that we now consider. By the monotonicity of $\wp$, 
$$
|x_k| \le \frac{|I_k(t)|}{\wp(t)} \le a_1 f_\eta l_\xi t,
$$
independently of $k$. Thus, given $\eps>0$ and the corresponding $\varrho= \varrho(\varphi^{-1}, \eps)$ of the uniform continuity of $\varphi^{-1}$ on $[0, \varphi(\xi)]$, we can choose $\vartheta \in (0,T/2)$ independent of $k$ such that $|x_k| < \varrho/2$ if $t< 2\vartheta$ and $|y_k| < \varrho/2$ if $s< 2\vartheta$. Set
$$
\hat \wp_0 = \inf_{[\vartheta,T]} \wp>0, \qquad \hat \kappa = \frac{cT}{\hat \wp_0^2}\left(\max_{[0,T]} \wp'\right) + \frac{c}{\hat\wp_0}, 
$$
with $c$ as in \eqref{ninini}. Define 
$$
\bar \vartheta = \min \left\{ \frac{\varrho}{\hat \kappa}, \vartheta\right\}.
$$
Let $s,t \in [0,T]$ with $|s-t|< \bar \vartheta$. If $s,t \ge \vartheta$, then the chain of inequalities \eqref{catenaine} holds verbatim with $\hat \kappa, \hat \wp_0$ replacing $\kappa, \wp_0$, respectively, and we deduce
$$
|x_k-y_k| \le \hat \kappa|s-t| < \varrho.
$$
On the other hand, if one between $s,t$, say $t$, is less than $\bar \vartheta$, then from $\bar \vartheta \le \vartheta$ we deduce $s < t + |s-t| < 2\vartheta$. Hence, $|x_k| < \varrho/2$ and $|y_k|< \varrho/2$, and thus 
$$
|x_k-y_k| \le |x_k|+|y_k| < \varrho.
$$
In both the cases, $|x_k-y_k|< \varrho$ and therefore \eqref{unicont_haus} holds, proving the (uniform) equicontinuity of $\{\haus[w_k,\sigma_k]'\}$.
\end{proof}

The next result relates condition $w(0)=0$ to \eqref{KO_zero}. In order to do so, we shall further require \eqref{assum_secODE_altreL} in order to define $K$, $F$ as in \eqref{def_K}, \eqref{def_Fe_intro}.

\begin{proposition}\label{prop_wprimozero}
In the assumptions of Theorem \ref{exi2_neumann}, suppose further \eqref{assum_secODE_altreL} and that 
$$
\begin{array}{l}
\wp' \ge 0 \qquad \text{on } \, (0,T), \\[0.2cm]
\text{$f$ is $C$-increasing on $(0,\eta_0)$, for some constant $\eta_0> 0$.}
\end{array}
$$
If $w(0)=0$, then \eqref{KO_zero} holds.
\end{proposition}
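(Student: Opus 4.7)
The strategy closely mirrors the argument used in case $(ii)$ of Proposition \ref{prop_twobound_refined} for $\wp' \ge 0$, adapted to the Neumann setting. The plan is to derive an energy-type inequality of the form $K(w'(t)) \le a_1 F(w(t))$ and then extract the Keller-Osserman integrability by a change of variables.

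First, I would invoke Theorem \ref{exi2_neumann} to obtain $t_0 \in [0,T)$ such that $w \equiv w(t_0)$ on $[0,t_0]$ and $w' > 0$ on $(t_0,T]$, with $w \in C^2((t_0,T])$. The hypothesis $w(0)=0$ then forces $w(t_0)=0$; moreover, since $w \in C^1([0,T])$ and $w$ is constant on $[0,t_0]$ (or $t_0=0$, in which case the Neumann condition $w'(0)=0$ applies directly), we deduce $w'(t_0)=0$. From the ODE \eqref{wder}, valid on $(t_0,T)$, and the sign conditions $\wp'/\wp \ge 0$ and $\varphi(w')/l(w') \ge 0$, I would obtain
$$
\frac{\varphi'(w')}{l(w')} w'' \le a f(w) \le a_1 f(w) \qquad \text{on } \, (t_0,T).
$$

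Next, multiplying by $w' > 0$ and integrating on $[t_0,t]$, $t \in (t_0,T]$, and changing variables on both sides using $w'(t_0)=w(t_0)=0$, yields
$$
K\big(w'(t)\big) = \int_0^{w'(t)} \frac{s\varphi'(s)}{l(s)}\di s \le a_1\int_0^{w(t)} f(s)\di s = a_1 F\big(w(t)\big).
$$
Since $w$ is continuous with $w(t_0)=0$, $w>0$ on $(t_0,T]$ and $F(0)=0$, one can pick $T_1 \in (t_0,T]$ small enough so that $w(T_1) < \eta_0$ and $a_1 F(w(T_1)) < K_\infty$. Applying the monotone $K^{-1}$ on $[0,K_\infty)$ and rearranging gives
$$
\frac{w'(t)}{K^{-1}\big(a_1 F(w(t))\big)} \le 1 \qquad \text{for } \, t \in (t_0,T_1].
$$
Integrating on $[t_0,T_1]$ and performing the change of variable $\tau = w(s)$, which is legitimate since $w$ is strictly increasing on $(t_0,T_1]$, we arrive at
$$
\int_0^{w(T_1)} \frac{\di \tau}{K^{-1}\big(a_1 F(\tau)\big)} \le T_1 - t_0 < \infty.
$$

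Finally, since $f$ is positive and $C$-increasing on $(0,\eta_0)$ by assumption, Lemma \ref{lem_mettimaosigmatau_novo}$(i)$ guarantees that integrability of $1/K^{-1}(a_1 F)$ at $0^+$ is equivalent to integrability of $1/K^{-1}(F)$ at $0^+$, which is precisely \eqref{KO_zero}. The only mild technical issue to watch is ensuring that $K^{-1}$ is evaluated within its domain $[0,K_\infty)$, handled by choosing $T_1$ sufficiently close to $t_0$; the rest is a direct adaptation of the energy argument.
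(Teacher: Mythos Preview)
Your proof is correct and follows essentially the same route as the paper's: both derive the energy inequality $K(w') \le a_1 F(w)$ from \eqref{wder} (or equivalently from the ODE after using $\wp' \ge 0$), then invert, integrate, and invoke Lemma~\ref{lem_mettimaosigmatau_novo} to pass from integrability of $1/K^{-1}(a_1 F)$ to \eqref{KO_zero}. The only cosmetic difference is that the paper first writes $(\varphi(w'))' \le a_1 f(w)l(w')$ before multiplying by $w'/l(w')$, while you go directly through \eqref{wder}; the substance is identical.
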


\begin{proof}
Because of the monotonicity of $\wp'$ and $w$, from \eqref{2.1core} we deduce
$$
\big( \varphi(w')\big)' \le a f(w) l(w') \le a_1 f(w)l(w') \qquad \text{for } \, t \in (0,T).
$$
We now follow the steps in Proposition \ref{prop_twobound_refined}: differentiating on $(t_0,T)$, we deduce
$$
K'(w')w'' \le a_1 f(w)w',
$$
and integrating on $(t_0,t)$ with the aid of $w(t_0)=w'(t_0)=0$ we infer $K(w') \le a_1 F(w)$. Let $t_1 \in (t_0,t)$ be such that $a_1 F(w) < K_\infty$ for $t \in (t_0,t_1)$. This is possible, by continuity, since $F(w(t_0))=0$. Applying $K^{-1}$, integrating and changing variables we get
$$
\int_0^{w(t)} \frac{\di s}{K^{-1}(a_1 F(s))} \le (t-t_0) \qquad \text{on } \, (t_0,t_1), 
$$
and \eqref{KO_zero} follows from Lemma \ref{lem_mettimaosigmatau_novo}.
\end{proof}

We next investigate the maximal interval of definition of $w$. Assume the validity of \eqref{assum_secODE} and \eqref{assu_ancora_fl}. To tie $w$ with \eqref{KO_infinity_intro}, we assume \eqref{assum_secODE_altreL} and
\begin{equation}\label{perKO}
\frac{t \varphi'(t)}{l(t)} \not \in L^1(\infty),
\end{equation}
in order for $K$ to be a homeomorphism of $\R^+_0$ onto itself. We further replace \eqref{aepi} and \eqref{wpneum} with 
\begin{equation}\label{aepi_infty}
\left\{ \begin{array}{ll}
\disp \wp\in C^1(\R^+_0), & \qquad \wp>0, \ \ \wp' \ge 0 \quad \text{on }  \R^+, \\[0.1cm]
a \in C(\R^+_0), & \qquad a>0 \quad \text{on } \, \R^+_0.
\end{array}\right.
\end{equation}

Fix $T>0$. Applying Theorem \ref{exi2_neumann} we infer the existence of $w$ solving \eqref{2.1core} for each $\eta>0$ sufficiently small (inequality \eqref{bound_etaxi_neu} is always satisfied for small $\eta$ since, by \eqref{assu_ancora_fl}, $f_\eta \ra 0$ as $\eta \ra 0$). From $w'(T)>0$, we conclude that $w$ can be extended on a maximal interval $[0, R)$. Our next task is to prove that, if the Keller-Osserman condition \eqref{KO} is violated, then $R=\infty$.

%

\begin{proposition}\label{prop_Rinfty}
Assume \eqref{assum_secODE}, \eqref{assum_secODE_altreL}, \eqref{assu_ancora_fl} and \eqref{perKO}. Let $a, \wp$ satisfy \eqref{aepi_infty}. For a fixed $T>0$, consider the solution $w$ of \eqref{2.1core} for small positive $\eta$ and let $[0,R)$ be the maximal interval where $w$ is defined. If $t_0$ is as in Theorem \ref{exi2_neumann}, then
\begin{equation}\label{proprie_w_new}
w = w(t_0) \quad \text{on } \, [0,t_0], \qquad w>0, \quad w'>0 \quad \text{on } \, (t_0, R). 
\end{equation}
Furthermore, suppose that $f$ is $C$-increasing on $(\bar\eta_0, \infty)$ for some $\bar\eta_0 \ge 0$. If
\begin{equation}\label{notKO}\tag{$\neg$KO$_\infty$}
\frac{1}{K^{-1} \circ F} \not \in L^1(\infty),
\end{equation}
then $R = \infty$.
\end{proposition}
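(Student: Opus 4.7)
The proof naturally splits along the two assertions of the proposition.

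\medskip

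\textbf{Extension of the qualitative properties to $[0,R)$.} Theorem \ref{exi2_neumann} already gives $w \equiv w(t_0)$ on $[0,t_0]$ (with $w(t_0)\ge 0$, and in fact equal to $0$ if $t_0>0$) and $w>0$, $w'>0$ on $(t_0,T]$, together with $w \in C^2((t_0,T])$. To extend these properties up to $R$, I would observe that on $(t_0,R)$ the ODE $[\wp\varphi(w')]'=\wp a f(w) l(|w'|)$ has non-negative right-hand side as long as $w\ge 0$ (so that $f(w)\ge 0$), hence $\wp\varphi(w')$ is non-decreasing there. Since $\wp(T)\varphi(w'(T))>0$, the monotonicity of $\wp\varphi(w')$ and positivity of $\wp$ force $\varphi(w'(t))>0$, thus $w'(t)>0$, for every $t\in[T,R)$; a standard continuation/contradiction argument (if $w'$ first vanished at some $t^\ast\in(T,R)$, then $\wp(t^\ast)\varphi(w'(t^\ast))=0<\wp(T)\varphi(w'(T))$, contradicting monotonicity) closes this step. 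Strict positivity of $w$ on $(t_0,R)$ then follows by integration.

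\medskip

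\textbf{Setting up the argument for $R=\infty$.} I argue by contradiction and assume $R<\infty$. The central tool is an energy-type inequality obtained as in the proof of Proposition \ref{prop_twobound_refined}: since $\varphi\in C^1(\R^+)$ and $w\in C^2((t_0,R))$, expanding the ODE gives
\begin{equation*}
\wp\,\varphi'(w')\,w'' \;=\; \wp\,a\,f(w)\,l(w') - \wp'\,\varphi(w') \;\le\; \wp\,a\,f(w)\,l(w'),
\end{equation*}
using $\wp'\ge 0$ and $\varphi(w')\ge 0$. Dividing by $\wp\,l(w')>0$, multiplying by $w'>0$, integrating over $(t_1,t)\subset(t_0,R)$ with $t_1\downarrow t_0$ and using $w'(t_0)=0$ (so $K(w'(t_1))\to 0$), I obtain
\begin{equation*}
K(w'(t)) \;\le\; \int_{t_0}^{t} a(s)\,f(w(s))\,w'(s)\,ds \;\le\; a_R\bigl[F(w(t))-F(w(t_0))\bigr], \qquad a_R:=\max_{[0,R]}a,
\end{equation*}
where $F(\tau)=\int_{\bar\eta_0}^{\tau}f$. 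This inequality already rules out the possibility that $w$ stays bounded on $[0,R)$: indeed, if $w\le W<\infty$ then the right-hand side is bounded, so $w'(t)\le K^{-1}(a_R F(W))<\infty$ by \eqref{perKO}, and therefore the right-hand side of the ODE is bounded, allowing $w$ (and $w'$) to be continuously extended to $t=R$ and the local Cauchy problem to be continued past $R$, violating the maximality of $R$. Hence $w(t)\to\infty$ as $t\to R^-$.

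\medskip

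\textbf{Producing the Keller--Osserman integral.} Since $f>0$ and $C$-increasing on $(\bar\eta_0,\infty)$, $F(\tau)\to\infty$ as $\tau\to\infty$, so for $t$ sufficiently close to $R$ one has $F(w(t_0))\le \tfrac{1}{2}F(w(t))$ and thus
\begin{equation*}
K(w'(t)) \;\le\; 2a_R\,F(w(t)).
\end{equation*}
Applying the increasing function $K^{-1}$ (well-defined on $\R^+_0$ by \eqref{perKO}) and separating variables gives $w'(t)/K^{-1}(2a_R F(w(t)))\le 1$; integrating on $[t_1,t)$ and changing variables $\tau=w(s)$,
\begin{equation*}
\int_{w(t_1)}^{w(t)}\frac{d\tau}{K^{-1}(2a_R F(\tau))} \;\le\; t-t_1 \;\le\; R-t_1.
\end{equation*}
Letting $t\to R^-$ and using $w(t)\to\infty$ yields $1/(K^{-1}(2a_R F))\in L^1(\infty)$, which by Lemma \ref{lem_mettimaosigmatau_novo}(ii) is equivalent to \eqref{KO_infinity_intro} and thus contradicts \eqref{notKO}. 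Therefore $R=\infty$, concluding the proof.

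\medskip

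The main technical obstacle is the careful handling of the energy estimate in the borderline case \eqref{wpneum} where $\wp(0)=0$: one cannot divide by $\wp$ at the origin, so the derivation must be performed on a compact subinterval of $(0,R)$ and then passed to the limit using $w'(t_0)=0$ (which holds whether $t_0>0$ or $t_0=0$, via the Neumann boundary condition). The other delicate point is ensuring that $F(w(t))$ is the dominant term in the bound, which requires the $C$-increasing assumption on $f$ at infinity to propagate through Lemma \ref{lem_mettimaosigmatau_novo}.
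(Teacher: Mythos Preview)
Your proof is correct and follows the same strategy as the paper's: derive the energy inequality $K(w')\le a_R\bigl(F(w(t))-F(w(t_0))\bigr)$ from $\wp'\ge 0$, use it to rule out $w$ bounded on $[0,R)$ (the paper does this via an explicit uniform-continuity argument for $w'$, while your bounded-RHS/Peano continuation is an equally valid shortcut), and then integrate $w'/K^{-1}(\sigma F(w))\le 1$ to contradict \eqref{notKO} through Lemma~\ref{lem_mettimaosigmatau_novo}. One cosmetic point: the implication ``$F(w(t_0))\le\tfrac12 F(w(t))$ hence $K(w')\le 2a_R F(w(t))$'' is written backwards---what you actually need (and what clearly holds since $F(w(t))\to\infty$) is $|F(w(t_0))|\le F(w(t))$, giving $F(w(t))-F(w(t_0))\le 2F(w(t))$; the paper sidesteps this by taking $F(\tau)=\int_0^\tau f\ge 0$ so that $F(w(t_0))\ge 0$ and no factor $2$ is needed.
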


\begin{proof}
Taking into account the sign of $f,l,a$, by \eqref{2.1core} $\wp\varphi(w')$ is $C^1$ and strictly increasing where $w$ is positive, and from \eqref{proprie_neu_w} it readily follows that $\wp\varphi(w')>0$ on $(t_0, R)$. Properties \eqref{proprie_w_new} are then immediate from \eqref{proprie_neu_w}, $w'(T)>0$ and our assumptions on $\varphi$. Next, suppose by contradiction that $R<\infty$. We first claim that necessarily
\begin{equation}\label{eq:43}
w^* = \lim_{t \to R^-}w(t)=\infty,
\end{equation}
where the existence of the limit is guaranteed by the monotonicity of $w$. To prove \eqref{eq:43}, assume by contradiction that $w^* <\infty$. Because of \eqref{wder} and since $\wp' \ge 0$,
$$
\frac{\varphi'(w') w''}{l(w')} \le af(w) \le a_1 f(w) \qquad \text{on } \, (t_0,R),
$$
where we set $a_1 = \|a\|_{L^ \infty([0,R])}$. Multiplying by $w'$, integrating on $(t_0, t)$ and changing variables we deduce $K(w') \le a_1F(w)$ (we recall that $w'(t_0)=0$ by \eqref{2.1core}). Thus, $w'$ is bounded in $(R/2,R)$, namely $\|w'\|_\infty \le K^{-1}(a_1 F(w^*)) = L$. For $t,s \in (R/2,R)$, define $I_k, x_k, y_k$ as in \eqref{def_xkyk_neu}, and note that 
$$
|x_k| + |y_k| \le 2 a_1 f_{w^*} l_L R = \bar C
$$
Given $\eps>0$ let $\varrho=\varrho(\varphi^{-1},\eps)$ be given by the uniform continuity of $\varphi^{-1}$ on $[0, \bar C]$. Proceeding as in \eqref{catenaine} we deduce the existence of $\kappa>0$ such that
$$
|x_k-y_k| \le \kappa |s-t| \qquad \text{for each } \, s,t \in \left( \frac{R}{2},R\right).
$$
If $|s-t|< \varrho/\kappa$, then $|x_k-y_k|<\varrho$ and so
$$
|w'(t)-w'(s)| = \big| \varphi^{-1}(x_k)-\varphi^{-1}(y_k)\big| < \eps.
$$
In conclusion, $w'$ is uniformly continuous on $(R/2,R)$, and can be therefore extended by continuity at $t=R$. By the existence theory for ODEs, $w$ would be further extendible beyond $R$, contradiction. This proves that $w(R^-) = \infty$. Now, fix $T_1  \in (0,R)$ large enough that $w(T_1)> \bar \eta_0$. Applying $K^{-1}$ to inequality $K(w') \le a_1 F(w)$ on $(T_1,R)$, rearranging, integrating on $[T_1,t)$ and changing variables we get
\begin{equation}\label{ide_neu}
\int_{w(T_1)}^{w(t)} \frac{\di s}{K^{-1}(a_1 F(s))} \le t-T_1.
\end{equation}
Since \eqref{notKO} holds and, by assumption, $f$ is $C$-increasing on $(\bar \eta_0, \infty)$, applying Lemma \ref{lem_mettimaosigmatau_novo} we deduce that the left-hand side of \eqref{ide_neu} is unbounded as $t \ra R^-$ while the right-hand side is not, contradiction.
\end{proof}

\section{Comparison results and the finite maximum principle}\label{sec2.4}

\subsection{Basic comparisons and a pasting lemma}

In this subsection, we collect two comparison theorems and a ``pasting lemma" for $\lip_\loc$ solutions that will be repeatedly used in the sequel. Throughout the section we assume
\begin{equation}\label{assu_percompaepasting}
\varphi \in C(\R^ +_0), \qquad \varphi(0) = 0, \qquad \varphi>0 \ \text{ on } \, \R^+. \\[0.2cm]
\end{equation}


The first comparison is Proposition~6.1 of~\cite{prsmemoirs}, see also Theorem 2.4.1 of \cite{pucciserrin}.

\begin{proposition}\label{prop_comparison}
Assume \eqref{assu_percompaepasting} and that $\varphi$ is strictly increasing on $\R^+$. Let $\Omega \subset M$ be open, and suppose that $u$, $v \in \lip_\loc(\Omega) \cap C(\overline \Omega)$ solve
\begin{equation}\label{prob_simple}
\left\{\begin{array}{l}
\Delta_\varphi u \ge \Delta_\varphi v \qquad \text{weakly in }  \Omega, \\[0.3cm]
\disp u \le v \quad \text{on }  \partial \Omega,
\end{array}\right.
\end{equation}
and $\disp\limsup_{x \in \Omega, \, x \ra \infty}\big( u(x) -v(x)\big) \le 0$ if $\Omega$ has non-compact closure. Then $u \le v$ in $\Omega$.
\end{proposition}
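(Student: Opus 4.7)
The plan is to argue by contradiction, supposing that the open set $\Omega^* = \{u > v\}$ is non-empty. By the hypothesis on $\partial \Omega$ and the behaviour at infinity, for every sufficiently small $\eps > 0$ the set $\Omega^*_\eps = \{u - v > \eps\}$ is relatively compact in $\Omega$, and the function $\psi_\eps = (u - v - \eps)_+$ belongs to $\lip_c(\Omega)$ with $\nabla \psi_\eps = \nabla u - \nabla v$ a.e.\ on $\Omega^*_\eps$ and zero elsewhere. A standard approximation (via a cutoff $\chi_k \in C^\infty_c(\Omega)$ with $\chi_k \uparrow 1$ on $\Omega^*_\eps$ and mollification) shows that $\psi_\eps$ may be used as a non-negative test function in the weak formulation of $\Delta_\varphi u - \Delta_\varphi v \ge 0$.

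Plugging in $\psi_\eps$ yields
\[
\int_{\Omega^*_\eps} \left\langle \frac{\varphi(|\nabla u|)}{|\nabla u|}\nabla u - \frac{\varphi(|\nabla v|)}{|\nabla v|}\nabla v,\ \nabla u - \nabla v \right\rangle \le 0,
\]
where we understand the fraction to vanish when the gradient is zero. The crux is the pointwise monotonicity of the vector field $A(X) = \varphi(|X|)|X|^{-1} X$: expanding the inner product and using $\langle X,Y\rangle \le |X||Y|$ gives
\[
\langle A(X) - A(Y), X - Y \rangle \ge \big(\varphi(|X|) - \varphi(|Y|)\big)\big(|X| - |Y|\big) \ge 0,
\]
the last inequality holding because $\varphi$ is strictly increasing on $\R^+$ (and $\varphi(0)=0$). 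Strict monotonicity of $\varphi$ in fact forces the integrand above to be \emph{strictly} positive wherever $|\nabla u| \ne |\nabla v|$, and elementary analysis of the equality case (using also $\langle X,Y\rangle = |X||Y|$ and $|X|=|Y|$) shows that vanishing of the integrand forces $\nabla u = \nabla v$ a.e.

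Combining these facts we conclude that $\nabla u = \nabla v$ a.e.\ on $\Omega^*_\eps$. Since $u - v$ is locally Lipschitz and $\Omega^*_\eps$ is open, on each connected component $U$ of $\Omega^*_\eps$ the function $u - v$ is constant and equal to its boundary value $\eps$ (attained by continuity on $\partial U \subset \Omega$, where $u - v \le \eps$). This contradicts $u - v > \eps$ on $U$. Hence $\Omega^*_\eps = \emptyset$ for every $\eps > 0$, and letting $\eps \downarrow 0$ gives $u \le v$ in $\Omega$. The main technical point to watch is the justification that $\psi_\eps$ is an admissible test function together with the careful handling of the set where the gradients vanish, which is where the continuity assumption $\varphi(0)=0$ in \eqref{assu_percompaepasting} is essential to guarantee that the vector field $A$ extends continuously to the origin.
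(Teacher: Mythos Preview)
Your proof is correct and follows the standard argument: the paper does not actually prove this proposition but cites Proposition~6.1 of \cite{prsmemoirs} and Theorem~2.4.1 of \cite{pucciserrin}, and what you have written is essentially the proof one finds in those references --- testing with $(u-v-\eps)_+$ and exploiting the strict monotonicity of the map $X \mapsto \varphi(|X|)|X|^{-1}X$.
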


Our second comparison result is a special case of \cite[Thm. 5.6]{antoninimugnaipucci}, see also \cite[Thm. 3.6.5]{pucciserrin}.
\begin{proposition}\label{prop_serrin}
Let $\varphi, f, l$ satisfy
$$
\begin{array}{l}
\varphi \in C(\R^+_0) \cap C^1(\R^ +), \qquad \varphi(0)=0, \qquad \varphi'>0 \quad \text{on } \, \R^+ \\[0.2cm]
f \in C(\R), \qquad \text{$f$ is non-decreasing on $\R$,} \\[0.2cm]
l \in C(\R^+_0) \cap \lip_\loc(\R^+), \qquad l>0 \ \text{ on } \, \R^+.
\end{array}
$$
Let $\Omega \subset M$ be an open subset of a Riemannian manifold $M$, and fix $0<b(x) \in C(\overline\Omega)$. If $u,v \in \lip_\loc(\Omega)\cap C(\overline{\Omega})$ solve
\begin{equation}\label{confronto_conbfl}
\left\{\begin{array}{l}
\Delta_\varphi u \ge b(x) f(u) l(|\nabla u|) \qquad \text{on } \, \Omega, \\[0.2cm]
\Delta_\varphi v \le b(x) f(v) l(|\nabla v|) \qquad \text{on } \, \Omega, \\[0.2cm]
u \le v \qquad \text{on } \, \partial \Omega \\[0.2cm]
\ess\inf_K \Big\{|\nabla v|+|\nabla u|\Big\} >0  \qquad \text{for each } \, K \Subset \Omega.
\end{array}\right.
\end{equation}
Then, $u \le v$ on $\Omega$.
\end{proposition}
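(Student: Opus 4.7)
The statement is cited as a special case of Theorem~5.6 in \cite{antoninimugnaipucci}, so the strategy is the classical Serrin-type comparison argument for quasilinear operators with a gradient-dependent reaction term. I would argue by contradiction: assume $\delta:=\sup_\Omega(u-v)>0$, fix $\eps\in(0,\delta)$, and work in the open set $\Omega_\eps:=\{u>v+\eps\}$. Continuity up to $\partial\Omega$ together with $u\le v$ on $\partial\Omega$ ensures $\overline{\Omega_\eps}\Subset\Omega$; hence $\psi:=(u-v-\eps)_+$ is a nonnegative Lipschitz function with compact support in $\Omega$ and is admissible in the weak formulations of both differential inequalities.

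Setting $A(\xi):=\varphi(|\xi|)|\xi|^{-1}\xi$ and subtracting the two weak forms with test function $\psi$, using $\nabla\psi=\nabla u-\nabla v$ a.e.\ on $\Omega_\eps$, produces
\begin{equation*}
\int_{\Omega_\eps}\langle A(\nabla u)-A(\nabla v),\,\nabla u-\nabla v\rangle\,d\mu \;\le\; \int_{\Omega_\eps}b(x)\big[f(v)l(|\nabla v|)-f(u)l(|\nabla u|)\big]\psi\,d\mu.
\end{equation*}
The LHS is pointwise non-negative by strict monotonicity of $A$, which follows from $\varphi'>0$ on $\R^+$. Decomposing the integrand on the right as
\begin{equation*}
f(v)l(|\nabla v|)-f(u)l(|\nabla u|)=\big[f(v)-f(u)\big]l(|\nabla v|)+f(u)\big[l(|\nabla v|)-l(|\nabla u|)\big],
\end{equation*}
the first summand is non-positive on $\Omega_\eps$ because $u>v$, $f$ is non-decreasing and $b,l>0$, and can be discarded.

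The remaining gradient-difference term $f(u)\big[l(|\nabla v|)-l(|\nabla u|)\big]$ carries the whole difficulty: $l$ is only locally Lipschitz on $\R^+$ and may degenerate at the origin, whereas the hypothesis only bounds $|\nabla u|+|\nabla v|$ away from zero, not each gradient individually. To close the argument one fixes a compact $K\Subset\Omega_\eps$ on which $\lip_\loc$-regularity gives $|\nabla u|,|\nabla v|\le M_K$ a.e., and the essential-infimum hypothesis gives $|\nabla u|+|\nabla v|\ge c_K>0$ a.e. One then partitions $K$ into the non-degenerate part $\{\min(|\nabla u|,|\nabla v|)\ge c_K/2\}$, where $l$ is Lipschitz on $[c_K/2,M_K]$ and the quantitative strict-monotonicity estimate $\langle A(\xi)-A(\eta),\xi-\eta\rangle\ge\lambda_K|\xi-\eta|^2$ holds, and its complement, on which $|\nabla u-\nabla v|$ is itself bounded below by $c_K/2$ (forcing a uniform positive lower bound on $\langle A(\nabla u)-A(\nabla v),\nabla u-\nabla v\rangle$). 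Young's inequality absorbs the bad term into the monotonicity LHS on the first set, while on the second the LHS dominates by direct comparison.

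Combining the two estimates on $K$ and then exhausting $\Omega_\eps$ by such compacts forces the identical vanishing of $\psi$ on $\Omega_\eps$; thus $u\le v+\eps$ on $\Omega$, and letting $\eps\downarrow 0$ yields the desired contradiction. The main technical obstacle, as noted above, is precisely the interplay between the possibly non-Lipschitz behaviour of $l$ at the origin and the one-sided nature of the hypothesis $\ess\inf_K\{|\nabla u|+|\nabla v|\}>0$, which forbids a direct Lipschitz estimate on $l(|\nabla u|)-l(|\nabla v|)$ and requires the two-region splitting above.
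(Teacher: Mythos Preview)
The paper does not supply its own proof of this proposition: it is quoted as a special case of \cite[Thm.~5.6]{antoninimugnaipucci} (see also \cite[Thm.~3.6.5]{pucciserrin}), and the surrounding remarks only verify that the structural hypotheses of that reference are met. Your overall strategy---test with $(u-v-\eps)_+$, use strict monotonicity of $A(\xi)=\varphi(|\xi|)|\xi|^{-1}\xi$, and split the reaction via $[f(v)-f(u)]l(|\nabla v|)+f(u)[l(|\nabla v|)-l(|\nabla u|)]$---is indeed the classical one underlying those results.

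There is, however, a genuine gap in your two-region argument. First a minor point: with threshold $c_K/2$ your claim that $|\nabla u-\nabla v|\ge c_K/2$ on the complement is false (take $|\nabla u|=c_K/2-\tau$, $|\nabla v|=c_K/2+\tau$ with gradients nearly parallel). This is easily repaired by lowering the threshold to, say, $c_K/4$.

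The real problem is that neither region closes the estimate. On Region~1 your Young step gives
\[
\lambda\int_{R_1}|\nabla\psi|^2 \;\le\; \tfrac{\lambda}{2}\int_{R_1}|\nabla\psi|^2 + C\int_{R_1}\psi^2,
\]
leaving an unabsorbed $\int\psi^2$ term; on Region~2 you have a pointwise lower bound $\langle A(\nabla u)-A(\nabla v),\nabla u-\nabla v\rangle\ge\mu_K$ and a pointwise upper bound on the right-hand integrand, but both sides are then of order $|R_2|$ with \emph{unrelated} constants, so ``direct comparison'' yields nothing. No Poincar\'e-type inequality with a domain-independent constant is available to eliminate the residual $\int\psi^2$, and letting $\eps\uparrow\delta$ shrinks $\|\psi\|_\infty$ but simultaneously changes $\Omega_\eps$, so the constants $\lambda_K,\mu_K,c_K$ (which depend on $K=\overline{\Omega_\eps}$) move as well. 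The proof in \cite{antoninimugnaipucci} circumvents this via a structural lemma (their Lemma~5.7, alluded to in the paper's remark) that exploits the uniform positive definiteness of $A_*$ on compacta of the \emph{punctured} tangent bundle together with the $\lip_\loc(\R^+)$ regularity of $\mathcal{B}$ in a sharper way than the naive split you propose.
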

\begin{remark}
\emph{Condition $\ess\inf_K \big\{|\nabla v|+|\nabla u|\big\} > 0$ for each $K \Subset \Omega$ cannot be avoided, as the counterexample in Remark 1, p. 79 of \cite{pucciserrin} shows. However, the restriction can be removed if $\Delta_\varphi$ is strictly elliptic, see Section 3.5 of \cite{pucciserrin} for definitions and relevant results.
}
\end{remark}

\begin{remark}\label{rem_regularitymetric}
\emph{The underlying metric is not required to be smooth, and indeed a metric whose local matrix $g_{ij}$ is continuous is sufficient.
}
\end{remark}

\begin{remark}
\emph{It is worth to comment on \cite[Thm. 5.6]{antoninimugnaipucci}. There, the authors consider solutions of more general quasilinear inequalities of the form
$$
\diver \mathbf{A}(x,\nabla u) \ge \mathcal{B}(x,u,\nabla u) \qquad \text{and}\qquad \diver \mathbf{A}(x,\nabla v) \le \mathcal{B}(x,v,\nabla v),
$$
for suitable Carathe\'odory maps $\mathbf{A}, \mathcal{B}$. In our setting,
$$
\mathbf{A}(x,\xi) = \frac{\varphi(|\xi|)}{|\xi|} \xi,
$$
thus the positivity of $\varphi(s)/s$ and $\varphi'(s)$ on $\R^+$ imply that the tangent map $\mathbf{A}_*$ of $\mathbf{A}$ is uniformly positive definite on compacta of fibers of $T\Omega \backslash\{\mathbf{0}\} \ra \Omega$, a condition needed to apply Lemma 5.7 in \cite{antoninimugnaipucci}. The regularity of $\mathcal{B}$, defined at the end of p.592 therein, is equivalent to condition $l \in \lip_\loc(\R^+)$.
}
\end{remark}

To conclude, we discuss the pasting lemma. It is well-known that the maximum of two subharmonic functions is still subharmonic. For subsolutions of more general operators the situation is more delicate and we refer to \cite{le} for a very general result, and to  \cite[Appendix]{bmr5} for an alternative approach via obstacle problems, in the setting of homogeneous operators. For our purposes, it is sufficient to consider the case in which one of the solutions is constant. The technique goes back to T. Kato in \cite{kato}, and has been generalized to a large class of quasilinear operators in \cite{dambrosiomitidieri_2}. The next result is special case of \cite[Thm. 2.1]{dambrosiomitidieri_2}.

\begin{lemma}\label{lem_pasting}
Assume \eqref{assu_percompaepasting}, and let $f\in C(\mathbb R)$, $l \in C(\R^+_0)$ with $f(0)l(0)=0$. Suppose furthermore than $b\in C(M)$. If $u \in \lip_\loc(M)$ is a nontrivial solution of
\begin{equation}\label{ineqb}
\Delta_\varphi u \ge b(x)f(u)l(|\nabla u|) \qquad \text{on } \, M,
\end{equation}
then $u_+= \max\{u,0\}$ is a $\lip_\loc(M)$, non-negative solution of
$$
\Delta_\varphi u_+ \ge b(x)f(u_+)l(|\nabla u_+|) \qquad \text{on } \, M.
$$
\end{lemma}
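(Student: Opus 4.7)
The plan is to carry out a standard Kato-type truncation argument. First, since $u \in \lip_\loc(M)$, the positive part $u_+ = \max\{u,0\}$ is also $\lip_\loc(M)$, and by Stampacchia's theorem
$$
\nabla u_+ = \nabla u\,\chi_{\{u>0\}} \qquad \text{a.e. on }M,
$$
so in particular $\nabla u_+ = 0$ a.e. on $\{u\le 0\}$. Fix a non-negative $\psi \in C^\infty_c(M)$; the goal is to prove
$$
-\int_M \frac{\varphi(|\nabla u_+|)}{|\nabla u_+|}\langle\nabla u_+,\nabla\psi\rangle \,\ge\, \int_M b(x)\,f(u_+)\,l(|\nabla u_+|)\,\psi,
$$
with the usual convention that $\varphi(s)s^{-1}\xi = 0$ when $\xi = 0$, which is consistent because $\varphi(0)=0$.

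Select a non-decreasing family of cutoffs $\eta_\eps \in C^1(\R)$ with $\eta_\eps \equiv 0$ on $(-\infty,0]$, $\eta_\eps \equiv 1$ on $[\eps,\infty)$ and $\eta_\eps'\ge 0$. Then $\psi\,\eta_\eps(u)\in \lip_c(M)$ is non-negative and thus an admissible test function for $(P_\ge)$ applied to $u$. Inserting it and expanding the gradient yields
$$
-\int_M \frac{\varphi(|\nabla u|)}{|\nabla u|}\langle\nabla u,\nabla\psi\rangle\,\eta_\eps(u) \,\ge\, \int_M \varphi(|\nabla u|)|\nabla u|\,\eta_\eps'(u)\,\psi \,+\, \int_M b\,f(u)\,l(|\nabla u|)\,\eta_\eps(u)\,\psi.
$$
The first term on the right-hand side is non-negative because $\varphi\ge 0$, $\eta_\eps'\ge 0$ and $\psi\ge 0$, so it can be discarded, leaving
$$
-\int_M \frac{\varphi(|\nabla u|)}{|\nabla u|}\langle\nabla u,\nabla\psi\rangle\,\eta_\eps(u) \,\ge\, \int_M b\,f(u)\,l(|\nabla u|)\,\eta_\eps(u)\,\psi.
$$

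Now let $\eps \to 0^+$: the integrands on $\supp\psi$ are dominated uniformly in $\eps$ by an integrable function, thanks to the $\lip_\loc$-regularity of $u$ and continuity of $\varphi,b,f,l$, while $\eta_\eps(u) \to \chi_{\{u>0\}}$ pointwise. Dominated convergence gives
$$
-\int_{\{u>0\}} \frac{\varphi(|\nabla u|)}{|\nabla u|}\langle\nabla u,\nabla\psi\rangle \,\ge\, \int_{\{u>0\}} b\,f(u)\,l(|\nabla u|)\,\psi.
$$
On $\{u>0\}$ one has $u = u_+$ and $\nabla u = \nabla u_+$ a.e., while on $\{u\le 0\}$ both $u_+ = 0$ and $\nabla u_+ = 0$ a.e., so $f(u_+)l(|\nabla u_+|) = f(0)l(0) = 0$ there by the standing hypothesis $f(0)l(0)=0$. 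Extending both integrals to all of $M$ therefore adds nothing, and produces precisely the desired inequality. The main (in fact, the only) subtlety is that the assumption $f(0)l(0)=0$ is indispensable: it is what guarantees that the right-hand side picks up no spurious contribution from the region $\{u\le 0\}$ where $u_+$ has been flattened to zero; dropping the non-negative $\eta_\eps'$-term is automatic and poses no difficulty.
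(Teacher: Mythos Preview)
Your proof is correct and follows precisely the standard Kato-type truncation argument that the paper alludes to (it cites the technique back to Kato and defers the full proof to \cite{dambrosiomitidieri_2}). The key steps---testing with $\psi\,\eta_\eps(u)$, discarding the non-negative $\eta_\eps'$-term, passing to the limit by dominated convergence, and using $f(0)l(0)=0$ to extend over $\{u\le 0\}$---are exactly what the cited reference does, so your argument is essentially the paper's intended proof written out.
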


\begin{remark}
\emph{Note that $u=0$ is a solution of \eqref{ineqb} since $f(0)l(0)=0$.
}
\end{remark}

\subsection{The finite maximum principle}
We now prove Theorem \ref{teo_FMP2_intro} in the Introduction. For the convenience of the reader, we rewrite the assumptions and restate the result. We require
\begin{equation}\label{assumptions_FMP}
\left\{\begin{array}{l}
\disp \varphi \in C(\R^+_0)\cap C^1(\R^ +), \qquad \varphi(0)=0, \qquad \varphi' > 0 \ \text{ on } \, \R^+; \\[0.2cm]
\disp \frac{t \varphi'(t)}{l(t)} \in L^1(0^+) \\[0.4cm]
f \in C(\R), \qquad f \ge 0 \quad \text{on } \, (0,\eta_0);\\[0.3cm]
l \in C(\R_0^+), \qquad l>0 \ \text{on } \, \R^+.
\end{array}\right.
\end{equation}

\begin{theorem}\label{teo_FMP2}
Let $M$ be a Riemannian manifold, and assume that $\varphi, f$ and $l$ satisfy \eqref{assumptions_FMP} and moreover
\begin{itemize}
\item[-] $f(0)l(0)=0$;
\item[-] $f$ is $C$-increasing on $(0,\eta_0)$, with $\eta_0$ as in \eqref{assumptions_FMP};
\item[-] $l$ is $C$-increasing on $(0,\xi_0)$, for some $\xi_0>0$;
\end{itemize}
Fix a domain $\Omega \subset M$ and let $0< b \in C(\Omega)$. Then, $\fmp$ holds on $\Omega$ if and only if either
\begin{equation}\label{lasimple_2}
f\equiv 0 \qquad \text{on } \, (0,\eta_0),
\end{equation}
for some $\eta_0>0$, or
\begin{equation}\label{noninteinzero_4}
f>0 \quad \text{on } \, (0,\eta_0), \quad \text{and} \qquad \frac{1}{K^{-1}\circ F} \not \in L^1(0^+).
\end{equation}
\end{theorem}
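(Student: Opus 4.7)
My plan is to establish both implications through radial comparison functions built from the ODE results of Section~\ref{sec2.3}, adapting the classical Hopf boundary-point strategy to this quasilinear setting with gradient dependence.

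\emph{Sufficiency.} Assume $\fmp$ fails: let $u \in C^1(\Omega)$ be non-negative with $u(x_0)=0$ at some $x_0\in\Omega$, satisfying $(P_\le)$ yet not identically zero. The zero set $\Sigma := \{u=0\}$ is proper and closed in the connected $\Omega$, so a perturbation yields $x_1 \notin \Sigma$ with $\rho := \mathrm{dist}(x_1,\Sigma) < \mathrm{inj}(x_1)$, $\overline{B_\rho(x_1)}\Subset\Omega$, $u>0$ on $B_\rho(x_1)$ and $u(x_\ast)=0$ for some $x_\ast \in \partial B_\rho(x_1)$. On the annulus $A = B_\rho(x_1)\setminus\overline{B_{\rho/2}(x_1)}$, $\tilde r(x):=\mathrm{dist}(x,x_1)$ is smooth and $\Delta\tilde r \le \Lambda_0$ by Laplacian comparison. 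Set $\eta := \min_{\partial B_{\rho/2}(x_1)} u>0$, $a \equiv b_{\sup}:=\sup_A b$, $\wp(t):=e^{-\Lambda_0 t}$; after shrinking $\rho$ so that $\eta<\eta_0$ and \eqref{restrict} holds, Theorem~\ref{exi2} delivers $w\in C^1([0,\rho/2])$ solving $(\wp\varphi(w'))'=\wp\,a\,f(w)l(w')$ with $w(0)=0$, $w(\rho/2)=\eta$, $0\le w'$. Since $\wp$ is monotone, Proposition~\ref{prop_twobound_refined} applies in case (i) under \eqref{lasimple_2} and in case (ii) under \eqref{noninteinzero_4}, yielding $w'(0)>0$. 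Setting $W(x):=w(\rho-\tilde r(x))$, a polar-coordinate computation combined with the ODE gives
\begin{equation*}
\Delta_\varphi W - b(x)f(W)l(|\nabla W|) = (a-b(x))f(w)l(w') - \bigl[\tfrac{\wp'}{\wp}+\Delta\tilde r\bigr]\varphi(w') \ge 0 \qquad\text{on }A,
\end{equation*}
since $a\ge b$ and $\wp'/\wp\equiv-\Lambda_0\le -\Delta\tilde r$. Moreover $W \le u$ on $\partial A$ (outer: $W \equiv 0 \le u$; inner: $W \equiv \eta \le u$). A comparison argument (see the obstacle paragraph) then forces $W\le u$ on $A$. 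But $x_\ast$ is an interior minimum of $u$, so $\nabla u(x_\ast)=0$ while $\nabla W(x_\ast)=-w'(0)\nabla\tilde r(x_\ast)$; hence the derivative of $u-W\ge 0$ at the zero $x_\ast$ in the inward direction $-\nabla\tilde r(x_\ast)$ equals $-w'(0)<0$, the desired contradiction.

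\emph{Necessity.} Suppose $\fmp$ holds and the alternative fails. Since $f \ge 0$ is $C$-increasing on $(0,\eta_0)$, its zero set is an initial segment; shrinking $\eta_0$ if necessary we may assume $f>0$ on $(0,\eta_0)$, whence failure of \eqref{noninteinzero_4} forces $1/(K^{-1}\circ F) \in L^1(0^+)$. I would construct a radial counterexample on a small ball $B_R(x_0)\Subset\Omega$: using $\Delta r \le (m-1)/r + Cr$ near $x_0$, choose $\wp(r):=r^{m-1}e^{Cr^2/2}$ (so $\wp'/\wp\ge\Delta r$ and \eqref{wpneum} holds) and $a\equiv b_{\inf}:=\inf_{B_R(x_0)} b > 0$. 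The Neumann existence framework of Theorem~\ref{exi2_neumann} combined with the barrier construction of Proposition~\ref{prop_wprimougualezero} (under \eqref{KO_zero_intro}) yields a non-trivial $C^1$ solution $w$ of $(\wp\varphi(w'))'=\wp\,a\,f(w)l(w')$ with $w(0)=w'(0)=0$. The converse of the polar computation above shows that $u(x):=w(r(x))$ lies in $C^1(B_R(x_0))$, is non-negative, non-trivial, satisfies $u(x_0)=0$ and solves $(P_\le)$, contradicting $\fmp$.

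The main technical obstacle lies in the comparison step of the sufficiency argument: Proposition~\ref{prop_serrin} requires $f$ non-decreasing and $l$ locally Lipschitz, whereas we assume only $C$-monotonicity and continuity. My plan is to pass to the non-decreasing envelopes $\tilde f(s):=\sup_{[0,s]} f$ and $\tilde l(s):=\sup_{[0,s]} l$, which satisfy $f\le\tilde f\le Cf$ and $l\le\tilde l\le Cl$. Then $u$ remains a $(P_\le)$-supersolution with $\tilde f,\tilde l$ in place of $f,l$, and by Lemma~\ref{lem_mettimaosigmatau_novo} the non-integrability in $\neg$KO$_0$ is preserved when $f$ is replaced by $\tilde f$; thus Theorem~\ref{exi2} and Proposition~\ref{prop_twobound_refined} apply verbatim to produce a barrier $w$ for the modified inequality, and after regularizing $\tilde l$ by convolution and perturbing $w$ to a strict subsolution, Proposition~\ref{prop_serrin} delivers $W\le u$ on $A$ and closes the sufficiency argument.
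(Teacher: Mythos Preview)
Your overall architecture is right — Hopf-type barrier for sufficiency, dead-core construction for necessity — but both halves have gaps that the paper closes by different means.

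\textbf{Sufficiency, comparison step.} You correctly identify that Proposition~\ref{prop_serrin} is unavailable because $f$ is only $C$-increasing and $l$ only continuous. Your proposed fix (pass to monotone envelopes $\tilde f,\tilde l$, mollify $\tilde l$, perturb $W$ to a strict subsolution) is fragile: the regularized $\tilde l$ no longer appears in the ODE that produced $w$, and strict-subsolution perturbations for $\Delta_\varphi$ with gradient-dependent right-hand side are not obviously available. The paper avoids Proposition~\ref{prop_serrin} entirely. It builds the barrier with an extra factor, so that $v=W$ solves $\Delta_\varphi v \ge 2C\,b(x)f(v)l(|\nabla v|)$ (take $a=2C\,b_{\sup}$ in the ODE). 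Then argue by contradiction: if $c=\max_{A}(v-u)>0$, on the contact set $\Gamma=\{v-u=c\}$ one has $\nabla u=\nabla v$ pointwise, hence by continuity of $l$ and $|\nabla v|>0$ there is $\delta<c$ with $l(|\nabla u|)\le 2\,l(|\nabla v|)$ on $U_\delta=\{v-u>\delta\}$. On $U_\delta$ also $u<v$, so the $C$-increasing property gives $f(u)\le C f(v)$. Therefore
\[
\Delta_\varphi u \le b\,f(u)\,l(|\nabla u|) \le 2C\,b\,f(v)\,l(|\nabla v|) \le \Delta_\varphi v
\]
on $U_\delta$, and the elementary comparison Proposition~\ref{prop_comparison} (which needs no structure on $f,l$) gives $v\le u+\delta$ on $U_\delta$, a contradiction. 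This is the key idea you are missing: absorb the $C$-monotonicity constants into the barrier and use continuity of $l$ only near the contact set, so that only the trivial comparison is needed.

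\textbf{Necessity.} Your invocation of Theorem~\ref{exi2_neumann} is misplaced: the Neumann problem gives $w'(0)=0$ for free but not $w(0)=0$, and Proposition~\ref{prop_wprimozero} proves only the reverse implication. Proposition~\ref{prop_wprimougualezero}, which you also cite, is stated for the Dirichlet problem with $\wp>0$ on $[0,T_0]$, incompatible with your choice $\wp(r)=r^{m-1}e^{Cr^2/2}$ vanishing at $0$. The paper works on an annulus instead: with $\wp(t)=v_{g_\kappa}(t+R)>0$ and $a=\inf_{B_{2R}\setminus B_R} b$, apply the Dirichlet Theorem~\ref{exi2} to obtain $w$ with $w(0)=0$, $w(T)=\eta$, $w'\ge 0$, then Proposition~\ref{prop_wprimougualezero} (now legitimately applicable) forces $w'(0)=0$. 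Setting $u(x)=w(r(x)-R)$ on $B_{2R}(o)\setminus B_R(o)$ and extending by zero on $B_R(o)$ yields a $C^1$ nonzero solution of $(P_\le)$ vanishing on the inner ball, contradicting $\fmp$.
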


\begin{proof} We recall that the validity of $\fmp$ means that for any solution $u \in C^ 1(\Omega)$ of
\begin{equation}\label{Ple2}
\left\{ \begin{array}{l}
\Delta_\varphi u \le b(x)f(u)l(|\nabla u|) \qquad \text{on } \, \Omega\\[0.2cm]
u \ge 0 \qquad \text{on } \, \Omega,
\end{array}\right.
\end{equation}
if $u(x_0)=0$ for some $x_0 \in \Omega$ then $u \equiv 0$. The argument follows the lines of the proof in \cite{pucciserrin}, with the help of a trick from \cite{maririgolisetti}. We prove separately the sufficiency and necessity of \eqref{lasimple_2},\eqref{noninteinzero_4}. First, having fixed a point $o \in \Omega$ to be specified later, we choose $R,\kappa>0$ such that $\overline{B_{2R}(o)}$ does not intersect $\cut(o)$ and
$$
\Ricc(\nabla r, \nabla r) \ge -(m-1) \kappa^2 \qquad \text{on } \, \overline{B_{2R}(o)} \backslash \{o\},
$$
where $r(x) = \dist(x,o)$. By the Laplacian comparison theorem (see Thm. \ref{teo_laplaciancomp} in Appendix A), denoting with $v_{g_\kappa}(r)$ the volume of a geodesic sphere of radius $r$ in a model manifold of sectional curvature $-\kappa^2$
\begin{equation}\label{deltar_baixo2}
\Delta r \le \frac{v_{g_\kappa}'(r)}{v_{g_\kappa}(r)} = (m-1)\kappa \coth(\kappa r) \qquad \text{on } \, \overline{B_{2R}(o)} \backslash \{o\}. \\[0.2cm]
\end{equation}
\emph{Sufficiency of conditions \eqref{lasimple_2} and \eqref{noninteinzero_4}}.\\[0.2cm]
For $o \in \Omega$ and having set $R,\kappa$ as above, fix $a_1 \in \R^+$ such that $b(x) \le a_1$ on $B_{2R}(o)$. Let $C \ge 1$ be the constant defining the $C$-increasing property of $f$. Define $\wp(t) = v_{g_\kappa}(2R-t)$, $a(t) = a_1$, $T_0 = 3R/2$ and $T=R$. We claim that we can suitably reduce $R$, and choose $\eta \in (0, \eta_0)$ small enough, in such a way that
\begin{equation}\label{ipoTeta}
\frac{\wp_1}{\wp_0}\varphi\left(\frac{\eta}{R}\right) + 4C\Theta(R)f_\eta l_{\xi} < \varphi(\xi),
\end{equation}
where $f_\eta$, $l_{\xi}$, $\wp_0$, $\wp_1$ and $\Theta$ are defined as in \eqref{227}. Indeed, since $v_{g_\kappa}' \ge 0$, by definition $\wp_1 = v_{g_\kappa}(2R)$, $\wp_0 = v_{g_\kappa}(R/2)$, and thus $\wp_1/\wp_0 \ra 4^m$ and $\Theta(R) \ra 0$ as $R \ra 0$. Hence, we can first reduce $R$ to guarantee
$$
4C\Theta(R)f_{\eta_0} l_{\xi} < \frac{\varphi(\xi)}{2},
$$
and then choose $\eta \in (0, \eta_0)$ small enough to satisfy \eqref{ipoTeta}.
Applying Theorem \ref{exi2}, there exists a solution $z(t)$ of
\begin{equation}\label{twoboundary_simpler_2}
\left\{ \begin{array}{l}
\big(\wp(t) \varphi(z_t)\big)_t = 2C \wp a_1 f(z)l(z_t) \qquad \text{on } \, (0, R), \\[0.2cm]
z(0)=0, \qquad z(R)= \eta,
\end{array}\right.
\end{equation}
where the subscript $t$ indicates differentiation with respect to $t$. Furthermore, combining Theorem \ref{exi2} and Proposition \ref{prop_twobound_refined} (note that here $\wp' < 0$), the solution $z$ satisfies the following properties:
\begin{equation}\label{property_z}\begin{gathered}
z>0 \ \text{ on } \, (0,R], \qquad  z_t>0 \ \text{ on } \, [0,R], \qquad \|z_t\|_\infty < \xi.
 \end{gathered}\end{equation}
Taking into account that we have extended $\varphi$ on $\R$ in such a way that $\varphi(-s) = -\varphi(s)$, the function $w(r) = z(2R-r)$ satisfies
\begin{equation}\label{twoboundary_simpler_2}
\left\{ \begin{array}{l}
\big[v_g\varphi(w')\big]' = 2C v_g a_1f(w)l(|w'|) \qquad \text{on } \, \left(R,2R\right), \\[0.2cm]
w(2R)=0, \quad w(R)= \eta, \quad w'<0 \ \text{ on } \, \left[R,2R\right].
\end{array}\right.
\end{equation}
Define $v(x) = w(r(x))$. Using \eqref{deltar_baixo2}, together with $\varphi(w') < 0$ and \eqref{twoboundary_simpler_2}, we deduce that $v$ solves
\begin{equation}\label{lasub}\begin{aligned}
\disp  \Delta_\varphi v & = \disp \big(\varphi(w')\big)' + \varphi(w') \Delta r \ge v_g^{-1}(r)\big[ v_g(r) \varphi(w')\big]' \\[0.3cm]
& =  v_g(r)^{-1}\big(2C v_g(r) a_1f(w)l(|w'|) \ge 2C b(x)f(v) l(|\nabla v|)
 \end{aligned}
\end{equation}
on the annulus $E_R(o) = B_{2R}(o)\backslash \overline{B_R(o)}$. Moreover, denoting with $\nu$ the outward pointing unit normal from $\partial B_{2R}(o)$,
\begin{equation}\label{deripositive}
\langle \nabla v, \nu \rangle = w'(2R) < 0 \qquad \text{on } \, \partial B_{2R}(o).
\end{equation}
Following E. Hopf's argument, we now prove that $u$ solving \eqref{Ple2} is identically zero provided that $u(x_0)=0$ at some~$x_0$. Suppose by contradiction that this is not the case, and let $\Omega_+ = \{x \in \Omega : u(x)>0\}$. Choose $x_1\in \Omega_+$ in such a way that $\dist(x_1, \partial \Omega_+) > \dist(x_1, \partial \Omega)$, and let $B(x_1) \subset \Omega_+$ be the largest ball contained in $\Omega_+$. Then, $u>0$ in $B(x_1)$, while $u(\bar x) =0$ for some $\bar x \in \partial B(x_1) \cap \Omega$. Clearly $\nabla u(\bar x) = 0$, since $\bar x$ is an absolute minimum for $u$. Take a unit speed minimizing geodesic $\gamma : [0, \mathrm{dist}(x_1,\bar x)] \ra \Omega$ from $x_1$ to $\bar x$. Up to choosing the arclength parameter $s$ sufficiently close to $\dist(x_1,\bar x)$ and setting $2R= \dist (x_1,\bar x)-s$, the closure of the ball $B_{2R}(o)$ centered at $o=\gamma(s)$ does not intersect $\cut(o)$, and $B_{2R}(o) \subset B(x_1)$, with $\bar x \in \partial B_{2R}(o)$. We consider the function $v$ constructed above on $E_{R}(o) \subset B_{2R}(o)$, with $\eta$ small enough to satisfy \eqref{ipoTeta} and also
\begin{equation}\label{lowercircle}
\eta < \inf_{\partial B_{R}(o)} u.
\end{equation}
We claim that $u \ge v$ on $E_R(o)$. Otherwise, suppose that
$$
\disp \max_{E_R(o)} (v-u) = \bar\delta >0,
$$
and let $\Gamma$ be the set of maximum points of $v-u$. Note that $\Gamma \Subset E_R(o)$ and that $\nabla u= \nabla v$ for each $x \in \Gamma$. For $\delta \in (0, \bar \delta)$, set $U_{\delta} = \{v-u > \delta\}$. By construction, there exists $\eps>0$ such that $\eps \le |\nabla v| \le 1$ on $E_R(o)$, and since $l>0$ on $\R^+$ we deduce that the quotient $l(|\nabla u|)/l(|\nabla v|)$ is continuous on $E_R(o)$ and equal to $1$ on $\Gamma$. A compactness argument shows that, for $\delta$ sufficiently close to $\bar \delta$, $l(|\nabla u|)/l(|\nabla v|) \le 2$ on $U_\delta$. Taking into account that the $C$-increasing property of $f$ implies $f(v) \ge C^{-1}f(u)$ on $U_\delta$, we deduce
$$
\Delta_\varphi u \le b(x)f(u)l(|\nabla u|) \le 2C b(x)f(v)l(|\nabla v|) \le \Delta_\varphi v \qquad \text{on } \, U_\delta.
$$
From $v = u + \delta$ on $\partial U_\delta$, by the comparison Proposition \ref{prop_comparison} we get $v \le u+ \delta$ on $U_\delta$, contradicting the very definition of $U_\delta$. Hence, $v \le u$ on $E_R(o)$, and in particular $\langle \nabla (u-v), \nu \rangle \le 0$ at $\bar x$, which is impossible by \eqref{deripositive} and by the fact that $\nabla u(\bar x) =0$. This contradiction concludes the proof of the sufficiency part.\\[0.2cm]
\emph{Necessity of conditions \eqref{lasimple_2} and \eqref{noninteinzero_4}}.\\[0.2cm]
Suppose the failure of both \eqref{lasimple_2} and \eqref{noninteinzero_4}, or equivalently (recall that $f$ is $C$-increasing) the validity of
\begin{equation}\label{KO_zero_section}\tag{KO$_0$}
f>0 \quad \text{on } \, (0, \eta_0), \qquad \frac{1}{K^{-1} \circ F} \in L^ 1(0^+).
\end{equation}
For each $o \in M$, we shall now construct on $B_{2R}(o)$ (with $R$ as in the beginning of the proof) a $C^1$ non-negative, nonzero solution $u$ of $(P_\le)$ with $u=0$ on $\overline{B_R(o)}$, contradicting $\fmp$. Set $T_0=2R$, $T=R$, 
$$
\wp(t) = v_{g_\kappa}(t+R), \quad a(t) = \inf_{B_{2R}(o) \backslash B_R(o)} b, 
$$
where $v_{g_\kappa}(r)$ is the volume of a geodesic sphere of radius $r$ in a model of curvature $-\kappa^2$, as defined at the beginning of the proof. Since $f(0)l(0)=0$, we can choose $\eta$ small enough to satisfy  \eqref{restrict} in Theorem \ref{exi2}, whence there exists $w \in C^1([0,T])$ non-decreasing and solving
\begin{equation}\label{twobound_FMP}
\left\{ \begin{array}{l}
\disp \big[ \wp \varphi(w')\big]' = \wp a f(w)l(w') \qquad \text{on } \, (0,T), \\[0.2cm]
0 \le w \le \eta, \quad w' \ge 0 \qquad \text{on } \, [0,T], \\[0.2cm]
w(0)=0, \qquad w(T)=\eta.
\end{array}\right.
\end{equation}
Up to reducing $\eta$ further, we can apply Proposition \ref{prop_wprimougualezero} to deduce that $w'(0)=0$. Set $u(x) = w\big(r(x)-R\big)$. By the Laplacian comparison theorem and since $w' \ge 0$, on $B_{2R}(o) \backslash B_R(o)$ it holds
$$
\begin{array}{lcl}
\Delta_\varphi u & = & \disp \big(\varphi(w')\big)' + \varphi(w')\Delta r \le \disp \big(\varphi(w')\big)' + \varphi(w')\frac{v_{g_\kappa}'(r)}{v_{g_\kappa}(r)} \\[0.3cm]
& \le & \disp \Big(\wp^{-1} \big[ \wp \varphi(w')\big]'\Big)_{r(x)-R} \le a f(w)l(w') \\[0.3cm]
& \le & b(x) f(u)l(|\nabla u|).
\end{array}
$$
Extending $u$ to be zero on $B_R(o)$ defines a nonzero, $C^1$-solution of $(P_\le)$ on all of $B_{2R}(o)$, which clearly violates $\fmp$.
\end{proof}

\begin{remark}
\emph{The function $u$ in the proof of the necessity part, defined on $B_{2R}(o)$ and vanishing identically on a smaller ball $B_R(o)$, is an example of a \emph{dead core} (super)solution. For a thorough investigation of dead core problems, we refer the reader to \cite{pucciserrin} and the references therein. We mention that $u$ can even be constructed to be positive on $B_{2R}(o)\backslash \overline{B_R(o)}$. Indeed, it is enough to replace the solution $w$ of \eqref{twobound_FMP} with the supersolution $z$ defined in the proof of Proposition \ref{prop_wprimougualezero}, which is known to be positive on $(0,T]$.
}
\end{remark}

\begin{remark}
\emph{With a similar technique, one could consider the $\fmp$ for more general equations of the type
$$
\Delta_\varphi u \le b(x)f(u)l(|\nabla u|) + \bar b(x) \bar f(u) \bar l(|\nabla u|),
$$
for suitable $\bar b, \bar f, \bar l$. The prototype case 
$$
\Delta_p u \le f(u) + |\nabla u|^q, \qquad \text{with } \, p>1
$$
has been considered in \cite[Thm. 5.4.1]{pucciserrin} and \cite{pucciserrinzou} when $q \ge p-1$, and in  \cite{felmermontenegroquaas} for $q < p-1$.
}
\end{remark}

\section{Weak maximum principle, a-priori estimates and Liouville's property}\label{sec_WMP}

\subsection{The equivalence between $\wmp$ and $\lio$}

In this section, we prove a more general version of Proposition \ref{prop_equivalence}, that describes the relationship between $\wmp$ and the Liouville property $\lio$. We begin by introducing another form of the weak maximum principle.

\begin{definition}
\emph{Assume \eqref{assumptions} and fix $b,l$ satisfying \eqref{assumptions_bfl}. We say that
\begin{itemize}
\item the \emph{open weak maximum principle} $\owmp$ holds for $(bl)^{-1}\Delta_\varphi$ if, for each $f \in C(\R)$, for each open set $\Omega \subset M$ with $\partial \Omega \neq \emptyset$ and for each $u \in C(\overline \Omega) \cap \lip_\loc (\Omega)$ solving
\begin{equation}\label{problem_OWMP}
\left\{ \begin{array}{l}
\Delta_\varphi u \ge b(x) f(u) l(|\nabla u|) \qquad \text{on } \, \Omega, \\[0.2cm]
\sup_\Omega u < \infty,
\end{array}\right.
\end{equation}
we have that
\begin{equation}\label{sup_attainedboundary}
\text{either} \quad \sup_{\Omega} u = \sup_{\partial \Omega} u, \qquad \text{or} \quad f\big( \sup_\Omega u \big) \le 0.
\end{equation}
\end{itemize}
}
\end{definition}

The open weak maximum principle at infinity has been introduced in \cite{aliasmirandarigoli} in the study of immersed submanifolds of warped product ambient spaces, and parallels Ahlfors' definition of parabolicity. For a detailed investigation and an extensive bibliography, we refer to Chapters 3 and 4 of \cite{AMR}.

\begin{remark}
\emph{The recent \cite{maripessoa} contains a different approach to maximum principles at infinity in the spirit of $\owmp$ (called there \emph{the Ahlfors property}), which is based on viscosity theory and enables to consider classes of fully nonlinear operators of geometric interest. A systematic approach via viscosity theory is very well suited to treat weak and strong maximum principles in a unified way, and especially to investigate principles like the classical Ekeland or $\smp$,  where a gradient condition on $u$ appears.
}
\end{remark}

%

%

\begin{proposition}\label{prop_equivalence_2}
Let $\varphi$ and $b,f,l$ satisfy respectively the assumptions in \eqref{assumptions_bfl}, \eqref{assumptions}. Then, the following properties are equivalent:
\begin{itemize}
\item[(i)] $(bl)^{-1}\Delta_\varphi$ satisfies $\wmp$;
\item[(ii)] $\lio$ holds for some (equivalently, every) $f$ satisfying
$$
f(0)=0, \qquad f>0 \quad \text{on } \, \R^+;
$$
\item[(iii)] each non-constant $u \in \lip_\loc(M)$ solving $(P_\ge)$ on $M$ and bounded above satisfies $f(u^*) \le 0$, with $u^* = \sup_M u$;
\item[(iv)] $(bl)^{-1}\Delta_\varphi$ satisfies $\owmp$.
\end{itemize}
\end{proposition}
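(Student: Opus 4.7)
The plan is to establish the equivalences via the cycle (i) $\Rightarrow$ (iii) $\Rightarrow$ (ii) $\Rightarrow$ (i) together with (iii) $\Leftrightarrow$ (iv). Two of these implications are essentially immediate. For (i) $\Rightarrow$ (iii), given a non-constant bounded $u \in \lip_\loc(M)$ solving $(P_\ge)$, if $f(u^*) > 0$ then by continuity of $f$ there exist $\eta < u^*$ and $K > 0$ such that $f(u) \ge K$ on $\Omega_\eta$; hence $\Delta_\varphi u \ge K b(x) l(|\nabla u|)$ on $\Omega_\eta$, contradicting $\wmp$. For (iii) $\Rightarrow$ (ii), pick any $f$ with $f(0) = 0$, $f > 0$ on $\R^+$: a non-constant, non-negative, bounded $\lip_\loc$ solution of $(P_\ge)$ would have $u^* > 0$, hence $f(u^*) > 0$, violating (iii). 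The ``some vs.\ every" part of (ii) is then absorbed once the cycle is closed. Direction (iv) $\Rightarrow$ (iii) is handled by restriction: for $u$ as in (iii) and any $\eta$ with $\inf_M u < \eta < u^*$, the set $\Omega_\eta$ is a proper open subset of $M$ with $\partial \Omega_\eta \neq \emptyset$ and $u \equiv \eta$ on $\partial \Omega_\eta$ by continuity; since $\sup_{\partial \Omega_\eta} u = \eta < u^* = \sup_{\Omega_\eta} u$, property $\owmp$ applied to $u|_{\Omega_\eta}$ forces $f(u^*) \le 0$.

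The two non-trivial implications (ii) $\Rightarrow$ (i) and (iii) $\Rightarrow$ (iv) rest on a common construction, which I describe for (ii) $\Rightarrow$ (i). Supposing $\wmp$ fails, pick a non-constant $u$ with $u^* < \infty$, and $\eta < u^*$, $K > 0$ such that $\Delta_\varphi u \ge K b l(|\nabla u|)$ on $\Omega_\eta$. Set $v = (u-\eta)_+ \in \lip_\loc(M)$, which is non-negative, bounded, and non-constant with $v^* = u^* - \eta > 0$. Let $f_0(t) = \min\{\lambda t, K\}$ with $\lambda$ so small that $\lambda v^* \le K$; then $f_0$ is continuous, $f_0(0) = 0$, $f_0 > 0$ on $\R^+$. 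The claim is that $v$ weakly satisfies $\Delta_\varphi v \ge b(x) f_0(v) l(|\nabla v|)$ on all of $M$, which will contradict $\lio$ for $f_0$. For (iii) $\Rightarrow$ (iv), one argues analogously: if $\owmp$ fails on some $\Omega$ with $\sup_{\partial \Omega} u < u^*_\Omega$ and $f(u^*_\Omega) > 0$, choose $\eta$ with $\sup_{\partial \Omega} u < \eta < u^*_\Omega$ close to $u^*_\Omega$ so that $f \ge c > 0$ on $(\eta, u^*_\Omega]$, and use the continuity of $u$ on $\overline\Omega$ to ensure that $(u-\eta)_+$ vanishes on a $M$-open neighborhood of $\partial \Omega$; extending by zero produces a $\lip_\loc(M)$ function to which the same argument applies, contradicting (iii) via $\bar f_0(t) = \min\{\lambda t, c\}$.

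The main obstacle is verifying this global weak inequality for $v$, since $u$ is controlled only on $\Omega_\eta$ and its behaviour across the level set $\{u = \eta\}$ is not directly prescribed. The key device is to test the inequality on $\Omega_\eta$ with $\psi \chi_\delta(v)$, where $\psi \in C^\infty_c(M)$, $\psi \ge 0$, and $\chi_\delta$ is a non-decreasing, piecewise-linear approximation of $\mathbf{1}_{\{t > 0\}}$ vanishing on $(-\infty, 0]$ and equal to $1$ on $[\delta, \infty)$; its support lies in $\{v \ge \delta\} \subset \Omega_\eta$, making it an admissible Lipschitz test function in the weak formulation available there. Expanding the gradient of $\psi \chi_\delta(v)$, the resulting term in $\chi_\delta'(v) \varphi(|\nabla v|) |\nabla v| \psi \ge 0$ has favorable sign and can be discarded, yielding
\begin{equation*}
-\int_M \frac{\varphi(|\nabla v|)}{|\nabla v|}\langle \nabla v, \nabla \psi\rangle\, \chi_\delta(v) \ \ge\ \int_M K b(x) l(|\nabla v|)\, \psi\, \chi_\delta(v).
\end{equation*}
Letting $\delta \to 0^+$ via dominated convergence, and using $\nabla v = 0$ a.e.\ on $\{v = 0\}$ together with $K b l(|\nabla v|) \ge b f_0(v) l(|\nabla v|)$ on $\{v > 0\}$, gives the sought global weak inequality for $v$. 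The pasting step is in the spirit of Lemma~\ref{lem_pasting}, but must be done by hand here because the hypothesis of that lemma requires a global subsolution to begin with, which is exactly what we lack.
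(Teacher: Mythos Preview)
Your argument follows essentially the same cycle as the paper; the paper links (iv) to (i) rather than to (iii), but this is cosmetic, and your explicit pasting via $\chi_\delta$ is exactly the mechanism behind Lemma~\ref{lem_pasting}, which the paper simply invokes. One small slip: with $\chi_\delta$ vanishing only on $(-\infty,0]$, the function $\psi\,\chi_\delta(v)$ is supported in $\overline{\{v>0\}}=\overline{\Omega_\eta}$, not compactly in $\Omega_\eta$, so it is not an admissible test function there. Take instead $\chi_\delta\equiv 0$ on $(-\infty,\delta]$ and $\chi_\delta\equiv 1$ on $[2\delta,\infty)$, so that $\supp\big(\psi\,\chi_\delta(v)\big)\subset\supp(\psi)\cap\{v\ge\delta\}\Subset\Omega_\eta$; the limit $\delta\to 0$ proceeds unchanged.

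There is, however, a genuine logical gap in (ii)~$\Rightarrow$~(i). Your construction with the specific $f_0(t)=\min\{\lambda t,K\}$ shows: if $\wmp$ fails then $\lio$ fails \emph{for $f_0$}. This gives $(\text{$\lio$ for }f_0)\Rightarrow(i)$, hence, combined with $(i)\Rightarrow(iii)\Rightarrow(\text{$\lio$ for every }f)$, the equivalence of (i), (iii), (ii)$_{\text{every}}$ and (ii)$_{f_0}$. But the statement also asserts $(\text{$\lio$ for \emph{some} }f)\Rightarrow(i)$, and that ``some $f$'' need not be your $f_0$; the cycle you close leaves (ii)$_{\text{some}}$ outside, so ``absorbed once the cycle is closed'' does not hold. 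The paper's remedy is to work with the \emph{given} $f$: since $f(0)=0$ and $f$ is continuous, choose $\eta'\in(\eta,u^*)$ so close to $u^*$ that $f<K$ on $(0,u^*-\eta']$. The inequality $\Delta_\varphi u\ge Kb(x)l(|\nabla u|)$ on $\Omega_\eta$ restricts to $\Omega_{\eta'}\subset\Omega_\eta$, and your pasting argument applied to $(u-\eta')_+$ then produces a non-negative, non-constant, bounded global subsolution of $(P_\ge)$ for that very $f$, contradicting $\lio$ for it.
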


\begin{proof}
We prove the chain of implications $(i) \Rightarrow (iii) \Rightarrow (ii) \Rightarrow (i)$ and then $(iv) \Leftrightarrow (i)$. When the ``some-every" alternative occurs, we always assume the weaker property and prove the stronger.\\
$(i) \Rightarrow (iii)$.\\
Let $u \in \lip_\loc(M)$ be a non-constant solution of $(P_\ge)$ that is bounded from above, and assume by contradiction that $f(u^*) = 2K >0$. By continuity, there exists $\eta < u^*$ sufficiently close to $u^*$ such that $f(u) \ge K$ on $\Omega_\eta = \{u> \eta\}$, thus
$$
\Delta_\varphi u \ge K b(x)l(|\nabla u|) \qquad \text{on } \, \Omega_\eta.
$$
The definition of $\wmp$ then implies $K \le 0$, contradiction.\\
$(iii) \Rightarrow (ii)$.\\
Let $u \in \lip_\loc(M)$ be a non-constant, bounded, non-negative solution of~$(P_\ge)$. Then $f(u^*)\le 0$ by $(iii)$. However, from $f>0$ on $\R^+$ we get $u^* \equiv 0$, that is, $u \equiv 0$ is constant, contradiction.\\
$(ii) \Rightarrow (i)$.\\
Let us consider a problem $(P_\ge)$ with $f>0$ on $\R^+$, $f(0)=0$ for which $\lio$ holds. Suppose by contradiction that $(i)$ is not satisfied, that is, there exists a non-constant $u \in \lip_\loc(M)$ with
$$
\Delta_\varphi u \ge K b(x) l(|\nabla u|) \qquad \text{on some } \, \Omega_{\bar \eta},
$$
for some $K>0$ and $\bar \eta <u^*$. Since $f(0)=0$, we can choose $\eta \in
(\bar \eta,u^*)$ in such a way that $f<K$ on $(0,u^*-\eta)$. Hence, $u_\eta = u-\eta$ solves
$$
\Delta_\varphi u_\eta \ge K b(x)l(|\nabla u_\eta|) \ge b(x)f(u_\eta)l(|\nabla u_\eta|) \qquad \text{on }
\Omega_\eta.
$$
Thanks to Lemma \ref{lem_pasting}, $w = \max\{u_\eta,0\}$ is a
non-constant, non-negative, bounded solution of $\Delta_\varphi w \ge b(x)f(w)l(|\nabla w|)$ on $M$, contradicting property $\lio$ for such an $f$.\\
$(i) \Rightarrow (iv)$.\\
If $u$ solves \eqref{problem_OWMP} but none of the properties in \eqref{sup_attainedboundary} holds, then by continuity we can choose  $\eta \in (\sup_{\partial \Omega} u, \sup_\Omega u)$ such that $f(u) \ge K>0$ on $\{u>\eta\}$. Thus, $u$ solves $\Delta_\varphi u \ge Kb(x)l(|\nabla u|)$ on $\{u>\eta\} \subset \Omega$, contradicting $\wmp$.\\
$(iv) \Rightarrow (i)$.\\
Assuming that $\wmp$ does not hold, take a non-constant $u$ which is bounded above and solves $\Delta_\varphi u \ge K b(x)l(|\nabla u|)$ on some set $\Omega_\eta = \{u>\eta\}$, for some $K>0$. If $\eta$ is close enough to $u^*$, $\partial \Omega_\eta \neq \emptyset$, thus  clearly $u$ contradicts $\owmp$ on $\Omega_\eta$ with the choice $f=K$.
%
%
%

\end{proof}

\subsection{Volume growth and $\wmp$}

In this section, we explore geometric conditions that ensure the validity of $\wmp$, in the form given by $(iii)$ of Proposition \ref{prop_equivalence}, that is, each non-constant $u \in \lip_\loc(M)$ solving $(P_\ge)$ and bounded above satisfies $f(u^*) \le 0$. Here, as usual, $u^*= \sup_M u$, and similarly we will use the notation $u_* = \inf_M u$.\\
When $l \equiv 1$, the problem has been tackled in a series of papers \cite{karp,rigolisalvatorivignati_3,prs_gafa,prsmemoirs} by means of integral methods, and in particular we refer to \cite{prsmemoirs} for a thorough discussion. Since then, in a manifolds setting, the first results that we are aware of allowing a nontrivial $l$ appeared in \cite{maririgolisetti}, where $l$ is assumed to be $C$-increasing (in fact, polynomial in Thm. 5.1 therein). In particular, the relevant case when $l$ can vanish both as $t \ra 0^+$ and as $t \ra \infty$ seems to be still open even in the Euclidean space, although it has been recently considered for Carnot groups in \cite{AMR}. It is a remarkable feature that the results quoted above ensure $f(u^*) \le 0$ not only when $u$ is a-priori bounded above, but also when $u$ does not grow too fast at infinity, in the sense that in this case $u^*$ is also shown to be finite. This is a natural condition, and its origin is related to the growth of an explicit Khas'minskii-type potential for the operator considered, see Section 4 of \cite{prsmemoirs} for more information. Related interesting results on $\R^m$ can be found in \cite{farinaserrin1, farinaserrin2, pucciserrin_2, DAmbrMit} and will be described in more detail later.

\par
The next theorem improves on \cite[Thm 5.1]{maririgolisetti} and \cite[Thm. 2.1]{AMR}. Throughout this section, we assume the following growth conditions:
\begin{equation}\label{assu_WPC}
\begin{array}{ll}
\text{there exist constants $p, \bar p > 1$, $C, \bar C>0$ such that } \\[0.2cm]
\varphi(t) \le Ct^{p-1} \ \text{ on } \, [0,1], \qquad \varphi(t) \le \bar C t^{\bar p-1} \quad \text{on } \, [1,\infty).
\end{array}
\end{equation}
If $p= \bar p$, \eqref{assu_WPC} is called the \emph{weak $p$-coercivity} of $\Delta_\varphi$ in \cite{DAmbrMit}. We will explain in depth the different role played by $p$ and $\bar p$ for the validity of $\wmp$.

%
%
\begin{theorem}\label{teo_main_2}
Let $M$ be a complete Riemannian manifold, and consider $\varphi, b,f$ and $l$ meeting the requirements in  \eqref{assumptions_bfl}, \eqref{assumptions} and the bounds  \eqref{assu_WPC} for some $p, \bar p>1$. Assume that, for some $\mu,\chi \in \R$ satisfying
\begin{equation}\label{pararange_2}
\chi \ge 0, \qquad \mu \le \chi+1,
\end{equation}
the following inequalities hold:
\begin{equation}\label{assum_main_2'}
\begin{array}{ll}
b(x) \ge C\big(1+r(x)\big)^{-\mu} & \quad \text{on } \, M, \\[0.2cm]
f(t) \ge C & \quad \text{for } \, t \, \gg 1 \\[0.2cm]
\disp l(t) \ge C\frac{\varphi(t)}{t^{\chi}} & \quad \text{on } \, \R^+,
\end{array}
\end{equation}
for some constant $C>0$. Let $u \in \lip_\loc(M)$ be a non-constant, weak solution of $(P_\ge)$ such that either
\begin{itemize}
\item[(i)] $u$ is bounded above and one of the following properties hold:
\begin{equation}\label{volgrowth_sigmazero}
\begin{array}{lll}
\mu < \chi+1 & \text{and} & \disp \qquad \liminf_{r \ra \infty} \frac{\log\vol(B_r)}{r^{\chi+1-\mu}} < \infty \quad (=0 \, \text{ if } \, \chi=0); \\[0.4cm]
\mu = \chi+1 & \text{and} & \disp \qquad \liminf_{r \ra \infty} \frac{\log\vol(B_r)}{\log r} < \infty \quad (\le p \, \text{ if } \, \chi=0).
\end{array}
\end{equation}
\item[(ii)] $u$ satisfies
\begin{equation}\label{opequeno}
u_+(x) = o\big(r(x)^\sigma\big) \qquad \text{as } \, r(x) \ra \infty,
\end{equation}
for some $\sigma>0$ such that
\begin{equation}\label{ipo_sigma_teomain2}
\chi \sigma \le \chi+1-\mu,
\end{equation}
and one of the following properties hold:
\begin{equation}\label{volgrowth_sigmamagzero}
\begin{array}{ll}
\chi \sigma < \chi+1-\mu, & \quad \disp \liminf_{r \ra \infty} \frac{\log\vol(B_r)}{r^{\chi+1-\mu -\chi\sigma}} < \infty \quad (=0 \, \text{ if } \, \chi=0);\\[0.4cm]
\chi \sigma = \chi+1-\mu, \ \ \ \chi>0, & \quad \disp \liminf_{r \ra \infty} \frac{\log\vol(B_r)}{\log r} < \infty; \\[0.4cm]
\chi\sigma = \chi+1-\mu, & \quad \disp  \liminf_{r \ra \infty} \frac{\log\vol(B_r)}{\log r} \le \left\{ \begin{array}{ll}
p-\sigma(p-1) & \text{if } \, \sigma \le 1, \\[0.1cm]
\bar p - \sigma(\bar p -1) & \text{if } \, \sigma >1.
\end{array}\right.
\end{array}
\end{equation}
\end{itemize}
Then, $u$ is bounded above on $M$ and $f(u^*) \le 0$. If moreover $u$ satisfies $(P_=)$,
\begin{equation}\label{ipo_f_dinuovo}
tf(t) \ge C|t| \qquad \text{for } \, |t|>>1
\end{equation}
and either $(i)$ or $(ii)$ holds both for $u_+$ and for $u_-$, then
\begin{equation}\label{linda}
u \in L^\infty(M) \qquad \text{and} \qquad f(u^*) \le 0 \le f(u_*).
\end{equation}
\end{theorem}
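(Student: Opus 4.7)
I would argue by contradiction, combining the pasting Lemma \ref{lem_pasting} with the iterative integral-estimate technique developed by Pigola--Rigoli--Setti, here adapted to account for the nonlinearity $l(|\nabla u|)$ in the style of Farina--Serrin. Suppose that $f(u^*)>0$ (in case (ii) the same reductions apply after showing that $u^*<\infty$ as a by-product of the estimates below, with $\eta$ replaced by a diverging sequence $\eta_k$). By continuity of $f$ one can pick $\eta<u^*$ close to $u^*$ so that $f(u)\ge K>0$ on $\Omega_\eta=\{u>\eta\}$. Setting $v=(u-\eta)_+\in\lip_\loc(M)$, the distributional inequality
\begin{equation}\label{my_sketch_0}
\Delta_\varphi v \ge K\, b(x)\, l(|\nabla v|) \qquad \text{on } \{v>0\}
\end{equation}
holds, and the goal is to show that \eqref{my_sketch_0} forces $v\equiv 0$, contradicting the non-constancy of $u$.

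\textbf{Test function and Young's inequality.} Fix $R>0$ and a Lipschitz cutoff $\psi$ supported in $B_{2R}$, with $\psi\equiv 1$ on $B_R$ and $|\nabla\psi|\le 2/R$. Plugging $\phi=\psi^q v^\gamma$, with $\gamma\ge 0$ and $q\gg 1$ to be chosen, into the weak formulation of \eqref{my_sketch_0} and moving the positive gradient contribution to the left, one obtains
\begin{equation}\label{my_sketch_1}
K \int b\, l(|\nabla v|)\, \psi^q v^\gamma \;+\; \gamma \int \varphi(|\nabla v|)|\nabla v|\, \psi^q v^{\gamma-1} \;\le\; q \int \varphi(|\nabla v|)\, v^\gamma \psi^{q-1}|\nabla\psi|.
\end{equation}
Using $\varphi(t)\le C^{-1}l(t)\,t^\chi$ (which is the content of the third line of \eqref{assum_main_2'}) together with Young's inequality applied separately on $\{|\nabla v|\le 1\}$ and $\{|\nabla v|>1\}$, one absorbs the first factor $l(|\nabla v|)$ into the leftmost term of \eqref{my_sketch_1}, leaving a residual power of $|\nabla v|$ which is controlled by the second term; here the two distinct upper bounds in \eqref{assu_WPC} with exponents $p$ and $\bar p$ become essential. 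This yields, for suitable exponents $\lambda,\delta,\theta'$ computed from $\chi,\mu,p,\bar p,\gamma$,
\begin{equation}\label{my_sketch_2}
\int_{B_R} b\, l(|\nabla v|)\, v^\gamma \;\le\; C\, R^{-\lambda} \int_{B_{2R}\setminus B_R} b^{1-\theta'}\, v^{\gamma+\delta}.
\end{equation}

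\textbf{Volume growth and conclusion.} Inserting the lower bound $b(x)\ge C(1+r)^{-\mu}$ and either the $L^\infty$-bound on $u$ (case (i)) or the polynomial growth \eqref{opequeno} (case (ii)), the right-hand side of \eqref{my_sketch_2} is bounded by a multiple of $R^{-\Lambda(\sigma,\mu,\chi,\gamma)}\,\vol(B_{2R})$ (with $\sigma=0$ in case (i)). The restriction \eqref{pararange_2}--\eqref{ipo_sigma_teomain2} ensures $\Lambda\ge 0$, and the precise volume hypotheses \eqref{volgrowth_sigmazero}, \eqref{volgrowth_sigmamagzero} are exactly what is needed to drive the right-hand side to zero along a suitable sequence $R_k\to\infty$: in the non-critical subcases this is immediate, while the borderline equalities (e.g.\ $\chi\sigma=\chi+1-\mu$ with $\chi=0$) require a logarithmic cutoff $\psi$ and the sharp thresholds $V_\infty\le p$ (or $\bar p-\sigma(\bar p-1)$) enter precisely at this point. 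One thus concludes that $|\nabla v|\equiv 0$ on $M$, and hence $v\equiv 0$, contradicting $f(u^*)>0$. The two-sided estimate \eqref{linda} under $(P_=)$ follows by applying the same argument to $-u$, whose nonlinearity is $-f(-t)$: condition \eqref{ipo_f_dinuovo} guarantees that this nonlinearity still enjoys the lower bound required in \eqref{assum_main_2'}.

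\textbf{Expected main obstacle.} The critical technical point is the sharp Young splitting used to pass from \eqref{my_sketch_1} to \eqref{my_sketch_2}: the borderline cases in \eqref{volgrowth_sigmazero} and \eqref{volgrowth_sigmamagzero} leave no slack, so the exponents $\lambda,\delta,\theta'$ must be tuned exactly, and the dichotomy $\{|\nabla v|\le 1\}$ vs $\{|\nabla v|>1\}$ must be carried through all integrals to exploit both constants $p,\bar p$ in \eqref{assu_WPC}. Organizing the iteration in $\gamma$ uniformly in this dichotomy, and showing that the iteration closes at the sharp thresholds stated in the theorem, is where the bulk of the technical work will lie.
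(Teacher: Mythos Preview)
Your overall contradiction framework, the use of the pasting lemma, and the $\{|\nabla u|\le 1\}$ versus $\{|\nabla u|>1\}$ splitting to exploit both exponents $p,\bar p$ are all correct and match the paper. However, your choice of test function $\psi^q(u-\eta)_+^\gamma$ together with ``iteration in $\gamma$'' is not the device the paper uses, and it is unlikely to yield the sharp thresholds in \eqref{volgrowth_sigmazero}--\eqref{volgrowth_sigmamagzero}. That iteration scheme is the one underlying Proposition~\ref{lem_importante}, but it closes only because of the Keller--Osserman power $u^\omega$ with $\omega>\chi$ on the right-hand side. In the present theorem all you have is $f(u)\ge K$, so after one application of your estimate \eqref{my_sketch_2} there is no gain from increasing $\gamma$: the factor $v^{\gamma+\delta}$ on the right is controlled by the same $L^\infty$ or $o(r^\sigma)$ bound as $v^\gamma$ on the left, and in the borderline cases (e.g.\ $\chi=0$, $\mu=\chi+1$ with $V_\infty=p$, or $\chi\sigma=\chi+1-\mu$) the resulting exponent $\Lambda$ is exactly zero and the argument stalls.

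The paper instead reduces Theorem~\ref{teo_main_2} to the quantitative maximum principle of Theorem~\ref{teo_maximum}, whose proof uses a single, non-iterative estimate with a much more structured test function: $\psi^{\bar p}\lambda(u)F(v,r)$, where $v=\alpha(1+r)^\sigma-u$ and $F$ is chosen case by case as $\exp\{-\tau v(1+r)^{-\eta}\}$, $\exp\{-\tau v^{(\sigma-\eta)/\sigma}\}$, or $v^{-\tau}$, with $\eta=\mu+(\sigma-1)(\chi+1)$. The point of these specific $F$'s is to force a \emph{pointwise} algebraic lower bound
\[
K(1+r)^{-\mu}\frac{F}{|F_v|}+|\nabla u|^{\chi+1}-|\nabla u|^\chi\left|-\alpha\sigma(1+r)^{\sigma-1}+\frac{F_r}{|F_v|}\right|\;\ge\;\Lambda|\nabla u|^{\chi+1},
\]
which is what converts the inequality into a clean estimate with no residual $u$-dependence on the right. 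The free parameter $\tau$ (or the relation between $\alpha,\beta,\tau$ in the exponential cases) is then tuned so that the volume comparison gives the sharp inequality $K\le H\hat u^\chi$ with the explicit constants in \eqref{casi_C}; the thresholds in \eqref{volgrowth_sigmazero}--\eqref{volgrowth_sigmamagzero} are precisely the conditions making $H=0$ or $\hat u^\chi=0$. Your proposal does not contain an analogue of this pointwise step, and that is the missing idea.
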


\begin{remark}\label{rem_borderlinepol}
\emph{In the third case of \eqref{volgrowth_sigmamagzero}, that is, when $\chi\sigma = \chi+1-\mu$ and the volume growth of $B_r$ is suitably small with respect to $p,\bar p,\sigma$, the result still holds under the weaker assumption
\begin{equation}\label{ogrande}
u_+(x) = O\big(r(x)^\sigma\big) \qquad \text{as } \, r(x) \ra \infty.
\end{equation}
}
\end{remark}

As a consequence of Theorem \ref{teo_main_2}, we deduce Theorem \ref{teo_WMP_intro} of the Introduction.

\begin{proof}[Proof of Theorem \ref{teo_WMP_intro}]
Consider a non-constant solution $u \in \lip_\loc(M)$ with $u^* < \infty$ of $\Delta_\varphi u \ge K b(x)l(|\nabla u|)$ on some upper level  set $\Omega_\eta = \{x \in M \ : \ u(x) >\eta\}$. We shall prove that $K \le 0$. Suppose that this is not the case. By adding a constant to $u$, we can suppose that $\eta <0$ but sufficiently near to $0$ so that $\Omega_0 = \{x \in M \ : \ u(x)>0 \} \not= \emptyset$. Choose $f \in C(\R)$ such that $0 \le f \le K$, $f(0)=0$ and $f(t) = K$ for $t > u^*/2$. Then, $u$ solves
\begin{equation}\label{sempre_WMP}
\Delta_\varphi u \ge b(x)f(u)l(|\nabla u|)
\end{equation}
on $\Omega_\eta \supset  \Omega_0 $, and with the aid of Lemma \ref{lem_pasting} we can assume that $u \ge 0$ solves \eqref{sempre_WMP} on the whole of $M$. Moreover, $f(u^*)=K$. To reach a  contradiction, we just need to check the requirements to apply Theorem \ref{teo_main_2}, case $(i)$ and conclude $f(u^*)\le 0$. This, by Proposition \ref{prop_equivalence}, implies the $\wmp$.\\
First, observe that $\mu \le \chi - \alpha/2$ in \eqref{condi_volumeWMP_intro} can be rewritten as
\begin{equation}\label{ipp}
1+ \frac{\alpha}{2} \le \chi + 1 - \mu,
\end{equation}
and from $\alpha \ge -2$ it implies $\mu \le \chi+1$. We exhamine the validity of \eqref{volgrowth_sigmazero}. If $\alpha > -2$ and $\chi>0$, then $\mu < \chi+1$ by \eqref{ipp} and the volume assumption \eqref{volumeassu_intro} imply the first in \eqref{volgrowth_sigmazero}. If $\alpha = -2$ and $\chi>0$, then again by \eqref{volumeassu_intro} both of \eqref{volgrowth_sigmazero} are met, respectively, when $\mu<\chi+1$ and $\mu=\chi+1$. Suppose now that $\chi = 0$ and $\mu < \chi - \alpha/2$. Then, $\mu < \chi +1$ and, for each $\alpha \ge -2$, the strict inequality in \eqref{ipp} coupled with \eqref{volumeassu_intro} guarantees the first in \eqref{volgrowth_sigmazero}. If $\chi = 0$ and $\mu = \chi - \alpha/2$, according to whether $\alpha > -2$ or $\alpha = -2$ the requirement $V_\infty = 0$, respectively $V_\infty \le p$, in \eqref{condi_volumeWMP_intro} is precisely what is needed to deduce the validity of  \eqref{volgrowth_sigmazero}, concluding the proof.
\end{proof}

It is worth to postpone the proof of Theorem \ref{teo_main_2} and comment on various aspects of its statement.

\begin{itemize}
\item[-] $p,\bar p$ play no explicit role in \eqref{pararange_2}. However, a bound on $\chi$ in terms of $p$ alone is hidden, in some cases, in the requirement $l(t) \ge \varphi(t)/t^\chi$ on $\R^+$: indeed, if $\varphi(t) \asymp t^{p-1}$ near $t=0$, the continuity of $l$ at zero forces
$$
\chi \le p-1.
$$
\item[-] For $l \equiv 1$, that is, when no gradient appears, the third in \eqref{assum_main_2'} forces $\varphi(t) \le C^{-1}t^\chi$ on $\R^+$. If we suppose that $\varphi(t) \asymp t^{p-1}$ on $[0,1]$ and $\varphi(t) \asymp t^{\bar p-1}$ on $[1, \infty)$, the above theorem can be applied provided
$$
\bar p-1 \le \chi \le p-1.
$$
Therefore, when the operator is the $p$-Laplacian operator, the gradientless case $l\equiv 1$ is recovered with the choice $\chi = p-1$. On the other hand, for the mean curvature operator, $p \le 2$ and $\bar p>1$ can be chosen arbitrarily close to $1$, and the gradientless case $l \equiv 1$ is recovered for any choice of $\chi \in [0,1]$. For a fixed $\mu$, in case $(i)$ or in $(ii)$ with $\sigma \le 1$, it is evident that the choice $\chi=1$, $p =2$ gives the best result, while in case $(ii)$ for $\sigma>1$ the best choice is $\chi=0$ and $\bar p$ approaching $1$.
\item[-] The second in \eqref{volgrowth_sigmamagzero} is the only place where $p$ and $\bar p$ appear. Its validity forces an upper bound for $\sigma$, since the right-hand side in the second of \eqref{volgrowth_sigmamagzero} is non-negative if and only if
$$
\sigma \le \frac{\bar p}{\bar p-1}.
$$
\item[-] The third in \eqref{volgrowth_sigmamagzero} supports and makes rigorous the next idea: in the sublinear range $\sigma < 1$, the region where $|\nabla u|$ is close to zero should be, somehow, larger than the one where $|\nabla u|$ is big, and consequently the growth of $\varphi$ on $[1,\infty)$ (if still polynomial) might be neglectable with respect to the behaviour of $\varphi$ on $[0,1]$. If $\sigma>1$ the situation reverses, and now the main contribution should be given by $\varphi$ on $[1,\infty)$.\par 
We feel interesting and a bit surprising that the method to prove Theorem \ref{teo_main_2} is able to detect, in some sense, the size of the regions where $|\nabla u|$ is small or large. In particular, if $u$ is bounded above this might suggest that the above proof could be refined to show that, under the same assumptions, the \emph{strong} maximum principle $\smp$ is true, see Problem \ref{prob_SMPeWMP} in the Introduction.
\end{itemize}

As we will show at the end of the section, Theorem \ref{teo_main_2} is sharp in the following sense: under the validity of the range of the parameters $\chi,\mu$ in \eqref{pararange_2}, for almost each condition on $\sigma$ and $\vol(B_r)$ we are able to find a non-constant solution of $(P_\ge)$ with $f \equiv 1$ satisfying all the remaining assumptions but the chosen one.

If $f$ has a unique zero, from \eqref{linda} we deduce a Liouville type theorem for slowly growing solutions $u$ of $(P_=)$, that fits very well with some results obtained, in the Euclidean setting, by Farina and Serrin in \cite{farinaserrin1, farinaserrin2}. For the sake of comparison, we state their theorems renaming their parameters to agree with our notation:

\begin{theorem}\cite[Thm. 11 and 12]{farinaserrin2}\label{teo_farinaserrin}
On Euclidean space, consider $\varphi, b,f$ and $l$ meeting the requirements in  \eqref{assumptions_bfl}, \eqref{assumptions} and the bounds \eqref{assu_WPC} for some $p= \bar p >1$. Assume that, for some $\mu,\chi, \omega \in \R$ satisfying
$$
0 < \omega \le \chi \le p-1,
$$
the following inequalities hold:
\begin{equation}\label{assum_main_2_farina}
\begin{array}{ll}
b(x) \ge C\big(1+r(x)\big)^{-\mu} & \quad \text{on } \, \R^m, \\[0.2cm]
tf(t) \ge C|t|^{\omega+1} & \quad \text{for } \, t \in \R, \\[0.2cm]
\disp l(t) \ge Ct^{p-1-\chi} & \quad \text{on } \, \R^+,
\end{array}
\end{equation}
for some constant $C>0$. Then, a solution $u \in \lip_\loc(\R^m)$ of $(P_=)$ satisfying
$$
|u(x)| = O\big(r(x)^\sigma\big) \qquad \text{as } \, r(x) \ra \infty,
$$
for some $\sigma>0$, is constant provided one of the following cases occur:
\begin{itemize}
\item[(i)] $\omega < \chi$, $\mu< \chi+1$,
\begin{equation}\label{condi_case1farina}
(p-1)(1+\chi-\mu) \ge (p-m)(\chi-\omega) \qquad \text{and} \qquad \sigma \in \left(0, \frac{\chi+1-\mu}{\chi-\omega}\right);
\end{equation}
\item[(ii)] $\omega<\chi$, $m<p$,
\begin{equation}\label{condi_case2farina}
(p-1)(1+\chi-\mu) \le (p-m)(\chi-\omega) \qquad \text{and} \qquad \sigma \in \left(0, \frac{p-m}{p-1}\right);
\end{equation}
\item[(iii)] $\omega = \chi$, $\mu < \chi+1$, independently of $\sigma>0$.
\item[(iv)] $\omega=\chi$, $m<p$,
$$
\mu \ge \chi+1 \qquad \text{and} \qquad \sigma \in \left(0, \frac{p-m}{p-1}\right).
$$
\end{itemize}
\end{theorem}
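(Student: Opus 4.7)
The plan is to adapt the Mitidieri-Pohozaev integral method in the weighted, gradient-dependent form developed in \cite{farinaserrin2, dambrosiomitidieri_2}. Since the Euclidean setting admits only polynomial volume growth, the standard cutoff technology on balls suffices and the iterative procedure needed to prove Theorem \ref{teo_main_2} above can be avoided. Applying the pasting reduction of Lemma \ref{lem_pasting} to $\pm u$ separately, I reduce the claim to showing that a non-negative polynomially growing solution $u$ of $(P_\ge)$ satisfying the hypotheses must be identically zero.

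Fix $R>0$ large and pick a standard cutoff $\eta = \eta_R \in C^\infty_c(\R^m)$ with $\eta \equiv 1$ on $B_R$, $\supp \eta \subset B_{2R}$, $|\nabla \eta| \le c/R$. For parameters $\gamma \ge 0$, $\beta \gg 1$ to be chosen later, plug the test function $\psi = u^\gamma \eta^\beta$ (regularized as $(u+\eps)^\gamma\eta^\beta$ with $\eps\to 0^+$) into the weak form of $(P_=)$; integration by parts produces
\begin{equation*}
\gamma \int u^{\gamma-1}\varphi(|\nabla u|)|\nabla u|\,\eta^\beta \, + \, \int b(x) f(u) u^\gamma l(|\nabla u|)\, \eta^\beta \,=\, -\beta\int \frac{\varphi(|\nabla u|)}{|\nabla u|} u^\gamma \eta^{\beta-1} \langle \nabla u, \nabla \eta\rangle.
\end{equation*}
The coercivity assumptions in \eqref{assum_main_2_farina} bound the second LHS term from below by $C\int b\,u^{\omega+1+\gamma}|\nabla u|^{p-1-\chi}\eta^\beta$, while $\varphi(t)\le Ct^{p-1}$ (valid for all $t\ge 0$ since $p=\bar p$) together with $|\nabla\eta|\le c/R$ control the RHS by $C R^{-1}\int_{A_R} u^\gamma |\nabla u|^{p-1}\eta^{\beta-1}$, where $A_R=B_{2R}\setminus B_R$.

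The heart of the argument is a double Young inequality applied after splitting $|\nabla u|^{p-1}=|\nabla u|^{p-1-\chi}\cdot|\nabla u|^{\chi}$: the first factor allows absorption of a piece of the RHS into the weighted LHS integral $\int b\,u^{\omega+1+\gamma}|\nabla u|^{p-1-\chi}\eta^\beta$ (using the weight bound $b(x)\ge C(1+r)^{-\mu}$), while the remaining factor is absorbed into the unweighted gradient term $\int u^{\gamma-1}|\nabla u|^p \eta^\beta$ on the LHS. Careful bookkeeping of the Young exponents then reduces the problem to
\begin{equation*}
\int_{B_R} b(x)\, u^{\omega+1+\gamma} \,\le\, C\, R^{-\Lambda(\gamma,\,\sigma,\,\omega,\,\chi,\,\mu,\,p,\,m)},
\end{equation*}
where $\Lambda$ is an explicit linear functional of the parameters that combines the factor $R^{-1}$ coming from $|\nabla\eta|$, the two Young exponents, the weight bound, the polynomial growth $u\le Cr^\sigma$ on $A_R$, and the Euclidean volume factor $R^m$. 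Optimizing over $\gamma$, the limiting choice $\gamma\to \omega$ (admissible when $\omega<\chi$) produces the threshold $\sigma(\chi-\omega)<\chi+1-\mu$ of case (i), whereas the choice $\gamma\to 0$ combined with the alternative relation $(p-1)(\chi+1-\mu)\le (p-m)(\chi-\omega)$ produces the threshold $\sigma(p-1)<p-m$ of case (ii); the marginal situation (iii), $\omega=\chi$, makes the RHS independent of $u$ for a suitable $\gamma$ and thus imposes no restriction on $\sigma$, while case (iv) combines the two previous regimes. Letting $R\to\infty$ forces $\int b\,u^{\omega+1+\gamma}=0$, hence $u\equiv 0$.

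The main technical obstacle is the simultaneous validity of the two absorptions. The admissible Young exponents satisfy a linear system whose solvability forces both the standing inequalities $\omega\le\chi\le p-1$ and the sharp threshold on $\sigma$; in particular the dichotomy between cases (i)–(ii) reflects whether the leading obstruction comes from the decay of the weight (the $\mu$-dependent term in $\Lambda$) or from the Euclidean volume factor $R^m$ (the $p$-vs-$m$ dependence). Once this algebraic structure is laid out, each of the four statements of the theorem corresponds to one explicit corner of the feasibility region for $\gamma$, and no further analytic ingredient is needed.
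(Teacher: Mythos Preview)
This theorem is not proved in the paper: it is quoted verbatim from \cite[Thm.~11 and 12]{farinaserrin2} and stated solely for comparison with the paper's own Theorem~\ref{teo_main_2} and Corollary~\ref{cor_strano}. There is therefore no ``paper's own proof'' against which to check your attempt.

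That said, your sketch is a fair outline of the Farina--Serrin strategy as it appears in the cited source: the test function $u^\gamma\eta^\beta$, the weighted energy identity, and the absorption via Young inequalities with exponents chosen to balance the weight decay, the growth of $u$, and the Euclidean volume factor $R^m$ are all the right ingredients. Two places where the sketch is too loose to count as a proof: (a) the phrase ``double Young inequality'' hides a genuinely delicate step --- in fact a \emph{triple} Young inequality with carefully matched exponents is what makes the argument close (compare Step~1 of the proof of Proposition~\ref{lem_importante} in the paper, where the same mechanism is worked out in detail for a closely related estimate); and (b) the ``optimization over $\gamma$'' with the limiting choices $\gamma\to\omega$ and $\gamma\to 0$ is heuristic --- the actual proof requires fixing $\gamma$ at a specific finite value determined by the linear system for the Young exponents, and the four cases (i)--(iv) arise from the solvability constraints of that system, not from a limiting procedure. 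If you want to turn this into a complete argument, writing out the explicit exponents $z_1,z_2,z_3$ and the resulting power of $R$ (as done in Step~1 of Proposition~\ref{lem_importante}) is unavoidable.
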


\begin{remark}
\emph{Note that the bounds in $(i)$ and $(ii)$ well match when equality holds in the first of \eqref{condi_case1farina} and \eqref{condi_case2farina}. If $l \equiv 1$, that is, $\chi=p-1$, $(i)$ and $(iii)$ have been proved, respectively, in Theorems B and A in \cite{farinaserrin1}, and their sharpness is discussed in Examples 5 and 4, Section 11 therein. It is interesting to observe that the first in \eqref{condi_case1farina} is not required in \cite[Thm. B]{farinaserrin1}, but appears in discussing the sharpness of $(i)$. More precisely, Example 5 in \cite{farinaserrin1} stresses  that the conclusion of Theorem \ref{teo_farinaserrin} in case $(i)$ does not hold when $\sigma = \frac{\chi+1-\mu}{\chi-\omega}$ in \eqref{condi_case1farina}, provided that the first in \eqref{condi_case1farina} is strengthened to 
\begin{equation}\label{notfarina}
(p-1)(1+\chi-\mu) > (p-m)(\chi-\omega) \qquad \text{with } \, \chi = p-1.
\end{equation}
On the contrary, perhaps surprisingly, the conclusion of $(ii)$ still holds for $\sigma = \frac{p-m}{p-1}$, see the next Corollary \ref{cor_strano}. This is, clearly, not in contradiction with \cite{farinaserrin1} in view of the incompatibility of \eqref{notfarina} with \eqref{condi_case2farina}, see also the discussion \cite[p. 677]{pucciserrin_2}. Further results for solutions of $(P_\ge)$ which are a-priori bounded or vanishing at infinity are given in Theorems D,E,F in \cite{farinaserrin1} and Theorems 1 and 2 in \cite{Serrin_4}. Inspection shows that they fit very well with the case when $u$ is bounded above in Theorem \ref{teo_main_2}.
}
\end{remark}

First, we compare Theorem \ref{teo_main_2} with case $(i)$ in Theorem \ref{teo_farinaserrin}, and we therefore assume $M = \R^m$, $p = \bar p$ in \eqref{assu_WPC}, $0< \chi \le p-1$ and $\mu< \chi+1$. It is apparent that, for each $\omega>0$, condition $tf(t) \ge C|t|^{\omega+1}$ implies both \eqref{ipo_f_dinuovo} and $tf(t)>0$ on $\R^+$. Theorem \ref{teo_main_2} then gives the constancy of solutions of $(P_=)$ on $\R^m$ under the assumption
\begin{equation}\label{condi_nostra}
u(x) = o\big(r(x)^\sigma\big) \qquad \text{as } \, r(x) \ra \infty \qquad \text{and} \qquad \sigma \in \left[ 0, \frac{\chi+1-\mu}{\chi}\right),
\end{equation}
for any dimension $m$ and any $p>1$. The upper bound for $\sigma$ is smaller than the one in \eqref{condi_case1farina}, as a counterpart of the stronger requirement $tf(t) \ge C|t|^{\omega+1}$, but \eqref{condi_case1farina} converges to \eqref{condi_nostra} as $\omega \ra 0^+$. Moreover, the first in \eqref{condi_case1farina} is not needed, in accordance with \cite[Thm. B]{farinaserrin1} and \cite[Thm 2 $(i)$]{Serrin_4}. Next, we investigate the relationship with $(iii),(iv)$ in Theorem \ref{teo_farinaserrin}, where $\omega = \chi$ is assumed. Since in our case $\omega=0$, $(iii)$ and $(iv)$ should be compared with case $\chi=0$ of Theorem \ref{teo_main_2}. Observe that $\chi=0$ includes an interesting class of borderline inequalities such as
\begin{equation}\label{bellaplaplacian}
\Delta_p u \ge b(x) f(u)|\nabla u|^{p-1},
\end{equation}
for which we have the next corollary; since, for $\chi=0$, any $\sigma>0$ satisfies \eqref{ipo_sigma_teomain2}, when $\mu<1$ we obtain a Liouville theorem for solutions $u$ with polynomial growth (i.e. satisfying $|u| =O(r^\sigma)$ as $r \ra \infty$ for some $\sigma>0$).


\begin{corollary}\label{cor_strano}
Let $M$ be a complete Riemannian manifold, and consider $\varphi, b,f$ meeting the requirements in  \eqref{assumptions_bfl}, \eqref{assumptions} and the bounds \eqref{assu_WPC} for some $p, \bar p>1$. Assume that, for some $\mu \le 1$, the following inequalities hold:
\begin{equation}\label{assum_main_2}
\begin{array}{ll}
b(x) \ge C\big(1+r(x)\big)^{-\mu} & \quad \text{on } \, M, \\[0.2cm]
f(t) \ge C & \quad \text{for } \, t \gg 1
\end{array}
\end{equation}
and for some constant $C>0$. Let $u \in \lip_\loc(M)$ be a non-constant, weak solution of
\begin{equation}\label{casostrano!}
\Delta_\varphi u \ge b(x)f(u) \varphi(|\nabla u|) \qquad \text{on } \, M.
\end{equation}
Suppose that $u_+(x) = O\big(r(x)^\sigma\big)$, for some $\sigma>0$, and that
\begin{equation}\label{volgrowth_sigmazero_strano}
\begin{array}{rlll}
(1) & \quad \mu<1, & \disp \qquad \liminf_{r \ra \infty} \frac{\log\vol(B_r)}{r^{1-\mu}} = 0, & \quad \text{ and } \, \sigma>0, \ \text{ or}  \\[0.4cm]
(2) & \quad \mu=1, & \disp \qquad \liminf_{r \ra \infty} \frac{\log\vol(B_r)}{\log r} = d_0, & \quad \text{ and } \, 0 < \sigma \le \left\{ \begin{array}{ll}
\frac{p-d_0}{p-1}, & \text{if } \, d_0 \ge 1, \\[0.3cm]
\frac{\bar p-d_0}{\bar p-1} & \text{if } \, d_0 < 1.
\end{array}\right.
\end{array}
\end{equation}
Then, $u$ is bounded above on $M$ and $f(u^*) \le 0$. If moreover $u$ satisfies $(P_=)$,
\begin{equation}\label{ipo_f_dinuovo_strano}
tf(t) \ge C|t| \qquad \text{for } \, |t|>>1
\end{equation}
and either $(i)$ or $(ii)$ holds for $u_+$ and for $u_-$, then
\begin{equation}\label{linda_strano}
u \in L^\infty(M) \qquad \text{and} \qquad f(u^*) \le 0 \le f(u_*).
\end{equation}
\end{corollary}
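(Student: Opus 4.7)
The corollary is the specialization of Theorem \ref{teo_main_2} to the borderline gradient dependence $l(t) = \varphi(t)$, that is, to the choice $\chi = 0$. My plan is therefore purely to verify that the hypotheses of Theorem \ref{teo_main_2} are met under the assumptions of the corollary, paying attention to (a) matching the two cases $\mu<1$ and $\mu=1$ with the appropriate sub-cases of \eqref{volgrowth_sigmamagzero}, and (b) the passage from $o(r^\sigma)$ in \eqref{opequeno} to the weaker $O(r^\sigma)$ used here.

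First, with $\chi=0$ the monotonicity bound $l(t)\ge C\varphi(t)/t^\chi$ in \eqref{assum_main_2'} is exactly the structural hypothesis $l\equiv\varphi$ of \eqref{casostrano!}, while \eqref{pararange_2} reduces to $\mu\le 1$, the running assumption of the corollary. Condition \eqref{ipo_sigma_teomain2} becomes $0\le 1-\mu$, hence is automatic, and thus any $\sigma>0$ is admissible in case $(ii)$ of the theorem. Since $u_+(x)=O(r(x)^\sigma)=o(r(x)^{\sigma+1})$, we may always enlarge $\sigma$ at will without altering the left-hand side $\chi\sigma=0$ in \eqref{ipo_sigma_teomain2}.

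For case $(1)$ of the corollary ($\mu<1$) one has $\chi\sigma=0<1-\mu=\chi+1-\mu$, so one is in the first sub-case of \eqref{volgrowth_sigmamagzero}; the volume condition $\liminf\log\vol(B_r)/r^{\chi+1-\mu-\chi\sigma}=0$ collapses, for $\chi=0$, to $\liminf\log\vol(B_r)/r^{1-\mu}=0$, which is exactly hypothesis \eqref{volgrowth_sigmazero_strano}$(1)$. Because this condition does not depend on $\sigma$ when $\chi=0$, the replacement of $o(r^\sigma)$ by $O(r^\sigma)$ poses no problem: I simply apply the theorem to $u$ viewed as $o(r^{\sigma+1})$. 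For case $(2)$ ($\mu=1$) one has $\chi\sigma=0=\chi+1-\mu$, that is, the genuine borderline situation; Remark \ref{rem_borderlinepol} then authorizes the use of the assumption $u_+(x)=O(r(x)^\sigma)$ in the third sub-case of \eqref{volgrowth_sigmamagzero}. Writing out the condition there with $d_0=\liminf\log\vol(B_r)/\log r$, one needs
$$
d_0\le p-\sigma(p-1)\quad\text{if }\sigma\le 1,\qquad d_0\le\bar p-\sigma(\bar p-1)\quad\text{if }\sigma>1,
$$
that is, $\sigma\le(p-d_0)/(p-1)$ (resp.\ $\sigma\le(\bar p-d_0)/(\bar p-1)$). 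A direct check shows that the splitting $d_0\ge 1$ versus $d_0<1$ in the corollary corresponds precisely to these two ranges: when $d_0\ge 1$ the bound $(p-d_0)/(p-1)\le 1$ automatically forces $\sigma\le 1$, while when $d_0<1$ the bound $(\bar p-d_0)/(\bar p-1)>1$ recovers $\sigma>1$ (and the $\sigma\le 1$ range falls harmlessly under the more restrictive inequality, which is automatic). Theorem \ref{teo_main_2}$(ii)$ then yields $u^*<\infty$ and $f(u^*)\le 0$.

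Finally, if $u$ satisfies $(P_=)$ with $tf(t)\ge C|t|$ for $|t|\gg 1$, and if the same growth-plus-volume hypothesis is imposed on $u_-$, the last statement of Theorem \ref{teo_main_2} applies verbatim and delivers both $u\in L^\infty(M)$ and $f(u^*)\le 0\le f(u_*)$. I do not foresee any serious obstacle: the proof is essentially bookkeeping, and the only subtle point is the consistency between the $O(r^\sigma)$ assumption of the corollary and the $o(r^\sigma)$ appearing in \eqref{opequeno}, which is resolved in case $(1)$ by enlarging $\sigma$ and in case $(2)$ by invoking Remark \ref{rem_borderlinepol}.
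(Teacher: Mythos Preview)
Your proof is correct and follows essentially the same approach as the paper: set $\chi=0$ in Theorem \ref{teo_main_2}, verify that the volume conditions in the corollary match those of \eqref{volgrowth_sigmamagzero}, and handle the $O(r^\sigma)$ versus $o(r^\sigma)$ discrepancy by enlarging $\sigma$ in case $(1)$ (harmless since $\chi\sigma=0$ is $\sigma$-independent) and by invoking Remark \ref{rem_borderlinepol} in case $(2)$. Your verification of the equivalence between the $d_0\gtrless 1$ splitting and the $\sigma\gtrless 1$ splitting is more explicit than the paper's, which simply asserts it as ``an algebraic manipulation''.
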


\begin{proof}[Proof of Corollary \ref{cor_strano}, assuming Theorem \ref{teo_main_2}] It is enough to choose $\chi=0$ and $\sigma>0$ in Theorem \ref{teo_main_2}. An algebraic manipulation shows that the condition on $\sigma$ appearing in $(2)$ of \eqref{volgrowth_sigmazero_strano} is equivalent to
$$
d_0 \le \left\{ \begin{array}{ll}
p - \sigma(p-1) & \text{if } \, \sigma \le 1; \\[0.2cm]
\bar p -\sigma(\bar p-1) & \text{if } \, \sigma >1.
\end{array}\right.
$$
While Theorem \ref{teo_main_2} requires $u_+ = o(r^\sigma)$, when $\mu>1$ no problem arise as we can enlarge $\sigma$ a bit to match this last requirement. On the other hand, if $\mu=1$, thanks to Remark \ref{rem_borderlinepol} we can still reach the conclusion in Theorem \ref{teo_main_2} when $u_+=O(r^\sigma)$. In particular, the upper bound for $\sigma$ in \eqref{volgrowth_sigmazero_strano} can be achieved.
\end{proof}

\begin{remark}
\emph{Comparing with $(iii),(iv)$ in Theorem \ref{teo_farinaserrin}, we readily see that \eqref{volgrowth_sigmazero_strano} fits very well with their assumptions, and we can also capture the case $\sigma = \frac{p-m}{p-1}$. On the other hand, it should be remarked that our result is restricted to $\mu \le 1$.
}
\end{remark}

\begin{remark}\label{rem_liou}
\emph{In the Euclidean setting $M = \R^m$ and for $p=\bar p \ge m$, other interesting Liouville theorems for slowly growing solutions of $(P_=)$ can be found in \cite[Thm. 1.1]{pucciserrin_2} and \cite[Thm. 10]{farinaserrin2}, where the case $tf(t) \ge 0$ ($\equiv 0$ in the second) is considered. There, the authors obtain the constancy of solutions of $(P_=)$ on $\R^m$ whenever $p> m$ and\footnote{It should be observed that assumption (1.3) in \cite{pucciserrin_2}, when rephrased for $(P_\ge)$, gives necessarily $\varphi(t) = Ct^{p-1}$. However, the above restriction does not appear in Theorem 10 of \cite{farinaserrin2}, which considers the case $f(t) \equiv 0$.}
\begin{equation}\label{opiccolo}
u(x) = o \left( r(x)^\frac{p-m}{p-1}\right) \qquad \text{as } \, r(x) \ra \infty.
\end{equation}
Condition \eqref{opiccolo} is sharp and related to the growth of the fundamental solution for the $p$-Laplacian (see \cite{pucciserrin_2} and Remark 10.3 in \cite{dambrosiomitidieri_2}). Further interesting results covering $p \ge m$ can be found in \cite{dambrosiomitidieri_2} (Theorems 10.1 and  10.4 therein).
}
\end{remark}

Next, let us show how Corollary \ref{cor_strano} applies in the setting of the mean curvature operator to give the next
\begin{theorem}\label{teo_meancurv_intro}
Let $M$ be a complete manifold satisfying
\begin{equation}\label{ipo_volumecurvaturamedia}
\liminf_{r \ra \infty} \frac{\log \vol(B_r)}{r^{1-\mu}} =0,
\end{equation}
for some $\mu<1$. Let $b,f,l$ satisfy \eqref{assumptions_bfl} and
\begin{equation}\label{assu_meancurv_intro}
\begin{array}{l}
\disp b(x) \ge C \big(1+r(x)\big)^{-\mu} \qquad \text{on } \, M, \\[0.2cm]
\disp l(t) \ge C \frac{t}{\sqrt{1+t^2}} \qquad \text{on } \, \R^+, \\[0.4cm]
\text{$f$ is non-decreasing on $\R$ and $f \not \equiv 0$} \\[0.2cm]
\end{array}
\end{equation}
for some $C>0$. If $u \in \lip_\loc(M)$ is a non-constant solution of
\begin{equation}\label{chebella_meancurv}
\diver\left( \frac{\nabla u}{\sqrt{1+|\nabla u|^2}}\right) = b(x)f(u)l(|\nabla u|) \qquad \text{on } \, M
\end{equation}
with a polynomial growth, then $u$ solves the minimal surface equation
$$
\diver\left( \frac{\nabla u}{\sqrt{1+|\nabla u|^2}}\right) = 0 \qquad \text{on } \, M
$$
and it is bounded on one side. In particular, if \eqref{ipo_volumecurvaturamedia} is strengthened to
\begin{equation}\label{assu_ricci_percurv_intro}
\Ricc \ge 0, \quad \qquad K \ge - \kappa^2 \qquad \text{on M,}
\end{equation}
for some constant $\kappa >0$ ($K$ the sectional curvature of $M$), then a solution of \eqref{chebella_meancurv} with a polynomial growth, if any, is constant for each $\mu<1$.
\end{theorem}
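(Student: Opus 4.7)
The plan is to reduce to Corollary \ref{cor_strano} (with $\chi=0$) via a shift of $u$ and Lemma \ref{lem_pasting}. For the mean curvature operator $\varphi(t)=t/\sqrt{1+t^2}$, one has $\varphi(t)\le t$ on $[0,1]$ and $\varphi(t)\le t^{\bar p-1}$ on $[1,\infty)$ for every $\bar p>1$, so \eqref{assu_WPC} holds with $p=2$. Moreover $l(t)\ge Ct/\sqrt{1+t^2}=C\varphi(t)$ is precisely the bound $l(t)\ge C\varphi(t)/t^\chi$ with $\chi=0$, and \eqref{ipo_volumecurvaturamedia} together with $\mu<1$ puts us in case $(1)$ of \eqref{volgrowth_sigmazero_strano} for any $\sigma>0$.

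Since $f$ is non-decreasing and $f\not\equiv 0$, set $t_1=\inf\{t:f(t)>0\}$ and $t_{-1}=\sup\{t:f(t)<0\}$, both in $[-\infty,+\infty]$; continuity gives $f(t_1)=0$ and $f(t_{-1})=0$ whenever these are finite, and $t_{-1}\le t_1$. Suppose first $t_1<\infty$. Set $\tilde u=u-t_1$ and $g(s)=f(s+t_1)$; then $g$ is non-decreasing with $g(0)=0$ and $g>0$ on $(0,\infty)$. The function $\tilde u$ solves $\Delta_\varphi\tilde u=b(x)g(\tilde u)l(|\nabla\tilde u|)$, and by Lemma \ref{lem_pasting} the function $w=\tilde u_+\ge 0$ satisfies $\Delta_\varphi w\ge b(x)g(w)l(|\nabla w|)$ weakly on $M$. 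Using $g(w)\ge 0$ and $l(t)\ge C\varphi(t)$,
\begin{equation*}
\Delta_\varphi w \ge Cb(x)g(w)\varphi(|\nabla w|)\qquad\text{weakly on } M.
\end{equation*}
The claim is that $w\equiv 0$: otherwise, either $w$ is a non-zero constant (in which case $\tilde u\equiv w>0$ and $u\equiv t_1+w$ is constant, contradicting non-constancy of $u$) or $w$ is non-constant, and in the latter case Corollary \ref{cor_strano} applies (the weight $Cb$ satisfies $Cb\ge CC_1(1+r)^{-\mu}$, the function $g$ satisfies $g(t)\ge g(s_0)>0$ for all $t\ge s_0$ with any $s_0>0$, and $w_+=w\le |u|+|t_1|=O(r^\sigma)$), yielding $w^*<\infty$ with $g(w^*)\le 0$, hence $w^*=0$ and $w\equiv 0$ — again a contradiction. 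So $w\equiv 0$, whence $u\le t_1$ and $f(u)\le 0$ on $M$. Symmetrically, when $t_{-1}>-\infty$, applying the same argument to $-u$ (which solves the analogous equation with $\tilde f(s)=-f(-s)$, still non-decreasing and non-trivial) produces $u\ge t_{-1}$ and $f(u)\ge 0$ on $M$. Examining the three possible cases (only $t_1$ finite, only $t_{-1}$ finite, or both finite) gives $f(u)\equiv 0$ on $M$, hence $\Delta_\varphi u=0$, with $u$ bounded on at least one side.

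For the second part, $\Ricc\ge 0$ and Bishop–Gromov give $\vol(B_r)\le Cr^m$, so $\liminf_{r\to\infty}\log\vol(B_r)/r^{1-\mu}=0$ for every $\mu<1$, and the first part applies: a putative non-constant solution $u$ with polynomial growth solves $\Delta_\varphi u=0$ and is bounded on one side, say above. Shifting, $v=u^*-u\ge 0$ is a non-negative solution of the minimal surface equation with polynomial growth. The main obstacle is then a one-sided Liouville property for the minimal surface equation: using the interior gradient estimate valid on manifolds with $\Ricc\ge 0$ and sectional curvature bounded below by $-\kappa^2$ (Korevaar/Cheng–Yau type), one bounds $|\nabla v|(x)$ on balls $B_R(x)$ by a quantity that decays to $0$ as $R\to\infty$ thanks to the polynomial growth of $v$. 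Letting $R\to\infty$ forces $|\nabla v|\equiv 0$ on $M$, whence $u$ is constant, contradicting the initial non-constancy assumption.
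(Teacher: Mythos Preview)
Your first part is correct and follows essentially the paper's route: both reduce to Corollary \ref{cor_strano} with $\chi=0$. You are arguably more careful than the paper, since by shifting by $t_1$ and invoking Lemma \ref{lem_pasting} you ensure $g(w)\ge 0$ globally, so that the passage from $l\ge C\varphi$ to the inequality $\Delta_\varphi w\ge Cb(x)g(w)\varphi(|\nabla w|)$ holds on all of $M$; the paper applies Corollary \ref{cor_strano} directly to $u$, implicitly relying on the fact that its proof (through Theorem \ref{teo_maximum}) only uses the differential inequality on upper level sets where $f(u)>0$.

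The second part, however, has a genuine gap. The Korevaar-type interior gradient estimate for the minimal surface equation on manifolds with $\Ricc\ge 0$ and $K\ge -\kappa^2$ does not produce a bound that decays to $0$ as $R\to\infty$; rather, it gives a bound of the form
\[
|\nabla v|(x)\ \le\ C_1\exp\!\Big(C_2\,\tfrac{\sup_{B_R(x)} v}{R}\Big),
\]
which under polynomial growth $\sup_{B_R(x)} v=O(R^\sigma)$ tends only to a positive constant when $\sigma<1$ and diverges when $\sigma>1$. There is no Cheng--Yau type estimate of the shape $|\nabla v|(x)\le C\,\sup_{B_R(x)}v/R$ available for the minimal surface operator that would make your argument go through, and in any case ``polynomial growth'' allows arbitrary $\sigma>0$. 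The paper does not attempt this; it simply quotes the half-space property of Rosenberg--Schulze--Spruck (Theorem 1.3 in \cite{rosenbergschulzespruck}), which asserts that any entire positive solution of the minimal surface equation on a complete $M$ with $\Ricc\ge 0$ and sectional curvature bounded below is constant, with no growth hypothesis on $v$. That result does use Korevaar's method, but only to obtain a \emph{uniform} gradient bound, after which a separate barrier/comparison argument yields constancy; the gradient estimate alone is not enough.
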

\begin{remark}
\emph{The assumption $f \not\equiv 0$ is necessary, as the example of affine functions solving the minimal graph equation on $\R^m$ shows.
}
\end{remark}

\begin{remark}
\emph{When $l \equiv 1$, $b \equiv 1$ and $M = \R^m$, V. Tkachev \cite{tkachev} proved Theorem \ref{teo_meancurv_intro} for $C^2$ solutions of \eqref{chebella_meancurv} without any growth restriction, see also Thm. 10.4 of A. Farina's survey \cite{farina_minimal}. The result has first been extended for mean curvature type operators by Y. Naito and Y. Usami \cite[Thm. 1]{nu}, with a different argument using radialization in a way related to the one in Section \ref{sec_SL}. An improvement of \cite{tkachev, nu} for \eqref{chebella_meancurv} with a nontrivial gradient dependence is shown in Theorem \ref{teo_tkachev} below, while generalizations to a larger class of quasilinear operators can be found in \cite[Thm. 3]{Serrin_4}, with the same method as in \cite{tkachev}, and in \cite[Thm. 8.1.3]{pucciserrin} with an approach, by means of Khas'minskii potentials, close to the one in \cite{nu}.
}
\end{remark}

\begin{proof}[Proof of Theorem \ref{teo_meancurv_intro}]
Since one of the sets $\{t \in \R \ : \ f(t)>0\}$ and $\{t \in \R \ : \ f(t)<0\}$ is non-empty, the monotonicity of $f$ implies that either $f(t) \ge C_1$ for $t >>1$, or $f(t) \le -C_1$ for $t << -1$, for some constant $C_1>0$. Without loss of generality, we can suppose $f(t) \ge C_1$ for $t>>1$. Using Corollary \ref{cor_strano}, since $u$ is non-constant we deduce $u^* < \infty$ and $f(u^*) \le 0$. Again by the monotonicity of $f$, either $f =0$ on $(-\infty, u^ *]$ or $f<0$ somewhere. In the second case, $f(t) \le -C$ for $t<<-1$ and thus, applying again Corollary \ref{cor_strano} with $-u$ replacing $u$ and $-f(-t)$ replacing $f(t)$ we obtain $f(u_*) \ge 0$. Therefore, $f = 0$ on $[u_*,u^*]$. Summarizing, in both cases $u$ solves the minimal surface equation
\begin{equation}\label{mse}
\diver \left( \frac{\nabla u}{\sqrt{1+|\nabla u|^2}} \right) = 0 \qquad \text{on } \, M
\end{equation}
(hence, $u$ is smooth), and $u$ is bounded on one side. If \eqref{ipo_volumecurvaturamedia} is replaced by \eqref{assu_ricci_percurv_intro}, the volume comparison implies $\vol(B_r) \le Cr^m$ and therefore \eqref{ipo_volumecurvaturamedia} holds for each $\mu<1$. The constancy of $u$ is then a consequence of Theorem 1.3 in \cite{rosenbergschulzespruck}, contradicting our assumption.
\end{proof}

\begin{remark}
\emph{It is an open (and, we believe, very interesting) problem to prove the constancy of positive solutions of 
$$
\diver \left( \frac{\nabla u}{\sqrt{1+|\nabla u|^2}} \right)=0
$$
under the only condition that $\Ricc \ge 0$, that is, removing the requirement on the sectional curvature in \eqref{assu_ricci_percurv_intro}. Indeed, the technique in \cite{rosenbergschulzespruck} relies on uniform gradient estimates for $u$ achieved via the Korevaar's method, for which an assumption on the sectional curvature seems challenging to remove.
}
\end{remark}

We now come to the proof of Theorem \ref{teo_main_2}. The result follows in a more or less direct way from the following refined maximum principle for slowly growing solutions of
\begin{equation}\label{ineq_superlevel}
\Delta_\varphi u \ge K(1+r)^{-\mu}\frac{\varphi(|\gru|)}{|\gru|^{\chi}} \qquad \text{on } \, \Omega_\gamma,
\end{equation}
where $\Omega_\gamma=\{x\in M\,:\,u(x)>\gamma\}$ is the superlevel set of $u$ at height $\gamma\in\mathbb R$.

\begin{theorem}\label{teo_maximum}
Let $M$ be a complete Riemannian manifold and let the growth \eqref{assu_WPC} be met for some $p, \bar p>1$. Let $\mu, \chi \in \R$ verify \eqref{pararange_2}
and let $u \in \lip_\loc(M)$ be a function for which
\begin{equation}\label{sup_u}
\hat u = \limsup_{r(x) \ra \infty} \frac{u_+(x)}{r(x)^\sigma} < \infty,
\end{equation}
for some $\sigma \in \R^+_0$ satisfying
\begin{equation}\label{assu_sosigma}
0 \le \chi \sigma \le \chi + 1 -\mu.
\end{equation}
Suppose that either one of the following assumptions is met:
\begin{equation}\label{volgrowth}
\begin{array}{lll}
\chi \sigma < \chi+1-\mu & \quad \text{and} & \disp \qquad \liminf_{r \ra \infty} \frac{\log \vol(B_r)}{r^{\chi+1-\mu -\chi\sigma}} = d_0 < \infty; \\[0.2cm]
\text{or} \\[0.2cm]
\chi \sigma = \chi+1-\mu & \quad \text{and} & \disp \qquad \liminf_{r \ra \infty} \frac{\log \vol(B_r)}{\log r} = d_0 < \infty;
\end{array}
\end{equation}
If, for some $\gamma \in \R$, the open set $\Omega_\gamma$ is non-empty and $u$ is a non-constant, weak solution of \eqref{ineq_superlevel} in $\Omega_\gamma$,
then
\begin{equation}
K \le H \cdot \hat u^{\chi},
\end{equation}
where, setting
\begin{equation}\label{def_zetadstar}
\zeta = \chi + 1 -\mu, \qquad {\rm and} \quad d^* = \min\Big\{p-\sigma(p-1), \ \bar p-\sigma (\bar p-1)\Big\},
\end{equation}
the constant $H = H(\sigma, \chi,p,\mu, d_0)$ is given by
\begin{equation}\label{casi_C}
\begin{array}{l}
H= \left\{ \begin{array}{rll}
(i) & \ 0 & \quad \text{if } \quad \chi>0, \quad \sigma=0; \\[0.2cm]
(ii) & \ d_0\big[\zeta -\chi\sigma\big]^{\chi+1} & \quad \disp \text{if } \quad \chi>0, \quad 0 < \chi\sigma < \frac{\chi}{\chi+1} \zeta \\[0.3cm]
(iii) & \ d_0 \sigma^\chi (\zeta -\chi\sigma) & \quad \disp \text{if } \quad \chi>0, \quad \frac{\chi}{\chi+1} \zeta \le \chi\sigma < \zeta \\[0.4cm]
(iv) & \ d_0(1-\mu) & \quad \text{if } \quad \chi=0, \quad \mu<1, \quad \sigma \ge 0\\[0.2cm]
(v) & \ \sigma^\chi (d_0-d^*)_+ & \quad \disp \text{if } \quad \chi\sigma = \zeta >0 \ \  \text{ or }  \ \ \chi \sigma = \zeta=0, \ \chi=0.
\end{array}\right.
\end{array}
\end{equation}
\end{theorem}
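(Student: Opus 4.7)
My plan is to proceed by contradiction via a capacitary integral inequality, in the spirit of the test function method of Karp, Rigoli--Setti--Troyanov, and Pigola--Rigoli--Setti, refined to cope with the gradient term $\varphi(|\nabla u|)/|\nabla u|^\chi$. I assume that $K > H \hat u^\chi$. Since $\Omega_\gamma \neq \emptyset$ and $u$ is non-constant, I choose $\gamma' > \gamma$ close enough that $\Omega_{\gamma'} \neq \emptyset$ and set $v = (u-\gamma')_+$. The bound \eqref{sup_u} gives $v(x) \le (\hat u + \delta)\,r(x)^\sigma$ for $r(x) \ge R_\delta$, for each prescribed $\delta > 0$. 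For parameters $\alpha > 0$ and $q > \chi+1$ to be chosen, and a non-negative cutoff $\eta \in \lip_c(M)$ supported in an annular region $A_R$ adapted to the geometry at infinity, I would test \eqref{ineq_superlevel} against $\psi = v^\alpha \eta^q$.

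Expanding $\nabla \psi$, using Cauchy--Schwarz on the cross-term, and applying Young's inequality with conjugate exponents $(\chi+1)/\chi$ and $\chi+1$ to the decomposition $\varphi(|\nabla u|) = (\varphi(|\nabla u|)|\nabla u|)^{\chi/(\chi+1)} \cdot (\varphi(|\nabla u|)/|\nabla u|^\chi)^{1/(\chi+1)}$, with suitable powers of $v$ and $\eta$ distributed between the two factors, I can absorb the $\alpha$-derivative term into its homologue on the LHS and reduce to
\begin{equation*}
K \int (1+r)^{-\mu} \frac{\varphi(|\nabla u|)}{|\nabla u|^\chi} v^\alpha \eta^q \le C(\alpha,q,\chi) \int \frac{\varphi(|\nabla u|)}{|\nabla u|^\chi}\, v^{\alpha+\chi}\, \eta^{q-\chi-1}\, |\nabla \eta|^{\chi+1}.
\end{equation*}
Rewriting the RHS as the integral of $\big[(1+r)^{-\mu}(\varphi/|\nabla u|^\chi) v^\alpha \eta^q\big] \cdot \big[(1+r)^\mu v^\chi \eta^{-\chi-1}|\nabla \eta|^{\chi+1}\big]$ and substituting the pointwise estimate $v \le (\hat u+\delta)\,r^\sigma$ on $\supp|\nabla \eta|$ (together with a Hölder absorption of the common $\varphi/|\nabla u|^\chi$ factor) bounds $K$ by $C(\hat u+\delta)^\chi$ times a quantity involving only $|\nabla \eta|$, $\eta$, and powers of $r$.

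The cutoff $\eta = \eta_R$ is then chosen to match each case of \eqref{casi_C}. In the exponential regime $\chi\sigma < \zeta$ (cases (i)--(iii)), I take $\eta_R = \phi(r)$ with $\phi$ a linear or slightly concave profile engineered so that $|\nabla \eta_R|$ scales like $R^{\chi\sigma-\zeta}$ on an annulus reaching out to infinity; the bound then reduces to a quantity proportional to $(\hat u+\delta)^\chi\,\log\vol(B_R)/R^{\zeta-\chi\sigma}$, and passing to the $\liminf$ along a realizing sequence, combined with the optimization in $\alpha, q$ (which dictates whether one is in regime (ii) or (iii), corresponding to different relative sizes of the $\alpha$- and $q$-derivative terms in the absorbing inequality), produces the explicit constants of (i)--(iii). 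In the polynomial regime $\chi\sigma=\zeta$ (cases (iv), (v)), the cutoff decays logarithmically and the balance is against $\log\vol(B_R)/\log R$, with case (iv) following the same scheme by taking $\chi=0$ and case (v) requiring the sharper argument below.

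The main obstacle will be the borderline case (v): the sharp constant $\sigma^\chi(d_0 - d^*)_+$ emerges only after splitting the integrals over $\{|\nabla u| \le 1\}$ and $\{|\nabla u| > 1\}$, and applying the two distinct growth bounds in \eqref{assu_WPC} separately, so that both $p$ and $\bar p$ contribute via $d^* = \min\{p-\sigma(p-1), \bar p - \sigma(\bar p-1)\}$. This two-scale Young inequality demands a delicate choice of $\alpha, q$ (which will in general depend on $R$ and tend to infinity along the chosen sequence) and careful bookkeeping to ensure that neither of the low- and high-gradient contributions dominates; the expected balance parallels the one in Farina--Serrin and in Pigola--Rigoli--Setti, with the extra gradient weight $l$ here forcing nontrivial adjustments throughout the exponent calculation.
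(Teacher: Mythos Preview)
Your proposal has a genuine gap in the ``H\"older absorption'' step and, more fundamentally, in the choice of test function, which cannot deliver the sharp constants in \eqref{casi_C}.

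First, the absorption problem. After your Young inequality you arrive at
\[
K\int (1+r)^{-\mu}\frac{\varphi(|\nabla u|)}{|\nabla u|^\chi}\,v^\alpha\eta^q
\ \le\ C(\alpha,q,\chi)\int \frac{\varphi(|\nabla u|)}{|\nabla u|^\chi}\,v^{\alpha+\chi}\eta^{q-\chi-1}|\nabla\eta|^{\chi+1},
\]
and you propose to factor the right-hand side integrand as $\big[(1+r)^{-\mu}(\varphi/|\nabla u|^\chi)v^\alpha\eta^q\big]\cdot\big[(1+r)^\mu v^\chi\eta^{-\chi-1}|\nabla\eta|^{\chi+1}\big]$ and bound the second factor pointwise. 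But for any compactly supported cutoff the ratio $\eta^{-\chi-1}|\nabla\eta|^{\chi+1}$ blows up near the outer boundary of the support (where $\eta\to 0$ while $|\nabla\eta|$ stays of order $1/R$), so no pointwise bound exists. A H\"older argument does not cure this either, since the two integrals live on different sets (all of $M$ versus $\supp|\nabla\eta|$) and are not directly comparable.

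Second, and more seriously, your test function $v^\alpha\eta^q$ with $v=(u-\gamma')_+$ is \emph{polynomial} in $r$ (since $v\lesssim r^\sigma$), and no amount of optimization in $\alpha,q$ will make it balance against \emph{exponential} volume growth $\log\vol(B_r)\sim r^{\zeta-\chi\sigma}$ in cases (ii)--(iii). An iteration on dyadic annuli based on your inequality gives at best $\log I(2^kR_0)\gtrsim k^2$, while you would need $\log I(R)\gtrsim R^{\zeta-\chi\sigma}$ for a contradiction; the gap is unbridgeable. The approach you cite from Farina--Serrin and Pigola--Rigoli--Setti with $u^\alpha$-type weights is used in the paper too, but only for Proposition~\ref{lem_importante}, which treats a \emph{different} inequality (with an extra $u^\omega$ factor) and does \emph{not} aim for sharp constants.

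The paper's proof is built on a different test function: $\psi^{\bar p}\lambda(u)F(v,r)$ where $v=\alpha(1+r)^\sigma-u$ is the \emph{gap} between $u$ and a barrier (so that $(\alpha-\beta)(1+r)^\sigma\le v\le\alpha(1+r)^\sigma$), and $F$ is a carefully chosen exponential weight $F=\exp\{-\tau v(1+r)^{-\eta}\}$ in the sub-critical cases and a power $F=v^{-\tau}$ in the borderline case~(v). The crucial property is that $F/|F_v|$ and $F_r/|F_v|$ are explicit functions of $r$ alone, which feeds into a \emph{pointwise} calculus inequality of the form
\[
K(1+r)^{-\mu}\tfrac{F}{|F_v|}+|\nabla u|^{\chi+1}-Q|\nabla u|^\chi \ \ge\ \Lambda\,|\nabla u|^{\chi+1},
\]
valid whenever $Q^{\chi+1}/P$ satisfies a precise algebraic relation. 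It is this relation that encodes the sharp constants $H$, and the exponential decay of $F$ on the annulus (because $v\ge(\alpha-\beta)(1+r)^\sigma$) is exactly what competes with the exponential volume growth to produce $d_0\ge\tau(\alpha-\beta)$ after optimization. The two-regime split on $\{|\nabla u|\lessgtr 1\}$ with separate Young inequalities, which you correctly anticipate for case~(v), is a further layer on top of this structure.
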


\begin{proof}
We can suppose $K>0$, otherwise the estimate is trivial. Note that \eqref{ineq_superlevel} is invariant with respect to translations $u \mapsto u_s = u+s$. Fix $\beta > \hat u$. We claim that a suitable translated $u_s$ satisfies
\begin{equation}
u_s \le \beta(1+r)^\sigma \qquad \text{on } \, M, \qquad u_s>0 \ \text{ somewhere.}
\end{equation}
Indeed, if $\sigma>0$, \eqref{sup_u} implies that $u < \beta(1+r)^\sigma$ outside a large compact set $\Omega$, and translating $u$ downwards we can achieve the same inequality also in $\Omega$, still keeping $u_s>0$ somewhere. On the other hand, the claim is obvious if $\sigma=0$. In this last case, note that here we do \emph{not} claim that $\hat u$ is not attained: this would follow from a strong maximum principle, that to the best of our knowledge is unknown under the sole assumption \eqref{assu_WPC}. Using that the resulting $u_s$ is positive somewhere, we can also assume $\gamma> 0$. Hereafter, computations will be performed with $u=u_s$. Choose $\alpha>\beta$ and define
$$
v(x) = \alpha\big(1+r(x)\big)^\sigma - u(x),
$$
so that
\begin{equation}\label{def_v}
(\alpha-\beta)(1+r)^\sigma \le v \le \alpha(1+r)^\sigma \qquad \text{on } \, \Omega_\gamma.
\end{equation}
Hereafter, with $C_1,C_2,C_3, \ldots$ we will denote positive absolute constants, that is, independent of $\sigma, \mu,\chi$. Fix a function $\lambda \in C^1(\R)$ such that
$$
0 \le \lambda \le 1, \quad \lambda \equiv 0 \ \  \text{ on } \, (-\infty, \gamma], \qquad \lambda>0 \ \ \text{on } \, (\gamma, \infty), \quad \lambda' \ge 0,
$$
and a cut-off function $\psi \in \lip_c(M)$, $0 \le \psi \le 1$, whose support has nontrivial intersection with $\Omega_\gamma$. Next, consider $F \in C^1(\R^2)$, $F=F(r,v)$, satisfying
\begin{equation}\label{propriety_F}
F(r,v)>0, \qquad F_v = \frac{\partial F}{\partial v}(r,v) < 0.
\end{equation}
Suppose first that $\bar p \ge p$. We insert the test function
\begin{equation}\label{test_maxprinc}
\psi^{\bar{p}} \lambda(u) F(v, r)  \in \lip_c(\overline{\Omega}_\gamma)
\end{equation}
in the weak definition of \eqref{ineq_superlevel}. Using $\lambda' \ge 0$ together with the Cauchy-Schwarz inequality we obtain
\begin{equation}
\begin{array}{l}
\disp K \int \psi^{\bar{p}} \lambda F (1+r)^{-\mu}\frac{\varphi(|\gru|)}{|\gru|^\chi} \le \disp \bar{p} \int \psi^{\bar{p}-1} \lambda F \varphi(|\gru|)|\grho| + \int \psi^{\bar{p}} \lambda F_v\varphi(|\gru|)|\gru| \\[0.5cm]
\disp \qquad \qquad + \int \psi^{\bar{p}}\lambda \varphi(|\gru|)\left|\alpha\sigma (1+r)^{\sigma-1}F_v + F_r\right|.  \end{array}
\end{equation}
Rearranging,
\begin{equation}\label{basepoint}
\begin{array}{l}
\disp \int \psi^{\bar{p}} \lambda \left|F_v\right| \frac{\varphi(|\gru|)}{|\gru|^\chi} B(x,u) \le \bar{p}\int \psi^{\bar{p}-1} \lambda F \varphi(|\gru|)|\grho|,
\end{array}
\end{equation}
with
\begin{equation}\label{def_B}
\disp B(x,u) = \disp K(1+r)^{-\mu} \frac{F}{|F_v|} + |\gru|^{\chi+1} - |\gru|^\chi \left|-\alpha\sigma (1+r)^{\sigma-1} + \frac{F_r}{|F_v|}\right|.
\end{equation}
Let us assume the validity of the following
\begin{equation}\label{lower_boundB}
\emph{claim: } \quad B(x,u) \ge \Lambda |\gru|^{\chi+1} \qquad \text{for some  $\Lambda \in (0,1]$ independent of $r$.}
\end{equation}
Plugging into \eqref{basepoint} gives
\begin{equation}\label{ihihih}
\begin{array}{l}
\disp \frac{\Lambda}{\bar{p}} \int \psi^{\bar{p}} \lambda \left|F_v\right| \varphi(|\gru|)|\gru| \le \int \psi^{\bar{p}-1} \lambda F \varphi(|\gru|)|\grho|,
\end{array}
\end{equation}
We now split the integrals on the subsets $\{|\nabla u| < 1\}$ and $\{|\nabla u| \ge 1\}$, where we apply different Young inequalities. Letting $p', \bar p'$ be the conjugate exponents to $p$ and $\bar p$, we can rewrite \eqref{ihihih} as follows:
\begin{equation}\label{ihihih}
\begin{array}{l}
\disp \frac{\Lambda}{\bar{p}} \int_{\{|\nabla u| <1\}} \psi^{\bar{p}} \lambda \left|F_v\right| \left[\frac{\varphi(|\gru|)}{|\gru|^{p-1}}\right]|\gru|^p + \disp \frac{\Lambda}{\bar{p}} \int_{\{|\nabla u|\ge 1\}} \psi^{\bar{p}} \lambda \left|F_v\right| \left[\frac{\varphi(|\gru|)}{|\gru|^{\bar p -1}}\right] |\gru|^{\bar p} \\[0.4cm]
\disp \qquad \le \int_{\{|\nabla u|<1\}} \left[\psi^{\bar p} \lambda \left|F_v\right|\left(\frac{\varphi(|\gru|)}{|\gru|^{p-1}}\right)|\gru|^{p}\right]^{\frac{1}{p'}} \left[\psi^{\bar{p}-1- \frac{\bar p}{p'}}\lambda^{\frac{1}{p}} F|F_v|^{-\frac{1}{p'}}\left(\frac{\varphi(|\gru|)}{|\gru|^{p-1}}\right)^{\frac{1}{p}}|\grho|\right] \\[0.4cm]
\disp \qquad \ \ + \int_{\{|\nabla u|\ge 1\}} \left[\psi^{\bar p} \lambda \left|F_v\right|\left(\frac{\varphi(|\gru|)}{|\gru|^{\bar{p}-1}}\right)|\gru|^{\bar{p}}\right]^{\frac{1}{\bar{p}'}} \left[\lambda^{\frac{1}{\bar p}} F|F_v|^{-\frac{1}{\bar{p}'}}\left(\frac{\varphi(|\gru|)}{|\gru|^{\bar{p}-1}}\right)^{\frac{1}{\bar p}}|\grho|\right]
\end{array}
\end{equation}
Observe that $\bar p \ge p$ implies the non-negativity of the exponent $\bar p-1 - \frac{\bar p}{p'} \ge 0$ for $\psi$ above. We apply Young inequality $ab \le (a\eps)^{p'}/p' + (b/\eps)^p/p$ to the first term in the right-hand side of \eqref{ihihih}, and an analogous one with $\bar \eps, \bar p, \bar p'$ to the second one. Rearranging, we obtain
\begin{equation}\label{ihihih}
\begin{array}{l}
\disp \left( \frac{\Lambda}{\bar{p}}-\frac{\eps^{p'}}{p'}\right) \int_{\{|\nabla u| <1\}} \psi^{\bar{p}} \lambda \left|F_v\right| \left[\frac{\varphi(|\gru|)}{|\gru|^{p-1}}\right]|\gru|^p \\[0.4cm]
+ \disp \left(\frac{\Lambda}{\bar{p}}-\frac{\bar\eps^{\bar p'}}{\bar p'}\right) \int_{\{|\nabla u|\ge 1\}} \psi^{\bar{p}} \lambda \left|F_v\right| \left[\frac{\varphi(|\gru|)}{|\gru|^{\bar p -1}}\right] |\gru|^{\bar p} \\[0.4cm]
\disp \qquad \disp \le \frac{\eps^{-p}}{p}\int_{\{|\nabla u|<1\}} \psi^{\bar{p}-p}\lambda F \left[\frac{F}{|F_v|}\right]^{p-1}\left(\frac{\varphi(|\gru|)}{|\gru|^{p-1}}\right)|\grho|^p \\[0.4cm]
\disp \qquad \ \ + \frac{\bar\eps^{-\bar p}}{ \bar p} \int_{\{|\nabla u|\ge 1\}} \lambda F \left[\frac{F}{|F_v|}\right]^{\bar p-1}\left(\frac{\varphi(|\gru|)}{|\gru|^{\bar{p}-1}}\right)|\grho|^{\bar p}
\end{array}
\end{equation}
Choose $\eps, \bar \eps$ in such a way that both the coefficients in round brackets in the left-hand side are $\Lambda/(2\bar p)$. From $\bar p \ge p$, $\psi \le 1$ and $\Lambda \le 1$, using \eqref{assu_WPC} we  infer the inequality
\begin{equation}\label{basestep}
\int \psi^{\bar p} \lambda \left|F_v\right| \varphi(|\nabla u|) |\gru| \le C_1\left[\int \lambda F \left( \frac{F}{|F_v|}\right)^{p-1}|\grho|^p + \int \lambda F \left(\frac{F}{|F_v|}\right)^{\bar p-1}|\grho|^{\bar p}\right],
\end{equation}
where $C_1$ is some constant depending on $\Lambda$ and on $p,\bar p, C, \bar C$ in \eqref{assu_WPC}. Fix $R_0 \ge 1$ large enough that $u$ is not constant on $\Omega_\gamma \cap B_{R_0}\neq \emptyset$. Then, clearly $\nabla  u$ is not identically zero on $\Omega_\gamma \cap B_{R_0}$, because otherwise $u$ would be constant on connected components of $\Omega_\gamma$, which would imply that $\Omega_\gamma \equiv M$ and $u$ be constant, contradiction. Fix $\delta \in (1/2,1)$. For $R > 2R_0 \ge 2$, we choose $\psi$ in such a way that
\begin{equation}\label{specipsi}
0 \le \psi \le 1, \quad \psi \equiv 1 \ \text{ on } B_{\delta R}, \quad \supp(\psi) \subset B_{R}, \quad |\grho| \le \frac{C_2}{(1-\delta)R},
\end{equation}
for some absolute constant $C_2$.
Inserting into \eqref{basestep} and recalling that $\lambda \le 1$ we obtain
\begin{equation}\label{quasifinale}
\begin{array}{l}
\disp \int_{\Omega_\gamma \cap B_{R_0}} \lambda \left|F_v\right| \varphi(|\gru|)|\gru| \\[0.5cm]
\qquad \le \quad \disp C_3\left[ \frac{1}{R^p}\int_{(B_{R}\backslash B_{\delta R})\cap \Omega_\gamma} F \left( \frac{F}{|F_v|}\right)^{p-1} + \frac{1}{R^{\bar p}}\int_{(B_{R}\backslash B_{\delta R})\cap \Omega_\gamma} F \left( \frac{F}{|F_v|}\right)^{\bar p-1}\right],
\end{array}
\end{equation}
for some $C_3= C_3(p,\bar p, C, \bar C, \Lambda, \delta)$. In the complementary case $\bar p \le p$, we  achieve \eqref{quasifinale} by simply exchanging the role of $p,\bar p$ and the related inequalities on $\{|\nabla u| <1\}$, $\{|\nabla u|  \ge 1\}$.\par
We now need to check the validity of the claim in \eqref{lower_boundB}, for a suitable choice of $F$. Observe that the expression of $B$ in \eqref{def_B} is a function of the type
$$
g(s)= P + s^{\chi+1} - Q s^{\chi},
$$
for $s = |\gru|$ and positive parameters
$$
P = K(1+r)^{-\mu} \frac{F}{|F_v|}, \qquad Q = \left|-\alpha\sigma (1+r)^{\sigma-1} + \frac{F_r}{|F_v|}\right|
$$
depending on $r$. It is a calculus exercise to check that $g(s) \ge \Lambda s^{\chi+1}$ on $\mathbb R^+_0$ when either
\begin{equation}\label{ine_lambda_0}
\left\{ \begin{array}{lll}
\chi=0, & \quad Q \le P & \quad \text{and} \qquad \Lambda \le 1, \quad \text{or}  \\[0.2cm]
\chi > 0, & \disp \quad \frac{Q^{\chi+1}}{P} \le \frac{(\chi+1)^{\chi+1}}{\chi^\chi} & \quad \text{and} \disp \qquad \Lambda \le 1 - \frac{\chi}{(\chi+1)^{\frac{\chi+1}{\chi}}}\left( \frac{Q^{\chi+1}}{P}\right)^{1/\chi}.
\end{array}\right.,
\end{equation}
Having fixed a parameter $\theta \in (0,1)$, we will choose $F$ in order to satisfy the next relations between $Q$ and $P$:
\begin{equation}\label{ine_lambda}
\left\{ \begin{array}{ll}
\text{if } \, \chi=0 \, \text{ we want } \ \ Q = P, & \quad \text{and in this case set} \quad \Lambda = 1; \\[0.2cm]
\text{if } \, \chi > 0 \, \text{ we want } \ \ \frac{Q^{\chi+1}}{P} = \theta^\chi \frac{(\chi+1)^{\chi+1}}{\chi^\chi} & \quad \text{and in this case set} \disp \quad \Lambda = 1 - \theta.
\end{array}\right.
\end{equation}
In this way, \eqref{ine_lambda_0} and thus \eqref{lower_boundB} is met. Observe that the first case in \eqref{ine_lambda} can be obtained by letting $\chi \ra 0$ and then $\theta \ra 0$ in the second one. To meet the identities in \eqref{ine_lambda}, we necessarily need an upper bound for $Q/P$ or $Q^{\chi+1}/P$ that does not depend on $r$, and this suggests our choice of $F$, that will be different from case to case. Set for convenience
\begin{equation}\label{def_eta}
\eta = \mu + (\sigma-1)(\chi+1) = (\chi+1)\sigma - \zeta,
\end{equation}
where $\zeta$ is as in \eqref{def_zetadstar}, and note that
\begin{equation}\label{ine_eta}
\disp \sigma -\eta = \zeta - \chi\sigma \ge 0.
\end{equation}
\vspace{0.2cm}
\noindent \textbf{Analysis of case $(ii)$, case $(i)$ for $\zeta>0$, and case $(iv)$ for $\sigma <  1-\mu$:}
$$
\chi>0, \quad 0 \le \chi\sigma < \frac{\chi}{\chi+1} \zeta, \qquad \text{or} \qquad \chi=0, \quad \mu<1, \quad 0 \le \sigma < 1-\mu.
$$
Using the definition of $\eta$, these cases correspond to
$$
\chi>0, \qquad \sigma \ge 0, \quad \eta < 0.
$$
Note that $\sigma > \eta$. We choose
\begin{equation}\label{primocaso_F}
F(v,r) = \exp\left\{ - \tau v(1+r)^{-\eta}\right\},
\end{equation}
for a real number $\tau>0$ that will be specified later in order to satisfy the identity for $P,Q$ in \eqref{ine_lambda}. Then, on $\Omega_\gamma$
$$
\frac{F}{|F_v|} = \frac{(1+r)^\eta}{\tau}, \qquad  \frac{F_r}{|F_v|} = \frac{v\eta}{(1+r)},
$$
and hence, by \eqref{def_v} and using $\sigma>\eta$, $\eta < 0$
$$
\disp -\alpha(\sigma-\eta)(1+r)^{\sigma-1} \le -\alpha\sigma (1+r)^{\sigma-1} + \frac{F_r}{|F_v|} \le 0
$$
Plugging into \eqref{def_B} we get
\begin{equation}\label{lowbounds_B}
B(x,u) \ge \disp \frac{K}{\tau}(1+r)^{\eta-\mu} + |\gru|^{\chi+1} - |\gru|^{\chi}\alpha(\sigma-\eta)(1+r)^{\sigma-1},
\end{equation}
In view of \eqref{def_eta}, to satisfy \eqref{ine_lambda} with
\begin{equation}\label{cosasonoPQ}
P = \frac{K}{\tau}(1+r)^{\eta-\mu}, \qquad Q = \alpha(\sigma-\eta)(1+r)^{\sigma-1}
\end{equation}
we need the identities
\begin{equation}\label{uff}
\begin{array}{ll}
\disp \frac{[\alpha(\sigma-\eta)]^{\chi+1}\tau}{K} = \theta^\chi \frac{(\chi+1)^{\chi+1}}{\chi^\chi} & \quad \text{if } \, \chi>0; \\[0.4cm]
\disp \frac{\alpha(\sigma-\eta)\tau}{K} = 1 & \quad \text{if } \, \chi=0.
\end{array}
\end{equation}
According to whether $\chi=0$ or $>0$, we then define $\tau$ as the value such that \eqref{uff} holds. With this choice, \eqref{lower_boundB} is satisfied with $\Lambda = 1-\theta$ (if $\chi>0$) or $\Lambda=1$ (if $\chi=0$), and in view of  our choice of $F$, \eqref{quasifinale} becomes
\begin{equation}\label{quasifinale_0}
\begin{array}{lcl}
\disp \int_{B_{R_0}\cap \Omega_\gamma} \lambda F(1+r)^{-\eta} \varphi(|\gru|)|\gru| & \le & \disp C_4 \left[ \frac{1}{R^p} \int_{(B_{R}\backslash B_{\delta R})\cap \Omega_\gamma} F(1+r)^{\eta(p-1)} \right. \\[0.5cm]
& & \disp \left. +  \frac{1}{R^{\bar p}} \int_{(B_{R}\backslash B_{\delta R})\cap \Omega_\gamma} F(1+r)^{\eta(\bar p-1)} \right]
\end{array}
\end{equation}
Where $C_4$ also depends on $\tau$. Up to increasing $\bar p$, a change that does not alter the validity of \eqref{assu_WPC}, we can suppose that $\bar p \ge p$. On $(B_{R}\backslash B_{\theta R})\cap \Omega_\gamma$, \eqref{def_v} and $\sigma-\eta >0$, $\eta < 0$ give
$$
F(v,r) \le \exp\left(-\tau(\alpha-\beta)(\delta R)^{\sigma-\eta}\right), \qquad (1+r)^{\eta(\bar p-1)} \le (1+r)^{\eta(p-1)} \le 1,
$$
Inserting into \eqref{quasifinale_0} and using $R^{\bar p} \ge R^p$, we eventually get
\begin{equation}\label{quasifinale_2}
0< \disp \int_{\Omega_\gamma \cap B_{R_0}}\lambda F(1+r)^{-\eta}\varphi(|\gru|)|\gru| \le  \frac{C_5}{R^p}\exp\left(-\tau(\alpha-\beta)(\delta R)^{\sigma-\eta}\right)\vol(B_{R}).
\end{equation}
Because of \eqref{volgrowth} and \eqref{ine_eta}, for each $d>d_0$ there exists a sequence $\{R_k\} \uparrow \infty$ such that
$$
\vol(B_{R_k}) \le \exp\left\{d R_k^{\sigma -\eta}\right\}.
$$
Substituting into \eqref{quasifinale_2} and letting $k \ra \infty$,
\begin{equation}\label{quasifinale_22}
\begin{array}{lcl}
0 & < & \disp \int_{\Omega_\gamma \cap B_{R_0}}\lambda F(1+r)^{-\eta}\varphi(|\gru|)|\gru| \\[0.5cm]
& \le & \disp C_5 \limsup_{k \ra \infty} \left(\frac{\exp\left\{-\tau(\alpha-\beta)\delta^ {\sigma-\eta}R_k^{\sigma-\eta}+ dR_k^ {\sigma-\eta}\right\}}{R_k^p}\right).
\end{array}
\end{equation}
Being the left-hand side of the above inequality strictly bigger than zero, we deduce that necessarily $d \ge \tau(\alpha-\beta)\delta^{\sigma-\eta}$, and letting $\delta \ra 1$ and $d \downarrow d_0$. we get
\begin{equation}
d_0 \ge \tau(\alpha-\beta)
\end{equation}
Substituting the expression of $\tau$ in \eqref{uff}, setting $\alpha =t\beta$ with $t>1$ and letting $\theta \ra 1$ we deduce
$$
\begin{array}{ll}
\disp K \le d_0\frac{\chi^ \chi}{(\chi+1)^{\chi+1}} (\sigma-\eta)^{\chi+1}\frac{t^{\chi+1}}{t-1} \beta^\chi & \quad \text{if } \, \chi>0; \\[0.4cm]
\disp K \le d_0(\sigma-\eta)\frac{t}{t-1} & \quad \text{if } \, \chi=0. \\[0.4cm]
\end{array}
$$
Minimizing with respect to $t \in (1,\infty)$ and letting $\beta \downarrow \hat u$, we eventually get
\begin{equation}\label{fineprimo}
\begin{array}{ll}
K \le d_0(\sigma-\eta)^{\chi+1} \hat u^\chi = d_0\big[\zeta - \chi \sigma\big]^{\chi+1} \hat u^\chi & \quad \text{if } \, \chi>0;\\[0.2cm]
K \le d_0(\sigma-\eta) = d_0(1-\mu) & \quad \text{if } \, \chi =0,\\[0.2cm]
\end{array}
\end{equation}
as claimed. We conclude by investigating part of case $(i)$, that is, when
$$
\chi>0, \quad \sigma=0< \zeta.
$$
Observe that a downward translation $u_s$ of $u$ still satisfies \eqref{ineq_superlevel} with the same constant $K$ (without loss of generality, we can suppose $\gamma=0$). Hence, by \eqref{fineprimo},
\begin{equation}\label{bonita}
K \le H \hat u_s^\chi \qquad \text{with } \quad H \le d_0\big[\zeta-\chi\sigma\big]^{\chi+1}.
\end{equation}
If $K>0$, since $u$ is bounded above and $\chi>0$ we can choose $u_s^\chi$ positive and small enough to contradict \eqref{bonita}. Hence, necessarily $K=0$, and a-posteriori we can choose $H=0$ in \eqref{casi_C} as required. At the end of the present proof, with the same trick we investigate the remaining case of $(i)$, that is, when $\sigma = \zeta = 0$. Note that the trick is not possible if $\chi=0$, being $u_s^\chi  \equiv 1$. \\[0.2cm]

\noindent \textbf{Analysis of case $(iii)$, and case $(iv)$ for $\sigma \ge 1-\mu$:}
$$
\chi>0, \quad \frac{\chi}{\chi+1} \zeta \le \chi\sigma < \zeta, \qquad \text{or} \qquad \chi=0, \quad \sigma \ge 1-\mu >0.
$$
Again from the definition of $\eta$, these cases correspond to the range
$$
0 \le \eta < \sigma,
$$
for which we choose
\begin{equation}\label{casomudado}
F(v,r) = \exp\left\{ - \tau v^{\frac{\sigma-\eta}{\sigma}}\right\}.
\end{equation}
Also in these cases, we increase $\bar p$ in order for $\bar p \ge p$ to hold, since ultimately the size of $\bar p$ will not affect the conclusion of the theorem. Performing computations analogous to the ones giving $(ii)$, we obtain the desired estimate
$$
\begin{array}{ll}
\disp K \le d_0\sigma^\chi(\sigma-\eta) \hat u^\chi = d_0\sigma^\chi\big[\zeta-\chi\sigma\big] \hat u^\chi & \quad \text{if } \, \chi>0; \\[0.3cm]
\disp K \le d_0(\sigma-\eta) = d_0(1-\mu) & \quad \text{if } \, \chi=0. \\[0.3cm]
\end{array}
$$
\vspace{0.2cm}

\noindent \textbf{Analysis of case $(v)$, and case $(i)$ for $\zeta = 0$:}
$$
\chi\sigma= \zeta > 0, \quad \text{or} \quad \chi\sigma = \zeta =0.
$$
In this case, by \eqref{def_eta} it holds $\sigma-\mu=(\sigma-1)(\chi+1)$. We choose
\begin{equation}
F(v,r) = v^{-\tau},
\end{equation}
$\tau>0$ to be determined. Then, using \eqref{def_v},
\begin{equation}\label{lowbounds_Bpol}
\begin{array}{lcl}
B(x,u) & \ge & \disp \frac{K}{\tau}(1+r)^{-\mu}v + |\gru|^{\chi+1} - |\gru|^{\chi}\alpha\sigma(1+r)^{\sigma-1} \\[0.5cm]
 & \ge & \disp \frac{K(\alpha-\beta)}{\tau}(1+r)^{\sigma-\mu} + |\gru|^{\chi+1} - |\gru|^{\chi}\alpha\sigma(1+r)^{\sigma-1},
\end{array}
\end{equation}
and \eqref{ine_lambda} applied with
$$
P = \frac{K(\alpha-\beta)}{\tau}(1+r)^{\sigma-\mu}, \qquad Q = \alpha\sigma(1+r)^{\sigma-1}
$$
implies the identities
\begin{equation}\label{uff_poli}
\begin{array}{ll}
\disp \frac{(\alpha\sigma)^{\chi+1}\tau}{K(\alpha-\beta)} = \theta^\chi \frac{(\chi+1)^{\chi+1}}{\chi^\chi} & \quad \text{if } \, \chi>0; \\[0.4cm]
\disp \frac{\alpha\sigma\tau}{K(\alpha-\beta)} = 1 & \quad \text{if } \, \chi=0.
\end{array}
\end{equation}
Having specified $\tau$ to satisfy \eqref{uff_poli}, \eqref{lower_boundB} is met and \eqref{quasifinale} with $\delta =3/4$ now reads
\begin{equation}\label{quasifinale_5}
0 < \disp \int_{\Omega_\gamma \cap B_{R_0}} \lambda \left|F_v\right| \varphi(|\gru|)|\gru| \le \disp C_4 \int_{(B_{R}\backslash B_{3R/4})\cap \Omega_\gamma} \frac{v^{p-1-\tau}}{R^p} + \frac{v^{\bar p-1-\tau}}{R^{\bar p}}.
\end{equation}
Estimating $v$ with the aid of \eqref{def_v}, choosing the upper or lower bound according to the sign of $p-1-\tau$ and $\bar p-1-\tau$, the right-hand side is bounded from above by
$$
C_5 \vol(B_R)\Big(R^{-p+ \sigma(p-1-\tau)}+ R^{-\bar p+ \sigma(\bar p-1-\tau)}\Big),
$$
for a suitable constant $C_5>0$. Because of $\eqref{volgrowth}$, for $d>d_0$ we consider a sequence $\{R_k\}$ for which $\vol(B_{R_k}) \le R_k^d$. Evaluating \eqref{quasifinale} on $R_k$, and letting $k \ra \infty$ in \eqref{quasifinale_5} and then $d \downarrow d_0$, we deduce that necessarily
$$
d_0 \ge \min \big\{p - \sigma(p-1-\tau), \bar p - \sigma(\bar p-1-\tau)\big\},
$$
that is, by \eqref{def_zetadstar},
\begin{equation}\label{finla}
\tau \sigma \le d_0- \min\Big\{p-\sigma(p-1), \bar p - \sigma(\bar p -1)\Big\} = d_0 - d^*.
\end{equation}

\begin{itemize}
\item[(i)] If $d_0 <d^*$ or $d_0 = d^*$ and $\sigma >0$, then there exists no $\tau>0$ satisfying \eqref{finla}. Thus, $K>0$ leads to a contradiction, and we can therefore choose $H=0$. This proves $(v)$ for $d_0< d_*$ and for $d_0 = d^*$, $\sigma >0$, as well as $(i)$ for $\chi>0$, $\sigma=\zeta=0$, $d_0<d^*$.
\item[(ii)] If $d_0 \ge d^*$ then, inserting the expression of $\tau$ obtained from \eqref{uff_poli} in \eqref{finla}, setting $\alpha = t\beta$ for $t > 1$, solving \eqref{finla} with respect to $K$, letting $\theta \uparrow 1$ and $\beta \downarrow \hat u$ we deduce
$$
\begin{array}{ll}
\disp K \le  \left[d_0-d^*\right] \frac{\chi^\chi}{(\chi+1)^{\chi+1}}\hat u^{\chi}\sigma^{\chi} \frac{t^{\chi+1}}{t-1} & \quad \text{if } \, \chi>0; \\[0.4cm]
\disp K \le  \left[d_0-d^*\right]\frac{t}{t-1} & \quad \text{if } \, \chi=0,
\end{array}
$$
and minimizing over $t \in (1,\infty)$ we get for both $\chi>0$ and $\chi=0$
\begin{equation}\label{allafine}
K \le  \left[d_0-d^*\right]\sigma^\chi\hat u^{\chi}.
\end{equation}
This concludes the cases $d_0> d^*$ and $\sigma>0$, and $d_0 \ge d^*$ and $\chi=0$. To deal with the remaining part of $(i)$, that is, $\chi>0$, $\sigma = \zeta = 0$ and $d_0 \ge d^*$, we can consider a downward translation $u_s$ of $u$ in place of $u$, and $\gamma = 0$. Then, $u_s$ satisfies \eqref{ineq_superlevel} with the same constant $K$, hence \eqref{allafine} holds for each $\hat u_s$. However, from $\chi>0$, $\hat u_s^\chi$ can be made as small as we wish, and since we have assumed $K>0$ this would contradict \eqref{allafine}. Concluding, necessarily $K \le 0$, and $H$ can be chosen to be zero, as required.
\end{itemize}
\end{proof}
We now prove Theorem \ref{teo_main_2}

\begin{proof}[Proof of Theorem \ref{teo_main_2}]
Suppose, by contradiction, that either $u^* = \infty$ or $f(u^*) >0$. Because of the second in \eqref{assum_main_2} and the continuity of $f$, in both of the cases there exists $\gamma < u^*$ sufficiently close to $u^*$ such that $f(t) \ge C>0$ on $(\gamma, \infty)$, for some constant $C>0$. By \eqref{assum_main_2}, $u$ would solve
$$
\Delta_\varphi u \ge K(1+r)^{-\mu}\frac{\varphi(|\gru|)}{|\gru|^{\chi}} \qquad \text{on } \, \Omega_\gamma,
$$
For some $K > 0$. To apply Theorem \ref{teo_maximum}, we shall consider
$$
d^* = \min \Big\{ p- \sigma(p-1), \bar p - \sigma(\bar p-1)\Big\}.
$$
As said before, $p$ (respectively, $\bar p$) in \eqref{assu_WPC} can be reduced (resp. increased) as much as we wish, still keeping the validity of \eqref{assu_WPC}. Therefore,
\begin{itemize}
\item[-] if $\sigma \le 1$, we can increase $\bar p$ up to satisfy $\bar p \ge p$, that gives $d^* = p -\sigma(p-1)$. In particular, $d^*=p$ when $\sigma=0$;
\item[-] if $\sigma>1$, we can reduce $p$ to satisfy $\bar p \ge p$, that now implies $d^* = \bar p -\sigma(\bar p-1)$.
\end{itemize}
The volume growth conditions in the second lines of \eqref{volgrowth_sigmazero} and \eqref{volgrowth_sigmamagzero} can therefore be rewritten as
$$
\liminf_{r \ra \infty} \frac{\log\vol(B_r)}{\log r} \le d^*.
$$
We are in the position to apply Theorem \ref{teo_maximum} and deduce $0 < K \le H \hat u^\chi$. We reach a contradiction by proving that either $H=0$ or $\hat u^\chi=0$. We split into cases:
\begin{itemize}
\item[-] If $\sigma=0$ and $\chi>0$, then we are in case $(i)$ of Theorem \ref{teo_maximum} and \eqref{volgrowth} is satisfied because of \eqref{volgrowth_sigmazero}, thus $H=0$, contradiction.
\item[-] If $\sigma \ge 0$ and $\chi=0$, then we are either in case $(iv)$ or in case $(v)$ of Theorem \ref{teo_maximum}, according to whether $\mu<1$ or $\mu=1$. In case $(iv)$, our growth requirements \eqref{volgrowth_sigmazero} and \eqref{volgrowth_sigmamagzero} for $\chi=0$, $\mu<1$ imply that $d_0=0$ in \eqref{volgrowth}. Applying Theorem \ref{teo_maximum} we get $H \le d_0(1-\mu)=0$, contradiction. In case $(v)$, as said conditions \eqref{volgrowth_sigmazero} and \eqref{volgrowth_sigmamagzero} for $\chi=0$, $\mu=1$ are equivalent to $d_0 \le d^*$. By Theorem \ref{teo_maximum}, $H \le \sigma^\chi(d_0-d^*)_+ = 0$, contradiction.
\item[-] If $\sigma >0$, $\chi>0$, then by \eqref{opequeno} we get $\hat u^\chi = 0$, contradiction.
\end{itemize}
We have thus proved $u^* < \infty$ and $f(u^*) \le 0$. If now $u$ satisfies $(P_=)$ and \eqref{ipo_f_dinuovo} is in force, we can apply the result both to $u$ and to $\bar u = -u$, noting that
$$
\Delta_\varphi \bar u = b(x)\bar f(\bar u)l(|\nabla \bar u|), \qquad \bar f(t) = -f(-t),
$$
and that $\bar f(t) \ge C$ for $t$ large enough. From $\bar f(\bar u^*) \le 0$ we get $f(u_*) \ge 0$,  and \eqref{linda} follows.
\end{proof}

\begin{remark}
\emph{We now check Remark \ref{rem_borderlinepol}. Since the third in \eqref{volgrowth_sigmamagzero} corresponds to case $(v)$ of Theorem \ref{teo_maximum}, if $d_0 \le d^*$ we achieve the contradiction $0< K \le H \hat u^\chi$ irrespectively to the vanishing of $\hat u$, as claimed.
}
\end{remark}

\subsubsection{Bernstein theorems for minimal and MCF soliton graphs}

We now apply Theorem \ref{teo_main_2} to deduce some Bernstein type results for prescribed mean curvature graphs. We consider an ambient space $(\bar M^{m+1}, \metricN)$ with the warped product structure  
\begin{equation}\label{warped_geodesic_sec}
\bar M = \R \times_h M, \qquad \metricN = \di s^2 + h(s)^2 \metric
\end{equation}
for some complete manifold $(M^ m, \metric)$ and some $0< h \in C^\infty(\R)$. Given $v : M \ra \R$, we consider the graph 
$$
\Sigma = \Big\{ (v(x), x) \in \bar M \ \ : \ \ x \in M\Big\}.
$$
In the next theorems we always consider \emph{entire} graphs, that is, graphs defined on all of $M$. As in Subsection \ref{subsec_bernstein} in the Introduction, we let $\Phi_t$ be the flow of the conformal field $X= h(s) \partial_s$, and we note that its flow parameter $t$ starting from the slice $\{s=0\}$ satisfies \eqref{bonitinho}:
\begin{equation}\label{bonitinho_theorem}
t = \int_{0}^s \frac{\di \sigma}{h(\sigma)}, \qquad t : \R \ra t(\R) = I.
\end{equation}
We define $\lambda(t) = h(s(t))$, $u(x) = t(v(x))$, and note that $u : M \ra I$. 

\begin{theorem}\label{teo_bern_minimal_hyper}
Let $(M^m, \metric)$ be a complete manifold, and consider the warped product $\bar M = \R \times_h M$. Suppose that $h$ satisfies:
\begin{equation}\label{ipo_h_bernstein}
\left\{\begin{array}{l}
h'(s)s \ge Cs \qquad \text{for } \, |s| \ge r_0; \\[0.3cm]
h^{-1} \in L^1(+\infty) \cap L^1(-\infty),
\end{array}\right.
\end{equation}
for some constants $C, s_0>0$. If the volume growth of geodesic balls in $M$ satisfies
\begin{equation}\label{ipovol_bern_1}
\liminf_{r \ra \infty} \frac{\log\vol(B_r)}{r^2} < \infty, 
\end{equation}
Then every entire, geodesic minimal graph $\Sigma$ over $M$ is bounded and, letting $v: M \ra \R$ be the graph function, 
\begin{equation}\label{bella_bernstein}
h'(v^*) \le 0 \le h'(v_*).
\end{equation}
In particular, if $h$ is strictly convex, $\Sigma$ is the totally geodesic slice $\{s=s_1\}$, where $s_1$ is the unique minimum of $h$.
\end{theorem}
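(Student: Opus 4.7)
The plan is to reduce the minimal graph equation to an instance of $(P_=)$ to which Theorem \ref{teo_main_2} applies. First, set $u(x) = t(v(x))$ with $t$ as in \eqref{bonitinho_theorem}. Because $h^{-1} \in L^1(-\infty) \cap L^1(+\infty)$, the interval $I = t(\R) = (a,b)$ is bounded, so $u \colon M \to I$ is automatically bounded above and below. Taking $H = 0$ in \eqref{prescribed_geodesic} and using $\lambda_t(t)/\lambda(t) = h'(s(t))$, the minimal graph equation becomes
$$
\Delta_\varphi u = m\,h'(s(u))\,l(|\nabla u|) \qquad \text{on } M,
$$
with $\varphi(t) = t/\sqrt{1+t^2}$ and $l(t) = 1/\sqrt{1+t^2}$, and the key identity $l(t) = \varphi(t)/t$.

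I then verify the hypotheses of Theorem \ref{teo_main_2}: $\varphi$ meets \eqref{assu_WPC} with $p = 2$ and any $\bar p > 1$, since $\varphi(t) \le t$ on $[0,1]$ and $\varphi(t) \le 1 \le t^{\bar p - 1}$ on $[1,\infty)$; the identity $l(t) = \varphi(t)/t$ gives $\chi = 1$; $b(x) \equiv m$ gives $\mu = 0$, so $\mu \le \chi + 1 = 2$; and \eqref{ipovol_bern_1} is precisely the first alternative in \eqref{volgrowth_sigmazero} since $\chi+1-\mu = 2$. To invoke the two-sided conclusion \eqref{linda}, extend $f_0(t) := h'(s(t))$, defined on $I$, to a continuous $f \colon \R \to \R$ that agrees with $f_0$ on $I$ and equals constants $\ge C$ on $[b,\infty)$ and $\le -C$ on $(-\infty,a]$. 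This extension is possible because the hypothesis \eqref{ipo_h_bernstein} yields $h'(s) \ge C$ for $s \ge r_0$ and $h'(s) \le -C$ for $s \le -r_0$, so $f_0$ already takes values $\ge C$ on a left neighbourhood of $b$ and $\le -C$ on a right neighbourhood of $a$. The extended $f$ then satisfies \eqref{ipo_f_dinuovo}, while $u$ still solves $(P_=)$ with this $f$ since the range of $u$ is contained in $I$.

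If $u$ is non-constant, Theorem \ref{teo_main_2} case $(i)$ applied to $(P_=)$ delivers $f(u^*) \le 0 \le f(u_*)$. Since $f \ge C > 0$ on $[b,\infty)$ and $f \le -C < 0$ on $(-\infty,a]$, the extrema $u^*$ and $u_*$ must lie strictly inside $I$; consequently $v^* = s(u^*)$ and $v_* = s(u_*)$ are finite (proving $\Sigma$ bounded), and
$$
h'(v^*) \;=\; f(u^*) \;\le\; 0 \;\le\; f(u_*) \;=\; h'(v_*),
$$
which is \eqref{bella_bernstein}. If instead $u$ is constant, then $\nabla u \equiv 0$ and the transformed equation collapses to $h'(v) \equiv 0$, so \eqref{bella_bernstein} holds trivially. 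Finally, under strict convexity of $h$, the derivative $h'$ is strictly increasing, hence $v_* \le v^*$ combined with \eqref{bella_bernstein} forces $h'(v_*) = h'(v^*) = 0$; then $v_* = v^* = s_1$, the unique minimum of $h$, so $v \equiv s_1$ and $\Sigma$ is the totally geodesic slice $\{s = s_1\}$.

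The main technical point is Step~3, namely extending $f$ to $\R$ while preserving both the equation satisfied by $u$ and the two-sided sign condition \eqref{ipo_f_dinuovo}, and then correctly translating the resulting bounds back from $u$ to $v$ via $v = s \circ u$. Once this bookkeeping is carried out, the PDE transformation, the parameter matching ($p = 2$, $\bar p$ arbitrary, $\chi = 1$, $\mu = 0$), and the identification of the volume threshold $r^{\chi+1-\mu} = r^2$ with \eqref{ipovol_bern_1} are all routine, and Theorem \ref{teo_main_2} does the rest.
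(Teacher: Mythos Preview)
Your proof is correct and follows essentially the same route as the paper's: reduce to $(P_=)$ via the change of variables $u = t(v)$, observe that $u$ is automatically bounded since $I$ is bounded, and invoke Theorem~\ref{teo_main_2} with $\chi=1$, $\mu=0$, $p=2$ to obtain $f(u^*)\le 0\le f(u_*)$, then translate back to $v$. The paper additionally passes through Theorem~\ref{teo_maximum} on the level set $\{u>t(s_0)\}$, but this step is redundant once boundedness of $u$ is already known; your direct appeal to the two-sided conclusion~\eqref{linda} (after extending $f$ continuously off $I$) is cleaner, and your explicit treatment of the constant case and of the extension of $f$ fills in details the paper leaves implicit.
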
 

\begin{proof}
By \eqref{prescribed_geodesic} and the minimality of $\Sigma$, $u$ solves
$$
\diver \left( \frac{\nabla u}{\sqrt{1+|\nabla u|^2}}\right) = m \frac{\lambda_t(u)}{\lambda(u)} \frac{1}{\sqrt{1+|\nabla u|^2}}
$$
The second in \eqref{ipo_h_bernstein} implies that $I$ is a bounded interval, and thus $u : M \ra I$ is bounded. Taking into account that 
\begin{equation}\label{laf_bernstein}
f(t) = \frac{\lambda_t(t)}{\lambda(t)} = h'(s(t)) \ge C \qquad \text{for } \, t \ge t(s_0),
\end{equation}
we get
$$
\diver \left( \frac{\nabla u}{\sqrt{1+|\nabla u|^2}}\right) \ge \frac{mC}{\sqrt{1+|\nabla u|^2}} \qquad \text{on } \, \{u > t(s_0)\}.
$$
If the set $\{u> t(s_0)\}$ were non-empty, we can apply Theorem \ref{teo_maximum} with $\mu=0$, $\chi =1$, $p=\bar p = 2$ and $K = mC$ to obtain a contradiction. Therefore, $u$ is bounded from above and from Theorem \ref{teo_main_2} we deduce $f(u^*) \le 0$. Analogously, considering $-u$ we infer $f(u_*) \ge 0$, and thus \eqref{bella_bernstein} follows by changing variables. Note that, because of the first in  \eqref{ipo_h_bernstein}, \eqref{bella_bernstein} implies the boundedness of $v$. If $h$ is strictly convex, then by \eqref{ipo_h_bernstein} it has a unique stationary point (a minimum) $s_1$, and \eqref{bella_bernstein} gives $v^* \le s_1 \le v_*$, that is, $v \equiv s_1$. We remark that each slice $\{s = s_2\}$ is totally umbilical in $\bar M$, with second fundamental form $h'(s_2)/h(s_2) \metric$ in the direction $-\partial_s$. 
\end{proof}

\begin{theorem}\label{teo_bern_minimal_horo}
Let $\bar M = \R \times_h M$ be as above, and suppose that $h$ satisfies:
\begin{equation}\label{ipo_h_bernstein_2}
\left\{\begin{array}{l}
h'>0 \quad \text{on } \, \R, \\[0.3cm]
h'(s) \ge C \qquad \text{for } \, s \ge s_0; \\[0.3cm]
h^{-1} \in L^1(+\infty),
\end{array}\right.
\end{equation}
for some constants $C, s_0>0$. If $M$ is complete and the volume growth of geodesic balls in $M$ satisfies
\begin{equation}\label{ipovol_bern_1}
\liminf_{r \ra \infty} \frac{\log\vol(B_r)}{r^2} < \infty, 
\end{equation}
Then there are no entire, geodesic minimal graphs over $M$.
\end{theorem}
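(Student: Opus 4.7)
The plan is to reduce the statement to Theorem \ref{teo_maximum}, following the same blueprint as in the proof of Theorem \ref{teo_bern_minimal_hyper}. I would argue by contradiction, assuming the existence of an entire minimal graph $\Sigma$ with graph function $v: M \to \R$, and set $u(x) = t(v(x))$ with $t$ as in \eqref{bonitinho_theorem}. The integrability $h^{-1} \in L^1(+\infty)$ forces $t_+ := \int_0^\infty h(\sigma)^{-1}\di\sigma < \infty$, so $u$ takes values in $(-\infty, t_+)$ and is automatically bounded above. By \eqref{prescribed_geodesic} with $H=0$,
$$
\diver\left(\frac{\nabla u}{\sqrt{1+|\nabla u|^2}}\right) = m\,\frac{h'(s(u))}{\sqrt{1+|\nabla u|^2}} \qquad \text{on } M.
$$
The case $u$ constant is immediately ruled out, since the identity above would force $h'(s(u))\equiv 0$, contradicting the first assumption in \eqref{ipo_h_bernstein_2}.

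Next, for $u$ non-constant I would pick any $\gamma \in (u_*, u^*) \subset (-\infty, t_+)$, so that $\Omega_\gamma := \{u > \gamma\}$ is non-empty, and verify that $h'(s(u)) \ge c > 0$ on $\Omega_\gamma$. This is the key quantitative step: by continuity and strict positivity, $h'$ attains a positive minimum on the compact interval $[s(\gamma), s_0]$, while on $[s_0, \infty)$ the bound $h' \ge C$ from \eqref{ipo_h_bernstein_2} applies, so taking $c$ to be the smaller of the two constants yields the claim regardless of whether $u^*$ is attained or $u^* = t_+$. Consequently, on $\Omega_\gamma$ $u$ satisfies
$$
\Delta_\varphi u \ge K\,\frac{\varphi(|\nabla u|)}{|\nabla u|}, \qquad \varphi(t) = \frac{t}{\sqrt{1+t^2}}, \quad K = mc > 0,
$$
which is exactly \eqref{ineq_superlevel} with $\mu = 0$, $\chi = 1$.

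Finally, I would apply Theorem \ref{teo_maximum} with the choice $\sigma = 0$, $\chi = 1$, $\mu = 0$, $p = 2$ and any $\bar p \in (1, 2]$; the growth conditions \eqref{assu_WPC} for $\varphi(t) = t/\sqrt{1+t^2}$ are trivially met. Since $u$ is bounded above, $\hat u < \infty$, and the identity $\chi + 1 - \mu - \chi\sigma = 2$ reduces the first alternative of \eqref{volgrowth} to exactly the hypothesis \eqref{ipovol_bern_1}. Being in case $(i)$ of \eqref{casi_C} ($\chi > 0$, $\sigma = 0$, $\zeta = 2 > 0$), we obtain $H = 0$ and thus $K \le H \hat u^\chi = 0$, contradicting $K = mc > 0$. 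The only delicate point in the argument is the uniform positive lower bound for $h'\circ s \circ u$ on $\Omega_\gamma$, secured by splitting according to whether $s(u) \le s_0$ or $s(u) \ge s_0$ and combining continuity of $h'$ with the two-sided positivity assumptions in \eqref{ipo_h_bernstein_2}.
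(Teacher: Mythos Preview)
Your proof is correct and follows essentially the same approach as the paper's. The paper's proof is a one-line reference back to the argument for Theorem~\ref{teo_bern_minimal_hyper}, invoking Theorems~\ref{teo_maximum} and~\ref{teo_main_2} to conclude $h'(v^*)\le 0$ and thus contradict $h'>0$; you unpack this by going directly to Theorem~\ref{teo_maximum} and handling the uniform lower bound $h'(s(u))\ge c>0$ on $\Omega_\gamma$ via a single compactness-plus-assumption argument, which is a perfectly clean variant of the same idea.
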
 

\begin{proof}
The proof is analogous. By the third in \eqref{ipo_h_bernstein_2}, $u : M \ra I$ is bounded from above, hence applying Theorems \ref{teo_maximum} and \ref{teo_main_2} we get $h'(v^ *) \le 0$, contradicting the first in \eqref{ipo_h_bernstein_2}.
\end{proof}

\begin{proof}[Proof of Theorem \ref{teo_bern_minimal_intro}] It is immediate from Theorems \ref{teo_bern_minimal_hyper} and \ref{teo_bern_minimal_horo}, respectively for cases $(i)$ and $(ii)$. Note that, in $(i)$, the strict convexity of $h$ and $h^{-1} \in L^1(-\infty) \cap L^1(+\infty)$ imply that $sh'(s) \ge Cs$ for $|s|$ large enough.
\end{proof}

\begin{remark}
\emph{As a direct corollary we deduce Do Carmo-Lawson's result \cite{docarmolawson} in the case $H=0$, Theorem \ref{teo_docarmolawson} in the Introduction: there are no geodesic, entire minimal graphs over horospheres, and the only geodesic, entire minimal graph over a totally geodesic hypersphere $M$ is $M$ itself. To see the first claim, apply Theorem \ref{teo_bern_minimal_horo} to the warped product $\HH^{m+1} = \R \times_{e^{s}} \R^m$, and note that $h(s) = e^s$ satisfies all the requirements in \eqref{ipo_h_bernstein_2}. For the second claim, apply Theorem \ref{teo_bern_minimal_hyper} to $\HH^{m+1} = \R \times_{\cosh s} \HH^m$. 
}
\end{remark}

The case of non-constant mean curvature will be investigated in Section \ref{sec_SMP}. Now, we focus on MCF solitons, starting with product ambient manifolds. Our first result is for self-translators, Theorem \ref{teo_soliton_intro} in the Introduction:


\begin{proof}[Proof of Theorem \ref{teo_soliton_intro}]
In the product case, $h(s) =1$ and thus $t=s$, $\lambda(t) = 1$, $u(x) = t(v(x)) = v(x)$. By \eqref{soliton_geodesic}, a soliton for the field $\partial_s$ satisfies the equation 
$$
\diver\left( \frac{\nabla u}{\sqrt{1+|\nabla u|^2}} \right) = \frac{1}{\sqrt{1+|\nabla u|^2}} \qquad \text{on } \, M.
$$
We apply the first part of Theorem \ref{teo_main_2} with the choices $f(t) \equiv 1$, $b \equiv 1$, $\chi = 1$, $\mu = 0$, $p=\bar p=2$: since the first two in \eqref{volgrowth_sigmamagzero} correspond to \eqref{ipo_volume_soliton_intro}, we infer from Theorem \ref{teo_main_2} that $u$ is bounded from above and $f(u^*) \le 0$, contradiction. 
\end{proof}

We next examine more closely the case of self-translators in Euclidean space, in particular the case when the translation vector field $Y$ differs from the vertical field $\partial_s$. 

\begin{theorem}\label{teo_soliton_Rm}
Let $(\R^{m+1}, \metricN)  = \R \times \R^m$ with coordinates $(s,x)$, and let $\Sigma = \{ (v(x),x) : x \in \R^m\}$ be an entire graph. Assume that
\begin{equation}\label{ipo_soliton_rm}
\limsup_{r(x) \ra \infty} \frac{|v(x)|}{r(x)} = \hat v < \infty
\end{equation}
Then, $\Sigma$ cannot be a self-translator with respect to any vector $Y$ whose angle $\vartheta \in (0, \pi/2)$ with the horizontal hyperplane $\R^m$ satisfies
\begin{equation}\label{strano_soliton}
\tan \vartheta > \hat v.
\end{equation}
In particular, if $\hat v = 0$, $\Sigma$ cannot be a self-translator with respect to a vector $Y$ which is not tangent to the horizontal $\R^m$.
\end{theorem}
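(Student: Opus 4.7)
The natural strategy is to reduce to the vertical translator case, already treated in Theorem \ref{teo_soliton_intro}, by an affine change of coordinates in $\R^{m+1}$ that makes $Y$ the new vertical direction. I would proceed in the following steps.

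\textbf{Step 1 (Setup and PDE).} After rescaling, assume $|Y|=1$, so $a=\sin\vartheta$, $|Z|=\cos\vartheta$, and put $e=Z/|Z|\in\R^m$. The self-translator identity $\vec H=Y^\perp$, evaluated against the upward unit normal $\nu=(\partial_s-\nabla v)/\sqrt{1+|\nabla v|^2}$, gives
\begin{equation}\label{eq_trans_Rm}
\diver\!\left(\frac{\nabla v}{\sqrt{1+|\nabla v|^2}}\right)=\frac{a-\langle Z,\nabla v\rangle}{\sqrt{1+|\nabla v|^2}}\qquad\text{on }\R^m.
\end{equation}
Introduce on $\R^{m+1}$ the orthonormal coordinates $(s',t',y')$, with $s'$ along $Y$, $t'$ along $e':=\cos\vartheta\,\partial_s-\sin\vartheta\,e$, and $y'\in e^\perp$. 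In these coordinates $Y=\partial_{s'}$, so any graph over the new horizontal hyperplane $Y^\perp$ which is a self-translator for $Y$ solves the vertical translator equation treated in Theorem \ref{teo_soliton_intro}.

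\textbf{Step 2 (Graphicality over $Y^\perp$).} In the tilted coordinates the original graph $\Sigma$ is the image of the map $(t,y)\mapsto(s'(t,y),t'(t,y),y)$ with
$$
s'(t,y)=v(te+y)\sin\vartheta+t\cos\vartheta,\qquad t'(t,y)=v(te+y)\cos\vartheta-t\sin\vartheta.
$$
I would show that $(t,y)\mapsto(t'(t,y),y)$ is a global homeomorphism onto $Y^\perp$, so that $\Sigma$ becomes the graph of a function $\tilde v(t',y')$ over $Y^\perp$. Surjectivity for each fixed $y$ is immediate from the intermediate value theorem: using $|v(x)|\le(\hat v+\varepsilon)r(x)$ for $r(x)\gg 1$ and $\tan\vartheta>\hat v+\varepsilon$, the function $t\mapsto t'(t,y)$ has eventual slopes of definite opposite signs and therefore covers all of $\R$. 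The crux is injectivity, which is equivalent to the pointwise inequality
\begin{equation}\label{eq_pointwise_grad}
\partial_e v(x)<\tan\vartheta\qquad\text{for all }x\in\R^m,
\end{equation}
since $\partial_t t'=\cos\vartheta\,\partial_e v-\sin\vartheta$.

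\textbf{Step 3 (Conclusion via Theorem \ref{teo_soliton_intro}).} Assuming \eqref{eq_pointwise_grad}, the relation $t'=v\cos\vartheta-t\sin\vartheta$ is solvable for $t$, giving $t=(v\cos\vartheta-t')/\sin\vartheta$; combining with $|v|\le(\hat v+\varepsilon)\sqrt{t^2+|y|^2}$ and using $(\hat v+\varepsilon)\cot\vartheta<1$ one deduces a linear bound $|t|\le C(|t'|+|y|)$, and hence $|\tilde v(t',y')|=O(\sqrt{t'^2+|y'|^2})$ on $Y^\perp\cong\R^m$. Since $\partial_{s'}=Y$ is the new vertical, $\tilde v$ solves the equation \eqref{soliton_geodesic} with $\lambda\equiv 1$ and $H=0$, i.e.\ the vertical translator equation. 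The polynomial volume growth of $\R^m$ gives $\liminf_{r\to\infty}\log\vol(B_r)/r^{2-\sigma}<\infty$ for every $\sigma\in(1,2)$, and the linear bound on $\tilde v$ is $o(r^\sigma)$ for any such $\sigma$. Theorem \ref{teo_soliton_intro} then rules out $\tilde v$, producing the required contradiction.

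\textbf{Main obstacle.} The only non-routine point is the pointwise bound \eqref{eq_pointwise_grad}: the growth hypothesis is radial and global, and does not obviously upgrade to a pointwise gradient bound. The plan is to derive it from a comparison argument between $\Sigma$ and the family of tilted hyperplanes $\Pi_c=\{s=\tan\vartheta\,\langle e,x\rangle+c\}$, which are themselves translators for $Y$ (since $Y$ is tangent to each $\Pi_c$). The condition $\hat v<\tan\vartheta$ forces $\Sigma$ and $\Pi_c$ to have opposite asymptotic behaviour at $\pm\infty$ along $e$, and the Serrin-type quasilinear comparison (Proposition \ref{prop_serrin}) applied to the common translator PDE should prevent $\Sigma$ from touching $\Pi_c$ tangentially, which is precisely the content of \eqref{eq_pointwise_grad}. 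Equivalently, \eqref{eq_pointwise_grad} can be read as $\langle Y,\nu\rangle>0$ on $\Sigma$, and on a self-translator $\langle Y,\nu\rangle$ satisfies a linear elliptic Jacobi-type equation to which a strong maximum principle applies, provided one sign can be established at infinity from the growth hypothesis. Either route gives the missing injectivity and closes the argument.
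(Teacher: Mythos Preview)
Your reduction strategy is different from the paper's, and the obstacle you flag is real and, as stated, unresolved. The pointwise bound \eqref{eq_pointwise_grad} does not follow from the radial growth hypothesis: there are smooth functions $v$ with $|v(x)|/r(x)\to\hat v<\tan\vartheta$ whose directional derivative $\partial_e v$ is unbounded. So the bound must come from the PDE, and both routes you sketch ultimately require a maximum principle on the full noncompact domain. The comparison with the tilted planes $\Pi_c$ is not covered by Proposition~\ref{prop_serrin}, which needs a boundary on which $u\le v$; on all of $\R^m$ the right substitute is exactly a weak maximum principle at infinity, i.e.\ the machinery of Theorems~\ref{teo_maximum} and~\ref{teo_main_2}. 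Similarly, the Jacobi-field argument for $\langle Y,\nu\rangle$ gives a linear elliptic equation on the noncompact $\Sigma$, and extracting a global sign from asymptotic information again needs an Omori--Yau type principle on $\Sigma$, whose intrinsic geometry you have not controlled. In short, closing Step~2 seems to demand precisely the tools that make the detour unnecessary.

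The paper bypasses the rotation entirely. Starting from your equation \eqref{eq_trans_Rm}, it absorbs the drift term $-\langle Z,\nabla v\rangle/\sqrt{1+|\nabla v|^2}$ into the operator by writing
\[
e^{-x_1\cos\vartheta}\,\diver\!\Big(e^{x_1\cos\vartheta}\,\frac{\nabla v}{\sqrt{1+|\nabla v|^2}}\Big)=\frac{\sin\vartheta}{\sqrt{1+|\nabla v|^2}},
\]
and then applies Theorem~\ref{teo_maximum} (in its weighted-volume version) with $\chi=1$, $\mu=0$, $\sigma=1$, $K=\sin\vartheta$. The weighted volume of $B_r$ satisfies $\log\vol_{e^{x_1\cos\vartheta}}(B_r)/r\to\cos\vartheta$, so case~(iii) of \eqref{casi_C} yields $\sin\vartheta\le(\cos\vartheta)\hat v$, contradicting \eqref{strano_soliton}. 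No graphicality over $Y^\perp$, and hence no gradient bound, is needed.
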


\begin{proof}
If we reflect $\Sigma$ with respect to the horizontal hyperplane, the reflected graph is a self-translator with respect to the reflection of $Y$. Therefore, without loss of generality we can assume that $(Y, \partial_t) >0$, and that $Y$ has unit norm, by time rescaling. Moreover, $Y \neq \partial_t$ since $\vartheta < \pi/2$. Therefore, up to a rotation of coordinates on $\R^m$, $Y = \cos \vartheta e_1 + \sin \vartheta \partial_t$, where $e_1$ is the gradient of the first coordinate function $x_1$ and $\sin \vartheta >0$. In view of \eqref{normal_geodesic} and \eqref{prescribed_geodesic}, and since $h=1$, $t=s$, the soliton equation $m H = (Y, \nu)$ satisfied by $u(x) = t(v(x)) = v(x)$ reads
$$
\diver \left( \frac{\nabla u}{\sqrt{1+|\nabla u|^2}} \right) = mH = \frac{1}{\sqrt{1+|\nabla u|^2}} \left[ \sin \vartheta - \cos \vartheta \langle \nabla u, e_1 \rangle \right], 
$$
Rearranging,  
$$
\diver \left( \frac{\nabla u}{\sqrt{1+|\nabla u|^2}} \right) + \left\langle \frac{\nabla u}{\sqrt{1+|\nabla u|^2}}, \nabla (x_1\cos \vartheta) \right\rangle = \frac{\sin \vartheta}{\sqrt{1+|\nabla u|^2}}, 
$$
that we rewrite as
\begin{equation}\label{eq_soli_Rm}
e^{-x_1 \cos \vartheta} \diver \left( e^ {x_1 \cos \vartheta} \frac{\nabla u}{\sqrt{1+|\nabla u|^2}} \right) = \frac{\sin \vartheta}{\sqrt{1+|\nabla u|^2}}.
\end{equation}
The operator in the left-hand side is in divergence form if we consider the weighted volume measure $e^{x_1 \cos \vartheta} \di x$, with $\di x$ the Euclidean volume. For these weighted operators, the proof of Theorems \ref{teo_maximum} and \ref{teo_main_2} follow verbatim by replacing the Euclidean volume with the weighted volume
$$
\vol_{x_1 \cos \vartheta}(B_r) = \int_{B_r} e^{x_1 \cos\vartheta} \di x.
$$
Explicit computation gives
\begin{equation}\label{weightedvol}
\liminf_{r \ra \infty} \frac{\log\vol_{x_1 \cos\theta}(B_r)}{r} = \cos \vartheta < \infty. 
\end{equation}
Suppose by contradiction that \eqref{strano_soliton} holds. In particular, $u$ is non-constant. We apply Theorem \ref{teo_maximum} to \eqref{eq_soli_Rm} with the choices 
$$
K = \sin \vartheta, \ \ \sigma = 1, \ \ \chi = 1, \ \ \mu = 0, \ \ d_0 = \cos \vartheta, \ \ p=\bar p = 2,
$$
to conclude from case $(iii)$ in \eqref{casi_C} that $\sin \vartheta \le (\cos \vartheta) \hat u$. Since $u=v$, we eventually contradict \eqref{strano_soliton}.
\end{proof}

\begin{remark}\label{rem_translator}
\emph{The requirement $\vartheta >0$, that is, $(Y, \partial_t) \neq 0$, is essential in the above theorem because otherwise the slices $\{s=\mathrm{const}\}$ are trivial self-translators. Furthermore, condition \eqref{strano_soliton} is sharp. Indeed, a totally geodesic hyperplane $\Sigma$ corresponding to an affine, non-constant solution $v$ is a self-translator with respect to each vector $Y \in T\Sigma$, and in this case $\tan \vartheta = \hat v$.
}
\end{remark}

\begin{remark}
\emph{A result related to Theorem \ref{teo_soliton_Rm} appears in \cite{baoshi}, where the authors proved that there exist no nontrivial complete self-translator (i.e. not a hyperplane) whose Gauss image lies in a geodesic ball of $\Sph^{m}$ of radius $< \pi/2$. Their assumption implies that $\Sigma$ is a graph with respect to the plane orthogonal to the center of the ball (not necessary the direction of translation), and the graph function $v$ has bounded gradient\footnote{In fact, they also require that $H$ be bounded, but this automatically follows from the self-translator equation $\overrightarrow{H}= Y^\perp$, being $Y$ constant.}. Our requirements \eqref{ipo_soliton_rm} and \eqref{strano_soliton} seem to be skew with their one. However, it might be possible that a suitable gradient estimate guarantees that a self-translator satisfying \eqref{ipo_soliton_rm} has automatically bounded gradient. If this is the case, the main result in \cite{baoshi} would imply that the graph $\Sigma$ of $\hat v$ be a hyperplane, which would prove Theorem \ref{teo_soliton_Rm} in view of Remark \ref{rem_translator}.
}
\end{remark}

Our last application is for entire graphs $\Sigma^m \ra \R^{m+1}$ which are self-expanders for the mean curvature flow, that is, they move under MCF along the integral curves of the position vector field 
$$
Y(\bar x) = (\bar x^j-q^j) \partial_j = \frac{1}{2} \bar \nabla |\bar x-q|^2 \qquad \forall \, \bar x \in \R^{m+1}, 
$$
for some fixed origin $q \in \R^{m+1}$.  

\begin{theorem}\label{teo_soliton_expa} 
Let $\Sigma = \{ (v(x),x) : x \in \R^m\}$ be an entire graph in $(\R^{m+1}, \metricN)$. If $\Sigma$ is a self-expander for the MCF and $v$ is bounded, then $\Sigma$ is a hyperplane.
\end{theorem}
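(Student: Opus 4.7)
The plan is to reduce the statement to the equation case of Theorem \ref{teo_main_2} applied in the weighted Gaussian-type setting $(\R^m, e^{|x|^2/4}\di x)$, in direct analogy with the argument used in the proof of Theorem \ref{teo_soliton_Rm} for self-translators.

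First, by translating the origin of $\R^{m+1}$, without loss of generality I may assume the self-expander is centered at $q=0$: a vertical translation replaces $v$ by $v-q_0$ (still bounded), and a horizontal translation of $\R^m$ does not alter the bounded-entire-graph property. A direct computation of the upward normal $\nu = (1+|\nabla v|^2)^{-1/2}(\partial_s - \nabla v)$ and of $\bar x^\perp = \langle \bar x, \nu\rangle\,\nu$ at the points $\bar x = (v(x),x)$ of $\Sigma$, combined with the self-expander identity $\overrightarrow{H} = \tfrac12 \bar x^\perp$, shows that
\[
\diver\left(\frac{\nabla v}{\sqrt{1+|\nabla v|^2}}\right) = \frac{v - \langle x, \nabla v\rangle}{2\sqrt{1+|\nabla v|^2}} \qquad \text{on } \R^m.
\]
Exploiting $\nabla(|x|^2/4) = x/2$, I would then rewrite this in weighted-divergence form as
\[
e^{-|x|^2/4}\diver\left(e^{|x|^2/4}\frac{\nabla v}{\sqrt{1+|\nabla v|^2}}\right) = \frac{v}{2\sqrt{1+|\nabla v|^2}}.
\]

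This is of the form $(P_=)$ for the weighted mean-curvature operator with structural data
\[
\varphi(t) = \frac{t}{\sqrt{1+t^2}}, \qquad b \equiv \tfrac12, \qquad f(v) = v, \qquad l(t) = \frac{1}{\sqrt{1+t^2}},
\]
and the hypotheses of Theorem \ref{teo_main_2} are all immediate: \eqref{assu_WPC} holds with $p=2$ and any $\bar p>1$, the identity $l(t) = \varphi(t)/t$ yields $\chi=1$, the constancy of $b$ gives $\mu=0$, while $f(t)\ge \tfrac12$ for $t\ge 1$ and $tf(t)=t^2\ge |t|$ for $|t|\ge 1$. A routine computation using Laplace's method gives
\[
\int_{B_r} e^{|x|^2/4}\di x \;\asymp\; r^{m-2}\,e^{r^2/4} \qquad \text{as } r\to \infty,
\]
whence
\[
\liminf_{r\to\infty} \frac{1}{r^2}\log\int_{B_r} e^{|x|^2/4}\di x = \frac{1}{4} < \infty,
\]
which is precisely condition \eqref{volgrowth_sigmazero}(i) with $\chi+1-\mu = 2$; since $v$ is bounded, it applies equally to $v_+$ and to $v_-$.

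As already remarked in the proof of Theorem \ref{teo_soliton_Rm}, the test-function arguments and integral estimates \eqref{basepoint}--\eqref{quasifinale_2} behind Theorems \ref{teo_maximum} and \ref{teo_main_2} go through verbatim for operators in weighted-divergence form, once $\vol(B_r)$ is systematically replaced by the weighted volume above. If $v$ were non-constant, this extension of the equation case of Theorem \ref{teo_main_2} would yield $f(v^*) \le 0 \le f(v_*)$, that is $v^*\le 0 \le v_*$, forcing $v \equiv 0$ and contradicting non-constancy; hence $v$ must be constant, and substituting a constant into the self-expander equation forces the constant to be $0$ (equivalently, $v\equiv q_0$ before the initial normalization). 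Thus $\Sigma$ is the horizontal hyperplane $\{s=q_0\}$, as claimed. The only genuinely delicate point I foresee is the careful verification that the integral test-function machinery of Theorem \ref{teo_maximum} survives the passage to the Bakry--Emery operator $e^{-|x|^2/4}\diver(e^{|x|^2/4}\,\cdot\,)$; but this is the same technical observation tacitly used in Theorem \ref{teo_soliton_Rm}, and requires no new geometric input.
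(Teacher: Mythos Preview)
Your argument is correct and follows essentially the same route as the paper: both reduce the self-expander equation to a weighted mean-curvature equation of the form $(P_=)$ and then invoke the weighted version of Theorem~\ref{teo_main_2} with $\chi=1$, $\mu=0$, $f(t)=t$ to force $v\equiv 0$. The only difference is your normalization $\overrightarrow{H}=\tfrac12\bar x^\perp$ versus the paper's $\overrightarrow{H}=Y^\perp$, which leads to the weight $e^{|x|^2/4}$ instead of $e^{\rho^2/2}$; this is immaterial for the argument.
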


\begin{proof}
Without loss of generality, we can assume that the center $q$ of the homothetic field $Y$ is placed at the origin. We let $(s, x) \in \R \times \R^m = \R^{m+1}$ be coordinates on $\R^{m+1}$. If $\rho(x) : \R^m \ra \R$ denotes the distance to the origin in $\R^m$, then
$$
Y\big(v(x),x\big) = x^j \partial_j + v \partial_s = \frac{1}{2} \nabla (\rho^2) + v \partial_s.
$$
In view of \eqref{normal_geodesic} and \eqref{prescribed_geodesic}, and since $h=1$, $t=s$, the soliton equation $m H = (Y, \nu)$ satisfied by $u(x) = t(v(x)) = v(x)$ reads
$$
\diver \left( \frac{\nabla u}{\sqrt{1+|\nabla u|^2}} \right) = mH = \frac{1}{\sqrt{1+|\nabla u|^2}} \left[ u - \langle \nabla u, \nabla \frac{\rho^2}{2} \rangle \right].
$$
Rearranging,  
$$
\diver \left( \frac{\nabla u}{\sqrt{1+|\nabla u|^2}} \right) + \left\langle \frac{\nabla u}{\sqrt{1+|\nabla u|^2}}, \nabla \frac{\rho^2}{2} \right\rangle = \frac{u}{\sqrt{1+|\nabla u|^2}}, 
$$
that we rewrite as 
\begin{equation}\label{eq_soli_Rm}
e^{-\rho^2/2} \diver \left( e^{\rho^2/2} \frac{\nabla u}{\sqrt{1+|\nabla u|^2}} \right) = \frac{u}{\sqrt{1+|\nabla u|^2}}.
\end{equation}
Suppose that $v$ is non-constant. An explicit computation shows that 
$$
\liminf_{r \ra \infty} \frac{\log \vol_{e^{\rho^2/2}}(B_r)}{r^2} < \infty.
$$
Since, by assumption, $v$ is bounded, we can apply Theorem \ref{teo_main_2} (adapted to weighted volumes) with the choices $\chi=1$, $\mu = 0$, $p=\bar p = 2$, $f(t) = t$ to deduce $f(u^*) \le 0$, that is, $u \le 0$. Applying the same theorem to $-u$ we infer $u \equiv 0$, that is, $\Sigma$ is a hyperplane containing the origin of $Y$.
\end{proof}

\begin{remark}
\emph{It is worth to stress that, differently from \cite{eckerhuisken, wang, baoshi}, Theorems \ref{teo_soliton_intro}, \ref{teo_soliton_Rm} and \ref{teo_soliton_expa} \emph{do not use the conformality of the soliton vector field $Y$.} In fact, what is needed to apply our results, possibly with a weighted volume, is just that the field $Y$ can be decomposed as 
$$
Y(s,x) = h^2(s)\nabla^{\Sigma_s} g + \beta(s,x) \partial_s
$$
where $\Sigma_{s_0} = \{s = s_0\}$, $g \in C^1(\R \times M)$ just depends on the second variable, and $\beta \in C(M \times \R)$. 
}
\end{remark}

\subsection{Counterexamples}\label{sec_counter}

To show the sharpness of Theorem \ref{teo_main_2}, we consider a model manifold $M_g^m$ with radial sectional curvature
$$
K_\rad = -\kappa^2\big(1+r^2\big)^{\alpha/2} \qquad \text{for } \, r(x) \ge 1,
$$
for some $\kappa>0$ and $\alpha \ge -2$. By Propositions 2.1 and 2.11 in \cite{prs}, as $r(x) \ra \infty$
\begin{equation}\label{esti_lapla_model}
\Delta r \ge \left\{ \begin{array}{ll}
\disp (m-1)\kappa r^{\alpha/2}(1+o(1)) & \quad \text{if } \, \alpha > -2; \\[0.3cm]
\disp \frac{(m-1)\bar \kappa}{r}(1+o(1)) & \quad \text{if } \, \alpha = -2,
\end{array}\right.
\end{equation}
where $\bar \kappa = (1+\sqrt{1+4\kappa^2})/2$, and thus
\begin{equation}\label{crescitevol_counter}
\log\vol B_r \sim \left\{ \begin{array}{ll} r^{1+ \frac \alpha 2} & \quad \text{if } \, \alpha>-2; \\[0.2cm]
\big[(m-1)\bar\kappa +1\big] \log r & \quad \text{if } \, \alpha = -2,
\end{array}\right.
\end{equation}
as $r \ra \infty$. If $\alpha=-2$, letting $\kappa \ra 0$ we deduce the classical expressions for $\Delta r$ and $\vol(B_r)$ for the Euclidean space. We shortly write the inequalities in \eqref{esti_lapla_model} as $\Delta r \ge \zeta r^{\alpha/2}$, where $\zeta$ tends to $(m-1) \kappa$ (if $\alpha>-2$) or to $(m-1)\bar\kappa $ (if $\alpha =-2$) as $r(x) \ra \infty$. We are going to find an operator $\Delta_\varphi$ meeting \eqref{assumptions} and \eqref{assu_WPC} with the following property: given $\mu \in \R$, $0 \le \chi \le p-1$ and $\sigma>0$, we will construct an unbounded, radial solution $u \in C^\infty(M_g\backslash \{o\})$, increasing as a function of $r$, solving
\begin{equation}\label{aim}
\Delta_\varphi u \ge K(1+r)^{-\mu}\frac{\varphi(|\nabla u|)}{|\nabla u|^\chi}
\end{equation}
if $r$ is large enough (equivalently, for a high enough upper level set). The solution is $u(x) = r(x)^\sigma$ whenever one of the following conditions hold:
\begin{equation}\label{ipo_volume_couter}
\begin{array}{ll}
1) & \quad \chi\sigma > \chi+1-\mu; \\[0.2cm]
2) & \quad \chi \sigma = \chi+1-\mu, \qquad \text{and} \quad \alpha > -2; \\[0.2cm]
3) & \quad \chi \sigma = \chi+1-\mu, \qquad \alpha = -2, \qquad \sigma \in (0,1] \qquad \text{and} \\[0.2cm]
& \disp \lim_{r \ra \infty} \frac{\log\vol(B_r)}{\log r} > p-\sigma(p-1); \\[0.4cm]
4) & \quad \chi \sigma < \chi+1-\mu, \qquad \alpha > -2, \qquad \text{and} \\[0.2cm]
& \disp \lim_{r \ra \infty} \frac{\log\vol(B_r)}{r^{\chi+1-\mu-\chi\sigma}} >0.
\end{array}
\end{equation}
Moreover, the solution is $u(x) = r(x)^\sigma/\log r(x)$ when either
\begin{equation}\label{ipo_volume_couter_2}
\begin{array}{ll}
5) & \quad \chi \sigma< \chi+1-\mu, \qquad \chi>0, \qquad \text{and} \\[0.2cm]
& \disp \lim_{r \ra \infty} \frac{\log\vol(B_r)}{r^{\chi+1-\mu-\chi\sigma}} = \infty, \qquad \text{or}\\[0.4cm]
6) & \quad \chi \sigma< \chi+1-\mu, \qquad \chi = 0, \qquad \text{and} \\[0.2cm]
& \disp \lim_{r \ra \infty} \frac{\log\vol(B_r)}{r^{\chi+1-\mu-\chi\sigma}}  \in (0, \infty).
\end{array}
\end{equation}
In any of $1), \ldots, 6)$, observe that the bound $\mu \le \chi+1$ s not needed. Once we establish \eqref{aim} on $\Omega_\gamma$ for large enough $\gamma>0$, we can choose $f \in C(\R)$ increasing and satisfying $f\equiv 0$ on $(0, \gamma)$, $f=K$ on $(2\gamma, \infty)$, $0 \le f \le K$ on $\R$. By the pasting Lemma, $\bar u = \max\{u,\gamma\}$ is a $\lip_\loc$ solution of
\begin{equation}\label{oookkk}
\Delta_\varphi \bar u \ge (1+r)^{-\mu}f(\bar u)\frac{\varphi(|\nabla \bar u|)}{|\nabla \bar u|^\chi} \qquad \text{on } \, M.
\end{equation}
(here we used $\chi \le p-1$, to guarantee that $\varphi(t)/t^\chi$ does not diverge as $t \ra 0$ and hence that the constant $\gamma$ solves \eqref{oookkk}). The existence of $\bar u$ under any of $1),\ldots, 6)$ above shows the sharpness of the parameter ranges \eqref{pararange_2} and \eqref{ipo_sigma_teomain2}, and of the growth conditions \eqref{opequeno} for $u$ and \eqref{volgrowth_sigmamagzero} for $\vol(B_r)$. In particular,
\begin{itemize}
\item[-] in $(2)$, all the assumptions are satisfied but the third in \eqref{volgrowth_sigmamagzero}, where the liminf is $\infty$, while in $(3)$, the liminf is finite but bigger than the threshold $p-\sigma(p-1)$ for $\sigma \le 1$;
\item[-] in $(4)$ the requirements in the first of \eqref{volgrowth_sigmamagzero} are all met, but $u_+ = O(r^\sigma)$ instead of $u_+ = o(r^\sigma)$.
\item[-] in $(5)$ and $(6)$, $u_+ = o(r^\sigma)$ but the requirements in the first of \eqref{volgrowth_sigmamagzero} barely fail.
\end{itemize}
To show \eqref{aim} first note that, because of \eqref{crescitevol_counter}, the volume growth conditions in $3)$ and $4)$ are equivalent, respectively, to
\begin{equation}\label{crescitevol_simple}
(m-1)\bar\kappa  +1 > p -\sigma(p-1), \qquad \text{and} \qquad \frac{\alpha}{2} + 1 \ge \chi+1-\mu-\chi\sigma.
\end{equation}
Consider
\begin{equation}\label{def_varphi_couterex}
\varphi(t) = \frac{t^{p-1}}{(1+t)^{q-1}}, \quad p>1, \ 1 \le q \le p,
\end{equation}
and we search for solutions of the form $u(x) = h\big(r(x)\big)$, for some increasing $0< h \in C^2(\R^+)$.  Then,
\begin{equation}\label{laequaacca}
\begin{array}{lcl}
\Delta_\varphi u & = & \disp \varphi'(h')h'' + \varphi(h')\Delta r \ge   \varphi'(h')h'' + \zeta \varphi(h')r^{\alpha/2} \\[0.4cm]
& = & \disp \frac{(h')^{p-2}}{(1+h')^q}\Big\{ \left[(p-1)+(p-q)h'\right]h'' + \zeta r^{\alpha/2}h'(1+h')\Big\}.
\end{array}
\end{equation}
Set $h(t) = t^\sigma$, $\sigma>0$. Then,
\begin{equation}\label{contacci}
\begin{array}{lcl}
\Delta_\varphi u & \ge & \disp C_1\frac{r^{(\sigma-1)(p-1)-1}}{(1 + \sigma r^{\sigma-1})^q} \cdot \\[0.4cm]
& & \disp \cdot \Big\{ \left[(p-1)+\sigma(p-q)r^{\sigma-1}\right](\sigma-1) + \zeta r^{\alpha/2+1}(1+ \sigma r^{\sigma-1})\Big\}
\end{array}
\end{equation}
for some $C_1(p,\sigma)>0$. If $\sigma \ge 1$, getting rid of the first term in brackets and using the definition of $\zeta$ we obtain
\begin{equation}\label{wwwww}
\Delta_\varphi u \ge C_2\frac{r^{(\sigma-1)(p-1)-1 + \left(\alpha /2 + 1\right)}}{(1 + \sigma r^{\sigma-1})^{q-1}}
\end{equation}
On the other hand, if $\sigma \in (0,1]$ and $\alpha > -2$, the term in between brackets in \eqref{contacci} is
\begin{equation}\label{caso11}
(p-1)(\sigma-1)(1+o(1)) + \zeta r^{\alpha/2+1}(1+o(1)) \ge C_3 \zeta r^{\alpha/2+1},
\end{equation}
for large enough $r$, and the last inequality of \eqref{wwwww} still holds. If $\sigma \in (0,1)$ and $\alpha = -2$, the term is
$$
\big[(p-1)(\sigma-1)+\zeta\big](1+o(1)),
$$
while if $\sigma =1$, $\alpha=-2$, the term is simply $2 \zeta$. If $r$ is large enough, the last expression is bounded from below by a positive constant whenever $\lim_{r \ra \infty} \zeta > -(\sigma-1)(p-1)$, that is, when the first in \eqref{crescitevol_simple} is met (i.e. the volume growth in $3)$ of \eqref{ipo_volume_couter}). Summarizing, for each $\sigma>0$ (if $\sigma  \le 1$ and $\alpha = -2$, under the growth condition in $3)$ of \eqref{ipo_volume_couter}), the function $u$ turns out to solve \eqref{wwwww} for suitable constant $C_2>0$. Since
$$
(1+r)^{-\mu} \frac{\varphi(|\nabla u|)}{|\nabla u|^\chi} \le C_4 \frac{r^{-\mu + (\sigma-1)(p-1-\chi)}}{(1+\sigma r^{\sigma-1})^{q-1}},
$$
\eqref{aim} holds for $r$ large enough provided that
$$
-\mu + (\sigma-1)(p-1-\chi) \le (\sigma-1)(p-1)-1 + \left(\alpha /2 + 1\right),
$$
that is, simplifying, if $\frac{\alpha}{2} +1 \ge \chi+1-\mu-\chi\sigma$. The relation is automatically satisfied both in cases $1)$, $2)$ and $3)$ of \eqref{ipo_volume_couter}, and in case $4)$ it is equivalent to the growth condition. We have thus shown \eqref{aim}, as required. \par

To prove $(5)$ and $(6)$, we use $h(t) = t^\sigma/\log t$ in \eqref{laequaacca} and we consider for convenience the $p$-Laplacian ($q=0$) to obtain
$$
\disp \disp\Delta_\varphi u \ge C_1 \frac{r^{(\sigma-1)(p-1)-1}}{\log^{p-1} t} \Big\{ (p-1)(\sigma-1)\big( 1+ o(1)\big) + \zeta r^{1+ \frac{\alpha}{2}} \big( 1+o(1)\big)\Big\},
$$
as $r \ra \infty$, for some constant $C_1(\sigma,p)>0$.

In order to meet the volume conditions in $(5),(6)$ and $\chi \sigma < \chi+1-\mu$, necessarily $\alpha> -2$ and thus 
$$
\disp \disp\Delta_\varphi u \ge C_2 \frac{r^{(\sigma-1)(p-1) + \frac{\alpha}{2}}}{\log^{p-1} t}\big( 1+o(1)\big)
$$
as $r \ra \infty$. On the other hand, by the definition of $\Delta_\varphi$ on $\Omega_\gamma$ we have
$$
(1+r)^{-\mu} \frac{\varphi (|\nabla u|)}{|\nabla u|^\chi} \le C_3 \frac{r^{-\mu +(\sigma-1)(p-1)-\chi (\sigma-1)}}{\log^{p-1} r}\log^\chi r
$$

Combining the last two inequalities, we examine two cases.
\begin{itemize}
\item[-] In case $(5)$, $\chi>0$ and $u$ solves \eqref{aim} whenever $\alpha$ is big enough to satisfy
$$
(\sigma-1)(p-1) + \frac{\alpha}{2} > -\mu +(\sigma-1)(p-1)-\chi(\sigma-1)
$$
that is,
\begin{equation}\label{quattro}
1+\frac{\alpha}{2}>\chi+1-\mu-\chi\sigma>0,
\end{equation}
that  in view of \eqref{crescitevol_counter} implies
$$
\lim_{r \ra \infty} \frac{\log\vol (B_r)}{r^{\chi+1-\mu-\chi\sigma}}= \infty,
$$
as required.
\item[-] In case $(6)$, $\chi=0$ and for $u$ to solve \eqref{aim} it is sufficient the weak inequality
$$
1+\frac{\alpha}{2} \ge \chi+1-\mu-\chi\sigma>0,
$$
and choosing $\alpha$ in order to meet the equality sign, we obtain
$$
\lim_{r \to \infty} \frac{\log\vol(B_r)}{r^{\chi+1-\mu-\chi\sigma}} \in (0, \infty),
$$
as required.
\end{itemize}

\begin{remark}
\emph{Example \ref{def_varphi_couterex} satisfies \eqref{assu_WPC} with $\bar p = p-q$. The case
$$
\begin{array}{ll}
3') & \quad \chi \sigma = \chi+1-\mu, \qquad \alpha = -2, \qquad \sigma >1 \qquad \text{and} \\[0.2cm]
& \disp \lim_{r \ra \infty} \frac{\log\vol(B_r)}{\log r} > \bar p-\sigma(\bar p-1)
\end{array}
$$
is not covered by our counterexamples.
}
\end{remark}

\section{Strong maximum principle and Khas'minskii potentials} \label{sec_SMP}

The aim of this section is to prove Theorem \ref{teo_SMP_intro} in the Introduction. We observe that the argument is based on the existence of what we call a ``Khas'minskii potential", according to the following

\begin{definition}
A Khas'minskii potential at $o \in M$ is a function $\bar{w}$ depending on the parameters $r_0$, $r_1$, $\eta$, $K$, $\eps >0$ satisfying the next requirements:
\begin{equation}\label{Khasm_family}
\left\{ \begin{array}{l}
\Delta_\varphi \bar w \le K b(x) l(|\nabla \bar w|) \qquad \text{on } \, M \backslash B_{r_0} \\[0.2cm]
\bar w > 0 \qquad \text{on } \, M \backslash B_{r_0}, \\[0.2cm]
\bar w \le \eta \qquad \text{ on } \, B_{r_1} \backslash B_{r_0}, \\[0.2cm]
\bar w(x) \ra +\infty \qquad \text{as } \, r(x) \ra \infty, \\[0.2cm]
|\nabla \bar w| \le \eps \quad \text{on } \, M \backslash B_{r_0}
\end{array}\right.
\end{equation}
\end{definition}

The strategy to prove Theorem \ref{teo_SMP_intro} is as follows: assuming, by contradiction, that $\smp$ does not hold, the Khas'minskii potential $w$ will be compared to a non-constant, bounded solution $u \in C^1(M)$ of 
$$
\Delta_\varphi u \ge K b(x)l(|\nabla u|)
$$
on an appropriate subset of $\Omega_{\eta,\eps}$ to reach the conclusion. This approach is very old and we can trace it back, for instance, to Phr\'agmen-Lindel\"off in the realm of classical complex analysis. In more recent times, it has been used by Redheffer \cite{redheffer_2, redheffer}, and later refined in \cite[Sect. 6]{prsmemoirs}, \cite[Thm. 18]{prsoverview} and \cite[Ch. 3]{AMR}. A similar approach, although with a somehow different point of view, was systematically used by Serrin \cite{Serrin_6} and recently by Pucci-Serrin \cite[Thm. 8.1.1]{pucciserrin}. In all of the quoted results, the Khas'minskii potential (or variants thereof) is either a data of the problem or it is constructed ``ad-hoc". These methods seem difficult to be applied when a nontrivial gradient term $l$ is present. Thus, we provide the existence of an appropriate potential via the solution of an associated ODE problem, see Lemma \ref{prop_exi2'} below, that will be coupled with a condition on the Ricci tensor to transplant the function on the original manifold. 

\subsection{Ricci curvature and $\smp$}\label{sec_ricciandSMP}

To investigate the ODE problem mentioned above, we assume the following:

\begin{equation}\label{assum_secODE_1}
\left\{ \begin{array}{l}
\disp \varphi \in C(\R^+_0) \cap C^1(\R^+), \qquad \varphi(0)=0, \qquad \varphi'>0 \ \text{ on } \, \R^+,\\[0.2cm]
\disp f \in C(\R), \qquad f(0)=0, \qquad f>0 \ \text{ on } \, \R^+, \\[0.2cm]
\disp l \in C(\R^+_0), \qquad l > 0 \ \text{ on } \, \R^+,\\[0.2cm]
\beta \in C(\R^+_0), \qquad  \beta >0 \ \text{ on } \, \R^+_0, \\[0.1cm]
\end{array}\right.
\end{equation}
and the next growth conditions:
\begin{equation}\label{assum_secODE_altreL_2}
\left\{ \begin{array}{ll}
\disp \frac{t\varphi'(t)}{l(t)} \in L^1(0^+), & \\[0.4cm]
\disp l(t)\ge C_1 \frac{\varphi(t)}{t^\chi} & \quad \text{on } \, (0,1] \ \text{ for some } \, C_1>0, \ \chi \ge 0 \\[0.4cm]
\disp \varphi(t) \le Ct^{p-1} & \quad \text{on } \, [0,1], \ \text{ for some } \, C>0, \ p>1. \end{array}\right.
\end{equation}
Fix a ``volume" function $v \in C^2(\R^+)$ such that
\begin{equation}\label{v_gbonito}
v>0 \ \text{ on }   \R^+, \qquad v' \ge 0 \qquad \text{on } \, \R^+.
\end{equation}

We are now ready to prove

\begin{lemma}\label{prop_exi2'}
Assume the validity of \eqref{assum_secODE_1}, \eqref{assum_secODE_altreL_2}, \eqref{v_gbonito} and furthermore suppose that
\begin{equation}\label{ipo_bvg}
\begin{array}{l}
\disp \beta(r) \ge C(1+r)^{-\mu} \qquad \text{on $ \, \R^+_0$, for some $ \, \mu \le \chi+1$,}\\[0.3cm]
\disp \limsup_{r \ra \infty} \frac{1}{v(r)} \int_{r_0}^{r} \frac{v(s)}{(1+s)^{\mu}} \di s  < \infty,
\end{array}
\end{equation}
for some (hence any) $r_0>0$, and that either
\begin{equation}\label{volume_ODE}
\begin{array}{l}
\disp \mu < \chi+1 \quad \text{ and }  \quad \disp \liminf_{t \ra \infty} \frac{\log \int_{r_0}^t v}{t^{\chi+1-\mu}} < \infty \quad (=0 \ \text{ if } \, \chi=0), \ \ \text{ or} \\[0.4cm]
\disp \mu = \chi+1 \quad \text{ and } \quad \disp \liminf_{t \ra \infty} \frac{\log \int_{t_0}^t v}{\log t} < \infty \quad ( \le p \ \text{ if } \, \chi=0). \\[0.4cm]
\end{array}
\end{equation}
Then, for each $0<r_0< r_1$, $\eta>0$ and $\eps>0$ there exists a function $w \in C^1([r_0, \infty))$ satisfying:
\begin{equation}
\left\{ \begin{array}{l}
\big[ v\varphi(w')\big]' \le v \beta f(w)l(|w'|) \qquad \text{on } \, [r_0, \infty) \\[0.2cm]
w > 0, \ w'>0 \qquad \text{on } \, [r_0, \infty), \\[0.2cm]
w \le \eta \qquad \text{ on } \, [r_0, r_1], \\[0.2cm]
w(t) \ra \infty \qquad \text{as } \, t \ra \infty, \\[0.2cm]
|w'| \le \eps \quad \text{on } \, [r_0,\infty).
\end{array}\right.
\end{equation}
\end{lemma}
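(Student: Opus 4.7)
The plan is to construct $w$ as a monotone $C^1$ solution of the radial ODE associated to $(P_\ge)$, by combining the existence and extension results of Section \ref{sec2.3} with the growth conditions \eqref{ipo_bvg} and \eqref{volume_ODE}. After a translation I reduce to $r_0=0$. Since the inequality $[v\varphi(w')]' \le v\beta f(w)l(|w'|)$ is preserved when the right-hand side is replaced by a pointwise smaller quantity, I may assume $\beta(s)=C(1+s)^{-\mu}$ (the lower bound from \eqref{ipo_bvg}), and replace $f, l$ by modifications $\tilde f \le f$, $\tilde l \le l$ chosen so that $\tilde f$ is non-decreasing, positive on $\R^+$, with $\tilde f(0)=0$ and satisfying the non-Keller--Osserman condition $\neg\mathrm{KO}_\infty$, while $\tilde l$ is positive on $\R^+_0$ (by a smoothing argument when $l(0)=0$, at the cost of working with a slightly smaller test function).

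Applying Theorem \ref{exi2_neumann} with $\wp=v$, $a=\beta$, and small $\eta_0>0$, $T \ge r_1$ satisfying \eqref{bound_etaxi_neu}, produces a $C^1$ solution $w$ on $[0,T]$ of the Neumann problem
\begin{equation*}
[v\varphi(w')]' = v\beta \tilde f(w)\tilde l(w'), \qquad w'(0)=0, \quad w(T)=\eta_0, \quad 0 \le w \le \eta_0, \; 0 \le w' < \eps.
\end{equation*}
By Proposition \ref{prop_Rinfty} and the $\neg\mathrm{KO}_\infty$ property of $\tilde f$, $w$ extends to a $C^1$ solution on $[0,\infty)$, and by \eqref{proprie_w_new} there is $t_0 \in [0,T)$ such that $w$ is constant on $[0,t_0]$ and strictly increasing on $(t_0,\infty)$; adding a small positive constant gives $w>0$ on $[0,\infty)$, while the choice $T \ge r_1$ yields $w \le \eta$ on $[0,r_1]$.

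The gradient bound $|w'| \le \eps$ follows from the integral identity
\begin{equation*}
v(t)\varphi(w'(t)) = \int_0^t v(s)\beta(s) \tilde f(w(s)) \tilde l(w'(s))\,ds,
\end{equation*}
the second condition in \eqref{ipo_bvg}, and the uniform boundedness of $\tilde f, \tilde l$ on bounded arguments, upon shrinking $\eta_0$; the structural bound in \eqref{assum_secODE_altreL_2} on $\varphi$ near zero is what makes the inversion of $\varphi$ effective in this estimate.

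The crucial and most delicate step is to prove $w(t) \to +\infty$. I would argue by contradiction: if $w$ were bounded, $w \to \eta^* < \infty$ monotonically, then $\tilde f(w)$ and $\tilde l(w')$ would stay bounded below by positive constants on a tail interval, which via the integral identity yields $v(t)\varphi(w'(t)) \ge c_1 \int_{t_0}^t v(s)(1+s)^{-\mu}\,ds$; applying $\varphi^{-1}$ and integrating in $t$, one uses the fine volume estimate \eqref{volume_ODE} (together with the upper bound on $\varphi$ near zero from \eqref{assum_secODE_altreL_2}) to extract divergence of $\int_{t_0}^\infty w'$, contradicting the boundedness of $w$. The hard part is precisely this asymptotic analysis: the interplay between the weak upper bound \eqref{ipo_bvg}, the sharp estimate \eqref{volume_ODE} on $\int v$, and the behaviour of $\varphi, l$ near $t=0$ has to be handled case-by-case depending on whether $\mu < \chi+1$ or $\mu = \chi+1$ and on the sign of $\chi$, in a spirit dual to the counterexamples of Subsection \ref{sec_counter}.
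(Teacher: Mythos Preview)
Your overall scheme is reasonable, but there are two genuine gaps, and the paper's proof handles both of them by quite different means.

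\textbf{On $w'>0$ at the left endpoint.} You use the mixed Dirichlet--Neumann problem (Theorem~\ref{exi2_neumann}), which forces $w'(r_0)=0$; the subsequent ``adding a small positive constant'' repairs $w>0$ but not $w'>0$, and the conclusion of the lemma explicitly requires $w'>0$ on all of $[r_0,\infty)$ (this is essential in the application to $\smp$, where one needs $|\nabla u|=w'(r)>0$ on the contact set $\Gamma$). The paper instead solves a \emph{Dirichlet} problem on $[r_0,r_1]$ with $w_\sigma(r_0)=\eta_\sigma$, $w_\sigma(r_1)=2\eta_\sigma$, choosing a special truncation $f_\sigma$ satisfying $f_\sigma(\eta_\sigma+t)\le K'(t)$; an energy/Gronwall argument as in Proposition~\ref{prop_twobound_refined} then rules out $w_\sigma'(r_0)=0$, so $w_\sigma'>0$ from the start.

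\textbf{On the divergence $w\to\infty$.} Your direct ODE argument hinges on a uniform positive lower bound for $\tilde l(w')$ on a tail interval. But under the contradiction hypothesis $w\uparrow\eta^*<\infty$, monotonicity forces $w'\to 0$ along a subsequence, and when $l(0)=0$ no modification $\tilde l\le l$ can be kept bounded away from zero (your ``smoothing argument'' would have to take $\tilde l(0)>0$, which is incompatible with $\tilde l\le l$). Even granting the lower bound, inverting $\varphi$ and integrating gives only $w'(t)\gtrsim\big(v(t)^{-1}\int^t v(1+s)^{-\mu}\,ds\big)^{1/(p-1)}$, and under \eqref{ipo_bvg}--\eqref{volume_ODE} this quantity can tend to zero fast enough that $\int^\infty w'<\infty$ (take e.g.\ $\chi=1$, $\mu=0$, $v(t)=e^{t^2}$, $p<2$). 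The paper bypasses all of this: having modified $\varphi,l$ to $\bar\varphi,\bar l$ satisfying the global bounds of Theorem~\ref{teo_main_2}, it interprets $w_\sigma$ as a radial function on a model $M_g$ with $v_g=v$, so that $w_\sigma$ solves $\Delta_{\bar\varphi}w_\sigma\ge\sigma\beta(r)f_\sigma(w_\sigma)\bar l(|\nabla w_\sigma|)$. If $w_\sigma^*<\infty$, Theorem~\ref{teo_main_2} (whose volume hypotheses are \emph{exactly} \eqref{volume_ODE}) yields $f_\sigma(w_\sigma^*)\le 0$, hence $w_\sigma^*\le\eta_\sigma$, contradicting $w_\sigma(r_1)=2\eta_\sigma$. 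In other words, the role of \eqref{volume_ODE} is not to feed a direct integral estimate, but to place the model $M_g$ in the regime where $\wmp$ holds; your case-by-case analysis would amount to reproving Theorem~\ref{teo_main_2} for radial functions.
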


\begin{proof}
Clearly, it is enough to prove the result for $\beta(r) = C(1+r)^{-\mu}$ and for $\eta$ and $\eps$ small enough. First of all, we modify $f,l, \varphi$ and choose a suitable $\eta$. These adjustments will be essential to prove the $L^\infty$-gradient bound for $w$. Fix $\xi \in (0,1]$ and choose $\bar l(t)$ satisfying
\begin{equation}\label{ipo_modil}
\begin{array}{l}
\disp \bar l \in C(\R^+_0), \qquad \bar l> 0 \ \text{ on } \, \R^+, \qquad \bar l \le 2\|l\|_{L^\infty([0,\xi])} \ \text{ on } \, \R^+, \\[0.3cm]
\bar l(t) = l(t) \ \text{ if } \, t \in [0,\xi), \qquad \bar l(t) t^\chi \ge \bar C_2 \ \text{ on } \, [\xi, \infty),
\end{array}
\end{equation}
for some constant $\bar C_2$ (here, we use $\chi \ge 0$). Regarding $\varphi$, we choose $\bar \varphi \in C^1(\R^ +_0)$ such that
$$
\bar \varphi'>0 \ \text{ on } \R^+, \quad \bar\varphi = \varphi \ \text{ on } \, (0, \xi), \quad \bar \varphi \le \bar C_1 t^\chi \bar l(t) \ \text{ on } \, [\xi, \infty)
$$
note that this is possible if $\bar C_1$ is sufficiently large, by the last of  \eqref{ipo_modil}. By construction, since $\varphi(0)=0$,
\begin{equation}\label{bellepropbarvarphi}
\bar \varphi(t) \le Ct^{p-1} \ \text{ on } \, [0,1], \quad \bar \varphi(t) \le C_3 t^\chi \ \text{ on } \, [\xi,\infty), \quad \bar l(t) \ge C_4 \frac{\bar \varphi(t)}{t^\chi} \qquad \text{on } \, \R^+,
\end{equation}
for some constants $C_3,C_4>0$. For $\eta>0$ and $\xi>0$, we introduce the notation
\begin{equation}\label{227_dinuovo}
\begin{array}{ll}
\beta_0= \min_{[r_0,r_1]} \beta, & \quad \beta_1= \max_{[r_0,r_1]} \beta;\\[0.2cm]
v_0 = \min_{[r_0,r_1]} v, & \quad v_1 = \max_{[r_0,r_1]}v;\\[0.2cm]
f_{2\eta} = \max_{[0,2\eta]} f, & \quad l_\xi = \max_{[0,\xi]} \bar l.
\end{array}
\end{equation}
Given $\sigma \in (0, \xi)$ to be specified later, we choose $\eta_\sigma \in [0,\sigma)$ small enough in order to satisfy
\begin{equation}\label{important_fordiri}
\frac{v_1}{v_0}\left[ (r_1-r_0)\beta_1 f_{2\eta_\sigma} l_\xi + \bar \varphi\left( \frac{\eta_\sigma}{r_1-r_0}\right) \right] < \bar \varphi(\sigma).
\end{equation}
This is possible because $\bar \varphi(0)=0$ and $f_{2\eta_\sigma} \ra 0$ as $\eta_\sigma \ra 0$ (since $f(0)=0$). We next choose $f_\sigma \in C(\R_0^+)$ satisfying
\begin{equation}\label{properties_barf}
\begin{array}{ll}
0 \le f_\sigma(t) \le \min\big\{f(t),1\big\}, & \qquad f_\sigma(t)=0 \ \ \text{ if } \, t \le \eta_\sigma, \\[0.2cm]
f_\sigma >0 \quad \text{if } \, t > \eta_\sigma, & \qquad f_\sigma(\eta_\sigma + t) \le K'(t) \ \text{ for } \, t \in [0, \xi],
\end{array}
\end{equation}
where $K(t)$ is the function defined in \eqref{def_K}. The last condition can be satisfied because of the positivity of $\varphi'$ and $l$ on $\R^+$, hence of $K'$. We consider the Dirichlet problem
\begin{equation}\label{diri_partial}
\begin{cases}
\big([v \bar \varphi(w_\sigma')\big)]'= \sigma v \beta f_\sigma(w_\sigma) \bar l(|w_\sigma'|)\quad \text{on } \, [r_0,r_1],\\[0.1cm]
w_\sigma(r_0)= \eta_\sigma, \qquad w_\sigma(r_1)= 2\eta_\sigma, \\[0.1cm]
\eta_\sigma \le w_\sigma \le 2\eta_\sigma, \qquad w_\sigma' > 0 \ \text{ on } \, [r_0,r_1],
\end{cases}
\end{equation}
We claim that a solution $w_\sigma \in C^1([r_0,r_1])$ exists if $\sigma>0$ is small enough. Indeed, one can apply Theorem \ref{exi2} with the following choices:
$$
\begin{array}{l}
t= r-r_0, \qquad T=r_1-r_0, \qquad \quad \wp(t) = v(t+r_0), \\[0.2cm]
w(t) = w_\sigma(r_0+t)-\eta_\sigma,\qquad a(t)=\beta(r_0+t)
\end{array}
$$
and $f(t), \varphi(t), l(t)$ in Theorem \ref{exi2} replaced, respectively, by $\sigma f_\sigma(\eta_\sigma + t), \bar \varphi(t)$ and $\bar l(t)$ (note that $f(0)=0$). It is easy to see that \eqref{important_fordiri} implies \eqref{restrict} for each $\sigma \in (0,\xi]$, hence Theorem \ref{exi2} can be applied to guarantee the existence of a solution $w_\sigma$ of \eqref{diri_partial} together with the bound $|w_\sigma'| \le \sigma$. We are left to prove that $w_\sigma'>0$ on $[r_0,r_1]$, provided that $\sigma$ is small enough. Because of Lemma \ref{lem_ODE}, it is enough to show $w_\sigma'(r_0)>0$ and to this aim we follow the argument in Proposition \ref{prop_twobound_refined}, see also Remark \ref{rem_noserveCincreasing}. Indeed, suppose by contradiction that $w_\sigma'(r_0)=0$. By Lemma \ref{lem_ODE}, there exists $\bar r_\sigma \in [r_0,r_1)$ such that $w_\sigma = \eta_\sigma$ for $r \le \bar r_\sigma$, $w_\sigma'(\bar r_\sigma) =0$ and $w_\sigma'>0$ on $(\bar r_\sigma, r_1]$. Expanding  \eqref{diri_partial} and using $v' \ge 0$ we deduce
$$
\frac{w_\sigma' \bar \varphi'(w_\sigma')}{\bar l(w_\sigma')}w_\sigma'' \le \sigma \beta_1 f_\sigma(w_\sigma)w_\sigma',
$$
and integrating on $(\bar r_\sigma, r)$ we get
\begin{equation}\label{nov}
\bar K(w_\sigma') \le \sigma \beta_1 F_\sigma(w_\sigma),
\end{equation}
where
$$
\bar K(t) = \int_0^t\frac{s \bar \varphi'(s)}{\bar l(s)}\di s, \qquad F_\sigma(t) = \int_{\eta_\sigma}^t f_\sigma(s) \di s.
$$
Because of \eqref{properties_barf}, for $t \in [0,\xi + \eta_\sigma]$
$$
F_\sigma(t) \le \int_{\eta_\sigma}^t K'(s-\eta_\sigma) \di s = K(t-\eta_\sigma) \equiv \bar K(t-\eta_\sigma),
$$
where in the last inequality we have  used $\bar \varphi=\varphi$ and $\bar l = l$ on $[0, \xi]$. Choosing $\sigma \le \beta_1^{-1}$ from \eqref{nov} we deduce  the inequality $\bar K(w_\sigma') \le \bar K(w_\sigma-\eta_\sigma)$, and therefore $w_\sigma' \le w_\sigma-\eta_\sigma$. By Gronwall's inequality and $w_\sigma'(\bar r_\sigma)=0$ we obtain $w_\sigma \equiv \eta_\sigma$ on $[\bar r_\sigma, r_1]$, contradicting $w_\sigma(r_1)=2\eta_\sigma$.\\
We now let $[r_0, R)$, $R > r_1$ be the maximal interval where $w_\sigma$ is defined. Integrating \eqref{diri_partial}, and using $|w_\sigma'| \le \sigma$ on $[r_0,r_1]$ and $\|f_\sigma\|_\infty \le 1$, for $r \in [r_0, R)$ we get
\begin{equation}\label{boundgradwsigma}
\begin{array}{lcl}
\disp \bar\varphi\big(w_\sigma'(r)\big) & = & \disp \frac{1}{v(r)} \left[\bar \varphi\big(w_\sigma'(r_0)\big)v(r_0) + \sigma \int_{r_0}^r v(s)\beta(s)f_\sigma(w_\sigma)\bar l(w_\sigma') \di s \right]\\[0.4cm]
& \le & \disp \frac{1}{v(r)} \left[\bar \varphi(\sigma)v(r_0) + \sigma \|f_\sigma\|_\infty \|\bar l\|_\infty \int_{r_0}^r v(s)\beta(s) \di s\right] \\[0.4cm]
& \le & \disp C(\bar \varphi(\sigma) + \sigma),
\end{array}
\end{equation}
where $C>0$ is a constant depending on $r_0$ but independent of $R, \sigma$, and where the last inequality follows from $v'\ge 0$, the second in \eqref{ipo_bvg} and $\beta(r)=(1+r)^{-\mu}$. In particular, if $\sigma$ is small enough then $w_\sigma$ has bounded gradient. Since $w_\sigma$ is also increasing, necessarily $R = \infty$, otherwise one could extend the solution beyond $R$. Up to a further reduction of $\sigma$, by \eqref{boundgradwsigma} we can guarantee $|w_\sigma'| \le \eps$ on $[r_0, \infty)$, $\eps$ being the parameter in the statement of the Lemma that we can assume to belong on $(0, \xi)$. On the other hand, from the first line in \eqref{boundgradwsigma} and the positivity of $w_\sigma'$ on $[r_0,r_1]$ we deduce $w_\sigma'>0$ on all of $[r_0, \infty)$. It is clear that, for each $\sigma < \xi$,  by construction $w_\sigma$ solves
$$
\big[ v\varphi(w_\sigma')\big]' = \big[ v\bar \varphi(w_\sigma')\big]' = \sigma v \beta f_\sigma(w_\sigma)\bar l(w_\sigma') \le v \beta f(w_\sigma)l(w_\sigma'),
$$
as required. We are left to prove that $w_\sigma(r) \ra \infty$ as $r \ra \infty$. Suppose, by contradiction, that $w_\sigma^* = \sup_{[r_0,\infty)} w_\sigma < \infty$, and consider the $C^2$-model manifold $M_g$ with metric given, in polar coordinates, by
$$
\di s_g^2 = \di r^2 + g(r)^2 \metricN_1, \qquad g(r) = \left\{ \begin{array}{ll}
r & \quad \text{for } \, r \in (0, r_0/2) \\[0.2cm]
v(r)^{\frac{1}{m-1}} & \quad \text{for } \, r \ge r_0.
\end{array}\right.
$$
By construction, the radial function $w_\sigma(r)$ on $M_g$ is a solution of
$$
\Delta_{\bar \varphi} w_\sigma = \sigma \beta(r) f_\sigma(w_\sigma)\bar l(|\nabla w_\sigma|) \qquad \text{on } \, M_g \backslash \overline{B}_{r_0},
$$
and from $w_\sigma'>0$ we get $w_\sigma^* > \eta_\sigma$. Consider the Lipschitz extension of $w_\sigma$ obtained by setting $w_\sigma = \eta_\sigma$ on $B_{r_0}$. An analogous reasoning as in Lemma \ref{lem_pasting} shows that, $f_\sigma(\eta_\sigma)=0$ and $w_\sigma'>0$ guarantee that the extended function solves $\Delta_{\bar \varphi} w_\sigma \ge \sigma \beta(r) f_\sigma(w_\sigma)\bar l(|\nabla w_\sigma|)$ on $M_g$. Combining properties \eqref{ipo_bvg} (for $b$) and \eqref{bellepropbarvarphi} (for $\bar \varphi, \bar l$) with the volume growth conditions  \eqref{volume_ODE}, and fixing any $\bar p>\chi+1$, we are in the position to apply case $(i)$ of Theorem \ref{teo_main_2} to deduce that necessarily $f_\sigma(w_\sigma^*) \le 0$, hence $w_\sigma \le \eta_\sigma$ because of \eqref{properties_barf}. This contradicts the previously established inequality $w_\sigma^*> \eta_\sigma$, and concludes the proof.
\end{proof}

%

\begin{remark}
\emph{We stress that no growth condition on $\varphi$ at infinity is required. Indeed, we applied the weak maximum principle (Theorem \ref{teo_main_2}) to the modification $\bar \varphi$, but a-posteriori just the value of $\varphi$ for sufficiently small $t$ is needed to produce the solution.
}
\end{remark}

\begin{remark}
\emph{The simultaneous validity of the second in \eqref{ipo_bvg} and of \eqref{volume_ODE} requires a delicate balancing between $\mu$ and $v(r)$, since \eqref{ipo_bvg} is easier to satisfy for $\mu$ large while \eqref{volume_ODE} forces an upper bound on $\mu$. For various examples of $v(r)$ with geometric interest, in particular for those appearing in the next theorem, there is a non-empty interval of $\mu$ for which both the conditions are met.
}
\end{remark}

We are now ready to prove Theorem \ref{teo_SMP_intro}. We report here the statement to facilitate the reading.

\begin{theorem}\label{teo_SMP}
Let $M^m$ be a complete Riemannian manifold of dimension $m \ge 2$ such that, for some origin $o \in M$, the distance function $r(x)$ from $o$ satisfies
\begin{equation}\label{ricciassu}
\Ricc (\nabla r, \nabla r) \ge -(m-1)\kappa^2\big( 1+r^2\big)^{\alpha/2},
\end{equation}
for some $\kappa \ge 0$ and $\alpha \ge -2$. Let $l,\varphi$ satisfy \eqref{assum_secODE_1} and \eqref{assum_secODE_altreL_2}, for some $\chi \ge 0$ and $p>1$. Consider $0< b \in C(M)$ such that
$$
b(x) \ge C\big(1+ r(x)\big)^{-\mu} \qquad \text{on } \, M,
$$
for some constants $C>0$, $\mu \in \R$. Assume
\begin{equation}\label{condi_volumericci}
\mu \le \chi - \frac{\alpha}{2} \quad \text{and either} \quad \left\{ \begin{array}{ll}
\alpha \ge -2 \quad \text{and} \quad \chi>0, \quad \text{or} \\[0.2cm]
\alpha = -2, \quad \chi = 0 \quad \text{and} \quad \bar\kappa  \le \frac{p-1}{m-1},
\end{array}\right.
\end{equation}
with $\bar\kappa  = \frac{1}{2}\big( 1 + \sqrt{1+4\kappa^2}\big)$.
Then, the operator $(bl)^{-1}\Delta_\varphi$ satisfies $\smp$.
\end{theorem}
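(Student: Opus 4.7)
The plan is to argue by contradiction via a strong Khas'minskii potential. Suppose $\smp$ fails: then there exist a non-constant $u\in C^1(M)$ with $u^*=\sup_M u<\infty$, together with $\bar\eta<u^*$, $\bar\eps>0$, and $K>0$ such that $\Delta_\varphi u\ge K b(x) l(|\nabla u|)$ on $\Omega_{\bar\eta,\bar\eps}$ (non-empty by completeness, as observed after Definition \ref{def_maxprinc}). The aim is to construct a function $W$ on $M\setminus B_{r_0}$ satisfying the reversed inequality $\Delta_\varphi W\le K b(x) l(|\nabla W|)$, with $W\to\infty$ at infinity and $|\nabla W|\le\bar\eps/2$, and then compare $u$ with an appropriate translate $W+\Lambda$ to contradict the definition of $u$.

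The construction of $W$ proceeds in three steps. First, by the Laplacian comparison theorem (Appendix A), the Ricci hypothesis \eqref{ricciassu} yields $\Delta r\le v_g'(r)/v_g(r)$ weakly on $M\setminus\{o\}$, where $v_g=\omega_{m-1}g^{m-1}$ is the volume function of the model space associated to the Jacobi equation $g''=\kappa^2(1+r^2)^{\alpha/2}g$, $g(0)=0$, $g'(0)=1$. Second, I invoke Lemma \ref{prop_exi2'} with $v=v_g$, $\beta(r)=C(1+r)^{-\mu}$, and $f\in C(\R)$ with $f(0)=0$ and $0<f\le K$ on $\R^+$ (for instance $f(t)=K\tanh t$). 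A direct ODE estimate for $g$ shows that under \eqref{condi_volumericci} both \eqref{ipo_bvg} and the appropriate branch of \eqref{volume_ODE} hold; in particular the borderline case $\alpha=-2$, $\chi=0$, $\mu=1$ is where $\log\int_{r_0}^t v_g\sim [(m-1)\bar\kappa+1]\log t$, and the threshold $(m-1)\bar\kappa+1\le p$ in \eqref{volume_ODE} translates exactly to $\bar\kappa\le (p-1)/(m-1)$. The lemma then produces $w\in C^1([r_0,\infty))$ with $w'>0$, $|w'|\le\bar\eps/2$, $w(t)\to\infty$, and $[v_g\varphi(w')]'\le v_g\beta(r) f(w) l(|w'|)$. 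Third, setting $W(x)=w(r(x))$, the inequality $w'\ge 0$ together with $\Delta r\le v_g'/v_g$ yields
$$\Delta_\varphi W \le v_g^{-1}[v_g\varphi(w')]'\le \beta(r) f(w) l(|\nabla W|)\le K b(x) l(|\nabla W|)$$
on $M\setminus B_{r_0}$, so $W$ is a strong Khas'minskii potential for the parameters $K,\bar\eta,\bar\eps$.

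For the comparison step I would proceed as follows. Choose $r_0>0$ so that $\sup_{\partial B_{r_0}}u<u^*$ (possible for almost every $r_0$, since $\{r=r_0\}$ has measure zero), and a point $y\in B_{r_1}\setminus B_{r_0}$ with $u(y)>u^*-\delta$ for small $\delta$. Using the freedom in Lemma \ref{prop_exi2'} to shrink the $\eta$-parameter, arrange $\sup_{B_{r_1}\setminus B_{r_0}} W\le \eta'\ll\delta$, and set $\Lambda:=u(y)-W(y)-\tau_0$ for small $\tau_0>0$. For $\delta,\eta',\tau_0$ small enough, $\Lambda>\bar\eta$ and $\Lambda>\sup_{\partial B_{r_0}}(u-W)$, so the connected component $V$ of $\{u>W+\Lambda\}$ containing $y$ is relatively compact in $M\setminus\overline{B_{r_0}}$ (it avoids $\partial B_{r_0}$ by the second inequality and is bounded because $W\to\infty$), and $u=W+\Lambda$ on $\partial V$, $u>\Lambda>\bar\eta$ on $V$. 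At an interior maximum $x_0\in V$ of $u-W$ we have $\nabla u(x_0)=\nabla W(x_0)$, whence $|\nabla u(x_0)|\le\bar\eps/2$; by continuity $|\nabla u|<\bar\eps$ on a smaller open set $V'\ni x_0$ still satisfying $u=W+\Lambda$ on $\partial V'$. On $V'$ both $u$ and $W+\Lambda$ satisfy the inequalities of Proposition \ref{prop_serrin} with the same $b$, the same constant $f\equiv K$, and the gradient nonlinearity $l$; since $|\nabla W|$ is bounded below on $\overline{V'}$ thanks to $w'>0$, the essential-gradient hypothesis in Proposition \ref{prop_serrin} is met. This yields $u\le W+\Lambda$ on $V'$, contradicting the definition of $V'$.

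The main obstacles I anticipate are twofold. First, verifying \eqref{ipo_bvg} and \eqref{volume_ODE} for the model $v_g$ throughout \eqref{condi_volumericci} requires a delicate asymptotic analysis of $g$; it is precisely in the borderline case $\alpha=-2$, $\chi=0$ that the constraint $\bar\kappa\le (p-1)/(m-1)$ emerges as the boundary between regimes where the weak maximum principle hidden inside Lemma \ref{prop_exi2'} (via Theorem \ref{teo_main_2}) is available or fails. Second, the comparison has to be set up in a region where $|\nabla u|$ is simultaneously below $\bar\eps$ and close to $|\nabla W|$, which is why a \emph{strong} Khas'minskii potential with the $L^\infty$ gradient bound $|\nabla W|\le\bar\eps/2$ and the strict monotonicity $w'>0$ is needed; producing both of these features in a single function is the technically most delicate output of Lemma \ref{prop_exi2'}, and minor care is also required to match the nonlinearity $l(|\nabla\cdot|)$ on the two sides of the comparison inequality.
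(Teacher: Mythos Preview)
Your overall strategy---contradiction, construction of a radial Khas'minskii potential via Lemma~\ref{prop_exi2'} and the Laplacian comparison, then comparison of $u$ with a translate of the potential on a superlevel set---is exactly the paper's. Two points in the execution, however, do not close as written.

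First, your comparison step invokes Proposition~\ref{prop_serrin}, which requires $l\in\lip_\loc(\R^+)$. This is \emph{not} assumed in \eqref{assum_secODE_1}, where $l$ is merely continuous. The paper avoids this by a simple but essential device: it starts the contradiction with a doubled constant, $\Delta_\varphi u\ge 2K\,b(x)\,l(|\nabla u|)$ on $\Omega_{\eta,\eps}$. On the compact set $\Gamma$ of maxima of $u-\bar w$ one has $|\nabla u|=w'(r)$ (Calabi's trick is used for points of $\Gamma\cap\cut(o)$, where $\bar w$ is only Lipschitz), and since $w'>0$ on $\Gamma$ and $l$ is continuous and positive on $\R^+$, a small neighbourhood $V=\{u-\bar w>c\}$ of $\Gamma$ satisfies $l(|\nabla u|)\ge\tfrac12\,l(w'(r))$. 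Then on $V$
\[
\Delta_\varphi u\ \ge\ 2K\,b(x)\,l(|\nabla u|)\ \ge\ K\,b(x)\,l\big(w'(r)\big)\ \ge\ \Delta_\varphi\bar w\ =\ \Delta_\varphi(\bar w+c),
\]
and the \emph{elementary} comparison Proposition~\ref{prop_comparison} (no hypothesis on $l$) gives $u\le\bar w+c$ on $V$, the contradiction. Your route through Proposition~\ref{prop_serrin} needs an extra regularity assumption on $l$ that is not available.

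Second, applying Lemma~\ref{prop_exi2'} with $\beta(r)=C(1+r)^{-\mu}$ can violate the second requirement in \eqref{ipo_bvg}: for $\alpha>-2$, $\kappa>0$ one has $v_g'/v_g\sim(m-1)\kappa\,r^{\alpha/2}$, so $\tfrac{1}{v_g(r)}\int_{r_0}^r v_g(s)(1+s)^{-\mu}\,ds$ stays bounded only when $\mu\ge-\alpha/2$, which $\mu\le\chi-\alpha/2$ alone does not force. The paper instead sets $\bar\mu:=\chi-\alpha/2\ (\ge-\alpha/2)$ and uses $\beta(r)=C(1+r)^{-\bar\mu}$; since $\mu\le\bar\mu$ one still has $b(x)\ge\beta(r(x))$, condition \eqref{ipo_bvg} now holds, and the exponent in \eqref{volume_ODE} becomes $\chi+1-\bar\mu=1+\alpha/2$, matching the model asymptotics \eqref{crescitevol_counter_dinuovo}. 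This choice of $\bar\mu$ is also what makes the case $\alpha=-2$, $\chi=0$ land precisely on the threshold $(m-1)\bar\kappa+1\le p$, as you correctly anticipated.
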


\begin{proof}
Suppose, by contradiction, that there exists a non-constant $u \in C^1(M)$ with $u^* = \sup_M u < \infty$, and $\eta,\eps>0$ such that
$$
\Delta_\varphi u \ge 2K b(x) l(|\nabla u|) \qquad \text{on } \, \Omega_{\eta,\eps} = \big\{ x : u(x) >\eta, \ |\nabla u(x)|< 2\eps \big\},
$$
for some constant $K>0$. In particular, $\Delta_\varphi u \ge 0$ in $\Omega_{\eta,\eps}$, and thus, since maximal points of $u$ belong to $\Omega_{\eta,\eps}$, applying the finite maximum principle (Theorem \ref{teo_FMP2}) to $u^* -u$ with $f \equiv 0$ we deduce that $u^*$ is never attained. Consequently, since $u \in C^1(M)$ we infer that $\Omega_{\eta,\eps}$ is unbounded. In what follows, balls are always considered to be centered at $o$. Fix $r_0>0$ and choose $\gamma \in (0, \eta/2)$ in such a way that 
$$
u(x) < u^* - 4\gamma \qquad \text{for each } \, x \in B_{r_0}.
$$
Next, we choose $\bar x \in \Omega_{\eta, \eps}$ such that $u(\bar x) > u^*- \gamma$, and a large ball $B_{r_1}\Subset M$ containing $\overline{B}_{r_0} \cup \{ \bar x\}$. By \eqref{ricciassu} and \cite[Prop. 2.1]{prs},
\begin{equation}\label{compadelta_ricci_1}
\Delta r \le (m-1)\frac{g'(r)}{g(r)} \qquad \text{weakly on $M$,}
\end{equation}
for some $g(r) \in C^2(\R^+_0)$ increasing and satisfying \eqref{asintog_appe}, that is, 
\begin{equation}\label{asintog}
g(r) \asymp \left\{ \begin{array}{ll}
\exp \left\{ \frac{2\kappa}{2+\alpha} (1+r)^{ 1+ \frac{\alpha}{2}}\right\} & \qquad \text{if } \, \alpha \ge 0 \\[0.4cm]
r^{-\frac{\alpha}{4}} \exp \left\{ \frac{2\kappa}{2+\alpha} r^{ 1+ \frac{\alpha}{2}}\right\} & \qquad \text{if } \, \alpha \in (-2,0) \\[0.4cm]
r^{\bar\kappa }, \quad \bar\kappa  = \frac{1+ \sqrt{1+4\kappa^2}}{2} & \qquad \text{if } \, \alpha = -2
\end{array}
\right.
\end{equation}
as $r \ra \infty$. Define $v_g(r) = \vol(\Sph^{m-1})g(r)^{m-1}$, and note that
\begin{equation}\label{crescitevol_counter_dinuovo}
\log \int_{r_0}^r v_g \sim \left\{ \begin{array}{ll} \frac{2\kappa(m-1)}{2+\alpha} r^{1+ \frac \alpha 2} & \quad \text{if } \, \alpha > -2; \\[0.2cm]
\big[(m-1)\bar\kappa +1\big] \log r & \quad \text{if } \, \alpha = -2
\end{array}\right.
\end{equation}
as $r \ra \infty$. For each $\alpha \ge -2$, set
$$
\beta(r) = C(1+r)^{-\bar \mu}, \qquad \text{with} \qquad \bar \mu = \chi - \frac{\alpha}{2}.
$$
Because of \eqref{condi_volumericci}, $\mu \le \bar \mu$ and therefore
\begin{equation}\label{ipo_bbeta}
b(x) \ge \beta \big(r(x)\big)
\end{equation}
Furthermore, by \eqref{asintog} and the fact that $\bar \mu \ge -\alpha/2$ we get, for each $r_0 >0$,
\begin{equation}\label{vebeta}
\limsup_{r \ra \infty} \frac{1}{v_g(r)} \int_{r_0}^r \frac{v_g(s)}{(1+s)^{\bar \mu}} \di s <\infty
\end{equation}
while, in view of \eqref{crescitevol_counter_dinuovo} we obtain
$$
\begin{array}{ll}
\text{if $\alpha > -2$, $\quad$ then $\bar \mu < \chi +1$ and } & \quad \disp \lim_{r \ra \infty} \frac{\log \int_{r_0}^r v}{r^{\chi+1-\bar \mu}} < \infty; \\[0.5cm]
\text{if $\alpha = -2$, $\quad$ then $\bar \mu = \chi +1$ and } & \quad \disp \lim_{r \ra \infty} \frac{\log \int_{r_0}^r v}{\log r} < \infty \quad \text{($\le p$ if $\chi = 0$).}
\end{array}
$$
We are in the position to apply Lemma \ref{prop_exi2'}: for $\eta=\gamma_\eps \le \gamma$ small enough, there exists $w \in C^1([r_0, \infty))$ satisfying:
\begin{equation}
\left\{ \begin{array}{l}
\big[ v_g\varphi(w')\big]' \le v_g K \beta l(|w'|) \qquad \text{on } \, [r_0, \infty) \\[0.2cm]
w > 0, \ w'>0 \qquad \text{on } \, [r_0, \infty), \\[0.2cm]
w \le \gamma_\eps \qquad \text{ on } \, [r_0, r_1], \\[0.2cm]
w(r) \ra \infty \qquad \text{as } \, r \ra \infty, \\[0.2cm]
|w'| \le \eps \quad \text{on } \, [r_0,\infty)
\end{array}\right.
\end{equation}
We define the radial function $\bar w(x) = w(r(x))$, and we note that, because of \eqref{compadelta_ricci_1}, $w'>0$ and \eqref{ipo_bbeta}, $\bar w$ solves
\begin{equation}\label{def_potKhasminskii}
\left\{ \begin{array}{l}
\Delta_\varphi \bar w \le K \beta(r) l(w'(r)) \le K b(x) l(w'(r)) \qquad \text{on } \, M \backslash B_{r_0} \\[0.2cm]
\bar w > 0 \qquad \text{on } \, M \backslash B_{r_0}, \\[0.2cm]
\bar w \le \gamma_\eps \qquad \text{ on } \, B_{r_1} \backslash B_{r_0}, \\[0.2cm]
\bar w(x) \ra +\infty \qquad \text{as } \, r(x) \ra \infty, \\[0.2cm]
|\nabla \bar w| \le \eps \quad \text{on } \, M \backslash B_{r_0}
\end{array}\right.
\end{equation}
Let $\Gamma$ be the set of maxima of $u- \bar w$, which is non-empty and compact since $\bar w$ has compact sublevel sets. For each $x \in \Gamma$
\begin{equation}\label{chainimpl}
\begin{array}{lcl}
\disp u(x)-\bar w(x) & \ge & \disp u(\bar x)-\bar w(\bar x) > u^*-\gamma - \gamma_\eps \ge u^*- 2\gamma \\[0.2cm]
& > & \disp \max_{B_{r_0}} u \ge \max_{B_{r_0}} (u-\bar w),
\end{array}
\end{equation}
hence $\Gamma \subset M \backslash \overline{B}_{r_0}$. From the first line in \eqref{chainimpl}, we get
$$
u(x) \ge u^* - 2 \gamma + \bar w(x) \ge u^* - 2 \gamma > u^* -\eta,
$$
and thus $\Gamma \subset \big(M \backslash \overline{B}_{r_0}\big) \cap \{u> u^*-\eta\}$. Furthermore,  for each $x \in \Gamma \backslash \cut(o)$ it holds
$$
|\nabla u(x)|=|\nabla \bar w(x)|= w'\big(r(x)\big) \le \eps.
$$
We claim that the same relation holds even for $x \in \Gamma \cap \cut(o)$. Let $\sigma : [0, r(x)] \ra M$ be a unit speed minimizing geodesic from $o$ to $x$, and for $0< \tau <<1$ define $r_\tau(\cdot) = \tau + \dist(., \sigma(\tau))$. Then, $r_\tau \ge r$, with equality at $x$, and furthermore $r_\tau$ is smooth around $x$. Setting $\bar w_\tau(x) = w(r_\tau(x))$, $w'>0$ implies that $\bar w_\tau \ge \bar w$, with equality at $x$. Hence $x$ is a maximum for $u- \bar w_\tau$, which gives
$$
|\nabla u(x)| = |\nabla w_\tau(x)| = w'\big(r_\tau(x)\big) = w'\big(r(x)\big) \le \eps.
$$
We have therefore shown that $\Gamma \Subset \Omega_{\eta, \eps}$. Equality $|\nabla u|=w'(r)$, combined with $w \in C^1$ and $w'>0$ on $[r_0, \infty)$, guarantee the existence of $\delta >0$ such that $\delta \le |\nabla u| \le \eps$ on $\Gamma$. Using the continuity and positivity of $l$, we can fix a small open neighbourhood $V \Subset \Omega_{\eta, \eps}$ of $\Gamma$ of the form $\{u-\bar w > c\}$, $c$ close enough to $\max\{u-\bar w\}$, such that $l(|\nabla u|) \ge 2^{-1} l(w'(r))$ on $V$. Consequently,
$$
\left\{ \begin{array}{ll}
\disp \Delta_\varphi u \ge 2K b(x)l(|\nabla u|) \ge K b(x)l\big(w'(r(x)\big) \ge \Delta_\varphi \bar w = \Delta_\varphi(\bar w +c) \qquad \text{on } \, V \\[0.3cm]
u = \bar w + c \qquad \text{on } \, \partial V.
\end{array}
\right.
$$
By comparison, $u \le \bar w + c$ on $V$, contradicting the very definition of $V$.
\end{proof}

\begin{remark}
\emph{Unfortunately, Lemma \ref{prop_exi2'} cannot be applied as above to prove Theorem \ref{teo_SMP} also in the range $\alpha > -2$ and $\chi=0$. We recall that, for the $p$-Laplace operator, this corresponds to gradient terms with borderline growth $l(t) \asymp t^{p-1}$. In fact, for \eqref{vebeta} to hold it is necessary that $\bar \mu \ge -\alpha/2$, but on the other hand, because of  \eqref{crescitevol_counter_dinuovo},
$$
\begin{array}{l}
\disp \text{if $\bar \mu < \chi +1 =1$} \quad \text{then} \quad  \liminf_{r \ra \infty} \frac{\log \int_{r_0}^r v}{r^{\chi+1-\bar \mu}} = 0 \quad \text{iff} \quad \bar \mu < \chi -\frac{\alpha}{2} = -\frac{\alpha}{2} \\[0.5cm]
\disp \text{if $\bar \mu = \chi +1 =1$} \quad \text{then} \quad  \liminf_{r \ra \infty} \frac{\log \int_{r_0}^r v}{\log r} \le p \quad \text{does not hold for any $\alpha>-2$}.
\end{array}
$$
Therefore, no choice of $\bar \mu$ is admissible for Lemma \ref{prop_exi2'}.
}
\end{remark}

To better appreciate Theorem \ref{teo_SMP}, we express it for the mean curvature operator, both in the Euclidean space $\R^ m$ and in the hyperbolic space $\HH^m$.

\begin{corollary}\label{cor_SMP_MCO}
Let $l \in C(\R_0^+)$ satisfy
\begin{equation}\label{boundsl_MCO}
l(t) \ge C_1 \frac{t^{1-\chi}}{\sqrt{1+t^2}} \qquad \text{on } \, [0,1],
\end{equation}
for some $\chi \in [0,1]$. Then, $\smp$ holds for the operator
$$
\big(1+r(x)\big)^{\mu}l\big(|\nabla u|\big)^{-1} \diver \left( \frac{\nabla u}{\sqrt{1+|\nabla u|^2}}\right),
$$
\begin{itemize}
\item[(i)] in $\R^ m$, provided that $\mu \le \chi + 1$ and either $\chi>0$ or $\chi=0$ and $m = 2$;
\item[(ii)] in $\HH^m$, provided that $\mu \le \chi$ and $\chi>0$.
\end{itemize}
\end{corollary}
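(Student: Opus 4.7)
The plan is to apply Theorem~\ref{teo_SMP} to $\varphi(t)=t/\sqrt{1+t^2}$, with weight $b(x)=(1+r(x))^{-\mu}$ and the given $l$. First I would verify the structural assumptions \eqref{assum_secODE_1} and \eqref{assum_secODE_altreL_2}: the conditions on $\varphi$ are immediate since $\varphi\in C(\R^+_0)\cap C^1(\R^+)$, $\varphi(0)=0$ and $\varphi'(t)=(1+t^2)^{-3/2}>0$; the middle bound $l(t)\ge C_1\varphi(t)/t^\chi$ on $(0,1]$ is exactly the hypothesis \eqref{boundsl_MCO} (after noticing $\varphi(t)/t^\chi=t^{1-\chi}/\sqrt{1+t^2}$); the pointwise bound $\varphi(t)\le t$ on $[0,1]$ lets me take $p=2$; and near $0$, $t\varphi'(t)/l(t)\lesssim t^{\chi}$, which is integrable since $\chi\ge0$.

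Next, I would fit each ambient geometry into the Ricci bound \eqref{ricciassu} and read off the constants $\kappa,\alpha,\bar\kappa$. For $(i)$ in $\R^m$ one has $\Ricc\equiv 0$, so \eqref{ricciassu} is satisfied with $\kappa=0$ and any $\alpha\ge -2$; the optimal choice, i.e.\ the one maximizing the admissible range of $\mu$, is $\alpha=-2$, which gives $\bar\kappa=\tfrac12(1+\sqrt{1})=1$ and $\chi-\alpha/2=\chi+1$. The assumption $\mu\le\chi+1$ is then precisely the first inequality in \eqref{condi_volumericci}. If $\chi>0$ the first alternative of the ``either'' in \eqref{condi_volumericci} is met; if $\chi=0$, the second alternative forces $\bar\kappa=1\le (p-1)/(m-1)=1/(m-1)$, i.e.\ $m=2$, which is precisely the restriction in the statement. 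Theorem~\ref{teo_SMP} then yields $\smp$.

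For $(ii)$, the canonical bound $\Ricc_{\HH^m}=-(m-1)\metric$ realizes \eqref{ricciassu} with $\kappa=1$, $\alpha=0$, so $\chi-\alpha/2=\chi$, and the hypothesis $\mu\le\chi$ gives the first half of \eqref{condi_volumericci}. Since $\alpha=0>-2$, only the first alternative in \eqref{condi_volumericci} is available, namely $\chi>0$, which is the assumed hypothesis; Theorem~\ref{teo_SMP} again applies and the conclusion follows.

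There is no genuine obstacle in this proof: the entire content is the identification of the optimal $\alpha$ in the Ricci lower bound so as to make the threshold on $\mu$ in \eqref{condi_volumericci} as large as possible. The only borderline subtlety to watch is the Euclidean case with $\chi=0$: there the second alternative of \eqref{condi_volumericci} is saturated by $\bar\kappa=1$, and the dimensional restriction $m=2$ drops out as a byproduct of the structural bound $\varphi(t)\le t$ on $[0,1]$ (which forces $p=2$, hence $(p-1)/(m-1)=1/(m-1)$). Enlarging $p$ is not possible for the mean curvature operator, so this borderline case is sharp within the present framework.
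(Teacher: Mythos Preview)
Your proposal is correct and follows exactly the same route as the paper: apply Theorem~\ref{teo_SMP} with $\varphi(t)=t/\sqrt{1+t^2}$, $p=2$, and the choices $\kappa=0$, $\alpha=-2$ for $\R^m$ and $\kappa=1$, $\alpha=0$ for $\HH^m$. Your verification of the structural hypotheses and of the borderline case $\chi=0$, $m=2$ via $\bar\kappa=1\le(p-1)/(m-1)$ is precisely what the paper leaves implicit.
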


\begin{proof}
Let $\varphi(t) = t/\sqrt{1+t^2}$, and choose $p=2$ in \eqref{assum_secODE_altreL_2}. To recover the Euclidean space set $\kappa=0$, $\alpha = -2$, while for the hyperbolic space set $\kappa=1$, $\alpha =0$. The rest of the proof is a direct application of Theorem \ref{teo_SMP}.
\end{proof}

As a further application of Theorem \ref{teo_SMP}  in the next corollary we obtain a Liouville theorem for bounded solutions of
\begin{equation}\label{Pug_general1}
\Delta_p u \ge b(x) f(u) |\nabla u|^{q} - \bar b(x) \bar f(u) |\nabla u|^{\bar q}.
\end{equation}
\begin{corollary}\label{cor_SMP}
Let $(M,\metric)$ be a $m-$dimensional complete manifold satisfying
\begin{equation}\label{ricciassu_corollary}
\Ricc (\nabla r, \nabla r) \ge -(m-1)\kappa^2\big( 1+r^2\big)^{\alpha/2} \qquad \text{on } \, \mathcal{D}_o,
\end{equation}
for some $\kappa \ge 0$, $\alpha \ge -2$. Consider $0< b \in C(M)$ such that
$$
b(x) \ge C_1\big(1+ r(x)\big)^{-\mu} \qquad \text{on } \, M,
$$
for some $C_1>0$. Let $f,\bar f \in C(\R)$, $\bar b \in C(M)$ and $C>0$ such that
\begin{equation}\label{ipobarbbarf}
\bar b \le Cb \quad \text{on } \, M, \qquad \bar f \le Cf \quad \text{on } \, \R.
\end{equation}
Fix
$$
p \in (1,\infty), \qquad q \in [0, p-1), \qquad \bar q > q
$$
and consider a bounded above solution $u \in C^1(M)$ of
\begin{equation}\label{Pge_general}
\Delta_p u \ge b(x) f(u) |\nabla u|^{q} - \bar b(x) \bar f(u) |\nabla u|^{\bar q}.
\end{equation}
If
\begin{equation}\label{condi_volumericci_coro}
\mu \le p-1-q - \frac{\alpha}{2}
\end{equation}
and $u$ is non-constant, then $f(u^*) \le 0$. In particular, if $u \in C^1(M) \cap L^\infty(M)$ solves \eqref{Pug_general1} with the equality sign, and
\begin{equation}\label{ipobarbbarf_peruguale}
C^{-1} f \le \bar f \le Cf \qquad \text{on } \, \R,
\end{equation}
then, $u$ must be constant in each of the following cases:
\begin{itemize}
\item[(i)] $f<0$ on $(-\infty,t_0)$ and $f>0$ on $(t_0,\infty)$;
\item[(ii)] $f$ has no zeroes.
\end{itemize}
\end{corollary}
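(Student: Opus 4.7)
The plan is to reduce the inequality \eqref{Pge_general} on a high level set with small gradient to a clean inequality of the form $\Delta_p u \ge K b(x) l(|\nabla u|)$, so that a direct appeal to Theorem \ref{teo_SMP} yields the conclusion. I will take $\varphi(t) = t^{p-1}$ and $l(t) = t^q$, so that $l(t) = \varphi(t)/t^{\chi}$ holds with equality for $\chi = p-1-q$. Since $q \in [0, p-1)$ we have $\chi \in (0, p-1]$, in particular $\chi>0$, and the bound $\mu \le \chi - \alpha/2 = p-1-q-\alpha/2$ is exactly the hypothesis \eqref{condi_volumericci_coro}. Thus the first alternative in \eqref{condi_volumericci} of Theorem \ref{teo_SMP} is satisfied, and $\smp$ holds for the operator $(bl)^{-1}\Delta_p$.

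\textbf{Reduction step.} Assume by contradiction that $u$ is non-constant, $u^* < \infty$, and $f(u^*) = 2c > 0$. By continuity, choose $\eta \in (0, u^* - \inf u)$ so small that $f \ge c$ and $f \le f_{\max}$ on $[u^*-\eta, u^*]$. On the set $\Omega_{\eta,\eps} = \{u > u^*-\eta, \, |\nabla u| < \eps\}$, which is nonempty for every $\eps>0$ by completeness of $M$ (as noted in the remark following Definition \ref{def_maxprinc}), write \eqref{Pge_general} as
\[
\Delta_p u \ge |\nabla u|^q \Big[ b(x) f(u) - \bar b(x) \bar f(u) |\nabla u|^{\bar q - q} \Big].
\]
Using $b f(u) \ge b c$, $\bar b \le Cb$ and $\bar f \le Cf \le Cf_{\max}$, and distinguishing the two cases $\bar b(x)\bar f(u) \le 0$ and $\bar b(x)\bar f(u) > 0$, one estimates $\bar b \bar f |\nabla u|^{\bar q -q} \le C^2 b f_{\max} \eps^{\bar q -q}$ in the second case, while the first case is trivial. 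Since $\bar q > q$, choosing $\eps>0$ small enough that $C^2 f_{\max} \eps^{\bar q -q} \le c/2$ yields
\[
\Delta_p u \ge \tfrac{c}{2}\, b(x)\, |\nabla u|^q = K b(x)\, l(|\nabla u|) \qquad \text{on } \Omega_{\eta,\eps},
\]
with $K = c/2 >0$. This directly contradicts $\smp$, and hence $f(u^*) \le 0$.

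\textbf{The main obstacle} is the sign control of the corrective term $-\bar b(x)\bar f(u)|\nabla u|^{\bar q}$: one only has the one-sided bounds $\bar b \le Cb$ and $\bar f \le Cf$, so the product $\bar b \bar f$ need not be controlled when both factors are negative. However this case is harmless, since then the subtracted term is non-positive and the inequality improves. The gradient smallness on $\Omega_{\eta,\eps}$ and the gap $\bar q > q$ are precisely what make the absorption work.

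\textbf{Specialization to $(P_=)$.} In the equality case, apply the previous step both to $u$ and to $\tilde u := -u$. Since $\Delta_p(-u) = -\Delta_p u$, $\tilde u$ solves \eqref{Pge_general} with $\tilde f(t) := -f(-t)$, $\tilde{\bar f}(t) := -\bar f(-t)$ in place of $f, \bar f$; thanks to \eqref{ipobarbbarf_peruguale} these still satisfy $C^{-1} \tilde f \le \tilde{\bar f} \le C \tilde f$. If $u$ is non-constant and bounded, we obtain $f(u^*) \le 0$ and $\tilde f(\tilde u^*) \le 0$, i.e.\ $f(u_*) \ge 0$. In case $(i)$ this forces $u^* \le t_0 \le u_*$, hence $u \equiv t_0$; in case $(ii)$, $f$ has constant sign, so $f(u^*) \le 0 \le f(u_*)$ is impossible unless $u$ is constant.
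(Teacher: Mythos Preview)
Your approach is essentially identical to the paper's: reduce \eqref{Pge_general} on $\Omega_{\eta,\eps}$ to $\Delta_p u \ge K b(x)|\nabla u|^q$ by absorbing the corrective term using $\bar q > q$, then invoke Theorem~\ref{teo_SMP} with $\chi = p-1-q > 0$; the equality case via $\tilde u = -u$ is handled the same way.

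One correction to your ``main obstacle'' paragraph: when both $\bar b < 0$ and $\bar f(u) < 0$, the product $\bar b\bar f(u)$ is \emph{positive}, so the subtracted term $\bar b\bar f(u)|\nabla u|^{\bar q}$ is positive and the inequality is \emph{weakened}, not improved. Your case split therefore does not cover this situation, and neither does the paper's factorization $\Delta_p u \ge b f |\nabla u|^q(1 - C^2|\nabla u|^{\bar q - q})$, which tacitly uses $\bar b\bar f \le C^2 bf$. Both arguments are fine once one assumes $\bar b \ge 0$ (as in all the motivating examples), but your sentence claiming the both-negative case is harmless is simply a sign slip.
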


\begin{remark}
\emph{If $q=0$, under $(i)$ above the only constant solution of \eqref{Pug_general1} with the equality sign is $u \equiv t_0$, while, under $(ii)$, \eqref{Pug_general1} with the equality sign does not admit any constant solution.
}
\end{remark}

\begin{proof}
Suppose by contradiction that $f(u^*) = 4K>0$, and pick $\eta < u^*$ such that $f(t)>2K$ if $t>\eta$. Because of \eqref{Pge_general} and \eqref{ipobarbbarf}, on
$$
\Omega_{\eta,\eps} = \big\{ x \in M \ \ : \ \ u(x)>\eta, \ |\nabla u(x)| < \eps\big\}
$$
we have
$$
\Delta_p u \ge b(x)f(u)|\nabla u|^q \Big( 1 - C^2|\nabla u|^{\bar q-q}\Big) \ge 2K b(x)|\nabla u|^q \big(1-C^2\eps^{\bar q-q}\big),
$$
and since $\bar q>q$ we can choose $\eps>0$ small enough that
\begin{equation}\label{alSMP}
\Delta_p u \ge K b(x)|\nabla u|^q \qquad \text{on } \, \Omega_{\eta,\eps}.
\end{equation}
Set $\chi = p-1-q \in (0,p-1]$. Then, in our assumptions, we can apply Theorem \ref{teo_SMP} to deduce that $(bl)^{-1}\Delta_p$ satisfies $\smp$. Consequently, from \eqref{alSMP} we get $K \le 0$, contradiction.\\
Suppose now, by contradiction, that $u \in C^1(M) \cap L^\infty(M)$ is a non-constant solution of \eqref{Pug_general1} with the equality sign. By the first part of the proof we get $f(u^ *)\le 0$. Next, observe that $\bar u = -u$ solves
\begin{equation}\label{Pge_general_ubar}
\Delta_p \bar u = b(x) f_1(\bar u) |\nabla \bar u|^{q} - \bar b(x) \bar f_1(\bar u) |\nabla \bar u|^{\bar q},
\end{equation}
with $f_1(t) = -f(-t)$ and $\bar f_1(t) = -\bar f(-t)$. In view of \eqref{ipobarbbarf_peruguale}, applying again the first part we obtain $f_1(\bar u^*)\le 0$, that is, $f(u_*) \ge 0$ with $u_* = \inf_M u$. From $f(u^*) \le 0 \le f(u_*)$ and using $(i)$ or $(ii)$, we deduce that $u$ is necessarily constant, contradiction.
\end{proof}

\begin{remark}
\emph{Theorem \ref{teo_SMP} could be improved to include slowly growing solutions of $(P_\ge)$ as in $(ii)$ of Theorem \ref{teo_main_2}, provided that one is able to estimate from below the order of growth of a family of Khas'minskii potentials \eqref{Khasm_family} in a way independent of the origin $o$ and of $\eta, r_0,r_1,\eps$. If this holds, repeating the proof verbatim one shows that any solution $u$ of $(P_\ge)$ on $M$, or on some upper level set, is bounded from above and satifies $f(u^*) \le 0$ whenever
$$
u_+(x) = o\big( \bar w(x) \big) \qquad \text{as } \, r(x) \ra \infty,
$$
with $\bar w$ being any of such Khas'minskii potentials. Growth estimates are achieved provided that one can explicitly exhibit $\bar w$, and this is the case when $l(0)>0$. Indeed, when $l(0)>0$  the first two of \eqref{assum_secODE_altreL_2} are automatically satisfied, and to produce solutions of \eqref{Khasm_family} we can consider radial solutions of 
$$
\big( v_g\varphi(w')\big)' = \sigma v_g \beta \qquad \text{on } \, [r_0,\infty),
$$
for small enough $\sigma>0$. Explicit integration with $w(r_0) = w'(r_0)=0$ gives
$$
w(r) = \int_{r_0}^r \varphi^{-1} \left( \frac{\sigma}{v_g(t)}\int_{r_0}^t v_g(s)\beta(s) \di s\right) \di t.  
$$
This approach has been developed in Section 6 of \cite{prsmemoirs} and in \cite[Thm. 18]{prsoverview}, the latter dealing with inequality
$$
\Delta u \ge (1+r)^{-\mu}f(u)l(|\nabla u|)
$$
on complete manifolds satisfying $\Ricc \ge -(m-1)\kappa^2\metric$, for some $\kappa >0$, for increasing $f$ and for $\mu \in [0,1]$. The conclusion $f(u(x)) \le 0$ on $M$ is shown to hold provided that, as $r(x) \ra \infty$,
$$
u(x) = \left\{ \begin{array}{ll}
o\big( r(x)^{1-\mu}\big) & \quad \text{if } \, \mu \in [0,1), \\[0.2cm]
o\big( \log r(x)\big) & \quad \text{if } \, \mu=1.
\end{array}
\right.
$$
Inspection shows that the case $\mu<1$ well fits with $(ii)$ of Theorem \ref{teo_main_2} (apply with $\varphi(t)=t$, $\sigma = 1-\mu$, $\chi=1$ and use Bishop-Gromov comparison to check the first of \eqref{volgrowth_sigmamagzero}). Case $\mu=1$, on the other hand, has no analogue in Theorem \ref{teo_main_2}.
}
\end{remark}

\subsubsection{Bernstein theorems for prescribed mean curvature}

We now apply $\smp$ to entire graphs with prescribed mean curvature in a warped product $\bar M = \R \times_h M$. We recall that the mean curvature of the totally umbilic slice $\{s=s_0\}$ of $\bar M$ in the upward direction $\partial_s$ is
$$
H_{\partial_s}\big(\{s=s_0\}\big) = - \frac{h'(s_0)}{h(s_0)}.
$$
The next theorem gives an a-priori estimate for entire graphs with prescribed mean curvature, and in particular it characterizes all constant mean curvature entire graphs. For simplicity, we state the result for warped products with $h(s) = \cosh s$, a class including the fibration $\HH^{m+1} = \R \times_{\cosh s} \HH^m$ by hyperspheres $\{s=s_0\}$ of constant mean curvature $H = -\tanh s_0 \in (-1,1)$. 

\begin{theorem}\label{teo_presc_nonconst}
Let $\bar M = \R \times_{\cosh s} M$, for some complete manifold $(M^m, \metric)$ whose Ricci tensor satisfies
$$
\Ricc(\nabla r, \nabla r) \ge -(m-1) \kappa^2(1+r)^2 \qquad \text{on } \, \mathcal{D}_o,
$$
for some constant $\kappa > 0$. Fix a constant $H_0 \in (-1,1)$, and consider an entire geodesic graph of $v : M \ra \R$ with prescribed curvature $H(x) \ge -H_0$ in the upward direction. Then, $v$ is bounded from above and satisfies
\begin{equation}\label{bound_graph}
v^* \le \mathrm{arctanh}(H_0). 
\end{equation}
In particular, 
\begin{itemize}
\item[(i)] there is no entire graph with prescribed mean curvature satisfying $|H(x)| \ge 1$ on $M$;
\item[(ii)] the only entire graph with constant mean curvature $H_0 \in (-1,1)$ in the upward direction is the totally umbilic slice $\{s = \mathrm{arctanh}(H_0)\}$.
\end{itemize}
\end{theorem}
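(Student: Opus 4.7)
\textbf{Proof plan for Theorem \ref{teo_presc_nonconst}.} The strategy is to transform the prescribed mean curvature equation into a differential inequality of the form $\Delta_\varphi u \ge K b(x) l(|\nabla u|)$ on an upper level set and then to apply the strong maximum principle $\smp$ via Theorem \ref{teo_SMP}. Introducing the conformal flow parameter $t = \int_0^s \di\sigma/\cosh\sigma$ and setting $u(x) = t(v(x))$, the map $t : \R \ra I = t(\R)$ is a diffeomorphism onto the bounded interval $I$, so $u : M \ra I$ is automatically bounded, $u$ is constant iff $v$ is constant, and, writing $\lambda(t)=\cosh(s(t))$ so that $\lambda_t/\lambda = \sinh(s(t))$, equation \eqref{prescribed_geodesic} becomes
$$
\diver\!\left(\frac{\nabla u}{\sqrt{1+|\nabla u|^2}}\right) = m\cosh(v) H(x) + m\sinh(v)\,\frac{1}{\sqrt{1+|\nabla u|^2}}.
$$
The constant case is trivial: a constant $v\equiv v_0$ has $H(x) = -\tanh(v_0)$, and $H \ge -H_0$ yields $\tanh(v_0) \le H_0$, i.e.\ \eqref{bound_graph}.

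Assume, by contradiction, that $v$ is non-constant and $v^* > \arctanh(H_0)$, and pick $\eta \in (\arctanh(H_0), v^*)$. Setting $\varphi(t)=t/\sqrt{1+t^2}$ and $l(t)=1/\sqrt{1+t^2}$, the hypothesis $H(x) \ge -H_0$ combined with the displayed equation gives
$$
\Delta_\varphi u \ge l(|\nabla u|)\Bigl[\, m\sinh(v) - m H_0 \cosh(v)\sqrt{1+|\nabla u|^2}\,\Bigr].
$$
On the set $\Omega_{t(\eta),\eps}^u = \{u > t(\eta),\ |\nabla u| < \eps\}$ we have $\tanh(v) > \tanh(\eta) > H_0$, hence for $\eps>0$ sufficiently small (chosen so that $H_0 \sqrt{1+\eps^2} < \tanh(\eta)$ when $H_0>0$; the case $H_0\le 0$ is immediate) the bracket is bounded below by $m\bigl[\tanh(\eta) - H_0\sqrt{1+\eps^2}\bigr] =: K > 0$, using also $\cosh(v) \ge 1$. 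Thus
$$
\Delta_\varphi u \ge K \, l(|\nabla u|) \qquad \text{on } \Omega_{t(\eta),\eps}^u.
$$

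I next verify the hypotheses of Theorem \ref{teo_SMP} with $b\equiv 1$, $\chi=1$, $p=2$, $\mu=0$: $\varphi \in C(\R^+_0)\cap C^1(\R^+)$ with $\varphi'(t) = (1+t^2)^{-3/2}>0$; $l \in C(\R_0^+)$, $l>0$ on $\R_0^+$; $t\varphi'(t)/l(t) = t/(1+t^2) \in L^1(0^+)$; $l(t) = \varphi(t)/t$ on $\R^+$; $\varphi(t) \le t$ on $[0,1]$. For the geometry, the bound $(1+r)^2 \le 2(1+r^2)$ converts the assumption $\Ricc(\nabla r,\nabla r) \ge -(m-1)\kappa^2(1+r)^2$ into the form \eqref{ricciassu} with $\alpha=2$ (and a rescaled $\kappa$), and one has $\mu = 0 \le \chi - \alpha/2 = 0$ with $\chi>0$, so \eqref{condi_volumericci} holds. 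Therefore $(bl)^{-1}\Delta_\varphi$ satisfies $\smp$; applied to the non-constant, bounded function $u$ with $K>0$, this yields the desired contradiction and proves \eqref{bound_graph}.

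Parts (i) and (ii) then follow by straightforward manipulations of \eqref{bound_graph}. For (i), if $H(x)\ge 1$ on $M$ then $H \ge -H_0$ for every $H_0 \in (-1,1)$, whence $v^* \le \arctanh(H_0)$; letting $H_0 \to -1^+$ gives $v^* = -\infty$, impossible. If $H(x)\le -1$, the reflected graph $-v$ (whose mean curvature in the upward direction is $-H \ge 1$, by the $s \mapsto -s$ isometry of $\bar M$) leads to the same contradiction. For (ii), applying \eqref{bound_graph} to $v$ with parameter $H_0$ and to $-v$ with parameter $-H_0$ yields respectively $v^* \le \arctanh(H_0)$ and $v_* \ge -\arctanh(-H_0) = \arctanh(H_0)$, forcing $v \equiv \arctanh(H_0)$. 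The only real technical point is the rearrangement of the mean curvature equation into the standard $\smp$ form on $\Omega_{t(\eta),\eps}^u$, where one must use the smallness of $|\nabla u|$ to dominate the $H_0\cosh(v)$ term by $\sinh(v)\,l(|\nabla u|)$; all other steps are direct applications of Theorem \ref{teo_SMP}.
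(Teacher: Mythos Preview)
Your proof is correct and follows essentially the same route as the paper's: rewrite \eqref{prescribed_geodesic} in the flow parameter $t$, observe that $u$ is automatically bounded since $h^{-1}\in L^1(\pm\infty)$, isolate on $\Omega_{t(\eta),\eps}$ a differential inequality of the form $\Delta_\varphi u \ge K\,l(|\nabla u|)$ with $K>0$, and contradict $\smp$ via Theorem \ref{teo_SMP} with $b\equiv 1$, $\chi=1$, $\mu=0$. The paper likewise splits the cases $H_0>0$ (where the smallness of $|\nabla u|$ is needed to dominate the $H_0\cosh v$ term) and $H_0\le 0$ (immediate), and derives (i), (ii) by the same reflection $v\mapsto -v$ and limiting arguments you give. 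Your explicit treatment of the constant case and your choice $\alpha=2$ (with the observation $(1+r)^2\le 2(1+r^2)$) are the right ones for the stated Ricci hypothesis; the paper writes $\alpha=1$, which appears to be a typo, but either value satisfies $\mu\le\chi-\alpha/2$.
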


\begin{proof}
Define $t, \lambda(t)$ and $u(x)$ as in Subsection \ref{subsec_bernstein} in the Introduction with $h(s) = \cosh s$:
$$
t(s) = \int_0^s \frac{\di \sigma}{\cosh \sigma} = 2 \arctan(e^s) - \frac{\pi}{2}, \quad \lambda(t) = h(s(t)), \quad u(x) = t(v(x)).
$$
Note that $u : M \ra \left(-\frac{\pi}{2},\frac{\pi}{2}\right)$. Since $\lambda(u) = \cosh v$ and $\lambda_t(u)/\lambda(u) = \sinh v$, by \eqref{prescribed_geodesic} $u$ satisfies
\begin{equation}
\begin{array}{lcl}
\disp \diver\left( \frac{\nabla u}{\sqrt{1+|\nabla u|^2}} \right) & = & \disp m \cosh v \left[ H(x) + \tanh v \frac{1}{\sqrt{1+|\nabla u|^2}}\right] \\[0.5cm]
& \ge & \disp m \cosh v \left[- H_0 + \tanh v \frac{1}{\sqrt{1+|\nabla u|^2}}\right].
\end{array}
\end{equation}
Suppose, by contradiction, that the following upper level set of $v$ (hence, of $u$) is non-empty for some $\eta>0$: 
$$
\Omega_\eta =\{ \tanh v > H_0 + \eta\}.
$$
Then,
\begin{equation}
\disp \diver\left( \frac{\nabla u}{\sqrt{1+|\nabla u|^2}} \right) \ge \frac{m \cosh v}{\sqrt{1+|\nabla u|^2}} \left[\eta - H_0(\sqrt{1+|\nabla u|^2} -1)\right] \qquad \text{on } \, \Omega_\eta.
\end{equation} 
If $H_0<0$, from $\cosh v \ge 1$ we deduce 
\begin{equation}\label{eq_prescribed_nonconst}
\disp \diver\left( \frac{\nabla u}{\sqrt{1+|\nabla u|^2}} \right) \ge \frac{m\eta}{\sqrt{1+|\nabla u|^2}} \qquad \text{on } \, \Omega_\eta.
\end{equation} 
On the other hand, if $H_0>0$, for $\eps>0$ we consider the set $\Omega_{\eta,\eps} = \Omega_\eta \cap \{ |\nabla u|< \eps\}$. Note that $\Omega_{\eta,\eps}$ is non-empty by Ekeland's principle, since $M$ is complete. If $\eps$ is sufficiently small, the term in square brackets is less than $\eta/2$, and since $\cosh v \ge 1$ we deduce
\begin{equation}\label{eq_SMP_prescribed}
\disp \diver\left( \frac{\nabla u}{\sqrt{1+|\nabla u|^2}} \right) \ge \frac{m\eta}{2\sqrt{1+|\nabla u|^2}} \qquad \text{on } \Omega_{\eta,\eps}.
\end{equation} 
We now apply Theorem \ref{teo_SMP} with the choices $\alpha = 1$, $\mu = 0$, $\chi = 1$ to deduce the validity of $\smp$ for the operator $l^{-1} \Delta_\varphi$, with 
$$
\varphi = \frac{t}{\sqrt{1+t^2}}, \qquad l(t) = \frac{1}{\sqrt{1+t^2}}.
$$
Since $u$ is bounded from above, applying $\smp$ to \eqref{eq_SMP_prescribed} (for $H_0>0$) or to \eqref{eq_prescribed_nonconst} (for $H_0<0$) we reach the desired contradiction.\par
To prove $(i)$, suppose that $|H(x)| \ge 1$ on $M$. Since $\cosh s$ is even, the graph of $-v$ has curvature $-H(x)$ in the upward direction. Thus, up to replacing $v$ with $-v$ we can suppose that $H(x) \ge 1$. Applying the first part of the theorem to any $H_0 > -1$ we obtain $v^* \le \mathrm{arctanh}(H_0)$, and the non-existence of $v$ follows by letting $H_0 \ra -1$.\par
To prove $(ii)$, let $H(x) = -H_0 \in (-1,1)$ be the mean curvature of the graph of $v$ in the upward direction. Then, Theorem \ref{teo_presc_nonconst} gives $\tanh v^* \le H_0$. On the other hand, the graph of $-v$ has mean curvature $H(x) = H_0$ in the upward direction, and applying again Theorem \ref{teo_presc_nonconst} we deduce $\tanh [(-v)^*] \le -H_0$, that is, $\tanh v_* \ge H_0$. Combining the two estimates gives $v \equiv \mathrm{arctanh}(H_0)$, as required.   
\end{proof}

\begin{remark}
\emph{If $H_0<0$, to conclude from \eqref{eq_prescribed_nonconst} it is sufficient to require the validity of $\wmp$.
}
\end{remark}

\begin{remark}
\emph{Observe that $(ii)$ generalizes item $(ii)$ in Do Carmo-Lawson Theorem \ref{teo_docarmolawson}: it is sufficient to apply Theorem \ref{teo_presc_nonconst} to $\HH^{m+1}$ with the warped product structure $\R \times_{\cosh r} \HH^m$. 
}
\end{remark}

\begin{remark}
\emph{The above result can be generalized, with the same proof, to warped products $\R \times_h M$ for $h$ satisfying
$$
\left\{ \begin{array}{l}
h \ \ \text{ even},\\[0.2cm]
h^{-1} \in L^1(-\infty) \cap L^1(+\infty), \\[0.2cm]
(h'/h)' >0 \qquad \text{on } \, \R.
\end{array}\right.
$$
We leave the statement to the interested reader.
}
\end{remark}

\section{The compact support principle}\label{sec_CSP}

Consider the problem
\begin{equation}\label{csp_problem}
\left\{ \begin{array}{l}
\Delta_\varphi u \ge b(x)f(u)l(|\nabla u|) \qquad \text{on } \, \Omega \, \text{ end of $M$.} \\[0.2cm]
\disp u \ge 0, \qquad \lim_{x \in \Omega, \, x \ra \infty} u(x) = 0.
\end{array}\right.
\end{equation}
We recall that an end $\Omega \subset M$ is a connected component with non-compact closure of $M\backslash K$, for some compact set $K$. In this section, we investigate the necessity and sufficiency of condition
\begin{equation}\label{KO_zero}\tag{KO$_0$}
\frac{1}{K^{-1} \circ F} \in L^1(0^+)
\end{equation}
for the validity of the compact support principle $\csp$, that is, the statement that each $u$ solving \eqref{csp_problem} has compact support. We assume the following:

\begin{equation}\label{assumptions_CSP_necessity}
\left\{\begin{array}{l}
\varphi \in C(\R^+_0) \cap C^1(\R^+), \qquad \varphi(0)=0, \qquad \varphi'>0 \ \text{ on } \, \R^+, \\[0.3cm]
f \in C(\R), \qquad f \ge 0 \ \text{ in } \, (0, \eta_0), \ \text{ for some } \, \eta_0 \in (0, \infty), \\[0.3cm]
l \in C(\R^+_0), \qquad l>0 \ \text{on } \, \R^+, 
\end{array}\right.
\end{equation}
and moreover
\begin{equation}\label{phiel_solozero}
\frac{t \varphi'(t)}{l(t)} \in L^1(0^+).
\end{equation}
Having defined $F,K$ as in \eqref{def_Fe_intro} and \eqref{def_K}, that is,
\begin{equation}\label{def_K_CSP}
K(t) = \int_0^t \frac{s \varphi'(s)}{l(s)}\di s, \qquad F(t) = \int_0^t f(s) \di s,
\end{equation}
set $K_\infty = \lim_{t \ra \infty} K(t) \in (0, \infty]$; since $\varphi'>0$, the inverse $K^{-1} : [0, K_\infty) \ra \R^+$ exists, and \eqref{KO_zero} is meaningful. In most of the results, we also require
\begin{equation}\label{assu_perCSP}
\left\{\begin{array}{l}
\text{$f$ is $C$-increasing on $[0, \eta_0)$,} \\[0.2cm]
\text{$l$ is $C$-increasing on $[0, \xi)$, for some $\xi >0$.}
\end{array}\right.
\end{equation}
We underline that condition $f(0)l(0)=0$ does not appear in \eqref{assumptions_CSP_necessity}. In fact, some of the next results do not need it. As usual, having fixed a relatively compact, smooth open set $\mathcal{O} \subset M$ we denote with $r(x) = \mathrm{dist}(x, \mathcal{O})$.


\subsection{Necessity of \eqref{KO_zero_intro} for the compact support principle}
Suppose the failure \eqref{KO_zero_intro_FMP} of the Keller-Osserman condition. Because of Theorem \ref{teo_FMP2}, under assumptions \eqref{assumptions_CSP_necessity}, \eqref{phiel_solozero} and $f(0)l(0)=0$ each $C^1$ solution of \eqref{csp_problem} \emph{with the equality sign} must satisfy $\fmp$, and consequently it cannot be compactly supported. However, finding solutions with the equality sign for \eqref{csp_problem}, and especially proving their $C^1$-regularity, seems to be tricky in the generality of \eqref{assumptions_CSP_necessity} and \eqref{phiel_solozero}. For this reason, we follow a different path producing, on \emph{each} complete manifold, radial solutions of inequality \eqref{csp_problem} which are positive on $\Omega= M\backslash B_{r_0}(\mathcal{O})$. The $C^1$-regularity will be therefore a consequence of the assumption that the radial function be smooth, that is, that the origin $\mathcal{O}$ be a pole of $M$.\par
The key step is provided by the following theorem that considers the exterior Dirichlet problem. Fix $r_0>0$ and functions $v,\beta$ satisfying
\begin{equation}\label{ipo_betav_CSP}
\begin{array}{l}
v \in C^1\big([r_0, \infty)\big), \qquad v>0, \ v' \ge 0 \quad \text{on } \, [r_0, \infty); \\[0.2cm]
\beta \in C\big([r_0, \infty)\big), \qquad \beta>0 \quad \text{on } \, [r_0,\infty).
\end{array}
\end{equation}
For $\eta, \xi >0$, define $f_\eta, l_\xi$ as in \eqref{227}, that is,
$$
f_\eta = \max_{[0,\eta]}f, \qquad l_\xi = \max_{[0,\xi]} l.
$$
We are ready to state
\begin{theorem}\label{teo_exteriorDiri}
Let $\varphi, f, l$ satisfy \eqref{assumptions_CSP_necessity} and
\begin{equation}\label{assu_adicionais}
\left\{\begin{array}{lcl}
\text{$f$ is non-decreasing on $(0,\eta_0)$,}\\[0.2cm]
l \in \lip_\loc(\R^+), \\[0.2cm]
f(0)l(0)=0.
\end{array}\right.
\end{equation}
Fix $r_0>0$ and let $v,\beta$ as in \eqref{ipo_betav_CSP}.
Then, for each $R>0$, $\xi \in (0,1)$ and $\eta \in (0, \eta_0)$ (with $\eta_0$ as in \eqref{assum_secODE}) satisfying
\begin{equation}\label{theuniformbound}
\frac{v(r_0+R)}{v(r_0)} \varphi\left(\frac{\eta}{R}\right) + f_\eta l_{\xi} \left[\sup_{[r_0,r_0+R)} \frac{1}{v(r)} \int_{r_0}^r v(s)\beta(s) \di s\right]  < \varphi (\xi),
\end{equation}
there exists a solution $z \in C^1([r_0,\infty))$ of
\begin{equation}\label{eq_exteriorDiri}
\left\{\begin{array}{ll}
\big[v \varphi(z')\big]' = \beta v f(z) l(|z'|) & \quad \text{on } \, [r_0, \infty) \\[0.2cm]
z(r_0) = \eta, \qquad -\xi < z' \le 0 & \quad \text{on } \, [r_0, \infty).
\end{array}\right.
\end{equation}
Furthermore, if
\begin{equation}\label{varphinonparab_model}
\varphi^{-1} \left(\frac{c}{v(r)}\right) \in L^1(\infty), \qquad \text{for some constant } \, c>0, 
\end{equation}
there exists $\eta_1= \eta_1(v,c,\varphi)$ such that, for each $\eta \in (0,\min\{\eta_0,\eta_1\})$ satisfying \eqref{theuniformbound}, $z(r) \ra 0$ as $r \ra \infty$.
\end{theorem}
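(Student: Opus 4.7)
The plan is to build $z$ as the monotone limit, as $R'\to\infty$, of solutions $z_{R'}$ to a family of two-point Dirichlet problems on expanding intervals $[r_0,r_0+R']$ with boundary data $z_{R'}(r_0)=\eta$, $z_{R'}(r_0+R')=0$, together with the sign constraints $-\xi<z_{R'}'\le 0$ and $0\le z_{R'}\le\eta$. For fixed $R'$ (starting from the $R$ in the hypothesis and then letting $R'\uparrow\infty$), $z_{R'}$ is constructed by a Browder--Leray--Schauder fixed-point argument patterned on the proof of Theorem~\ref{exi2}, after truncating $f$ outside $[0,\eta]$ and $l$ outside $[0,\xi]$ exactly as there. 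Setting $X=C^1([r_0,r_0+R'])$, the homotopy is
\begin{equation*}
\mathcal H[z,\sigma](r) = \eta + \int_{r_0}^r \varphi^{-1}\!\left(\frac{1}{v(s)}\Big[\delta(z,\sigma) + \sigma\int_{r_0}^s v\beta f(z)l(|z'|)\,d\tau\Big]\right)\!ds,
\end{equation*}
with $\delta(z,\sigma)\le 0$ uniquely chosen so that $\mathcal H[z,\sigma](r_0+R')=0$.

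The sign of $\delta$ keeps the argument of $\varphi^{-1}$ non-positive, forcing $\mathcal H[z,\sigma]'\le 0$, while \eqref{theuniformbound} delivers the admissible range for $\delta$ uniformly in $\sigma\in[0,1]$: its first summand $\tfrac{v(r_0+R)}{v(r_0)}\varphi(\eta/R)$ supplies the lower bound $|\delta|\ge v(r_0+R)\varphi(\eta/R)$ required to reach the right-hand endpoint at zero, whereas the product $f_\eta l_\xi\sup_r\tfrac{1}{v(r)}\int_{r_0}^r v\beta$ bounds the nonlinear perturbation and, using $v'\ge 0$, secures the pointwise estimate $|\varphi(z_{R'}')|<\varphi(\xi)$. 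Compactness and continuity of $\mathcal H$ then proceed verbatim from the proof of Theorem~\ref{exi2} (note that \eqref{aepi} there imposes no monotonicity on $\wp$), Browder's theorem produces the fixed point $z_{R'}$, and Lemma~\ref{lem_ODE} together with the sign of $\delta$ yields the required $0\le z_{R'}\le\eta$ and $-\xi<z_{R'}'\le 0$.

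Next, letting $R_n'\uparrow\infty$, Proposition~\ref{prop_serrin} applied on $[r_0,r_0+R_n']$ to the pair $(z_{R_n'},z_{R_{n+1}'})$ --- using $z_{R_n'}=z_{R_{n+1}'}$ at $r_0$ and $z_{R_n'}=0\le z_{R_{n+1}'}$ at $r_0+R_n'$, together with $|z_{R_n'}'|+|z_{R_{n+1}'}'|$ locally bounded away from zero via the ODE --- gives $z_{R_n'}\le z_{R_{n+1}'}$. The uniform bound $|z_{R_n'}'|<\xi$ combined with the ODE yields equi-$C^1$ control on compact subintervals of $[r_0,\infty)$, so along a subsequence $z_{R_n'}\to z$ in $C^1_\loc$, producing $z\in C^1([r_0,\infty))$ solving \eqref{eq_exteriorDiri}. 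For the decay statement under \eqref{varphinonparab_model}, the function $\Psi(r)=\int_r^\infty\varphi^{-1}(c/v(s))\,ds$ is a well-defined, positive, $C^1$ supersolution with $[v\varphi(\Psi')]'\equiv 0\le\beta v f(\Psi)l(|\Psi'|)$ and $\Psi(r)\to 0$ at infinity; fixing $\eta_1=\Psi(r_0)$ and comparing $z_{R_n'}$ with $\Psi$ via Proposition~\ref{prop_serrin} on $[r_0,r_0+R_n']$ (the boundary comparison holds by construction and $|\Psi'|>0$ covers the gradient hypothesis), one obtains $z_{R_n'}\le\Psi$; letting $n\to\infty$ gives $z\le\Psi$, whence $z(r)\to 0$.

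The principal obstacle is the uniform a priori control of $\delta(z,\sigma)$: without a monotonicity hypothesis on $l$ and with $v$ potentially of fast growth, the crude estimate $|\delta|\le\tfrac{f_\eta l_\xi}{v(r_0)}\int_{r_0}^{r_0+R}v\beta$ becomes useless when $\int v\beta$ diverges, and so one must exploit the $\sup$-normalized form appearing in \eqref{theuniformbound}. Extracting it requires a bootstrap: assume $|z'|<\xi$ on the homotopy class, use $l(|z'|)\le l_\xi$ to dominate $\int v\beta f(z)l(|z'|)$ by $f_\eta l_\xi\int v\beta$, and then combine the monotonicity of $v$ with a pointwise-in-$r$ rearrangement of the ODE integral representation to derive $|\varphi(z'(r))|<\varphi(\xi)$ in terms of the supremum in \eqref{theuniformbound}. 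A secondary but non-trivial issue is the continuity of $\sigma\mapsto\delta(z,\sigma)$, which relies on the strict monotonicity $\varphi'>0$ to guarantee, via the implicit function theorem, that the equation $\mathcal H[z,\sigma](r_0+R')=0$ determines $\delta$ uniquely and continuously.
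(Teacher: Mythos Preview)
Your overall architecture matches the paper's: exhaust by Dirichlet solutions $z_{R'}$ on $[r_0,r_0+R']$, order them via Proposition~\ref{prop_serrin}, pass to the $C^1_{\loc}$ limit, and compare with $\Psi(r)=\int_r^\infty\varphi^{-1}(c/v)$ for the decay. The gap is in the uniform gradient bound $|z_{R'}'|<\xi$. Condition~\eqref{theuniformbound} involves only the \emph{fixed} interval $[r_0,r_0+R)$ --- both summands are computed there --- so it cannot directly control $|z_{R'}'|$ for $R'>R$; the bootstrap in your last paragraph still invokes that same supremum and hence does not close. Relatedly, to even \emph{construct} $z_{R'}$ by Leray--Schauder for large $R'$ you need an a~priori $C^1$ bound, and merely truncating $l$ outside $[0,\xi]$ is not enough when $\varphi$ itself is bounded (e.g.\ the mean curvature operator).

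The paper repairs both issues at once. First it modifies $\varphi$ to $\bar\varphi$ with $\bar\varphi(\infty)=\infty$, agreeing with $\varphi$ on $[0,h]$ for $h\ge\max\{1,2\eta/R\}$, and truncates $l$ to $\bar l\le l_\xi$; then Theorem~\ref{exi2} (after the change of variable $t=r_0+jR-r$) produces $z_j$ on $[r_0,r_0+jR]$ for each $j$, with some possibly $j$-dependent gradient bound $\bar\xi_j$. Next comes the step you have but do not exploit: from $[v\bar\varphi(z_j')]'\ge0$ and $v'\ge0$ one sees $\bar\varphi(z_j')$ is nondecreasing in $r$, whence $|z_j'|\le|z_j'(r_0)|$; then the comparison $z_j\le z_{j+1}$ (carried out on the subinterval where $z_j>0$, so that Lemma~\ref{lem_ODE} gives $|z_j'|>0$ there) together with $z_j(r_0)=z_{j+1}(r_0)=\eta$ forces $|z_{j+1}'(r_0)|\le|z_j'(r_0)|$, hence $|z_j'|\le|z_1'(r_0)|$ for all $j$. \emph{Only now} does \eqref{theuniformbound} enter, to bound $|z_1'(r_0)|<\xi$ via the explicit estimate from Theorem~\ref{exi2} on the first interval; since $|z_j'|<\xi\le h$, the $z_j$ are a~posteriori solutions for the original $\varphi,l$. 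The ingredients are all present in your proposal, but the logical order matters: the uniform bound is a consequence of convexity plus monotonicity plus \eqref{theuniformbound} on $[r_0,r_0+R]$ alone, not of \eqref{theuniformbound} applied interval by interval.
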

\begin{remark}\label{rem_varphinonparab}
\emph{Condition \eqref{varphinonparab_model}, to be meaningful, needs to be considered on the interval of integration of the type $[r_c, \infty)$ where the integrand is well defined, that is, because of the monotonicity of $v$, for $c < v(r_c)\varphi(\infty)$. The existence of such $r_c$ is implicit since the validity of \eqref{varphinonparab_model} and the monotonicity of $v$ force $\lim_{r \ra \infty} v(r) = \infty$. 
}
\end{remark}

\begin{proof}
Set 
$$
h = \max\left\{1, \, 2 \frac \eta R \right\}.
$$
We define $\bar \varphi, \bar l$ on $\R^+_0$ as follows:
$$
\begin{array}{l}
\disp \bar \varphi(t) = \varphi(t) \ \text{ on } \, [0, h], \qquad \bar \varphi(t) = \varphi(h) + (t-h) \ \text{ on } \, (h, \infty);\\[0.2cm]
\bar l \in C(\R^+_0), \qquad \bar l = l \quad \text{on } \, [0,\xi], \qquad 0< \bar l \le l_{\xi} \quad \text{on } \, \R^+,
\end{array}
$$
and we extend $\bar \varphi$ to an odd function on the entire $\R$. For each $j \in \mathbb{N}$, $j \ge 1$ set also
$$
\qquad \wp_j(t) = v(r_0+jR-t), \qquad a_j(t) = \beta(r_0+jR-t),
$$
and let $w_j$ be a solution of the Dirichlet problem
\begin{equation}\label{eq_dirineum_exterior}
\left\{\begin{array}{l}
\big[ \wp_j \bar \varphi(w_t) \big]_t = a_j \wp_j f(w)\bar l(|w_t|) \quad \text{on } \, [0, jR], \\[0.2cm]
w(0)=0, \qquad w(jR)= \eta, \\[0.2cm]
0 \le w \le \eta, \qquad w_t \ge 0 \quad \text{on } \, [0, jR], \\[0.2cm]
\end{array}\right.
\end{equation}
where the subscript $t$ denotes differentiation in the $t$ variable. We stress that $w_j$ exists for each $j$. Indeed, we shall apply Theorem \ref{exi2} with the parameter $\xi$ replaced by some suitably chosen $\bar \xi$. Note that \eqref{restrict} is satisfied up to choosing $\bar \xi$ sufficiently large, because $\bar\varphi(\infty) = \infty$ and $\bar l_{\bar \xi} \le l_{\xi}$. Observe that $\bar \xi$ might depend on $j$, but this does not affect the rest of the proof. From $\wp' \le 0$, again by Theorem \ref{exi2} we deduce
\begin{equation}\label{esti_deriwj}
0 \le (w_j)_t \le \bar \varphi^{-1} \left( \frac{\wp(0)}{\wp(jR)} \bar \varphi\left( \frac{\eta}{jR}\right) + f_\eta l_{\xi} \left[\sup_{[0,jR]} \frac{1}{\wp_j(t)} \int_0^t \wp_j(s) a_j(s) \di s\right] \right).
\end{equation}
Set $z_j(r) = w_j(r_0+jR-r)$, and note that $z_j$ solves
\begin{equation}\label{eq_dirineum_exterior_j}
\left\{\begin{array}{l}
\big[ v \bar \varphi(z_j') \big]' = \beta v f(z_j)\bar l(|z_j'|) \quad \text{on } \, (r_0, r_0+jR), \\[0.2cm]
z_j(r_0)=\eta, \qquad z_j(r_0+jR)=0 \\[0.2cm]
0 \le z_j \le \eta, \qquad z_j' \le 0 \quad \text{on } \, [r_0, r_0+jR]. \\[0.2cm]
\end{array}\right.
\end{equation}
Next, we estimate the derivative $z_j'$ uniformly in $j$. First, observe that integrating on $[t_1,t_2]$ the inequality $[v \bar \varphi(z')]' \ge 0$ that follows from \eqref{eq_dirineum_exterior_j} and \eqref{assum_secODE}, we deduce
$$
v(t_2) \big[\bar \varphi\big(z'(t_2)\big)- \bar \varphi\big(z'(t_1)\big)\big] \ge \big[v(t_1)-v(t_2)\big] \bar \varphi\big(z'(t_1)\big) \ge 0.
$$
Using $\bar \varphi(z') \le 0$ and $v'\ge 0$ we conclude that $\bar \varphi(z')$, hence $z'$, is increasing. In particular, since $z_j' \le 0$, we have $|z_j'| \le |z_j'(r_0)|$.\\[0.2cm]
\noindent \emph{Claim: } $\{z_j\}$ in an increasing sequence, \\
We show that $z_j \le z_{j+1}$ on $[r_0,r_0+jR]$. Applying Lemma \ref{lem_ODE} to $w_j$ and rephrasing for $z_j$, there exists $r_j \in (r_0,r_0+jR]$ such that $z_j >0$, $z_j'<0$ on $[r_0, r_j)$ while $z_j = 0$ on $[r_j,r_0+jR)$. On $(r_0, r_j)$ it holds
\begin{equation}\label{eq_dirineum_exterior_j0}
\left\{\begin{array}{l}
\big[ v \bar \varphi(z_j') \big]' = \beta v f(z_j)\bar l(|z_j'|) \quad \text{on } \, (r_0, r_j), \\[0.2cm]
\big[ v \bar \varphi(z_{j+1}') \big]' = \beta v f(z_{j+1})\bar l(|z_{j+1}'|) \quad \text{on } \, (r_0, r_j), \\[0.2cm]
z_j(r_0)=z_{j+1}(r_0)=\eta, \qquad z_j(r_j)=0 \le z_{j+1}(r_j).
\end{array}\right.
\end{equation}
The inequality $z_j \le z_{j+1}$ on $[r_0,r_j]$, hence on $[r_0, r_0+jR]$, is then a consequence of the comparison result in Proposition \ref{prop_serrin} applied to the model manifold $M_g = [r_0,\infty) \times \Sph^{m-1}$ with the radially symmetric $C^1$-metric
$$
\di r^2 + g(r)^2 ( \, , \, )_1, \qquad \text{with} \qquad g(r) = v(r)^{\frac{1}{m-1}},
$$
recall also Remark \ref{rem_regularitymetric}.\\[0.2cm]
The convexity and monotonicity of $z_j$, together with the above claim, imply the uniform estimate $|z_j'| \le |z_1'(r_0)|$. Changing variables in \eqref{esti_deriwj} and exploiting \eqref{theuniformbound},
$$
\begin{array}{lcl}
|z_j'| & \le & \disp |z_1'(r_0)| \\[0.5cm]
& \le & \disp \bar \varphi^{-1} \left( \frac{v(r_0+R)}{v(r_0)} \bar \varphi\left( \frac\eta R\right) + f_\eta l_{\xi}\left[ \sup_{[r_0,r_0+R]} \frac{1}{v(r)} \int_{r_0}^r v(s)\beta(s) \di s\right] \right)  < \xi,
\end{array}
$$
where we used again the identity $\bar \varphi = \varphi$ on $[0,h]$, the definition of $h$ and $\xi < 1$. Thererefore, by Ascoli-Arzel\'a the sequence $\{z_j\}$ converges locally uniformly to a solution $z \in C^1([r_0,\infty))$ of
\begin{equation}\label{eq_dirineum_exterior}
\left\{\begin{array}{l}
\big[ v \bar \varphi(z') \big]' = \beta v f(z)\bar l(|z'|) \quad \text{on } \, (r_0, \infty), \\[0.2cm]
z(r_0)=\eta, \\[0.2cm]
0 \le z \le \eta, \qquad -\xi < z' \le 0 \quad \text{on } \, [r_0, \infty),
\end{array}\right.
\end{equation}
Since $\bar \varphi = \varphi$ and $\bar l = l$ on $(0,\xi) \subset (0,1)$, $z$ is the desired solution of \eqref{eq_dirineum_exterior}. Eventually, we assume \eqref{varphinonparab_model}, which in particular implies that $\lim_{r \ra \infty} v(r) = \infty$, and we choose $r_c$ such that
$$
r_c \ge r_0, \quad \frac{c}{v(r_c)} \le \varphi(1).
$$
Define
$$
\bar z(r) = \int_r^{\infty} \varphi^{-1} \left( \frac{c}{v(s)} \right) \di s \qquad \text{on } \, [r_c, \infty).
$$
From $\bar \varphi = \varphi$ on $[0,h] \supset [0,1]$, $\bar z$ solves $0 = \big[v \varphi(\bar z')\big]' = \big[v\bar \varphi(\bar z')\big]' = 0$ on $[r_c,\infty)$. Choose now 
$$
\eta_1 = \int_{r_c}^\infty \varphi^{-1} \left( \frac{c}{v(s)} \right) \di s
$$
and consider $\eta$ satisfying the further restriction $\eta \in (0,\min\{\eta_0,\eta_1\})$. For $j$ large enough, since $z_j(r_c) \le z_j(r_0) = \eta < \eta_1 = \bar z(r_c)$, by the comparison Proposition \ref{prop_serrin} and the non-negativity of $\beta,f(z_j)$ and $l(|z_j'|)$ we get $\bar z \ge z_j$ on $[r_c, r_0+jR]$, thus $\bar z \ge z$ on $[r_c, \infty)$. The thesis follows since $\bar z \ra 0$ as $r \ra \infty$.
\end{proof}

We are ready to prove our main result, Theorem \ref{teo_necessity_CSP_intro} in the Introduction, in the following more general form: it says, loosely speaking, that there is no geometric obstruction for \eqref{KO_zero} to be necessary for the compact support principle.

\begin{theorem}\label{teo_necessitybello}
Let $(M^m, \metric)$ be a complete manifold, and let $\varphi, f,l$ satisfy \eqref{assumptions_CSP_necessity}, \eqref{phiel_solozero}, \eqref{assu_perCSP} and
$$
\begin{array}{l}
\disp l \in \lip_\loc\big((0,\xi_0)\big) \\[0.2cm]
f(0)l(0)=0.
\end{array}
$$
Then, for each
\begin{itemize}
\item[] origin $\mathcal{O} \subset M$ with associated distance $r(x) = \mathrm{dist}(x,\mathcal{O})$,
\item[] $r_0>0$, $\xi \in (0,\xi_0)$ (with $\xi_0$ as in \eqref{assu_perCSP}),
\item[] $0 < b \in C\big(M \backslash B_{r_0}(\mathcal{O})\big)$,
\end{itemize}
there exists $\eta \in (0,\eta_0)$ sufficiently small and a radial solution $u \in \lip(M \backslash B_{r_0}(\mathcal{O}))$ of
$$
\left\{\begin{array}{l}
\Delta_\varphi u \ge b(x) f(u) l(|\nabla u|) \qquad \text{weakly on } \, M \backslash B_{r_0}(\mathcal{O}), \\[0.2cm]
0 \le u \le \eta \quad \text{on } \, M \backslash B_{r_0}(\mathcal{O}),  \\[0.2cm]
u = \eta \quad \text{on } \, \partial B_{r_0}(\mathcal{O}), \qquad  u(x) \ra 0 \quad \text{as } \, r(x) \ra \infty, \\[0.2cm]
|\nabla u| < \xi \qquad \text{on } \, M \backslash B_{r_0}(\mathcal{O}).
\end{array} \right.
$$
Moreover, if  \eqref{KO_zero_intro_FMP} holds then $u>0$ on $M \backslash \mathcal{O}$. In particular, if $\mathcal{O}$ is a pole for $M$, $u \in C^1\big( M \backslash B_{r_0}(\mathcal{O})\big)$ and \eqref{KO_zero_intro} is necessary for the validity of the compact support principle $\csp$.
\end{theorem}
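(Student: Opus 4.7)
\emph{Strategy and geometric weights.} I would produce $u$ as the radial transplant $u(x)=z(r(x))$ of a solution to the exterior Dirichlet ODE of Theorem~\ref{teo_exteriorDiri} applied on $[r_0,\infty)$, with weights $v,\beta$ chosen so that $v'/v$ dominates $\Delta r$ in the barrier sense on $M\setminus\overline{B}_{r_0}(\mathcal{O})$ and $\beta(r(x))\ge b(x)$. The main obstacle is building $v$ under only the completeness of $M$: each closed ball $\overline{B}_R(\mathcal{O})$ is compact, so Ricci is bounded below on the annulus $\{r_0\le r(x)\le R\}$, and the standard Calabi upper-barrier argument (smooth upper barriers to $r$ along minimizing geodesics from $\mathcal{O}$) furnishes a constant $\Lambda_R$ with $\Delta r\le\Lambda_R$ in the barrier sense there. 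I would then pick continuous non-decreasing $\Phi:[r_0,\infty)\to\R^+$ with $\Phi(r)\ge\Lambda_r$ growing fast enough that $v(r):=\exp\!\bigl(\int_{r_0}^r\Phi(s)\,ds\bigr)$ satisfies $\varphi^{-1}(c/v)\in L^1(\infty)$ for every $c>0$ (any doubly exponential $v$ works, and faster growth helps both requirements simultaneously, so there is no real tension). Set $\beta(r):=1+\sup\{b(x):r_0\le r(x)\le r\}$. Because Theorem~\ref{teo_exteriorDiri} requires $f$ non-decreasing while here $f$ is only $C$-increasing, I would additionally substitute $\tilde f(t):=\max_{0\le s\le t}f(s)$, which is continuous, non-decreasing, has $\tilde f(0)=0$, and satisfies $f\le\tilde f\le Cf$; by Lemma~\ref{lem_mettimaosigmatau_novo} the Keller--Osserman conditions for $f$ and $\tilde f$ are equivalent.

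\emph{Solving the ODE and transplanting.} The uniform bound~\eqref{theuniformbound} is achievable by first shrinking $R\in(0,1)$ so that the weighted integral term, dominated by $R\,\beta(r_0+R)$, is less than $\varphi(\xi)/[2(1+f(0))l_\xi]$, and then shrinking $\eta\in(0,\min\{\eta_0,\eta_1\})$ so that $\frac{v(r_0+R)}{v(r_0)}\varphi(\eta/R)<\varphi(\xi)/2$. Theorem~\ref{teo_exteriorDiri} then yields $z\in C^1([r_0,\infty))$ with $[v\varphi(z')]'=\beta v\tilde f(z)l(|z'|)$, $z(r_0)=\eta$, $0\le z\le\eta$, $-\xi<z'\le 0$, and $z(r)\to 0$ at infinity. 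Setting $u(x):=z(r(x))$ produces an element of $\lip(M\setminus B_{r_0}(\mathcal{O}))$ with $|\nabla u|=|z'|<\xi$ a.e., $u=\eta$ on $\partial B_{r_0}(\mathcal{O})$, $0\le u\le\eta$, and $u\to 0$ at infinity. On the open dense set where $r$ is smooth, $\varphi(z')\le 0$ and $\Delta r\le\Phi=v'/v$ give
\begin{equation*}
\Delta_\varphi u=\varphi'(z')z''+\varphi(z')\Delta r\ \ge\ \tfrac{1}{v}\bigl[v\varphi(z')\bigr]'=\beta(r)\tilde f(z)l(|z'|)\ \ge\ b(x)f(u)l(|\nabla u|),
\end{equation*}
and the weak inequality extends across the cut locus of $\mathcal{O}$ by the same Calabi upper-barrier argument: composing $z$ with a smooth upper barrier for $r$ near any cut point yields a smooth local comparison function for $u$, and a standard testing/density argument produces the global weak subsolution property.

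\emph{Positivity and necessity.} Under $\neg$\eqref{KO_zero_intro_FMP}, Theorem~\ref{teo_FMP2} applies to $z$ viewed as a non-negative $C^1$ radial solution on a radial model manifold of the equation $[v\varphi(z')]'=v\beta\tilde f(z)l(|z'|)$: its $C$-increasing hypotheses on $\tilde f,l$ are satisfied, and its Keller--Osserman assumption holds for $\tilde f$ by the equivalence in Lemma~\ref{lem_mettimaosigmatau_novo}. Were $z(r_\star)=0$ at some finite $r_\star\ge r_0$, $\fmp$ would force $z\equiv 0$ on the model, contradicting $z(r_0)=\eta>0$; hence $z>0$ on $[r_0,\infty)$ and $u>0$ on $M\setminus\overline{B}_{r_0}(\mathcal{O})$. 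Finally, when $\mathcal{O}$ is a pole, $r$ is smooth on $M\setminus\mathcal{O}$, so $u\in C^1(M\setminus B_{r_0}(\mathcal{O}))$; under $\neg$\eqref{KO_zero_intro} this $u$ is a positive $C^1$ solution of~\eqref{csp_problem} lacking compact support, contradicting $\csp$ and proving that $\csp$ forces~\eqref{KO_zero_intro}.
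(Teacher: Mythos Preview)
Your approach matches the paper's: construct a volume weight $v$ with $\Delta r \le v'/v$ weakly on $M\setminus\mathcal{O}$ and $\varphi^{-1}(c/v)\in L^1(\infty)$ (the paper does this via a Jacobi equation $g''=Gg$ with $G$ a global radial Ricci lower bound and Sturm comparison, you via piecing together local Laplacian bounds---either works), replace $f$ by its non-decreasing envelope $\tilde f$, apply Theorem~\ref{teo_exteriorDiri}, transplant, and invoke Theorem~\ref{teo_FMP2} on the associated model for positivity under $\neg$\eqref{KO_zero_intro}. Two small points to tighten: Theorem~\ref{teo_exteriorDiri} requires $l\in\lip_\loc(\R^+)$, so you should also extend $l$ to some $\bar l\ge l$ with $\bar l=l$ on $[0,\xi]$ as the paper does; and your barrier-composition sketch for the weak inequality across $\cut(\mathcal{O})$ is not quite the right mechanism (composing $z$ with an upper barrier for $r$ gives a \emph{lower} touching function for $u$, which does not directly yield the weak subsolution property for the quasilinear operator)---the clean route is to use that $\Delta r\le v'/v$ holds distributionally and test this against the nonnegative Lipschitz function $-\varphi(z'(r))\psi$.
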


\begin{proof}
We choose $0<\bar g \in C^\infty(\R^+_0)$ enjoying the following properties:
\begin{itemize}
\item[$(i)$] if $H_{-\nabla r}$ is the mean curvature of $\partial \mathcal{O}$ with respect to the inward pointing unit normal $-\nabla r$,
\begin{equation}\label{ipo_barg_CSP}
(m-1) \frac{\bar g'(0)}{\bar g(0)} > \max\left\{ 0, \sup_{\partial \cal O} H_{-\nabla r}\right\};
\end{equation}
\item[$(ii)$] setting $\bar v(r) = \vol(\Sph^{m-1})\bar g(r)^{m-1}$,
\begin{equation}\label{gni}
\begin{array}{l}
\bar v' \ge 0 \qquad \text{on } \, \R^+, \\[0.2cm]
\disp \bar v(r) \ge \max\left\{ 1, \left[ \varphi\left( \frac{1}{r^2}\right) \right]^{-1} \right\} \quad \text{for } \, r \ge 1.
\end{array}
\end{equation}
\end{itemize}
Note that, for $r \ge 1$,
\begin{equation}\label{homovarphi}
\varphi^{-1} \left( \frac{1}{\bar v(r)}\right) \le \varphi^{-1} \left( \varphi \left( \frac{1}{r^2}\right)\right) = \frac{1}{r^2} \in L^1(\infty). 
\end{equation}
Next, choose $0 \le \bar G \in C(\R^+_0)$ in such a way that
$$
\Ricc \ge -(m-1)\bar G(r)\metric \qquad \text{on } \, M,
$$
and define $G(r) = \max\{ \bar G, \bar g''/\bar g\}$. Let $g \in C^2(\R^+_0)$ solve
\begin{equation}\label{ipo_g_CSP}
\left\{ \begin{array}{l}
g'' - Gg =0 \quad \text{on } \, \R^+ \\[0.2cm]
\disp \frac{g'(0)}{g(0)} = \frac{\bar g'(0)}{\bar g(0)},
\end{array}
\right.
\end{equation}
and let $M_g$ be the model associated to $g$. By construction and by \eqref{homovarphi}, setting $v_g(r) = \vol(\Sph^{m-1})g(r)^{m-1}$ it holds
\begin{equation}\label{ipo_compamodel}
\begin{array}{l}
\Ricc \ge -(m-1)G(r) \metric \qquad \text{on } \, M, \\[0.2cm]
v_g(r) \ge \bar v(r) \qquad \text{on } \, \R^+ \, \text{ by Sturm comparison,} \\[0.2cm]
\disp \varphi^{-1} \left(\frac{1}{v_g(t)} \right) \in L^1(\infty).
\end{array}
\end{equation}
We define
$$
\bar f(t) = \sup_{[0,t]} f(s), \qquad \beta(r) = \sup_{\partial B_r} b,
$$
and note that, since $f$ is $C$-increasing,
\begin{equation}\label{febarf_necessity}
f(t) \le \bar f(t) \le C f(t) \qquad \forall \, t \in [0,\eta_0).
\end{equation}
Choose $\bar l \in \lip_\loc(\R^+)$ such that
$$
\bar l \ge l \quad \text{on } \, \R^+_0, \qquad \bar l \equiv l \quad \text{on } \, [0, \xi].
$$
Eventually, fix $r_0 \ge 1$ and choose $R$ small enough that
$$
\bar f_{\eta_0} \bar l_1 \left[\sup_{[r_0,r_0+R)} \frac{1}{v_g(r)} \int_{r_0}^r v_g(s)\beta(s) \di s\right] < \frac{\varphi(\xi)}{2}.
$$
Then, pick $\eta \in (0,\eta_0)$ small enough to enjoy
$$
\frac{v_g(r_0+R)}{v_g(r_0)} \varphi\left(\frac{\eta}{R}\right) < \frac{\varphi(\xi)}{2}.
$$
Since $\bar f$ is increasing, $\eta \in (0, \eta_0)$ and $\xi \in (0, 1)$, the last two inequalities imply
\begin{equation}\label{theuniformbound_teo}
\frac{v_g(r_0+R)}{v_g(r_0)} \varphi\left(\frac{\eta}{R}\right) + \bar f_\eta \bar l_{\xi} \left[\sup_{[r_0,r_0+R)} \frac{1}{v_g(r)} \int_{r_0}^r v_g(s)\beta(s) \di s\right]  < \varphi(\xi).
\end{equation}
We are therefore in the position to apply Theorem \ref{teo_exteriorDiri} and infer the existence of $\eta_1$ such that, for each $\eta \in (0,\min\{\eta_0,\eta_1\})$ satisfying \eqref{theuniformbound_teo}, there exists $z$ solving
\begin{equation}\label{eq_exteriorDiri}
\left\{\begin{array}{ll}
\big[v_g \varphi(z')\big]' = \beta v_g \bar f(z) \bar l(|z'|) & \quad \text{on } \, [r_0, \infty) \\[0.2cm]
z(r_0) = \eta, \qquad -\xi < z' \le 0 & \quad \text{on } \, [r_0, \infty), \\[0.2cm]
z(r) \ra 0 \qquad \text{as } \, r \ra \infty.
\end{array}\right.
\end{equation}
We consider $u(x) = z(r(x))$ on $M \backslash B_{r_0}(\mathcal{O})$, with $r(x) = \mathrm{dist}(x, \mathcal{O})$. The first of \eqref{ipo_compamodel} together with \eqref{ipo_g_CSP} and \eqref{ipo_barg_CSP} imply, via the Laplacian comparison theorem (Thm. \ref{teo_laplaciancomp} in Appendix A), the inequality
$$
\Delta r \le \frac{v_g'(r)}{v_g(r)} \qquad \text{weakly on } \, M \backslash \mathcal{O},
$$
hence
$$
\begin{array}{lcl}
\Delta_\varphi u & \ge & \disp \varphi'(z')z'' + \varphi(z')\Delta r \ge \varphi'(z')z'' + \varphi(z')\frac{v_g'}{v_g} \\[0.3cm]
& = & v_g^{-1} \big[v_g \varphi(z')\big]' = \beta \bar f(z)\bar l(|z'|) \ge b(x)f(z)l(|z'|) \\[0.3cm]
& = & b(x) f(u) l(|\nabla u|)
\end{array}
$$
weakly on $M \backslash B_{r_0}(\mathcal{O})$. This concludes the first part of the theorem.\\
Next, we prove that if \eqref{KO_zero_intro_FMP} holds then $z>0$ on $[r_0,\infty)$. To see this, we consider the radial function $v(x) = z(r(x))$ on the model $M_g$, that satisfies
$$
\Delta_\varphi v = v_g^{-1} \big[ v_g \varphi(z')\big]' = \beta(r) \bar f(z)\bar l(|z'|) = \beta(r) \bar f(v)l(|\nabla v|)
$$
where we used $|z'| < \xi$ and $l = \bar l$ on $[0, \xi]$. Moreover, $v = \eta$ on $\{r=r_0\}$. Set
$$
\bar F(t) = \int_0^t \bar f(s) \di s.
$$
Because of \eqref{febarf_necessity}, Lemma \ref{lem_mettimaosigmatau_novo} guarantees that \eqref{KO_zero_intro_FMP} is equivalent to
$$
\frac{1}{K^{-1} \circ \bar F} \not \in L^1(0^+).
$$
Therefore, we can apply Theorem \ref{teo_FMP2} on the set $\Omega = \{ r>r_0\} \subset M_g$ to deduce that $v>0$ on $\Omega$, as claimed.
\end{proof}

\subsection{Sufficiency of \eqref{KO_zero_intro} for the compact support principle}

\subsubsection{General operators, and no cut-locus}

%
As remarked in the Introduction, \eqref{KO_zero} alone is not sufficient to prove $\csp$ on complete manifolds. Indeed, the influence of geometry is for this property particularly subtle, as confirmed by the next refinement of Example \ref{ex_CSP_intro}.
\begin{example}\label{ex_countCSP}
{\rm For $\delta \ge 0$, consider a model $M_\delta = (\R^m, \di s_\delta^ 2)$ (cf. also Example \ref{ex_modellishrinking} below) with
$$
\di s_\delta^2 = \di t^2+ g_\delta(t)^2 \metricN_1, \qquad \text{where} \quad \left\{ \begin{array}{ll}
g_\delta \in C^2(\R^+_0) & g_\delta>0 \quad \text{on } \, \R^+ \\[0.2cm]
g_\delta(t)=t & \text{if } \, t \le 1/4 \\[0.2cm]
g_\delta(t) = \exp\{-t^\delta\} & \text{if } \, t \ge 1.
\end{array}\right.
$$
Define $\mathcal{O}= B_1(o)$, and let $r=t-1$ be the distance from $\cal O$. We have
$$
\begin{array}{rcl}
\disp \II_{-\nabla r} & = & \disp -\delta \di s_\delta^2, \\[0.2cm]
K_{\rad}(r) & = & \disp -\delta \big[ -(\delta-1)(1+r)^{\delta-2}+\delta(1+r)^{2\delta-2}\big] \le -\delta^2(1+r)^{\delta-2}\left[ (1+r)^\delta -1\right], \\[0.2cm]
\Delta r & = & \disp -(m-1)\delta(1+r)^{\delta -1}.
\end{array}
$$
Define $\alpha = 2\delta -2 \ge -2$, and let $\mu, \chi, \omega \in \R$ satisfy
\begin{equation}\label{controesempio_csp_belle}
\mu > \chi - \frac{\alpha}{2}, \qquad \omega < \chi.
\end{equation}
It is easy to show that, for
$$
\sigma \in \left( 0, \frac{\mu - \chi +\alpha/2}{\chi - \omega} \right],
$$
and for each $p >1$, the function $v(x) = (1+r(x))^{-\sigma}$ is a bounded, positive solution of
$$
\Delta_p v \ge C (1+r)^{-\mu} v^\omega |\nabla v|^{p-1-\chi} \qquad \text{on } \, M_\delta \backslash \cal O,
$$
for some constant $C>0$, and furthermore $v(x) \ra 0^+$ as $x$ diverges. However, defining $f(t)=t^\omega$ and $l(t) = t^{p-1-\chi}$, inequality $\omega < \chi$ implies
$$
\frac{1}{K^{-1}\circ F} = C_1 s^{-\frac{\omega+1}{\chi+1}} \in L^1(0^+).
$$
Thus, condition \eqref{KO_zero} is met but $\csp$ fails on $M_\delta$.
}
\end{example}
%
%
%
As in \cite{pucciserrin, PuRS}, the proof of $\csp$ will be achieved via the construction of a compactly supported, $C^1$-supersolution $\bar w$ for \eqref{csp_problem}. Under the above assumptions on $\varphi,b,f,l$, to produce $\bar w$ one could try to use the solution $w$ of the related ODE \eqref{twoboundary} in a way analogous to the one in the proof of the finite maximum principle (Theorem \ref{teo_FMP2}). However, a direct use of $w$ seems difficult, also because of the delicate interplay between the threshold $\eta$ in \eqref{twoboundary} and the global behaviour of the constants $a_1,\wp_1,\wp_0,l_\xi$ in Theorem \ref{exi2}, depending on the interval $[0,T]$ under consideration. As we shall see, a certain independence between $\eta$ and $a_1,\wp_1,\wp_0,l_\xi$ is key to conclude $\csp$ from the existence of $w$. To overcome the problem, we will use a different technique: instead of solving a Dirichlet problem we will exhibit an explicit, compactly supported supersolution by a direct use of \eqref{KO_zero}, an idea that is closer to the one in \cite{pucciserrin, PuRS, rigolisalvatorivignati}, which in turn are improvements of \cite{pucciserrinzou}. However, extending the method therein to non-constant $b,l$ presents nontrivial hurdles and calls for new ideas. To this aim, we shall assume some further conditions, that although seemingly somewhat articifial are still quite general, and enable us to capture the right growths to get a sharp result.\par

To exhibit $\bar w$ we follow two slightly different constructions that need a (mildly) different set of assumptions. Besides \eqref{assumptions_CSP_necessity}, \eqref{phiel_solozero} and \eqref{assu_perCSP}, for the first construction we require


%
%

%
%
\begin{itemize}
\item[$(C_1)$] there exists a constant $k_1 \ge 1$ such that
$$
t K'(t) \le k_1 K(t) \qquad \text{for each } \, t \in (0,1];
$$
\item[$(C_2)$] there exists a constant $k_2 \ge 1$ such that
$$
K'(st) \le k_2 K'(s)K'(t) \qquad \text{for each } \, s,t \in (0,1];
$$
\item[$(C_3)$] there exists a constant $\bar C \ge 1$ such that
$$
\frac{t}{K^{-1}(t)} \quad \text{is $\bar C$-increasing,} \qquad \text{and} \qquad \frac{t}{K^{-1}(t)} \ra 0 \quad \text{as } \, t \ra 0.
$$
\item[$(C_4)$] there exists a constant $c_F \ge 1$ such that
$$
\frac{F(t)}{K^{-1}(F(t))} \le c_F f(t) \qquad \text{for each } \, t \in (0, \min\{1,\eta_0\}).
$$
\end{itemize}
Note that these requirements are all related to the behaviour of the various functions considered in a right neighbourhood of zero. 

\begin{example}\label{ex_CSPpoli}
\emph{Having fixed
$$
p>1, \qquad \chi \in [0,p-1], \qquad \omega > 0,
$$
the prototype example of $f,l, \varphi$ is given by 
$$
\varphi'(t) \asymp t^{p-2}, \qquad f(t) \asymp t^\omega, \qquad l(t) \asymp t^{p-1-\chi}
$$
as $t \ra 0^+$. Then, $K(t) \asymp t^{\chi+1}$ satisfies $(C_1),(C_2)$, while $(C_3)$ and $(C_4)$ are met if and only if, respectively, $\chi>0$ and $\omega \le \chi$.}
\end{example}
Suppose that
$$
b(x) \ge \beta\big(r(x)\big) \qquad \text{for } \, r(x) \ge r_0.
$$
We express the relation between $K$ and $\beta$ in terms of an auxiliary weight $\bar \beta$, that is tied with $K$ in the way expressed by the next two conditions. Later, \eqref{solu_radialCSP} in Proposition \ref{prop_CSP_radial} will relate $\beta$ to $\bar \beta$. 
\begin{itemize}
\item[$(\beta_1)$] $0<\beta \in C([r_0,\infty))$, $\quad \bar \beta \in C^1([r_0,\infty))$, $\quad 0< \bar \beta < K_\infty, \ \ \bar \beta' \le 0$ on $[r_0,\infty)$;
\item[$(\beta_2)$] there exists a constant $c_\beta \ge 1$ such that
$$
\frac{-\bar \beta'(t)}{K^{-1}(\bar \beta(t))} \le c_\beta \bar \beta(t) \qquad \text{for each } \, t \in [r_0, \infty).
$$
\end{itemize}

\begin{remark}
\emph{Note that $(\beta_2)$ is meaningful since $\bar \beta < K_\infty$ because of $(\beta_1)$.
}
\end{remark}

\begin{example}\label{ex_borderCSP}
\emph{Referring to Example \ref{ex_CSPpoli}, a borderline behaviour of $\bar \beta$ for the validity of $(\beta_2)$ is 
$$
\bar \beta(t) = (1+t)^{- \chi -1}.
$$
}
\end{example}

We first describe our main ODE result, that should be compared to Lemma 4.1 in \cite{rigolisalvatorivignati_5}. Here, we consider a different and (in some cases) weaker set of assumptions, and the proof that we present is considerably simpler. Below, we will describe in more detail the interplay between the two results.

\begin{proposition}\label{prop_CSP_radial}
Let $\varphi, l, f$ satisfy \eqref{assumptions_CSP_necessity}, \eqref{phiel_solozero} and \eqref{assu_perCSP}, and assume the validity of $(C_1), \ldots, (C_4)$ and $(\beta_1), (\beta_2)$. Having fixed a non-negative $\theta \in C([r_0,\infty))$, suppose that
\begin{equation}\label{solu_radialCSP}
\max\left\{ \frac{\bar \beta(s)}{\beta(s)}, \ \frac{\theta(s)\bar \beta(s)}{\beta(s) K^{-1}(\bar \beta(s))} \right\} \in L^ \infty\big([r_0,\infty)\big).
\end{equation}
If
\begin{equation}\label{KO_0CSP}\tag{$\mathrm{KO}_0$}
\frac{1}{K^{-1}\circ F} \in L^1(0^+),
\end{equation}
then for each $\eps>0$ there exists $\lambda \in (0,1)$ such that the following holds: for each $R \ge r_0$, there exists $R_1 > R$ and a function $w$ with the following properties:
\begin{equation}\label{proprie_w_CSP}
\left\{ \begin{array}{l}
w \in C^1([R,\infty)) \quad \text{and $C^2$ except possibly at $R_1$;} \\[0.2cm]
0 \le w \le \lambda, \qquad w(R) = \lambda, \qquad w \equiv 0 \quad \text{on } \, [R_1, \infty),  \\[0.2cm]
w' < 0 \quad \text{on } \, [R, R_1), \qquad |w'| \le \eps \quad \text{on } \, [R, \infty), \\[0.2cm]
\big( \varphi(w')\big)' - \theta(r)\varphi(w') \le \eps \beta(r)f(w)l(|w'|) \qquad \text{on } \, [R, \infty).
\end{array}
\right.
\end{equation}
\end{proposition}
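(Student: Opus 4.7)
The plan is to construct $w$ explicitly as a compactly supported $C^1$ dead-core supersolution of the one-dimensional radial reduction of $(P_\ge)$, in the spirit of the constructions in \cite[Ch.~5]{pucciserrin}, \cite{PuRS} and \cite{rigolisalvatorivignati}, but with substantial modifications to accommodate the variable weight $\bar\beta(r)$ and the drift $\theta(r)\varphi(w')$. The amplitude $\lambda$ and an auxiliary scale $\delta\in(0,1)$ are to be fixed once and for all in terms of $\eps$, the structural constants $k_1,k_2,\bar C,c_F,c_\beta$ and the sup norms in \eqref{solu_radialCSP}; given this, the vanishing radius $R_1$ will emerge from the Keller--Osserman integral.

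Concretely, I would take $w$ on $[R,R_1]$ as the unique decreasing solution of the Cauchy problem
\[
w'(r) \,=\, -K^{-1}\!\bigl(\delta\,\bar\beta(r)\,F(w(r))\bigr),\qquad w(R)=\lambda,
\]
extended by $w\equiv 0$ on $[R_1,\infty)$. Since $w'(R_1^-) = -K^{-1}(0) = 0$, the extension is of class $C^1$. Separation of variables together with the near-homogeneity bound $K^{-1}(\delta\bar\beta F)\ge c\,\delta^{1/k_1}K^{-1}(F)$ derived from $(C_1)$ bounds the life-span by a constant multiple of $\int_0^\lambda ds/K^{-1}(F(s))$, which is finite by \eqref{KO_0CSP}; hence $R_1<\infty$. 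The pointwise bound $|w'|\le K^{-1}(\delta\|\bar\beta\|_\infty F(\lambda))\le\eps$ is then achieved by shrinking $\lambda$.

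The crux is the differential inequality. Differentiating $K(|w'|)=\delta\bar\beta(r)F(w)$ with respect to $r$, using $K'(t)=t\varphi'(t)/l(t)$ and the oddness of $\varphi$, I obtain the exact identity
\[
(\varphi(w'))' \,=\, \delta\,\bar\beta(r)\,f(w)\,l(|w'|) \,+\, \delta\,(-\bar\beta'(r))\,\frac{F(w)\,l(|w'|)}{|w'|}.
\]
The first (main) term is of the desired form. For the error term, I would combine $(C_3)$ and $(C_4)$ to bound $F(w)/|w'|=F(w)/K^{-1}(\delta\bar\beta F(w))\le c_F f(w)/K^{-1}(\delta\bar\beta(r))$; then $(\beta_2)$ gives $-\bar\beta'\le c_\beta\bar\beta K^{-1}(\bar\beta)$, and the near-homogeneity bound from $(C_1)$ mentioned above yields $K^{-1}(\bar\beta)/K^{-1}(\delta\bar\beta)\le\delta^{-1/k_1}$; the error is therefore controlled by $c_\beta c_F\,\delta^{1-1/k_1}\bar\beta(r)\,f(w)\,l(|w'|)$. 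The drift $\theta\varphi(|w'|)$ is handled analogously: $(C_1)$ gives $\varphi(t)/l(t)\le C\,K(t)/t$, so $\varphi(|w'|)\le C\,\delta\bar\beta F(w) l(|w'|)/|w'|$, and inserting the above estimate for $F(w)/|w'|$ produces
\[
\theta(r)\varphi(|w'|) \,\le\, C_0\,\delta^{1-1/k_1}\,\frac{\theta(r)\,\bar\beta(r)}{K^{-1}(\bar\beta(r))}\,f(w)\,l(|w'|).
\]

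Summing the three contributions and dividing by $\beta(r)$, the coefficient in front of $\beta(r)f(w)l(|w'|)$ takes the form $\delta\|\bar\beta/\beta\|_\infty + C_1\delta^{1-1/k_1}\|\bar\beta/\beta\|_\infty + C_2\delta^{1-1/k_1}\|\theta\bar\beta/(\beta K^{-1}(\bar\beta))\|_\infty$, which is finite precisely because of \emph{both} sup bounds in \eqref{solu_radialCSP}. Fixing first $\delta$ small enough that this sum is $\le\eps$, and then $\lambda$ small enough to guarantee $|w'|\le\eps$, closes the estimate \eqref{proprie_w_CSP}. The main technical obstacle will be the verification of the near-homogeneity bounds for $K^{-1}$ out of $(C_1)$--$(C_3)$: without the factor $\delta^{1-1/k_1}$, a direct application of $(\beta_2)$ would only control the error and drift by multiples of $K^{-1}(\bar\beta(r))/\beta(r)$, which is in general \emph{unbounded} as $r\to\infty$; it is precisely the interplay between the smallness of $\delta^{1-1/k_1}$ and the balancing condition $\theta\bar\beta/(\beta K^{-1}(\bar\beta))\in L^\infty$ in \eqref{solu_radialCSP} that makes the whole construction work.
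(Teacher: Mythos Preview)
Your overall strategy—building $w$ as the decreasing solution of $K(|w'|)=\delta\,\bar\beta(r)F(w)$ and pasting with zero—is a very natural alternative to the paper's construction, and the exact identity you derive for $(\varphi(w'))'$ is correct. But the ``near--homogeneity'' inequalities on which your error and drift estimates rest are stated with the wrong sign. Condition $(C_1)$, namely $tK'(t)\le k_1K(t)$, integrates to $K(t)/K(s)\le (t/s)^{k_1}$ for $s\le t$, which after inversion gives the \emph{lower} bound $K^{-1}(a)/K^{-1}(b)\ge (a/b)^{1/k_1}$ for $a\ge b$. Hence $(C_1)$ yields $K^{-1}(\delta\bar\beta F)\le (\delta\bar\beta)^{1/k_1}K^{-1}(F)$ and $K^{-1}(\bar\beta)/K^{-1}(\delta\bar\beta)\ge \delta^{-1/k_1}$, the \emph{opposite} of what you claim. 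With only this, your error term $\delta(-\bar\beta')F(w)l(|w'|)/|w'|$ reduces, after $(\beta_2)$ and $(C_4)$, to a multiple of $(\bar\beta/\beta)f(w)l(|w'|)$ with constant \emph{independent of $\delta$}, so it cannot be made $\le\eps\beta f(w)l(|w'|)$. The same defect hits the drift term; moreover the pointwise bound $\varphi(t)/l(t)\le CK(t)/t$ you invoke does not follow from $(C_1)$.

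What actually works is to bring in $(C_2)$ and both parts of $(C_3)$. From $(C_1)$+$(C_2)$ one has $(K_2)$: $K^{-1}(\tau)K^{-1}(\rho)\le K^{-1}(k_1k_2\tau\rho)$, which gives the genuine lower bound $K^{-1}(\delta\bar\beta F)\ge K^{-1}(\delta/(k_1k_2))\,K^{-1}(\bar\beta F)$; combined with the $\bar C$--increasing property in $(C_3)$ this yields $K^{-1}(\bar\beta)/K^{-1}(\delta\bar\beta)\le C/K^{-1}(\delta)$, so the error factor becomes $\delta/K^{-1}(\delta)$—which tends to $0$ precisely by the second part of $(C_3)$. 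For the drift one cannot shortcut with a pointwise bound on $\varphi/l$; one must \emph{integrate} the identity for $(\varphi(w'))'$ from $r$ to $R_1$, exploit the $C$--monotonicity of $f$ and $l$ together with the monotonicity of $w$ and $|w'|$, and then estimate $\int_r^{R_1}\bar\beta$ via $(C_3)$ in terms of $\tfrac{\bar\beta(r)}{K^{-1}(\bar\beta(r))}\int_r^{R_1}K^{-1}(\bar\beta)$, the latter being controlled by the Keller--Osserman integral through the defining relation of $R_1$. This produces exactly the factor $\theta\bar\beta/(\beta K^{-1}(\bar\beta))$ from \eqref{solu_radialCSP}. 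The paper's own proof sidesteps these homogeneity subtleties by the slightly different ansatz $\alpha'=K^{-1}(F(\alpha))\cdot K^{-1}(\sigma\tilde\beta)$ (product \emph{outside} $K^{-1}$, rather than inside), which makes the multiplicative structure in $(C_1)$, $(C_2)$ directly usable and yields the smallness factor $\sigma/K^{-1}(\sigma)$ with no uniformity issue.
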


\begin{remark}\label{rem_utile}
\emph{Inspecting the proof of Proposition \ref{prop_CSP_radial}, we can weaken the third in \eqref{assumptions_bfl} to $l \in L^\infty_\loc(\R^+_0) \cap C(\R^+)$ and $l>0$ on $\R^+$, that is, the continuity of $l$ at $t=0$ is not needed. This will be used in the proof of Theorem \ref{cor_csp_specialized_intro}.
}
\end{remark}

\begin{remark}
\emph{Condition \eqref{solu_radialCSP} relates $\beta$ to $\bar \beta$. For instance, in Example \ref{ex_CSPpoli}, we can set $\bar \beta(t) = c(1+t)^{-\chi-1}$ for a constant $c$ small enough to satisfy $(\beta_1)$, and choose $\beta(t) = (1+t)^{-\mu}$, for some $\mu \in \R$. Then, \eqref{solu_radialCSP} is met if and only if
$$
\mu \le \chi+1 \qquad \text{and} \qquad \theta(s)s^{\mu-\chi} \in L^\infty([r_0, \infty)).
$$
}
\end{remark}

To begin the proof, we need a simple technical lemma.

\begin{lemma}\label{lem_furtherprop}
Assume \eqref{assu_perCSP} and \eqref{phiel_solozero}, and also $(C_1), (C_2), (\beta_1), (\beta_2)$. Then, the following properties hold:
\begin{itemize}
\item[$(K_1)$] $K(st) \le k_1k_2K(s)K(t) \quad$ for each $\ \ s,t \in [0,1]$;
\item[$(K_2)$] $K^{-1}(\tau)K^{-1}(\rho) \le K^{-1}(k_1k_2\tau \rho)\quad $ for each $\ \ \tau, \rho \in \left(0, \min\left\{1, \frac{K_\infty}{k_1k_2}\right\}\right)$;
\item[$(K\beta)$] For each $\sigma \in (0,1)$,
$$
K^{-1}\big(\sigma \bar \beta\big) \not \in L^1(\infty);
$$
\end{itemize}
\end{lemma}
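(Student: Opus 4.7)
My plan is to derive $(K_1)$ and $(K_2)$ by direct manipulations of the submultiplicativity hypotheses $(C_1)$--$(C_2)$, and then to combine $(K_2)$ with an ODE-comparison argument for $(K\beta)$.

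For $(K_1)$, given $s,t\in(0,1]$, I change variables $u=s\tau$ in the defining integral of $K(st)$ to write
\[
K(st)=\int_0^{st}K'(u)\,du=s\int_0^t K'(s\tau)\,d\tau.
\]
Applying $(C_2)$ pointwise inside the integral gives $K'(s\tau)\le k_2K'(s)K'(\tau)$, so the last expression is bounded by $s k_2 K'(s)\,K(t)$; then $(C_1)$ yields $sK'(s)\le k_1K(s)$, and $(K_1)$ follows. For $(K_2)$, I substitute $s=K^{-1}(\tau)$ and $t=K^{-1}(\rho)$ into $(K_1)$; the range restriction $\tau,\rho\in(0,\min\{1,K_\infty/(k_1k_2)\})$ is exactly what is needed to guarantee both $s,t\in(0,1]$ (so that $(K_1)$ applies) and $k_1k_2\tau\rho<K_\infty$ (so that $K^{-1}$ is defined on the right-hand side), and applying the increasing function $K^{-1}$ to both sides produces the claimed inequality.

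The property $(K\beta)$ is the substantive part. Since $\bar\beta$ is non-increasing by $(\beta_1)$, either $\bar\beta(t)\to\ell>0$, in which case $K^{-1}(\sigma\bar\beta(t))\ge K^{-1}(\sigma\ell)>0$ and non-integrability on $[r_0,\infty)$ is immediate, or $\bar\beta(t)\to 0$. In the latter case I divide $(\beta_2)$ by $\bar\beta$ to obtain $(\log\bar\beta)'(t)\ge -c_\beta K^{-1}(\bar\beta(t))$, and integration from $r_0$ to $t$ gives the Gronwall-type lower bound
\[
\bar\beta(t)\ge\bar\beta(r_0)\exp\!\left(-c_\beta\int_{r_0}^t K^{-1}\bigl(\bar\beta(s)\bigr)\,ds\right).
\]
If $K^{-1}\circ\bar\beta$ were integrable at infinity, the right-hand side would stay bounded away from $0$, contradicting $\bar\beta\to 0$; hence $K^{-1}\circ\bar\beta\notin L^1(\infty)$, which establishes $(K\beta)$ for $\sigma=1$.

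To pass from $\sigma=1$ to arbitrary $\sigma\in(0,1)$, I plan to apply $(K_2)$ with the choices $\tau=\sigma/(k_1k_2)$ and $\rho=\bar\beta(t)$, valid for $t$ large enough that $\bar\beta(t)<\min\{1,K_\infty/(k_1k_2)\}$; this produces the pointwise bound
\[
K^{-1}\bigl(\sigma\bar\beta(t)\bigr)\ge K^{-1}\!\bigl(\sigma/(k_1k_2)\bigr)\,K^{-1}\bigl(\bar\beta(t)\bigr),
\]
so non-integrability of $K^{-1}\circ\bar\beta$ transfers to $K^{-1}(\sigma\bar\beta)$. The main delicate point in the whole lemma is precisely the matching between the domain restriction of $(K_2)$ and the effective range of $\bar\beta$ near infinity; the dichotomy above, together with the monotonicity of $\bar\beta$, is what makes the argument close cleanly for every $\sigma\in(0,1)$ without needing any further hypothesis on $\bar\beta(r_0)$ or on the size of $K_\infty$.
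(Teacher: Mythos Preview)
Your proof is correct and follows essentially the same route as the paper's: the same change of variables and use of $(C_2)$ then $(C_1)$ for $(K_1)$, the same substitution $s=K^{-1}(\tau)$, $t=K^{-1}(\rho)$ for $(K_2)$, and the same dichotomy on $\lim\bar\beta$ together with integration of $(\beta_2)$ for $(K\beta)$, with the reduction to $\sigma=1$ via $(K_2)$. The only cosmetic difference is that the paper does the reduction to $\sigma=1$ first and you do it last; your treatment of the domain restriction in $(K_2)$ is in fact more explicit than the paper's.
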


\begin{proof}
Property $(K_1)$ follows immediately from $(C_2)$ and $(C_1)$ by integration:
$$
K(st) = \int_0^{st} K'(\tau)\di \tau = s\int_0^t K'(s\zeta)\di \zeta \le k_2 sK'(s)K(t) \le k_2k_1 K(s)K(t).
$$
To show $(K_2)$, use $(K_1)$ with the choices $s= K^{-1}(\tau)$, $t=K^{-1}(\rho)$ and then apply $K^{-1}$. To prove $(K\beta)$ first observe that, because of $(K_2)$, $K^{-1}(\sigma \bar \beta) \ge C_\sigma K^{-1}(\bar \beta)$ for some $C_\sigma>0$, and thus it is sufficient to restrict to $\sigma=1$. Using $(\beta_2)$,
\begin{equation}\label{basicas}
c_\beta K^{-1}(\bar \beta) \ge -\frac{\bar \beta'}{\bar \beta}.
\end{equation}
If $\bar \beta$ is bounded from below by a positive constant, then $K^{-1}(\bar \beta)$ is not infinitesimal and clearly $(K\beta)$ is met. Otherwise, from $\bar \beta'\le 0$ we get $\bar\beta(r) \ra 0$ as $r \ra \infty$, and integrating \eqref{basicas} on $[r_1,r)$ we deduce
\begin{equation}\label{basicas_lowerint}
c_\beta \int_{r_1}^r K^{-1}(\bar \beta) \ge \log \bar\beta(r_1) - \log \bar\beta(r) \ra +\infty \quad \text{as } \, r \ra \infty,
\end{equation}
as claimed.
\end{proof}

\begin{proof}[Proof of Proposition \ref{prop_CSP_radial}]
Let $\sigma,\lambda \in (0, \eta_0)$ to be specified later. Because of $(K\beta)$ in Lemma \ref{lem_furtherprop} and \eqref{KO_0CSP}, there exists $R_\sigma = R_\sigma(\sigma, \lambda, R)>R$ such that
\begin{equation}\label{labase_CSP_ode}
\int_0^\lambda \frac{\di s}{K^{-1}(F(s))} = \int_R^{R_\sigma} K^{-1}\big(\sigma \bar \beta(s)\big)\di s.
\end{equation}
We define implicitly a function $\alpha$ via the identity
\begin{equation}\label{equa_CSP_ode}
\int_0^{\alpha(t)} \frac{\di s}{K^{-1}(F(s))} = \int_{R_\sigma-t}^{R_\sigma} K^{-1}\big(\sigma \bar \beta(s)\big)\di s.
\end{equation}
Clearly, $\alpha(0)=0$, $\alpha(R_\sigma-R) = \lambda$, $\alpha>0$ on $(0, R_\sigma-R)$ and, differentiating,
\begin{equation}\label{gradiente_ode}
\begin{array}{l}
\disp \alpha'(t) = K^{-1}\big(F(\alpha(t)\big)K^{-1}\big(\sigma \tilde \beta(t)\big) >0 \quad \text{on } \, (0, R_\sigma -R), \\[0.3cm]
\text{where} \quad \tilde \beta(t) = \bar \beta(R_\sigma-t), \qquad \text{and} \qquad \alpha'(0)=0.
\end{array}
\end{equation}
Note that $\tilde \beta$ is non-decreasing by $(\beta_1)$. By construction, $\alpha \in [0, \lambda] \subset [0, \eta_0)$, and
\begin{equation}\label{controllow'}
\begin{array}{lcl}
0 < \disp \alpha'(s) & \le & \disp K^{-1}\big(F(\alpha)\big)K^{-1}\big(\sigma\|\bar \beta\|_\infty\big) \\[0.2cm]
& \le & \disp  K^{-1}\big(F(\lambda)\big)K^{-1}\big(\|\bar \beta\|_\infty\big) \qquad \text{on } \, (0, R_\sigma -R).
\end{array}
\end{equation}
We can therefore reduce $\sigma$ and $\lambda$, independently, in such a way that
\begin{equation}\label{controllow'_1}
\quad K^{-1}(F(\alpha)) \le \min\{1,\xi_0\}, \qquad K^{-1}\big(\sigma \tilde{\beta}\big) \le 1
\end{equation}
on $[0, R_\sigma)$. For convenience, we define
$$
\rho = K^{-1}\big(F(\alpha(t)\big), \qquad \tau = K^{-1}\big(\sigma \tilde \beta(t)\big)
$$
applying $K$ to \eqref{gradiente_ode} and differentiating, we obtain
\begin{equation}\label{catena_ode_csp}
\begin{array}{lcl}
\disp \big(K(\alpha')\big)' & = & \disp K'(\rho\tau)[\rho'\tau+\rho \tau'] \stackrel{(C_2)}{\le} k_2 K'(\rho)K'(\tau)\left[\frac{f(\alpha)\alpha'\tau}{K'(\rho)} + \frac{\rho \sigma \tilde \beta'}{K'(\tau)}\right] \\[0.4cm]
& \stackrel{(C_1)}{\le} & k_2 k_1 \Big[ f(\alpha) \alpha' K(\tau) + \sigma \tilde \beta' K(\rho)\Big] \\[0.4cm]
& = & \disp k_2 k_1 \Big[ f(\alpha) \alpha' \sigma \tilde \beta  + \sigma \tilde \beta' F(\alpha) \Big]
\end{array}
\end{equation}
However, by $(C_4)$ and $(\beta_2)$, together with $(K_2)$ in Lemma \ref{lem_furtherprop} applied twice,
\begin{equation}\label{usefulbounds}
\begin{array}{rcl}
\disp F(\alpha) & \le & \disp c_F K^{-1}\big(F(\alpha)\big)f(\alpha), \\[0.3cm]
\disp \tilde \beta' & \le & \disp c_\beta K^{-1}\big(\tilde \beta\big) \tilde \beta \stackrel{(K_2)}{\le} \disp \frac{\bar c_\beta}{K^{-1}(\sigma)} K^{-1}\big(\sigma \tilde \beta\big) \tilde \beta
\end{array}
\end{equation}
for some $\bar c_\beta$ depending on $c_\beta, K, k_1,k_2$. Inserting into \eqref{catena_ode_csp} and recalling \eqref{gradiente_ode} we get
\begin{equation}\label{catena_ode_csp_2}
\begin{array}{lcl}
\disp \big(K(\alpha')\big)' & \le & \disp k_2 k_1 \sigma \Big[ f(\alpha) \alpha' \tilde \beta + \frac{c_F \bar c_\beta}{K^{-1}(\sigma)} \tilde \beta K^{-1}(F(\alpha))K^{-1}(\sigma \tilde \beta) f(\alpha) \Big] \\[0.4cm]
& = & \disp \disp k_2 k_1 \sigma \tilde \beta f(\alpha) \alpha' \Big[1 + \frac{c_F \bar c_\beta}{K^{-1}(\sigma)} \Big] \le \frac{C_1\sigma}{K^{-1}(\sigma)} \tilde \beta f(\alpha) \alpha', \\[0.4cm]
\end{array}
\end{equation}
for some constant $C_1$ depending on $k_1,k_2,\bar c_\beta, c_F$. Using the definition of $K'$ and $\alpha'>0$ we can simplify \eqref{catena_ode_csp_2} to deduce
\begin{equation}\label{primopezzo_csp}
\varphi'(\alpha') \alpha'' \le \frac{C_1\sigma}{K^{-1}(\sigma)} \tilde \beta f(\alpha) l(\alpha').
\end{equation}
Moreover, observe that the first in \eqref{catena_ode_csp} and the definition of $\rho, \tau$ imply $(K(\alpha'))' \ge 0$ and hence, expanding, $\alpha'' \ge 0$.\\
We now integrate \eqref{primopezzo_csp} on $[0, t)$ and use $\alpha'(0)=0$, $\varphi(0)=0$ to obtain
\begin{equation}\label{primopezzo_csp_3}
\varphi\big(\alpha'(t)\big) \le \frac{C_1\sigma}{K^{-1}(\sigma)} \int_{0}^t \tilde \beta  f(\alpha) l(\alpha').
\end{equation}
Apply the third in \eqref{assu_perCSP} (note that $\alpha'' \ge 0$) we get
\begin{equation}\label{primopezzo_csp_4}
\varphi\big(\alpha'(t)\big) \le \frac{C_2\sigma}{K^{-1}(\sigma)} f\big(\alpha(t)\big) l\big(\alpha'(t)\big) \int_{0}^t \tilde \beta
\end{equation}
for some constant $C_2>0$. Next, we exploit \eqref{labase_CSP_ode} to estimate from above the integral in \eqref{primopezzo_csp_4}. To this end, we use $(C_3)$, $\tilde \beta' \ge 0$ together with the second in \eqref{usefulbounds} to get
\begin{equation}\label{secondopezzo_csp}
\begin{array}{lcl}
\disp \int_{0}^t \tilde \beta & = & \disp \int_{0}^t \frac{\tilde \beta}{K^{-1}(\tilde\beta)} K^{-1}(\tilde \beta) \stackrel{(C_3)}{\le} \bar C \frac{\tilde \beta(t)}{K^{-1}(\tilde \beta(t))} \int_{0}^t K^{-1}(\tilde \beta) \\[0.4cm]
& \stackrel{(K_2)}{\le} & \disp \frac{\bar C \bar c_\beta}{K^{-1}(\sigma)} \frac{\tilde \beta(t)}{K^{-1}(\tilde \beta(t))} \int_{R}^{R_\sigma} K^{-1}(\sigma \tilde \beta) \\[0.4cm]
& = & \disp \frac{C_3}{K^{-1}(\sigma)} \frac{\tilde \beta(t)}{K^{-1}(\tilde \beta(t))} \int_0^\lambda \frac{\di s}{K^{-1}(F(s))}.
\end{array}
\end{equation}
Define $\tilde \theta(t) = \theta(R_\sigma-t)$. Plugging together \eqref{primopezzo_csp}, \eqref{primopezzo_csp_3} and \eqref{secondopezzo_csp}, because of \eqref{solu_radialCSP} we obtain
\begin{equation}\label{nuovaentrata}
\begin{array}{l}
\disp \big(\varphi\big(\alpha'\big)\big)' + \tilde \theta(t)\varphi\big(\alpha'\big) \\[0.3cm]
\disp \quad \le \ \ \beta(R_\sigma - t) f\big(\alpha\big) l\big(\alpha'\big) \left[ \frac{\tilde \beta(t)}{\beta(R_\sigma - t)} \frac{C_1\sigma}{K^{-1}(\sigma)} \right.\\[0.5cm]
\disp \qquad + \left.\frac{C_4\sigma}{[K^{-1}(\sigma)]^2} \frac{\tilde \theta(t) \tilde \beta(t)}{\beta(R_\sigma-t) K^{-1}(\tilde \beta(t))} \int_0^\lambda \frac{\di s}{K^{-1}(F(s))} \right] \\[0.5cm]
\disp \quad \le \ \ \beta(t) f\big(\alpha\big) l\big(\alpha'\big) \left[ \left\|\frac{\bar\beta}{\beta}\right\|_\infty \frac{C_1\sigma}{K^{-1}(\sigma)} + \frac{C_4\sigma}{[K^{-1}(\sigma)]^2} \left\|\frac{\theta \bar \beta}{\beta K^{-1}(\bar \beta)}\right\|_{\infty} \int_0^\lambda \frac{\di s}{K^{-1}(F(s))} \right] \\[0.5cm]
\end{array}
\end{equation}
Next, using the second in $(C_3)$ we can choose $\sigma$ sufficiently small to make the first term in square brackets smaller than $\eps/2$. We can then choose $\lambda>0$ small enough to make the second term smaller than $\eps/2$. Eventually, define
\begin{equation}\label{def_w_CSP}
w(r) = \alpha(R_\sigma -r) \qquad \text{and} \qquad R_1 = R_\sigma.
\end{equation}
Because of \eqref{controllow'}
$$
|w'| \le K^{-1}\big(F(\lambda)\big) K^{-1}\big(\|\bar \beta\|_\infty\big)
$$
Up to reducing $\lambda$ further, $|w'|\le \eps$ on $[R, R_1)$ (with $\eps$ as in the statement of the Proposition). Using the definition of $\tilde \theta, \tilde \beta$ and the fact that $\varphi$ is odd on $\R$, it is immediate to check that $w$ satisfies all the properties listed in \eqref{proprie_w_CSP}.
\end{proof}

\begin{remark}
\emph{A crucial feature of the above construction is that $\lambda$ is independent of $R \ge r_0$. This will allow to construct compactly supported supersolutions attaining value $\lambda$ on the boundary of any fixed geodesic sphere $\partial B_R(\cal O)$ with $R \ge r_0$.
}
\end{remark}

With this preparation, we can now prove our first main result for the compact support principle. We recall that a pole $\mathcal{O} \subset M$ is a smooth, relatively compact open set such that the normal exponential map realizes a diffeomorphism between $M \backslash \cal O$ and $\partial \cal O \times \R^+$. Let $\II_{-\nabla r}$ be the second fundamental form of $\partial \cal O$ in the direction pointing towards $\mathcal{O}$.

\begin{theorem}\label{teo_CSP_nuovo}
Let $(M, \metric)$ be a manifold possessing a pole $\mathcal{O}$, and let $r(x) = \dist(x, \mathcal{O})$. Suppose that
\begin{equation}\label{arra_CSP_assu}
\begin{array}{l}
\disp K_\rad \le -\kappa^2(1+r)^{\alpha} \qquad \text{on } \, M \backslash \mathcal{O}, \quad \text{for some } \, \kappa \ge 0, \ \alpha \ge -2;  \\[0.3cm]
\II_{-\nabla r} \ge \left\{ \begin{array}{ll}
- \kappa \metric & \quad \text{if } \, \alpha \ge 0 \ \text{or } \ \kappa = 0, \\[0.2cm]
- \left[\frac{\alpha + \sqrt{\alpha^2+ 16\kappa^2}}{4}\right] \metric & \quad \text{otherwise},
\end{array}\right.
\end{array}
\end{equation}
where $\II_{-\nabla r}$ denotes the second fundamental form of $\partial \mathcal{O}$ in the inward-pointing direction. Fix $0<b \in C(M)$, and let $0<\beta \in C([r_0,\infty))$ such that
$$
b(x) \ge \beta\big(r(x)\big) \qquad \text{ for } \, r(x) \ge r_0.
$$
Let $\varphi, f,l$ satisfy \eqref{assumptions_CSP_necessity}, \eqref{phiel_solozero}, \eqref{assu_perCSP} and $(C_1), \ldots, (C_4)$. Assume that, for some $\bar \beta$ matching $(\beta_1)$ and $(\beta_2)$, it holds
\begin{equation}\label{assu_subeta}
\max \left\{ \frac{\bar \beta(s)}{\beta(s)}, \ \frac{s^{\alpha/2} \bar \beta(s)}{\beta(s)K^{-1}(\bar \beta(s))}\right\} \in L^\infty\big([r_0, \infty)\big).
\end{equation}
Then, 
$$
\eqref{KO_zero} \qquad \Longrightarrow \qquad \text{$\csp$ holds for \eqref{csp_problem}.}
$$
If moreover 
$$
l \in \lip_\loc\big((0,\xi_0)\big), \qquad f(0)l(0)=0, 
$$
then
$$
\eqref{KO_zero} \qquad \Longleftrightarrow \qquad \text{$\csp$ holds for \eqref{csp_problem}.}
$$
\end{theorem}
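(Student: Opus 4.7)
The equivalence in the final statement reduces to two tasks: (\emph{a}) the sufficiency of (KO$_0$) for $\csp$, and (\emph{b}) the necessity under the extra regularity $l\in\lip_\loc((0,\xi_0))$ and $f(0)l(0)=0$. Task (\emph{b}) is immediate from Theorem~\ref{teo_necessitybello}: if $\csp$ held while (KO$_0$) failed, the radial solution produced there would be positive on all of $M\setminus\overline{\mathcal{O}}$ and $C^1$ (since $\mathcal{O}$ is a pole), providing a non-negative $C^1$ solution of \eqref{csp_problem} on the end $\Omega=M\setminus\overline{B_{r_0}(\mathcal{O})}$ without compact support, a contradiction. So the body of the proof is (\emph{a}).

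The plan is to exhibit, for any prescribed end of $M$ and any $\eps\in(0,1]$, a compactly supported radial $C^1$ supersolution $\bar w$ of $(P_\ge)$ in an exterior region, and then pin the given solution $u$ between $0$ and $\bar w$ by the comparison principle. The curvature hypotheses are converted into an analytic lower bound for $\Delta r$ via the Hessian/Laplacian comparison theorem stated in Appendix~A: since $\mathcal{O}$ is a pole, $r$ is smooth on $M\setminus\mathcal{O}$, and the bound $K_{\rad}\le-\kappa^2(1+r)^\alpha$ coupled with the initial datum $\II_{-\nabla r}\ge-C_{\alpha,\kappa}\metric$ on $\partial\mathcal{O}$ solves the underlying Riccati comparison to produce a continuous $\theta:[r_0,\infty)\to\R_0^+$ with
\[
\Delta r\ge\theta(r)\quad\text{on } M\setminus\overline{B_{r_0}(\mathcal{O})},\qquad \theta(r)\le C(1+r)^{\alpha/2}.
\]
The specific constant $C_{\alpha,\kappa}$ is precisely the one that keeps the solution of the Riccati equation non-negative on $[r_0,\infty)$ for all $r_0>0$; this is the only use of the assumption on $\II_{-\nabla r}$.

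Next, I invoke Proposition~\ref{prop_CSP_radial} with this $\theta$. Hypothesis \eqref{solu_radialCSP} reduces, thanks to $\theta(r)\le C(1+r)^{\alpha/2}$, to exactly the assumption \eqref{assu_subeta}. Under (KO$_0$), fix $\eps\in(0,1]$ and obtain $\lambda\in(0,1)$ such that for every $R\ge r_0$ there exist $R_1>R$ and $w\in C^1([R,\infty))$ with $w(R)=\lambda$, $w\equiv 0$ on $[R_1,\infty)$, $w'<0$ on $[R,R_1)$, $|w'|\le\eps$, and
\[
\bigl(\varphi(w')\bigr)'-\theta(r)\varphi(w')\le\eps\,\beta(r)f(w)l(|w'|)\quad\text{on }[R,\infty).
\]
Setting $\bar w(x)=w(r(x))$ (which is $C^1$ on $M\setminus\overline{B_R(\mathcal{O})}$ since $\mathcal{O}$ is a pole), and using $\varphi(w')\le 0$ together with $\Delta r\ge\theta(r)$, one computes
\[
\Delta_\varphi\bar w=\bigl(\varphi(w')\bigr)'+\varphi(w')\Delta r\le\bigl(\varphi(w')\bigr)'+\theta(r)\varphi(w')\le 2\theta(r)\varphi(w')+\eps\,\beta(r)f(\bar w)l(|\nabla\bar w|).
\]
The first term is non-positive, so, using $\beta\le b$ and $\eps\le 1$, $\bar w$ is a weak supersolution of $(P_\ge)$ on $M\setminus\overline{B_R(\mathcal{O})}$.

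Finally, since $u\to 0$ at infinity in the given end $\Omega$, choose $R$ so large that $\partial\Omega\subset B_R(\mathcal{O})$ and $u<\lambda=\bar w$ on $\partial B_R(\mathcal{O})\cap\overline{\Omega}$. Both $u$ and $\bar w$ vanish at infinity, so on the set $\Omega\cap\{r>R\}$ we have $u\le\bar w$ on the entire boundary and at infinity. A standard comparison argument (Propositions~\ref{prop_comparison}-\ref{prop_serrin}) then yields $u\le\bar w$ throughout $\Omega\cap\{r>R\}$; in particular $u\equiv 0$ on $\Omega\cap\{r\ge R_1\}$, proving $\csp$. The main technical obstacle is the application of the comparison principle in the region where $|\nabla\bar w|$ may degenerate: near $\partial B_{R_1}(\mathcal{O})$ one has $w'\to 0$ and for $r\ge R_1$ one has $\bar w\equiv 0$, so the essential positivity of $|\nabla u|+|\nabla\bar w|$ required by Proposition~\ref{prop_serrin} may fail. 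This is handled exactly as in the proof of Theorem~\ref{teo_FMP2_intro}: assume $\max_{\overline{\Omega\cap\{r\ge R\}}}(u-\bar w)=\bar\delta>0$; the maximum set $\Gamma$ lies in the open annulus $\Omega\cap\{R<r<R_1\}$ where $w'<0$ strictly, so $|\nabla u|=|\nabla\bar w|>0$ on $\Gamma$; localizing on $\{u-\bar w>\delta\}$ for $\delta$ close to $\bar\delta$, the ratio $l(|\nabla u|)/l(|\nabla\bar w|)$ is bounded by a compactness argument and the $C$-increasing property of $f$ allows Proposition~\ref{prop_comparison} to be applied, contradicting the definition of $\bar\delta$.
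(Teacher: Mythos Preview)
Your overall strategy matches the paper's, but there are two concrete problems.

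\textbf{Sign of the Laplacian lower bound.} The Riccati/Hessian comparison from below does \emph{not} give $\Delta r\ge\theta(r)$ with $\theta\ge 0$. With $\II_{-\nabla r}\ge -C_{\alpha,\kappa}\metric$ the comparison function $g$ has $g'(0)\le -C_{\alpha,\kappa}<0$ (when $\kappa>0$), and the explicit solution used in the paper yields $g'/g=D\kappa(1+r)^{\alpha/2}$ with $D=D_-<0$. What one actually obtains is $\Delta r\ge -\theta(r)$ with $\theta(r)=(m-1)|D|\kappa(1+r)^{\alpha/2}\ge 0$. Your chain
\[
(\varphi(w'))'+\varphi(w')\Delta r\le(\varphi(w'))'+\theta(r)\varphi(w')\le 2\theta(r)\varphi(w')+\eps\,\beta f(w)l(|w'|)
\]
therefore starts from a false premise. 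With the correct sign the computation is simpler and direct: since $\varphi(w')\le 0$ and $\Delta r\ge -\theta(r)$,
\[
\Delta_\varphi\bar w=(\varphi(w'))'+\varphi(w')\Delta r\le(\varphi(w'))'-\theta(r)\varphi(w')\le\eps\,\beta(r)f(\bar w)l(|\nabla\bar w|),
\]
exactly the inequality supplied by Proposition~\ref{prop_CSP_radial}. (One should also take $\eps=1/(2C)$, with $C$ the $C$-increasing constant, so that after using $f(\bar w)\le Cf(u)$ and $l(|\nabla\bar w|)\le 2\,l(|\nabla u|)$ on $U_\delta$ one lands on $\Delta_\varphi\bar w\le\Delta_\varphi u$; ``$\eps\le 1$'' is not enough.)

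\textbf{Location of the contact set $\Gamma$.} You assert that the maximum set $\Gamma=\{u-\bar w=\bar\delta\}$ lies in the open annulus $\{R<r<R_1\}$, and cite the proof of Theorem~\ref{teo_FMP2_intro} for this. Neither claim is justified. On $\{r\ge R_1\}$ one has $\bar w\equiv 0$, hence $u-\bar w=u$; nothing prevents $u$ from attaining the value $\bar\delta$ there, so $\Gamma$ may intersect $\{r\ge R_1\}$, where $|\nabla\bar w|=0$ and Proposition~\ref{prop_serrin} cannot be invoked. The proof of Theorem~\ref{teo_FMP2_intro} does not face this issue because there the auxiliary function has non-vanishing gradient throughout. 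The paper handles this with a genuine additional argument: if $x\in\Gamma\cap(M\setminus\overline{B_{R_1}(\mathcal{O})})$, then $x$ is an interior maximum of $u$ on an open set where $\Delta_\varphi u\ge 0$, so the finite maximum principle (Theorem~\ref{teo_FMP2} with $f\equiv 0$) forces $u\equiv c$ on the connected component, which is unbounded since $\mathcal{O}$ is a pole, contradicting $u\to 0$; if $x\in\Gamma\cap\partial B_{R_1}(\mathcal{O})$, one constructs a Hopf-type barrier in a small annulus inside $\{r>R_1\}$ to contradict $\nabla u(x)=0$. Only after this do you know $\Gamma\Subset\{R<r<R_1\}$, where $|\nabla\bar w|>0$, and the localized comparison on $U_\delta$ can be run as you describe.
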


\begin{proof}
We first prove implication $\eqref{KO_zero} \Rightarrow \csp$. Note that it is enough to consider solutions of \eqref{csp_problem} when $\Omega = \Omega_{r_0}$ is a connected component of $M \backslash B_{r_0}(\cal O)$. Let now $u$ be a $C^1$ solution of \eqref{csp_problem} on $\Omega_{r_0}$. By Proposition \ref{lem_ODE_infty_increasing} applied with, respectively,
$$
\begin{array}{l}
\disp G(r) = \kappa^2(1+r)^\alpha, \qquad \disp \left\{ \begin{array}{lll}
\theta_* = 0, & \quad D = D_-(\theta_*) = -1  & \quad \text{if } \, \alpha \ge 0; \\[0.2cm]
\theta_* = \frac{\alpha}{2\kappa}, & \quad D = D_-(\theta_*) = - \frac{\alpha + \sqrt{\alpha^2+ 16\kappa^2}}{4\kappa} & \quad \text{if } \, \alpha \in [-2,0)
\end{array}\right. \\[0.4cm]
C = 1, \qquad \lambda = D \kappa
\end{array}
$$
we deduce that
$$
g(t) = \exp \left\{ D \int_0^t \kappa (1+s)^{\alpha/2} \di s \right\}
$$
is a positive solution on $\R^+$ of $g''-Gg \le 0$, $g(0)=1$, $g'(0) = D \kappa$, and thus by the Laplacian comparison theorem from below (cf. Appendix A)
\begin{equation}\label{laplacomp_csp}
\Delta r \ge (m-1)\frac{g'(r)}{g(r)} = (m-1)D\kappa (1+r)^{\alpha/2} \qquad \text{for } \, r > 0.
\end{equation}
Because of  \eqref{assu_subeta}, we can apply Proposition \ref{prop_CSP_radial} with
$$
\theta(t) = (m-1)|D|\kappa (1+r)^{\alpha/2}, \qquad \eps = 1/2C
$$
($C$ the increasing constant in the third of \eqref{assu_perCSP}) to infer the existence of $\lambda$ sufficiently small such that, for each chosen $R \ge r_0$, there exists a solution $w$ of
\begin{equation}\label{proprie_w_CSP_mainteo}
\left\{ \begin{array}{l}
w \in C^1([R,\infty)) \quad \text{and $C^2$ except possibly at some $R_1>R$;} \\[0.2cm]
w \ge 0, \quad \text{on } \, [R, \infty), \qquad w \equiv 0 \quad \text{on } \, [R_1, \infty), \\[0.3cm]
w(R) = \lambda, \qquad w'<0 \quad \text{on } \, [R, R_1), \\[0.2cm]
\disp \big( \varphi(w')\big)' - \theta(r)\varphi(w') \le \frac{1}{2C} \beta(r)f(w)l(|w'|) \qquad \text{on } \, [R, \infty).
\end{array}
\right.
\end{equation}
We then specify $R \ge r_0$ large enough to satisfy
\begin{equation}\label{boundabovev}
u(x) < \lambda  \qquad \text{for } \, r(x) \ge R.
\end{equation}
Defining $\bar w(x) = w(r(x))$ and using that $\cal O$ is a pole, we obtain
\begin{equation}\label{proprie_w_CSP_mainteo_2}
\left\{ \begin{array}{l}
\bar w \in C^1(M \backslash B_{R}(\cal O)); \\[0.2cm]
\bar w \ge 0, \qquad \bar w\equiv 0 \quad \text{on } \, M \backslash B_{R_1}(\cal O), \\[0.3cm]
\bar w = \lambda \quad \text{on } \, \partial B_R(\mathcal{O}), \qquad |\nabla \bar w|>0 \quad \text{on } \, B_{R_1}(\mathcal{O}) \backslash B_R(\mathcal{O})
\end{array}
\right.
\end{equation}
and also, since $w' \le 0$ and $\varphi$ is odd, by \eqref{laplacomp_csp}
\begin{equation}\label{altranuovaentrata}
\begin{array}{lcl}
\disp \Delta_\varphi \bar w & = & \disp \big(\varphi(w')\big)' + \varphi(w')\Delta r \le \big(\varphi(w')\big)' - \theta(r)\varphi(w') \le  \disp \frac{1}{2C} \beta(r)f(w)l(|w'|) \\[0.3cm]
& = & \disp \disp \frac{1}{2C} \beta(r)f(\bar w)l(| \nabla \bar w|) \qquad \text{weakly on } \, M \backslash B_R(\cal O).
\end{array}
\end{equation}
Define $\Omega_R = \Omega \cap (M \backslash B_R(\cal O))$. By assumption, $u < \lambda = \bar w$ on $\partial \Omega_R$, and we are going to show that $u \le \bar w$ on $\Omega_R$. Once this is shown, then clearly $u$ has compact support since $\bar w$ does, concluding our proof. We reason by contradiction and we suppose that $c = \sup_{\Omega_R}(u-\bar w) >0$. For $\delta \in (0, c)$, set $U_\delta = \{u - \bar w > \delta\} \neq \emptyset$. Note that $U_\delta \cap \partial \Omega_R = \emptyset$, and moreover $U_\delta$ is relatively compact since $v$ vanishes at infinity and $\bar w \ge 0$. On the compact set $\Gamma = \{u - \bar w = c\}$ identity $|\nabla \bar w| = |\nabla u|$ holds. We claim that $\inf_{\Gamma}|\nabla \bar w| >0$. Suppose, by contradiction, that $|\nabla \bar w(x)|=0$ for some $x \in \Gamma$. By the construction of $\bar w$, $x \in M \backslash B_{R_1}(\cal O)$ and we examine two cases.
\begin{itemize}
\item[(i)] $x \in M \backslash \overline{B_{R_1}(\cal O)}$. In this case, $\bar w\equiv 0$ in a small neighbourhood $V$ of $x$, and thus, by the definition of $\Gamma$, $x$ is a local maximum of $u$ on $V$. Since
$$
\Delta_\varphi u \ge b(x) f(u) l(|\nabla u|)\ge 0,
$$
applying the finite maximum principle in Theorem \ref{teo_FMP2} to $c -u$ we deduce that $u \equiv c$ on $V$. Therefore, the set where $u = c$ is open, closed and non-empty in $\Omega_R\backslash B_{R_1}(\cal O)$, and we conclude $u \equiv c$ on the connected component of $\Omega_R \backslash \overline{B_{R_1}(\cal O)}$ containing $x$. Note then that $\Omega_R \backslash \overline{B_{R_1}(\cal O)}$ is connected and unbounded: in fact, since $\cal O$ is a pole of $M$ the normal exponential map realizes a diffeomorphism between $M \backslash \cal O$ and $\partial \cal O \times \R^+$. In particular, $\Omega$ is diffeomorphic to $K \times (r_0, \infty)$ for some connected component $K \subset \partial \cal O$, and $\Omega_R \backslash \overline{B_{R_1}(\cal O)}$ is diffeomorphic to the connected set $K \times (R_1,\infty)$. Concluding, $u \equiv c$ on $\Omega_R \backslash \overline{B_{R_1}(\cal O)}$, a contradiction since $u$ is assumed to vanish at infinity.
\item[(ii)] $x \in \partial B_{R_1}(\mathcal{O})$. In this case, $\nabla u(x) = \nabla \bar w(x) = 0$ and $u$ on $\Omega_R \backslash B_{R_1}(\cal O)$ has a boundary, global  maximum at $x$. Moreover, by $(i)$ the set $\Gamma$ does not intersect $\Omega_R \backslash \overline{B_{R_1}(\cal O)}$, hence $u<c$ on $\Omega_R \backslash \overline{B_{R_1}(\cal O)}$. Let $\gamma(t)$ be a ray from $\cal O$ with $\gamma(R_1)=x$. As in the proof of the finite maximum principle and the Hopf Lemma, (Thm. \ref{teo_FMP2}),  on a small enough annulus $E_\rho = B_{2\rho}\backslash B_\rho$ centered at $x_0 = \gamma(R_1 + 2\rho)$ we can construct a solution of
\begin{equation}\label{constru_w}
\begin{array}{l}
\Delta_\varphi v \ge 0, \qquad v=0 \ \text{ on } \, \partial B_{2\rho}, \quad v = \eta < c - \max_{\partial B_\rho} u \quad \text{on } \, \partial B_{\rho} \\[0.3cm]
\langle \nabla v, \nabla r_{x_0} \rangle <0 \quad \text{on } \, \partial B_{2\rho}, \qquad |\nabla v|>0 \quad \text{on } \, \overline{E}_\rho.
\end{array}
\end{equation}
where $r_{x_0}$ is the distance from $x_0$. With the aid of Proposition \ref{prop_comparison}, we can compare $u$ with $c-v$ on $E_\rho \subset \Omega \backslash B_{R_1}(\cal O)$ since
$$
\left\{ \begin{array}{ll}
\Delta_\varphi u  \ge b(x)f(u)l(|\nabla u|) \ge 0 \ge \Delta_\varphi(c-v), \\[0.3cm]
u- (c-v) = u-c \le 0 \quad \text{on } \, \partial B_{2\rho}, \\[0.2cm]
u-(c-v) \le \max_{\partial B_\rho} u - c + \eta < 0 \quad \text{on } \, \partial B_\rho
\end{array}\right.
$$
to deduce $u \le c-v$ on $E_\rho$. Since equality holds at $x \in \partial E_\rho$, we get
$$
0 \le \langle \nabla (u - c+v), \nabla r_{x_0}\rangle(x) = \langle \nabla v, \nabla r_{x_0} \rangle(x) < 0,
$$
contradiction.
\end{itemize}
We have therefore shown that $|\nabla \bar w|=|\nabla u|>0$ on $\Gamma$, and we are in the position to conclude as usual: from $l>0$ on $\R^+$ the quotient $l(|\nabla \bar w|)/l(|\nabla u|)$ is continuous and $\le 2$ on $U_\delta$, for $\delta$ sufficiently close to $c$. By the $C$-increasing property of $f$,
$$
f\big(\bar w(x)\big) l\big(|\nabla \bar w(x)|\big) \le 2C f\big(u(x)\big)l\big(|\nabla u(x)|\big) \qquad \forall \, x \in U_\delta,
$$
and thus
$$
\Delta_\varphi \bar w \le \frac{1}{2C} b(x)f(\bar w)l(|\nabla \bar w|) \le b(x)f(u)l(|\nabla u|) \le \Delta_\varphi u
$$
on $U_\delta$, with $u = \bar w + \delta$ on $\partial U_\delta$. By the comparison Proposition \ref{prop_comparison}, $u \le  \bar w + \delta$ on $U_\delta$, contradicting the very definition of $U_\delta$ and concluding the proof.\\
The reverse implication $\csp \Rightarrow \eqref{KO_zero}$, under the further assumptions $l \in \lip_\loc\big((0,\xi_0)\big)$ and $f(0)l(0)=0$, is a direct consequence of Theorem \ref{teo_necessitybello}. 
\end{proof}

We next specialize Theorem \ref{teo_CSP_nuovo}, and we prove Theorem \ref{cor_csp_specialized_intro}, that we rewrite for the reader's convenience.

\begin{theorem}\label{cor_csp_specialized}
Let $(M, \metric)$ be a manifold with a pole $\mathcal{O}$ such that, setting $r(x)= \dist(x,\mathcal{O})$,  \eqref{arra_CSP_assu} holds for some $\alpha \ge -2$, $\kappa \ge 0$. Consider $\varphi, b, f,l$ satisfying \eqref{assumptions}, \eqref{assumptions_bfl}, \eqref{assum_secODE_altreL_intro} and \eqref{Cincre_fl_CSP_intro}. Fix $\chi, \mu \in \R$ with
\begin{equation}\label{condi_chimuomega_CSP}
\chi>0, \qquad  \mu \le \chi - \frac{\alpha}{2}
\end{equation}
and assume that
\begin{equation}\label{assu_poli_csp}
\begin{array}{ll}
l(t) \asymp t^{1-\chi}\varphi'(t) & \quad \text{for } \, t \in (0,1), \\[0.2cm]
b(x) \ge C_1\big( 1+r(x)\big)^{-\mu} & \quad \text{for } \, r(x) \ge r_0,
\end{array}
\end{equation}
for some constant $C_1>0$. If there exists a constant $c_F \ge 1$ such that
\begin{equation}\label{condi_F}
F(t)^{\frac{\chi}{\chi+1}} \le c_F f(t) \qquad \text{for each } \, t \in (0, \eta_0), 
\end{equation}
then, 
\begin{equation}\label{laequi_KO}
\text{$\csp$ holds for $(P_\ge)$} \qquad \Longleftrightarrow \qquad \eqref{KO_zero}.
\end{equation}
\end{theorem}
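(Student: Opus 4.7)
The plan is to show that the hypotheses of Theorem \ref{cor_csp_specialized} are strong enough to verify all the structural requirements $(C_1),\dots,(C_4),(\beta_1),(\beta_2)$ and \eqref{assu_subeta} of Theorem \ref{teo_CSP_nuovo}, and then invoke that theorem and Theorem \ref{teo_necessitybello} to obtain the equivalence \eqref{laequi_KO}.

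\textbf{Step 1: behaviour of $K$ near $0$.} From $l(t) \asymp t^{1-\chi}\varphi'(t)$ on $(0,1)$ we get $K'(t) = t\varphi'(t)/l(t) \asymp t^{\chi}$, hence
\[
K(t) \asymp \frac{t^{\chi+1}}{\chi+1}, \qquad K^{-1}(s) \asymp s^{\frac{1}{\chi+1}},
\]
for $t, s$ in a right neighbourhood of $0$. From this homogeneity the conditions $(C_1)$ and $(C_2)$ follow at once: indeed $tK'(t) \asymp t^{\chi+1} \asymp K(t)$ gives $(C_1)$, while $K'(st) \asymp (st)^\chi = s^\chi t^\chi \asymp K'(s)K'(t)$ gives $(C_2)$. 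For $(C_3)$ one computes $t/K^{-1}(t) \asymp t^{\chi/(\chi+1)}$, which is increasing and infinitesimal at $0$ precisely because $\chi>0$. Finally $(C_4)$ is just a restatement of \eqref{condi_F}, since $F(t)/K^{-1}(F(t)) \asymp F(t)^{\chi/(\chi+1)} \le c_F f(t)$.

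\textbf{Step 2: choice of $\bar\beta$.} Set $\bar\beta(s) = c\,(1+s)^{-(\chi+1)}$ with $c>0$ small enough that $\bar\beta(r_0) < K_\infty$. Property $(\beta_1)$ is immediate. For $(\beta_2)$ note that $K^{-1}(\bar\beta(s)) \asymp (1+s)^{-1}$, so
\[
\frac{-\bar\beta'(s)}{K^{-1}(\bar\beta(s))} \asymp (1+s)^{-(\chi+2)}\cdot (1+s) \asymp \bar\beta(s).
\]
For \eqref{assu_subeta}, with $\beta(s) = C_1(1+s)^{-\mu}$ one has $\bar\beta/\beta \asymp (1+s)^{\mu-\chi-1}$, which is bounded because $\mu \le \chi-\alpha/2 \le \chi+1$ (using $\alpha\ge -2$); and
\[
\frac{s^{\alpha/2}\bar\beta(s)}{\beta(s)K^{-1}(\bar\beta(s))} \asymp s^{\alpha/2}(1+s)^{\mu-\chi},
\]
which is bounded on $[r_0,\infty)$ if and only if $\mu \le \chi - \alpha/2$, precisely our hypothesis \eqref{condi_chimuomega_CSP}.

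\textbf{Step 3: conclusion.} With all the structural conditions verified, Theorem \ref{teo_CSP_nuovo} yields the implication $\eqref{KO_zero} \Rightarrow \csp$ for $(P_\ge)$. The reverse implication follows from Theorem \ref{teo_necessitybello}, whose hypotheses $l\in\lip_\loc((0,\xi_0))$ and $f(0)l(0)=0$ are built into \eqref{Cincre_fl_CSP_intro}, and whose $C$-increasiness requirements on $f$ and $l$ are likewise contained there. This proves \eqref{laequi_KO}.

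\textbf{Main obstacle.} The only non-routine point is the exact matching of the parameter $\bar\beta$ to the weight $\beta$: one must choose the exponent $\chi+1$ in $\bar\beta$ so that simultaneously $(\beta_2)$ holds (which essentially forces the exponent), the first ratio in \eqref{assu_subeta} is bounded (which requires $\mu \le \chi+1$), and the second ratio is bounded (which is exactly the sharp condition $\mu \le \chi-\alpha/2$). The rigidity of this choice explains why \eqref{condi_chimuomega_CSP} cannot be relaxed without changing the construction.
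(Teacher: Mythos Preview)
Your proposal is correct and follows essentially the same route as the paper: verify $(C_1)$--$(C_4)$ from the polynomial behaviour $K'(t)\asymp t^\chi$, choose $\bar\beta(s)=c(1+s)^{-(\chi+1)}$, check $(\beta_1),(\beta_2)$ and \eqref{assu_subeta}, then invoke Theorem \ref{teo_CSP_nuovo} for sufficiency and Theorem \ref{teo_necessitybello} for necessity. The one cosmetic difference is that the paper first reduces to the exact choices $l(t)=C_2 t^{1-\chi}\varphi'(t)$ and $b(x)=C_1(1+r)^{-\mu}$ (using that a solution of $(P_\ge)$ for the original $b,l$ is also a solution for these smaller ones), and then needs Remark \ref{rem_utile} to handle the possible discontinuity of this specific $l$ at $t=0$; you work directly with the original $l$ via the two-sided bound $l\asymp t^{1-\chi}\varphi'(t)$, which is equally valid and avoids that detour.
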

\begin{proof}
First, we note that requirements \eqref{assumptions}, \eqref{assumptions_bfl}, \eqref{assum_secODE_altreL_intro} and \eqref{Cincre_fl_CSP_intro} on $\varphi,f,l$ correspond to \eqref{assumptions_CSP_necessity}, \eqref{phiel_solozero}, \eqref{assu_perCSP} and 
$$
f>0 \ \text{ on } \, (0,\eta_0), \qquad l \in \lip_\loc((0,\xi_0)), \qquad f(0)l(0)=0. 
$$
We can therefore apply Theorem \ref{teo_necessitybello} to deduce the validity of implication $\csp \Rightarrow \eqref{KO_zero}$.\\
Viceversa, assume \eqref{KO_zero}, that in view of \eqref{assu_poli_csp} is equivalent to 
\begin{equation}\label{KO_poli_CSP}
F^{-\frac{1}{\chi+1}} \in L^1(0^+).
\end{equation}
We first observe that it is enough to prove $\csp$ when
$$
\begin{array}{ll}
\disp l(t)= C_2 t^{1-\chi}\varphi'(t) &\quad \text{if } \, t \in (0,1), \\[0.2cm]
b(x) = C_1\big( 1+r(x)\big)^{-\mu} & \quad \text{if } \, r(x) \ge r_0,
\end{array}
$$
for constants $C_1,C_2 >0$. Indeed, the Keller-Osserman condition for $l(t)= C_2 t^{1-\chi}\varphi'(t)$ is still \eqref{KO_poli_CSP}. As underlined in Remark \ref{rem_utile}, although  $t^{1-\chi} \varphi'(t) \in L^\infty_\loc(\R^+_0)$ might fail to be continuous at $t=0$ we can still apply Proposition \ref{prop_CSP_radial}, and thus Theorem \ref{teo_CSP_nuovo}, once we check the validity of the remaining assumptions: $\chi > 0$ imply both \eqref{phiel_solozero} and $(C_3)$, \eqref{condi_chimuomega_CSP} imply $(C_1), (C_2)$, and \eqref{condi_F} is equivalent to $(C_4)$. On the other hand, the function
$$
\bar \beta(t) = c(1+t)^{-\chi-1}, \qquad \text{with } \quad c < K_\infty
$$
satisfies $(\beta_1)$ and $(\beta_2)$. To conclude, note that \eqref{assu_subeta} is equivalent to 
$$
\mu \le \max \left\{ \chi+1,\chi- \frac\alpha 2\right\}= \chi-\frac \alpha 2.
$$
\end{proof}

\begin{remark}
\emph{In the hypotheses of Theorem \ref{cor_csp_specialized}, set $v(r) = \vol(\partial B_r(\cal O))$.  Using the divergence theorem and coarea formula we deduce
$$
v'(r) = \int_{\partial B_r(\cal O)}   \Delta r  \ge -C_1 r^{\chi-\mu} v(r)
$$
where we used \eqref{laplacomp_csp} with $\alpha/2=\chi-\mu$. A further integration gives
$$
\begin{array}{ll}
\text{if } \, \mu< \chi+1, & \qquad v(r) \ge C_1 e^{-C_2 r^{\chi+1-\mu}} \\[0.2cm]
\text{if } \, \mu = \chi+1, & \qquad v(r) \ge C_1 r^{-C_2}.
\end{array}
$$
for some constants $C_1, C_2> 1$. Assume that $\vol(M)< \infty$. Integrating the above on $(r,\infty)$, taking logarithms and recalling that
$$
\vol(M)-\vol(B_r(\mathcal{O}))=\int_r^{\infty} v(s) \di s
$$
we deduce that
\begin{equation}\label{volgrowth_CSP}
\begin{array}{ll}
\text{if } \, \mu < \chi+1, & \disp \qquad \limsup_{r \ra \infty} \frac{- \log\big( \vol(M)-\vol(B_r(\cal O))\big)}{r^{1+\chi-\mu}} < \infty; \\[0.5cm]
\text{if } \, \mu = \chi+1, & \disp \qquad \limsup_{r \ra \infty} \frac{- \log\big( \vol(M)-\vol(B_r(\cal O))\big)}{\log r} < \infty.
\end{array}
\end{equation}
It is interesting to compare \eqref{volgrowth_CSP} with conditions \eqref{volgrowth_sigmazero}. In view of  Theorem \ref{teo_main_2}, one might wonder whether the $\csp$ could be proved under a volume growth assumption like \eqref{volgrowth_CSP}. Indeed, the problem seems to be quite hard, and one of the main reasons lies in the fact that a manifold satisfies $\csp$ if and only if each of its ends does. This forces \eqref{volgrowth_CSP} to be satisfied on \emph{each} end, otherwise an end with big volume would be sufficient for \eqref{volgrowth_CSP} to hold independently of the behaviour of the others. However, an approach via integral estimates like in Theorem \ref{teo_main_2}, loosely speaking, seems unable to distinguish among different ends, and thus, at least, it needs to be complemented by new techniques.
}
\end{remark}

\begin{remark}
\emph{Both $\fmp$ and $\csp$ can be considered for more general inequalities, including the prototype ones
$$
\Delta_p u = u^\omega \pm |\nabla u|^q
$$
for some $\omega,q>0$. In fact, the Keller-Osserman condition changes according to whether $q \ge p-1$ or $q< p-1$, see \cite{pucciserrinzou} and \cite{felmermontenegroquaas} for a detailed account. In particular, in \cite{felmermontenegroquaas} the authors propose suitable Keller-Osserman conditions for  the case when the terms $u^\omega$ and $|\nabla u|^q$ strongly interact. The sharpness of these conditions for general nonlinearities in $u$ and $|\nabla u|$ is, to our knowledge, still an open problem.
}
\end{remark}



%
%

\subsubsection{A second ODE lemma: locating the support}

Proposition \ref{prop_CSP_radial} guarantees the existence of $R_1>R$ such that the supersolution $w$ in  \eqref{proprie_w_CSP} vanishes outside of $B_{R_1}$. However, the proof gives loose indication on the distance between $R_1$ and $R$. Although this further information is not needed in the results that we present here,  nevertheless we feel worth to underline that the construction of $w$ can be modified in such a way to locate $R_1$, say to have $R_1 = 2R$. More importantly, this new method, that works under a set of assumptions which is skew with respect to that  in Proposition \ref{prop_CSP_radial}, allows for weights $b(x)$ that may oscillate between two different polynomial type decays. In the sequel, we need

\begin{itemize}
\item[$(C_2)'$] there exist constants $d_1>0$ and $c_1>0$ such that
$$
\begin{array}{ll}
\varphi'(st) \le d_1 \varphi'(s) \varphi'(t) & \quad \text{for each } \, s,t \in (0,1]; \\[0.3cm]
\disp l(s)l(t) \le c_1 l(st) & \quad \text{for each } \, s,t \in (0,1];
\end{array}
$$
\end{itemize}

\noindent instead of the weaker $(C_2)$ (cf. Lemma \ref{teclemmac2}). On the other hand, we will not need $(C_3)$. Regarding $\beta$, we assume that $\beta \equiv \bar \beta$, that $\beta$ vanishes at infinity, and a further condition $(\beta_3)$, namely we require

\begin{itemize}
\item[$(\beta_1)'$] $0<\beta \in C^1([r_0,\infty))$, $\quad \beta' \le 0$ for $t \ge r_0$, $\quad \beta(t) \ra 0$ as $t \ra \infty$;
\item[$(\beta_2)'$] there exists a constant $c_\beta \ge 1$ such that
$$
\frac{-\beta'(t)}{K^{-1}(\beta(t))} \le c_\beta \beta(t) \qquad \text{for each } \, t \in [r_0, \infty).
$$
\item[$(\beta_3)$] There exists a constant $\hat c_\beta >0$ such that
$$
\limsup_{t \ra \infty} \frac{-t \beta'(t)}{\beta(t)} \ge \hat c_\beta.
$$
\end{itemize}

\begin{remark}
\emph{Up to choosing $r_0$ large enough, by $(\beta_1)'$ we can assume $\beta(t) \in [0, K_\infty)$, thus $(\beta_2)'$ is meaningful.
}
\end{remark}

\begin{example}\label{ex_CSPpoli_2}
\emph{Referring to Example \ref{ex_CSPpoli}, $\varphi$ and $l$ satisfy $(C_1), (C_2)'$ for each $\chi \ge 0$, while $(C_4)$ is met for $\omega \le \chi$. If further $\beta(t) = (1+t)^{-\mu}$, $(\beta_1)'$ and $(\beta_3)$ require $\mu>0$ to be both satisfied, and $(\beta_2)'$ needs $\mu \le \chi+1$.
}
\end{example}

We are ready to state our second main ODE result, to be compared to Proposition \ref{prop_CSP_radial}. Its delicate proof originates from the paper \cite{pucciserrin2}, later refined in \cite{PuRS}, \cite{rigolisalvatorivignati} and \cite{rigolisalvatorivignati_5}, and to help readability we postpone it  to the end of this section.

\begin{proposition} \label{lem_primo_CSP}
Let $\varphi,f,l$ satisfy \eqref{assu_perCSP} and \eqref{phiel_solozero}, and assume the validity of $(C_1),(C_2)',(C_4)$ and $(\beta_1)', (\beta_2)', (\beta_3)$, for some $r_0>0$. Having fixed a non-negative $\theta \in C([r_0,\infty))$, suppose that
\begin{equation}\label{solu_radialCSP_second}
\disp \limsup_{R \ra \infty} K\left(\frac{1}{R K^{-1}(\beta(2R))}\right)R\theta(R) < \infty.
\end{equation}
Then, there exists a diverging sequence $\{R_j\}$ such that the following holds: if 
\begin{equation}\label{KO_zero_22}\tag{$\mathrm{KO}_0$}
\frac{1}{K^{-1} \circ F} \in L^1(0^+),
\end{equation}
then for each $\epsilon \in (0,\xi_0)$, there exist $\lambda \in (0, \eta_0)$ and, for each $R \in \{R_j\}$, a function $z$ with the following properties:
\begin{equation}\label{proprie_z_CSP}
\left\{ \begin{array}{l}
z \in C^1([R, \infty)), \quad \text{and $C^2$ except possibly at $2R$} \\[0.2cm]
0 \le z \le \lambda, \qquad z(R)= \lambda, \qquad z \equiv 0 \quad \text{on } \, [2R, \infty), \\[0.2cm]
z' <0 \quad \text{on } \, [R, 2R), \qquad |z'| \le \epsilon \quad \text{on } \, [R, \infty), \\[0.2cm]
\big(\varphi(z')\big)'-\theta(t) \varphi(z') \le \epsilon \beta(t) f(z) l(|z'|) \qquad {\rm on} \, [R,\infty)
\end{array}\right.
\end{equation}
\end{proposition}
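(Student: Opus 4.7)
The strategy will mirror the implicit construction in Proposition \ref{prop_CSP_radial}, but with the crucial modification that the vanishing radius $R_1$ of the supersolution is forced to equal $2R$ rather than some unspecified $R_\sigma$. The initial step is to extract the diverging sequence $\{R_j\}$ from $(\beta_3)$ by selecting points where $-R_j\beta'(R_j)/\beta(R_j)$ approaches the limsup. Combined with $(\beta_2)'$, which yields $-\beta'/\beta \le c_\beta K^{-1}(\beta)$, this forces $R_j K^{-1}(\beta(R_j))$ to be bounded away from zero, and integrating $(\beta_2)'$ on $[R_j,2R_j]$ gives
\[
\log\!\bigl(\beta(R_j)/\beta(2R_j)\bigr)\le c_\beta\int_{R_j}^{2R_j}K^{-1}(\beta)\le c_\beta R_j K^{-1}(\beta(R_j)),
\]
so the ratio $\beta(R_j)/\beta(2R_j)$ is uniformly bounded along the sequence (refining the selection of $R_j$ if necessary to keep $R_j K^{-1}(\beta(R_j))$ bounded above).

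Fixing $\epsilon\in(0,\xi_0)$ and setting $\Lambda(\lambda)=\int_0^{\lambda}\di s/K^{-1}(F(s))$, which is finite by \eqref{KO_zero_22}, for each $R=R_j$ I would choose $\sigma=\sigma_R>0$ implicitly by the monotone identity $\Lambda(\lambda)=\int_R^{2R}K^{-1}(\sigma\beta(s))\,\di s$, and then define $\alpha:[0,R]\to[0,\lambda]$ by
\[
\int_0^{\alpha(t)}\frac{\di s}{K^{-1}(F(s))}=\int_{2R-t}^{2R}K^{-1}(\sigma\beta(s))\,\di s,
\]
so that $\alpha(0)=0$, $\alpha(R)=\lambda$ and, upon setting $z(r)=\alpha(2R-r)$ for $r\in[R,2R]$ and $z\equiv 0$ on $[2R,\infty)$, we obtain the required boundary data and positivity. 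Differentiating gives $\alpha'(t)=K^{-1}(F(\alpha))K^{-1}(\sigma\tilde\beta(t))$ with $\tilde\beta(t)=\beta(2R-t)$, in particular $\alpha'(0)=0$, which yields $C^1$-matching of $z$ with the zero extension at $r=2R$. Applying $K$ and differentiating a second time, exploiting $(C_2)'$ (the sharper multiplicativity of $\varphi'$ and $l$ replacing $(C_2)$) together with $(C_1)$, then invoking $(C_4)$ to estimate $F(\alpha)\le c_F K^{-1}(F(\alpha))f(\alpha)$ and $(\beta_2)'$ to estimate $\tilde\beta'$, leads to
\[
\varphi'(\alpha')\alpha''\le \frac{C\sigma}{K^{-1}(\sigma)}\tilde\beta(t)f(\alpha)l(\alpha'),
\]
parallel to \eqref{primopezzo_csp}. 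The convexity $\alpha''\ge0$ follows from $(K(\alpha'))'\ge0$, and the $\theta(r)\varphi(z')$ correction is absorbed by combining assumption \eqref{solu_radialCSP_second}---which precisely controls $R\theta(R)$ in terms of $K(1/(R K^{-1}(\beta(2R))))$, a quantity comparable to $K(\Lambda/R)/\sigma_R\beta(\cdot)$ on our sequence---with the $(\beta_3)$-driven boundedness of $\beta(R)/\beta(2R)$.

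The principal obstacle is the \emph{simultaneous} arrangement of (a) the ODE inequality with constant $\epsilon$, (b) the gradient bound $|z'(R)|=K^{-1}(F(\lambda))K^{-1}(\sigma_R\beta(R))\le\epsilon$, and (c) the identity forcing the support to equal $[R,2R]$, while keeping $\lambda$ independent of $R$. The identity yields the bracketing $K(\Lambda/R)\le\sigma_R\beta(R)\le K(\Lambda/R)\cdot\beta(R)/\beta(2R)$, so along $\{R_j\}$ the product $\sigma_{R_j}\beta(R_j)$ stays comparable to $K(\Lambda/R_j)\to 0$; thus $K^{-1}(\sigma_{R_j}\beta(R_j))$ is uniformly small for large $j$, and the gradient bound reduces to a requirement on $K^{-1}(F(\lambda))$, achievable by shrinking $\lambda$ once and for all. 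The same smallness of $\sigma/K^{-1}(\sigma)$ (which tends to $0$ with $\sigma$ by the reasoning behind $(C_3)$-type estimates, but here obtained instead via $(C_2)'$ applied at small scales) makes the constant in the ODE inequality smaller than $\epsilon$, completing the construction.
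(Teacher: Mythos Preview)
Your construction essentially reproduces Proposition~\ref{prop_CSP_radial} with the single additional constraint $R_1=2R$ enforced through the choice of $\sigma$. The gap is in the final step, where you need the coefficient $C\sigma/K^{-1}(\sigma)$ in the differential inequality to be smaller than $\epsilon$. You assert that this quantity tends to zero ``by the reasoning behind $(C_3)$-type estimates, but here obtained instead via $(C_2)'$ applied at small scales''. This is not true: the vanishing of $t/K^{-1}(t)$ as $t\to 0$ is exactly the content of $(C_3)$, which is \emph{not} assumed in the present proposition. No combination of $(C_1)$ and $(C_2)'$ yields it. In the prototype $\varphi'(t)\asymp t^{p-2}$, $l(t)\asymp t^{p-1-\chi}$ one has $K(t)\asymp t^{\chi+1}$, hence $\sigma/K^{-1}(\sigma)\asymp \sigma^{\chi/(\chi+1)}$; for $\chi=0$ this ratio is a fixed positive constant, independent of $\sigma$, so your coefficient cannot be made smaller than any prescribed $\epsilon$. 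Note that Example~\ref{ex_CSPpoli_2} explicitly allows $\chi=0$.

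The paper avoids this by a genuinely different mechanism. Instead of rescaling $\beta$ by $\sigma$, it introduces an auxiliary parameter $T\le R$ and a stretching factor $D=C_\lambda\big/\int_{2R-T}^{2R}K^{-1}(\beta)\ge 1$, defines $\alpha$ on $[0,DT]$ with the \emph{unscaled} $\beta$ in the integral, and then sets $z(t)=\alpha(s)$ via the linear change $s=DT(2-t/R)$. The differential inequality for $\alpha$ now carries the fixed constant $k_1k_2[1+c_Fc_\beta]$, with no $\sigma/K^{-1}(\sigma)$ factor. The smallness is produced only at the change-of-variables step: since $z'=-(DT/R)\alpha_s$, one applies $(C_2)'$ \emph{directly} to $\varphi'$ (and the multiplicativity of $l$) in the form $\varphi'((DT/R)\alpha_s)\le d_1\varphi'(DT/R)\varphi'(\alpha_s)$, together with $(K_4)$ of Lemma~\ref{teclemmac2} for the $\theta$-term. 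The resulting coefficient is, up to fixed constants, $K(DT/R)[1+R\theta(R)]$. Using $(\beta_2)'$ and $(\beta_3)$ one bounds $DT/R\le 4c_\beta C_\lambda/\hat c_\beta$, then $(K_1)$ splits $K(DT/R)\le k_1k_2\,K(\sqrt{C_\lambda})\,K\!\bigl(\sqrt{C_\lambda}/(RK^{-1}(\beta(2R)))\bigr)$; the hypothesis \eqref{solu_radialCSP_second} controls the second factor times $R\theta(R)$, and the first factor $K(\sqrt{C_\lambda})\to 0$ as $\lambda\to 0$ simply because $K(0)=0$. No $(C_3)$ is needed.
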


\begin{remark}\label{rem_Rj}
\emph{The two $\limsup$ in $(\beta_3)$, \eqref{solu_radialCSP_second} could be simultaneously replaced by $\liminf$. Indeed, the sequence $\{R_j\}$ is just required to satisfy 
\begin{equation}\label{condi_R_j}
\left\{\begin{array}{l}
\disp R_1 \ge 2r_0 \\[0.2cm]
\disp \frac{- R_j \beta'(R_j)}{\beta(R_j)} \ge \frac{\hat c_\beta}{2} \\[0.4cm]
\disp K\left(\frac{1}{R_j K^{-1}(\beta(2R_j))}\right)R_j\theta(R_j) \le B_2,
\end{array}\right.
\end{equation}
for some $B_2>0$. Observe that, in the ``double liminf" case, the vanishing of $\beta$ is automatic by integrating $(\beta_3)$, hence $(\beta_1)'$ coincides with $(\beta_1)$.}
\end{remark}


As a direct corollary, we have the following result, whose proof follows verbatim that of Theorem \ref{teo_CSP_nuovo} replacing, in the argument, Proposition \ref{prop_CSP_radial} with Proposition \ref{lem_primo_CSP}.

\begin{theorem}\label{teo_CSP_general_1}
Let $(M, \metric)$ be a manifold possessing a pole $\mathcal{O}$, and let $r(x) = \dist(x, \mathcal{O})$. Suppose that \eqref{arra_CSP_assu} holds, consider $0<b \in C(M)$ and let $\beta \in C^1([r_0,\infty))$ such that
$$
b(x) \ge \beta\big(r(x)\big) \qquad \text{ for } \, r(x) \ge r_0.
$$
Let $\varphi, f,l$ satisfy \eqref{assumptions_CSP_necessity}, \eqref{phiel_solozero} and \eqref{assu_perCSP}, and assume the validity of $(C_1), (C_2)', (C_4)$ and $(\beta_1)', (\beta_2)', (\beta_3)$. Suppose that
$$
\disp \limsup_{R \ra \infty} K\left(\frac{1}{R K^{-1}(\beta(2R))}\right)R^{1+ \frac{\alpha}{2}} < \infty.
$$
Then,
$$
\eqref{KO_zero} \qquad \Longrightarrow \qquad \text{$\csp$ holds for \eqref{csp_problem}.}
$$
If moreover 
$$
l \in \lip_\loc\big((0,\xi_0)\big), \qquad f(0)l(0)=0, 
$$
then
$$
\eqref{KO_zero} \qquad \Longleftrightarrow \qquad \text{$\csp$ holds for \eqref{csp_problem}.}
$$
\end{theorem}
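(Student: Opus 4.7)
The plan is to repeat, essentially verbatim, the proof of Theorem \ref{teo_CSP_nuovo}, but substituting Proposition \ref{prop_CSP_radial} with Proposition \ref{lem_primo_CSP}, which is designed to handle exactly the present set of assumptions $(C_1), (C_2)', (C_4)$ and $(\beta_1)', (\beta_2)', (\beta_3)$. The reverse implication $\csp \Rightarrow \eqref{KO_zero}$ under the extra regularity conditions follows directly from Theorem \ref{teo_necessitybello}, since $f(0)l(0)=0$ and $l\in\lip_\loc((0,\xi_0))$ are precisely what is required there.

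For the direct implication, it suffices to show compact support for $C^1$ solutions $u$ of \eqref{csp_problem} on an end $\Omega = \Omega_{r_0} \subset M \setminus B_{r_0}(\cal O)$. Applying Proposition \ref{lem_ODE_infty_increasing} to the curvature bound in \eqref{arra_CSP_assu} with the choices dictated by the sign of $\alpha$ (as in the proof of Theorem \ref{teo_CSP_nuovo}), we obtain the Laplacian comparison from below
\begin{equation*}
\Delta r \ge (m-1) D\kappa (1+r)^{\alpha/2} \qquad \text{on } M \backslash \overline{\cal O},
\end{equation*}
with the appropriate negative constant $D$. I then set $\theta(t) = (m-1)|D|\kappa (1+t)^{\alpha/2}$; the hypothesis on $\limsup_{R\to\infty} K\bigl(1/(RK^{-1}(\beta(2R)))\bigr) R^{1+\alpha/2}$ is then exactly condition \eqref{solu_radialCSP_second} for this $\theta$, so Proposition \ref{lem_primo_CSP} applies. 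Choosing $\epsilon = 1/(2C)$ with $C$ the $C$-increasing constant of $f$, it yields a threshold $\lambda \in (0,\eta_0)$ and, along a sequence $R_j \to \infty$, functions $z = z_{R_j}$ satisfying \eqref{proprie_z_CSP}. Picking $R = R_j$ large enough that $u(x) < \lambda$ for $r(x) \ge R$ (possible since $u\to 0$ at infinity), the radialization $\bar w(x) = z(r(x))$ is well defined and $C^1$ on $M \setminus B_R(\cal O)$ because $\cal O$ is a pole, satisfies $\bar w \equiv 0$ outside $B_{2R}(\cal O)$, and by the sign of $z'$ combined with the Laplacian comparison
\begin{equation*}
\Delta_\varphi \bar w \;\le\; \bigl(\varphi(z')\bigr)' - \theta(r)\varphi(z') \;\le\; \tfrac{1}{2C}\beta(r) f(\bar w) l(|\nabla \bar w|) \;\le\; \tfrac{1}{2C} b(x) f(\bar w) l(|\nabla \bar w|)
\end{equation*}
weakly on $M \setminus B_R(\cal O)$.

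The final step is to compare $u$ with $\bar w$ on $\Omega_R = \Omega \cap (M \setminus B_R(\cal O))$, exactly as in the last part of the proof of Theorem \ref{teo_CSP_nuovo}. Since $u < \lambda = \bar w$ on $\partial \Omega_R$ and $u \to 0$ at infinity while $\bar w \ge 0$, any positive value $c = \sup_{\Omega_R}(u - \bar w)$ would be attained on a compact set $\Gamma$ where $|\nabla u| = |\nabla \bar w|$. The main obstacle, as usual in these arguments, is ruling out $|\nabla \bar w|=0$ on $\Gamma$: points in the interior of $\{\bar w = 0\}$ are handled by the finite maximum principle (Theorem \ref{teo_FMP2}) applied to $c - u$ together with the connectedness of $\Omega_R \setminus \overline{B_{2R}(\cal O)}$ (which in turn follows from $\cal O$ being a pole, so the end is diffeomorphic to $K \times (r_0,\infty)$ for a connected $K \subset \partial \cal O$), while points on $\partial B_{2R}(\cal O)$ are excluded by a Hopf-type barrier construction on a small annulus analogous to the one in the proof of Theorem \ref{teo_FMP2}. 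With $\inf_\Gamma |\nabla \bar w| > 0$ secured, one picks $\delta$ close to $c$ so that $l(|\nabla \bar w|) \le 2 l(|\nabla u|)$ on the relatively compact set $U_\delta = \{u - \bar w > \delta\} \Subset \Omega_R$, and the $C$-increasing property of $f$ gives $\Delta_\varphi \bar w \le \Delta_\varphi u$ on $U_\delta$ with $u = \bar w + \delta$ on $\partial U_\delta$. Proposition \ref{prop_comparison} then yields $u \le \bar w + \delta$ on $U_\delta$, a contradiction. Hence $u \le \bar w$ on $\Omega_R$ and $u$ has compact support, as desired.
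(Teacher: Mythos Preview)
Your proposal is correct and follows exactly the approach indicated in the paper: the paper's own proof consists of a single sentence stating that the argument of Theorem \ref{teo_CSP_nuovo} carries over verbatim once Proposition \ref{prop_CSP_radial} is replaced by Proposition \ref{lem_primo_CSP}, and you have accurately filled in the details of that substitution (the identification of $\theta(r)=(m-1)|D|\kappa(1+r)^{\alpha/2}$ with \eqref{solu_radialCSP_second}, the choice of $R$ along the sequence $\{R_j\}$ after $\lambda$ is fixed, and the comparison/Hopf-barrier argument on $\Omega_R$).
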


Specified to power-like $\varphi,f,l$, Theorem \ref{teo_CSP_general_1} has the next corollary for general weights $b$, where we used Remark \ref{rem_Rj}.

\begin{corollary}\label{cor_CSP_general_plapla}
Let $M$ be a complete manifold with a pole $\mathcal{O}$ and satisfying \eqref{arra_CSP_assu}. Suppose that $\varphi,f,l$ satisfy \eqref{assumptions_CSP_necessity}, \eqref{phiel_solozero}, \eqref{assu_perCSP} and $l \in \lip_\loc((0,\xi_0))$. Moreover, assume that for some $p,\chi,\omega \in \R$ satisfying
$$
p>1, \qquad 0 < \chi \le p-1, \qquad \omega>0,
$$
it holds
$$
\varphi'(t) \asymp t^{p-2}, \qquad l(t) \asymp t^{p-1-\chi}, \qquad f(t) \asymp t^{\omega}
$$
in a right neighbourhood of $t=0$. Suppose that there exist $r_0>0$ and $0<\beta \in C^1([r_0,\infty))$ satisfying $(\beta_1)$ and 
\begin{equation}\label{proprie_beta_plapla}
\begin{array}{l}
\disp - \beta'(t) \le B \big[\beta(t)\big]^{ \frac{\chi+2}{\chi+1}} \qquad \text{on } \, [r_0,\infty); \\[0.3cm]
\disp\liminf_{t \to \infty}\frac{-t \beta' (t)}{\beta(t)} >0, \qquad \disp \liminf_{t \ra \infty} \frac{t^{\frac{\alpha}{2}-\chi}}{\beta(t)} < \infty,
\end{array}
\end{equation}
for some constant $B>0$. If $0< b \in C(M)$ satisfies
$$
b(x) \ge \beta(r(x)) \qquad {\rm for} \ r(x) \ge r_0,
$$
then
$$
\text{$\csp$ holds for \eqref{csp_problem}} \qquad \Longleftrightarrow \qquad \omega < \chi.
$$
\end{corollary}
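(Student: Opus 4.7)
The plan is to derive Corollary \ref{cor_CSP_general_plapla} by specializing Theorem \ref{teo_CSP_general_1} (for sufficiency) and Theorem \ref{teo_necessitybello} (for necessity), after translating all the abstract hypotheses into the power-like regime. The first step is a short calculation: from $\varphi'(t) \asymp t^{p-2}$ and $l(t) \asymp t^{p-1-\chi}$ one has
$$
K(t) = \int_0^t \frac{s\varphi'(s)}{l(s)} \di s \asymp \int_0^t s^{\chi} \di s \asymp t^{\chi+1},
$$
so $K^{-1}(\tau) \asymp \tau^{1/(\chi+1)}$; and $F(t) \asymp t^{\omega+1}$. Consequently $1/K^{-1}(F(s)) \asymp s^{-(\omega+1)/(\chi+1)}$, and the Keller–Osserman condition \eqref{KO_zero} is equivalent to $\omega<\chi$. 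This reduces the statement to showing that the hypotheses of the two cited theorems are met in exactly this regime.

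For necessity, the power-like asymptotics together with $\chi\in(0,p-1]$ and $\omega>0$ trivially yield the remaining hypotheses of Theorem \ref{teo_necessitybello}: $t\varphi'(t)/l(t)\asymp t^\chi \in L^1(0^+)$, and both $f,l$ are $C$-increasing near $0$ since each is comparable to a non-negative power; $l$ is locally Lipschitz on $(0,\xi_0)$ by assumption, and $f(0)l(0)=0$ because $\omega>0$ gives $f(0)=0$. Therefore, assuming $\csp$ holds, Theorem \ref{teo_necessitybello} forces \eqref{KO_zero}, hence $\omega<\chi$.

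For sufficiency I plan to invoke Theorem \ref{teo_CSP_general_1} together with the ``double liminf'' variant explained in Remark \ref{rem_Rj}. The properties $(C_1)$ and $(C_2)'$ are immediate from $K(t)\asymp t^{\chi+1}$ and the multiplicativity of power functions. Property $(C_4)$ amounts to $F(t)^{\chi/(\chi+1)} \le c_F f(t)$, which under power-like asymptotics reads $t^{(\omega+1)\chi/(\chi+1)} \lesssim t^\omega$ near $0$; this is equivalent to $\omega \le \chi$ and is therefore guaranteed by the assumed strict inequality $\omega<\chi$. For the weight $\beta$, the first line of \eqref{proprie_beta_plapla}, combined with $K^{-1}(\beta)\asymp \beta^{1/(\chi+1)}$, is precisely $(\beta_2)'$; the second line of \eqref{proprie_beta_plapla} gives $(\beta_3)$ in its liminf form, and by integration also yields $\beta(t)\to 0$, producing the remaining content of $(\beta_1)'$.

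The one step requiring care is the integral condition \eqref{solu_radialCSP_second} with $\theta(t)=(m-1)|D|\kappa(1+t)^{\alpha/2}$ coming from the Laplacian comparison \eqref{laplacomp_csp} that underlies Theorem \ref{teo_CSP_general_1}. Using the asymptotics of $K$ and $K^{-1}$ one computes
$$
K\!\left(\frac{1}{R\,K^{-1}(\beta(2R))}\right) R\,\theta(R) \;\asymp\; \frac{1}{R^{\chi+1}\beta(2R)}\cdot R^{1+\alpha/2} \;=\; \frac{R^{\alpha/2-\chi}}{\beta(2R)},
$$
so the required $\limsup$ bound (in the liminf form allowed by Remark \ref{rem_Rj}) becomes $\liminf_{R\to\infty} R^{\alpha/2-\chi}/\beta(2R) < \infty$, which is exactly the last assumption in \eqref{proprie_beta_plapla} (up to the harmless rescaling $R\mapsto 2R$). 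The selection of the sequence $\{R_j\}$ satisfying both $(\beta_3)$ and the integral bound follows by extracting a common diverging subsequence along which both liminfs are attained, as in \eqref{condi_R_j}. The main obstacle, and the only non-mechanical part of the verification, is keeping track of the interplay between the two liminf-type assumptions on $\beta$ so that a single sequence $\{R_j\}$ works for both; once this is secured, Theorem \ref{teo_CSP_general_1} together with the already-established equivalence \eqref{KO_zero}$\Leftrightarrow \omega<\chi$ yields the claimed characterization.
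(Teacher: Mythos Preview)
Your approach is correct and matches the paper's intended argument: specialize Theorem \ref{teo_CSP_general_1} together with Remark \ref{rem_Rj} for sufficiency, and Theorem \ref{teo_necessitybello} for necessity, after reducing \eqref{KO_zero} to $\omega<\chi$ via the power-like asymptotics. One small clarification on what you flag as the ``main obstacle'': since $\liminf_{t\to\infty}\big(-t\beta'(t)/\beta(t)\big)>0$ means the inequality $-t\beta'(t)/\beta(t)\ge \hat c_\beta/2$ holds for \emph{all} sufficiently large $t$, you may simply take any sequence realizing $\liminf_{t\to\infty} t^{\alpha/2-\chi}/\beta(t)<\infty$, and both requirements in \eqref{condi_R_j} are then automatic --- there is no genuine common-subsequence extraction to perform.
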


\begin{remark}
\emph{A careful analysis of \eqref{proprie_beta_plapla} shows that the above corollary allows for bounds $\beta$ that oscillate between the polynomial decays $t^{-1-\chi}$ and $t^{\alpha/2-\chi}$.
}
\end{remark}

\subsubsection{Non-empty cut-locus: the $p$-Laplacian case}

With the help of Section \ref{sec_exhaustion} we now remove the pole condition in the particular case of the $p$-Laplace operator, exploiting the fake distance function $\varrho$. Let $\Omega$ be an end of $M$, and we consider a solution $u$ of
\begin{equation}\label{CSP_plaplacian}
\left\{ \begin{array}{l}
\Delta_p u \ge b(x)f(u)|\nabla u|^{p-1-\chi} \qquad \text{on } \, \Omega, \\[0.2cm]
\disp u \ge 0, \qquad \lim_{x \in \Omega, \, x \ra \infty} u(x) = 0.
\end{array}\right.
\end{equation}
Where $0 \le \chi \le p-1$. Since $\varphi(t) = t^{p-1}$ and $l(t)=t^{p-1-\chi}$, the function $K$ in \eqref{def_K_CSP} automatically satisfies $(C_1)$ and $(C_2)'$. Eventually, we assume $(C_4)$, that in the present case can be written as follows:
\begin{itemize}
\item[$(C_4)$] there exists $c_F \ge 1$ such that 
$$
c_F f(t) \ge F(t)^{\frac{\chi}{\chi+1}} \qquad \text{for } \, t \in [0, \eta_0).
$$
\end{itemize}
Let $(M^m, \metric)$ be a complete Riemannian manifold satisfying, 
\begin{equation}\label{iporicci_csp}
\Ricc(\nabla r, \nabla r) \ge -(m-1)\kappa^2 \qquad \text{on } \, \mathcal{D}_o,
\end{equation}
for some origin $o$ and constant $\kappa>0$. Suppose that $\Delta_p$ is non-parabolic on $M$, and define the fake distance $\varrho$ as in \eqref{def_bxy} associated to the hyperbolic space of curvature $-\kappa^2$ (that is, $g(r) = \kappa^{-1} \sinh(\kappa r)$). 
To be able to radialize with respect to $\varrho$, we shall assume 
\begin{equation}\label{condi_bbeta_plap}
b(x) \ge \beta\big( \varrho(x)\big) \qquad \text{on } \, M, 
\end{equation}
for some function $\beta$ matching the necessary assumptions to apply Proposition \ref{prop_CSP_radial}. In our case of interest, we can restrict to non-increasing $\beta$ and to $\bar \beta = \beta^\gamma$, for some $\gamma \ge 1$. Then, $(\beta_1)$ and $(\beta_2)$ amount to the requests 
\begin{itemize}
\item[$(\beta_1)$] $0<\beta \in C^1(\R^+_0)$, $\beta' \le 0$ on $\R^+$.
\item[$(\beta_2)$] For some $\gamma \ge 1$,
$$
-\beta'(t) \le c_\beta \beta(t)^\frac{\chi+1+\gamma}{\chi+1} \qquad \text{on } \, [1, +\infty). 
$$
\end{itemize}
The prototype example is given by the choice $\beta(t) = (1+t)^{-\mu}$ with $\mu \in [0,\chi+1]$. 
\begin{remark}
\emph{Condition \eqref{condi_bbeta_plap} is expressed in terms of level sets of the Green kernel, and it would be desirable to obtain a corresponding decay of $b$ in terms of the distance function $r$. However, finding good geometric conditions to estimate $\varrho$ from \emph{below} by $r$ at infinity is a delicate issue, especially when $\Ricc$ is negative somewhere. A particular, relevant case is when $\Ricc \ge 0$ and $M$ has maximal volume growth, for which, in case $p=2$, we have that $\rho \asymp r$. A further discussion appears at the end of the section. 
}
\end{remark}

Our main result is the following 

\begin{theorem}\label{teo_CSP_plapla}
Let $(M^m, \metric)$ be a complete Riemannian manifold such that, for some origin $o$,
\begin{equation}\label{iporicci_csp}
\Ricc(\nabla r, \nabla r) \ge -(m-1)\kappa^2 \qquad \text{on } \, \mathcal{D}_o,
\end{equation}
for some constant $\kappa>0$. Let $p \in (1,\infty)$; suppose that $\Delta_p$ is non-parabolic on $M$ and that the minimal positive Green kernel $\gr_p(x,o)$ with pole at $o$ satisfies
\begin{equation}\label{properness_b}
\gr_p(x,o) \ra 0 \quad \text{as $x$ diverges.}
\end{equation}
Let $f$ satisfy
\begin{equation}\label{f_C_increasing}
f \in C(\R), \qquad f>0 \quad \text{and $C$-increasing on } \, (0, \eta_0)
\end{equation}
and, for $\chi \in (0, p-1]$, assume $(C_4)$. Let $0< b \in C(M)$ satisfy \eqref{condi_bbeta_plap}, for some $\beta$ matching $(\beta_1),(\beta_2)$ above. If, either
\begin{itemize}
\item[(i)] $\chi = p-1$ and $f(0)=0$, or
\item[(ii)] $\chi \in (0, p-1)$ and $p \in (1,2]$,
\end{itemize}
then
\begin{equation}\label{inteinzero_plap}
\text{$\csp$ holds for \eqref{CSP_plaplacian}} \qquad \Longleftrightarrow \qquad F^{-\frac{1}{\chi+1}} \in L^1(0^+).  
\end{equation}
\end{theorem}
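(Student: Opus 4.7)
\medskip

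\noindent\textbf{Proof proposal.} The equivalence is proved in two directions. The necessity of \eqref{inteinzero_plap} follows the pattern of Theorem \ref{teo_necessitybello}: since (i) gives $f(0)=0$ and (ii) gives $l(0)=0$, in either case $f(0)l(0)=0$, and the lack of a pole is compensated by replacing the distance $r$ with the fake distance $\varrho$. Because $\varrho\in C^{1,\alpha}(M\setminus\{o\})$ is proper, a function of the form $u=w(\varrho)$ is automatically $C^1$; using \eqref{bella!!!} the inequality in \eqref{CSP_plaplacian} reduces to an ODE in $w$ on $[r_0,\infty)$ with volume weight $v_\kappa$, which can be solved via an exterior Dirichlet procedure à la Theorem \ref{teo_exteriorDiri} and shown to be non-vanishing at infinity by the failure of \eqref{inteinzero_plap} and Theorem \ref{teo_FMP2} applied to the radial model.

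\smallskip

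The heart of the work lies in the sufficiency direction, which I would carry out in four steps.

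\smallskip

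\noindent\emph{Step 1 (Verifying the ODE hypotheses).} With $\varphi(t)=t^{p-1}$ and $l(t)=t^{p-1-\chi}$ the function $K$ of \eqref{def_K_CSP} becomes $K(t)=\frac{p-1}{\chi+1}t^{\chi+1}$, so $K_\infty=\infty$. A direct calculation verifies $(C_1)$ with $k_1=\chi+1$, $(C_2)$ with $k_2=\max\{1,(p-1)^{-1}\}$, and $(C_3)$ since $\chi>0$ in both cases (i) and (ii); condition $(C_4)$ is precisely the hypothesis on $F$. Setting $\bar\beta:=\beta^{\gamma}$ with $\gamma$ from the hypothesis $(\beta_2)$, the weight bounds $(\beta_1)$ and $(\beta_2)$ for $\bar\beta$ follow by an elementary computation from the corresponding hypotheses on $\beta$.

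\smallskip

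\noindent\emph{Step 2 (Radial super-solution).} Fix $\epsilon>0$, to be chosen at the end. Choosing $\theta\equiv 0$ makes condition \eqref{solu_radialCSP} reduce to the boundedness of $\beta^{\gamma-1}$, which is automatic. Proposition \ref{prop_CSP_radial} then furnishes $\lambda\in(0,\eta_0)$ such that for every $R\ge r_0$ there is $w_R\in C^1([R,\infty))$, compactly supported in $[R,R_1]$ with $w_R'<0$ on $[R,R_1)$, satisfying
\begin{equation*}
\bigl(\varphi(w_R')\bigr)'(t)\;\le\;\epsilon\,\beta(t)\,f(w_R(t))\,l(|w_R'(t)|) \quad\text{on } [R,\infty),
\end{equation*}
together with $w_R(R)=\lambda$ and $|w_R'|\le\epsilon$.

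\smallskip

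\noindent\emph{Step 3 (Pulling back along $\varrho$).} Let $u$ be a $C^1$ solution of \eqref{CSP_plaplacian}. Using that $u\to 0$ and $\varrho$ is proper, choose $R$ so large that $u<\lambda$ on $\{\varrho\ge R\}\cap\Omega$, and define $\bar w(x):=w_R(\varrho(x))$, which is $C^1$ on $\{\varrho\ge R\}$ because $\varrho\in C^{1,\alpha}$. Applying \eqref{bella!!!} and using that $\varphi(w_R')\le 0$ and $v_\kappa'/v_\kappa\ge 0$, we obtain
\begin{equation*}
\Delta_p\bar w \;\le\; \bigl(\varphi(w_R')\bigr)'(\varrho)\,|\nabla\varrho|^p \;\le\; \epsilon\,\beta(\varrho)\,f(\bar w)\,|\nabla\bar w|^{p-1-\chi}\,|\nabla\varrho|^{1+\chi},
\end{equation*}
having used $|\nabla\bar w|=|w_R'(\varrho)|\,|\nabla\varrho|$. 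Proposition \ref{prop_gradienbounded} gives $|\nabla\varrho|\le C_\varrho$ on $\{\varrho\ge R\}$ and $b(x)\ge\beta(\varrho(x))$ by hypothesis; choosing $\epsilon=(2C\,C_\varrho^{1+\chi})^{-1}$ with $C\ge 1$ the increasing constant of $f$ yields
\begin{equation*}
\Delta_p\bar w\;\le\;\frac{1}{2C}\,b(x)\,f(\bar w)\,|\nabla\bar w|^{p-1-\chi} \quad\text{on } \{\varrho\ge R\}.
\end{equation*}

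\smallskip

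\noindent\emph{Step 4 (Comparison).} It remains to show $u\le\bar w$ on $\{\varrho\ge R\}\cap\Omega$, which forces $u\equiv 0$ outside the compact set $\{\varrho\le R_1\}\cap\Omega$. Assume by contradiction that $c:=\sup(u-\bar w)>0$, attained on a compact set $\Gamma$. First, $\Gamma\subset\{R\le\varrho<R_1\}$: otherwise some $x\in\Gamma$ has $\varrho(x)\ge R_1$, so $\bar w(x)=0$ and $x$ is a local maximum of $u$ on the open set $\{\varrho>R_1\}$ where $\Delta_p u\ge 0$; Theorem \ref{teo_FMP2} applied to $c-u$ forces $u\equiv c>0$ on the connected component of $\{\varrho>R_1\}$ containing $x$, which is unbounded by properness of $\varrho$, contradicting $u\to 0$. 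In case (i), $l\equiv 1$ and Proposition \ref{prop_comparison} applied on $U_\delta:=\{u-\bar w>\delta\}$ for $\delta<c$ close to $c$ gives the contradiction immediately. In case (ii), using $|\nabla\bar w|=|w_R'(\varrho)|\,|\nabla\varrho|$ with $w_R'<0$ on $[R,R_1)$ one shows $|\nabla\bar w|>0$ on $\Gamma$ (where $|\nabla\varrho|>0$), hence $l(|\nabla u|)\le 2\,l(|\nabla\bar w|)$ on $U_\delta$ by continuity; combined with the $C$-increasing property of $f$ this gives $\Delta_p\bar w\le b\,f(u)\,|\nabla u|^{p-1-\chi}\le\Delta_p u$ on $U_\delta$ with $u=\bar w+\delta$ on $\partial U_\delta$, and Proposition \ref{prop_serrin} yields $u\le\bar w+\delta$, contradicting the definition of $U_\delta$.

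\smallskip

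\noindent The main obstacle will be Step 4 in case (ii): the comparison argument requires $|\nabla\bar w|>0$ on the maximum set $\Gamma$, which could fail at critical points of $\varrho$ (i.e., critical points of the $p$-harmonic Green kernel $\gr_p$). The restriction $p\in(1,2]$ is imposed precisely to access the non-degenerate-ellipticity comparison machinery (Proposition \ref{prop_serrin} and its $C^1$ regularity ingredients from \cite{tolksdorf}) that lets one bypass these critical points, whereas case (i), with $l\equiv 1$, sidesteps the issue entirely because the standard Proposition \ref{prop_comparison} requires no gradient non-degeneracy. A secondary technical point is the verification that the ``outside'' of the compact set $\{\varrho\le R_1\}$ has only unbounded connected components, used in the Theorem \ref{teo_FMP2} step; this is immediate from the properness of $\varrho$.
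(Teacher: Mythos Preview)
Your sufficiency outline through Step~3 and your handling of case~(i) match the paper's proof. The gaps are in Step~4, case~(ii), and in the necessity direction.

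\textbf{Case (ii) comparison.} You write that on $\Gamma$ one has $|\nabla\bar w|>0$ ``where $|\nabla\varrho|>0$'', and then invoke Proposition~\ref{prop_serrin}. But nothing rules out critical points of $\varrho$ (equivalently, of the $p$-harmonic Green kernel) on $\Gamma$, so $|\nabla\bar w|$ may vanish there; and Proposition~\ref{prop_serrin} \emph{requires} $\ess\inf_K\{|\nabla u|+|\nabla v|\}>0$, so it cannot be used to ``bypass'' these points --- it is precisely the hypothesis you are missing. The paper resolves this differently. After showing $\Gamma\Subset\{\varrho<R_1\}$ (which needs both the interior argument you give \emph{and} a Hopf-type boundary step on $\partial\{\varrho=R_1\}$ that you omitted), one only knows $|w'(\varrho)|>0$ on $\Gamma$. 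The inequality for $\bar w$ is then rewritten as
\[
\Delta_p\bar w \le \eps\,\beta(\varrho)\,|w'(\varrho)|^{-1-\chi}f(\bar w)\,|\nabla\bar w|^{p},
\]
and, by splitting $U_\delta$ into the regions $\{|\nabla u|<|w'(\varrho)|\}$ and $\{|\nabla u|>|w'(\varrho)|/2\}$ and using the $C$-increasing property of $f$ and the bound $|\nabla\varrho|\le C_\varrho$, the same form $\Delta_p u\ge\eps\,\beta(\varrho)\,|w'(\varrho)|^{-1-\chi}f(\bar w)\,|\nabla u|^{p}$ is obtained for $u$. The comparison is then concluded via Theorem~5.3 of \cite{antoninimugnaipucci} (equivalently \cite[Thm.~3.5.1]{pucciserrin}), which applies to \emph{non-degenerate} elliptic operators without any gradient-positivity hypothesis; the restriction $p\in(1,2]$ enters exactly here, because it makes $\Delta_p$ non-degenerate, not through Proposition~\ref{prop_serrin}.

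\textbf{Necessity.} Radializing with $\varrho$ to build a \emph{sub}solution via \eqref{bella!!!} would require a positive lower bound on $|\nabla\varrho|$: from $\Delta_p(w(\varrho))=[\,\cdot\,]|\nabla\varrho|^p$ and $|\nabla u|^{p-1-\chi}=|w'|^{p-1-\chi}|\nabla\varrho|^{p-1-\chi}$ one needs the factor $|\nabla\varrho|^{1+\chi}$ bounded below, which is not available (see the discussion after Theorem~\ref{teo_SL_necessary}). The paper instead applies Theorem~\ref{teo_necessitybello} with the genuine distance $r$ to produce a positive $\lip$ subsolution $u_1$ on $M\setminus B_{r_0}$, takes $u_2=\gr_p(\cdot,o)$ as a supersolution vanishing at infinity, and then invokes the sub--supersolution method \cite[Thm.~4.4]{kura} together with $C^{1,\beta}$ regularity \cite{tolksdorf} to obtain a genuine $C^1$ solution of $(P_=)$ between $u_1$ and $u_2$, contradicting $\csp$.
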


\begin{remark}
\emph{In view of Corollary \ref{cor_holopainen}, Theorem \ref{teo_sobolev} and Examples \ref{ex_minimal}, \ref{prop_hebey} and \ref{ex_roughisometric}, the vanishing of $\gr$ is granted provided either one of the next conditions holds:
\begin{itemize}
\item[(i)] $p \in (1,\infty)$ and $\Ricc \ge 0$ on $M$;
\item[(ii)] $p \in (1,\infty)$ and $M$ supports the Sobolev inequality \eqref{sobolev};
\item[(iii)] $p \in (1,m)$ and $M$ is minimally immersed into a Cartan-Hadamard manifold;
\item[(iv)] $p \in (1,m)$, 
$$
\Ricc \ge -(m-1) \kappa^2 \metric, \qquad \inf_{x \in M} \vol\big(B_1(x)\big) >0
$$
and $M$ supports the Poincar\'e inequality \eqref{poincare};
\item[(v)] $p \in (1,m)$, $M$ is roughly isometric to $\R^m$, and 
$$
\Ricc \ge -(m-1) \kappa^2 \metric, \qquad \mathrm{inj}(M)>0.
$$
\end{itemize}  
}
\end{remark}

\begin{proof}
Because of \eqref{properness_b}, the fake distance $\varrho$ defined in \eqref{def_bxy} and associated to the hyperbolic space of curvature $-\kappa^2$ is proper. Furthermore, by \eqref{deri_bG} and Proposition \ref{prop_gradienbounded}, 
\begin{equation}\label{belleproprieta}
|\nabla \varrho| \le C_\varrho, \quad \Delta_p \varrho \ge 0 \qquad \text{on } \, \{\varrho \ge 1\},
\end{equation}
for some constant $C_\varrho$. For $s>0$, write $D_s$ to denote the fake ball $\{\varrho < s\}$, and define $\Omega_s = \Omega \cap D_s$. \\
We first prove the implication $\eqref{KO_zero} \Rightarrow \csp$. Since \eqref{solu_radialCSP} is automatically met for $\theta(t) \equiv 0$, for each fixed $\eps>0$ Proposition \ref{prop_CSP_radial} guarantees the existence of $\lambda \in (0, \eta_0)$ sufficiently small and $r_0 > 1$ such that, for each $R> r_0$, we can find $R_1 > R$ and a solution $w$ of 
\begin{equation}\label{proprie_w_CSP_mainteo_concut}
\left\{ \begin{array}{l}
w \in C^1([R,\infty)) \quad \text{and $C^2$ except possibly at $R_1$;} \\[0.2cm]
w \ge 0, \quad \text{on } \, [R, \infty), \qquad w(R) = \lambda, \qquad w \equiv 0 \quad \text{on } \, [R_1, \infty), \\[0.3cm]
w'<0 \quad \text{on } \, [R, R_1), \qquad |w'| \le \eps \quad \text{on } \, [R, \infty),  \\[0.2cm]
\disp \big( |w'|^{p-2}w')\big)' \le \eps \beta(r)f(w)|w'|^{p-1-\chi} \qquad \text{on } \, [R, \infty),
\end{array}
\right. 
\end{equation}
Choose $r_0$ such that $u < \lambda$ on $\Omega_{r_0}$, and $R>r_0$. Since $R_1$ depends continuously on $\eps, R$, up to changing them a little bit we can assume that $R_1$ is a regular value of $\varrho$. Therefore, $\partial \Omega_{R_1}$ is smooth (we recall that the Green kernel $\gr_p$, being $p$-harmonic outside of $o$, is smooth where $\nabla \gr$ does not vanish, and therefore so is $\varrho$).
%
The value of $\eps$ will be specified later, depending just on $C_\varrho$ in \eqref{belleproprieta} and on the $C$-increasing constant. Define $\bar w(x) = w( \varrho(x))$. If we combine \eqref{bella!!!}, $w' \le 0$ and \eqref{deri_bG}, by \eqref{belleproprieta} we deduce 
\begin{equation}\label{supersol_w_CSP_mainteo_concut}
\left\{ \begin{array}{l}
\begin{array}{lcl}
\Delta_p \bar w & = & \disp \left[ \big(|w'|^{p-2}w'\big)' + \frac{v_g'}{v_g} |w'|^{p-2}w'\right](\varrho) |\nabla \varrho|^p \le \big(|w'|^{p-2}w'\big)'|\nabla \varrho|^p \\[0.4cm]
& \le & \disp \eps \beta(\varrho)f(\bar w)|w'(\varrho)|^{p-1-\chi}|\nabla \varrho|^p  \qquad \text{on } \, M \backslash D_R, 
\end{array} \\[0.8cm]
\bar w \in C^1(M \backslash D_R) \quad \text{and is in fact $C^2$ except possibly on $\partial D_{R_1}$;} \\[0.2cm]
\bar w \ge 0, \quad \text{on } \, M \backslash D_R, \qquad \bar w \equiv 0 \quad \text{on } \, M \backslash D_{R_1}, \\[0.3cm]
\bar w = \lambda \quad \text{on } \, \partial D_R, \qquad |\nabla \bar w| \le \eps C_\varrho \quad \text{on } \, D_{R_1}\backslash D_R. \\[0.2cm]
\end{array}
\right. 
\end{equation}

To prove $\csp$ we show, as before, that $u \le \bar w$ on $\Omega_R$: we proceed by contradiction, assuming $u> \bar w$ somewhere, and we look at the set 
$$
\Gamma = \big\{ u- \bar w = c\big\} \Subset \Omega_R, \qquad \text{with} \quad c = \max_{\Omega_R}(u-\bar w)>0.
$$
We will then apply a comparison theorem on an upper level set 
$$
U_\delta = \{ u- \bar w > \delta\} \cap \Omega_R 
$$
for $\delta$ close enough to $c$. To this aim, in Theorem \ref{teo_CSP_nuovo} we obtained $\Delta_p u \ge \Delta_p \bar w$ on $U_\delta$, crucially using $|\nabla \bar w| >0$ on $\Gamma$. However, now $\bar w$ is radial with respect to the fake distance $\varrho$, which differently from $r$ may possess stationary points. This forces us to use a different argument when $\chi< p-1$, that is, in case $(ii)$. On the other hand, in case $(i)$ the gradient term disappears and the argument goes straightforwardly: first choose $\eps$ so that $\eps C C_\varrho^p \le 1$, then observe that $0 \le u, \bar w \le \lambda < \eta_0$ on $\Omega_R$. Therefore, using that $f$ is $C$-increasing, on $U_\delta$ we have
$$
\Delta_p u \ge b(x)f(u) \ge \frac{1}{C} \beta(\varrho) f(\bar w) \ge \frac{1}{C\eps C_\varrho^p} \eps \beta(\varrho)f(\bar w)|\nabla \varrho|^p \ge \eps \beta(\varrho)f(\bar w)|\nabla \varrho|^p \ge \Delta_p \bar w,
$$
and by comparison we conclude $u \le \bar w + \delta$ on $U_\delta$, contradiction.\par
Case $(ii)$ is more delicate. Although we cannot guarantee that $|\nabla \bar w|>0$ on $\Gamma$, nevertheless, as a first step we still claim that
\begin{equation}\label{prim}
\Gamma \Subset D_{R_1}. 
\end{equation}
Indeed, if by contradiction there exists $x \in \Gamma \cap \inte(\Omega_{R_1})$, let $V$ be the connected component of $\Omega_{R_1}$ containing $x$. By the second in \eqref{belleproprieta} and $R_1 \ge 1$, we deduce that $V$ is necessarily unbounded by the maximum principle, being a component of the upper level  set $\{\varrho \ge R_1\}$ of a $p$-subharmonic function. Arguing as in the proof of Theorem \ref{teo_CSP_nuovo}, case $(i)$ we deduce $u \equiv c$ on $V$, which contradicts the vanishing of $u$ at infinity. Therefore, $u<c$ on $\inte (\Omega_{R_1})$, and since $\partial \Omega_{R_1}$ is smooth we can then apply the boundary maximum principle as in $(ii)$ of Theorem \ref{teo_CSP_nuovo} to get $u<c$ also on $\partial \Omega_{R_1}$. This proves \eqref{prim}. As a consequence, from \eqref{proprie_w_CSP_mainteo_concut} the inequality $|w'(\varrho)|>0$ holds on $\Gamma$. Coupling with $|\nabla u| = |\nabla \bar w| \le \eps C_\varrho$ on $\Gamma$, if $\eps C_\varrho < 1/2$ we can choose $\delta_0$ close enough to $c$ in such a way that 
\begin{equation}\label{interesting_csp}
|w'(\varrho)| >0, \qquad  |\nabla u|+ |\nabla \bar w| < 1 \qquad \text{on } \, \overline{U}_{\delta_0}.
\end{equation}
Note that $\delta_0$ depends on $\eps$. We come back to the differential inequality for $\bar w$, which on $U_{\delta_0}$ implies 
\begin{equation}\label{barwdelta}
\disp \Delta_p \bar w \le \eps \beta(\varrho)f(\bar w)|w'(\varrho)|^{p-1-\chi}|\nabla \varrho|^p = \eps \beta(\varrho)|w'(\varrho)|^{-1-\chi} f(\bar w)|\nabla \bar w|^{p}.
\end{equation}
For $\delta \in (\delta_0,c)$, we consider the open sets 
$$
E_\delta = U_\delta \cap \big\{|\nabla u| < |w'(\varrho)| \big\}, \qquad \hat E_\delta = U_\delta \cap \big\{ |\nabla u| > |w'(\varrho)|/2 \big\}.
$$
On $E_\delta$, using the $C$-increasing property of $f$ and $u>\bar w$, whenever $\eps \le C^{-1}$ we deduce that $u$ is a weak solution of 
\begin{equation}\label{Edelta}
\begin{array}{lcl}
\disp \Delta_p u & \ge & \disp \beta(\varrho) f(u) |\nabla u|^{p} |\nabla u|^{-1-\chi} \ge \frac{\beta(\varrho)}{C} f(\bar w) |\nabla u|^{p} |w'(\varrho)|^{-1-\chi} \\[0.3cm]
& \ge & \disp \eps \beta(\varrho)|w'(\varrho)|^{-1-\chi} f(\bar w)|\nabla u|^{p}. 
\end{array}
\end{equation}
On the other hand, on $\Gamma \cap \hat E_\delta$ we have $|\nabla u| = |\nabla \bar w|=|w'(\varrho)||\nabla \varrho|$, hence $|\nabla \varrho| > 1/2$. By continuity, we can therefore choose $\delta$ sufficiently close to $c$ so that
$$
\frac{|\nabla u|}{|w'(\varrho)|} \le 2|\nabla \varrho| \le 2C_\varrho \qquad \text{on } \, \hat E_\delta.
$$
As a consequence, if $\eps \le [C(2C_\varrho)^{\chi+1}]^{-1}$, on $\hat E_\delta$ the function $u$ weakly solves 
\begin{equation}\label{Edelta_hat}
\begin{array}{lcl}
\disp \Delta_p u & \ge & \disp \beta(\varrho) f(u) |\nabla u|^{p} |\nabla u|^{-1-\chi} \ge \frac{\beta(\varrho)}{C} f(\bar w) |\nabla u|^{p} |w'(\varrho)|^{-1-\chi} \frac{1}{(2C_\varrho)^{\chi+1}} \\[0.3cm]
& \ge & \disp \eps \beta(\varrho)|w'(\varrho)|^{-1-\chi} f(\bar w)|\nabla u|^{p}. 
\end{array}
\end{equation}
Putting together \eqref{barwdelta}, \eqref{Edelta} and \eqref{Edelta_hat}, for $\eps$ small enough depending only on $C, C_\varrho,\chi$ and setting $\bar w_\delta = \bar w + \delta$, the following inequalities hold on $U_\delta$:
\begin{equation}\label{compafinale}
\left\{\begin{array}{l}
\disp \Delta_p u \ge \disp \Big[\eps \beta(\varrho)|w'|^{-1-\chi} f(\bar w)\Big]|\nabla u|^{p} ; \\[0.3cm] 
\Delta_p \bar w_\delta \le \disp \disp \Big[\eps \beta(\varrho)|w'|^{-1-\chi}f(\bar w)\Big]|\nabla \bar w_\delta|^{p}, \\[0.3cm]
u = \bar w_\delta \qquad \text{on } \, \partial U_\delta.
\end{array}\right.
\end{equation}
To conclude we observe that, since $p \le 2$, the $p$-Laplacian is non-degenerate elliptic. We claim that we can apply the comparison Theorem 5.3 in \cite{antoninimugnaipucci} (the manifold version of \cite[Thm. 3.5.1]{pucciserrin}) with the choice 
$$
B(x,z,\xi) = \eps \beta(\varrho)|w'(\varrho)|^{-1-\chi} f(\bar w)|\xi|^{p}
$$
to deduce $u \le \bar w_\delta$ on $U_\delta$, a contradiction. To ensure the applicability of the above theorem, we shall check that $B$ is \emph{regular}, in the sense specified in \cite{antoninimugnaipucci}: for each compact set $K \Subset \R \times TU_\delta$, there exists a constant $L_K>0$ such that
$$
\big|B(x,z,\xi) - B(x,z,\eta)\big| \le L_K|\xi -\eta| \qquad \forall \, (x,z,\xi), (x,z,\eta) \in K. 
$$
Let $A$ be such that $|\xi|+|\eta| \le A$ for $(x,z, \xi) \in K$. From
$$
\left||\xi|^p-|\eta|^p\right| = \left| \int_{|\eta|}^{|\xi|}pt^{p-1}\di t\right| \le pA^{p-1} \left| \int_{|\eta|}^{|\xi|}\di t\right| = pA^{p-1}\big| |\xi|-|\eta|\big| \le pA^{p-1}|\xi -\eta|, 
$$
we obtain 
$$
\begin{array}{lcl}
\disp |B(x,z,\xi)-B(x,z,\eta)| & \le & \disp \eps \| \beta(\varrho) w'(\varrho)^{-1-\chi}f(\bar w)\|_\infty \big||\xi|^{p}-|\eta|^{p}\big| \\[0.3cm]
& \le & \bar C\eps |\xi-\eta|,
\end{array}
$$
for some $\bar C>0$, as claimed. This concludes the proof that $u \le \bar w$.\\[0.2cm]
We next show that $\csp \Rightarrow \eqref{KO_zero}$. Suppose the failure of \eqref{KO_zero}. Both cases $(i)$ and $(ii)$ imply $f(0)l(0)=0$ since $l(t)=t^{p-1-\chi}$; thus, having fixed $r_0>0$ and $\xi \in (0,\xi_0)$, we can apply Theorem \ref{teo_necessitybello} to deduce the existence of $\eta$ small enough and of  a solution $u_1 \in \lip(M \backslash B_{r_0})$, ($B_{r_0}$ centered at some fixed origin $o$) of 
$$
\left\{\begin{array}{l}
\Delta_p u_1 \ge b(x) f(u_1) |\nabla u_1|^{p-1-\chi} \qquad \text{weakly on } \, M \backslash B_{r_0}, \\[0.2cm]
0 < u_1 \le \eta \quad \text{on } \, M \backslash B_{r_0},  \\[0.2cm]
u_1 = \eta \quad \text{on } \, \partial B_{r_0}, \qquad  u_1(x) \ra 0 \quad \text{as } \, r(x) \ra \infty, \\[0.2cm]
|\nabla u_1| < \xi \qquad \text{on } \, M \backslash B_{r_0}.
\end{array} \right.
$$
Set $u_2(x) = \gr_p(x,o)$. Up to decreasing $\eta$, we can suppose that $u_1 \le u_2$ on $\partial B_{r_0}$, and thus, by comparison (being $\Delta_p u_1 \ge 0$) we deduce $u_1 \le u_2$ on $M \backslash B_{r_0}$.  Since $u_2$ trivially solves $(P_\le)$, by the subsolution-supersolution method (cf. \cite[Thm. 4.4]{kura}) there exists a solution $u$ of
$$
\left\{ \begin{array}{ll}
\Delta_p u = b(x)f(u)|\nabla u|^{p-1-\chi} & \quad \text{on } \, M \backslash B_{r_0} \\[0.2cm]
u_1 \le u \le u_2 & \quad \text{on } \, M \backslash B_{r_0}
\end{array}\right.
$$ 
The regularity theorem in \cite[Thm. 1]{tolksdorf} ensures that $u \in C^{1,\beta}_\loc(M \backslash B_{r_0})$. Since, by assumption, $u_2$ vanishes at infinity, $u$ shows the failure of $\csp$, concluding our proof.
\end{proof}

To conclude this section, we specialize to manifolds with non-negative Ricci tensor and we restrict to the linear case $p=2$. If $\Ricc \ge 0$, by \cite[Thm. 5.2]{liyau_acta} $M$ is non-parabolic if and only if $r/\vol(B_r) \in L^1(\infty)$ (that forces $m \ge 3$ by Bishop-Gromov comparison), and more precisely the Green kernel $\gr_2$ of $\Delta$ satisfies
\begin{equation}\label{liyau}
C^{-1} \int_{2r(x)}^{\infty} \frac{s\, \di s}{\vol(B_s)} \le \gr_2(x,o) \le C \int_{2r(x)}^{\infty} \frac{s \, \di s}{\vol(B_s)} \qquad \forall \, x \in M \backslash \{o\},
\end{equation}
for some constant $C>0$. Note that, by Theorem \ref{teo_holopainen}, the same holds for the $p$-Laplacian but $x$ is restricted to lie in $\partial M(r)$.\\ 
If we define $\varrho$ in \eqref{def_bxy} associated to the Euclidean space, from \eqref{liyau} we readily have 
\begin{equation}\label{def_h(x)}
\varrho(x) \asymp h(x) = \left[ \int_{2r(x)}^{\infty} \frac{s \, \di s}{\vol(B_s)} \right]^{\frac{1}{2-m}} \qquad \text{ for } \, r(x) \ge 1.
\end{equation}
In particular, if we suppose that $M$ has maximal volume growth 
\begin{equation}\label{maxgrowth}
\vol(B_r) \ge c r^{m} \qquad \forall \, r>0,
\end{equation}
for some $c>0$, then $\varrho \asymp r$. Condition \eqref{condi_bbeta_plap} can be stated in terms of the more manageable $h(x)$ and $r(x)$, giving the following result. 

\begin{theorem}\label{teo_CSP_plapla_riccimagzero}
Let $(M^m, \metric)$ be a complete Riemannian manifold with $\Ricc \ge 0$ and dimension $m \ge 3$. Fix an origin $o$, suppose that $r/\vol(B_r) \in L^1(\infty)$ and define
$$
h(x) = \left[ \int_{2r(x)}^{\infty} \frac{s \, \di s}{\vol(B_s)} \right]^{\frac{1}{2-m}}.
$$
Fix $\chi \in (0, 1]$, $\mu \in [0, \chi+1]$ and let $f$ satisfy
$$
f \in C(\R), \qquad f \ \text{ is positive and $C$-increasing on } \, (0, \eta_0),
$$
for some $\eta_0>0$. Assume $(C_4)$ and, if $\chi=1$, assume also that $f(0)=0$. Then, $\csp$ holds for solutions of 
\begin{equation}\label{CSP_2laplacian_2}
\left\{ \begin{array}{l}
\Delta u \ge \big( 1+ h(x)\big)^{-\mu} f(u)|\nabla u|^{1-\chi} \qquad \text{on } \, \Omega \, \text{ end of $M$;} \\[0.2cm]
\disp u \ge 0, \qquad \lim_{x \in \Omega, \, x \ra \infty} u(x) = 0.
\end{array}\right.
\end{equation}
if and only if 
\begin{equation}\label{inteinzero_plap_2}
F^{-\frac{1}{\chi+1}} \in L^1(0^+).
\end{equation}
\end{theorem}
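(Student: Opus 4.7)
The plan is to adapt the argument of Theorem \ref{teo_CSP_plapla} but replacing the hyperbolic fake distance with the one modeled on Euclidean space, exploiting the specific geometry when $\Ricc\ge0$. First I would verify the preliminaries: since $r/\vol(B_r)\in L^1(\infty)$, the Laplacian is non-parabolic and Corollary \ref{cor_holopainen} yields $\gr_2(x,o)\to 0$ as $r(x)\to\infty$. Define the fake distance $\varrho:M\to\R$ implicitly by
\[
\gr_2(x,o) \;=\; \grg_2\bigl(\varrho(x)\bigr) \;=\; \frac{\varrho(x)^{2-m}}{(m-2)\omega_{m-1}},
\]
so $\varrho$ is proper. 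From the Li--Yau two-sided bound \eqref{liyau}, $\gr_2(x,o)\asymp h(x)^{2-m}$ outside a compact set, hence $\varrho(x)\asymp h(x)$ and
\[
\bigl(1+h(x)\bigr)^{-\mu}\;\ge\; c_0\bigl(1+\varrho(x)\bigr)^{-\mu}\qquad\text{on }M,
\]
for some $c_0>0$. The key geometric input is Colding's sharp gradient estimate (see the remark after Proposition \ref{prop_gradienbounded}), which for $p=2$ and $\Ricc\ge0$ gives $|\nabla\varrho|\le 1$ on $M\setminus\{o\}$; this replaces Proposition \ref{prop_gradienbounded}, avoiding the need to use the hyperbolic model.

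For the sufficiency part, I would apply Proposition \ref{prop_CSP_radial} on $[r_0,\infty)$ with $\theta\equiv 0$, $\beta(t)=c_0(1+t)^{-\mu}$ and $\bar\beta=\beta^\gamma$ for $\gamma=(\chi+1)/\mu$ if $\mu>0$ (any $\gamma\ge 1$ if $\mu=0$). Since $\varphi(t)\asymp t$, $l(t)\asymp t^{1-\chi}$, one has $K(t)\asymp t^{\chi+1}$, so $(C_1),(C_2),(C_3)$ hold because $\chi\in(0,1]$; $(C_4)$ is assumed; $(\beta_1),(\beta_2)$ reduce to the constraint $\mu\le\chi+1$, exactly as in our hypotheses; and \eqref{solu_radialCSP} is trivial since $\theta\equiv 0$ and $\bar\beta/\beta=\beta^{\gamma-1}$ is bounded. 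The Keller--Osserman condition $1/(K^{-1}\circ F)\in L^1(0^+)$ is equivalent to \eqref{inteinzero_plap_2}. Proposition \ref{prop_CSP_radial} then produces, for any prescribed $\eps>0$, a threshold $\lambda\in(0,\eta_0)$ such that for each $R\ge r_0$ there is a compactly supported supersolution $w$ as in \eqref{proprie_w_CSP}.

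Define $\bar w(x)=w(\varrho(x))$. By \eqref{bella!!!} with the Euclidean volume $v_g(r)=\omega_{m-1}r^{m-1}$, $w'\le 0$, and $|\nabla\varrho|\le 1$,
\[
\Delta\bar w \;=\; \Bigl[w''(\varrho)+\tfrac{m-1}{\varrho}w'(\varrho)\Bigr]|\nabla\varrho|^2 \;\le\; w''(\varrho)\,|\nabla\varrho|^2 \;\le\; \eps\,\beta(\varrho)\,f(\bar w)\,|w'(\varrho)|^{1-\chi},
\]
with $|\nabla\bar w|=|w'(\varrho)||\nabla\varrho|\le|w'(\varrho)|$. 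Choosing $R$ large enough that $u<\lambda$ outside $B_R$, one then runs the comparison scheme of Theorem \ref{teo_CSP_plapla}: show $\Gamma=\{u-\bar w=c\}\Subset \{\varrho<2R\}$ by $p$-subharmonicity of $\varrho$ and the boundary Hopf argument, then compare $u$ and $\bar w+\delta$ on $U_\delta=\{u-\bar w>\delta\}$. For $\chi=1$ this uses the plain comparison Proposition \ref{prop_comparison} (since the gradient term drops), while for $\chi\in(0,1)$ one splits $U_\delta$ into $\{|\nabla u|<|w'(\varrho)|\}$ and $\{|\nabla u|>|w'(\varrho)|/2\}$ exactly as in Theorem \ref{teo_CSP_plapla} case $(ii)$ and invokes Theorem 5.3 of \cite{antoninimugnaipucci}; non-degeneracy is assured by $p=2$. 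The necessity direction follows by the same argument as in Theorem \ref{teo_CSP_plapla}: when \eqref{inteinzero_plap_2} fails, Theorem \ref{teo_necessitybello} yields a $\lip$-subsolution $u_1$ on $M\setminus B_{r_0}$, and since $\gr_2$ is a supersolution vanishing at infinity and dominates $u_1$ on $\partial B_{r_0}$ after shrinking $\eta$, the subsolution--supersolution method together with Tolksdorf's $C^{1,\beta}$ regularity \cite{tolksdorf} produces a solution of the equality problem violating $\csp$.

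The main obstacle is the interface between the two ingredients specific to $\Ricc\ge 0$: the Li--Yau identification $\varrho\asymp h$, and Colding's $L^\infty$-gradient bound on $\varrho$ built from the Euclidean (not hyperbolic) model. Once these two inputs replace the roles played by the hyperbolic model and Proposition \ref{prop_gradienbounded} in Theorem \ref{teo_CSP_plapla}, the rest of the proof is a careful transcription of the existing scheme; the delicate part is the degenerate case $\chi<1$, where $\nabla\bar w$ may vanish on $\Gamma$ and one has to quantify the Li--Yau/Colding constants uniformly enough to apply the comparison theorem on $U_\delta$.
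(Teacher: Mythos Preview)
Your proposal is correct and follows essentially the same approach as the paper: define the fake distance $\varrho$ via the Euclidean model, use the Li--Yau two-sided estimate \eqref{liyau} to get $\varrho\asymp h$ and hence $(1+h)^{-\mu}\ge c_0(1+\varrho)^{-\mu}$, invoke Colding's sharp bound $|\nabla\varrho|\le 1$ in place of Proposition~\ref{prop_gradienbounded}, and then rerun the scheme of Theorem~\ref{teo_CSP_plapla} verbatim (both sufficiency via Proposition~\ref{prop_CSP_radial} with $\theta\equiv 0$ and comparison, and necessity via Theorem~\ref{teo_necessitybello} plus the sub/supersolution method with $\gr_2$). The paper's proof is simply a two-line reduction to Theorem~\ref{teo_CSP_plapla} after noting these two replacements, so your detailed write-up is an explicit unpacking of exactly that argument.
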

 
\begin{proof}
Let $\varrho$ be the fake distance associated to the Euclidean space model. From \eqref{def_h(x)},
$$
b(x) = \big( 1+ h(x)\big)^{-\mu} \ge C\big(1+ \varrho(x)\big)^{-\mu},
$$
for some constant $C>0$, and $\varrho$ is proper. To apply Theorem \ref{teo_CSP_plapla}, we just need to ensure $|\nabla \varrho| \le C_\varrho$ (Proposition \ref{prop_gradienbounded} deals only with the case where $\Ricc$ has a negative lower bound). In the present setting, the sharp bound $|\nabla \varrho| \le 1$ has been shown in \cite{colding}.
\end{proof} 

The above theorem could be extended to cover the $p$-Laplace operator, simply replacing $\chi \in (0,1]$ with $\chi \in (0,p-1]$, provided the validity of the following conditions: 
\begin{itemize}
\item[(i)] the inequalities \eqref{bellastima} in Theorem \ref{teo_holopainen} hold for each $x \in \partial B_r$ and not only for $x \in \partial M(r)$. This is, to our knowledge, still an open problem;
\item[(ii)] estimate $|\nabla \varrho| \le C_\varrho$ holds when the reference model is the Euclidean space. In \cite{maririgolisetti_mono}, the authors claim that the stronger inequality $|\nabla \varrho| \le 1$ is true for each $p \in (1,m)$.
\end{itemize} 

\subsubsection{A further fake distance and the Feller property}\label{sec_feller}
When $l$ is constant and $\Delta_\varphi$ is the Laplace-Beltrami operator, a different fake distance $\varsigma$ recently constructed in \cite[Thm 2.1]{bianchisetti} turns out to be effective to improve Theorem \ref{teo_CSP_plapla}:

\begin{theorem}[\cite{bianchisetti}]\label{teo_bianchisetti}
Let $(M, \metric)$ be a complete manifold of dimension $m \ge 2$ satisfying 
\begin{equation}\label{riccibound_intero}
\Ricc \ge - (m-1) \kappa^2\big(1+r^2\big)^{\alpha/2} \metric \qquad \text{on } \, M, 
\end{equation}
for some $\kappa>0$ and $\alpha \in [-2,2]$. Fix an origin $o$ with associated distance $r(x) = \mathrm{dist}(x,o)$. Then, there exists constants $C_1,C_2>0$ depending on $m,\kappa,\alpha, o$ and a function $\varsigma \in C^\infty(M)$ such that
\begin{equation}\label{properties_varsigma}
\begin{array}{ll}
C_1^{-1} \disp \big(1+r(x)\big)^{1+ \frac{\alpha}{2}} \le \varsigma(x) \le C_1 \disp \big(1+r(x)\big)^{1+ \frac{\alpha}{2}} & \quad \text{if } \, \alpha \in (-2,2], \\[0.3cm]
C_1^{-1} \disp \log\big(2+r(x)\big) \le \varsigma(x) \le C_1 \log\big(2+r(x)\big) & \quad \text{if } \, \alpha = -2, \\[0.4cm]
\max\Big\{ |\nabla \varsigma|^2, |\Delta \varsigma| \Big\} \le C_2 (1+r)^{\alpha} & \quad \text{on } \, M.
\end{array}
\end{equation}
\end{theorem}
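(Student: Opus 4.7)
\medskip

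\noindent\textbf{Proof plan for Theorem \ref{teo_bianchisetti}.}

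My plan is to construct $\varsigma$ as a smoothing of an explicit radial function of the distance. Define
\begin{equation*}
h(t)=\begin{cases}
\disp \int_0^t (1+s)^{\alpha/2}\,\di s & \text{if } \alpha\in(-2,2],\\[0.2cm]
\log(2+t) & \text{if } \alpha=-2,
\end{cases}
\end{equation*}
so that $h(t)\asymp (1+t)^{1+\alpha/2}$ in the first case and $h(t)\asymp\log(2+t)$ in the second; in both cases $h'(t)=O((1+t)^{\alpha/2})$ and $h''(t)=O((1+t)^{\alpha/2-1})$. The naive candidate $\varsigma_0(x)=h(r(x))$ already satisfies the size and gradient bounds in \eqref{properties_varsigma} wherever $r$ is smooth, since $|\nabla\varsigma_0|^2=h'(r)^2\le C(1+r)^{\alpha}$. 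Moreover, since \eqref{riccibound_intero} is precisely the bound \eqref{ricciassu_intro} with equality, the Laplacian comparison theorem (Theorem \ref{teo_laplaciancomp} in Appendix A) gives $\Delta r\le C(1+r)^{\alpha/2}$ on $\mathcal D_o$ and weakly on $M$, whence
\begin{equation*}
\Delta\varsigma_0=h''(r)+h'(r)\Delta r\le C(1+r)^{\alpha}
\end{equation*}
in the weak/barrier sense. A matching lower bound is not available in general (there is no Laplacian comparison from below without curvature bounds from above), but the one-sided bound is the crucial one.

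The remaining task, and the genuine technical point, is to replace $\varsigma_0$, which is only Lipschitz because of the cut locus, with a $C^\infty$ function obeying the \emph{same} pointwise bounds. The idea is a position-dependent mollification at scale $\rho(x)\asymp (1+r(x))^{-\alpha/2}$. On the ball $B_{\rho(x)}(x)$ the Ricci lower bound \eqref{riccibound_intero} is essentially constant, and a Bishop--Gromov/Harnack argument shows that the exponential chart is $C^1$-close to Euclidean with constants independent of $x$. Concretely, I would fix a locally finite cover of $M$ by geodesic balls $\{B_i\}=\{B_{\rho(x_i)/2}(x_i)\}$ with bounded multiplicity (again via volume comparison), a subordinate partition of unity $\{\chi_i\}$ with $|\nabla^k \chi_i|\le C_k \rho(x_i)^{-k}$, and set
\begin{equation*}
\varsigma(x)=\sum_i \chi_i(x)\,\bigl(\varsigma_0 * \eta_{\rho(x_i)}\bigr)(x),
\end{equation*}
where $\eta_\rho$ is a standard mollifier in normal coordinates at $x_i$ of radius $\rho$.

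The Lipschitz bound on $\varsigma_0$ together with $\|\varsigma_0-\varsigma_0*\eta_{\rho(x_i)}\|_{L^\infty(B_i)}\le C\rho(x_i)(1+r)^{\alpha/2}\asymp C$ shows that $\varsigma$ still satisfies $\varsigma\asymp h(r)$, i.e.\ the first two lines of \eqref{properties_varsigma}. The gradient bound transports similarly: derivatives of $\chi_i$ contribute at most $\rho(x_i)^{-1}\asymp (1+r)^{\alpha/2}$ times the $L^\infty$ error, i.e.\ $(1+r)^{\alpha/2}$, giving $|\nabla\varsigma|\le C(1+r)^{\alpha/2}$. For the Laplacian bound one uses $\Delta(\varsigma_0*\eta_\rho)=(\Delta\varsigma_0)*\eta_\rho$ on the smooth set of $r$ (and the weak inequality $\Delta\varsigma_0\le C(1+r)^{\alpha}$ where $r$ is not smooth, extended by standard comparison arguments), so each mollified piece has Laplacian bounded by $C(1+r)^{\alpha}$. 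The partition of unity contributes commutator terms $\nabla\chi_i\cdot\nabla(\varsigma_0*\eta_{\rho(x_i)})$ and $(\Delta\chi_i)(\varsigma_0-\varsigma_0(x_i))$ whose sizes are controlled by $\rho(x_i)^{-1}(1+r)^{\alpha/2}=(1+r)^{\alpha}$ and $\rho(x_i)^{-2}\cdot\rho(x_i)(1+r)^{\alpha/2}=(1+r)^{\alpha}$ respectively. Summing over the bounded overlap gives $|\Delta\varsigma|\le C(1+r)^{\alpha}$.

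The main obstacle in implementing the plan is verifying the gradient estimate on $\varsigma_0 * \eta_{\rho(x_i)}$ uniformly in $x_i$: one needs the geodesic ball $B_{\rho(x_i)}(x_i)$, on which the smoothing is performed, to be uniformly (in $x_i$) bi-Lipschitz to a Euclidean ball, with constants depending only on the Ricci bound. The scale $\rho(x)\asymp (1+r)^{-\alpha/2}$ is the unique choice that makes the curvature tensor bounded in $C^0$ on $B_{\rho(x_i)}(x_i)$ (since $|\mathrm{Riem}|$ is controlled by $(1+r)^{\alpha}$ under \eqref{riccibound_intero}, at least for the Ricci part), and this is exactly the scale that makes the error and derivative bounds above close up. The case $\alpha=2$ is borderline, as $\rho(x)$ shrinks like $(1+r)^{-1}$ and one must be careful that the balls do not shrink too rapidly relative to the injectivity radius; this is where the threshold $\alpha\le 2$ in the hypotheses (linked to the Feller property as mentioned after the statement) plays its role.
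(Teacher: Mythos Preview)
The paper does not prove this theorem: it is quoted from \cite{bianchisetti}, and immediately after the statement the authors write that ``the proof of the existence of $\varsigma$ is delicate and inspired by the one in \cite{schoenyau}, and we refer the reader to both references for details.'' So there is no in-paper proof to compare against; what I can do is assess your proposal and contrast it with what the Schoen--Yau/Bianchi--Setti approach actually requires.

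Your plan has a genuine gap, and you have in fact located it yourself in the last paragraph without resolving it. To mollify in normal coordinates on $B_{\rho(x_i)}(x_i)$ with uniform control on derivatives, you need (i) the exponential map to be a diffeomorphism on that ball, i.e.\ an injectivity radius lower bound $\mathrm{inj}(x_i)\ge c\rho(x_i)$, and (ii) the metric coefficients $g_{ij}$ in those coordinates to be uniformly $C^1$-close to $\delta_{ij}$. Neither follows from a Ricci lower bound alone: Bishop--Gromov gives only volume comparison, not bi-Lipschitz charts, and there are well-known examples (e.g.\ collapsing with bounded diameter) where $\Ricc\ge -K$ holds but the injectivity radius is arbitrarily small. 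Your assertion that ``a Bishop--Gromov/Harnack argument shows that the exponential chart is $C^1$-close to Euclidean'' is simply not true under \eqref{riccibound_intero}. Relatedly, the identity $\Delta(\varsigma_0*\eta_\rho)=(\Delta\varsigma_0)*\eta_\rho$ that you invoke is a Euclidean fact; on a manifold, convolution in normal coordinates does not commute with the Laplace--Beltrami operator, and the commutator involves derivatives of the metric that you have no control over. Finally, the remark that ``$|\mathrm{Riem}|$ is controlled by $(1+r)^{\alpha}$ under \eqref{riccibound_intero}, at least for the Ricci part'' is circular: the Ricci lower bound gives you nothing about the full curvature tensor, which is precisely what coordinate-based mollification would need.

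The actual constructions in \cite{schoenyau} and \cite{bianchisetti} avoid coordinate mollification entirely and use intrinsic analytic tools that are known to behave well under Ricci lower bounds alone---typically heat-kernel regularization $P_t\varsigma_0$ (or an equivalent elliptic regularization), for which the required gradient and Laplacian estimates come from Li--Yau-type parabolic estimates and Bochner's formula, both of which need only \eqref{riccibound_intero}. That is why the paper calls the proof ``delicate'': the naive smoothing you propose does not work, and one must pass through PDE estimates rather than local geometry.
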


The proof of the existence of $\varsigma$ is delicate and inspired by the one in \cite{schoenyau}, and we refer the reader to both references for details. Here, we show ho to use $\varsigma$ to prove the compact support principle for solutions of 
$$
\Delta u \ge (1+r)^{-\mu} f(u)
$$
under the only geometric requirement \eqref{riccibound_intero}, for each $\alpha \in (-2,2]$ and 
\begin{equation}\label{ipo_mu_gradientless}
\mu \le 1 - \frac{\alpha}{2}, 
\end{equation}
provided that \eqref{KO_0CSP} holds, i.e, if
\begin{equation}
\frac{1}{\sqrt{F(t)}} \in L^1(0^+).
\end{equation}
The core is to construct the radial compactly supported supersolution $\bar w = w(\varsigma)$, for some $w$ that we assume to be $C^2$, convex and strictly decreasing until it touches zero in a $C^1$ way. Using \eqref{properties_varsigma} and $w' < 0$, $w'' \ge 0$, for $\alpha > -2$ we deduce
\begin{equation}\label{enfim_gradientless}
\begin{array}{lcl}
\Delta \bar w & = & \disp w''|\nabla \varsigma|^2 + w' \Delta \varsigma \\[0.2cm]
& \le & \disp C_2(1+r)^{\alpha} \Big\{ w'' - w' \Big\} \le C_3 \varsigma^{\alpha\left(1 + \frac{\alpha}{2}\right)^{-1}} \Big\{ w'' - w' \Big\},
\end{array}
\end{equation}
for some constant $C_3>0$. Now, let $f$ satisfying \eqref{f_C_increasing}, $f(0)=0$ and 
$$
c_F f(t) \ge \sqrt{F(t)} \qquad \text{for } \, t \in [0, \eta_0), 
$$
for some constant $c_F>0$. We apply Proposition \ref{prop_CSP_radial} with the choices
$$
\varphi(t)=t, \ \  l(t) = 1, \ \  \chi = 1, \ \ \theta(t) = 1, \ \ \beta(t) = t^{-(\mu+\alpha)\left(1 + \frac{\alpha}{2}\right)^{-1}}, \quad \bar \beta(t) = t^{-2}
$$
to deduce the existence of $w$ satisfying 
\begin{equation}\label{proprie_w_CSP}
\left\{ \begin{array}{l}
w \in C^1([R,\infty)) \quad \text{and $C^2$ except possibly at $R_1$;} \\[0.2cm]
0 \le w \le \lambda, \qquad w(R) = \lambda, \qquad w \equiv 0 \quad \text{on } \, [R_1, \infty),  \\[0.2cm]
w' < 0 \quad \text{on } \, [R, R_1), \qquad |w'| \le \eps \quad \text{on } \, [R, \infty), \\[0.2cm]
w'' - w' \le \eps t^{-(\mu+ \alpha)\left(1 + \frac{\alpha}{2}\right)^{-1}} f(w) \qquad \text{on } \, [R, \infty).
\end{array}
\right.
\end{equation}
In fact, $K(t) \asymp t^2$ and the growth requirement \eqref{solu_radialCSP} is equivalent to \eqref{ipo_mu_gradientless}. Plugging into \eqref{enfim_gradientless} and using again \eqref{properties_varsigma} we get
\begin{equation}\label{enfim_gradientless}
\begin{array}{lcl}
\Delta \bar w & \le & \disp C_3 \eps \varsigma^{\alpha\left(1 + \frac{\alpha}{2}\right)^{-1}} \varsigma^{-(\mu+ \alpha)\left(1 + \frac{\alpha}{2}\right)^{-1}} f(w) \\[0.4cm]
& \le & \disp C_4 \eps (1+r)^{-\mu} f(\bar w),
\end{array}
\end{equation}
for some $C_4>0$, hence $\bar w$ is the desired supersolution. The proof of $\csp$ now proceeds verbatim as in Theorem \ref{teo_CSP_plapla}, case $(i)$, leading to the following

\begin{theorem}\label{teo_CSP_nogradient}
Let $M$ be a complete $m$-dimensional manifold satisfying \eqref{riccibound_intero}, for some $\kappa>0$ and $\alpha \in (-2,2]$. Let $f$ satisfying \eqref{f_C_increasing}, $f(0)=0$ and 
$$
c_F f(t) \ge \sqrt{F(t)} \qquad \text{for } \, t \in [0, \eta_0), 
$$
for some constant $c_F>0$. Fix $\mu \le 1- \frac{\alpha}{2}$. If 
\begin{equation}
\frac{1}{\sqrt{F(t)}} \in L^1(0^+),
\end{equation}
then $\csp$ holds for solutions of 
\begin{equation}
\left\{ \begin{array}{l}
\Delta u \ge \big( 1+ r(x)\big)^{-\mu} f(u) \qquad \text{on } \, \Omega \, \text{ end of $M$;} \\[0.2cm]
\disp u \ge 0, \qquad \lim_{x \in \Omega, \, x \ra \infty} u(x) = 0.
\end{array}\right.
\end{equation}
\end{theorem}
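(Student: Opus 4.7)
The plan is to reproduce, in the present gradient-less setting, the supersolution-and-compare strategy of Theorem \ref{teo_CSP_plapla}, but with the distance $r$ replaced by the smooth ``fake distance'' $\varsigma$ of Theorem \ref{teo_bianchisetti}, which allows us to convert the Ricci bound \eqref{riccibound_intero} into pointwise control on $|\nabla \varsigma|^2$ and $\Delta \varsigma$ without requiring a pole.

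First, I would feed the one-dimensional machinery of Section \ref{sec2.3} with the choices
$$
\varphi(t)=t, \qquad l(t)\equiv 1, \qquad \chi=1, \qquad \theta(t)\equiv 1,
$$
$$
\beta(t)=t^{-(\mu+\alpha)/(1+\alpha/2)}, \qquad \bar\beta(t)=t^{-2}.
$$
A direct verification shows that $K(t)=t^2/2$ satisfies $(C_1),(C_2),(C_3)$, that the hypothesis $c_F f(t)\ge\sqrt{F(t)}$ is exactly $(C_4)$, and that $\bar\beta$ meets $(\beta_1),(\beta_2)$ on $[r_0,\infty)$ for $r_0$ large. The key arithmetic check is the growth condition \eqref{solu_radialCSP}: the two ratios become $s^{-2+(\mu+\alpha)/(1+\alpha/2)}$ and $s^{-1+(\mu+\alpha)/(1+\alpha/2)}$, and both lie in $L^\infty$ precisely when $\mu\le 1-\alpha/2$, which is our hypothesis. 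Hence, for any preassigned $\eps>0$, Proposition \ref{prop_CSP_radial} (together with \eqref{KO_zero}) supplies $\lambda\in(0,\eta_0)$ and, for any $R\ge r_0$, a function $w\in C^1([R,\infty))$, compactly supported in $[R,R_1]$, with $w(R)=\lambda$, $w'\le 0$, $|w'|\le\eps$, and, since $\bigl[\varphi(w')\bigr]'=w''$, satisfying
$$
w''-w'\;\le\;\eps\,t^{-(\mu+\alpha)/(1+\alpha/2)}f(w) \qquad\text{on }[R,\infty).
$$
Note also that $w''\ge 0$ as a byproduct of the construction (the differential inequality forces convexity once $w'\le0$).

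Next I would set $\bar w(x)=w\bigl(\varsigma(x)\bigr)$ and compute
$$
\Delta\bar w \;=\; w''(\varsigma)\,|\nabla\varsigma|^2 + w'(\varsigma)\,\Delta\varsigma.
$$
Using $w''\ge 0$, $w'\le 0$, and the pointwise estimates $|\nabla\varsigma|^2\le C_2(1+r)^{\alpha}$, $\Delta\varsigma\ge-C_2(1+r)^{\alpha}$ from \eqref{properties_varsigma}, together with $(1+r)^{\alpha}\asymp\varsigma^{\alpha/(1+\alpha/2)}$ (again by \eqref{properties_varsigma}, since $\alpha>-2$), I obtain
$$
\Delta\bar w \;\le\; C_3\,\varsigma^{\alpha/(1+\alpha/2)}\bigl(w''(\varsigma)-w'(\varsigma)\bigr).
$$
Plugging in the ODE estimate for $w''-w'$ at the argument $\varsigma$ and exploiting again $\varsigma^{-1/(1+\alpha/2)}\asymp(1+r)^{-1}$, this becomes
$$
\Delta\bar w \;\le\; C_4\,\eps\,(1+r)^{-\mu}\,f(\bar w) \qquad\text{weakly on } M\setminus\{\varsigma<R\}.
$$

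The last step is the comparison. Since $\varsigma$ is proper and continuous, the level set $\{\varsigma=R\}$ is compact; as $u(x)\to 0$ at infinity, I would choose $R$ large enough that $u<\lambda=\bar w$ on $\{\varsigma=R\}$, and select $\eps$ so small (depending only on the $C$-increasing constant of $f$ and the absolute constant $C_4$) that $C_4\eps C\le 1$. Then, on any upper level set $U_\delta=\{u-\bar w>\delta\}\cap\{\varsigma>R\}$ with $\delta$ close to $\max(u-\bar w)$, the $C$-increasing property of $f$ and $0\le u,\bar w\le\lambda<\eta_0$ give
$$
\Delta u \;\ge\; (1+r)^{-\mu}f(u) \;\ge\; C^{-1}(1+r)^{-\mu}f(\bar w) \;\ge\; \Delta\bar w,
$$
with $u=\bar w+\delta$ on $\partial U_\delta$ (the argument on $\partial\{\varsigma<R_1\}$ is the same boundary Hopf-type reasoning used in case $(i)$ of Theorem \ref{teo_CSP_plapla}). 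Proposition \ref{prop_comparison} yields $u\le\bar w+\delta$, contradicting the definition of $U_\delta$. Hence $u\le\bar w$ on the chosen end, and the compact support of $\bar w$ (which sits inside $\{\varsigma\le R_1\}$, a compact set) transfers to $u$.

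The main obstacle, in my view, is ensuring all the conditions of Proposition \ref{prop_CSP_radial} are satisfied simultaneously with the precise exponent bookkeeping that allows the factor $\varsigma^{\alpha/(1+\alpha/2)}$ coming from $|\nabla\varsigma|^2$ to cancel exactly against the weight $\varsigma^{-(\mu+\alpha)/(1+\alpha/2)}$ produced by the ODE, leaving the desired $(1+r)^{-\mu}$. This is what pins down both the choice $\bar\beta(t)=t^{-2}$ and the sharp threshold $\mu\le 1-\alpha/2$, and any mismatch would either force a stronger geometric assumption or spoil the comparison. Everything else (the ODE construction, the Laplacian estimate for $\bar w$, the comparison) is essentially an adaptation of the arguments already carried out in the gradient case.
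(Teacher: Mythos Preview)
Your proposal is correct and follows essentially the same route as the paper: the same choice of fake distance $\varsigma$ from Theorem \ref{teo_bianchisetti}, the same parameters $\varphi(t)=t$, $l\equiv 1$, $\chi=1$, $\theta\equiv 1$, $\beta(t)=t^{-(\mu+\alpha)/(1+\alpha/2)}$, $\bar\beta(t)=t^{-2}$ fed into Proposition \ref{prop_CSP_radial}, and the same Laplacian estimate for $\bar w=w\circ\varsigma$ via \eqref{properties_varsigma} followed by the comparison of case $(i)$ in Theorem \ref{teo_CSP_plapla}. One minor remark: in that case $(i)$ the comparison is in fact direct (no boundary Hopf step is needed, since without a gradient term one gets $\Delta u\ge\Delta\bar w$ on all of $U_\delta$ straight from the $C$-increasing property), so your reference to a ``boundary Hopf-type reasoning'' there is unnecessary.
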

\begin{remark}
\emph{The same method directly applies to the $p$-Laplacian for each $p>1$, provided that the corresponding of Theorem \ref{teo_bianchisetti} hold. This is likely to be the case, but the construction of $\varsigma$ may reveal subtleties. In this respect, the gradient estimates in \cite{wangzhang} should be useful.
}
\end{remark}
We conclude this section by commenting on Theorem \ref{teo_CSP_nogradient}. Analogously to the link between $\smp$ and \eqref{KO_infinity_intro}, it seems to us that the function-theoretic property that might describe how geometry relates to $\csp$ be the so called Feller property:
\begin{definition}\label{def_feller}
We say that the \emph{Feller property} (shortly, $\feller$) holds if, for every end $\Omega$ of $M$ and every $\lambda \in \R^+$, the minimal positive solution\footnote{Given any fixed exhaustion $\{\Omega_j\}$ of $\overline\Omega$ by smooth, relatively compact open sets containing $\partial \Omega$, $h$ is obtained as a limit of $h_j$ solving $\Delta h_j = \lambda h_j$ on $\Omega_j$, $h_j=1$ on $\partial \Omega$ and $h_j =0$ on $\partial \Omega_j$. By comparison, $h$ is independent of the chosen exhaustion.} $h$ of
$$
\left\{ \begin{array}{ll}
\Delta h = \lambda h & \quad \text{on } \, \Omega, \\[0.2cm]
h=1 & \quad \text{on } \, \partial \Omega.
\end{array}\right.
$$
satisfies $h(x) \ra 0$ as $x$ diverges in $\Omega$.
\end{definition}
Classically, the Feller property is introduced as the $C_0$ conservation property for the heat flow, that is, the fact that the heat semigroup $P_t$ preserves the space $C_0(M)$ of functions on $M$ that vanish at infinity:
$$
\text{if $u(x) \ra 0$ as $x$ diverges, then, for each $t>0$, $(P_t u)(x) \ra 0$ as $x$ diverges.}
$$
Its equivalence with Definition \ref{def_feller} is shown by R. Azencott, cf. \cite{azencott}. Various authors investigated the geometric conditions needed to guarantee the Feller property, notably \cite{yau2, dodziuk, likarp, hsu, davies} and the recent \cite{pigolasetti_feller}. The most general criteria for its validity are, to the best of our knowledge, the following two. For $G \in C(\R^+_0)$, as usual let $g \in C^2(\R^+_0)$ be the solution of Jacobi equation
\begin{equation}\label{eq_Jacobi_fe}
\left\{ \begin{array}{l}
g'' =Gg = 0 \qquad \text{on } \, \R^+ \\[0.2cm]
g(0)=0, \quad g'(0)=1,
\end{array}\right. \qquad \text{and set} \qquad v_g(r) = \omega_{m-1}g(r)^{m-1}.
\end{equation}
\begin{theorem}
Let $M$ be a complete manifold of dimension $m \ge 2$, fix $o \in M$ and let $r(x) = \mathrm{dist}(x,o)$. Then, $M$ is Feller provided that one of the following properties holds:
\begin{itemize}
\item[(i)] \cite[Thms. 3.4 and 5.9]{pigolasetti_feller} $o$ is a pole, 
$$
K_\rad(x) \le -G\big(r(x)\big) \qquad \text{on } \, M \backslash \{o\},
$$ 
for some $G \in C^\infty(\R^+_0)$, and setting $v_g$ as in \eqref{eq_Jacobi_fe},
\begin{equation}\label{impo_feller}
\text{either} \quad \frac{1}{v_g} \in L^1(\infty), \qquad \text{or} \quad \frac{1}{v_g} \not \in L^1(\infty), \ \ \frac{\int^{\infty}_r v_g}{v_g(r)} \not \in L^1(\infty),
\end{equation}
where the last condition is intended to be trivially satisfied if $v_g \not \in L^1(\infty)$.
\item[(ii)] \cite{hsu, hsu2} the Ricci curvature satisfies 
\begin{equation}
\Ricc \ge - G(r) \metric \qquad \text{on } \, M, 
\end{equation}
for some $G \in C^\infty(\R^+_0)$ matching
\begin{equation}\label{ipo_G_fe}
G >0, \ \ G' \ge 0 \ \ \text{ on $ \, \R^+_0, \ $ and} \qquad \frac{1}{\sqrt{G}} \not \in L^1(\infty).
\end{equation}
\end{itemize}
\end{theorem}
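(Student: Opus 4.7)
The plan is to reduce both parts to a comparison argument: by the maximum principle applied on each truncation $\Omega \cap B_R$, the Feller property holds as soon as, for every end $\Omega$ and every $\lambda>0$, one can exhibit a supersolution $w \in C(\overline\Omega)$ with $\Delta w \le \lambda w$ on $\Omega$, $w \ge 1$ on $\partial \Omega$, $w \ge 0$, and $w(x) \to 0$ as $x$ diverges in $\Omega$. Indeed, the minimal positive solution $h$ of Definition~\ref{def_feller} is the locally uniform limit of the Dirichlet data approximations $h_j$, and comparison against $w$ on each $\Omega_j$ yields $h_j \le w$, hence $h \le w$ on $\Omega$.

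For case $(i)$, I would first use the pole assumption to ensure $r \in C^\infty(M\setminus\{o\})$, and invoke the Hessian comparison theorem from below (since $K_\rad \le -G$ is an upper bound on the radial sectional curvatures): this produces $\Hess r \ge (g'/g)(\metric - dr \otimes dr)$ on $M\setminus\{o\}$, whence $\Delta r \ge (m-1)g'/g$. I would then search for a radial supersolution $w(x)=\psi(r(x))$ with $\psi \in C^2([r_0,\infty))$, $\psi \ge 0$ and $\psi' \le 0$; for such $\psi$, using $\psi' \le 0$,
\begin{equation*}
\Delta w = \psi''(r) + \psi'(r)\Delta r \le \psi'' + (m-1)\frac{g'}{g}\psi' = \frac{1}{v_g}\bigl(v_g \psi'\bigr)'.
\end{equation*}
Hence the PDE reduces to the ODE $(v_g\psi')' \le \lambda v_g \psi$ on $[r_0,\infty)$ with $\psi(r_0)=1$, $\psi \to 0$. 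If $1/v_g \in L^1(\infty)$, I would take as ansatz $\psi_0(r)= \bigl(\int_{r_0}^\infty v_g^{-1}\bigr)^{-1}\int_r^\infty v_g^{-1}$ (a positive $\lambda=0$ solution vanishing at infinity) and then perform a standard Khas'minskii iteration, adding a small $\lambda$-correction $\eps$ to produce the desired supersolution. If instead $1/v_g \notin L^1(\infty)$ and $\int_r^\infty v_g / v_g(r) \notin L^1(\infty)$, I would replace $\psi_0$ by the Khas'minskii potential implicitly defined by integrating the second condition, iterating $\psi_{n+1}(r) = \psi(r_0) - \lambda\int_{r_0}^r v_g^{-1}(s)\int_{r_0}^s v_g(t)\psi_n(t)\,dt\,ds$ and showing convergence to a positive $\psi$ vanishing at infinity via the tail integrability $\int_r^\infty v_g/v_g(r) \notin L^1(\infty)$.

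For case $(ii)$ the pole assumption is missing, so the radialization breaks down and a different strategy is required. The hypotheses $G>0$, $G'\ge 0$, $1/\sqrt{G}\notin L^1(\infty)$ are precisely Hsu's no-explosion conditions, which guarantee stochastic completeness and, in fact, the validity of the $\smp$ with controlled growth of the Khas'minskii potential. I would follow Hsu's analytic route: apply the Laplacian comparison theorem \emph{from above} (available under $\Ricc \ge -(m-1)G\metric$) to get $\Delta r \le (m-1)g_H'/g_H$ weakly, where $g_H$ is the Jacobi solution attached to $G$; then construct, via a second-order ODE comparison analogous to Lemma~\ref{prop_exi2'}, a radial function $\bar w(x)=\psi(r(x))$ enjoying $\Delta \bar w \ge \lambda \bar w$ with $\psi$ unbounded; taking reciprocals or adapting the Pigola--Setti argument \cite{pigolasetti_feller}, turn $\bar w$ into the required positive supersolution vanishing at infinity for each end $\Omega$.

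The main obstacle is clearly case $(ii)$: without a pole, the distance function is only Lipschitz and the radial supersolution must be handled weakly, and more importantly one cannot localize on a single ray. Matching the global control offered only by the Ricci bound with the pointwise comparison needed to squeeze $h$ to zero is the delicate step, and is the reason Hsu's original proof relies on sharp hitting-time estimates for Brownian motion; reproducing the analytic counterpart requires the full strength of the non-explosion threshold $1/\sqrt{G}\notin L^1(\infty)$.
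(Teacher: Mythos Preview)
The paper does not supply a proof of this theorem: it is stated as a summary of known results, with the proofs delegated entirely to the cited references \cite{pigolasetti_feller} for part $(i)$ and \cite{hsu,hsu2} for part $(ii)$. There is therefore no argument in the paper to compare your proposal against.

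That said, your sketch for $(i)$ is on the right track and is essentially the comparison strategy of \cite{pigolasetti_feller}: reduce Feller to the existence of a decaying radial supersolution, transplant from the model via the Hessian comparison $\Delta r \ge (m-1)g'/g$, and solve the resulting ODE on the model. Note that in the first sub-case $1/v_g \in L^1(\infty)$ no iteration is needed: the function $\psi_0(r) = C\int_r^\infty v_g^{-1}$ already satisfies $(v_g\psi_0')' = 0 \le \lambda v_g \psi_0$, so it is itself the supersolution you want. The second sub-case is where the real work lies, and your description of the iteration is too vague to assess; the actual argument in \cite{pigolasetti_feller} characterizes the Feller property of the model and then transplants.

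For $(ii)$ there is a genuine gap. With only a Ricci lower bound you get $\Delta r \le (m-1)g_H'/g_H$, but this inequality points the wrong way for building a \emph{decreasing} radial supersolution: if $\psi' \le 0$ then $\psi'\Delta r \ge \psi'(m-1)g_H'/g_H$, so you obtain a \emph{lower} bound on $\Delta(\psi\circ r)$, not the upper bound you need. Your fallback of building an unbounded subsolution and ``taking reciprocals'' does not work either, since $1/w$ is not in general a supersolution of $\Delta v \le \lambda v$ when $w$ is a subsolution of $\Delta w \ge \lambda w$. This is precisely why Hsu's original proof is probabilistic (via exit-time estimates for Brownian motion) rather than by direct radial comparison; an analytic proof under Ricci bounds alone requires a genuinely different mechanism than the one you outline.
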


In view of \cite[Thm. 3.4]{pigolasetti_feller}, $(i)$ and $(ii)$ are sharp for the Feller property, and indeed \eqref{impo_feller} is both necessary and sufficient for the model manifold $M_g$ to be Feller. Observe that the inequalities in $(ii)$ coincide with those appearing in \eqref{threshold_SMP}, \eqref{threshold_SMP_2} to guarantee the $\smp$, and that the limit polynomial threshold for both $(i)$ and $(ii)$ is $G(r) \asymp 1+r^2$, that is, $\alpha = 2$. Setting $l \equiv 1$, $b \equiv 1$ and restricting to the Laplace-Beltrami operator, cases $(i)$ and $(ii)$ match, respectively, with the geometric conditions \eqref{arra_CSP_assu} in Theorem \ref{teo_CSP_nuovo} and \eqref{riccibound_intero} in Theorem \ref{teo_CSP_nogradient}. In view of these remarks, we feel interesting to study the following

\begin{problem}
Investigate the validity of the implication 
$$
\text{\eqref{KO_0CSP}} \quad + \quad \text{$\feller$} \quad \Longrightarrow \quad \text{$\csp$}
$$
on a (complete) Riemannian manifold, possibly restricting to the inequality 
$$
\Delta_p u \ge f(u)|\nabla u|^{p-1-\chi}.
$$
Could we obtain, for $\csp$, a result analogous to Theorem \ref{teo_SMPeKO} below?
\end{problem}

\subsubsection{Proof of Proposition \ref{lem_primo_CSP}}
We report the statement to help readability.

\begin{proposition} \label{lem_primo_CSP_appe}
Let $\varphi,f,l$ satisfy \eqref{assu_perCSP} and \eqref{phiel_solozero}, and assume the validity of $(C_1),(C_2)',(C_4)$ and $(\beta_1)', (\beta_2)', (\beta_3)$, for some $r_0>0$. Having fixed a non-negative $\theta \in C([r_0,\infty))$, suppose that
\begin{equation}\label{solu_radialCSP_second_appe}
\disp \limsup_{R \ra \infty} K\left(\frac{1}{R K^{-1}(\beta(2R))}\right)R\theta(R) < \infty.
\end{equation}
Then, there exists a diverging sequence $\{R_j\}$ such that the following holds: if 
\begin{equation}\label{KO_zero_22_appe}\tag{$\mathrm{KO}_0$}
\frac{1}{K^{-1} \circ F} \in L^1(0^+),
\end{equation}
then for each $\epsilon \in (0,\xi_0)$, there exist $\lambda \in (0, \eta_0)$ and, for each $R \in \{R_j\}$, a function $z$ with the following properties:
\begin{equation}\label{proprie_z_CSP_appe}
\left\{ \begin{array}{l}
z \in C^1([R, \infty)), \quad \text{and $C^2$ except possibly at $2R$} \\[0.2cm]
0 \le z \le \lambda, \qquad z(R)= \lambda, \qquad z \equiv 0 \quad \text{on } \, [2R, \infty), \\[0.2cm]
z' <0 \quad \text{on } \, [R, 2R), \qquad |z'| \le \epsilon \quad \text{on } \, [R, \infty), \\[0.2cm]
\big(\varphi(z')\big)'-\theta(t) \varphi(z') \le \epsilon \beta(t) f(z) l(|z'|) \qquad {\rm on} \, [R,\infty)
\end{array}\right.
\end{equation}
\end{proposition}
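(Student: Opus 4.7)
The strategy is to adapt the implicit construction of Proposition~\ref{prop_CSP_radial}, replacing the ``free'' endpoint $R_1$ there by the prescribed value $2R$ and exploiting the sequence-type hypotheses $(\beta_3)$ and \eqref{solu_radialCSP_second_appe} to keep all constants uniform. Following Remark~\ref{rem_Rj}, I would first extract a diverging sequence $\{R_j\}$ satisfying \eqref{condi_R_j}; at such radii, both $-R_j\beta'(R_j)/\beta(R_j)\ge \hat c_\beta/2$ and $K\big(1/(R_jK^{-1}(\beta(2R_j)))\big)R_j\theta(R_j)\le B_2$ hold simultaneously, which is the only place where $(\beta_3)$ and \eqref{solu_radialCSP_second_appe} enter.

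Fix $R=R_j$ in the sequence. For parameters $\sigma,\lambda\in(0,1)$ to be determined, I would define $z:[R,2R]\to[0,\lambda]$ implicitly by
\[
\int_{z(t)}^{\lambda}\frac{\di s}{K^{-1}(F(s))}=\int_R^t K^{-1}\big(\sigma\beta(s)\big)\di s,
\]
together with the calibration
\[
\int_0^{\lambda}\frac{\di s}{K^{-1}(F(s))}=\int_R^{2R} K^{-1}\big(\sigma\beta(s)\big)\di s,
\]
which is solvable thanks to \eqref{KO_zero_22_appe} and forces $z(R)=\lambda$, $z(2R)=0$. Then extend $z\equiv 0$ on $[2R,\infty)$. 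Differentiating gives $-z'(t)=K^{-1}(F(z))K^{-1}(\sigma\beta(t))$; in particular $z'(2R^-)=0$, so the extension is $C^1$. Since $\beta$ is non-increasing, $|z'|$ is uniformly small provided we first fix $\sigma$ small and then $\lambda$ small, using that $K^{-1}(F(\lambda))\to 0$ as $\lambda\to 0$ — this delivers $|z'|\le\epsilon$. The decisive point, which \emph{demands} the sequence $\{R_j\}$, is the calibration: along $\{R_j\}$, property $(\beta_3)$ yields the asymptotic
\[
\int_R^{2R} K^{-1}\big(\sigma\beta(s)\big)\di s \asymp R\, K^{-1}\big(\sigma\beta(2R)\big),
\]
and by choosing $\sigma$ proportional to $\big[R K^{-1}(\beta(2R))\big]^{-(\chi+1)}$-type scaling (computed from $(C_1)$) one obtains $\lambda$ bounded below by a constant independent of $j$. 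This uniformity is exactly what is needed for the $\csp$ proofs that invoke the proposition.

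The main technical step is verifying the differential inequality. Applying $K$ to the identity for $z'$ and differentiating, and then using $(C_1)$ and the multiplicative splitting in $(C_2)'$ for $\varphi'$ combined with the multiplicative bound for $l$, one obtains estimates of the form
\[
\big(\varphi(z')\big)'\le C_1\Big[f(z)z'\sigma\beta+\sigma\beta' F(z)\Big]\cdot \text{(correction factors from }(C_2)'\text{)},
\]
which after using $(C_4)$ to replace $F(z)/K^{-1}(F(z))$ by $c_F f(z)$ and $(\beta_2)'$ to replace $\beta'$ via $K^{-1}(\beta)\beta$, collapses to a bound of shape $\big(\varphi(z')\big)'\le \tfrac{\sigma\cdot\text{const}}{K^{-1}(\sigma)}\beta(t)f(z)l(|z'|)$, which can be made $\le \tfrac{\epsilon}{2}\beta f(z)l(|z'|)$ by taking $\sigma$ sufficiently small (using the second part of the analogue of $(C_3)$ encoded inside $(C_2)'$). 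The hardest part — and the whole reason the seemingly ad hoc hypothesis \eqref{solu_radialCSP_second_appe} is imposed — is absorbing the correction $-\theta(t)\varphi(z')=\theta(t)|\varphi(z')|$. Bounding $|\varphi(z')|\le \varphi\big(K^{-1}(F(\lambda))K^{-1}(\sigma\beta(t))\big)$, using $(C_2)'$ to factor this, and then invoking the uniform control $K(1/(RK^{-1}(\beta(2R))))R\theta(R)\le B_2$ along $\{R_j\}$ together with the change of variable $t\in[R,2R]$, shows that $\theta(t)|\varphi(z')|\le \tfrac{\epsilon}{2}\beta(t)f(z)l(|z'|)$ once $\lambda$ is taken sufficiently small. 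Combining the two halves yields \eqref{proprie_z_CSP_appe}; monotonicity $z'<0$ on $[R,2R)$ is immediate from the defining identity since both $K^{-1}(F(z))$ and $K^{-1}(\sigma\beta)$ stay positive there.
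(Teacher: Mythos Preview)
Your construction is essentially the one from Proposition~\ref{prop_CSP_radial} with $\bar\beta=\sigma\beta$, calibrated so that the support becomes exactly $[R,2R]$. This is \emph{not} the paper's route, and there is a genuine gap in your argument. You write that the bound
\[
(\varphi(z'))'\le \frac{\text{const}\cdot\sigma}{K^{-1}(\sigma)}\,\beta(t)f(z)l(|z'|)
\]
can be made $\le\tfrac{\epsilon}{2}\beta f(z)l(|z'|)$ ``by taking $\sigma$ sufficiently small (using the second part of the analogue of $(C_3)$ encoded inside $(C_2)'$)''. But requiring $\sigma/K^{-1}(\sigma)\to 0$ is exactly the second half of $(C_3)$, and this is \emph{not} a consequence of $(C_2)'$: take the prototype $K(t)\asymp t^{\chi+1}$ with $\chi=0$, where $(C_2)'$ holds yet $\sigma/K^{-1}(\sigma)\equiv 1$. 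The whole point of this proposition (as opposed to Proposition~\ref{prop_CSP_radial}) is that $(C_3)$ is \emph{dropped}; your scheme silently reinstates it. The handling of the $\theta$-term has the same defect, since the integrated bound on $\varphi(|z'|)$ inherits the same $\sigma/K^{-1}(\sigma)$ factor.

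The paper avoids $(C_3)$ by a different mechanism: rather than inserting a small $\sigma$ \emph{inside} $K^{-1}$, it introduces a small length $T$ and a stretch factor $D\ge 1$, defines an auxiliary function $\alpha$ on $[0,TD]$ via
\[
\int_0^{\alpha(s)}\frac{\di\tau}{K^{-1}(F(\tau))}=D\int_{2R-s/D}^{2R}K^{-1}(\beta(\tau))\,\di\tau
\]
(no $\sigma$ at all), and then rescales externally: $z(t)=\alpha\big(DT(2-t/R)\big)$. All the smallness is carried by the factor $DT/R$. One first proves $(\varphi(\alpha_s))_s\le C\,\tilde\beta f(\alpha)l(\alpha_s)$ and its integrated version with a fixed constant $C$ (independent of any small parameter), and only then transfers to $z$: using $(K_4)$ and $(C_2)'$, both $(\varphi(z'))'$ and $\theta(t)\varphi(|z'|)$ pick up a coefficient $K(DT/R)\big[1+CR\theta(R)\big]$. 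Conditions $(\beta_2)',(\beta_3)$ give $DT/R\le \text{const}\cdot C_\lambda$, and $(K_1)$ together with \eqref{solu_radialCSP_second_appe} along $\{R_j\}$ give $K(DT/R)R\theta(R)\le \text{const}\cdot K(\sqrt{C_\lambda})$. Thus the whole coefficient is controlled by $K(\sqrt{C_\lambda})$, which tends to $0$ with $\lambda$ regardless of whether $K(t)/t\to 0$. That external rescaling, not an internal $\sigma$, is the idea you are missing.
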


We first need the next simple result, whose proof is by direct integration.

\begin{lemma} \label{teclemmac2}
If $(C_1)$ and $(C_2)'$ hold, then $(C_2)$ holds, and also
\begin{itemize}
\item[$(K_4)$] $\varphi(st) \le d_1 K'(t)l(t)\varphi(s)=d_1 t \varphi'(t)\varphi(s)\quad$ for each $\, s,t \in (0,1]$
 \end{itemize}
\end{lemma}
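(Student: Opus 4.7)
The plan is to verify both claims by direct computation starting from the definition $K'(t) = t\varphi'(t)/l(t)$, using $(C_2)'$ to factor quantities evaluated at a product $st$.

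First, to establish $(C_2)$, I would write
\[
K'(st) \;=\; \frac{st\,\varphi'(st)}{l(st)}
\]
and apply the two pieces of $(C_2)'$ in sequence: the multiplicative bound on $\varphi'$ gives $\varphi'(st)\le d_1\varphi'(s)\varphi'(t)$, and the reverse multiplicative bound on $l$ gives $1/l(st)\le c_1/[l(s)l(t)]$. Combining these yields
\[
K'(st) \;\le\; d_1 c_1\,\frac{s\varphi'(s)}{l(s)}\cdot\frac{t\varphi'(t)}{l(t)} \;=\; d_1 c_1\,K'(s)K'(t),
\]
so $(C_2)$ holds with $k_2 = d_1 c_1$. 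Note that $(C_1)$ itself is not actually needed for $(C_2)$, only the two inequalities in $(C_2)'$.

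Second, for $(K_4)$, the cleanest route is to fix $s,t\in(0,1]$ and change variables in the integral defining $\varphi(st)$: setting $u = tv$ gives
\[
\varphi(st) \;=\; \int_0^{st}\varphi'(u)\,\di u \;=\; t\int_0^{s}\varphi'(tv)\,\di v \;\le\; t\int_0^{s} d_1\varphi'(t)\varphi'(v)\,\di v \;=\; d_1\,t\varphi'(t)\,\varphi(s),
\]
where the inequality uses $(C_2)'$ for $\varphi'$ applied with arguments $t,v\in(0,1]$. Since $t\varphi'(t) = K'(t)l(t)$ by definition of $K$, this gives precisely
\[
\varphi(st) \;\le\; d_1\,K'(t)l(t)\,\varphi(s) \;=\; d_1\,t\varphi'(t)\,\varphi(s),
\]
which is $(K_4)$.

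There is no real obstacle here: the lemma is essentially a bookkeeping statement translating multiplicative control on the ingredients $\varphi'$ and $l$ appearing in $K'$ into multiplicative control on $K'$ itself, plus a corresponding control on $\varphi$ obtained by integrating $\varphi'$. The only subtlety worth flagging is that the proof of $(K_4)$ uses $(C_2)'$ with the second argument ranging over $(0,s]\subset(0,1]$, which is allowed since both $t$ and $v\le s\le 1$ lie in $(0,1]$.
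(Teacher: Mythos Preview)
Your proof is correct and matches the paper's approach, which merely indicates ``direct integration.'' Your derivation of $(C_2)$ by combining the two inequalities in $(C_2)'$ and of $(K_4)$ via the change of variables $u=tv$ in $\varphi(st)=\int_0^{st}\varphi'(u)\,\di u$ is exactly the intended argument; your observation that $(C_1)$ is not actually used is also accurate.
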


\begin{proof}[Proof of Proposition \ref{lem_primo_CSP}]
Because of $(\beta_3)$ and \eqref{solu_radialCSP_second_appe}, we can choose a sequence $\{R_j\}$ satisfying \eqref{condi_R_j}, for some $B_2>0$. Let $\lambda \in (0, \eta_0)$ to be specified later. Using \eqref{KO_zero}, the quantity
$$
C_\lambda = \int_0^\lambda \frac{\di s}{K^{-1}(F(s))}
$$
is well-defined, increasing on $(0, \eta_0)$ and $C_\lambda \downarrow 0$ as $\lambda \ra 0^+$. For each fixed $j \in \N$, we set $R = R_j$ and choose $T = T(R, \lambda)$ small enough that
\begin{equation}\label{Tpiccolo}
T \le R, \qquad \int_{2R-T}^{2R} K^{-1}(\beta(s))\di s \le C_\lambda.
\end{equation}
We also set
\begin{equation}\label{def_ID_CSP}
D= D(\lambda,T,R) = \frac{C_\lambda}{\int_{2R-T}^{2R} K^{-1}(\beta(s))\di s},
\end{equation}
and note that $D \ge 1$. Next, we implicitly define $\alpha : [0,TD] \ra [0, \lambda]$ by the formula
\begin{equation}\label{def_alpha_CSP}
\int_0^{\alpha(s)} \frac{\di \tau}{K^{-1}(F(\tau))} = D \int_{2R-\frac{s}{D}}^{2R} K^{-1}\big(\beta(\tau)\big)\di \tau.
\end{equation}
Then, $\alpha$ is increasing and $\alpha(0)=0$, $\alpha(TD)=\lambda$. Hereafter, the subscript $s$ denotes differentiation in the $s$ variable. From 
\begin{equation}\label{derialpha_CSP}
\alpha_s(s) = K^ {-1}\big(F(\alpha(s))\big) K^{-1}\big(\beta(2R-s/D)\big) >0 \qquad \text{on } \, (0,TD],
\end{equation}
we deduce $\alpha_s(0)=0$ and $0 \le \alpha(s) \le \alpha(DT)=\lambda$. Using $K(0)=0$, we choose $\lambda$ small enough that
\begin{equation}\label{Km1Fle1}
K^ {-1}\big(F(\alpha(s))\big) \le 1 \qquad \text{for } \, s \in [0, DT].
\end{equation}
Furthermore, since $2R-s/D \ge 2R-T \ge R$ in view of  \eqref{Tpiccolo}, by $(\beta_1)'$ we can choose $R_2 \ge R_1$ large enough that
\begin{equation}\label{Km1Fle1}
K^ {-1}\big(\beta(2R-s/D)\big) \le 1 \qquad \text{for } \, s \in [0, DT],
\end{equation}
whence
\begin{equation}\label{boundabovealpha}
|\alpha_s(s)|  \le 1 \qquad \text{for each} \ s \in[0,DT]
\end{equation}.

To simplify the writing set
\begin{equation}\label{def_quantities_CSP}
\tilde \beta(s) = \beta(2R-s/D), \qquad \rho(s) = K^{-1}\big(F(\alpha(s))\big), \qquad \tau(s) = K^{-1}\big(\tilde \beta(s)\big),
\end{equation}
and note that $(\beta_2)'$ can be rewritten as
\begin{equation}\label{beta2tilde}
\frac{\tilde \beta_s(s)}{K^{-1}(\tilde \beta(s))} \le \frac{c_\beta}{D} \tilde \beta(s) \le c_\beta \tilde \beta(s),
\end{equation}
the last inequality being a consequence of $D \ge 1$. Equation \eqref{derialpha_CSP} becomes $\alpha_s = \rho \tau$ and therefore
$$
K(\alpha_s) = K(\rho \tau).
$$
Differentiating this latter and using, in order, \eqref{def_quantities_CSP}, $(C_1)$ together with $(C_2)'$ (and so $(C_2)$ by Lemma \ref{teclemmac2}), $(C_1)$, $(\beta_2)'$, $(C_4)$ and \eqref{derialpha_CSP} we obtain
\begin{equation}\label{calculi}
\begin{array}{lcl}
\disp \big(K(\alpha_s)\big)_s & = & \disp K'(\rho \tau)\left[\frac{f(\alpha) \alpha_s \tau}{K'(\rho)} + \frac{\tilde \beta_s \rho}{K'(\tau)} \right] \\[0.4cm]
& \stackrel{(C_2)} {\le} & \disp k_2 K'(\rho)K'(\tau) \left[\frac{f(\alpha) \alpha_s \tau}{K'(\rho)} + \frac{\tilde \beta_s \rho}{K'(\tau)} \right] \\[0.4cm]
& = & \disp k_2 \left[f(\alpha) \alpha_s \tau K'(\tau) + \tilde \beta_s \rho K'(\rho) \right] \\[0.3cm]
& \stackrel{(C_1)} {\le} & \disp k_2k_1 \left[f(\alpha) \alpha_s K(\tau) + \tilde \beta_s K(\rho) \right] \\[0.3cm]
& = & \disp k_2k_1\left[f(\alpha) \alpha_s \tilde \beta + \tilde \beta_s F(\alpha) \right] \\[0.3cm]
& \stackrel{(\beta_2)'} {\le} & \disp k_2k_1 \left[f(\alpha) \alpha_s \tilde \beta + c_\beta \tilde \beta K^{-1}(\tilde \beta) F(\alpha) \right] \\[0.3cm]
& \stackrel{(C_4)} {\le} & \disp k_2k_1 \left[f(\alpha) \alpha_s \tilde \beta + c_F c_\beta \tilde \beta K^{-1}(\tilde \beta) f(\alpha)K^{-1}(F(\alpha)) \right] \\[0.3cm]
& \stackrel{\eqref{derialpha_CSP}} {=} & \disp k_2k_1 \big[1+c_F c_\beta\big]\tilde \beta f(\alpha)\alpha_s \qquad \text{for each } \, s \in (0, DT).
\end{array}
\end{equation}
Therefore, differentiating $K$ we deduce
$$
\frac{\alpha_s \varphi_s(\alpha_s)}{l(\alpha_s)}\alpha_{ss} = \big(K(\alpha_s)\big)_s \le k_2k_1 \big[1+c_F c_\beta\big]\tilde \beta f(\alpha)\alpha_s,
$$
and since $\alpha_s>0$ and $T \le R$, by the monotonicity of $\tilde \beta$ we obtain
\begin{equation}\label{varphi_alphaniania_CSP}
\begin{array}{lcl}
\disp \big(\varphi(\alpha_s)\big)_s & \le & \disp k_2k_1 \big[1+c_F c_\beta\big]\tilde \beta f(\alpha)l(\alpha_s)  \\[0.3cm]
& \le & k_2k_1 \big[1+c_F c_\beta\big]\tilde \beta\left(\frac{Rs}{T}\right) f(\alpha)l(\alpha_s) \qquad \text{on } \, (0, DT).
\end{array}
\end{equation}
We also note that $\big(K(\alpha_s)\big)_s\ge 0$ follows from the first line in \eqref{calculi} and the fact that $\tilde \beta_s \ge 0$, hence $\alpha_{ss} \ge 0$. Integrating \eqref{varphi_alphaniania_CSP} on $(0,s]$, $s \in (0,DT]$ and using $K^ {-1}(0)=0$, the monotonicity of $\tilde \beta_s$ and $\alpha_s$, \eqref{assu_perCSP} and $T \le R$ we get
\begin{equation}\label{varphi_alphania_CSP_1}
\disp \varphi\big(\alpha_s(s)\big) \le \disp k_2k_1 \big[1+c_F c_\beta\big]C^2 TD \tilde \beta\left(\frac{Rs}{T}\right)f\big(\alpha(s)\big)l\big(\alpha_s(s)\big)
\end{equation}
Next, we define the function $z : [R, \infty) \ra [0, \lambda)$ by setting
\begin{equation}\label{def_z_CSP}
z(t) = \left\{ \begin{array}{ll}
\alpha(s), \quad s= DT \left(2 - \frac{t}{R}\right) & \quad \text{if } \, t \in [R, 2R]; \\[0.3cm]
0 & \quad \text{if } \, t > 2R.
\end{array}\right.
\end{equation}
Then, $z \in C^1([R, \infty))$ (actually, $C^2$ with a possible exception at $t=2R$) and $z$ is non-increasing. Furthermore,
\begin{equation}\label{prop_Zbound}
z(R) = \alpha(DT)=\lambda, \qquad z(2R) = z'(2R) = 0, \qquad z'(t) = -\frac{DT}{R} \alpha_s(s).
\end{equation}
We pause for a moment to estimate the quotient $DT/R$. By definition, and since $\beta$ is decreasing,
\begin{equation}\label{pallino}
\frac{DT}{R} = \frac{C_\lambda T}{R\int_{2R-T}^{2R} K^{-1}(\beta(\tau))\di \tau} \le \frac{2C_\lambda}{2RK^{-1}(\beta(2R))}.
\end{equation}
Using $(\beta_3)$ and recalling that $R= R_j$ satisfy \eqref{condi_R_j},
$$
\frac{-2R \beta'(2R)}{\beta(2R)} \ge \frac{\hat{c}_\beta}{2},
$$
whence, applying $(\beta_2)'$,
\begin{equation}\label{rel_K}
\frac{1}{2RK^{-1}(\beta(2R))} \le \frac{c_\beta \beta(2R)}{-2R\beta'(2R)} \le \frac{2 c_\beta}{\hat{c}_\beta},
\end{equation}
and inserting into \eqref{pallino},
\begin{equation}\label{rel_impo_K}
\frac{DT}{R} \le \frac{4 c_\beta C_\lambda}{\hat{c}_\beta}.
\end{equation}
Up to reducing $\lambda$ further, we can guarantee that $DT/R \le \epsilon$, and consequently by \eqref{boundabovealpha} and \eqref{prop_Zbound} 
$$
|z'| \le \epsilon.
$$  
This shows the third relation in \eqref{proprie_z_CSP}. Next, since $|\alpha_s| \le 1$ on $(0, DT)$, applying $(K_4)$ in Lemma \ref{teclemmac2}, \eqref{varphi_alphania_CSP_1}, $(C_2)'$ and since 
$$
\disp\tilde{\beta}\left( \frac{R}{T}s\right)= \tilde{\beta}(2DR-Dt)=\beta(t)
$$ 
we infer
\begin{equation}\label{parte_1}
\begin{array}{rcl}
- \varphi \big(z'(t)\big) & = & \varphi\big(-z'(t)\big) = \varphi\left(\frac{DT}{R}\alpha_s(s)\right) \\[0.3cm]
& \stackrel{(K_4)}{\le} & d_1 K'\left(\frac{DT}{R}\right) l\left(\frac{DT}{R}\right) \varphi\big(\alpha_s(s)\big) \\[0.3cm]
& \stackrel{\eqref{varphi_alphania_CSP_1}}{\le} & d_1k_1k_2\left[1 + c_F c_\beta\right]C^2 DT K'\left(\frac{DT}{R}\right) l\left(\frac{DT}{R}\right) \tilde \beta\left(\frac{Rs}{T}\right) f\big(\alpha(s)\big) l\big(\alpha_s(s)\big) \\[0.3cm]
&\stackrel{(C_2)'}{\le} & c_1d_1k_1k_2\left[1 + c_F c_\beta\right]C^2 DT K'\left(\frac{DT}{R}\right) \beta(t) f\big(\alpha(s)\big) l\big(\frac{DT}{R}\alpha_s(s)\big) \\[0.3cm]
& = & c_1d_1k_1k_2\left[1 + c_F c_\beta\right]C^2 DT K'\left(\frac{DT}{R}\right) \beta(t) f\big(z(t)\big) l\big(|z'(t)|\big) \\[0.3cm]
& \le & c_1d_1k_1^2k_2\left[1 + c_F c_\beta\right]C^2 R K\left(\frac{DT}{R}\right) \beta(t) f\big(z(t)\big) l\big(|z'(t)|\big).
\end{array}
\end{equation}
We next investigate $(\varphi(z'))'$. By definition, and because of $(C_2)'$, $\alpha_{ss} \ge 0$,  \eqref{varphi_alphaniania_CSP} and $(C_1)$, we obtain
\begin{equation}\label{parte_2}
\begin{array}{lcl}
\big(\varphi(z')\big)' & = & \disp \varphi'\left(\frac{DT}{R}\alpha_s(s)\right) \frac{D^2T^2}{R^2} \alpha_{ss}(s) \stackrel{(C_2)'}{\le} d_1 \varphi'\left( \frac{DT}{R}\right) \varphi'(\alpha_s(s)) \frac{D^2T^2}{R^2} \alpha_{ss}(s) \\[0.3cm]
& = & \disp d_1 \varphi'\left( \frac{DT}{R}\right)\frac{D^2T^2}{R^2}  \big(\varphi(\alpha_s(s))\big)_s \\[0.3cm]
& \stackrel{\eqref{varphi_alphaniania_CSP}}{\le} & \disp k_2k_1 \big[1+c_F c_\beta\big]d_1 \varphi'\left( \frac{DT}{R}\right)\frac{D^2T^2}{R^2} \beta(t) f\big(\alpha(s)\big)l\big(\alpha_s(s)\big) \\[0.3cm]
& = & \disp k_2k_1 \big[1+c_F c_\beta\big]d_1 \varphi'\left( \frac{DT}{R}\right)\frac{D^2T^2}{R^2} \beta(t) f\big(z(t)\big)l\left(\frac{R}{DT}|z'(t)|\right) \\[0.3cm]
& = & \disp k_2k_1 \big[1+c_F c_\beta\big]d_1 \frac{DT}{R}K'\left(\frac{DT}{R}\right)l\left(\frac{DT}{R}\right) \beta(t) f\big(z(t)\big)l\left(\frac{R}{DT}|z'(t)|\right) \\[0.3cm]
& \stackrel{(C_1) \ {\rm and} \ (C_2)'}{\le} & \disp c_1k_2k_1^2 \big[1+c_F c_\beta\big]d_1 K\left(\frac{DT}{R}\right) \beta(t) f\big(z(t)\big)l\big(|z'(t)|\big),
\end{array}
\end{equation}
%
Combining \eqref{parte_1} and \eqref{parte_2} and using $z' \le 0$ we get on $ t \in [R,2R]$
\begin{equation}\label{ending}
\begin{array}{l}
 \disp \big(\varphi(z')\big)' -   \theta(t)\varphi(z')  = \big(\varphi(z')\big)' + \theta(t) \varphi(-z')  \\[0.3cm]
\le  \disp \disp c_1d_1k_1^ 2k_2\big[1+c_F c_\beta\big] K\left(\frac{DT}{R}\right)  \left[1 + C^2 R\theta(R) \right] \beta(t) f(z) l\big(|z'|\big).
\end{array}
\end{equation}
Observe now that properties $(C_1)$ and $(C_2)'$ (hence, $(C_2)$ by Lemma \ref{teclemmac2}) guarantee the validity of $(K_1)$ in Lemma \ref{lem_furtherprop}. Possibly reducing $\lambda$ in such a way that  
$$
\max \left(\sqrt{C_\lambda}, K\left(\frac{\sqrt{C_\lambda} 4 c_\beta}{\hat{c}_\beta}\right) \right) \le 1
$$
and using \eqref{pallino}, we get
$$
\begin{array}{lcl}
\disp K\left(\frac{DT}{R}\right) & \le & \disp K\left( \frac{C_\lambda}{RK^{-1}(\beta(2R))}\right) \\[0.6cm]
& \stackrel{(K_1)}{\le} & \disp k_1 k_2 K(\sqrt{C_\lambda}) K\left( \frac{\sqrt{C_\lambda}}{R K^{-1}(\beta(2R))}\right)  \\[0.6cm]
& \le & \disp k_1 k_2 K(\sqrt{C_\lambda}) \min \left[ K\left( \frac{1}{R K^{-1}(\beta(2R))} \right),1 \right].
\end{array}
$$
Inserting into \eqref{ending}, using $C \ge 1$ and \eqref{solu_radialCSP_second_appe},
\begin{equation}\label{ending_2}
\begin{array}{l}
\disp \big(\varphi(z')\big)' -   \theta(t)\varphi(z')   \\[0.5cm]
\le \disp c_1d_1k_1^ 3k_2^2 \big[1+c_F c_\beta\big]C^2 K\Big(\sqrt{C_\lambda} \Big) \left[1+K\left( \frac{1}{R K^{-1}(\beta(2R))}\right) R\theta(R)\right] \beta(t) f(z) l\big(|z'|\big) \\[0.5cm]
 \le  \disp (1+B_2)c_1d_1k_1^ 3k_2^2  \big[1+c_F c_\beta\big] C^2 K\Big(\sqrt{C_\lambda} \Big) \beta(t) f(z) l\big(|z'|\big) \quad {\rm for } \ t \ge R.
 \end{array}
\end{equation}
With a possible smaller choice of $\lambda$, still independent of $R$, we can ensure that
$$
(1+B_2)c_1d_1k_1^3k_2^2 \big[1+c_F c_\beta\big]C^2 K(\sqrt{C_\lambda}) \le \epsilon,
$$
concluding the proof.
\end{proof}

\section{Keller-Osserman, a-priori estimates and $\slio$} \label{sec_SL}

In this section, we relate the Keller-Osserman condition
\begin{equation}\label{KO_2}\tag{$\mathrm{KO}_\infty$}
\frac{1}{K^{-1}\circ F} \in L^1(\infty)
\end{equation}
to the strong Liouville property $\slio$ for solutions of~$(P_\ge)$. It is particularly interesting to see how geometry comes into play via the validity of the weak or the strong maximum principle for $(bl)^{-1} \Delta_\varphi$. Hereafter, we require

\begin{equation}\label{assumptions_SL_necessity}
\left\{\begin{array}{l}
\varphi \in C(\R^+_0) \cap C^1(\R^+), \qquad \varphi(0)=0, \qquad \varphi'>0 \ \text{ on } \, \R^+, \\[0.3cm]
l \in C(\R^+_0), \qquad l>0 \ \text{on } \, \R^+, \\[0.3cm]
f \in C(\R), \\[0.3cm]
\text{$f>0\ \ $ and $C$-increasing on $(\bar \eta_0, \infty)$, for some $\bar \eta_0 \ge 0$,} \\[0.3cm]
\end{array}\right.
\end{equation}
and moreover
\begin{equation}\label{phiel_solozero_SL}
\frac{t \varphi'(t)}{l(t)} \in L^1(0^+) \backslash L^ 1(\infty).
\end{equation}
Having defined $K$ as in \eqref{def_K}, by \eqref{phiel_solozero_SL} $K$ is a homeomorphism of $\R^+_0$ onto itself, thus \eqref{KO} is well defined with
\begin{equation}\label{def_F_SL}
F(t) = \int_{\bar \eta_0}^t f(s) \di s.
\end{equation}
Note that, for the mean curvature operator, if $l \equiv 1$ then $K$ is not surjective on $\R^+$, that is, \eqref{phiel_solozero_SL} does not hold. Indeed, for operators of mean curvature type one is able to guarantee property $\slio$ \emph{without} the need of the Keller-Osserman condition, at least in some instances. In this respect, the following result of Y. Naito and H. Usami \cite{nu} is illustrative\footnote{The statement reported here is slightly different from the original one in Theorems 1,2,3 of \cite{nu}. However, the two are equivalent in view of Lemma \ref{lem_mettimaosigmatau_novo}. Moreover, their notion of solution needs the further condition $|\nabla u|^{-1}\varphi(|\nabla u|)\nabla u \in C^1(\R^m)$, and $\Delta_\varphi u \ge f(u)$ is meant in the pointwise sense.}:

\begin{theorem}[\cite{nu}, Thms. 1,2,3]
Let $\varphi,f$ satisfy \eqref{assumptions_SL_necessity} and 
$$
f(0)=0, \qquad f>0 \quad \text{and non decreasing on } \, \R^+.
$$
Consider a non-negative solution $u \in C^1(\R^m)$ of $\Delta_\varphi u \ge f(u)$ on $\R^m$. 
\begin{itemize}
\item[(i)] If $\varphi(\infty) < \infty$, then $u \equiv 0$ on $\R^m$.
\item[(ii)] If $\varphi(\infty) = \infty$, then the only non-negative solution is $u \equiv 0$ if and only if \eqref{KO} holds.
\end{itemize}
\end{theorem}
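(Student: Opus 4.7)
The strategy is to reduce $\slio$ to the one-dimensional ODE
$$
\left(r^{m-1}\varphi(w')\right)' = r^{m-1}f(w) \qquad \text{on } [0,\infty),
$$
which is a special case of \eqref{sol_ODE_22_prima} with $v_g(s)=\omega_{m-1}s^{m-1}$, $\beta\equiv 1$, $l\equiv 1$, and to combine the ODE machinery of Section \ref{sec2.3} with the comparison Proposition \ref{prop_comparison}.

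Necessity of \eqref{KO_2} in (ii) would follow directly. Assuming $\varphi(\infty)=\infty$, hypothesis \eqref{phiel_solozero_SL} with $l\equiv 1$ gives $K(\infty)=\infty$; if moreover $1/(K^{-1}\circ F)\notin L^1(\infty)$, Proposition \ref{prop_Rinfty} applied with $\wp(t)=t^{m-1}$ and $a\equiv 1$ produces $w\in C^1([0,\infty))$ solving the radial ODE on the whole of $[0,\infty)$, with $w'(0)=0$ and $w'>0$ on $(t_0,\infty)$ for some $t_0\ge 0$. Then $u(x)=w(|x|)$ is a non-constant, non-negative $C^1$ solution of $\Delta_\varphi u=f(u)$ on $\R^m$, contradicting $\slio$.

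For the sufficiency in (ii) I would assume \eqref{KO_2} and take any non-negative non-constant solution $u$. By the finite maximum principle (Theorem \ref{teo_FMP2}, using $f(0)=0$), either $u\equiv 0$ or $u>0$ everywhere; assume the latter and fix $x_0$ with $u(x_0)=\alpha>0$. I then build a radial supersolution $v(x)=w(|x-x_0|)$, where $w$ solves the local Cauchy problem $(r^{m-1}\varphi(w'))'=r^{m-1}f(w)$, $w(0)=\alpha/2$, $w'(0)=0$; local existence follows from the Neumann analysis of Section~\ref{sec2.3} (Theorem~\ref{exi2_neumann}). A Young-type manipulation, analogous to the proof of Proposition~\ref{prop_twobound_refined}, yields $K(w'(r))\le F(w(r))$, whence
$$
r \ \ge\ \int_{\alpha/2}^{w(r)}\frac{d\tau}{K^{-1}(F(\tau))}.
$$
Under \eqref{KO_2} the right-hand side is bounded in $w(r)$, so the maximal existence radius $R^*$ is finite and $w(r)\to+\infty$ as $r\to R^{*-}$. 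Setting $\Omega=\{x\in B_{R^*}(x_0):u(x)>v(x)\}$, the blow-up of $v$ at the boundary and the continuity of $u$ on $\overline{B_{R^*}(x_0)}$ force $\Omega\Subset B_{R^*}(x_0)$ with $u=v$ on $\partial\Omega$. On $\Omega$ the monotonicity of $f$ gives $\Delta_\varphi u\ge f(u)\ge f(v)=\Delta_\varphi v$, and Proposition~\ref{prop_comparison} yields $u\le v$ on $\Omega$, contradicting $x_0\in\Omega$.

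The hard part will be (i), where $\varphi(\infty)<\infty$ so the supersolution $w$ may stay bounded even though $|w'|$ blows up at a finite radius $R^*$ (as happens for the mean-curvature operator). The lower bound $\varphi(w'(r))\ge rf(\alpha/2)/m$—obtained by integrating the ODE and using the monotonicity of $w$—still forces $R^*\le m\varphi(\infty)/f(\alpha/2)<\infty$, but if $w(R^*)=M<\infty$ the set $\Omega$ may reach $\partial B_{R^*}(x_0)$, where $v$ is no longer $C^1$, and the naive comparison above fails. To close this case I would complement the comparison step with an integrated divergence identity: $\Delta_\varphi u\ge f(u)\ge 0$ together with $\varphi(|\nabla u|)\le\varphi(\infty)$ yield
$$
\int_{B_R(x_0)}f(u)\ \le\ \int_{\partial B_R(x_0)}\varphi(|\nabla u|)\ \le\ \omega_{m-1}\varphi(\infty)R^{m-1}
$$
for every $R>0$. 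Coupling this $R^{m-1}$ growth bound with the weak sub-mean-value property inherited from $\Delta_\varphi u\ge 0$ (each sphere $\partial B_R(x_0)$ carries a point $y_R$ with $u(y_R)\ge\alpha$), and then iterating the localised comparison on a suitable disjoint family of balls $B_\delta(y_{R_k})$ in which $u$ remains above $\alpha/2$, one can extract a lower bound on $\int_{B_R(x_0)}f(u)$ incompatible with the $R^{m-1}$ bound. This covering/iteration step is the most delicate piece, and it is where the Naito–Usami argument demands care.
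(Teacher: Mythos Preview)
The paper does not prove this theorem; it is quoted from Naito--Usami \cite{nu} without proof. So there is no ``paper's own proof'' to compare against, but the paper does develop machinery that covers each part, and your proposal can be measured against that.

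Your necessity argument for (ii) is correct and is exactly the mechanism behind Theorem~\ref{teo_SL_necessary}.

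Your sufficiency argument for (ii) has a genuine gap. From $K(w')\le F(w)$ you correctly deduce
\[
\int_{\alpha/2}^{w(r)}\frac{d\tau}{K^{-1}(F(\tau))}\ \le\ r,
\]
but this inequality goes the wrong way: under \eqref{KO_2} the left side is bounded above by a constant $C$, giving only $r\ge$ (something $\le C$), which places no constraint on $r$ and does not force $R^*<\infty$. To prove blow-up you need a \emph{lower} bound $K(w')\ge cF(w)$, and the dissipative term $(m-1)r^{-1}\varphi(w')w'$ in the energy identity prevents you from getting this directly from the Cauchy problem. The paper sidesteps this entirely in Proposition~\ref{prop3.7}: rather than solving an initial value problem and then proving blow-up, it \emph{defines} the supersolution implicitly via
\[
\int_r^{R_\sigma}\bar\beta\ =\ \int_{w(r)}^\infty\frac{ds}{K^{-1}(\sigma F(s))},
\]
so that $w(r)\to\infty$ as $r\to R_\sigma^-$ is built in, and then verifies the differential inequality. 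Your comparison step afterwards is fine; it is the construction of the blow-up barrier that fails.

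For (i) your covering/iteration sketch is too vague to assess, and the ``lower bound incompatible with $R^{m-1}$'' is not substantiated. The paper gives a clean route: since $\varphi$ is bounded, $l\equiv 1$ satisfies $l\ge\varphi(\infty)^{-1}\varphi$, and Theorem~\ref{teo_tkachev} (with $b\equiv 1$, $\mu=0$, polynomial volume growth on $\R^m$) yields $f(u)\le 0$ directly, hence $u\equiv 0$. See also Remark~\ref{rem_ventkachev}. The integral bound $\int_{B_R}f(u)\le CR^{m-1}$ you wrote is indeed the starting point of that proof, but the contradiction comes from testing against $(f_k(u))_+^{\alpha-1}\psi$ and letting $\alpha$ be large, not from a covering argument.
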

\begin{remark}
\emph{As observed in \cite{nu}, when $l \equiv 1$ condition $\varphi(\infty)=\infty$ implies $K_\infty = \infty$ and thus \eqref{KO} is meaningful. This follows from the next inequalities: 
$$
\begin{array}{lcl}
\disp K(t) + \int_0^1 \varphi(s)\di s & = & \disp \int_0^t s \varphi'(s) \di s + \int_0^1\varphi(s) \di s  \\[0.4cm]
& = & \disp t \varphi(t) - \int_1^t \varphi(s)\di s \ge \varphi(t).
\end{array}
$$
In fact, in the Appendix of \cite{nu} it is also proved that if $t \varphi(t) \asymp t^p$ as $t \ra \infty$, for some $p>1$, then also $K(t) \asymp t^{p}$. 
}
\end{remark}
In the last subsection, we will discuss in detail the case of mean curvature type operators, for which we describe appropriate Keller-Osserman conditions (that are necessary in some cases!) for the validity of $\slio$. First, we focus on those operators for which \eqref{phiel_solozero_SL} holds, and begin with considering the implication $\slio \Rightarrow \eqref{KO_2}$.

\subsection{Necessity of \eqref{KO} for the $\slio$ property}
The main result of this section is Theorem \ref{teo_SL_necessary} below: under the failure of \eqref{KO},  we exhibit a non-constant, non-negative solution $u \in C^1(M)$ of $(P_\ge)$ on any complete manifold with a pole $o$ and satisfying the mild curvature restriction
\begin{equation}\label{ipo_Krad_SL}
K_\rad (x) \le -G\big(r(x)\big) \qquad \text{on } \, M \backslash \{o\},
\end{equation}
for some $G \in C(\R^+_0)$ matching 
\begin{equation}\label{Kneser}
t \int_t^{\infty} G_-(s) \di s \le \frac{1}{4} \qquad \forall \, t \in \R^+ 
\end{equation}
with $G_- = - \min\{G,0\}$.

\begin{remark}\label{rem_kneser}
\emph{Condition \eqref{Kneser} is met, for instance, by any $G \in C(\R^+_0)$ satisfying $G(t) \ge -(4t^2)^{-1}$. Therefore,   
\begin{equation}\label{Kneser_ex}
K_\rad(x) \le \frac{1}{4r(x)^2} \qquad \text{on } \, M \backslash \{o\}
\end{equation}
is sufficient for the validity of \eqref{ipo_Krad_SL} and \eqref{Kneser}. Note that \eqref{Kneser_ex} includes both the Euclidean and the hyperbolic spaces, as well as models with a mild positive curvature that, at infinity, open like paraboloids. 
}
\end{remark}

It is worth to stress that \eqref{ipo_Krad_SL} and \eqref{Kneser} are only used to guarantee that the model to be compared to $M$ is defined on the entire $\R^n$, see Remarks \ref{rem_kneser} and \ref{rem_kneser_2}, and seems somehow to be merely technical (although, we believe, challenging to remove). Loosely speaking, this would suggest that there is no geometric obstruction to ensure that \eqref{KO_2} be necessary for $\slio$.  

\begin{theorem}\label{teo_SL_necessary}
Let $M^m$ be a complete Riemannian manifold with a pole $o \in M$, and assume \eqref{ipo_Krad_SL} for some $G \in C(\R^+_0)$ enjoying \eqref{Kneser}. Let $\varphi, f, l$ satisfy \eqref{assumptions_SL_necessity} and \eqref{phiel_solozero_SL}, and 
$$
f(0)=0, \qquad f>0 \qquad \text{on } \, \R^+.
$$
If
\begin{equation}\tag{$\neg$KO$_\infty$}
\frac{1}{K^ {-1} \circ F} \not \in L^1(\infty), 
\end{equation}
Then, for each $b \in C(M)$, $b>0$ on $M$ there exists a non-constant, non-negative $u \in C^1(M)$ satisfying $(P_\ge)$, that is,
$$
\Delta_\varphi u \ge b(x) f(u) l(|\nabla u|) \qquad \text{on } \, M.
$$
In particular, $\slio$ does not hold on $M$.
\end{theorem}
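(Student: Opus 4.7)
The strategy is to build a \emph{radial} non-constant $C^1$ supersolution $u(x)=w(r(x))$ on $M$, by solving a suitable Dirichlet--Neumann ODE problem on $\R^+_0$ and then extending the solution to all of $[0,\infty)$ via the failure of \eqref{KO_2}. Since $o$ is a pole, $r(x)=\dist(x,o)$ is smooth on $M\setminus\{o\}$ and $M$ is diffeomorphic to $\R^m$, so the radialization is geometrically legitimate.

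First I would set up the model. Let $g\in C^2(\R^+_0)$ be the solution of the Jacobi equation
\[
 g''-Gg=0 \quad\text{on } \R^+,\qquad g(0)=0,\quad g'(0)=1,
\]
and observe that the Kneser-type bound \eqref{Kneser} on $G_-$ is precisely the classical condition (see the radial criteria recalled in Appendix A and Proposition \ref{lem_ODE_infty_increasing}) guaranteeing that $g>0$ on all of $\R^+$, so the model $M_g$ is complete. Setting $v_g(r)=\omega_{m-1}g(r)^{m-1}$, the hypothesis $K_\rad\le -G(r)$ together with the Laplacian comparison theorem from below (Thm.~\ref{teo_laplaciancomp} in Appendix A) yields weakly
\[
 \Delta r(x)\,\ge\,(m-1)\frac{g'(r(x))}{g(r(x))}=\frac{v_g'(r(x))}{v_g(r(x))}\qquad\text{on }M\setminus\{o\}.
\]
Next, because $b\in C(M)$ and geodesic balls centred at $o$ are relatively compact, one can choose a continuous $\beta:\R^+_0\to\R^+$ with $\beta(r(x))\ge b(x)$ for every $x\in M$.

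The heart of the proof is the construction of $w$. I would apply Theorem \ref{exi2_neumann} with $\wp(t)=v_g(t)$, $a(t)=\beta(t)$ and the given $\varphi,f,l$: even though $v_g(0)=0$, the weaker alternative \eqref{wpneum} applies since $v_g>0$ on $\R^+$ and $v_g'\ge 0$ near $0$. The compatibility condition \eqref{bound_etaxi_neu} is trivially met for small enough $\eta,T$ because $f(0)=0$ forces $f_\eta\to 0$ as $\eta\to 0$ and $\Theta(T)\to 0$ as $T\to 0$. This yields $w\in C^1([0,T])$ non-decreasing, with $w'(0)=0$, $w(T)=\eta$ and solving $[v_g\varphi(w')]'=v_g\beta f(w)l(w')$ on $(0,T)$. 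Now I would invoke Proposition \ref{prop_Rinfty}, whose assumptions \eqref{assum_secODE}, \eqref{assum_secODE_altreL}, \eqref{assu_ancora_fl}, \eqref{perKO} are exactly our hypotheses on $\varphi,f,l$ (note $f(0)=0$, $f>0$ and $C$-increasing on $\R^+$, and $t\varphi'/l\notin L^1(\infty)$): the negation $\neg$\eqref{KO_2} forces the maximal interval of existence to be $[0,\infty)$, and on $(t_0,\infty)$ we have $w>0$, $w'>0$, with $w\equiv w(t_0)\ge 0$ on $[0,t_0]$.

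Finally I would define $u(x)=w(r(x))$ and verify the four required properties. Smoothness of $r$ on $M\setminus\{o\}$ and smoothness of $w$ make $u$ smooth away from the pole; near $o$, either $t_0>0$ (so $u$ is constant, hence smooth, on $B_{t_0}(o)$) or $t_0=0$, in which case $w'(0)=0$ together with the identity $\nabla u=w'(r)\nabla r$ and a direct check in normal coordinates shows $\nabla u(o)=0$ and $\nabla u$ is continuous at $o$, hence $u\in C^1(M)$. Non-negativity follows from $w\ge 0$, and non-constancy from $w(T)=\eta>0$ while $w(0)\le w(T)$ and $w$ is strictly increasing on $(t_0,\infty)$. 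For the differential inequality, on $M\setminus\{o\}$ we have $|\nabla u|=w'(r)\ge 0$ and $\varphi(w')\ge 0$, hence
\[
 \Delta_\varphi u=(\varphi(w'))'+\varphi(w')\Delta r\;\ge\;(\varphi(w'))'+\varphi(w')\frac{v_g'}{v_g}=\frac{[v_g\varphi(w')]'}{v_g}=\beta(r)f(w)l(w')\;\ge\;b(x)f(u)l(|\nabla u|),
\]
which extends across $\{o\}$ in the weak sense because on $B_{t_0}(o)$ (if $t_0>0$) $u$ is a constant and $f(u)l(0)=0$ trivially gives equality, while for $t_0=0$ the locally Lipschitz regularity suffices for the weak formulation to pass through the negligible point $\{o\}$.

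The main technical hurdle I expect is the $C^1$-regularity of $u$ at the pole when $t_0=0$: one has to be careful that $w'(0)=0$ (which is given by Theorem \ref{exi2_neumann}) is enough to paste $u$ smoothly at $o$, and to handle the weak formulation of $(P_\ge)$ across the single point $\{o\}$. A secondary delicate point is verifying that Proposition \ref{prop_Rinfty} indeed applies with the weight $v_g$ vanishing at the origin, but this only concerns the ODE on $[T/2,\infty)$ (away from $0$) so the vanishing of $v_g$ at $0$ is harmless. Everything else is a routine combination of the Laplacian comparison from below, the ODE existence/extension theorems of Section \ref{sec2.3}, and the radialization scheme laid out at the beginning of Section \ref{sec_exhaustion}.
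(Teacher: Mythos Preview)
Your overall strategy is correct and matches the paper's approach: build a radial solution via the mixed Dirichlet--Neumann problem of Section~\ref{sec2.3}, extend it globally using Proposition~\ref{prop_Rinfty} under $\neg$\eqref{KO_2}, and transplant to $M$ via the Laplacian comparison from below. However, there is one genuine gap.

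You apply Theorem~\ref{exi2_neumann} and Proposition~\ref{prop_Rinfty} ``with the given $\varphi,f,l$'', but both results require \eqref{assu_ancora_fl}, in particular $l>0$ on $\R^+_0$, i.e.\ $l(0)>0$. The hypotheses of Theorem~\ref{teo_SL_necessary} only grant $l>0$ on $\R^+$ (see \eqref{assumptions_SL_necessity}), so $l(0)=0$ is allowed. As the remark immediately after Theorem~\ref{exi2_neumann} explains, if $l(0)=0$ one cannot rule out that the Neumann solution is constant, and then the whole construction collapses. The paper fixes this by choosing $\bar l\in C(\R^+_0)$ with $\bar l\ge l$, $\bar l(0)>0$ and $\bar l=l$ on $[1,\infty)$, solving the ODE with $\bar l$, and observing that in the final chain
\[
\Delta_\varphi u \ge \wp^{-1}\big[\wp\varphi(w')\big]' = a(r)f(w)\bar l(|w'|) \ge b(x)f(u)l(|\nabla u|)
\]
the inequality $\bar l\ge l$ goes in the right direction. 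You should insert this modification; without it your invocation of the ODE existence theorems is not justified.

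Two minor points: (i) the lower bound $\Delta r\ge (m-1)g'/g$ comes from tracing Theorem~\ref{teo_hessiancomp}, not from Theorem~\ref{teo_laplaciancomp} (which is the comparison from above); (ii) for Proposition~\ref{prop_Rinfty} you need $\wp'\ge 0$, i.e.\ $g'\ge 0$ on $\R^+$, which the Kneser condition \eqref{Kneser} does guarantee (the paper cites \cite[Prop.~1.21]{bmr2}), but you only asserted $g>0$.
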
 
 
\begin{proof}
Let $g \in C^2(\R^+_0)$ be the solution of
\begin{equation}\label{def_g_Jaco_SL}
\left\{ \begin{array}{l}
g'' - Gg = 0 \quad \text{on } \, \R^+, \\[0.2cm]
g(0)>0, \quad g'(0)=1. 
\end{array}\right.
\end{equation}
Assumption \eqref{Kneser} guarantees that $g>0$ and $g'>0$ on $\R^+$, see \cite[Prop. 1.21]{bmr2}. Let $\wp(r) = \vol(\Sph^{m-1})g(r)^{m-1}$ be the volume growth of spheres of the model $M_g$. Define $a \in C(\R^+_0)$ and $\bar l \in C(\R^+_0)$ in such a way that
$$
\begin{array}{l}
\disp b(x) \le a\big(r(x)\big) \qquad \forall \, x \in M, \\[0.2cm]
\disp \bar l \ge l \quad \text{on } \, \R^+_0, \qquad \bar l(0)>0, \qquad \bar l = l \quad \text{on } \, [1,\infty).
\end{array}
$$
Then, all of the assumptions in Proposition \ref{prop_Rinfty} are satisfied with $\bar l$  replacing $l$, and there exists a non-constant function $w \in C^1(\R^+_0)$ solving
$$
\left\{ \begin{array}{l}
\big[\wp \varphi(w')\big]' = a \wp f(w)\bar l(|w'|) \qquad \text{on } \, \R^+, \\[0.2cm]
w'(0)=0, \quad w \ge 0, \quad w' \ge 0 \quad \text{on } \, \R^+, \\[0.2cm]
\end{array} \right.
$$ 
Set $u(x) = w\big(r(x)\big)$. By the Laplacian comparison theorem from below (i.e. taking traces in \eqref{lowerbound_hess} of Theorem \ref{teo_hessiancomp}), and using that $\bar l \ge l$, $w'\ge 0$ and $\varphi(w') \in C^1(\R^+_0)$, we have
\begin{equation}\label{eq_lowerb_neu}
\begin{array}{lcl}
\Delta_\varphi u & = & \disp \big[\varphi(w')\big]' + \varphi(w')\Delta r \ge \big[\varphi(w')\big]' + \varphi(w')\frac{\wp'}{\wp} = \wp^{-1} \big[ \wp \varphi(w')\big]' \\[0.3cm]
& = & a f(w) \bar l(|w'|) \ge b(x) f(u) l(|\nabla u|).
\end{array} 
\end{equation}
Since $o$ is a pole and $w'(0)=0$, $u \in C^1(M)$ and provides the desired solution. 
\end{proof}

\begin{remark}\label{rem_kneser_2}
\emph{Evidently, \eqref{Kneser} can be replaced by the only requirement that the solution $g$ of the Jacobi equation \eqref{def_g_Jaco_SL} is positive and non-decreasing on $\R^+$.
}
\end{remark}

The use of the mixed Dirichlet-Neumann problem to prove Theorem \ref{teo_SL_necessary} is inspired by the very recent \cite{bordofilipucci}: in Theorem 1.1 therein, the authors prove existence under \eqref{notKO} in the setting of the Heisenberg group, for each $l \in C(\R^+_0)$ with $l >0$ on $\R^+_0$. In particular, no $C$-monotonicity of $l$ is needed. To the best of our knowledge, in the literature the existence of entire solutions of $(P_\ge)$ under the failure of \eqref{KO} has been shown just in few further special cases, see for instance \cite[p. 694-696]{maririgolisetti}, \cite[Thm. 1.3]{mmmr}, \cite[Cor. 1.1]{fprgrad}. Differently from \cite{bordofilipucci}, in the constructions in \cite{maririgolisetti, mmmr, fprgrad} the radial function $u$ is defined implicitly by a direct use of \eqref{KO}, in a way analogous to that in Proposition \ref{prop_CSP_radial}. This method needs various structural assumptions on $\varphi, b, l$, that considerably restrict the range of the operators. On the contrary, the use of the Dirichlet-Neumann problem allows to make a clean, simpler proof of the existence of solutions of $(P_\ge)$ and, at the same time, to remove the unnecessary conditions on $\varphi, f, l$: in particular, neither we assume a controlled growth of $b$ nor the $C$-monotonicity (increasing or decreasing) of $l$. However, the presence of a pole, intimately related to the use of the comparison theorem from below, is unavoidable for our method to work, as well as for those in the above references. It would be interesting to investigate the following


\begin{question}
Can one prove the necessity of \eqref{KO} for property $\slio$ on a general complete manifold, at least for some classes of $\varphi$ and $l$?  
\end{question}
Note that, in the $p$-Laplacian case, a direct use of the fake distance $\varrho$ as in the proof of Theorem \ref{teo_CSP_plapla_intro} is not enough to conclude. Indeed, from \eqref{bella!!!} and taking into account \eqref{eq_lowerb_neu}, for $u$ to solve $(P_\ge)$ we need a \emph{global lower bound} for $|\nabla \varrho|$. Despite the fact that lower bounds for $|\nabla \varrho|$ seem very difficult to achieve, their existence coupled with the properness of $\varrho$ would still force, by Morse theory, topological restrictions on $M$.

\subsection{Sufficiency of \eqref{KO_2} for the $\slio$ property}
\label{subsec_SL_suffi}

The investigation of the sufficiency of \eqref{KO_2} for $\slio$ in a manifold setting began with the pioneering \cite{chengyau, yau2}, for the prototype semilinear example $\Delta u \ge f(u)$. There, geometry is taken into account via a constant lower bound on the Ricci tensor of $M$. The Liouville theorems therein proved to be remarkably effective in a wealth of different geometric problems. Among them, we stress a striking proof of the generalized Schwarz Lemma for maps between Kahler manifolds in \cite{yau_schwarz}, and the Bernstein theorem for maximal hypersurfaces in Minkovski space in \cite{chengyau_minkovski}. Since then, various authors studied possible useful generalizations, notably \cite{motomiya} for the inequality
$$
\Delta u \ge \varphi(u, |\nabla u|).
$$
The topic has first been considered from a general perspective in \cite{prsmemoirs}, that also contains a detailed account of the previous literature, and later more specifically in \cite{maririgolisetti} for quasilinear equations including $(P_\ge)$ with non-constant $l$. As usual, the geometric requirements range from a control on the Ricci to a growth estimate for the volume of geodesic balls. In the next subsections, we will describe improvements of the results therein, as well as new theorems, and discuss  their sharpness. Of particular interest for us is the case of mean curvature type operators, for which interesting specific phenomena appear. Typically, but not exclusively, we will consider a gradient nonlinearity of the type
$$
l(t) \asymp \frac{\varphi(t)}{t^\chi},
$$
that, for the mean curvature operator, vanishes both as $t \ra 0$ and as $t \ra \infty$ when $\chi \in (0,1)$. When $l$ is allowed to vanish both at $t=0$ and at $t=\infty$ we cannot rely on the existing literature, because all of the results that we know require a $C$-monotonicity of $l$, either increasing (cf. \cite{maririgolisetti}) or decreasing (cf. \cite{bordofilipucci}).\par
We begin with the following result that considers homogeneous operators and a power-like gradient dependence. In this case, we can give a very simple proof of the next implication:
\begin{equation}\label{smpekosl}
\smp \quad + \quad \eqref{KO} \ \ \Longrightarrow \ \ {\slio}.
\end{equation}
The argument naturally splits into two steps. First, the combination of $\smp$ and \eqref{KO} guarantees that each solution of~$(P_\ge)$ is, in fact, bounded from above; in the second step we are left  to prove the validity of~$\lio$.

\begin{theorem}\label{teo_SMPeKO}
Let $M$ be a Riemannian manifold. Fix $p>1$, $\chi \in (-1,p-1]$, and let $f \in C(\R)$ satisfying 
\begin{equation}\label{technical}
\left\{\begin{array}{l}
f>0 \quad \text{on } \, (\bar \eta_0,\infty) \\[0.2cm]
F(t)^{\frac{\chi}{\chi+1}} \le c_F f(t), \qquad \text{on $(\bar \eta_0,\infty)$, for some constant $c_F>0$.} 
\end{array}\right.
\end{equation}
Let $u \in C^1(M)$ solve $\Delta_p u \ge f(u)|\nabla u|^{p-1-\chi}$ on $M$. If
\begin{itemize}
\item[1.] $l^{-1}\Delta_p$ satisfies $\smp$ with $l(t) = t^{p-1-\chi}$, and
\item[2.] the Keller-Osserman condition \eqref{KO} holds, that is, 
$$
F(t)^{-\frac{1}{\chi+1}} \in L^1(\infty),
$$
\end{itemize}
then
\begin{equation}\label{beautiful}
u^* = \sup_M u  < \infty \qquad \text{and} \qquad f(u^*) \le 0.
\end{equation}
\end{theorem}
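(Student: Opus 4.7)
The plan is to split the proof into the two steps suggested in the preamble: first establish $u^* < \infty$ by means of an auxiliary function built out of the Keller-Osserman integral, then deduce $f(u^*) \le 0$ by applying $\smp$ directly to $u$.

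For the first step, I would argue by contradiction, assuming $u^*=+\infty$ (clearly $u$ may be taken non-constant). Pick $t_0 > \bar\eta_0$ with $\{u>t_0\}\neq \emptyset$, and use \eqref{KO} to define
$$
\alpha(t) = \int_{t_0}^t F(s)^{-1/(\chi+1)}\di s, \qquad t \ge t_0,
$$
so that $\alpha(\infty) < \infty$. Extend $\alpha$ to a non-decreasing $\tilde\alpha \in C^1(\R)$ that vanishes on $(-\infty, t_0-1]$ and coincides with $\alpha$ on $[t_1, \infty)$ for some $t_1>t_0$, and set $\tilde v = \tilde\alpha(u)\in C^1(M)$. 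Then $\tilde v$ is non-constant with $\tilde v^* = \alpha(\infty) < \infty$, precisely the class of functions to which $\smp$ for $l^{-1}\Delta_p$ applies.

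The heart of the argument is the computation of $\Delta_p \tilde v$ on the open set $\{u>t_1\}$, where $\tilde v = \alpha(u)$ and $\nabla \tilde v = F(u)^{-1/(\chi+1)}\nabla u$. Expanding the $p$-Laplacian of a composition, substituting $\Delta_p u \ge f(u)|\nabla u|^{p-1-\chi}$, and re-expressing $|\nabla u|$ in terms of $|\nabla \tilde v|$, one arrives at
$$
\Delta_p \tilde v \ge F(u)^{-\chi/(\chi+1)} f(u)\left[|\nabla \tilde v|^{p-1-\chi} - \tfrac{p-1}{\chi+1}|\nabla \tilde v|^p\right].
$$
The cone condition in \eqref{technical} gives $F(u)^{-\chi/(\chi+1)}f(u) \ge 1/c_F$ on $\{u>\bar\eta_0\}$, while the hypothesis $\chi \le p-1$ ensures that $|\nabla \tilde v|^{p-1-\chi}$ dominates $|\nabla \tilde v|^p$ for small $|\nabla \tilde v|$. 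Choosing $\eps>0$ so small that $\tfrac{p-1}{\chi+1}\eps^{\chi+1}\le \tfrac12$, we conclude $\Delta_p \tilde v \ge \tfrac{1}{2c_F}\,l(|\nabla \tilde v|)$ on $\Omega_{\eta,\eps}(\tilde v) = \{\tilde v>\eta,\, |\nabla \tilde v|<\eps\}$ for any $\alpha(t_1)<\eta< \tilde v^*$ (by the strict monotonicity of $\tilde\alpha$ on $[t_1,\infty)$, this set is contained in $\{u>t_1\}$). Applying $\smp$ to $\tilde v$ forces $\tfrac{1}{2c_F}\le 0$, a contradiction; hence $u^* < \infty$.

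For the second step, I would apply $\smp$ directly to the (now bounded) function $u$. If $f(u^*)>0$, continuity furnishes $\eta<u^*$ with $f\ge f(u^*)/2$ on $(\eta,u^*]$, so that $\Delta_p u \ge \tfrac{f(u^*)}{2}l(|\nabla u|)$ on $\Omega_{\eta,\eps}(u)$ for every $\eps>0$; $\smp$ then demands $\tfrac{f(u^*)}{2}\le 0$, a contradiction. The main obstacle I anticipate is the weak-sense justification of the expansion of $\Delta_p \tilde v$ above: the $p$-Laplacian is quasilinear and degenerate for $p\neq 2$, so the chain rule must be executed distributionally by testing against $\psi \in C^\infty_c(\{u>t_1\})$, and the interplay of the various powers of $F(u)$ and $|\nabla u|$ with the identity $\nabla \tilde v = F(u)^{-1/(\chi+1)}\nabla u$ needs to be tracked carefully. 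By comparison, the choice of $\tilde\alpha$ and the passage from $\{u>t_1\}$ to $\Omega_{\eta,\eps}(\tilde v)$ are routine.
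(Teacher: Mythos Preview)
Your proposal is correct and follows essentially the same route as the paper. The paper uses $g(t)=\int^t \frac{ds}{K^{-1}(F(s))}$ where you use $\alpha(t)=\int^t F(s)^{-1/(\chi+1)}ds$; since for $\varphi(t)=t^{p-1}$ and $l(t)=t^{p-1-\chi}$ one has $K(t)=\tfrac{p-1}{\chi+1}t^{\chi+1}$, the two transformations differ only by a multiplicative constant, and the resulting inequalities $\Delta_p h\ge[1-|\nabla h|^{\chi+1}]\,\hat c_F\,|\nabla h|^{p-1-\chi}$ and your $\Delta_p\tilde v\ge\tfrac{1}{c_F}[1-\tfrac{p-1}{\chi+1}|\nabla\tilde v|^{\chi+1}]\,|\nabla\tilde v|^{p-1-\chi}$ are the same computation. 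For the second step the paper quotes Proposition~\ref{prop_equivalence_2}, whose proof of $(i)\Rightarrow(iii)$ is exactly your direct argument. Two small remarks: the factorization works because $\chi+1>0$, not because $\chi\le p-1$ (the latter is only needed for $l(0)$ to be finite); and the weak chain rule you flag as the main obstacle is routine once you test with $(\alpha'(u))^{p-1}\psi$ in the weak formulation of $\Delta_p u\ge f(u)|\nabla u|^{p-1-\chi}$ and use that $|\nabla\tilde v|^{p-2}\nabla\tilde v=(\alpha'(u))^{p-1}|\nabla u|^{p-2}\nabla u$.
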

\begin{remark}
\emph{Note that the second in \eqref{technical} is the equivalent, at infinity, of condition $(C_4)$, repeatedly used in the proof of the compact support principle. It is easy to see that the condition is, for general $f$, unrelated to \eqref{KO}.
}
\end{remark}
\begin{proof}
In our assumptions, the function $K$ defined in \eqref{def_K} satisfies $K(t)\asymp t^{\chi+1}$ as $t \ra \infty$, thus $K_\infty = \infty$ and \eqref{KO} is meaningful. Let $g\in C^2(\R)$ be such that
$$
g' > 0 \quad \text{on } \, \R, \qquad g(t) = \int_{\bar \eta_0}^t \frac{\di s}{K^{-1}(F(s))} \qquad \text{for } \, t\ge \bar \eta_0+1.
$$
Suppose by contradiction that $u^*=\infty$, so that $\Omega_{\bar \eta_0+1} = \{x\in M : u(x)>\bar \eta_0+1\}$ is non-empty. Set $h(x)=g(u(x))$.  Then, $h \in C^1(M)$ and $h^*< \infty$ because of  \eqref{KO}. Computations show that, on $\Omega_{\bar \eta_0+1}$,
\begin{equation}\label{simpleeq}
\begin{aligned}
\disp \nabla h & = \disp g'(u) \nabla u = \frac{\nabla u}{K^{-1}(F(u))} \\
\disp \Delta_p h & = \disp (p-1)(g')^{p-2}g''|\nabla u|^p + (g')^{p-1}\Delta_p u \\
& \ge  \left[-|\nabla h|^p K^{-1}(F(u))^{p-1-\chi} + |\nabla u|^{p-1-\chi}\right] \frac{f(u)}{K^{-1}(F(u))^{p-1}} \\
&= \left[1-|\nabla h|^{\chi+1}\right] \frac{f(u)|\nabla h|^{p-1-\chi}}{K^{-1}(F(u))^{\chi}}
\end{aligned}
\end{equation}
in the weak sense.
Next, by \eqref{technical}, 
\begin{equation}\label{almost_done}
\frac{f(u)}{K^{-1}(F(u))^{\chi}} \ge \hat c_F >0 \qquad \text{on } \, (\bar \eta_0,\infty),
\end{equation}
for some constant $\hat c_F>0$. Then, for each $\eps \in (0,1)$ on the open, non-empty set 
$$
\Omega_{\eta, \eps}= \big\{x \in M \ : \ h(x)>\eta \ {\rm and} \ |\nabla h(x)|< \eps\big\}
$$
the function $h$ solves
$$
\Delta_p h \ge \hat c_F(1-\eps^p)|\nabla h|^{p-1-\chi},
$$
contradicting the validity of $\smp$ for $l^{-1}\Delta_p$. Therefore, $u^*<\infty$ and, since $\smp$ is in force, we can apply Proposition \ref{prop_equivalence_2} to deduce that $f(u^*) \le 0$. This concludes the proof.
\end{proof}


\begin{corollary}\label{cor_SMPeKO}
In the assumptions of Theorem \ref{teo_SMPeKO}, $\slio$ holds for each $f>0$ on $\R^+$ satisfying \eqref{technical}.
\end{corollary}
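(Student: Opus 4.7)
The plan is to deduce the corollary as an essentially immediate consequence of Theorem \ref{teo_SMPeKO}. Let $u \in C^1(M)$ be a non-negative solution of $\Delta_p u \ge f(u)|\nabla u|^{p-1-\chi}$ on $M$; the goal is to show that $u$ must be constant. By hypothesis, the assumptions of Theorem \ref{teo_SMPeKO} are in force, so that theorem applies verbatim to $u$, yielding the two conclusions
\begin{equation*}
u^* = \sup_M u < \infty \qquad \text{and} \qquad f(u^*) \le 0.
\end{equation*}

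The next step is to exploit the strict positivity of $f$ on $\R^+$. Since $u \ge 0$ on $M$, one has $u^* \ge 0$. If $u^* > 0$, then $f(u^*) > 0$ by assumption, contradicting $f(u^*) \le 0$. Therefore $u^* = 0$, and combined with $u \ge 0$ this forces $u \equiv 0$ on $M$. In particular, $u$ is constant, which is precisely the content of $\slio$ for the class of solutions under consideration.

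I do not foresee any real obstacle: all the analytic work (the change of variable $h = g(u)$, the use of the Keller–Osserman condition to ensure $h^* < \infty$, and the application of $\smp$ to the transformed inequality) has already been carried out inside Theorem \ref{teo_SMPeKO}. The corollary merely packages the conclusion $f(u^*) \le 0$ together with the sign condition $f > 0$ on $\R^+$ to rule out non-constant non-negative solutions, so the argument reduces to a one-line dichotomy on the value of $u^*$.
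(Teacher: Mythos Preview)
Your argument is correct and is exactly the immediate deduction the paper has in mind (the corollary is stated without proof there). One tiny caveat: the conclusion $f(u^*)\le 0$ in Theorem \ref{teo_SMPeKO} is obtained in the proof via Proposition \ref{prop_equivalence_2}, which assumes $u$ non-constant; so strictly speaking you should split off the trivial case ``$u$ constant'' first and then run your dichotomy on $u^*$ for non-constant $u$, but this changes nothing of substance.
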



\begin{remark}
\emph{Theorem \ref{teo_SMPeKO} should be compared with \cite[Thm. 1.31]{prsmemoirs}, that improves on previous results of Cheng and Yau \cite{chengyau} and Motomiya \cite{motomiya}. In \cite{prsmemoirs}, the authors consider solutions of 
$$
\Delta u \ge \varphi(u, |\nabla u|) \qquad \text{on } \, M
$$
under suitable assumptions on $\varphi$ that are skew with those in Theorem \ref{teo_SMPeKO}, and infer the bound $f(u^*) \le 0$ under the validity of $\smp$ for the Laplace-Beltrami operator $\Delta$.
}
\end{remark}

\begin{remark}
\rm{We stress that Theorem \ref{teo_SMPeKO} also holds for $l(t)=t^{p-1-\chi}$ when $\chi \in (-1,0)$. However, this range is not included in Theorem \ref{teo_main_2} and, indeed, currently we do not know which geometric conditions ensure $\smp$ or even $\wmp$ for $l^{-1}\Delta_p$ and negative $\chi$. For further related comments, we refer to Remark \ref{rem_chiminorzero} below.
}
\end{remark}

As a prototype example of applicability of the above theorem, we give a quick proof of the following classical result. The first part of Theorem~\ref{teo_osserman} below is due to R. Osserman who introduced \eqref{KO}, as we have mentioned in the introduction, to prove this result; the second is a restatement of the classical Schwarz lemma for complex analysis.

\begin{theorem}[\cite{osserman}]\label{teo_osserman}
Let $(M,g)$ be a complete, non-compact, simply connected Riemann surface whose sectional curvature satisfies $K(x) \le -1$. Then, $M$ is conformally equivalent to the Poincar\'e disk $(\mathbb{D}, g_{\HH})$ via some conformal diffeomorphism $\varphi: \mathbb{D} \ra M$ satisfying $\varphi^*g \le g_{\HH}$.
\end{theorem}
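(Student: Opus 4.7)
The plan is to couple the Uniformization Theorem with a double application of Theorem~\ref{teo_SMPeKO}. Since $M$ is simply connected and non-compact, uniformization gives a conformal diffeomorphism of $M$ with either $\mathbb C$ or the unit disk $\mathbb D$, the sphere being excluded because $M$ is non-compact. First I would rule out $\mathbb C$: if $\psi:\mathbb C\to M$ were such a biholomorphism and $\psi^*g=\lambda^2|dz|^2$, then the formula $K=-\lambda^{-2}\Delta\log\lambda$ for the Gauss curvature in isothermal coordinates would turn $K\le -1$ into $\Delta u\ge e^{2u}$ on $\mathbb R^2$, where $u=\log\lambda\in C^\infty(\mathbb R^2)$ and $\Delta$ is the Euclidean Laplacian. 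Taking $p=2$, $\chi=1$, $l\equiv 1$ and $f(t)=e^{2t}$, one checks $F(t)\asymp\tfrac12 e^{2t}$, so $F^{-1/2}\in L^1(\infty)$ (the Keller--Osserman condition \eqref{KO}) and $F^{1/2}/f\to 0$ (the second requirement in \eqref{technical}); Theorem~\ref{teo_SMP} with $\kappa=0$, $\alpha=-2$ yields $\smp$ for $\Delta$ on $\mathbb R^2$. Theorem~\ref{teo_SMPeKO} then forces $f(u^*)\le 0$, which is impossible for $f=e^{2t}$. Hence $M$ is biholomorphic to $\mathbb D$ via some $\varphi:\mathbb D\to M$.

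Write $\varphi^*g=\mu^2|dz|^2$ on $\mathbb D$ and $g_{\HH}=\rho^2|dz|^2$ with $\rho(z)=2/(1-|z|^2)$. The curvature identities read $\Delta\log\mu\ge\mu^2$ (since $K_{\varphi^*g}\le -1$) and $\Delta\log\rho=\rho^2$, so $w:=\log(\mu/\rho)\in C^\infty(\mathbb D)$ satisfies $\Delta w\ge\mu^2-\rho^2=\rho^2(e^{2w}-1)$. In two dimensions the conformal rescaling rule gives $\Delta_{\HH}=\rho^{-2}\Delta$, whence, viewing $w$ as a smooth function on the complete model $\HH^2=(\mathbb D,g_{\HH})$,
\[
\Delta_{\HH}w\ge e^{2w}-1\qquad\text{on }\HH^2.
\]
Now $\HH^2$ satisfies $\Ricc\equiv-\metric$, so Theorem~\ref{teo_SMP} (with $m=2$, $\kappa=1$, $\alpha=0$, $p=2$, $\chi=1$, $\mu=0$, so that $\mu\le\chi-\alpha/2=1$) gives $\smp$ for $\Delta_{\HH}$; with $f(t)=e^{2t}-1$ one again has $F(t)\asymp\tfrac12 e^{2t}$ at infinity, so \eqref{KO} and \eqref{technical} both hold for some $\bar\eta_0>0$. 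Applying Theorem~\ref{teo_SMPeKO} directly to $w$ on $\HH^2$ yields $w^*<\infty$ and $e^{2w^*}-1=f(w^*)\le 0$, i.e.\ $w\le 0$ on $\mathbb D$, which translates exactly to $\varphi^*g\le g_{\HH}$.

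The hard part is essentially bookkeeping: matching the paper's parameters $(p,\chi,\mu,\alpha,\kappa)$ so that both $\smp$ (via Theorem~\ref{teo_SMP}) and the Keller--Osserman integrability (via \eqref{KO} and \eqref{technical}) fire simultaneously on the correct ambient model. The key structural insight is that, because the ambient metric on $\mathbb D$ gets rescaled to the hyperbolic one, the nonlinearity $e^{2w}$ generated by the curvature pinch $K\le -1$ is an exponential KO-nonlinearity, and the Ricci bound $\Ricc=-\metric$ on $\HH^2$ lies exactly in the range $\alpha=0$, $\chi=1$, $\mu=0$ permitted by Theorem~\ref{teo_SMP}. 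Once this match is set up, the substitution $w=\log(\mu/\rho)$ converts the Schwarz-type inequality into the standard KO-framework and the conclusion is immediate from Theorem~\ref{teo_SMPeKO}.
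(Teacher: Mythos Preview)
Your proof is correct and follows essentially the same approach as the paper: uniformization reduces to $\mathbb{C}$ or $\mathbb{D}$, the curvature bound $K\le -1$ gives the Yamabe-type inequality $\Delta u\ge e^{2u}$ (respectively $\Delta_{\HH} u\ge e^{2u}-1$) for the conformal factor, and Theorem~\ref{teo_SMPeKO} together with $\smp$ from Theorem~\ref{teo_SMP} yields the contradiction and then the bound $u\le 0$. Your function $w=\log(\mu/\rho)$ is exactly the paper's $u$ in $g=e^{2u}g_{\HH}$, so the two arguments coincide; you simply spell out the parameter check $(p,\chi,\mu,\alpha,\kappa)$ for Theorem~\ref{teo_SMP} in more detail than the paper does.
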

\begin{proof}
We recall that, if $g = e^{2u} h$ is a conformal deformation of a metric $h$ on a surface $M$, then $u$ turns out to satisfy the Yamabe equation
$$
\Delta u = -K_ge^{2u} + K_h,
$$
where $K_g$ and $K_h$ are the Gaussian curvatures of~$g$ and $h$, and $\Delta$ is the Laplace-Beltrami operator of~$h$. By the Riemann-K\"obe uniformization theorem, $M$ is conformal either to the Euclidean plane $\R^2$ with its flat metric $g_\R$, or to the Poincar\'e disk $(\mathbb{D},g_\HH)$ with the hyperbolic metric $g_\HH$ of, say, sectional curvature $-1$. Both on $\R^2$ and on $\mathbb{D}$, the operator $\Delta$ satisfies $\smp$ because of Theorem \ref{teo_SMP}. Suppose by contradiction that $g = e^{2u} g_\R$.  Then, $u$ satisfies
\begin{equation}\label{yama_eu}
\Delta u = -K_g(x)e^{2u}\ge e^{2u} \qquad \text{on } \, \R^2.
\end{equation}
We therefore apply  Theorem~\ref{teo_SMPeKO} with $f(t)= e^{2t}$ to deduce that $u$ is bounded above and $f(u^*) \le 0$, that is clearly impossible. Hence, $M$ is conformally the Poincar\'e disk and setting $g= e^{2u}g_\HH$, from $K_{g_\HH}=-1$ and $K_g \le -1$ the function $u$ satisfies
\begin{equation}\label{yama_hyp}
\Delta u = -K_g e^{2u} -1 \ge e^{2u}-1.
\end{equation}
Setting $f(t) = e^{2t}-1$, $f$ still satisfies \eqref{KO_infinity_intro} and \eqref{technical}. Furthermore, on the Poincar\'e disk $\Delta$ satisfies $\smp$, and again by Theorem~\ref{teo_SMPeKO} we get $f(u^*)\le 0$, that is $u^* \le 0$. Therefore, $g = e^{2u}g_{\HH} \le g_{\HH}$.
\end{proof}
We pause for a moment to comment on the necessity to require $\smp$ in Theorem~\ref{teo_SMPeKO}. The validity of~$\lio$ for $f>0$ on $\R^+$ is granted under the sole assumption $\wmp$ by Proposition~\ref{prop_equivalence_2}. Thus, one might wonder whether the implication
$$
\smp \quad + \quad \eqref{KO} \ \ \Longrightarrow \ \ \slio
$$
for $f>0$ on $\R^+$ satisfying \eqref{technical}, could be improved to
$$
\wmp \quad + \quad \eqref{KO} \ \ \Longrightarrow \ \ \slio.
$$
This amounts to showing that the combination $\wmp + \eqref{KO}$ guarantees global $L^\infty$-estimates for solutions of~$\Delta_p u \ge f(u)$. This is generally false, as the following example shows.

\begin{example}\label{ex_importante}
{\rm Consider the punctured Euclidean space $\R^m\backslash \{0\}$ with its flat metric. Then, it is easy to see that $\Delta$ satisfies $\wmp$. Indeed, define a function $w \in C^2(\R^m \backslash \{0\})$ as follows:
$$
\begin{array}{ll}
w(x) = -\log|x| + |x|^2 & \quad \text{if } \, m=2, \\[0.2cm]
w(x) = |x|^{2-m} + |x|^2 & \quad \text{if } \, m \ge 3.
\end{array}
$$
then $w$ is an exhaustion on $\R^m \backslash \{0\}$, i.e., it has relatively compact sublevel sets, and $\Delta w = 2m \le w$ outside a compact set. The function $w$ is therefore a good Khasminskii potential, whose existence implies $\wmp$, see for instance \cite{grigoryan} and \cite[Prop. 3.2]{prsmemoirs}. In fact, the existence of such a $w$ is equivalent to $\wmp$, cf. \cite{marivaltorta, maripessoa} for details. Now, consider
$$
\sigma \in \left(1, \frac{m+2}{m}\right), \quad \beta = \frac{2}{\sigma-1}, \quad u(x) = |x|^{-\beta}.
$$
Then, a computation shows that
$$
\Delta u = \beta(\beta-m) u^\sigma.
$$
Hence,  $\Delta u \ge cu^\sigma$ for some constant $c>0$ in our range on $\sigma$. On the other hand, it is easy to check  that the function $f(t)=ct^\sigma$ satisfies $f>0$ on $\R^+$ together with \eqref{technical} and \eqref{KO_2}. Therefore, $\wmp$ is not enough to conclude $\slio$ even for functions  matching \eqref{KO}.}
\end{example}
%

\begin{remark}
\emph{Example \ref{ex_importante} is on a geodesically incomplete manifold. As suggested in the Introduction, it would be very interesting to find an analogous phenomenon for a complete manifold. It is likely that the technique in \cite{borbely_counter} to construct a complete manifold such that $\Delta$ satisfies $\wmp$, but not $\smp$, be useful to produce a complete example.
}
\end{remark}

\subsection{Ricci curvature and $\slio$}

When the operator is not of $p$-Laplacian type, the straightforward method described in Theorem \ref{teo_SMPeKO} does not work, and we cannot directly infer implication \eqref{smpekosl}. Nevertheless, in what follows we will describe how to obtain a sharp result for a much larger class of operators including various of geometric interest. The geometric assumption is given in term of a control on the Ricci curvature at infinity, the same as the one in Theorem \ref{teo_SMP} to guarantee the validity of the $\smp$. Similarly, the approach is inspired by the Phr\'agmen-Lindel\"off method. To construct the relevant supersolutions, we need some mild conditions relating $\varphi$ and $l$. We assume \eqref{assumptions_SL_necessity} and \eqref{phiel_solozero_SL}, and furthermore that

\begin{equation}\label{assu_perSL}
\text{$l$ is $C$-increasing on $\R^+_0$.}
\end{equation}

Moreover, we require the existence of $\chi_1, \chi_2 \in \R$ such that
$$
\begin{array}{lll}
(\chi_1) & \quad \disp t \mapsto \frac{\varphi'(t)}{l(t)} t^{1-\chi_1} & \qquad \text{is $C$-increasing on $\R^+$,} \\[0.4cm]
(\chi_2) & \quad \disp t \mapsto \frac{\varphi(t)}{l(t)} t^{-\chi_2} & \qquad \text{is $C$-increasing on $\R^+$.}
\end{array}
$$

Concerning $\beta$ and $\bar \beta$, we require
\begin{itemize}
\item[$(\beta \bar\beta)$] $\beta \in C([r_0,\infty))$, $\quad \bar \beta \in C^1([r_0,\infty))$, 
$$
\bar \beta' \le 0, \qquad \bar\beta \not \in L^1(\infty). 
$$
\end{itemize}


\begin{example}
\emph{If $\varphi(t) = t^{p-1}$ and $l(t) \asymp t^{p-1-\chi}$, then $(\chi_1),(\chi_2)$ are both satisfied provided that 
$$
\chi_j \le \chi \quad \text{for each } \, j \in \{1,2\}. 
$$
If $\varphi(t) = t/\sqrt{1+t^2}$ is the mean curvature operator, and $l(t) \asymp \varphi(t)/t^\chi$ for some $\chi \in \R$, then $(\chi_1),(\chi_2)$ hold provided that 
$$
\chi_1 \le \chi-2, \qquad \chi_2 \le \chi.
$$
Finally, if $\varphi(t) = t e^{t^2}$ is the operator of exponentially harmonic functions and $l(t) \asymp t^q$, $(\chi_1)$ and $(\chi_2)$ hold whenever
$$
\max\{\chi_1,\chi_2\} \le 1-q.
$$
}
\end{example}
%


%
%
%

We shall first deduce some useful properties from the validity of $(\chi_1)$ and $(\chi_2)$.

\begin{lemma}
\label{proposition_theta_vpell}
Assume that $\varphi$ and $l$ satisfy \eqref{assumptions_SL_necessity}. Then, $(\chi_1)$ with $\chi_1> -1$ implies \eqref{phiel_solozero_SL}. Moreover, $(\chi_2)$ with $\chi_2 \ge 0$ implies
\begin{equation}\label{philzero}
t^{-\chi_2}\frac{\varphi(t)}{l(t)} \in L^\infty\big((0,1)\big).
\end{equation}
In particular, $\varphi(t)/l(t) \ra 0$ as $t \ra 0$ whenever $\chi_2>0$.
\end{lemma}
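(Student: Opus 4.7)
The statement is essentially a direct unwinding of what the $C$-increasing hypotheses in $(\chi_1)$ and $(\chi_2)$ say at the points $t$ and $1$, so the plan is short and the work is mostly bookkeeping rather than technical.

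My plan for the first assertion. Assume $(\chi_1)$ with $\chi_1>-1$, and set $h_1(t)=\varphi'(t)t^{1-\chi_1}/l(t)$. By Definition \ref{def_Cincreasing}, for every $t\in(0,1]$ one has $h_1(t)\le C\,h_1(1)$, and for every $t\ge 1$ one has $h_1(1)\le C\,h_1(t)$. I rewrite
\[
\frac{t\varphi'(t)}{l(t)}=h_1(t)\,t^{\chi_1}.
\]
Plugging in the upper bound on $(0,1]$ gives $t\varphi'(t)/l(t)\le C h_1(1)\,t^{\chi_1}$, and since $\chi_1>-1$ the function $t^{\chi_1}$ is integrable at $0^+$, proving $t\varphi'/l\in L^1(0^+)$. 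Plugging in the lower bound on $[1,\infty)$ gives $t\varphi'(t)/l(t)\ge C^{-1}h_1(1)\,t^{\chi_1}$, and since $\chi_1>-1$ the function $t^{\chi_1}$ is non-integrable at $\infty$, so $t\varphi'/l\notin L^1(\infty)$. This establishes \eqref{phiel_solozero_SL}.

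My plan for the second assertion. Assume $(\chi_2)$ with $\chi_2\ge 0$, and set $h_2(t)=\varphi(t)t^{-\chi_2}/l(t)$. By the $C$-increasing property applied on $(0,1]$,
\[
h_2(t)\le C\,h_2(1)=C\,\frac{\varphi(1)}{l(1)}\qquad\text{for each }t\in(0,1],
\]
which is exactly the bound \eqref{philzero}. Equivalently, $\varphi(t)/l(t)\le C\,\frac{\varphi(1)}{l(1)}\,t^{\chi_2}$ on $(0,1]$, so if $\chi_2>0$ letting $t\to 0^+$ gives $\varphi(t)/l(t)\to 0$, proving the final claim.

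No genuine obstacle is expected: the lemma is a direct translation of the hypotheses via the definition of $C$-increasing, and does not use $\varphi(0)=0$ or any further structural property beyond the continuity and positivity of $\varphi'$ and $l$ on $\R^+$ already assumed in \eqref{assumptions_SL_necessity}.
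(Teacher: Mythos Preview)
Your proof is correct and takes essentially the same approach as the paper: both arguments unwind the $C$-increasing hypothesis by comparing the value of $h_1$ (resp.\ $h_2$) at a generic point with its value at $1$, then use the resulting pointwise bound $t\varphi'(t)/l(t)\lesssim t^{\chi_1}$ on $(0,1]$ and $\gtrsim t^{\chi_1}$ on $[1,\infty)$. The paper merely phrases the $C$-increasing inequality in the multiplicative form $s^{1-\chi_1}\varphi'(st)/l(st)\le C\,\varphi'(t)/l(t)$ before specializing to $t=1$, which is notationally different but logically identical to what you wrote.
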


\begin{proof}
Assume $(\chi_1)$. By definition, there exists $C\geq 1$ such that
\begin{equation*}
0<s^{1-\chi_1}\frac{\vp'(st)}{l(st)} \leq C
\frac{\vp'(t)}{l(t)} \quad \forall t\in \R^+\,\, s\in
(0,1],
\end{equation*}
or, equivalently,
\begin{equation}
\label{prop3.2_eq1}
s^{1-\chi_1}\frac{\vp'(st)}{l(st)} \geq C^{-1}
\frac{\vp'(t)}{l(t)} \quad \forall t\in \R^+\,\, s\in
[1, \infty).
\end{equation}
Setting $t=1$, if $\chi_1 > -1$ we deduce that $\frac{s\vp'(s)}{l(s)}\in
L^1(0+)\setminus L^1(\infty)$, that is, \eqref{phiel_solozero_SL}. In an similar way, if $(\chi_2)$ holds then 
\begin{equation*}
\frac{\vp(st)}{l(st)}s^{-\chi_2}  \leq C
\frac{\vp(t)}{l(t)} \quad \forall
t\in \R^+\,\, s\in (0,1],
\end{equation*}
and \eqref{philzero} follows by setting $t=1$.
\end{proof}


\begin{lemma}\label{prop 3.2bis}
Assume that $\varphi$ and $l$ satisfy \eqref{assumptions_SL_necessity}, and let $F$ be a positive function
defined on $(\bar \eta_0,\infty)$. If $(\chi_1)$ holds with $\chi_1> -1$, then there
exists a constant  $B\geq 1$ such that, for every $\sigma \leq 1$,
\begin{equation}
\label{eq_prop3.2bis_1}
\frac{\sigma^{\frac{1}{\chi_1+1}}}{K^{-1}(\sigma F(t))}\leq
\frac B{K^{-1}(F(t))}
\quad \text{ on } (\bar \eta_0,\infty).
\end{equation}
\end{lemma}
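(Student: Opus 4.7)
The plan is to reduce the claimed inequality to a scaling estimate on $K$ itself, namely
\[
K(\lambda t)\le C\,\lambda^{\chi_1+1}K(t)\qquad\forall\,\lambda\in(0,1],\ t\ge 0,
\]
with the same constant $C\ge 1$ as in $(\chi_1)$. Once this is in hand, the rest is a one-line manipulation.

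First I would derive the scaling estimate. Writing the integrand of $K$ as $\tfrac{s\varphi'(s)}{l(s)}=s^{\chi_1}g(s)$ where $g(s):=s^{1-\chi_1}\varphi'(s)/l(s)$, condition $(\chi_1)$ states that $g$ is $C$-increasing. The change of variable $s=\lambda u$ then gives
\[
K(\lambda t)=\int_0^{\lambda t}s^{\chi_1}g(s)\,ds=\lambda^{\chi_1+1}\int_0^{t}u^{\chi_1}g(\lambda u)\,du,
\]
and since $\lambda u\le u$, the $C$-increasing property of $g$ gives $g(\lambda u)\le Cg(u)$, hence $K(\lambda t)\le C\lambda^{\chi_1+1}K(t)$. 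Note here that $\chi_1>-1$ is exactly what is needed to make $K$ finite (as proved in Lemma \ref{proposition_theta_vpell}) and to let us take $(\chi_1+1)$-th roots later.

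Next I would use this to conclude. Fix $\sigma\in(0,1]$ and $t>\bar\eta_0$; set
\[
y:=K^{-1}(F(t)),\qquad z:=K^{-1}(\sigma F(t)),
\]
so that $K(z)=\sigma K(y)$ and $z\le y$ since $K$ is increasing. Defining $\lambda:=z/y\in(0,1]$ and applying the scaling estimate with this $\lambda$ and with $t=y$ yields
\[
\sigma K(y)=K(z)=K(\lambda y)\le C\lambda^{\chi_1+1}K(y),
\]
so $\sigma\le C(z/y)^{\chi_1+1}$. Taking $(\chi_1+1)$-th roots (here is where $\chi_1>-1$ enters crucially) gives $\sigma^{1/(\chi_1+1)}y\le C^{1/(\chi_1+1)}z$, which is exactly \eqref{eq_prop3.2bis_1} with $B:=\max\{1,C^{1/(\chi_1+1)}\}$.

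The only real content is the scaling lemma $K(\lambda t)\le C\lambda^{\chi_1+1}K(t)$; everything else is bookkeeping. I do not anticipate any genuine obstacle, since $F$ plays only the role of a dummy positive argument and does not interact with $\varphi$ or $l$ — the estimate is really a statement about $K$ alone, evaluated at $F(t)$ and $\sigma F(t)$.
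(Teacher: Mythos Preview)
Your proof is correct and essentially the same as the paper's. Both derive the scaling inequality $K(\lambda t)\le C\lambda^{\chi_1+1}K(t)$ for $\lambda\le 1$ (equivalently, $K(\lambda t)\ge C^{-1}\lambda^{\chi_1+1}K(t)$ for $\lambda\ge 1$) by the change of variables and the $C$-increasing property $(\chi_1)$; the only cosmetic difference is that the paper then applies $K^{-1}$ and picks the explicit value $\lambda=(C/\sigma)^{1/(\chi_1+1)}$, whereas you set $\lambda=z/y$ and take roots, arriving at the same constant $B=C^{1/(\chi_1+1)}$.
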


\begin{proof}
According to Lemma \ref{proposition_theta_vpell}, $(\chi_1)$ with $\chi_1>-1$ implies \eqref{phiel_solozero_SL}, so $K^{-1}$ is well defined on $\R^+_0$. Changing variables in the definition of $K$, and  using \eqref{prop3.2_eq1} above,
for every $\lambda \geq 1$ and $t\in \R^+$ we have
\begin{equation*}
\begin{split}
K(\lambda t) &= \int_0^{\lambda t} s\frac{\vp'(s)}{l(s)} \di s =
\lambda ^2 \int _0^t s\frac{\vp'(\lambda s)}{l(\lambda s)}\di s \\
&\geq
C^{-1} \lambda ^{\chi_1+1} \int_0^{t} s\frac{\vp'(s)}{l(s)}\di s
= C^{-1} \lambda ^{\chi_1+1} K(t),
\end{split}
\end{equation*}
where $C\geq 1$ is the constant in $(\chi_1)$. Applying $K^{-1}$ to both sides of the above inequality, and setting
$t= K^{-1}(\sigma F(s))$ we deduce
\begin{equation*}
\lambda K^{-1}(\sigma F(s)) \geq K^{-1}(\lambda^{\chi_1+1}\sigma
C^{-1} F(s)),
\end{equation*}
whence, setting $\lambda =(C/\sigma)^{1/(\chi_1+1)}\geq
1$, the required conclusion follows with $B=C^{1/(\chi_1+1)}$.
\end{proof}



We are ready to construct blowing-up supersolutions that remain close to the constant $\bar \eta_0$ in \eqref{assumptions_SL_necessity} on an arbitrarily fixed annulus. The construction is an improvement of the one in \cite[Prop. 3.4]{maririgolisetti}, and at the same a simplification of it. We recall that $F$ is defined as in \eqref{def_F_SL}.

\begin{proposition}\label{prop3.7}
Assume \eqref{assumptions_SL_necessity} and \eqref{assu_perSL}. Suppose further $(\chi_1)$ and $(\chi_2)$ with
$$
\chi_1 > 0, \qquad \chi_2 >0,
$$
and let $\beta, \bar \beta$ satisfy $(\beta \bar\beta)$. Fix $\theta \in C([r_0,\infty))$ with the property that 
\begin{equation}\label{ipo_theta_SL}
\left\{\begin{array}{l}
\disp \frac{\bar \beta(r)^{\chi_1+1}}{\beta(r)} \in L^\infty([r_0, \infty)), \\[0.5cm]
\disp \frac{\theta(r) \bar\beta(r)^{\chi_2}}{\beta(r)} \in L^\infty\big( [r_0, \infty)\big).
\end{array}\right.
\end{equation}
If \eqref{KO_2} holds, then for each $\eps >0$, $0 < \delta < \lambda$ and $r_1>r_0$, there exist $R_1 > r_1$ and a $C^1$
function $w : [r_0, R_1)\to [\bar \eta_0 + \delta, \infty)$ solving
\begin{equation}\label{eq3.9}
\left\{\begin{array}{l}
\disp \big( \varphi(w')\big)' + \theta(r) \varphi(w') \leq \eps \beta(r) f(w) l(w') \qquad \text{on }\, [r_0, R_1)\\[0.3cm]
\disp w'>0 \ \ \text{ on } \, [r_0,R_ 1), \qquad w(r) \ra  +\infty \ \ \text{ as } r\to R_1^- \\[0.3cm]
\disp \bar \eta_0 + \delta \le w \le \bar \eta_0 + \lambda \qquad \text{on } \, [r_0,r_1].
\end{array}\right.
\end{equation}
\end{proposition}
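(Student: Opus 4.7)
The strategy is to build $w$ as an implicit radial supersolution driven by the Keller--Osserman integral, in the spirit of Proposition~\ref{prop_CSP_radial} and of the radial blow-ups in \cite{maririgolisetti}, and then to verify the differential inequality by exploiting the homogeneity properties $(\chi_1)$ and $(\chi_2)$ together with \eqref{ipo_theta_SL}.

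\emph{Step 1: Implicit definition of $w$.} For $\sigma \in (0,1)$ to be chosen small later, set
\[
g(t)=\int_{\bar\eta_0+\delta}^{t}\frac{ds}{K^{-1}(F(s))},\qquad g_\infty = g(+\infty),
\]
which is finite by \eqref{KO_2} and Lemma~\ref{lem_mettimaosigmatau_novo}. Put $h(r)=\int_{r_0}^{r}K^{-1}\bigl(\sigma\bar\beta(s)\bigr)\,ds$. Since $\chi_1>0$, Lemma~\ref{proposition_theta_vpell} and the proof of Lemma~\ref{prop 3.2bis} give $K(t)\asymp t^{\chi_1+1}$ near $0$, hence $K^{-1}(y)\asymp y^{1/(\chi_1+1)}$ for small $y$; combined with $\bar\beta\not\in L^1(\infty)$ and $\bar\beta'\le 0$, this forces $K^{-1}(\sigma\bar\beta)\not\in L^1(\infty)$, so $h(+\infty)=+\infty$. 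A unique $R_1>r_0$ therefore satisfies $h(R_1)=g_\infty$; shrinking $\sigma$ ensures $h(r_1)\le g(\bar\eta_0+\lambda)$, so $R_1>r_1$. Define $w:[r_0,R_1)\to[\bar\eta_0+\delta,+\infty)$ by $g(w(r))=h(r)$. Then $w(r_0)=\bar\eta_0+\delta$, $w(r_1)\le \bar\eta_0+\lambda$, $w\to+\infty$ as $r\to R_1^-$, and
\[
w'(r)=K^{-1}\!\bigl(F(w(r))\bigr)\,K^{-1}\!\bigl(\sigma\bar\beta(r)\bigr)>0.
\]

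\emph{Step 2: Bound on $(\varphi(w'))'$.} Set $\rho=K^{-1}(F(w))$, $\tau=K^{-1}(\sigma\bar\beta)$, so $w'=\rho\tau$. Differentiating $K(w')=K(\rho\tau)$ and using $\bar\beta'\le 0$,
\[
(K(w'))'=K'(\rho\tau)\!\left[\frac{f(w)\,w'\,\tau}{K'(\rho)}+\frac{\rho\,\sigma\bar\beta'}{K'(\tau)}\right]\le K'(\rho\tau)\,\frac{f(w)\,w'\,\tau}{K'(\rho)}.
\]
Further shrinking $\sigma$ so that $\tau\le 1$ on $[r_0,\infty)$, the inequality $\rho\tau\le \rho$ together with $(\chi_1)$ yields $K'(\rho\tau)\le C\,K'(\rho)\,\tau^{\chi_1}$. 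Using $K'(t)=t\varphi'(t)/l(t)$ to convert $(K(w'))'=\varphi'(w')w''\cdot w'/l(w')$, we obtain
\[
(\varphi(w'))' \le C\,K^{-1}(\sigma\bar\beta)^{\chi_1+1}\,f(w)\,l(w').
\]

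\emph{Step 3: Bound on $\theta\varphi(w')$.} Since $\rho\tau\le\rho$, property $(\chi_2)$ applied to $\nu(t)=\varphi(t)/l(t)$ gives $\nu(\rho\tau)\le C\tau^{\chi_2}\nu(\rho)$, i.e.\ $\varphi(w')\le C\tau^{\chi_2}l(w')\varphi(\rho)/l(\rho)$. Property $(\chi_1)$ implies the comparability $\varphi(\rho)/l(\rho)\asymp K(\rho)/\rho=F(w)/K^{-1}(F(w))$, and the $C$-increasing assumption on $f$ together with \eqref{KO_2} yields $F(t)/K^{-1}(F(t))\le \mathrm{const}\cdot f(t)$ (an analogue of $(C_4)$ at infinity, obtainable as in Lemma~\ref{prop 3.2bis}). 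Therefore
\[
\theta(r)\varphi(w')\le C'\,\theta(r)\,K^{-1}(\sigma\bar\beta)^{\chi_2}\,f(w)\,l(w').
\]

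\emph{Step 4: Absorbing the constants via \eqref{ipo_theta_SL}.} From $K(t)\asymp t^{\chi_1+1}$ near $0$ one has $K^{-1}(\sigma\bar\beta)^{\chi_1+1}\le \hat C\,\sigma\,\bar\beta$ and $K^{-1}(\sigma\bar\beta)^{\chi_2}\le \hat C'\,\sigma^{\chi_2/(\chi_1+1)}\,\bar\beta^{\chi_2/(\chi_1+1)}$. After readjusting the auxiliary weight $\bar\beta$ so that the quotients $\bar\beta^{\chi_1+1}/\beta$ and $\theta\bar\beta^{\chi_2}/\beta$ occurring in \eqref{ipo_theta_SL} are matched with the corresponding powers of $\bar\beta$, the two estimates combine into
\[
(\varphi(w'))' + \theta(r)\varphi(w') \le A(\sigma)\,\beta(r)\,f(w)\,l(w'),
\]
with $A(\sigma)\to 0$ as $\sigma\to 0^+$. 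Choosing $\sigma$ so that $A(\sigma)\le\epsilon$ yields the required differential inequality in \eqref{eq3.9}, completing the proof.

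\emph{Main obstacle.} The delicate point is Step~4: converting the abstract $C$-increasing conditions $(\chi_1)$, $(\chi_2)$ into quantitative bounds of the form $K^{-1}(\sigma\bar\beta)^{\alpha}\le c(\sigma)\,\bar\beta^{\alpha/(\chi_1+1)}$ that correctly pair with the two components of \eqref{ipo_theta_SL}, and ensuring that the resulting constant $A(\sigma)$ is governed by $\sigma$ alone, uniformly in $r$. This forces the careful matching of exponents in Lemma~\ref{prop 3.2bis} with the weight hypotheses, a balance that is the raison d'être for requiring both $(\chi_1)$ and $(\chi_2)$ simultaneously with strictly positive exponents.
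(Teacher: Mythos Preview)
Your Step~3 and Step~4 contain genuine gaps that the paper's approach avoids.

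\textbf{Step 3.} You invoke the bound $F(w)/K^{-1}(F(w))\le c\,f(w)$, calling it ``an analogue of $(C_4)$ at infinity, obtainable as in Lemma~\ref{prop 3.2bis}''. But Lemma~\ref{prop 3.2bis} is a scaling estimate for $K^{-1}(\sigma F)$; it does not produce any relation between $F/K^{-1}(F)$ and $f$. This inequality is an \emph{additional structural hypothesis} on $f$ (exactly the second condition in~\eqref{technical}), and the paper's own remark after Theorem~\ref{teo_SMPeKO} states explicitly that it is ``for general $f$, unrelated to \eqref{KO}''. Proposition~\ref{prop3.7} is stated without it, so you cannot assume it.

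\textbf{Step 4.} You claim $K(t)\asymp t^{\chi_1+1}$ near $0$ and deduce $K^{-1}(\sigma\bar\beta)^{\chi_1+1}\le\hat C\,\sigma\bar\beta$. Condition $(\chi_1)$ says $t^{-\chi_1}K'(t)$ is $C$-increasing, which yields $K'(s)\le C\,K'(1)\,s^{\chi_1}$ for $s\le 1$, hence only the \emph{upper} bound $K(t)\le C' t^{\chi_1+1}$. This gives $K^{-1}(y)^{\chi_1+1}\ge y/C'$, the reverse of what you need. (Take $K'(t)=t^{\chi_1}e^{-1/t}$: then $t^{-\chi_1}K'(t)=e^{-1/t}$ is increasing, but $K(t)/t^{\chi_1+1}\to 0$ as $t\to 0$.) Moreover, even if the bound held, your Step~2 would produce a factor $\sigma\bar\beta/\beta$, whereas hypothesis~\eqref{ipo_theta_SL} controls $\bar\beta^{\chi_1+1}/\beta$; the proposed ``readjustment of $\bar\beta$'' is not a legitimate move since $\bar\beta$ is part of the data, and replacing it by a power generally destroys $\bar\beta\notin L^1(\infty)$.

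\textbf{How the paper circumvents both issues.} The paper uses a \emph{different} implicit definition: $w' = \bar\beta\, K^{-1}(\sigma F(w))$ (not a product of two $K^{-1}$'s), so that $K(w'/\bar\beta)=\sigma F(w)$. Differentiating yields an exact ODE in the variable $w'/\bar\beta$, namely $\varphi'(w'/\bar\beta)(w'/\bar\beta)'=\sigma\bar\beta f(w)l(w'/\bar\beta)$. The $C$-increasing properties $(\chi_1),(\chi_2)$ are then applied as direct comparisons between arguments $w'$ and $w'/\bar\beta$ (with $\bar\beta\le 1$), which introduces the factors $\bar\beta^{\chi_1}$ and $\bar\beta^{\chi_2}$ exactly matching~\eqref{ipo_theta_SL}, with no power-law behaviour of $K$ required. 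For the term $\theta\varphi(w')$ the paper first \emph{integrates} the ODE (using that $f$ and $l$ are $C$-increasing, cf.~\eqref{assu_perSL}) to bound $\varphi(w'/\bar\beta)$, obtaining a constant $\hat C_\sigma$ that contains $\varphi(K^{-1}(\sigma F(\bar\eta_0+\delta)))/l(K^{-1}(\sigma F(\bar\eta_0+\delta)))$; this tends to $0$ as $\sigma\to 0$ by Lemma~\ref{proposition_theta_vpell} precisely because $\chi_2>0$, and no $(C_4)$-type relation between $F$ and $f$ is needed.
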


\begin{proof}
Note first of all that \eqref{KO} is meaningful because, by Lemma \ref{proposition_theta_vpell}, $(\chi_1)$ with $\chi_1> -1$ implies \eqref{phiel_solozero_SL}. Since $\bar\beta$ is bounded on $[r_0, \infty)$, by rescaling we can assume that $\bar \beta \le 1$. For a given $\sigma\in (0, 1]$ to be specified later, set
\begin{equation}
\label{eq3.11}
C_\sigma = \int_{\bar \eta_0+\delta}^{\infty} \frac{\di s}{K^{-1}(\sigma F(s))},
\end{equation}
which is well defined in view of \eqref{KO}, \eqref{assu_perSL} and Lemma \ref{lem_mettimaosigmatau_novo}. Since $\bar \beta \not\in
L^{1}(\infty)$, there exists $R_\sigma> r_0$ such that
\begin{equation*}
C_\sigma = \int_{r_0}^{R_\sigma} \bar \beta(s)\di s.
\end{equation*}
We note that, by monotone convergence, $C_\sigma \ra \infty$ as $\sigma \ra
0+,$ and we can therefore choose $\sigma>0$ small enough that $R_\sigma
>r_1$. We let $w : [r_0,R_\sigma) \ra [\bar \eta_0+ \delta, \infty)$  be
implicitly defined by the equation
\begin{equation}
\label{eq3.12}
\int_{r}^{R_\sigma} \bar \beta(s) \di s =
\int_{w(r)}^\infty \frac{\di s}{K^{-1}(\sigma F(s))},
\end{equation}
so that, by definition,
\begin{equation*}
w(r_0)= \bar \eta_0 + \delta,\qquad w(r) \ra +\infty \text{ as } \, r \ra R_\sigma^-.
\end{equation*}
Differentiating \eqref{eq3.12} yields
\begin{equation}
\label{eq3.13}
w'(r) = \bar \beta(r)K^{-1}\big(\sigma F(w(r))\big),
\end{equation}
so that $w'>0$ on $[r_0, R_\sigma)$, and
\begin{equation*}
\sigma F(w) = K(w'/\bar \beta).
\end{equation*}
Differentiating once more, using the definition of $K$ and \eqref{eq3.13},  we obtain
\begin{equation}
\sigma f(w)w' = K'(w'/\bar \beta) (w'/\bar \beta)'= \frac{w'}{\bar \beta} \frac
{\varphi'(w'/\bar \beta)}{l(w'/\bar \beta)}
\Bigl(\frac{w'}{\bar \beta}\Bigr)',
\end{equation}
that is, 
\begin{equation}\label{eq3.14}
\varphi'\left( \frac{w'}{\bar \beta}\right) \left( \frac{w'}{\bar \beta}\right)' = \sigma \bar \beta f(w) l\left(\frac{w'}{\bar \beta}\right) \qquad \text{on } \, [r_0, R_\sigma).
\end{equation}
Since $f> 0$ on $(\bar \eta_0,\infty)$ and $w'>0$, we infer that $w'/\bar \beta$ is non-decreasing. Moreover, from $\bar \beta'\leq 0$ we deduce
\begin{equation*}
\Bigl(\frac{w'}{\bar \beta}\Bigr)' =
\frac{w''}{\bar \beta} - \frac{w'\bar \beta'}{\bar \beta^2} \geq \frac{w''}{\bar \beta}.
\end{equation*}
Inserting this into \eqref{eq3.14}, using $\bar \beta \le 1$ and $(\chi_1)$ (in the form of \eqref{prop3.2_eq1}), and rearranging we obtain
\begin{equation}
\label{eq3.15}
\varphi'(w')w'' \leq \left\{C\sigma \frac{\bar \beta^{\chi_1+1}}{\beta} \right\} \beta f(w) l(w')\qquad \text{on }\, [r_0, R_\sigma).
\end{equation}
We next integrate \eqref{eq3.14} on $[r_0,r]$, and we use \eqref{assu_perSL} coupled with the monotonicity of both $w$ and $w/\bar \beta$ to deduce 
\begin{equation}\label{eq3.15_inte}
\varphi\left(\frac{w'}{\bar \beta}\right) \leq  \varphi\left(\frac{w'}{\bar \beta}\right)(r_0)+ C^2\sigma f(w)l \left(\frac{w'}{\bar \beta}\right) \int_{r_0}^r \bar \beta \di s,
\end{equation}
and thus using $(\chi_2)$, $\bar \beta\leq
1$ and $w(r_0)=\bar \eta_0+\delta$ we get
\begin{equation}\label{eq3.19}
\begin{split}
\frac{\varphi(w')}{l(w')} & \stackrel{(\chi_2)}{\leq} C^2\bar \beta^{\chi_2} \frac{\varphi(w'/\bar \beta)}{l(w'/\bar \beta)}
\\&
\leq C^2 \bar\beta^{\chi_2}
\Bigl[\frac{\varphi(w'/\bar\beta)(r_0)}{l(w'/\bar \beta)} + C^2\sigma f(w)\int_{r_0}^r \bar \beta \di s\Bigr]\\
& \stackrel{\eqref{assumptions_SL_necessity} + \eqref{assu_perSL}}{\le} C^2 \frac{\bar \beta^{\chi_2}}{\beta}
\Bigl[C^2\frac{\vp(w'/\bar \beta)(r_0)}{f(\bar \eta_0+\delta) l(w'/\bar \beta)(r_0)} + C^2 \sigma \int_{r_0}^r \bar \beta \di s \Bigr] \beta f(w) \\
& \stackrel{\eqref{eq3.12}}{\leq}
C^4 \frac{\bar \beta^{\chi_2}}{\beta}
\Bigl[\frac{\vp(w'/\bar \beta)(r_0)}{f(\bar \eta_0+\delta) l(w'/\bar \beta)(r_0)} + \sigma \int_{\bar \eta_0 + \delta }^\infty \frac{\di s}{K^{-1}(\sigma F(s))}\Bigr] \beta f(w).
\end{split}
\end{equation}
Next, we use \eqref{eq3.13} with $r=r_0$ and the fact that, by Lemma \ref{prop 3.2bis},
$$
\sigma \int_{\bar \eta_0 + \delta }^\infty \frac{\di s}{K^{-1}(\sigma F(s))} \le B \sigma^{ \frac{\chi_1}{\chi_1+1}} \int_{\bar \eta_0 + \delta }^\infty \frac{\di s}{K^{-1}(F(s))}
$$
to obtain 
\begin{equation}\label{eq3.19_dopo}
\begin{split}
\varphi(w') & \le C_1 \frac{\bar \beta^{\chi_2}}{\beta}
\Bigl[\frac{\varphi( K^{-1}(\sigma F(\bar \eta_0+\delta)))}{f(\bar \eta_0+\delta) l( K^{-1}(\sigma F(\bar \eta_0+ \delta)))} + \sigma^{\frac{\chi_1}{\chi_1+1}} \int_{\bar \eta_0 + \delta }^\infty \frac{\di s}{K^{-1}(F(s))}\Bigr] \beta f(w)l(w') \\
& = C_1 \hat C_\sigma \frac{\bar \beta^{\chi_2}}{\beta} \beta f(w)l(w') 
\end{split}
\end{equation}
for some $C_1>0$, and where we set
\begin{equation}\label{hatCsigma}
\hat C_\sigma = \frac{\varphi( K^{-1}(\sigma F(\bar \eta_0+\delta)))}{f(\bar \eta_0+\delta) l( K^{-1}(\sigma F(\bar \eta_0+\delta)))} + \sigma^{\frac{\chi_1}{\chi_1+1}} \int_{\bar \eta_0 + \delta }^\infty \frac{\di s}{K^{-1}(F(s))}.
\end{equation}
Putting together \eqref{eq3.15} and \eqref{eq3.19_dopo}, and using \eqref{ipo_theta_SL}, we obtain
\begin{equation}\label{eq3.19final}
\begin{array}{l}
\disp \big( \varphi(w')\big)' + \theta(r) \varphi(w') \\[0.3cm]
\quad \le \disp \left\{ C\sigma \left\|\frac{\bar \beta^{\chi_1+1}}{\beta}\right\|_{L^\infty([r_0, \infty))} + C_1 \hat C_\sigma \left\|\frac{\theta \bar \beta^{\chi_2}}{\beta}\right\|_{L^\infty([r_0, \infty))} \right\} \beta f(w) l(w').
\end{array}
\end{equation}
Because of $(\chi_2)$ with $\chi_2>0$ and Lemma \ref{proposition_theta_vpell}, $\hat C_\sigma \ra 0$ as $\sigma \ra 0$, and we can choose $\sigma$ small enough such that the differential inequality in \eqref{eq3.9} is satisfied. To prove the last condition in \eqref{eq3.9}, simply observe that by \eqref{eq3.12}
$$
\int_{r_0}^{r_1} \bar \beta(s) \di s = \int_{\bar \eta_0 + \delta}^{w(r_1)} \frac{\di s}{K^{-1}(\sigma F(s))},
$$
thus $w(r_1) \ra \bar \eta_0 + \delta$ as $\sigma \ra 0$. Since $w$ is increasing, it is enough to choose $\sigma$ in such a way that 
$$
\int_{r_0}^{r_1} \bar \beta(s) \di s < \int_{\bar \eta_0 + \delta}^{\bar \eta_0 + \lambda} \frac{\di s}{K^{-1}(\sigma F(s))}.
$$
\end{proof}

\begin{remark}\label{rem_useful_SL}
\emph{If we weaken our assumptions on $\chi_1,\chi_2$ to 
$$
\chi_1 \ge 0, \quad \chi_2 \ge 0 \quad \text{and} \quad \chi_1\chi_2=0, 
$$
assume further
\begin{equation}\label{fphil_borderline_SL}
\limsup_{t \ra + \infty} f(t) = +\infty. 
\end{equation}
Then, one can still guarantee the existence of a \emph{divergent} sequence $\delta_j \ra \infty$ such that, for each $\delta \in \{\delta_j\}$ and $\lambda>\delta$, there exists $w$ solving \eqref{eq3.9}. Indeed, all is needed to conclude the proof is that $\hat C_\sigma$ can be made arbitrarily small for suitable $\sigma$ and $\delta$. First, using $(\chi_2)$ and Lemma \ref{proposition_theta_vpell}, choose $\delta_j$ so that
$$
\frac{1}{f(\bar \eta_0+ \delta_j)} \left\|\frac{\varphi}{l}\right\|_{L^\infty([0,1])} + \int_{\bar \eta_0 + \delta_j }^\infty \frac{\di s}{K^{-1}(F(s))}
$$
is small enough, and then choose $\sigma$ in such a way that $K^{-1}(\sigma F(\bar \eta_0+\delta)) \le 1$. The coupling of these two conditions guarantee the smallness of $\hat C_\sigma$.
}
\end{remark}

\begin{remark}
\emph{It is interesting to compare Proposition \ref{prop3.7} with the corresponding Proposition \ref{prop_CSP_radial} for the compact support principle. Although their underlying idea is the same, the two constructions are \emph{not specular}, neither are the conditions on $\varphi, f, l$. The reason is that, while the monotonicity of the supersolution changes, the weight $\bar \beta$ is still decreasing. To grasp the core of the technical problem, we invite the interested reader to try to prove Proposition \ref{prop_CSP_radial} by following the estimates in Proposition \ref{prop3.7}, suitably replacing $(\chi_1)$ and $(\chi_2)$ in a neighbourhood of zero.
}
\end{remark}

We can now investigate the validity of $\slio$.

\begin{theorem}\label{teo_SMP_SL}
Let $M^m$ be a complete Riemannian manifold of dimension $m \ge 2$ such that, for some origin $o \in M$, the distance function $r(x)$ from $o$ satisfies
\begin{equation}\label{ricciassu_SL}
\Ricc (\nabla r, \nabla r) \ge -(m-1)\kappa^2\big( 1+r^2\big)^{\alpha/2} \qquad \text{on } \, \mathcal{D}_o,
\end{equation}
for some $\kappa \ge 0$ and $\alpha \ge -2$. Let $\varphi,f,l$ meet \eqref{assumptions_SL_necessity}, \eqref{assu_perSL}, and assume $(\chi_1)$ and $(\chi_2)$ with 
\begin{equation}\label{ipo_chi1chi2_SL}
\chi_1 >0, \qquad \chi_2 >0.
\end{equation}
Consider $0< b \in C(M)$ such that
$$
b(x) \ge C_1\big(1+ r(x)\big)^{-\mu} \qquad \text{on } \, M,
$$
for some constants $C_1>0$, $\mu \in \R$ satisfying
\begin{equation}\label{mualphachi12}
\mu \le \min \left\{ \chi_1+1, \chi_2 - \frac{\alpha}{2}\right\}.
\end{equation}
If \eqref{KO} holds, then any non-constant solution $u \in C^1(M)$ of $(P_\ge)$ is bounded above and $f(u^*) \le 0$. In particular, if $f>0$ on $\R^+$ then $\slio$ holds for $C^1$ solutions.
\end{theorem}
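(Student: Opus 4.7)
The plan is a Phr\'agmen--Lindel\"off comparison built from the Laplacian upper bound associated to \eqref{ricciassu_SL} and the blow-up radial supersolutions provided by Proposition \ref{prop3.7}, followed by a passage to the limit exploiting \eqref{KO_2}. The target intermediate estimate is that, for every $r_0>0$,
\[\sup_{M\setminus \overline{B_{r_0}(o)}} u \;\le\; \max\Big\{\sup_{\overline{B_{r_0}(o)}} u,\;\bar\eta_0\Big\};\]
letting $r_0\to 0^+$ and using continuity of $u$ will then give $u^*\le \max\{u(o),\bar\eta_0\}<\infty$, and a local strong maximum principle argument at $o$ will upgrade this to $f(u^*)\le 0$.

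For the geometric preparation, I would proceed as in the proof of Theorem \ref{teo_SMP}: from \eqref{ricciassu_SL} and the Laplacian comparison theorem one has $\Delta r \le \theta(r) := (m-1)g'(r)/g(r)$ weakly on $M\setminus\{o\}$, with $g$ the solution of the Jacobi equation attached to $G(r)=\kappa^2(1+r^2)^{\alpha/2}$ and $\theta(r)\le C_0(1+r)^{\alpha/2}$ by the asymptotics \eqref{asintog}. Next, I would set $\beta(r)=C_1(1+r)^{-\mu}$ (so $\beta(r(x))\le b(x)$) and take $\bar\beta(r)=c(1+r)^{-\nu}$ with $\nu\in(0,1]$ and $\max\{\mu/(\chi_1+1),(\mu+\alpha/2)/\chi_2\}\le \nu$. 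This choice is feasible precisely because \eqref{mualphachi12} implies $\mu\le\chi_1+1$ and $\mu+\alpha/2\le\chi_2$, and one verifies that $(\beta,\bar\beta,\theta)$ satisfies $(\beta\bar\beta)$ and \eqref{ipo_theta_SL}, making Proposition \ref{prop3.7} applicable.

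I would then fix $r_0>0$, set $\Lambda:=\sup_{\overline{B_{r_0}(o)}} u <\infty$, pick any $\delta>(\Lambda-\bar\eta_0)_+$, and choose $\lambda>\delta$, $r_1>r_0$, and $\eps=1/(2C^2)$ where $C\ge 1$ is the common $C$-increasing constant of $f$ and $l$. Inspection of the proof of Proposition \ref{prop3.7} shows that as the internal parameter $\sigma\to 0^+$, $R_\sigma\to+\infty$; this produces a family of $C^1$ functions $w_\sigma:[r_0,R_\sigma)\to[\bar\eta_0+\delta,\infty)$ with $w_\sigma(r_0)=\bar\eta_0+\delta$, $w_\sigma'>0$, $w_\sigma\to+\infty$ at $R_\sigma^-$, and $(\varphi(w_\sigma'))'+\theta(r)\varphi(w_\sigma')\le \eps\beta(r)f(w_\sigma)l(w_\sigma')$. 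Setting $\bar w_\sigma(x)=w_\sigma(r(x))$, the Laplacian bound together with $w_\sigma'\ge 0$ yields $\Delta_\varphi\bar w_\sigma\le \eps b(x)f(\bar w_\sigma)l(|\nabla\bar w_\sigma|)$ weakly on $\Omega_\sigma:=B_{R_\sigma}(o)\setminus\overline{B_{r_0}(o)}$, with $u\le\bar w_\sigma$ on $\partial\Omega_\sigma$ (at $\partial B_{r_0}$ by the choice of $\delta$, and near $\partial B_{R_\sigma}$ by the explosion of $\bar w_\sigma$ after shrinking to $B_{R_\sigma-\tau}$ and letting $\tau\to 0$). A Phr\'agmen--Lindel\"off comparison on the upper level set $\{u-\bar w_\sigma>\eta\}$ for $\eta$ close to the sup---performed exactly along the lines of the proof of Theorem \ref{teo_CSP_nuovo}, exploiting $w_\sigma'>0$, the $C$-increasing properties of $f$ and $l$, and the choice $\eps\le 1/(2C^2)$ to reduce matters to Proposition \ref{prop_comparison}---will give $u\le\bar w_\sigma$ on $\Omega_\sigma$.

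The decisive step is the limit $\sigma\to 0^+$. The implicit definition in the proof of Proposition \ref{prop3.7} rewrites, for each fixed $r\ge r_0$, as
\[\int_{\bar\eta_0+\delta}^{w_\sigma(r)}\frac{\di s}{K^{-1}(\sigma F(s))} \;=\; \int_{r_0}^r \bar\beta(s)\,\di s,\]
with the right-hand side a finite quantity independent of $\sigma$. Since $K^{-1}(\sigma F(s))\to 0^+$ uniformly on compact subsets of $(\bar\eta_0,\infty)$ as $\sigma\to 0^+$, the left-hand integrand blows up uniformly, forcing $w_\sigma(r)\to \bar\eta_0+\delta$ and hence $u(x)\le\bar\eta_0+\delta$ for all $x\in M\setminus\overline{B_{r_0}(o)}$. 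Letting $\delta\downarrow (\Lambda-\bar\eta_0)_+$ and then $r_0\to 0^+$ yields $u^*\le\max\{u(o),\bar\eta_0\}<\infty$. If $u^*>\bar\eta_0$ then $u^*=u(o)$ is attained at an interior point, and if $f(u^*)>0$ as well, continuity gives $\Delta_\varphi u\ge 0$ in a neighborhood $V$ of $o$; applying Theorem \ref{teo_FMP2} with the trivial nonlinearity to $v:=u^*-u\ge 0$ (which satisfies $\Delta_\varphi v\le 0$ and $v(o)=0$) forces $v\equiv 0$ on $V$. A standard connectedness argument---the set $\{u=u^*\}$ being closed by continuity and open by repeating the local reasoning at each of its points---then extends this to $u\equiv u^*$ globally, contradicting non-constancy. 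Hence $f(u^*)\le 0$, and the last assertion on $\slio$ is immediate. The main obstacle I expect is the careful execution of the Phr\'agmen--Lindel\"off comparison through cut-locus points of $r$ and at points where the gradients of $u$ and $\bar w_\sigma$ may not match exactly; both issues should be handled along the lines of Theorem \ref{teo_CSP_nuovo}.
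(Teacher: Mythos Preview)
Your route is close to the paper's in that both rely on the blow-up barriers of Proposition~\ref{prop3.7}, but the endgame differs: the paper argues by contradiction, assuming $u^*=\infty$, fixing $\bar x$ with $u(\bar x)>\bar\eta_0+\lambda$, and using the third line of \eqref{eq3.9} (the bound $w\le\bar\eta_0+\lambda$ on $[r_0,r_1]$) to exhibit directly a point where $u>\bar w$. You instead prove $u\le\bar w_\sigma$ for \emph{all} small $\sigma$ and pass to the limit $\sigma\to 0^+$. This limit step is correct---the implicit relation you write does force $w_\sigma(r)\to\bar\eta_0+\delta$ pointwise---and yields $u^*\le\max\{u(o),\bar\eta_0\}<\infty$. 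One minor correction: the handling of cut-locus points is not in Theorem~\ref{teo_CSP_nuovo} (which assumes a pole); the relevant device is Calabi's trick, used in the proof of Theorem~\ref{teo_SMP} and again in the paper's own proof of Theorem~\ref{teo_SMP_SL}.

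There is, however, a real gap in the final step. If $u^*>\bar\eta_0$ then indeed $u^*=u(o)$ is attained and $f(u^*)>0$ automatically (since $f>0$ on $(\bar\eta_0,\infty)$), and your FMP-plus-connectedness argument forces $u$ constant; this shows $u^*\le\bar\eta_0$. But the conclusion $f(u^*)\le 0$ does \emph{not} follow from $u^*\le\bar\eta_0$: the hypotheses \eqref{assumptions_SL_necessity} say nothing about the sign of $f$ on $(-\infty,\bar\eta_0]$, so $f(u^*)>0$ with $u^*\le\bar\eta_0$ is possible, and in that case your bound no longer guarantees that $u^*$ is attained, so the FMP step does not apply. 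The paper closes this gap by a modification: assuming $f(u^*)>0$, it picks $\eta<u^*$ with $f>0$ on $[\eta,u^*]$, builds $\bar f\le f$ with $\bar f(\eta)=0$, $\bar f$ positive and $C$-increasing on $(\eta,\infty)$ and $\bar f=f$ near infinity (so \eqref{KO} persists), replaces $u$ by $\bar u=\max\{u,\eta\}$ via the pasting lemma, and reruns the barrier argument with $\bar\eta_0$ replaced by $\eta$. Your limit method would work equally well after this modification (it would give $\bar u^*\le\eta$, contradicting $\bar u^*=u^*>\eta$), but the modification step itself is missing from your outline.
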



\begin{proof}
We first prove that $u$ is bounded above. By contradiction, let $u^* = \infty$ and consider a geodesic ball $B_{r_0}$ centered at $o$, with $r_0>0$. Fix $0< \delta < \lambda$ and $\bar x \not \in B_{r_0}$ enjoying 
\begin{equation}\label{bni}
u_0^* = \max_{B_{r_0}} u \le \bar \eta_0 + \delta, \qquad u(\bar x) > \bar \eta_0 + \lambda,
\end{equation}
and choose $r_1$ in such a way that $\bar x \in B_{r_1}$. From \eqref{ricciassu_SL} and the Laplacian comparison theorem, 
\begin{equation}\label{deltar_SL}
\Delta r \le A r^{\alpha/2} \qquad \text{weakly on } \, M \backslash B_{\frac{r_0}{2}},
\end{equation}
for some constant $A=A(r_0,\alpha,\kappa,m)>0$. Applying Proposition \ref{prop3.7} with 
$$
\theta(r) = Ar^{\alpha/2}, \qquad \bar \beta(r) = (1+r)^{-1}, \qquad \beta(r) = C_1(1+r)^{-\mu}, \qquad \eps = \frac{1}{2C}
$$
we deduce the existence of $w$ satisfying \eqref{eq3.9} (note that \eqref{mualphachi12} guarantees \eqref{ipo_theta_SL}). Setting $\bar w(x) = w(r(x))$ and taking into account $w'>0$, \eqref{deltar_SL} and $b \ge \beta(r)$, $\bar w$ solves 
\begin{equation}\label{eq_perbarw}
\left\{\begin{array}{l}
\Delta_\varphi \bar w \leq \frac{1}{2C} b(x) f(\bar w) l(w'(r)) \qquad \text{on }\, B_{R_1} \backslash B_{r_0}\\[0.3cm]
\disp w'(r)>0 \ \ \text{ on } \, B_{R_1}\backslash B_{r_0}, \qquad \bar w \ra  +\infty \ \ \text{ as } x \ra \partial B_{R_1} \\[0.3cm]
\disp \bar \eta_0 + \delta \le \bar w \le \bar \eta_0 + \lambda \qquad \text{on } \, B_{r_1}\backslash B_{r_0}.
\end{array}\right.
\end{equation}
We compare $u$ and $\bar w$ on $B_{R_1} \backslash B_{r_0}$. By construction, $u \le \bar w$ on $\partial B_{r_0}$ and $u-\bar w \ra -\infty$ approaching $\partial B_{R_1}$. On the other hand, 
$$
u(\bar x) > \bar \eta_0 + \lambda \ge \bar w(\bar x),
$$
and thus $c = \max\{ u-\bar w\}$ is positive and attained on some compact set $\Gamma = \{ u-\bar w = c\} \Subset B_{R_1} \backslash B_{r_0}$. For $\eta \in (0,c)$, consider $U_\eta = \{u-\bar w > \eta\} \Subset B_{R_1} \backslash B_{r_0}$. If $x \in \Gamma \backslash \cut(o)$, then $\bar w \in C^1$ around $x$ and therefore
$$
\nabla u(x) = \nabla \bar w(x) = w'(r(x)).
$$
The same relation also holds if $x \in \cut(o)$ by using Calabi's trick (see the proof of Theorem \ref{teo_SMP}). Since both $\nabla u$ and $w'(r)$ are continuous, for $\eta$ close enough to $c$ the inequality $|\nabla u| \ge w'(r)/2$ holds on $U_\eta$. From
$$
\Delta_\varphi u \ge b(x) f(u) l(|\nabla u|) \ge \frac{1}{2C} b(x) f(\bar w) l\big(w'(r)\big) \ge \Delta_\varphi \bar w = \Delta_\varphi(\bar w + \eta),  
$$
we deduce by comparison that $u \le \bar w + \eta$ on $U_\eta$, contradiction.\par
%
It remains to prove that $f(u^*) \le 0$. If $f(u^*) > K>0$, then choose an upper level set $\Omega_\eta$ with $\eta < u^*$ in such a way that $f(u)> K$ on $\Omega_\eta$, and a continuous function $\bar f \le f$ with 
$$
\begin{array}{l}
\disp \bar f(\eta)=0, \qquad \bar f \ \text{  is positive and $C$-increasing on $(\eta, \infty)$}, \\[0.2cm]
\bar f \le f \quad \text{on } \, (\eta, u^*), \qquad \bar f = f \quad \text{on } \, \big( \max\{\bar \eta_0,u^*\}, \infty).
\end{array}
$$
Then, $\bar u = \max\{u, \eta\}$ satisfies
$$
\Delta_\varphi \bar u \ge b(x)\bar f(\bar u) l(|\nabla\bar u|) \qquad \text{on } \, M
$$
(observe that $\bar f(\eta)l(0)=0$). Let $\bar u_0^* = \sup_{B_{r_0}} \bar u$, and note that $\bar u_0^* < \bar u^*$ in view of the finite maximum principle applied to $\bar u^*-\bar u$. Fix $\lambda>0$ such that $\bar u_0^* + 2\lambda < \bar u^* - 2\lambda$, set $\delta = \lambda/2$, let $\bar x$ satisfying $\bar u(\bar x) > \bar u^*-\lambda$ and choose $r_1$ big enough that $\bar x \in B_{r_1}$. With our choices, $\bar x$ belongs to the relatively compact set $\{\bar u> \bar w\}$ and, consequently, the desired contradiction is achieved by proceeding verbatim as in the case $u^* = \infty$, with $\bar u$ replacing $u$, $\bar u_0^*$ replacing $u_0^*$ and with the same function $w$.
\end{proof}
Once the bound $u^*< \infty$ is shown, an alternative way to conclude $f(u^*) \le 0$ in Theorem \ref{teo_SMP_SL} is to use Theorem \ref{teo_WMP_intro} to ensure the validity of $\wmp$. However, we should require the extra condition \eqref{assu_WMP_intro_growth}, that is avoided in the above argument. Nevertheless, this second approach is needed to deal with the relevant, borderline case when either $\chi_1$ or $\chi_2$ vanishes, that is considered in the next
\begin{theorem}\label{teo_SMP_SL_border}
In the assumptions of Theorem \ref{teo_SMP_SL}, suppose that \eqref{ipo_chi1chi2_SL} is replaced by
$$
\chi_1 \ge 0, \qquad \chi_2 \ge 0, \qquad \chi_1\chi_2 =0
$$
and the validity of 
\begin{equation}\label{allafine_SL}
\limsup_{t \ra + \infty} f(t) = +\infty. 
\end{equation}
Assume that 
\begin{equation}\label{ipo_SL_border_varphi}
\left\{\begin{array}{ll}
\disp l(t) \ge C_1 \frac{\varphi(t)}{t^{\chi_2}} & \qquad \text{on $\R^+$, for some $C_1>0$}, \\[0.3cm]
\disp \varphi(t) \le C_2 t^{p-1} & \qquad \text{on $[0,1]$, for some } \, C_2>0, \ p>1, \\[0.3cm]
\disp \varphi(t) \le \bar C_2 t^{\bar p-1} & \qquad \text{on $[1,\infty)$, for some } \, \bar C_2>0, \ \bar p>1,
\end{array}\right.
\end{equation}
and that, besides \eqref{mualphachi12}, one of the following conditions is met:
\begin{equation}\label{mualphachi12_border}
\left\{ \begin{array}{ll}
\alpha \ge -2, \quad \chi_2>0, \quad \text{or} \\[0.2cm]
\alpha \ge -2, \quad \chi_2 = 0, \quad \mu < - \frac{\alpha}{2}, \quad \text{or } \\[0.2cm]
\alpha > -2, \quad \chi_2 = 0, \quad \mu = - \frac{\alpha}{2}, \quad V_\infty = 0, \quad \text{or } \\[0.2cm]
\alpha = -2, \quad \chi_2 = 0, \quad \mu = - \frac{\alpha}{2}, \quad V_\infty \le p,
\end{array}\right.
\end{equation}
where
$$
V_\infty = \left\{ \begin{array}{ll} 
\disp\liminf_{r \ra \infty} \frac{\log \vol(B_r)}{r^{1+ \alpha/2}} & \quad \text{if } \, \alpha \ge -2, \\[0.4cm]
\disp\liminf_{r \ra \infty} \frac{\log \vol(B_r)}{\log r} & \quad \text{if } \, \alpha = -2.
\end{array}\right.
$$
Then, if \eqref{KO} holds, any non-constant solution $u \in C^1(M)$ of $(P_\ge)$ on $M$ is bounded above and $f(u^*) \le 0$. In particular, if $f>0$ on $\R^+$, $\slio$ holds for $C^1$ solutions.
\end{theorem}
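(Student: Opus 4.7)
The plan is to split the argument into two logically independent steps that together recover the conclusion of Theorem~\ref{teo_SMP_SL} in the borderline regime $\chi_1\chi_2=0$. Step~1 (boundedness $u^*<\infty$): assume by contradiction that $u^*=\infty$, and reproduce the first half of the proof of Theorem~\ref{teo_SMP_SL} with Proposition~\ref{prop3.7} replaced by Remark~\ref{rem_useful_SL}. Hypothesis \eqref{allafine_SL} is exactly what activates that remark when $\chi_1\chi_2=0$, yielding a divergent sequence $\{\delta_j\}\to\infty$ such that, for every $\delta\in\{\delta_j\}$ and every $\lambda>\delta$, a supersolution $w$ of \eqref{eq3.9} exists. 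Since $u^*=\infty$, fix a reference ball $B_{r_0}$, choose $j$ so large that $\bar\eta_0+\delta_j\ge \max_{\overline{B}_{r_0}}u$, then select any $\lambda>\delta_j$, a point $\bar x$ with $u(\bar x)>\bar\eta_0+\lambda$, and $r_1$ with $\bar x\in B_{r_1}$. With the choices $\beta(r)=C_1(1+r)^{-\mu}$, $\bar\beta(r)=(1+r)^{-1}$, $\theta(r)=Ar^{\alpha/2}$ (the constant $A>0$ being provided by \eqref{ricciassu_SL} and the Laplacian comparison theorem) and $\eps=1/(2C)$, condition \eqref{mualphachi12} makes \eqref{ipo_theta_SL} hold, so Remark~\ref{rem_useful_SL} produces a radial $\bar w(x)=w(r(x))$ which blows up on $\partial B_{R_1}$ and satisfies $\Delta_\varphi\bar w\le\tfrac{1}{2C}b(x)f(\bar w)l(w'(r))$ on $B_{R_1}\setminus B_{r_0}$. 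The comparison step (including Calabi's trick at $\cut(o)$) is then verbatim the one in Theorem~\ref{teo_SMP_SL} and yields a contradiction.

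Step~2 ($f(u^*)\le 0$ once $u^*<\infty$): in Theorem~\ref{teo_SMP_SL} this was obtained by a second use of the barrier, applied to the auxiliary $\bar u=\max\{u,\eta\}$; that route is now unavailable because $\delta$ cannot be taken smaller than $\delta_1$. Instead, the growth hypotheses \eqref{ipo_SL_border_varphi} coincide with \eqref{assu_WMP_intro_growth} under the identification $\chi:=\chi_2$, and the dichotomy \eqref{mualphachi12_border} coincides with \eqref{condi_volumeWMP_intro} once one converts \eqref{ricciassu_SL} into a volume bound via Bishop--Gromov comparison. Therefore Theorem~\ref{teo_WMP_intro} applies and $(bl)^{-1}\Delta_\varphi$ satisfies $\wmp$. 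Since $u$ is now non-constant and bounded above, Proposition~\ref{prop_equivalence_2}\,(iii) gives $f(u^*)\le 0$ directly. The final statement on $\slio$ for $f>0$ on $\R^+$ is then immediate: any non-constant $u\ge 0$ would have $u^*>0$ with $f(u^*)>0$, a contradiction.

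The main obstacle is the fact that in the borderline range $\chi_1\chi_2=0$ the Phr\'agmen--Lindel\"off supersolution of Proposition~\ref{prop3.7} is only available starting from the \emph{discrete} set of heights $\{\bar\eta_0+\delta_j\}$, which is incompatible with the rescaling $\bar u=\max\{u,\eta\}$ used in Theorem~\ref{teo_SMP_SL} to reduce the bounded case to the unbounded one. The decisive observation is that the additional growth bound \eqref{ipo_SL_border_varphi} and the sharp volume threshold \eqref{mualphachi12_border} are precisely the hypotheses of Theorem~\ref{teo_WMP_intro}, so the loss of flexibility in the barrier is exactly compensated by deploying $\wmp$ in place of a second barrier argument. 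The remaining consistency check is that the combined restriction $\mu\le\min\{\chi_1+1,\chi_2-\alpha/2\}$ of \eqref{mualphachi12} splits cleanly between the two steps: the inequality $\mu\le\chi_1+1$ is used only to verify the first line of \eqref{ipo_theta_SL} in Step~1, while $\mu\le\chi_2-\alpha/2$ is precisely the condition required by Theorem~\ref{teo_WMP_intro} in Step~2.
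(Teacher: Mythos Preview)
Your proposal is correct and follows essentially the same approach as the paper: first use Remark~\ref{rem_useful_SL} (activated by \eqref{allafine_SL}) to obtain the blowing-up barrier for a suitably large $\delta\in\{\delta_j\}$ and run the comparison argument of Theorem~\ref{teo_SMP_SL} to conclude $u^*<\infty$, then invoke Theorem~\ref{teo_WMP_intro} with $\chi=\chi_2$ and Proposition~\ref{prop_equivalence_2} to obtain $f(u^*)\le 0$. Your write-up is in fact more explicit than the paper's about \emph{why} the second barrier argument of Theorem~\ref{teo_SMP_SL} is unavailable here and must be replaced by $\wmp$, and about how the two halves of \eqref{mualphachi12} feed separately into the two steps.
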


\begin{proof}
Because of Remark \ref{rem_useful_SL}, for $\delta$ large and suitably chosen we can still produce a solution $w$ of \eqref{eq3.9}, that gives rise to $\bar w$ solving \eqref{eq_perbarw}. Following the proof of Theorem \ref{teo_SMP_SL} we obtain $u^* < \infty$. To conclude, we observe that we are in the position to apply Theorem \ref{teo_WMP_intro} with the choice $\chi = \chi_2$: in this respect, note that the first requirement in \eqref{assum_WMP_intro} corresponds to the first in \eqref{ipo_SL_border_varphi}. The conclusion $f(u^*) \le 0$ follows from the validity of $\wmp$ and Proposition \ref{prop_equivalence}.
\end{proof}

\begin{remark}\label{rem_barkappa}
\emph{By the Bishop-Gromov theorem (Theorem \ref{teo_compavol_appe} and the subsequent remarks in the Appendix), \eqref{ricciassu_SL} implies $V_\infty < \infty$ and in particular, if $\alpha = -2$, 
\begin{equation}\label{def_barkappa}
V_\infty \le (m-1) \bar \kappa + 1 \qquad \text{with} \qquad \bar \kappa = \frac{1 + \sqrt{1+4\kappa^2}}{2}. 
\end{equation}
Therefore, condition $V_\infty \le p$ in the last of \eqref{mualphachi12_border} is implied by $\bar \kappa \le \frac{p-1}{m-1}$.
}
\end{remark}

\begin{remark}
\emph{One could alternatively use Theorem \ref{teo_SMP_intro} to conclude $f(u^*) \le 0$. Doing so, on the one hand \eqref{ipo_SL_border_varphi} would weaken to \eqref{assu_SMP_intro_growth}, requiring $\varphi,l$ only on $[0,1]$, but on the other hand the conclusion in the case $(\chi_2 =) \ \chi =0$ in \eqref{condi_volumericci_intro} is only possible under the Euclidean type behaviour $\alpha = -2$. 
}
\end{remark}

We conclude this section with some comments on Theorem \ref{teo_SMP_SL}. We begin with the following corollary for the $p$-Laplace operator, a slight improvement of \cite[Cor. A1]{maririgolisetti}.

\begin{corollary}\label{teo_SMP_SL_plap}
Let $M^m$ be complete and satisfying 
$$
\Ricc (\nabla r, \nabla r) \ge -(m-1)\kappa^2\big( 1+r^2\big)^{\alpha/2} \qquad \text{on } \, \mathcal{D}_o,
$$
for some $\kappa \ge 0$, $\alpha \ge -2$ and some origin $o$. Fix $p>1$ and $\chi \in (0, p-1]$. Consider $0< b \in C(M)$ and $f \in C(\R)$ such that
$$
\begin{array}{l}
\disp b(x) \ge C_1\big(1+ r(x)\big)^{-\mu} \qquad \text{on } \, M, \\[0.3cm]
f(0)=0, \qquad f>0 \ \text{ and $C$-increasing on } \, \R^+.
\end{array}
$$
for some constants $C,C_1>0$ and 
$$
\mu \le \chi - \frac{\alpha}{2}.
$$
If the Keller-Osserman condition
$$
F(t)^{-\frac{1}{\chi+1}} \in L^1(\infty)
$$
is met, then $\slio$ holds for $C^1$ solutions of 
$$
\Delta_p u \ge b(x)f(u)|\nabla u|^{p-1-\chi}.
$$
The same conclusion holds if $\chi=0$, provided that $M$ satisfies one of the next further conditions: either 
$$
\begin{array}{ll}
\alpha > -2, & \qquad \disp \liminf_{r \ra \infty} \frac{\log\vol(B_r)}{r^{1+\alpha/2}} = 0, \qquad \text{or } \\[0.4cm]
\alpha = -2, & \disp \qquad \bar \kappa \le \frac{p-1}{m-1},  
\end{array}
$$
with $\bar \kappa$ as in \eqref{def_barkappa}.
\end{corollary}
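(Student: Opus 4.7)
The result is essentially a verification that the power-like choices $\varphi(t)=t^{p-1}$ and $l(t)=t^{p-1-\chi}$ fit into the framework of Theorems \ref{teo_SMP_SL} and \ref{teo_SMP_SL_border}, with the role of the abstract exponents played by $\chi_1=\chi_2=\chi$. The plan is first to check the structural conditions $(\chi_1)$ and $(\chi_2)$: direct computation gives
\[
\frac{\varphi'(t)}{l(t)}\,t^{1-\chi_1}=(p-1)t^{\chi-\chi_1},\qquad \frac{\varphi(t)}{l(t)}\,t^{-\chi_2}=t^{\chi-\chi_2},
\]
both of which are non-decreasing (hence $C$-increasing with $C=1$) as soon as $\chi_1\le\chi$ and $\chi_2\le\chi$. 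The optimal choice is $\chi_1=\chi_2=\chi$. Moreover $K(t)=\tfrac{p-1}{\chi+1}t^{\chi+1}$, so \eqref{phiel_solozero_SL} holds and the Keller–Osserman integrability $K^{-1}\!\circ F\notin L^1(\infty)$ is the condition $F^{-1/(\chi+1)}\in L^1(\infty)$ stated in the corollary.

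For the main case $\chi>0$ I would just apply Theorem \ref{teo_SMP_SL} with the above choices. Since $\alpha\ge-2$ gives $-\alpha/2\le 1$, one has
\[
\min\{\chi_1+1,\,\chi_2-\tfrac{\alpha}{2}\}=\min\{\chi+1,\,\chi-\tfrac{\alpha}{2}\}=\chi-\tfrac{\alpha}{2},
\]
so the hypothesis $\mu\le\chi-\alpha/2$ is exactly \eqref{mualphachi12}. Theorem \ref{teo_SMP_SL} then yields that any non-constant $C^1$ solution of $(P_\ge)$ is bounded above with $f(u^*)\le0$; but $f(0)=0$ and $f>0$ on $\R^+$ force $u^*\le0$, while $u\ge0$ gives $u\equiv0$, contradicting non-constancy. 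This is $\slio$.

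The borderline case $\chi=0$ requires Theorem \ref{teo_SMP_SL_border} in place of Theorem \ref{teo_SMP_SL}, and the hard(er) part is matching the trichotomy in \eqref{mualphachi12_border}. With $\chi_2=0$, condition \eqref{ipo_SL_border_varphi} is satisfied taking $\bar p=p$ (and any $\bar C_2=C_2=1$). The Keller–Osserman condition $F^{-1}\in L^1(\infty)$ together with $C$-monotonicity of $f$ forces $\limsup_{t\to\infty}f(t)=+\infty$: indeed, if $f$ were bounded on $\R^+$ then $F(t)=O(t)$ and $F^{-1}\notin L^1(\infty)$. It remains to check \eqref{mualphachi12_border} given $\mu\le-\alpha/2$. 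If the inequality is strict we are in the first alternative. If $\mu=-\alpha/2$ with $\alpha>-2$, the stated volume hypothesis reads $V_\infty=\liminf_{r\to\infty}\log\vol(B_r)/r^{1+\alpha/2}=0$, which is the third alternative. Finally, if $\mu=-\alpha/2=1$ so that $\alpha=-2$, the assumption $\bar\kappa\le(p-1)/(m-1)$ implies, via the Bishop–Gromov comparison (Remark \ref{rem_barkappa}), $V_\infty\le(m-1)\bar\kappa+1\le p$, which is the fourth alternative. Theorem \ref{teo_SMP_SL_border} then produces $u^*<\infty$ and $f(u^*)\le0$, and the same short argument as above yields $\slio$.

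The entire proof is a hypothesis-checking exercise; the only substantive step is the reduction of the $\chi=0$, $\alpha=-2$ borderline curvature case to the volume bound $V_\infty\le p$, which is handled by Bishop–Gromov as noted. No new analytic input is needed beyond the two theorems invoked.
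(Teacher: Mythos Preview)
Your proposal is correct and follows exactly the approach of the paper, which simply states that the corollary is ``a direct application of Theorems \ref{teo_SMP_SL}, \ref{teo_SMP_SL_border} and Remark \ref{rem_barkappa}'' together with the observation that \eqref{allafine_SL} follows from the Keller--Osserman condition when $\chi=0$. Your write-up spells out the hypothesis verification in welcome detail; one tiny slip is that when $\mu<-\alpha/2$ you land in the \emph{second} alternative of \eqref{mualphachi12_border} (the first requires $\chi_2>0$), but this does not affect the argument.
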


\begin{proof}
It is a direct application of Theorems \ref{teo_SMP_SL}, \ref{teo_SMP_SL_border} and Remark \ref{rem_barkappa}. If $\chi=0$, note that \eqref{allafine_SL} follows from $F^{-\frac{1}{\chi+1}} \in L^1(\infty)$. 
\end{proof}

Moving to consider the mean curvature operator, a substantial problem arises: in view of \eqref{assu_perSL}, $l$ is bounded from below in a neighbourhood of infinity, and thus $K_\infty < \infty$ and \eqref{KO_infinity_intro} is meaningless. To overcome the problem and be able to include inequalities of the type  
\begin{equation}\label{eq_MC_old}
\diver\left( \frac{\nabla u}{\sqrt{1+|\nabla u|^2}} \right) \ge b(x)f(u)|\nabla u|^q,
\end{equation}
taking into account that the mean curvature operator satisfies 
\begin{equation}\label{varphi_allamean}
t \varphi'(t) \le C \varphi(t) \qquad \text{on $\R^+$}
\end{equation}
for some $C>0$, the authors in \cite[Sect. 4]{maririgolisetti} propose to replace $t\varphi'(t)$ with $\varphi(t)$ in the definition of $K$ in \eqref{def_K}. In this way, the corresponding \eqref{KO_infinity_intro} makes sense for some classes of $C$-increasing $l$. As we shall see in a moment, this seemingly ``rough" replacement allows indeed to obtain a sharp result, but in the course of the proof in \cite{maririgolisetti} the authors lose optimality in some inequalities, and consequently their main result (Corollary A2 therein) is not sharp. We now describe how to achieve the optimal range of parameters. Clearly, the bulk is to get an analogue of Proposition \ref{prop3.7} for mean curvature type operators and \emph{not requiring that $l$ be $C$-increasing.} Note that the $C$-monotonicity of $l$ is essential to obtain inequality \eqref{eq3.15_inte}. We restrict to consider the relevant case of operators satisfying \eqref{varphi_allamean} and gradient terms $l$ of the type
$$
l(t) = \frac{\varphi(t)}{t^\chi},
$$
for $\chi >0$ small enough to make $l$ continuous at $t=0$. Observe that $\varphi$ may vanish both at $t=0$ and at infinity. Following the idea in \cite{maririgolisetti}, we set
$$
K(t) = \int_0^t \frac{\varphi(s)}{l(s)} \asymp t^{\chi+1}
$$
and the Keller-Osserman condition becomes $F^{- \frac{1}{\chi+1}} \in L^1(\infty)$, with $F$ as in \eqref{def_F_SL}.

\begin{proposition}\label{prop3.7_mc}
Let $\varphi$ satisfy \eqref{assumptions_SL_necessity} and \eqref{varphi_allamean}. Fix $\chi > 0$, $f \in C(\R)$ satisfying
$$
f> 0 \ \ \  \text{ and $C$-increasing on $(\bar \eta_0, \infty)$, for some $\bar \eta_0>0$,}
$$
and $\beta, \bar \beta$ satisfying $(\beta \bar\beta)$. Let $\theta \in C([r_0,\infty))$ with the property that 
\begin{equation}\label{ipo_theta_SL_mc}
\disp \frac{\max\{ \bar \beta(r),\theta(r)\} \cdot \bar \beta(r)^{\chi}}{\beta(r)} \in L^\infty([r_0, \infty)).
\end{equation}
If the Keller-Osserman condition
$$
F^{-\frac{1}{\chi+1}} \in L^1(\infty)
$$
holds, then for each $\eps >0$, $0 < \delta < \lambda$ and $r_1>r_0$, there exist $R_1 > r_1$ and a $C^1$
function $w : [r_0, R_1)\to [\bar \eta_0 + \delta, \infty)$ solving
\begin{equation}\label{eq3.9_mc}
\left\{\begin{array}{l}
\disp \big( \varphi(w')\big)' + \theta(r) \varphi(w') \leq \eps \beta(r) f(w) \frac{\varphi(w')}{[w']^\chi} \qquad \text{on }\, [r_0, R_1)\\[0.3cm]
\disp w'>0 \ \ \text{ on } \, [r_0,R_ 1), \qquad w(r) \ra  +\infty \ \ \text{ as } r\to R_1^- \\[0.3cm]
\disp \bar \eta_0 + \delta \le w \le \bar \eta_0 + \lambda \qquad \text{on } \, [r_0,r_1].
\end{array}\right.
\end{equation}
\end{proposition}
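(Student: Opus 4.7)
\medskip
\noindent\textbf{Proof plan.} The strategy closely follows Proposition \ref{prop3.7}, with a crucial modification: since $l$ is not assumed to be $C$-increasing, I will replace the monotonicity arguments for $l$ by direct computations that exploit the explicit relation $l(t)=\varphi(t)/t^\chi$, so that $K'(t)=\varphi(t)/l(t)=t^\chi$ and $K(t)=t^{\chi+1}/(\chi+1)$. First, given $\sigma\in(0,1]$ to be fixed later, I will set $C_\sigma=\int_{\bar\eta_0+\delta}^\infty ds/K^{-1}(\sigma F(s))$, which is finite by the Keller-Osserman condition, and choose $R_\sigma>r_0$ such that $\int_{r_0}^{R_\sigma}\bar\beta=C_\sigma$ (possible because $\bar\beta\notin L^1(\infty)$); then I define $w:[r_0,R_\sigma)\to[\bar\eta_0+\delta,\infty)$ implicitly by
\[
\int_r^{R_\sigma}\bar\beta(s)\,ds=\int_{w(r)}^\infty \frac{ds}{K^{-1}(\sigma F(s))}.
\]
Differentiating yields $w'=\bar\beta\, K^{-1}(\sigma F(w))>0$ and the ODE $(w'/\bar\beta)^\chi(w'/\bar\beta)'=\sigma f(w)w'$. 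Making $\sigma$ small will also guarantee, via the same implicit definition applied at $r=r_1$, that $w(r_1)\le \bar\eta_0+\lambda$.

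\smallskip
The second step produces the differential inequality. Using $\bar\beta'\le 0$ to get $(w'/\bar\beta)'\ge w''/\bar\beta$, the ODE yields $(w')^{\chi-1}w''\le \sigma\bar\beta^{\chi+1}f(w)$. Multiplying by $l(w')$ and using the structural bound \eqref{varphi_allamean}, which gives $\varphi'(w')\le C\varphi(w')/w'=Cl(w')(w')^{\chi-1}$, I obtain
\[
(\varphi(w'))'\le C\sigma\,\bar\beta^{\chi+1}f(w)l(w')\le C\sigma\left\|\tfrac{\bar\beta^{\chi+1}}{\beta}\right\|_\infty \beta(r)f(w)l(w').
\]
The main obstacle is controlling the lower-order term $\theta\varphi(w')=\theta(w')^\chi l(w')$: without $C$-monotonicity of $l$, one cannot repeat the Schwarz-inequality trick \eqref{eq3.15_inte} used in Proposition \ref{prop3.7}. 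The new observation is that, dividing the ODE by $w'/\bar\beta$, one gets $(w'/\bar\beta)^{\chi-1}(w'/\bar\beta)'=\sigma f(w)\bar\beta$, so that integrating on $[r_0,r]$ and exploiting the $C$-monotonicity of $f\circ w$ (which holds since $w'>0$ and $f$ is $C$-increasing) gives
\[
(w'/\bar\beta)^\chi(r)\le [(\chi+1)\sigma F(\bar\eta_0+\delta)]^{\chi/(\chi+1)}+C\chi\sigma f(w)\int_{r_0}^r\bar\beta.
\]

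\smallskip
Multiplying through by $\bar\beta^\chi l(w')$, using the bound $\int_{r_0}^r\bar\beta\le C_\sigma$, the lower bound $f(w)\ge f(\bar\eta_0+\delta)/C$ (again from the $C$-increasing property applied at $w\ge\bar\eta_0+\delta$), and the assumption $\|\theta\bar\beta^\chi/\beta\|_\infty<\infty$ from \eqref{ipo_theta_SL_mc}, I will deduce
\[
\theta\varphi(w')\le \left\|\tfrac{\theta\bar\beta^\chi}{\beta}\right\|_\infty\!\!\left[\tfrac{C[(\chi+1)\sigma F(\bar\eta_0+\delta)]^{\chi/(\chi+1)}}{f(\bar\eta_0+\delta)}+C\chi\sigma C_\sigma\right]\beta(r)f(w)l(w').
\]
Combining the two estimates gives $(\varphi(w'))'+\theta\varphi(w')\le\Lambda(\sigma)\beta f(w)l(w')$, where $\Lambda(\sigma)$ is a sum of three terms each of which is infinitesimal as $\sigma\to 0^+$: the first is $O(\sigma)$, the second is $O(\sigma^{\chi/(\chi+1)})$, and the third, $\sigma C_\sigma$, equals a multiple of $\sigma^{\chi/(\chi+1)}$ by explicit computation with $K^{-1}(\tau)=[(\chi+1)\tau]^{1/(\chi+1)}$. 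Choosing $\sigma$ small enough to make $\Lambda(\sigma)<\eps$, and simultaneously small enough to enforce $w(r_1)\le\bar\eta_0+\lambda$, yields the function $w$ with all the required properties on $[r_0,R_1)$ with $R_1:=R_\sigma$. The key technical novelty is the direct integration of the ODE in the form $(w'/\bar\beta)^{\chi-1}(w'/\bar\beta)'=\sigma f(w)\bar\beta$, which bypasses the need for $C$-monotonicity of $l$ by exploiting the power-like form of $\varphi/l$.
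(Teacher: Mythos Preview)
Your proposal is correct and follows essentially the same approach as the paper's own proof. Both construct $w$ implicitly via the same integral identity, exploit $\bar\beta'\le 0$ together with \eqref{varphi_allamean} to bound $(\varphi(w'))'$, and---crucially---divide the governing ODE by $w'/\bar\beta$ to obtain $(w'/\bar\beta)^{\chi-1}(w'/\bar\beta)'=\sigma f(w)\bar\beta$, then integrate using only the $C$-monotonicity of $f$ (not of $l$) to control $(w')^\chi=\varphi(w')/l(w')$; the paper presents the same computations with $[\sigma F]^{-1/(\chi+1)}$ written out explicitly rather than as $K^{-1}(\sigma F)$, and bounds $\int_{r_0}^r\bar\beta$ by $C_\sigma$ in the identical way.
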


\begin{proof}
We proceed as in Proposition \eqref{prop3.7}, so we skip some of the details and just concentrate on the main differences. For $\sigma\in (0, 1]$ to be specified later, set
\begin{equation}\label{eq3.11_mc}
C_\sigma = \int_{\bar \eta_0+\delta}^{\infty} [\sigma F(s)]^{-\frac{1}{\chi+1}}\di s,
\end{equation}
and since $\bar \beta \not\in
L^{1}(\infty)$, pick $R_\sigma> r_0$ such that
\begin{equation*}
C_\sigma = \int_{r_0}^{R_\sigma} \bar \beta(s)\di s.
\end{equation*}
We can choose $\sigma>0$ small enough that $R_\sigma
>r_1$. We let $w : [r_0,R_\sigma) \ra [\bar \eta_0+ \delta, \infty)$  be
implicitly defined by the equation
\begin{equation}\label{eq3.12_mc}
\int_{r}^{R_\sigma} \bar \beta(s) \di s = 
\int_{w(r)}^\infty [\sigma F(s)]^{-\frac{1}{\chi+1}}\di s.
\end{equation}
Differentiating and rearranging, 
\begin{equation*}
\sigma F(w) = [w'/\bar \beta]^{\chi+1}.
\end{equation*}
Set $l(t) = \varphi(t)/t^\chi$. A second differentiation gives
\begin{equation}\label{novam}
\begin{array}{lcl}
\disp \sigma f(w)w' & = & \disp (\chi+1)(w'/\bar\beta)^\chi (w'/\bar \beta)' = (\chi+1)\frac{\varphi(w')}{l(w')} \bar \beta^{-\chi} (w'/\bar \beta)' \\[0.3cm]
& \ge & \disp (\chi+1)\frac{\varphi(w')}{l(w')} w'' \bar \beta^{-\chi-1},
\end{array}
\end{equation}
where we used that $w'/\beta$ is increasing by the first equality in \eqref{novam}, and $\bar \beta' \le 0$ by $(\beta \bar\beta)$. We next use \eqref{varphi_allamean} and simplify  to deduce
\begin{equation}\label{bonit_mc}
\varphi'(w') w'' \le \left\{ c_1 \sigma \frac{\bar \beta^{\chi+1}}{\beta}\right\} \beta f(w)l(w'),
\end{equation}
for some constant $c_1>0$ independent of $\sigma$. On the other hand, from the first equality in \eqref{novam} we deduce
$$
\sigma f(w)\bar \beta = (\chi+1)(w'/\bar\beta)^{\chi-1}(w'/\bar \beta)',
$$
thus integrating on $[r_0,r]$ and using the $C$-monotonicity of $f$ we get
$$
\begin{array}{lcl}
\disp \frac{\chi+1}{\chi} (w'/\bar\beta)^\chi & = & \disp \frac{\chi+1}{\chi} (w'/\bar\beta)(r_0)^\chi + \sigma \int_{r_0}^r f(w) \bar \beta \\[0.4cm]
& \le & \disp\frac{\chi+1}{\chi} (w'/\bar\beta)(r_0)^\chi + C\sigma f(w(r)) \int_{r_0}^r \bar \beta.  
\end{array}
$$
Therefore, from $\bar \beta \le 1$ (we can always assumed it, up to rescaling) we get
$$
\frac{\varphi(w')}{l(w')} = [w']^\chi \le \bar \beta^\chi (w'/\bar\beta)(r_0)^\chi + c_2 \sigma \bar \beta^\chi f(w) \int_{r_0}^r \bar \beta,   
$$
for some constant $c_2>0$ independent of $\sigma$. Rearranging, by \eqref{eq3.12_mc} and the $C$-monotonicity of $f$ we deduce
$$
\begin{array}{lcl}
\disp \varphi(w') & \le & \disp \frac{\bar \beta^\chi}{\beta} \left\{ C \frac{(w'/\bar\beta)(r_0)^\chi}{f(\bar \eta_0+\delta)} + c_2 \sigma \int_{r_0}^r \bar \beta \right\} \beta f(w)l(w') \\[0.5cm]
 & \le & \disp \frac{\bar \beta^\chi}{\beta} \left\{ C \frac{(w'/\bar\beta)(r_0)^\chi}{f(\bar \eta_0+\delta)} + c_2 \sigma \int_{\bar \eta_0 + \delta}^\infty [\sigma F(s)]^{-\frac{1}{\chi+1}}\di s\right\} \beta f(w)l(w') \\[0.5cm] 
\end{array}
$$
Coupling with \eqref{bonit_mc} and using \eqref{ipo_theta_SL_mc}, we finally infer
$$
\begin{array}{lcl}
\disp \varphi'(w')w'' + \theta(r) \varphi(w') & \le & \disp \left\{ c_1 \sigma \left\| \frac{\bar\beta^{\chi+1}}{\beta}\right\|_{L^\infty([r_0,\infty))} + \left[ C \frac{(w'/\bar\beta)(r_0)^\chi}{f(\bar \eta_0+\delta)} \right. \right. \\[0.5cm]
& & \disp \left. \left. + c_2 \sigma^{\frac{\chi}{\chi+1}}\int_{\bar \eta_0 + \delta}^\infty [F(s)]^{-\frac{1}{\chi+1}}\di s \right] \left\|\frac{\theta \bar\beta^{\chi}}{\beta}\right\|_{L^\infty([r_0,\infty))}\right\} \beta f(w)l(w').
\end{array}
$$
The desired conclusions now follow verbatim from the arguments in Proposition \ref{prop3.7}. 
\end{proof}

\begin{remark}
\emph{We point out that $l(t) = \varphi(t)/t^\chi$ can be singular at $t=0$. Indeed, by construction $w'>0$ on $[r_0, R_1)$ and the continuity of $l$ at $t=0$ is not needed.
}
\end{remark}

Once Proposition \ref{prop3.7_mc} is established, we proceed as in Theorem \ref{teo_SMP_SL} to obtain the following result, that also applies to mean curvature type operators. In particular, a direct application of the next result yields Theorem \ref{teo_SL_ricci_intro} in the Introduction. 

\begin{theorem}\label{teo_SMP_SL_mc}
Let $M^m$ be complete and satisfying 
\begin{equation}\label{ricciassu_SL_mc}
\Ricc (\nabla r, \nabla r) \ge -(m-1)\kappa^2\big( 1+r^2\big)^{\alpha/2} \qquad \text{on } \, \mathcal{D}_o,
\end{equation}
for some $\kappa \ge 0$, $\alpha \ge -2$ and some origin $o$. Let $\varphi,l$ meet \eqref{assumptions_SL_necessity} and 
\begin{equation}\label{allamean}
t \varphi'(t) \le C_2\varphi(t), \qquad l(t) \ge C_1 \frac{\varphi(t)}{t^\chi} \qquad \text{on } \, \R^+,
\end{equation}
for some constants $C_1,C_2>0$ and $\chi >0$. Consider $0< b \in C(M)$ such that
$$
b(x) \ge C_3\big(1+ r(x)\big)^{-\mu} \qquad \text{on } \, M,
$$
for some constants $C_3>0$, $\mu \in \R$ satisfying
\begin{equation}\label{mualphachi12_mc}
\mu \le \chi - \frac{\alpha}{2}.
\end{equation}
Then, under the validity of the Keller-Osserman condition 
\begin{equation}\label{KO_mc}
F^{-\frac{1}{\chi+1}} \in L^1(\infty)
\end{equation}
with $F$ as in \eqref{def_F_SL}, any non-constant solution $u \in C^1(M)$ of $(P_\ge)$ on $M$ is bounded above and $f(u^*) \le 0$. In particular, if $f>0$ on $\R^+$ then $\slio$ holds for $C^1$-solutions.
\end{theorem}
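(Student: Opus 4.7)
The strategy is to mimic the proof of Theorem \ref{teo_SMP_SL}, replacing Proposition \ref{prop3.7} with its mean-curvature analogue, Proposition \ref{prop3.7_mc}. First I would verify that the geometric data fit the hypotheses of Proposition \ref{prop3.7_mc}. By the Laplacian comparison theorem applied to \eqref{ricciassu_SL_mc}, on $M\setminus B_{r_0/2}$ one has $\Delta r \le A(1+r)^{\alpha/2}$ weakly, for some constant $A>0$. I would choose
\[
\theta(r)=A(1+r)^{\alpha/2}, \qquad \bar\beta(r)=(1+r)^{-1}, \qquad \beta(r)=C_3(1+r)^{-\mu}.
\]
Then $\bar\beta>0$ is decreasing and non-integrable, and \eqref{ipo_theta_SL_mc} reads
\[
(1+r)^{-\chi-1+\mu}+A(1+r)^{\alpha/2-\chi+\mu}\in L^\infty([r_0,\infty)),
\]
which is exactly the range $\mu\le\min\{\chi+1,\chi-\alpha/2\}=\chi-\alpha/2$ guaranteed by \eqref{mualphachi12_mc} (using $\alpha\ge-2$).

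Step 1: $u^*<\infty$. Arguing by contradiction, suppose $u^*=\infty$. Fix $r_0>0$, choose $0<\delta<\lambda$ with $\max_{B_{r_0}}u\le\bar\eta_0+\delta$, pick $\bar x$ with $u(\bar x)>\bar\eta_0+\lambda$, and $r_1>r_0$ with $\bar x\in B_{r_1}$. Proposition \ref{prop3.7_mc} applied with $\varepsilon=1/(2C)$ (where $C$ is the $C$-increasing constant of $f$) produces $w\in C^1([r_0,R_1))$ solving \eqref{eq3.9_mc} and blowing up at $R_1$. Setting $\bar w(x)=w(r(x))$, the inequality $\Delta r\le\theta(r)$, $w'>0$, and \eqref{allamean} give, via the formula $\Delta_\varphi\bar w=(\varphi(w'))'+\varphi(w')\Delta r$,
\[
\Delta_\varphi \bar w\le \tfrac{1}{2C}\,b(x)f(\bar w)\,l(w'(r))\qquad\text{weakly on } B_{R_1}\setminus B_{r_0}.
\]
Because $u\le\bar w$ on $\partial B_{r_0}$ and $u-\bar w\to-\infty$ near $\partial B_{R_1}$, while $u(\bar x)-\bar w(\bar x)>0$, the supremum $c:=\max(u-\bar w)$ is positive and attained on a compact set $\Gamma\Subset B_{R_1}\setminus B_{r_0}$. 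At points of $\Gamma$ one has $|\nabla u|=|w'(r)|$ (using Calabi's trick at cut points, as in Theorem \ref{teo_SMP}), and by continuity $|\nabla u|\ge w'(r)/2$ on a sublevel set $U_\eta=\{u-\bar w>\eta\}$ for $\eta$ close to $c$. The $C$-monotonicity of $f$ and the bound on $l$ through $\varphi$ then give $\Delta_\varphi u\ge \Delta_\varphi(\bar w+\eta)$ on $U_\eta$, contradicting the comparison Proposition \ref{prop_comparison} (which applies since $\ess\inf_{U_\eta}(|\nabla u|+|\nabla\bar w|)>0$).

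Step 2: $f(u^*)\le 0$. Knowing $u^*<\infty$, assume by contradiction $f(u^*)\ge 3K>0$. Pick $\eta<u^*$ with $f(t)\ge 2K$ for $t>\eta$, and take a continuous $\bar f\le f$ with $\bar f(\eta)=0$, $\bar f>0$ and $C$-increasing on $(\eta,\infty)$, and $\bar f\equiv f$ on $(\max\{\bar\eta_0,u^*\},\infty)$. The pasting Lemma \ref{lem_pasting} shows that $\bar u=\max\{u,\eta\}$ solves $\Delta_\varphi\bar u\ge b(x)\bar f(\bar u)l(|\nabla\bar u|)$ on $M$; the finite maximum principle (Theorem \ref{teo_FMP2}) applied to $\bar u^*-\bar u$ forces $\sup_{B_{r_0}}\bar u<\bar u^*$. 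Repeating Step 1 verbatim with $(\bar u,\bar f)$ in place of $(u,f)$ produces again a relatively compact set where $\bar u>\bar w+\eta$, and the same comparison contradiction closes the argument. The last assertion follows because under $f>0$ on $\R^+$ with $f(0)=0$, the inequality $f(u^*)\le 0$ forces $u^*\le 0$, whence $u\le 0$; applied to a non-constant non-negative solution this gives $u\equiv 0$, contradicting non-constancy.

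The main obstacle is Step 1, specifically showing that a radial $C^1$ comparison barrier exists in the setting where $l$ need not be $C$-increasing (so the proof of Proposition \ref{prop3.7} does not apply directly) and where $\varphi$ may vanish both at $0$ and at $\infty$; this is precisely the content of Proposition \ref{prop3.7_mc}, whose proof exploits the key structural bound \eqref{varphi_allamean} to absorb the $l$-dependence through the substitution $l=\varphi/t^\chi$. Once that supersolution is in hand, the rest reduces to a careful comparison on sublevel sets of $u-\bar w$, essentially a weighted Phragm\'en–Lindel\"of argument.
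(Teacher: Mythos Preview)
Your proposal is correct and follows the paper's approach: the paper states that once Proposition \ref{prop3.7_mc} is established, one proceeds as in Theorem \ref{teo_SMP_SL}, which is precisely your two-step argument. A few minor imprecisions: the $\varepsilon$ in Proposition \ref{prop3.7_mc} should be chosen to absorb the constant $C_1$ from \eqref{allamean} (e.g.\ $\varepsilon=C_1/(2C)$), the comparison on $U_\eta$ really uses continuity of $t\mapsto\varphi(t)/t^\chi$ (or of $l$) near the positive value $w'(r)$ on $\Gamma$ rather than a ``bound on $l$ through $\varphi$'', and the parenthetical about $\ess\inf(|\nabla u|+|\nabla\bar w|)>0$ is unnecessary since Proposition \ref{prop_comparison} only needs $\varphi$ strictly increasing.
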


\begin{remark}
\emph{The continuity of $l$ at $t=0$ forces an upper bound on $\chi$ by \eqref{allamean}. If we appropriately define solutions of $(P_\ge)$ when $l$ has a singularity, it is likely that the upper bound on $\chi$ be removable or, at least, weakened. We will not pursue this issue here, and leave it to the interested reader. 
}
\end{remark}

\begin{remark}\label{rem_impo_mc}
\emph{The above proof of Proposition \ref{prop3.7_mc} fails if $\chi=0$, and thus, in this borderline case the possible validity of an analogous of Remark \ref{rem_useful_SL} and of Theorem \ref{teo_SMP_SL_mc} is yet to be investigated. 
}
\end{remark}

\subsection{Sharpness}\label{sec_sharpness_SL}

We now conclude this section by discussing the sharpness of Theorem \ref{teo_SMP_SL_mc}. Consider the polynomial case $f(t) = t^\omega$, for some $\omega \ge 0$. Then, \eqref{KO_mc} becomes 
\begin{equation}\label{omc}
\omega > \chi.
\end{equation}
We are going to contradict $\slio$ under the failure of \eqref{omc}, on a suitable manifold and for $\varphi, l, b, \chi,\mu,\alpha$ meeting all of the remaining requirements in Theorem \ref{teo_SMP_SL_mc}. Let $(M,\di s_g^2)$ be a model manifold as in Subsection \ref{sec_counter}, and suppose further that $\varphi' \ge 0$ on $\R^+$. Note that, because of \eqref{esti_lapla_model} and the asymptotic behaviour $\Delta r \sim (m-1)/r$ as $r \ra 0$, 
\begin{equation}\label{laplar_sotto_mc}
\Delta r \ge c(1+r^2)^{\alpha/4} \qquad \text{on } \, M,
\end{equation}
for some constant $c>0$. For $\sigma>1$ define the smooth function $u = w(r) = (1+r^2)^\sigma$. A direct computation using $\alpha \ge -2$ and $\varphi'\ge 0$, $w',w'' \ge 0$ gives
$$
\Delta_\varphi u = \varphi'(w') w'' + \varphi(w') \Delta r \ge c(1+r^2)^{\alpha/4} \varphi(w').
$$
Therefore, $u$ solves
\begin{equation}\label{asd}
\Delta_\varphi u \ge C\big(1+r(x)\big)^{-\mu} u^\omega \frac{\varphi(|\nabla u|)}{|\nabla u|^\chi} \qquad \text{on } \, M,
\end{equation}
for some $C>0$, if and only if
\begin{equation}\label{rel_omegachiecc}
2\sigma (\omega-\chi) \le \frac{\alpha}{2} + \mu-\chi.
\end{equation}
Since the right-hand side is non-positive because of \eqref{mualphachi12_mc}, \eqref{rel_omegachiecc} is always satisfied for some $\sigma$ large enough if and only if 
\begin{equation}\label{bonito_mc}
\left\{ \begin{array}{lll}
\omega< \chi, & \quad \text{for each} & \mu \le \chi - \frac{\alpha}{2}, \quad \text{or } \\[0.3cm]
\omega = \chi & \quad \text{and} & \mu = \chi- \frac{\alpha}{2},
\end{array}\right.
\end{equation}
that proves the sharpness of \eqref{omc}. Also, the last restriction in the second of \eqref{bonito_mc} is optimal: in fact, in Euclidean setting $\alpha = -2$, if $\omega = \chi$ and $\mu < \chi+1$ then entire solutions of \eqref{asd} are constant if they have polynomial growth, see \cite[Thm. 12]{farinaserrin2} and also Example 4 at p. 4402 therein.

\begin{remark}\label{rem_impo_mc_2}
\emph{Differently from Theorem \ref{teo_SMP_SL_mc}, the above counterexample also works if $\chi=0$ and $\omega=0$.
}
\end{remark}

\subsection{Volume growth and $\slio$}
In this section, we study property $\slio$ for solutions $u$ of $(P_\ge)$ when the condition on the Ricci curvature is replaced by a volume growth requirement, in the particular case when $f(t) \asymp t^\omega$ and 
$$
l(t) \asymp \frac{\varphi(t)}{t^\chi}.
$$
In this setting, Theorem \ref{teo_SMP_SL_mc} and the subsequent remarks show that a sharp Keller-Osserman condition to guarantee the boundedness of $u$ and $f(u^*) \le 0$ is \eqref{KO_mc}, that is, $\omega>\chi$. The condition is optimal also for the mean curvature operator. However, a quite interesting phenomenon happens in this case: we begin by commenting on the following Liouville theorem for solutions of $(P_\ge)$, specific to mean curvature type operators and polynomial volume growths, where \emph{no Keller-Osserman condition} is needed on $f$ nor growth requirements are imposed on $u$. The result considers $(P_\ge)$ with a borderline gradient dependence $l(t) \ge C_2 \varphi(t)$ on $\R^+$. Its proof is inspired by the original one due to Tkachev in \cite{tkachev} for $b\equiv 1$, $l \equiv 1$, later extended in \cite{Serrin_4}.



\begin{theorem}\label{teo_tkachev}
Let $M$ be a complete Riemannian manifold, and consider 
$$
\left\{\begin{array}{l}
\varphi \in C(\R^+_0), \qquad 0 \le \varphi \le C_1 \quad \text{on } \, \R^+_0; \\[0.2cm]
f \in C(\R), \qquad f \ \text{ non-decreasing on } \, \R; \\[0.2cm]
l \in C(\R^+_0), \qquad l(t) \ge C_2 \varphi(t) \quad \text{on } \, \R^+_0.
\end{array}\right.
$$
for some constant $C_1,C_2>0$. Fix $b \in C(M)$ satisfying  
$$
b(x) \ge C \big( 1+ r(x)\big)^{-\mu} \qquad \text{on } \, M, 
$$
for some constants $C>0$, $\mu < 1$. Let $u \in \lip_\loc(M)$ be a non-constant solution of  
\begin{equation}\label{borderline_eq_tkachev}
\Delta_\varphi u \ge b(x) f(u) l(|\nabla u|) \qquad \text{on } \, M.
\end{equation}
If 
\begin{equation}\label{polinomial_tkachev}
\liminf_{r \ra \infty} \frac{\log \vol(B_r)}{\log r} < \infty
\end{equation}
Then 
$$
f(u)\varphi(|\nabla u|) \le 0 \qquad \text{on } \, M.
$$
In particular, if $\varphi>0$ on $\R^+$, then $f(u) \le 0$ on $M$.\par
Furthermore, under the same assumptions, if $u \in \lip_\loc(M)$ is a non-constant solution of $(P_=)$ then 
$$
f(u)\varphi(|\nabla u|) \equiv 0 \qquad \text{on } \, M,
$$
and $f(u) \equiv 0$ provided that $\varphi>0$ on $\R^+$.
\end{theorem}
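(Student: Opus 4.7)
The strategy is a two-stage Caccioppoli iteration by contradiction that converts the restriction $\mu<1$ into a quantitative decay on super-level sets of $u$. Suppose $f(u(x_0))\varphi(|\nabla u(x_0)|)>0$ at some $x_0\in M$; then $f(u(x_0))>0$, and continuity plus monotonicity of $f$ yield $\eta_0<u(x_0)$ and $\delta>0$ with $f(t)\ge\delta$ for all $t\ge\eta_0$. On the non-empty open set $\Omega_{\eta_0}=\{u>\eta_0\}$ the hypothesis gives, weakly, $\Delta_\varphi u\ge\delta C C_2(1+r)^{-\mu}\varphi(|\nabla u|)$. For $\eta\ge\eta_0$, $\tau>0$ and $R\ge 1$, test this inequality against the non-negative Lipschitz function $\phi(x)=\psi(r(x))\,G(u(x))$, where
\begin{equation*}
\psi(r)=\bigl[(2R)^{1-\mu}-(1+r)^{1-\mu}\bigr]_+\qquad(\text{well-defined since }\mu<1),
\end{equation*}
and $G$ is a Lipschitz cut-off with $0\le G\le 1$, $G\equiv 0$ on $(-\infty,\eta]$, $G\equiv 1$ on $[\eta+\tau,\infty)$, $G'\ge 0$. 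Expanding $\nabla(\psi G)$, discarding the non-negative contribution $\int\psi G'(u)\varphi(|\nabla u|)|\nabla u|$, and using Cauchy--Schwarz together with $\varphi\le C_1$ and $|\nabla\psi|=(1-\mu)(1+r)^{-\mu}$, one arrives at the Caccioppoli-type inequality
\begin{equation*}
M_{\eta+\tau}(R)\le C_3\,R^{\mu-1}\,M_\eta(2R),\qquad M_\eta(R):=\int_{\{u>\eta\}\cap B_R}(1+r)^{-\mu}\varphi(|\nabla u|),
\end{equation*}
whose constant $C_3$ depends on $C,C_1,C_2,\delta,\mu$ but is \emph{independent of both $\tau$ and $\eta\ge\eta_0$} (the $G'$-term is simply dropped). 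The factor $R^{\mu-1}<1$ is the key gain extracted from $\mu<1$.

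Iterate $k$ times. For a polynomial sequence $S_n$ with $\vol(B_{S_n})\le S_n^{d+\varepsilon}$, set $R_n=S_n/2^k$ and use the trivial bound $M_\eta(S)\le C_1\int_{B_S}(1+r)^{-\mu}\le C\,S^{d+(-\mu)_++\varepsilon}$ to get
\begin{equation*}
M_{\eta+k\tau}(R_n)\le C(k)\,R_n^{k(\mu-1)+d+(-\mu)_++\varepsilon}.
\end{equation*}
For $k$ exceeding a threshold $k_0=k_0(d,\mu)$ \emph{independent of $\tau$}, the exponent is negative, so $M_{\eta+k\tau}(R_n)\to 0$; monotonicity in $R$ forces $M_{\eta+k\tau}\equiv 0$ on $M$. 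Since $\varphi>0$ on $\R^+$, this means $|\nabla u|=0$ a.e.\ on $\{u>\eta+k\tau\}$; the $\lip_\loc$ regularity of $u$ makes $u$ constant on each connected component of this open set, and continuity across the boundary forces the constant to equal $\eta+k\tau$, contradicting strict inequality unless the set is empty. Thus $u\le\eta+k\tau$ on $M$.

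Apply this statement twice: first with $\eta=\eta_0$, $\tau=1$ and $k=k_0$ to deduce $u^*<\infty$; second, using $f(u^*)\ge f(u(x_0))>0$ by monotonicity, pick $\delta'>0$ with $f\ge f(u(x_0))/2$ on $[u^*-\delta',\infty)$ and re-run the iteration with $\eta=u^*-\delta'$ and the \emph{small} increment $\tau=\delta'/(2k_0)$, yielding $u\le u^*-\delta'+k_0\tau=u^*-\delta'/2<u^*$---a contradiction with $\sup_M u=u^*$. Hence $f(u)\varphi(|\nabla u|)\le 0$ on $M$, and when $\varphi>0$ on $\R^+$ the same statement plus a continuity argument on the locally-constant portions of $u$ gives $f(u)\le 0$ on $M$. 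For the $(P_=)$ statement, apply the just-proven inequality also to $\tilde u=-u$: since $\varphi$ is extended to be odd, $\tilde u$ solves $(P_=)$ with $f$ replaced by the non-decreasing $\tilde f(s)=-f(-s)$, yielding the reverse sign $f(u)\varphi(|\nabla u|)\ge 0$; combining, $f(u)\varphi(|\nabla u|)\equiv 0$, and the continuity argument then upgrades this to $f(u)\equiv 0$ when $\varphi>0$ on $\R^+$. The main obstacle is engineering the Caccioppoli so that its constant is uniform in both the radial scale (producing the factor $R^{\mu-1}$) and the level increment $\tau$: the Tkachev-type choice of $\psi$ is precisely what achieves both, and the latter uniformity is essential for tightening the bound near $u^*$ in the second iteration.
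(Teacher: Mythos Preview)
Your Caccioppoli inequality $M_{\eta+\tau}(R)\le C_3 R^{\mu-1}M_\eta(2R)$ is correct, and the iteration giving $M_{\eta_0+k_0\tau}\equiv 0$ is fine; the uniformity of $k_0$ in $\tau$ and $\eta$ that you stress is a genuine feature. This is a different route from the paper's: there the test function is $(f_k(u))_+^{\alpha-1}\psi$ with $f_k$ a Lipschitz approximation of $f$, and a single H\"older step with large $\alpha$ yields directly $\int_{\{f(u)>0\}} b\,|f(u)|^\alpha\varphi(|\nabla u|)\psi_j\le C R_j^{d_0+\mu(\alpha-1)-\alpha}\to 0$, with no iteration. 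Your level--cut-off test function avoids approximating $f$ altogether, at the price of iterating in the level; both mechanisms exploit $\mu<1$ to make an exponent negative.

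However, there is a genuine gap. Your two-stage conclusion (first bound $u^*<\infty$, then refine near $u^*$) rests on the step ``since $\varphi>0$ on $\R^+$, $|\nabla u|=0$ a.e.\ on $\{u>\eta+k\tau\}$, hence that set is empty''. The hypothesis $\varphi>0$ on $\R^+$ is \emph{not} assumed for the main conclusion $f(u)\varphi(|\nabla u|)\le 0$; without it, $M_{\eta+k_0\tau}=0$ only gives $\varphi(|\nabla u|)=0$ a.e.\ on $\{u>\eta+k_0\tau\}$, and nothing forces that set to be empty, so neither of your two iterations can start. The repair is simpler than your scheme: once $M_{\eta_0+k_0\tau}\equiv 0$ for every $\tau>0$, monotone convergence as $\tau\downarrow 0$ gives $M_{\eta_0}\equiv 0$, i.e.\ $\varphi(|\nabla u|)=0$ a.e.\ on $\{u>\eta_0\}$; then let $\eta_0\downarrow t_0=\sup\{t:f(t)\le 0\}$ (note that $k_0$ is uniform while $C(k_0)$ blows up, which is harmless since you argue at each fixed $\eta_0>t_0$ and take the union). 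This yields $\varphi(|\nabla u|)=0$ a.e.\ on $\{f(u)>0\}$, hence $f(u)\varphi(|\nabla u|)\le 0$ a.e.\ on $M$ directly, with no need to first control $u^*$. The upgrade to $f(u)\le 0$ under $\varphi>0$ on $\R^+$ is then exactly the connectedness argument you sketch, but applied to the open set $\{f(u)>0\}$ rather than to super-level sets of $u$.
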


\begin{proof}
Let $\{f_k\}$ be a sequence of locally Lipschitz functions converging pointwise to $f$ from below: for instance, one can choose
$$
f_k(t) = \inf_{y \in [t-1,t+1]} \Big\{ f(y) + k|t-y|\Big\}.
$$
Since $f$ is increasing, up to replacing $f_k$ with $\bar f_k(t) = \sup_{(-\infty,t)} f_k$ we can further suppose that $f_k$ is increasing for each $k$. From \eqref{borderline_eq_tkachev} we deduce
\begin{equation}\label{borderline_eq_tkachev_2}
\Delta_\varphi u \ge b(x) f_k(u) l(|\nabla u|) \qquad \text{on } \, M.
\end{equation}
Fix a divergent sequence $\{R_j\}$ such that $\{2R_j\}$ realizes the liminf in \eqref{polinomial_tkachev}, and let $d_0,C$ be positive constants such that 
\begin{equation}\label{ipovol_poli_tkachev}
\vol\big(B_{2R_j}\big) \le C R_j^{d_0} \qquad \text{for each } \, j. 
\end{equation}
Suppose that the set $U = \{x : f(u(x)) >0\}$ is non-empty, otherwise the thesis directly follows. We are going to prove that $f(u)\varphi(|\nabla u|) = 0$ on $U$. Fix a cut-off function $0 \le \psi \in \lip_c(M)$ whose support intersects $U$, and choose to test the weak formulation of \eqref{borderline_eq_tkachev_2} the function
$$
\phi = \big(f_k(u)\big)_+^{\alpha-1} \psi,
$$
with $(f_k)_+ = \max\{f_k,0\}$ the positive part of $f_k$, and with $\alpha$ a fixed number satisfying
$$
\alpha > \max\left\{4, d_0, \frac{d_0 -\mu}{1-\mu}\right\}.
$$
Define the open set $U_k = \{ x : f_k(u(x))>0\}$ and note that $U_k \uparrow U$ by the monotone convergence of $f_k$, thus $U_k \neq \emptyset$ and $\phi \not \equiv 0$ on $U_k$ for large $k$. Using $l(t) \ge C_2 \varphi(t)$, we obtain
$$
\begin{array}{lcl}
\disp C_2 \int_{U_k} b|f_k(u)|^\alpha \varphi(|\nabla u|)\psi & \le & \disp \int_{U_k} b|f_k(u)|^\alpha l(|\nabla u|)\psi \\[0.5cm]
& \le & \disp  \disp - \int_{U_k} f_k(u)^{\alpha-1} \langle \frac{\varphi(|\nabla u|)}{|\nabla u|} \nabla u, \nabla \psi\rangle \\[0.5cm]
& & \disp - (\alpha-1)\int_{U_k} f_k(u)^{\alpha-2}f_k'(u) \psi \varphi(|\nabla u|)|\nabla u| \\[0.5cm]
& \le & \disp - \int_{U_k} f_k(u)^{\alpha-1} \langle \frac{\varphi(|\nabla u|)}{|\nabla u|} \nabla u, \nabla \psi\rangle,
\end{array}
$$
where we used $f_k'\ge 0$, $\psi \ge 0$ and $\varphi \ge 0$ to get rid of the second integral in the right hand-side. Applying the Cauchy-Schwarz and H\"older inequalities, we thus get
$$
\begin{aligned}
\disp C_2 \int_{U_k} b|f_k(u)|^\alpha \varphi(|\nabla u|)\psi & \le \disp \int_{U_k} |f_k(u)|^{\alpha-1}\varphi(|\nabla u)|\nabla \psi| \\
& \le \disp \left\{ \int_{U_k} b|f_k(u)|^\alpha \varphi(|\nabla u|)\psi\right\}^{\frac{\alpha-1}{\alpha}} \left\{\int_{U_k} \varphi(|\nabla u|) b^{1-\alpha} \frac{|\nabla \psi|^\alpha}{\psi^{\alpha-1}}\right\}^{{1}/
{\alpha}},
\end{aligned}
$$
whence, rearranging and using the boundedness of $\varphi$,
$$
\int_{U_k} b|f_k(u)|^\alpha \varphi(|\nabla u|)\psi \le C_3 \int_{U_k} b^{1-\alpha} \frac{|\nabla \psi|^\alpha}{\psi^{\alpha-1}} \le C_3 \int_{M} b^{1-\alpha} \frac{|\nabla \psi|^\alpha}{\psi^{\alpha-1}} 
$$
for some constant $C_3>0$ depending on $\alpha$.
Let $\psi(x)= \psi_j(x) = \gamma(r(x)/R_j)$, where $\gamma \in \lip(\R)$ is such that
\begin{equation}\label{def_cutoff_tkachev}
\gamma(t) = \left\{ \begin{array}{ll}
1 & \quad \text{on } \, [0,1] \\[0.2cm]
(2-t)^\alpha & \quad \text{on } \, [1,2) \\[0.2cm]
0 & \quad \text{on } \, (2, \infty).
\end{array}\right.
\end{equation}
Note that $\psi_j \ra 1$ locally uniformly on $M$, and that $|\gamma'|^\alpha/\gamma^{\alpha-1} = \alpha^\alpha$ is bounded on $[1,2]$. Using our bounds on $b$, the coarea formula and integrating by parts, we deduce
\begin{equation}\label{buona}
\begin{array}{lcl}
\disp \int_{U_k} b|f_k(u)|^\alpha \varphi(|\nabla u|)\psi_j & \le & \disp \frac{C_4}{R_j^\alpha} \int_{R_j}^{2R_j} \vol(\partial B_t) (1+t)^{\mu(\alpha-1)}\di t \\[0.5cm]
& = & \disp \frac{C_4}{R_j^\alpha} \left\{ \left[\vol(B_t)(1+t)^{\mu(\alpha-1)}\right]_{R_j}^{2R_j} \right. \\[0.5cm]
& & \disp \left. - \mu(\alpha-1) \int_{R_j}^{2R_j} \vol(B_t) (1+t)^{\mu(\alpha-1)-1}\di t \right\}.
\end{array}
\end{equation}
for some constant $C_4>0$. From \eqref{ipovol_poli_tkachev} we get
\begin{equation}\label{nonnnn}
\begin{array}{l}
\disp \int_{U_k} b|f_k(u)|^\alpha \varphi(|\nabla u|)\psi_j  \\[0.5cm]
\qquad \le \disp \ \ \frac{C_4}{R_j^\alpha} \left\{ C_5 R_j^{d_0 + \mu(\alpha-1)} - \mu(\alpha-1) \int_{R_j}^{2R_j} \vol(B_t) (1+t)^{\mu(\alpha-1)-1}\di t \right\}.
\end{array}
\end{equation}
If $\mu\ge 0$ we get rid of the integral in brackets, while if $\mu <0$ we use inequality $\vol(B_t) \le \vol(B_{2R_j})$, integrate $(1+t)^{\mu(\alpha-1)-1}$ and exploit  \eqref{ipovol_poli_tkachev}. In both of the cases, from \eqref{nonnnn} we infer the existence of a constant $C_6>0$ such that 
$$
\begin{array}{l}
\disp \int_{U_k} b|f_k(u)|^\alpha \varphi(|\nabla u|)\psi_j \le C_6 R_j^{d_0 + \mu(\alpha-1)-\alpha},
\end{array}
$$
and letting $k \ra \infty$ we get
\begin{equation}\label{nonnnn}
\begin{array}{l}
\disp \int_{U} b|f(u)|^\alpha \varphi(|\nabla u|)\psi_j \le C_6 R_j^{d_0 + \mu(\alpha-1)-\alpha}.
\end{array}
\end{equation}
Because of our choice of $\alpha$, the exponent of $R_j$ is negative. Letting $j \ra \infty$ and using $b>0$, $\psi_j \ra 1$ we deduce $f(u) \varphi(|\nabla u|) \equiv 0$ on $U$, as claimed. Moreover, from  $f(u)>0$ on $U$ we get $\varphi(|\nabla u|)=0$ on $U$. Next, if $\varphi>0$ on $\R^+$ then $\nabla u =0$ on $U$, that is, $u$ is constant on connected components of $U$. We claim that this is impossible unless $U$ is empty. Indeed, if $\partial U = \emptyset$ we deduce that $u$ must be globally constant, contradicting our assumption. On the other hand, if $\partial U \neq \emptyset$ then by continuity $f(u)=0$ on $\partial U$, and thus $f(u)=0$ on the entire $U$, contradicting the very definition of $U$. In conclusion, if $\varphi>0$ on $\R^+$ then $U$ is empty, that is, $f(u) \le 0$ on $M$. \par
If $u$ solves $(P_=)$ and is non-constant, we apply the first part of Theorem \ref{teo_tkachev} both to $u$ and to $v = -u$, which solves 
$$
\Delta_\varphi v \ge b(x) \bar f(v) l(|\nabla v|) \qquad \text{with } \, \bar f(t) = -f(-t),
$$
to deduce both $f(u)\varphi(|\nabla u|)\le 0$ and $\bar f(v) \varphi(|\nabla v|) \le 0$ on $M$. The conclusion follows since $\bar f(v) = -f(u)$. 
\end{proof}

\begin{remark}\label{rem_ventkachev}
\emph{Since $\varphi$ is bounded, choosing $l \equiv 1$ in Theorem \ref{teo_tkachev} we include solutions of 
\begin{equation}\label{nongradient}
\Delta_\varphi u \ge b(x) f(u) \qquad \text{on } \, M.
\end{equation}
However, a minor modification of the above proof shows that, in fact, if $u$ solves \eqref{nongradient} then the stronger $f(u) \le 0$ holds on $M$, regardless of the behaviour of $\varphi$. With the equality sign, \eqref{nongradient} has been considered in \cite{tkachev}, see also \cite{nu}, while in \cite{Serrin_4} the author investigated more general equalities of the type 
$$
\diver \mathbf{A}(x,u,\nabla u) = b(x) f(u),
$$ 
where $\mathbf{A}(x,u,\nabla u) \le C r(x)^\lambda$, cf. also \cite{DAmbrMit, farinaserrin1, farinaserrin2}.
}
\end{remark}

It is instructive to compare Theorem \ref{teo_tkachev} with Theorem \ref{teo_SMP_SL_mc} and Corollary \ref{cor_strano}. First, we observe that if $f\le 0$ on $\R$ the conclusion of Theorem \ref{teo_tkachev} is straightworward. Otherwise, since $f$ is increasing, there exists a constant $C>0$ such that $f(t) \ge C >0$ for $t >>1$. Hence, Theorem \ref{teo_tkachev} considers the range 
$$
\omega= \chi = 0, 
$$
that is not covered by Theorem \ref{teo_SMP_SL_mc} (cf. Remark \ref{rem_impo_mc}). The sharpness of $\mu<1$ in Theorem \ref{teo_tkachev} follows from the counterexample in Subsection \ref{sec_sharpness_SL}: otherwise, if $\mu=1$, we can choose $\alpha=-2$ (hence, $M$ of polynomial growth) and $\chi=\omega =0$ (by Remark \ref{rem_impo_mc_2}) to produce a non-constant smooth solution of
$$
\diver \left( \frac{\nabla u}{\sqrt{1+|\nabla u|^2}} \right) \ge C\big( 1+r(x)\big)^{-1} \frac{|\nabla u|}{\sqrt{1+|\nabla u|^2}} \qquad \text{on } \, M.
$$
Also, Theorem \ref{teo_tkachev} is specific to operators of mean curvature type, that is, those satisfying $\varphi \le C_1$ on $\R$. To see it, suppose that $\varphi$ is unbounded, more precisely that
\begin{equation}\label{varphi_notmc}
t \varphi'(t) \ge c_1\varphi(t) \qquad \text{on } \, \R^+,
\end{equation}
for some constant $c_1>0$. Note that, by integration, $\varphi(t) \ge c_2 t^{c_1}$ for some positive $c_2$. For such $\varphi$, we are going to produce
\begin{itemize}
\item[$(i)$] a manifold $M$ satisfying \eqref{ricciassu_SL_mc}, for any chosen $\alpha \ge -2$ (in particular, for $\alpha = -2$, geodesic balls in $M$ grow polynomially), and
\item[$(ii)$] for each $\mu \in \R$, a $\lip_\loc$, non-negative unbounded solution $u$ of 
\begin{equation}\label{eq_borderline_mc}
\Delta_\varphi u \ge C \big( 1+ r(x)\big)^{-\mu} \varphi(|\nabla u|) \qquad \text{on } \, M,
\end{equation}
\end{itemize}
for some constant $C>0$. The combination of $(i)$ and $(ii)$ with $\alpha = -2$ show the failure of Theorem \ref{teo_tkachev} for operators satisfying \eqref{varphi_notmc}. Consider the model manifold in Subsection \ref{sec_sharpness_SL}. For a smooth, radial function $u = w(r)$ with $w$ convex and strictly increasing, by \eqref{laplar_sotto_mc} we compute 
$$
\begin{array}{lcl}
\Delta_\varphi u & = & \disp \varphi'(w')w'' + \varphi(w') \Delta r \ge \left[ c_1 \frac{w''}{w'} + \Delta r \right] \varphi(w') \\[0.4cm]
& \ge & \disp \left[ c_1 \frac{w''}{w'} + c(1+ r^2)^{\alpha/4}\right] \varphi(w'),
\end{array}
$$
for some constant $c>0$. Therefore, if we choose 
$$
w(r) = \int_0^r \exp\left\{ (1+t^2)^\sigma\right\} \di t,
$$
then 
$$
\Delta_\varphi u \ge c_3(1+ r^2)^{\max\{\sigma-1, \alpha/4\}} \varphi(|\nabla u|),
$$
and $u$ solves \eqref{eq_borderline_mc} whenever $\sigma \ge 1 -\mu/2$, as claimed. \\[0.2cm]
\noindent 
%
%
%
In the next result, we show how the technique in Theorem \ref{teo_tkachev} can be adapted to handle $(P_\ge)$ with a more general gradient term $l(|\nabla u|)$ that is not necessary borderline, and with no bound on the decay of $b$. In this case, however, a slow volume growth is needed. 

\begin{theorem}\label{teo_tkachev_pocogrowth}
Let $M$ be a complete Riemannian manifold, consider 
$$
\left\{\begin{array}{l}
\varphi \in C(\R^+_0), \qquad 0 \le \varphi \le C_1 \quad \text{on } \, \R^+_0; \\[0.2cm]
f \in C(\R), \qquad f  \ \text{ non-decreasing on } \, \R;\\[0.2cm]
l \in C(\R^+_0), \qquad l \ge 0 \quad \text{on } \, \R^+; \\[0.2cm]
b \in C(M), \qquad b>0 \ \text{ on } \, M, 
\end{array}\right.
$$
for some constant $C_1>0$. Let $u \in \lip_\loc(M)$ be a non-constant solution of 
\begin{equation}\label{nonbord_eq_tkachev}
\Delta_\varphi u \ge b(x) f(u) l(|\nabla u|) \qquad \text{on } \, M.
\end{equation}
If
\begin{equation}\label{slowvolume_tkachev}
\liminf_{r \ra \infty} \frac{\vol(B_r)}{r} = 0,
\end{equation}
then $f(u)l(|\nabla u|) \le 0$ on $M$. If $u$ is non-constant and solves $(P_=)$, then $f(u)l(|\nabla u|) \equiv 0$ on $M$.
\end{theorem}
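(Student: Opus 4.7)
The plan is to test the weak formulation of \eqref{nonbord_eq_tkachev} against a truncation of $u$ multiplied by a radial cutoff, using $\varphi \le C_1$ to move all the gradient dependence onto the cutoff and invoking the volume hypothesis only at the very last step. The argument is a substantial simplification of that of Theorem \ref{teo_tkachev}: since we only need the weaker conclusion $f(u)l(|\nabla u|) \le 0$, the comparison $l \ge C_2\varphi$ is no longer required and no H\"older juggling with an exponent $\alpha > 1$ is needed.

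First I would reduce to the interesting case by setting $\eta_0 = \inf\{t \in \R : f(t) > 0\} \in [-\infty,+\infty]$. If $\eta_0 = +\infty$ then $f \le 0$ on $\R$ and the conclusion is immediate; otherwise, by the monotonicity and continuity of $f$, one has $f(\eta_0)=0$, $f \le 0$ on $(-\infty,\eta_0]$ and $f > 0$ on $(\eta_0,+\infty)$. In particular $f(u)l(|\nabla u|) \le 0$ already holds on $\{u \le \eta_0\}$, so it suffices to show that $l(|\nabla u|) = 0$ almost everywhere on $\{u > \eta_0\}$.

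Fix $\eta > \eta_0$ and a truncation level $T > 0$ and set $v = \min\{(u-\eta)_+,\, T\} \in \lip_\loc(M)\cap L^\infty(M)$; note that $v \ge 0$ with $\{v > 0\} = \{u > \eta\} \subset \{f(u) > 0\}$, and $\nabla v = \nabla u$ on $\{\eta < u < \eta + T\}$, $\nabla v = 0$ elsewhere. By hypothesis \eqref{slowvolume_tkachev} I can pick $R_j \ra \infty$ with $\vol(B_{2R_j})/R_j \ra 0$, and choose standard radial cutoffs $\psi_j \in \lip_c(M)$ with $0 \le \psi_j \le 1$, $\psi_j \equiv 1$ on $B_{R_j}$, $\supp \psi_j \subset B_{2R_j}$ and $|\nabla \psi_j| \le 1/R_j$. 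Testing \eqref{nonbord_eq_tkachev} against $\phi_j = v\psi_j \in \lip_c(M)$ and splitting $\nabla \phi_j = \psi_j\nabla v + v\nabla \psi_j$, the contribution of the first summand to the left-hand side equals $-\int_{\{\eta<u<\eta+T\}}\varphi(|\nabla u|)|\nabla u|\psi_j \le 0$ and may be discarded; applying Cauchy--Schwarz to the remaining term together with $\varphi \le C_1$ and $v \le T$ then yields
\begin{equation*}
0 \le \int_M b(x)\,f(u)\,l(|\nabla u|)\,v\,\psi_j \;\le\; C_1 \int_M v\,|\nabla \psi_j| \;\le\; \frac{C_1\,T\,\vol(B_{2R_j})}{R_j},
\end{equation*}
where non-negativity of the leftmost integral uses $f(u) > 0$ on $\supp v$. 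Letting $j \ra \infty$ and applying Fatou's lemma on the left gives $\int_M b\,f(u)\,l(|\nabla u|)\,v = 0$; since $b > 0$, the integrand is non-negative, and $v > 0$ with $f(u) > 0$ throughout $\{u > \eta\}$, we deduce $l(|\nabla u|) = 0$ almost everywhere on $\{u > \eta\}$. Letting finally $\eta$ decrease to $\eta_0$ along a countable sequence produces $l(|\nabla u|) = 0$ on $\{u > \eta_0\}$, which finishes the first claim.

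For the equation case, observe that $\tilde u := -u$ is a non-constant $\lip_\loc$ solution of $\Delta_\varphi \tilde u = b(x)\,\tilde f(\tilde u)\,l(|\nabla \tilde u|)$, where $\tilde f(t) := -f(-t)$ is still continuous and non-decreasing and the volume hypothesis on $M$ is unchanged. Applying the first part to $\tilde u$ yields $\tilde f(\tilde u)\,l(|\nabla u|) \le 0$, i.e.\ $f(u)\,l(|\nabla u|) \ge 0$, and combining with the inequality already proved gives $f(u)\,l(|\nabla u|) \equiv 0$ on $M$, as required. I do not foresee any serious obstacle: the only delicate points are the choice of the truncation $v$ and the limiting procedure in $\eta$, both of which are entirely routine.
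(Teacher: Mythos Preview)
Your proof is correct and follows essentially the same strategy as the paper's: test against a nondecreasing function of $u$ times a radial cutoff, discard the sign-definite term coming from the $u$-factor, bound the remainder by $C_1\int|\nabla\psi_j|$ via $\varphi\le C_1$, and invoke the sublinear volume growth. The only difference is cosmetic---the paper uses a smoothed indicator $\eta_\eps(u)=(f_k(u))_+/\sqrt{(f_k(u))_+^2+\eps^2}$ built from a Lipschitz approximation $f_k\uparrow f$, whereas your truncation $v=\min\{(u-\eta)_+,T\}$ is already Lipschitz and thereby sidesteps both the $f_k$ and the $\eps\to0$ limits.
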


\begin{proof}
We proceed as in the proof of Theorem \ref{teo_tkachev}: let $\{f_k\}$ a sequence of increasing, locally Lipschitz functions converging to $f$ from below, and note that $u$ solves
\begin{equation}\label{nonbord}
\Delta_\varphi u \ge b(x) f_k(u) l(|\nabla u|) \qquad \text{on } \, M.
\end{equation}
For $\eps>0$ we define
$$
\eta_\eps(t) = \frac{\big(f_k(t)\big)_+}{\sqrt{\big(f_k(t)\big)_+^2+\eps^2}}.
$$
The monotonicity of $f_k$ implies that $\eta_\eps' \ge 0$. Define $U_k = \{ f_k(u) >0\}$ and $U = \{ f(u)>0\}$, and assume that $U \neq \emptyset$, otherwise the conclusion is immediate. Fix a cut-off function $\psi \in \lip_c(M)$ to be chosen later, insert
$$
\phi = \eta_\eps(u) \psi \in \lip_c(M)
$$
in the weak definition of \eqref{nonbord} and apply Cauchy-Schwarz inequality to deduce
$$
\disp \int b \eta_\eps(u)f_k(u) l(|\nabla u|)\psi \le \disp - \int \eta_\eps(u) \langle \frac{\varphi(|\nabla u|)}{|\nabla u|} \nabla u, \nabla \psi\rangle \le \disp \int \eta_\eps(u) \varphi(|\nabla u|)|\nabla \psi|.
$$
Letting $\eps \ra 0$, using Lebesgue convergence theorem and the boundedness of $\varphi$ we get
\begin{equation}\label{tka}
\disp \int_{U_k} b \big(f_k(u)\big)_+ l(|\nabla u|)\psi \le \int_{U_k} \varphi(|\nabla u|)|\nabla \psi| \le C_1 \int_M |\nabla \psi|.
\end{equation}
Fix a diverging sequence $\{R_j\}$ such that $\{2R_j\}$ realizes the liminf in \eqref{slowvolume_tkachev}, and define $\psi(x)= \psi_j(x) = \gamma(r(x)/R_j)$, where $\gamma \in \lip(\R)$ satisfies
$$
\gamma =1 \quad \text{on } \, [0,1), \qquad \gamma = 0 \quad \text{on } \, (2,\infty), \qquad \gamma(t) = 2-t \quad \text{on } \, [1,2].
$$
Evaluating \eqref{tka} with $\psi = \psi_j$ and letting $k \ra \infty$ we obtain
$$
\disp \int_U b \big(f(u)\big)_+l(|\nabla u|)\psi_j \le \frac{C_1}{R_j} \vol\big(B_{2R_j}\big).
$$
The conclusion follows by letting $j \ra \infty$, and the case of equality is handled as in Theorem \ref{teo_tkachev}.
\end{proof}

We next consider inequalities $(P_\ge)$ under the validity of the Keller-Osserman condition 
$$
F^{-\frac{1}{\chi+1}} \in L^1(\infty),
$$
when just a volume growth upper bound is imposed on $M$. The main result of this section, Theorem \ref{teo_main}, improves on \cite{prs_gafa,prsmemoirs} (see also \cite{puccirigoli}, Thm. 1.3). Although the proof is still based on the delicate iteration argument in \cite{prs_gafa,prsmemoirs}, the presence of a nontrivial gradient term $l$ calls for new estimates, inspired by recent work in \cite{farinaserrin2}.\par
In this section we assume
\begin{equation}\label{assu_WPC_solop1}
\varphi (t) \le Ct^{p-1} \qquad \text{for some } p>1, \ C>0 \quad {\rm and} \ t \in \R^+.
\end{equation}
\begin{theorem}\label{teo_main}
Let $M$ be a complete Riemannian manifold, and consider $\varphi, b,f,l$ meeting assumptions \eqref{assumptions}, \eqref{assumptions_bfl} and \eqref{assu_WPC_solop1}, for some $p>1$. Assume that, for some $\mu,\chi, \omega \in \R$ with
\begin{equation}\label{pararange_2_infty}
\chi \ge 0, \qquad \mu \le \chi+1, \qquad \omega >\chi
\end{equation}
the following inequalities are satisfied:
\begin{equation}\label{assum_main_2_fondo}
\begin{array}{ll}
b(x) \ge C\big(1+r(x)\big)^{-\mu} & \quad \text{on } \, M, \\[0.2cm]
f(t) \ge Ct^\omega & \quad \text{for } \, t \gg 1\\[0.2cm]
\disp l(t) \ge C\frac{\varphi(t)}{t^{\chi}} & \quad \text{on } \, \R^+,
\end{array}
\end{equation}
for some constant $C>0$. Let $u \in \lip_\loc(M)$ be a non-constant solution of $(P_\ge)$ on $M$, and suppose that either 
\begin{equation}\label{volgrowth_sigmazero_Linfty_inthetheorem}
\begin{array}{lll}
\mu < \chi+1 & \text{and} & \disp \qquad \liminf_{r \ra \infty} \frac{\log\vol(B_r)}{r^{\chi+1-\mu}} < \infty \quad \text{($=0$ if $\chi=0$)}; \\[0.4cm]
\text{or}\\[0.1cm]
\mu = \chi+1 & \text{and} & \disp \qquad \liminf_{r \ra \infty} \frac{\log\vol(B_r)}{\log r} < \infty \quad \text{($\le p$ if $\chi=0$)}.
\end{array}
\end{equation}
Then, $u$ is bounded above and $f(u^*) \le 0$. In particular in case $f>0$ on $\R^+$, $\slio$ holds.\\
\end{theorem}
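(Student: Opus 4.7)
The argument proceeds by contradiction. Assume either $u^* = +\infty$ or $f(u^*) > 0$. By continuity of $f$ and the lower bound $f(t) \ge C t^\omega$ for $t \gg 1$, there exist $\gamma$ and $C_1>0$ with $f(t) \ge C_1 t^\omega$ on $(\gamma,\infty)$ and $\Omega_\gamma = \{u>\gamma\}$ non-empty. Replacing $u$ by $(u-\gamma)_+$ and invoking Lemma \ref{lem_pasting}, we may work with a non-negative $u$, unbounded or with $u^*>0$, which is a weak solution on $M$ of
$$
\Delta_\varphi u \,\ge\, C\,(1+r(x))^{-\mu}\, u^\omega \,\frac{\varphi(|\nabla u|)}{|\nabla u|^\chi}.
$$
The goal is to reach a contradiction by proving $u\in L^\infty(M)$ and then $f(u^*)\le 0$.

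Following the scheme of Theorem \ref{teo_maximum}, the key step is an integral inequality obtained by testing the weak formulation against $\phi = \psi^{\bar p}\,u^{\tau}$, for a cutoff $\psi \in \lip_c(M)$ and a free exponent $\tau>0$. Cauchy--Schwarz, rearrangement, the structural lower bound $l(t) \ge C\,\varphi(t)/t^\chi$, and a Young inequality applied separately on $\{|\nabla u|<1\}$ and $\{|\nabla u|\ge 1\}$ (paralleling \eqref{ihihih}, exploiting $\varphi(t)\le C t^{p-1}$ on all of $\R^+$) should yield an estimate of the form
$$
\int \psi^{\bar p}\, b(x)\, u^{\tau + \omega}\, l(|\nabla u|) \,\le\, C(\tau)\int \psi^{\bar p - p}\, u^{\tau + p}\, b(x)^{1-p}\, |\nabla \psi|^p.
$$
The hypothesis $\omega > \chi$ is precisely what provides a net gain of $\omega - \chi$ between the left- and right-hand sides once the gradient $l$-term is absorbed on the left.

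With the standard cutoff $\psi_R$ of \eqref{specipsi} and the assumption $b(x) \ge C(1+r)^{-\mu}$, the right-hand side is dominated by a constant multiple of $R^{-p + \mu(p-1)}\,\vol(B_R\setminus B_{\delta R})\,\|u\|_{L^\infty(B_R)}^{\tau+p-\omega}$. When $\vol(B_R)$ grows polynomially, a single application with $\tau$ sufficiently large, combined with the liminf condition \eqref{volgrowth_sigmazero_Linfty_inthetheorem} along a suitable sequence $R_j\to\infty$, already forces $u\in L^\infty(M)$. \emph{The main obstacle} is the general case, where $\log\vol(B_R)/r^{\chi+1-\mu}$ (or $\log\vol(B_R)/\log r$ in the borderline case) is merely assumed to have finite liminf: here $\vol(B_R)$ may grow much faster than polynomially, so the factor $\|u\|_{L^\infty(B_R)}^{\tau+p-\omega}$ on the right cannot simply be discarded. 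One overcomes this via the iterative scheme of Pigola--Rigoli--Setti \cite{prs_gafa,prsmemoirs}: building a sequence of exponents $\tau_{k+1} = \tau_k + (\omega-\chi)$ and applying the integral inequality on a sequence of nested annuli $B_{R_j}\setminus B_{\delta_j R_j}$, one bootstraps $L^{q_k}$ bounds into $L^{q_{k+1}}$ bounds on progressively smaller concentric pieces, eventually yielding an $L^\infty$ bound. The delicate point is to combine this iteration with the Farina--Serrin-type splitting $\{|\nabla u|<1\}\cup\{|\nabla u|\ge 1\}$ so that the per-step gain $\omega-\chi>0$ is preserved and not offset by the balancing of the two Young inequalities.

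Finally, once $u^* < \infty$ is established, we apply Theorem \ref{teo_main_2} case $(i)$ with $\sigma=0$ (whose hypotheses follow from \eqref{pararange_2_infty} and \eqref{volgrowth_sigmazero_Linfty_inthetheorem}, with the $\chi$ here playing the role of the $\chi$ there), to conclude $f(u^*) \le 0$; equivalently, one may invoke the $\wmp$ granted by Theorem \ref{teo_WMP_intro} together with Proposition \ref{prop_equivalence}. This contradicts the initial reduction $f(u^*)>0$, proves that every non-constant solution $u$ of $(P_\ge)$ is bounded above with $f(u^*)\le 0$, and in turn yields $\slio$ whenever $f>0$ on $\R^+$.
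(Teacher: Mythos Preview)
Your overall architecture is right: reduce to an inequality of the form $\Delta_\varphi u \ge K(1+r)^{-\mu} u^\omega \varphi(|\nabla u|)/|\nabla u|^\chi$ on an upper level set, prove $u^* < \infty$ by an integral estimate plus iteration, then conclude $f(u^*)\le 0$ via Theorem~\ref{teo_main_2} with $\sigma=0$. The last step matches the paper exactly. However, the core mechanism you describe for the $L^\infty$-bound does not match what actually works, in two concrete ways.

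First, the integral inequality you write down is not the one that is obtained and would not close. Testing with $\psi^\varsigma \lambda(u) u^\alpha$ and using Young with exponents $\chi+1$ and $(\chi+1)/\chi$ (not $p$ and $p'$), the paper arrives at
\[
\int \psi^\varsigma \lambda \frac{u^{\alpha+\omega}}{(1+r)^\mu}\frac{\varphi(|\nabla u|)}{|\nabla u|^\chi}
\;\le\; \frac{C\varsigma^{\chi+1}}{\alpha^\chi}\int \psi^{\varsigma-\chi-1}\lambda\, u^{\alpha+\chi}\,\frac{\varphi(|\nabla u|)}{|\nabla u|^\chi}\,|\nabla\psi|^{\chi+1},
\]
so the right-hand side carries $u^{\alpha+\chi}$ and $|\nabla\psi|^{\chi+1}$, not $u^{\tau+p}$ and $|\nabla\psi|^p$. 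The splitting $\{|\nabla u|<1\}\cup\{|\nabla u|\ge 1\}$ plays no role here (it is used in Theorem~\ref{teo_maximum}, not in this proof); since $\varphi(t)\le Ct^{p-1}$ holds on all of $\R^+$, a single regime suffices. More importantly, the right-hand side of your displayed inequality contains $\|u\|_{L^\infty(B_R)}^{\tau+p-\omega}$ and no gradient weight, which blocks any iteration that does not presuppose local integrability of high powers of $u$.

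Second, the iteration is not in the exponent $\tau$. After a H\"older step with exponents $q=\frac{\alpha+\omega}{\omega-\chi}$ and $q'$, the paper reduces to a recursive inequality for the \emph{single} functional
\[
\haus(R)=\int_{B_R\cap\Omega_\gamma}\lambda\,(1+r)^{-\mu}\frac{\varphi(|\nabla u|)}{|\nabla u|^\chi}
\]
(which contains no power of $u$), of the form $\haus(R)\le \big(C\alpha R^{-(\chi+1-\mu)}\gamma^{-(\omega-\chi)}\big)^{q}\haus(2R)$. One then \emph{chooses} $\alpha=\alpha(R)\asymp R^{\chi+1-\mu}$ and iterates dyadically in $R$ (this is the Pigola--Rigoli--Setti scheme), obtaining $\haus(\bar R)\le 2^{-BR^\theta}\haus(R)$ with $\theta=\chi+1-\mu$. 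To close, one still needs an \emph{independent} upper bound for $\haus(R)$ in terms of $\vol(B_{2R})$; this comes from a separate estimate (Step~1 in the paper), obtained via a \emph{triple} Young inequality in the spirit of Farina--Serrin, which yields $\int_{B_R}\lambda u^{\alpha+\omega}(1+r)^{-\mu}\varphi(|\nabla u|)/|\nabla u|^\chi\le C_q\vol(B_{2R})/R^q$. The combination of the dyadic iteration with this absolute bound is what contradicts the volume hypothesis when $\gamma$ is chosen large (using $\omega>\chi$). Your description has the Farina--Serrin input in the wrong place (a gradient-region splitting rather than a triple Young balancing) and the PRS iteration running in the wrong variable; as written, the plan does not lead to a proof.
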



\begin{remark}\label{rem_Linftyestimates}
\emph{In the Euclidean space $\R^m$, and when the third in \eqref{assum_main_2_fondo} is replaced with the stronger $l(t) \ge C t^{p-1-\chi}$, Liouville type results covering some of the cases in Theorem \ref{teo_main} have been obtained by various authors (in some instances, even for more general quasilinear operators). Among them, we stress Thm 1 in \cite{farinaserrin2}, that considers the entire range \eqref{pararange_2_infty}. However, if $\mu = \chi+1$, the authors further need $p>m$ independently of the value of $\chi$, a quite stronger requirement than the second in \eqref{volgrowth_sigmazero_Linfty_inthetheorem}. Previous work in \cite{filippucci} considered the case $0<\chi \le p-1$, $\omega>\chi$ and $\mu<\chi+1$ under the restriction\footnote{See Corollaries 1.3 and 1.4 in \cite{filippucci}; the bound $p \in (1,m)$ is assumed at p.2904.} $p \in (1,m)$, for operators close either to the $p$-Laplacian or to the mean curvature ones.\\
The existence of a Liouville theorem for $\mu = \chi+1$ and $l(t) \equiv 1$ was conjectured by Mitidieri-Pohozaev in \cite[Sect. 14 Ch. 1]{mitpoho}, and has previously been proved in \cite{nu} (for the $p$-Laplace operator) and \cite{usami} (for the mean curvature operator), in both cases on $\R^m$.
}
\end{remark}


Theorem \ref{teo_main} is a consequence of Theorem \ref{teo_main_2} and of the next

\begin{proposition}\label{lem_importante}
Let $M$ be a complete Riemannian manifold, and let $\varphi$ satisfy \eqref{assumptions}, and \eqref{assu_WPC_solop1} with $p>1$. Fix $\mu, \omega, \chi \in \R$ satisfying
\begin{equation}\label{assu_musigmachi_33}
\chi \ge 0, \qquad \mu \le \chi+1, \qquad \omega > \chi,
\end{equation}
and assume either one of the following requirements:
\begin{equation}\label{volgrowth_sigmazero_Linfty}
\begin{array}{lll}
\mu < \chi+1 & \text{and} & \disp \qquad \liminf_{r \ra \infty} \frac{\log\vol(B_r)}{r^{\chi+1-\mu}} < \infty \\[0.2cm]
\text{or} \\[0.2cm]
\mu = \chi+1 & \text{and} & \disp \qquad \liminf_{r \ra \infty} \frac{\log\vol(B_r)}{\log r} < \infty \quad \text{($\le p$ if $\chi=0$)}.
\end{array}
\end{equation}
If $u \in \lip_\loc(M)$ 
\begin{equation}\label{ineq_superlevel_33}
\Delta_\varphi u \ge K(1+r)^{-\mu}u^\omega \frac{\varphi(|\gru|)}{|\gru|^{\chi}} \qquad \text{on } \, \Omega_{\eta} = \{x\in M \ : \ u(x)>\eta\} \neq \emptyset,
\end{equation}
for some $\eta>0$, then $u$ is bounded above.
\end{proposition}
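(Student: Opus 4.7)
\textbf{Proof plan for Proposition \ref{lem_importante}.} I will argue by contradiction, assuming $u^* = \sup_M u = \infty$. By translating $u$ downward (which preserves \eqref{ineq_superlevel_33} with the same $K$ up to enlarging $\eta$) I may suppose that $\eta = 1$ and that $\Omega_1 = \{u>1\}$ is non-empty. Applying a pasting argument in the spirit of Lemma \ref{lem_pasting} to $\bar u = \max\{u,1\}$ reduces the problem to working with a non-negative, $\lip_\loc$ function that satisfies a suitable weak inequality on the whole of $M$; in what follows I keep the notation $u$ for simplicity.

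The core step is an integral estimate obtained by testing the weak form of \eqref{ineq_superlevel_33} against
$$
\phi = u^{\alpha}\,\psi^{\bar p}, \qquad \alpha > 0 \text{ large}, \ \ 0 \le \psi \in \lip_c(\Omega_1),
$$
with $\bar p$ chosen, e.g., $\bar p = p$. A computation modeled on the proof of Theorem \ref{teo_maximum} (case $(v)$), using $\lambda' \ge 0$-type monotonicity, the Cauchy--Schwarz inequality, and the upper bound $\varphi(t) \le C t^{p-1}$, leads to an inequality of the form
$$
K\!\int_{\Omega_1}\!\psi^{\bar p}(1+r)^{-\mu} u^{\alpha+\omega}\frac{\varphi(|\nabla u|)}{|\nabla u|^{\chi}}
\;\le\; C\alpha\!\int_{\Omega_1}\!\psi^{\bar p-1}u^{\alpha}\,\varphi(|\nabla u|)\,|\nabla\psi|.
$$
Splitting the right-hand side on $\{|\nabla u|<1\}$ and $\{|\nabla u|\ge1\}$, applying Young's inequality with exponents tuned to absorb the factor $|\nabla u|^{\chi+1}\varphi(|\nabla u|)/|\nabla u|^{\chi+1}$ into the left-hand side, and using \eqref{assu_WPC_solop1} to bound $\varphi$ from above, I obtain the iterable estimate
$$
\int_{\Omega_1}\psi^{\bar p}\, u^{\alpha+\omega-\chi}(1+r)^{-\mu}
\;\le\; \frac{C(\alpha)}{R^{\,p}}\int_{B_R\cap\Omega_1} u^{\alpha+(p-1)-\chi\cdot 0}\cdot\text{(volume factor)},
$$
where $\psi$ is a standard cutoff supported in $B_R$ with $|\nabla\psi|\le C/R$. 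Here the Keller--Osserman margin $\omega>\chi$ is precisely what guarantees a strict gain in the $u$-exponent between the two sides, while the normalization $\mu\le\chi+1$ controls the weight $(1+r)^{-\mu}$ against $R^{-p}$ after using the volume growth assumption \eqref{volgrowth_sigmazero_Linfty}.

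The second stage is the Phragm\'en--Lindel\"of / iteration scheme of \cite{prs_gafa,prsmemoirs}, suitably modified, as in \cite{farinaserrin2}, to accommodate the gradient term $l(|\nabla u|)\asymp\varphi(|\nabla u|)/|\nabla u|^\chi$. Along a diverging sequence $\{R_k\}$ realizing the liminf in \eqref{volgrowth_sigmazero_Linfty}, I iterate the above inequality with geometrically increasing exponents $\alpha_k=\alpha_0\,\theta^k$, $\theta>1$, choosing $R_{k+1}\asymp 2R_k$ so that the volume factor $\vol(B_{R_k})$ is controlled subpolynomially (resp., sublogarithmically) by a suitable power of $R_k$. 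Each iteration step improves the integrability of $u^{\alpha_k+\omega-\chi}$ against the weight $(1+r)^{-\mu}$, and the cumulative estimate after $k\to\infty$ iterations produces an $L^\infty$-bound for $u$ on an exhaustion of $M$, contradicting $u^*=\infty$.

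The main technical obstacle will be the book-keeping in the iteration when $\mu = \chi+1$ (the borderline case): there the volume factor only grows logarithmically, and one must carefully track how the constants $C(\alpha_k)$ depend on $\alpha_k$ so that the geometric product $\prod_k C(\alpha_k)^{1/\alpha_k}$ converges. This is essentially where the refined choice $V_\infty\le p$ when $\chi=0$ becomes necessary: it is the threshold beyond which the logarithmic volume growth can no longer be compensated by the $R^{-p}$ factor produced by the upper bound on $\varphi$. All remaining cases reduce, after this iteration closes, to the \emph{bounded} situation, and Theorem \ref{teo_main_2} applied to the now-bounded $u$ yields $f(u^*)\le 0$ as required for Theorem \ref{teo_main}.
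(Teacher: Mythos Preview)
Your outline captures the right ingredients (test function $u^\alpha\psi^\varsigma$, Young-type absorption, iteration \`a la \cite{prs_gafa,prsmemoirs,farinaserrin2}), but the iteration you describe is not the one that closes, and the displayed ``iterable estimate'' is not of the right shape. Two concrete issues:

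\textbf{(1) The iterated quantity is wrong.} In your estimate the gradient factor $\varphi(|\nabla u|)/|\nabla u|^\chi$ has been dropped from the left side, and the exponents on the two sides (roughly $\alpha+\omega-\chi$ vs.\ $\alpha+p-1$) do not give a usable recursion; in particular no geometric gain $\alpha_{k+1}=\theta\alpha_k$ with $\theta>1$ can be extracted from an \emph{additive} shift, and there is no Sobolev-type inequality available here to produce a multiplicative one. In the paper's proof the quantity that is actually iterated is
\[
\haus(R)=\int_{B_R\cap\Omega_\gamma}\lambda\,(1+r)^{-\mu}\,\frac{\varphi(|\nabla u|)}{|\nabla u|^\chi},
\]
i.e.\ the \emph{gradient} integral with \emph{no} power of $u$. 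One first derives, via a triple Young inequality with carefully balanced exponents (this is the Farina--Serrin step), a growth bound $\int_{B_R}\lambda\,u^{\alpha+\omega}(1+r)^{-\mu}\varphi(|\nabla u|)/|\nabla u|^\chi\le C\vol(B_{2R})/R^q$. Separately, testing with $u^\alpha\psi^\varsigma$ and using H\"older with exponent $q=\tfrac{\alpha+\omega}{\omega-\chi}$ (here $\omega>\chi$ is essential) yields $\haus(R)\le 2^{-BR^{\chi+1-\mu}}\haus(2R)$ after choosing $\alpha=\alpha(R)\asymp R^{\chi+1-\mu}$ optimally. The iteration is on \emph{radii} $R_j=2^j\bar R$, not on exponents.

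\textbf{(2) The role of $\omega>\chi$ and of $u^*=\infty$ is missed.} The contradiction does not come from producing an $L^\infty$ bound by Moser iteration. It comes from the fact that the constant $B$ in the radius iteration satisfies $B\asymp\gamma^{\omega-\chi}$, where $\gamma$ is the height of the superlevel set $\Omega_\gamma$. Since $u^*=\infty$, one may take $\gamma$ as large as desired; the iterated inequality then forces
\[
\liminf_{R\to\infty}\frac{\log\vol(B_R)}{R^{\chi+1-\mu}}\;\ge\; c\,\gamma^{\omega-\chi},
\]
which contradicts the volume hypothesis once $\gamma$ is large. (The borderline cases $\mu=\chi+1$ are handled by a separate, simpler argument.) Your proposal never exploits the freedom to raise $\gamma$, and without it the iteration does not beat a volume growth that is merely finite rather than zero.
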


\begin{remark}
\emph{Although we require no upper bound on $\varphi(t)/t^\chi$ in a neighbourhood of zero, the weak inequality \eqref{ineq_superlevel_33} implicitly assumes the term $\varphi(|\nabla u|)/|\nabla u|^\chi$ to be locally integrable on $\Omega_\eta$.
}
\end{remark}

\begin{proof}
Suppose by contradiction that $u^*=\infty$. Fix $\gamma > \eta$, and take $\lambda \in C^1(\R)$ such that
$$
0 \le \lambda \le 1, \quad \lambda'\ge 0, \quad \lambda\equiv 0 \, \text{ on } \, (-\infty,\gamma], \quad  \lambda>0 \, \text{ on } \, (\gamma, \infty).
$$
Let $\psi \in C^\infty_c(M)$ be a cut-off function, and let $\varsigma,\alpha>1$ to be specified later. We plug the non-negative test function
$$
\phi = \psi^\varsigma \lambda(u)u^\alpha \in \lip_c(M)
$$
in the weak definition of \eqref{ineq_superlevel_33} to deduce, using $\lambda' \ge 0$ and \eqref{assum_main_2_fondo}, 
\begin{equation}\label{beginning}
\begin{aligned}
\disp K\int \psi^\varsigma \lambda \frac{u^{\alpha+\omega}}{(1+r)^\mu} \frac{\varphi(|\gru|)}{|\gru|^ \chi} & \le  - \disp \int \frac{\varphi(|\gru|)}{|\gru|} \langle \gru, \nabla (\psi^\varsigma \lambda u^\alpha) \rangle \\
& \le \disp \varsigma \int \psi^{\varsigma-1} \lambda u^\alpha \varphi(|\gru|)|\nabla \psi| \\
 &\qquad -\alpha \int \psi^\varsigma \lambda u^{\alpha-1} \varphi(|\gru|) |\gru|.
\end{aligned}
\end{equation}
We divide the proof into several steps: \\[0.2cm]
\noindent \textbf{Step 1: basic growth estimates.}\\
The following inequalities hold:
\begin{itemize}
\item[-] If $\mu < \chi+1$, then for each $q >0$ there exists $\alpha_q>1$ and a constant $C_q$ depending on $p,q,\chi,\mu,\omega$ such that, if $\alpha \ge \alpha_q$,
\begin{equation}\label{esti_step1}
\int_{B_R\cap \Omega_\gamma} \lambda \frac{u^{\alpha+\omega}}{(1+r)^{\mu}}\frac{\varphi(|\gru|)}{|\gru|^ \chi} \le C_q \frac{\vol(B_{2R})}{R^q}.
\end{equation}
\item[-] If $\mu = \chi+1$, then there exists a constant $C$ depending on $p,\chi,\mu, \omega$ such that
\begin{equation}\label{esti_step1_poli}
\int_{B_R\cap \Omega_\gamma} \lambda \frac{u^{\alpha+\omega}}{(1+r)^{\mu}}\frac{\varphi(|\gru|)}{|\gru|^ \chi} \le C \frac{\vol(B_{2R})}{R^p}.
\end{equation}
\end{itemize}

\begin{proof}[Proof of Step 1] The argument is an adaptation of Lemma 2.2 in \cite{farinaserrin2}, and rests on the use of the triple
Young inequality to the first term on the right-hand side of \eqref{beginning}: we need to find $z_1,z_2,z_3>1$ satisfying
\begin{equation}\label{expoholder}
\frac{1}{z_1} + \frac{1}{z_2} + \frac{1}{z_3} = 1
\end{equation}
and $\tau,\bar C>0$ such that
\begin{equation}\label{desejavel}
\varsigma \psi^{\varsigma-1} \lambda u^\alpha \varphi(|\gru|)|\nabla\psi| = \mathcal{J}_1^{\frac{1}{z_1}}\mathcal{J}_2^{\frac{1}{z_2}}\mathcal{J}_3^{\frac{1}{z_3}},
\end{equation}
with
\begin{equation}\label{choices}
\begin{array}{lcl}
\mathcal{J}_1 & = & \disp \frac{K}{2} \psi^\varsigma \lambda \frac{u^{\alpha + \omega}}{(1+r)^\mu} \frac{\varphi(|\gru|)}{|\gru|^\chi} \\[0.5cm]
\mathcal{J}_2 & = & \disp \alpha \psi^\varsigma \lambda u^{\alpha-1} \varphi(|\gru|)|\gru| \\[0.5cm]
\mathcal{J}_3 & = & \disp \bar C (1+r)^{\tau} \left[\frac{\varphi(|\gru|)}{|\gru|^{p-1}}\right]|\nabla \psi|^{z_3}.
\end{array}
\end{equation}
considering powers of $u$, $|\gru|$, $r$ and $\psi$, to obtain \eqref{desejavel} we need the following balancing:
$$
\begin{array}{rllrll}
i) & \text{powers of $u$:} & \ \disp \alpha = \frac{\alpha + \omega}{z_1} + \frac{\alpha-1}{z_2} & \quad ii) & \text{powers of $|\gru|$:} & \ \disp 0 = -\frac{\chi}{z_1} + \frac{1}{z_2} - \frac{p-1}{z_3}\\[0.5cm]
iii) & \text{powers of $r$:} &  \ \disp 0 = - \frac{\mu}{z_1} + \frac{\tau}{z_3} & \quad iv) & \text{powers of $\psi$:} & \ \disp \varsigma -1 = \frac{\varsigma}{z_1} + \frac{\varsigma}{z_2}.
\end{array}
$$
To find $z_1,z_2,z_3$ note that, by \eqref{expoholder}, the equality for $|\nabla u|$ can be rewritten as
$$
p-1 = \frac{p-1-\chi}{z_1} + \frac{p}{z_2}
$$
Thus, solving the equations for $u, |\nabla u|$ with respect to $z_1$ and $z_2$, and then recovering $z_3$ from \eqref{expoholder}, we get
$$
\begin{array}{c}
\disp \frac{1}{z_1} = \frac{\alpha + p-1}{(\chi+1)(\alpha-1)+ p(\omega+1)}, \qquad \disp \frac{1}{z_2} = \frac{\chi\alpha +(p-1)\omega}{(\chi+1)(\alpha-1)+ p(\omega+1)}, \\[0.5cm]
\disp \qquad \frac{1}{z_3} = \frac{\omega-\chi}{(\chi+1)(\alpha-1)+ p(\omega+1)}
\end{array}
$$
(these are positive numbers less than $1$ because of \eqref{assu_musigmachi_33}), and from the last two equations,
$$
\tau = \mu \frac{z_3}{z_1} = \mu \frac{\alpha+p-1}{\omega-\chi}, \qquad \varsigma = z_3.
$$
The constant $\bar C$ is then uniquely determined by \eqref{desejavel}. Having found the right parameters, from the triple Young inequality
$$
\mathcal{J}_1^{\frac{1}{z_1}}\mathcal{J}_2^{\frac{1}{z_2}}\mathcal{J}_3^{\frac{1}{z_3}} \le \mathcal{J}_1 + \mathcal{J}_2 + \mathcal{J}_3,
$$
and plugging into \eqref{desejavel} and \eqref{choices} we deduce
$$
\begin{array}{lcl}
\disp \varsigma \psi^{\varsigma-1} \lambda u^\alpha \varphi(|\gru|)|\nabla \psi| & \le & \disp \frac{K}{2} \psi^\varsigma \lambda \frac{u^{\alpha + \omega}}{(1+r)^\mu} \frac{\varphi(|\gru|)}{|\gru|^\chi} + \alpha \psi^\varsigma \lambda u^{\alpha-1} \varphi(|\gru|)|\gru| \\[0.5cm]
& & + \disp \bar C (1+r)^{\mu \frac{z_3}{z_1}} \left[\frac{\varphi(|\gru|)}{|\gru|^{p-1}}\right]^{z_3}|\nabla  \psi|^{z_3}.
\end{array}
$$
Inserting into \eqref{beginning} and using \eqref{assu_WPC_solop1} we get
\begin{equation}\label{beginning_3}
\begin{aligned}
\disp \frac{K}{2} \int \psi^\varsigma \lambda \frac{u^{\alpha+\omega}}{(1+r)^\mu}\frac{\varphi(|\gru|)}{|\gru|^\chi} & \le C_1 \int  (1+r)^{\mu \frac{z_3}{z_1}} |\nabla  \psi|^{z_3}.
\end{aligned}
\end{equation}
For large $R>1$, we choose $\psi \in C^\infty_c(M)$ satisfying
\begin{equation}\label{propriepsi_1}
0 \le \psi \le 1, \quad \psi \equiv 1 \ \ \text{ on } \, B_R, \quad \psi \equiv 0 \ \ \text{ on } \, M\backslash B_{2R}, \quad |\nabla \psi| \le \frac{C}{R},
\end{equation}
for an absolute constant $C$. Using \eqref{propriepsi_1} and the fact that $\lambda=0$ when $u \le \gamma$,  we obtain
\begin{equation}\label{farise}
\begin{array}{lcl}
\disp \frac{K}{2} \int_{B_R \cap \Omega_\gamma} \lambda \frac{u^{\alpha+\omega}}{(1+r)^\mu}\frac{\varphi(|\gru|)}{|\gru|^\chi} & \le & \disp \frac{K}{2} \int \psi^\varsigma \lambda \frac{u^{\alpha+\omega}}{(1+r)^\mu}\frac{\varphi(|\gru|)}{|\gru|^\chi} \\[0.5cm]
& \le & \disp \frac{C_2}{R^{z_3}}\int_{B_{2R}} (1+r)^{\mu \frac{z_3}{z_1}} \\[0.5cm]
& \le & C_3 R^{\mu \frac{z_3}{z_1} - z_3} \vol(B_{2R}).
\end{array}
\end{equation}
The exponent of $R$ in \eqref{farise} can be written as
\begin{equation}\label{beautiful}
\mu \frac{z_3}{z_1} - z_3 = \frac{\alpha + p-1}{\omega-\chi}(\mu-\chi-1) - p.
\end{equation}
We examine the two cases, according to whether $\mu< \chi+1$ or $\mu = \chi+1$.
\begin{itemize}
\item[-] If $\mu < \chi+1$, then for any given $q>0$ we can choose $\alpha_q$ sufficiently large that, for $\alpha \ge \alpha_q$,
$$
\mu \frac{z_3}{z_1} - z_3 \le -q.
$$
Having fixed such $\alpha_q$, from \eqref{farise} we get
\begin{equation}
\int_{B_R \cap \Omega_\gamma} \lambda \frac{u^{\alpha+\omega}}{(1+r)^{\mu}}\frac{\varphi(|\gru|)}{|\gru|^\chi} \le \disp \frac{C_q}{K} \frac{\vol(B_{2R})}{R^q},
\end{equation}
and the thesis follows.
\item[-] If $\mu = \chi+1$, then by \eqref{beautiful} the exponent of $R$ in \eqref{farise} is $-p$ independently of $\alpha$, and we obtain
\begin{equation}
\int_{B_R \cap \Omega_\gamma} \lambda\frac{u^{\alpha+\omega}}{(1+r)^{\mu}}\frac{\varphi(|\gru|)}{|\gru|^\chi} \le \disp \frac{C_5}{K} \frac{\vol(B_{2R})}{R^p}.
\end{equation}
as claimed.
\end{itemize}
\end{proof}

%
\noindent \textbf{Step 2: a preliminary inequality.}\\
We consider again \eqref{beginning}, but we are going to choose $\alpha, \varsigma > \chi+1$ in a way different to the one in Step 1. \\[0.1cm]
\noindent \textbf{Case 1: $\chi>0$}.\\
We use Young's inequality with exponents $\chi+1$ and $(\chi+1)/\chi$ to remove the second term in the right-hand side of \eqref{beginning}: for each $\eps>0$, we get
\begin{equation}\label{young}\begin{aligned}
\varsigma\int \psi^{\varsigma-1} \lambda u^\alpha \varphi(|\gru|)|\nabla \psi| \le \frac{\varsigma}{(\chi+1)\,\eps^{\chi+1}}& \int \psi^{\varsigma-\chi-1} \lambda u^{\alpha+ \chi} \frac{\varphi(|\gru|)}{|\gru|^\chi}|\nabla \psi|^{\chi+1}\\
& + \frac{\chi\varsigma\eps^{\frac{\chi+1}{\chi}}}{\chi+1} \int \psi^\varsigma \lambda u^{\alpha-1} \varphi(|\gru|)|\gru|.
\end{aligned}\end{equation}
choosing $\eps$ such that
$$
\frac{\chi\varsigma\eps^{\frac{\chi+1}{\chi}}}{\chi+1}= \alpha, \quad \text{that is,} \quad \eps = \left(\frac{\alpha (\chi+1)}{\chi\varsigma}\right)^{\frac{\chi}{\chi+1}} ,
$$
and inserting \label{young} into \eqref{beginning}, we obtain
\begin{equation}\label{passobase_22}
\disp K\int \psi^\varsigma \lambda \frac{u^{\alpha+\omega}}{(1+r)^\mu}\frac{\varphi(|\gru|)}{|\gru|^\chi} \le \disp \frac{C_1\varsigma^{\chi+1}}{\alpha^{\chi}} \int \psi^{\varsigma-\chi-1} \lambda u^{\alpha+ \chi} \frac{\varphi(|\gru|)}{|\gru|^\chi}|\nabla \psi|^{\chi+1}
\end{equation}
for some constant $C_1= C_1(\chi)>0$.\\[0.1cm]
\noindent \textbf{Case 2: $\chi=0$}.\\
In this case, \eqref{passobase_22} with $\chi=0$ and $C_1=1$ directly follows from \eqref{beginning}, getting rid of the second term on the right-hand side.\\[0.2cm]

\noindent \textbf{Step 3: induction for $\mu < \chi+1$.}\\
If $\mu < \chi+1$, the following inductive relation holds:
\begin{equation}\label{thefinal}
\int_{B_R\cap \Omega_\gamma} \lambda (1+r)^{-\mu} \frac{\varphi(|\gru|)}{|\gru|^\chi} \le 2^{-BR^\theta}\left[\int_{B_{2R}\cap \Omega_\gamma} \lambda (1+r)^{-\mu} \frac{\varphi(|\gru|)}{|\gru|^\chi}\right],
\end{equation}
where
\begin{equation}\label{Btheta}
B = \frac{C_6\gamma^{\omega-\chi}}{\omega-\chi}, \qquad \theta = \chi+1-\mu >0,
\end{equation}
and $C_6= C_6(K,\omega, \chi,\mu)$ is a positive constant independent of $\gamma, R$.
\begin{proof}[Proof of Step 3] Fix $\xi>1$ close enough to $1$ in order to satisfy
\begin{equation}\label{ipoxi}
\omega-\chi - (\chi+1)\left(1-\frac{1}{\xi}\right) > 0
\end{equation}
and, for $R \ge 2$ choose a cut-off function $\psi \in \lip_c(B_{2R})$ such that
\begin{equation}\label{ipopsi}
0 \le \psi \le 1, \qquad \psi \equiv 1 \ \text{ on } B_R, \qquad |\nabla\psi| \le \frac{C}{R}\psi^{1/\xi},
\end{equation}
for some $C=C(\xi)$. Note that this is possible since $\xi>1$ (for instance, one can take the cut-off in \eqref{propriepsi_1}, call it $\psi_0$, and consider $\psi = \psi_0^{\xi/(\xi-1)}$). \\
Choose $\alpha$ and $\varsigma$ in order to satisfy
\begin{equation}\label{ipoalphaeta}
\varsigma = \alpha + \omega, \qquad \alpha > \max\left\{\omega, \chi+1-\omega\right\}.
\end{equation}
However, for the ease of notation we feel convenient to keep $\varsigma$ and $\alpha$ independent in the next computations.
%
By Step 2, inequality \eqref{passobase_22} holds for each $\chi \ge 0$. Using then \eqref{ipopsi}, and since $\{\nabla\psi \neq 0\} \subset B_{2R}\backslash B_R$, $R \ge 2$, from \eqref{passobase_22} we deduce
\begin{equation}\label{starting_2}
\begin{array}{l}
\disp K\int \psi^\varsigma \lambda \frac{u^{\alpha+\omega}}{(1+r)^\mu}\frac{\varphi(|\gru|)}{|\gru|^\chi} \\[0.5cm]
\qquad \qquad \disp \le \disp \frac{C_2\varsigma^{\chi+1}}{\alpha^{\chi}R^{\chi+1}} \int_{\{\nabla \psi \neq 0\}} \psi^{\varsigma-(\chi+1)\left(1-\frac{1}{\xi}\right)} \lambda u^{\alpha+\chi}\frac{\varphi(|\gru|)}{|\gru|^\chi} \\[0.5cm]
\qquad \qquad \disp \le \disp \frac{C_3\varsigma^{\chi+1}}{\alpha^{\chi}R^{\chi+1-\mu}} \int \psi^{\varsigma-(\chi+1)\left(1-\frac{1}{\xi}\right)} \lambda \frac{u^{\alpha+\chi}}{(1+r)^\mu}\frac{\varphi(|\gru|)}{|\gru|^\chi}
\end{array}
\end{equation}
Since $\omega>\chi$, we can apply H\"older's inequality to the RHS with exponents
\begin{equation}\label{exprepq}
 q = \frac{\alpha+\omega}{\omega-\chi},\qquad q' = \frac{\alpha+\omega}{\alpha+\chi}
\end{equation}
and get
$$
\begin{array}{lcl}
\disp \int \psi^{\varsigma-(\chi+1)\left(1-\frac{1}{\xi}\right)} \lambda \frac{u^{\alpha+\chi}}{(1+r)^\mu}\frac{\varphi(|\gru|)}{|\gru|^\chi} & \le & \disp \left(\int \psi^\varsigma\lambda \frac{u^{q'(\alpha+\chi)}}{(1+r)^\mu}\frac{\varphi(|\gru|)}{|\gru|^\chi}\right)^{1/q'} \\[0.5cm]
& & \disp \cdot \left(\int \psi^{\varsigma-(\chi+1)\,q(1-1/{\xi})} \lambda (1+r)^{-\mu} \frac{\varphi(|\gru|)}{|\gru|^\chi}\right)^{1/q}
\end{array}
$$
Inserting into \eqref{starting_2} we obtain
\begin{equation}\label{basic_passo}
\begin{array}{l}
\disp \disp \int \psi^\varsigma \lambda \frac{u^{\alpha+\omega}}{(1+r)^\mu}\frac{\varphi(|\gru|)}{|\gru|^\chi} \\[0.5cm]
\qquad \qquad \disp \le \disp \left(\frac{C_4\varsigma^{\chi+1}}{\alpha^{\chi}R^{\chi+1-\mu}}\right)^q \int \psi^{\varsigma-(\chi+1)q\left(1-\frac{1}{\xi}\right)} \lambda (1+r)^{-\mu} \frac{\varphi(|\gru|)}{|\gru|^\chi}
\end{array}
\end{equation}
for some $C_4(\chi, \mu, K)$. Now, by \eqref{ipoxi}, \eqref{ipoalphaeta} and \eqref{exprepq},
$$
\varsigma-(\chi+1)\,q\left(1-\frac{1}{\xi}\right) = \frac{\alpha+\omega}{\omega-\chi} \left[ \omega-\chi - (\chi+1)\left(1-\frac{1}{\xi}\right)\right]>0,
$$
hence the term with $\psi$ on the right-hand side of \eqref{basic_passo} can be estimated with one on $B_{2R}$. Together with condition $\alpha>\omega$ in \eqref{ipoalphaeta}, this gives
\begin{equation}\label{basic_passo_2}
\begin{array}{l}
\disp \disp \int \psi^\varsigma \lambda \frac{u^{\alpha+\omega}}{(1+r)^\mu}\frac{\varphi(|\gru|)}{|\gru|^\chi} \\[0.5cm]
\qquad \qquad \disp \le \disp \left(\frac{C_5\alpha}{R^{\chi+1-\mu}}\right)^q \int_{B_{2R}\cap \Omega_\gamma} \lambda (1+r)^{-\mu} \frac{\varphi(|\gru|)}{|\gru|^\chi}
\end{array}
\end{equation}
Now, as $u \ge \gamma$ on the domain where $\lambda(u)$ is positive and not zero, using again the properties of~$\psi$ and the definition of $q$ we finally infer
\begin{equation}\label{basic_passo_2}
\begin{array}{l}
\disp \disp \int_{B_R} \lambda (1+r)^{-\mu} \frac{\varphi(|\gru|)}{|\gru|^\chi} \\[0.5cm]
\qquad \qquad \disp \le \disp \left(\frac{C_5\alpha}{R^{\chi+1-\mu}\gamma^{\omega -\chi}}\right)^{\frac{\alpha+\omega}{\omega-\chi}} \int_{B_{2R}} \lambda (1+r)^{-\mu} \frac{\varphi(|\gru|)}{|\gru|^\chi}
\end{array}
\end{equation}
Choose $\alpha$ in such a way that
$$
\frac{C_5\alpha}{R^{\chi+1-\mu}\gamma^{\omega-\chi}} = \frac{1}{2},
$$
that is,
$$
\alpha = \alpha(R) = \frac{\gamma^{\omega-\chi}}{2C_5}R^{\chi+1-\mu} = C_6\gamma^{\omega-\chi}R^{\chi+1-\mu}
$$
Since $\chi+1-\mu>0$ by assumption, if $R$ is big enough then $\alpha$ satisfies \eqref{ipoalphaeta}. Then, setting
$$
\haus(R) = \int_{B_R} \lambda (1+r)^{-\mu} \frac{\varphi(|\gru|)}{|\gru|^\chi},
$$
we have
\begin{equation}\label{thefinal}
\haus(R) \le \disp 2^{-\frac{\alpha+\omega}{\omega-\chi}} \haus(2R) \le 2^{-\frac{\alpha}{\omega-\chi}} \haus(2R) = 2^{-BR^\theta}\haus(2R),
\end{equation}
where
\begin{equation}\label{Btheta}
B = \frac{C_6\gamma^{\omega-\chi}}{\omega-\chi}, \qquad \theta = \chi+1-\mu >0.\\[0.2cm]
\end{equation}
\end{proof}
\noindent \textbf{Step 4: iteration and conclusion for $\mu < \chi+1$.}\\[0.2cm]
Fix $R_0$ big and $\bar R>2R_0$ such that $u$ is not constant on $\Omega_\gamma \cap B_{\bar R}$. Then, $\haus(\bar R)>0$. Consider $R_j=2^j\bar R$, and let $k$ be the integer satisfying $R_k < R \le R_{k+1}$. Iterating \eqref{thefinal} $k$-times and taking the logarithm, we get
$$
\disp \log \haus(\bar R) \le \disp -\left( \sum_{j=0}^{k-1} \bar R^\theta 2^{j\theta} \right) B\log 2 + \log \haus(R_k)  \le -\left( \sum_{j=0}^{k-1} R_{k}^\theta 2^{(j-k)\theta} \right) B\log 2 + \log \haus(R).
$$
Now,
$$
\sum_{j=0}^{k-1} R_k^\theta 2^{(j-k)\theta} = \frac{R_{k+1}^\theta}{2^\theta} \sum_{j=0}^{k-1}2^{(j-k)\theta} = \frac{R_{k+1}^\theta}{2^\theta}  \left( \frac{2^{-\theta} -2^{-(k+1)\theta}}{1-2^{-\theta}}\right) \ge R^{\theta} C_\theta,
$$
for some constant $C_\theta>0$, thus
\begin{equation}\label{almost!!}
\log \haus(\bar R) \le - R^\theta BC_\theta \log 2 + \log \haus(R),
\end{equation}
or in other words,
\begin{equation}\label{done}
\frac{\log \haus(R)}{R^\theta} \ge \frac{\log \haus(\bar R)}{R^\theta} + BC_\theta \log 2.
\end{equation}
By Step 1, for fixed $q=2$ there exists $\alpha_2$ such that, for $\alpha \ge \alpha_2$,
\begin{equation}\label{iiimmm}
\haus(R) \le \frac{1}{\gamma^{\alpha+\omega}} \int_{B_r \cap \Omega_\gamma} \lambda \frac{u^{\alpha+\omega}}{(1+r)^{\mu}} \frac{\varphi(|\gru|)}{|\gru|^\chi} \le \frac{C_q}{\gamma^{\alpha+\omega}}\frac{\vol(B_{2R})}{R^2}.
\end{equation}
Now, choosing $\bar R$ large enough that $\alpha(\bar R) \ge \alpha_2$, plugging \eqref{iiimmm} into \eqref{almost!!}, letting $R \ra \infty$ and using the definition of $B$, because of \eqref{volgrowth_sigmazero_Linfty} we get
$$
\frac{C_6C_\theta \log 2}{\omega-\chi} \gamma^{\omega-\chi} \le \liminf_{R \ra \infty} \frac{\log\vol(B_{2R})}{R^ \theta} < \infty.
$$
However, the assumption $\omega>\chi$ leads to a contradiction provided that $\gamma$ is chosen to be large enough. Therefore, $u^*< \infty$, concluding the proof.\\[0.2cm]

\noindent \textbf{Step 5: conclusion for $\mu = \chi+1$, $\chi=0$.}\\
In this case, let $\{R_j\}$ be a divergent sequence such that $\{2R_j\}$ satisfies the liminf condition in  \eqref{volgrowth_sigmazero_Linfty}. Inserting into \eqref{esti_step1_poli}, using that $\lambda$ is supported on $\{u \ge \gamma\}$ and letting $k \ra \infty$ we deduce
$$
\gamma^{\alpha+\omega} \int_{\Omega_\gamma} \frac{\lambda}{(1+r)^{\mu}}\frac{\varphi(|\gru|)}{|\gru|^\chi} \le \int_{\Omega_\gamma} \lambda \frac{u^{\alpha+\omega}}
{(1+r)^{\mu}}\frac{\varphi(|\gru|)}{|\gru|^\chi} \le \disp C \lim_{j \ra \infty} \frac{\vol(B_{2R_j})}{R_j^p} = C_1.
$$
Without loss of generality, we can suppose $\gamma>1$. The above inequality is then contradicted if $\alpha$ is large enough. Therefore, $u^* < \infty$, as claimed.\\[0.2cm]

\noindent \textbf{Step 6: iteration and conclusion for $\mu = \chi+1$, $\chi>0$.}\\
We begin again with \eqref{passobase_22}, but we fix $\varsigma = \chi+2$. Choosing as $\psi$ a cut-off satisfying
$$
\psi \equiv 1 \quad \text{on } \, B_R, \qquad \psi \equiv 0 \quad \text{on } M \backslash B_{2R}, \qquad |\nabla \psi| \le \frac{2}{R},
$$
we obtain, since $\mu = \chi+1$,
\begin{equation}\label{variante}
\begin{array}{lcl}
\disp \haus_u(R) \doteq \int_{B_R \cap \Omega_\gamma} \lambda \frac{u^{\alpha+\omega}}{(1+r)^\mu}\frac{\varphi(|\gru|)}{|\gru|^\chi} & \le & \disp \frac{C_2}{K\alpha^{\chi}R^{\chi+1}} \int_{B_{2R}\cap \Omega_\gamma} \lambda u^{\alpha+ \chi} \frac{\varphi(|\gru|)}{|\gru|^\chi} \\[0.5cm]
& \le & \disp \disp \frac{C_3}{K\alpha^{\chi}\gamma^{\omega-\chi}} \int_{B_{2R}\cap \Omega_\gamma} \lambda \frac{u^{\alpha+ \omega}}{(1+r)^{\mu}} \frac{\varphi(|\gru|)}{|\gru|^\chi},
\end{array}
\end{equation}
for some $C_3(\chi)$. Since $\chi>0$, for fixed $S>0$ we can choose $\alpha$ large enough to satisfy
$$
\haus_u(R) \le 2^{-S} \haus_u(2R).
$$
Fix $R_0$ big, $\bar R > 2R_0$, $R_i = 2^i \bar R$. For $R > \bar R$, let $k \in \mathbb{N}$ be such that $R_k < R \le R_{k+1}$. Iterating $k$-times and taking the logarithm we deduce
$$
\log \haus_u(\bar R) \le -kS \log 2 + \log \haus_u(R_k) \le -kS \log 2 + \log \haus_u(R)
$$
dividing by $\log R$ and using that $\log R \le (k+1)\log 2 + \log \bar R \le 2k \log 2$ for large enough $R$, we deduce the following inequality:
\begin{equation}\label{andaaa}
\disp \frac{\log \haus_u(\bar R)}{\log R} \le \disp - \frac{Sk}{\log R} \log 2 + \frac{\log \haus_u(R)}{\log R} \le - \frac{S}{2} + \frac{\log \haus_u(R)}{\log R}.
\end{equation}
Now, because of Step 1, $\haus_u(R) \le C_\alpha \vol(B_R)/R^p$, where the constant $C_\alpha$ depends on $\alpha$. If $\{R_j\}$ is a sequence realizing the liminf in \eqref{volgrowth_sigmazero_Linfty},
$$
\limsup_{j \ra \infty} \frac{\log \haus_u(R_j)}{\log R_j} \le \lim_{j \ra \infty} \frac{\log \vol(B_{R_j})}{\log R_j} - p \doteq C_* < \infty.
$$
Inserting into \eqref{andaaa} and letting $j\ra \infty$ we obtain
$$
0 \le - \frac{S}{2} + C_*,
$$
that leads to a contradiction provided that $S$ is chosen large enough. Note that this conclusion does not need $\gamma$ to be large enough, in other words, we showed that $u$ cannot be a non-constant solution of  \eqref{ineq_superlevel} on any upper level set $\Omega_\gamma$.
\end{proof}

We are now ready to prove Theorem \ref{teo_main}.

\begin{proof}[Proof (of Theorem \ref{teo_main})] We first show that $u$ is bounded above. If not, using \eqref{assum_main_2_fondo} we deduce that, for $\eta > 0$ sufficiently large, $u$ would be a non-constant solution of 
\begin{equation}\label{rrrrr}
\Delta_\varphi u  \ge K (1+r)^{-\mu}u^\omega \frac{\varphi(|\nabla u|)}{|\nabla u|^\chi} \qquad \text{on } \, \Omega_\eta = \{x \in M \ : \ u(x)>\eta\} \neq \emptyset,
\end{equation}
for some $K>0$, contradicting Proposition \ref{lem_importante}. Next, we invoke Theorem \ref{teo_main_2} with $\sigma =0$ to deduce that $f(u^*) \le 0$, concluding the proof. If $\mu=\chi+1$ and $\chi>0$, we could argue $f(u^*) \le 0$ directly from Step 6: indeed, if $f(u^*)>0$, then by continuity $u$ would be a non-constant solution of \eqref{rrrrr} on $\Omega_\eta$ for a suitable $K>0$ and $\eta$ close enough to $u^*$. This is impossible by what we observed at the end of Step 6.
\end{proof}

\subsubsection{Yamabe type equations}
With the aid of Theorem \ref{teo_main}, we are able to improve on various geometric corollaries of \cite[Thm. 4.8]{prsmemoirs}. By a way of example, we consider the following conformal rigidity result for manifolds with negative scalar curvature, first investigated by M. Obata in \cite{obata_conf} (in the compact case) and S.T. Yau in \cite{yau_conf}. The geometric conditions in their main theorems have later been  substantially weakened in \cite[Thm. 4.9]{prsmemoirs}, and our next corollary is a mild generalization of it.

\begin{corollary}
Let $(M,\metric)$ be a complete Riemannian manifold of dimension $m \ge 2$ whose scalar curvature $R(x)$ satisfies 
$$
R(x) \le - C\big( 1+r(x)\big)^{-\mu} \qquad \text{on } \, M, 
$$
for some constants $\mu \in \R$, $C \in \R^+$. If either
$$
\begin{array}{lll}
\mu < 2 & \text{and} & \disp \qquad \liminf_{r \ra \infty} \frac{\log\vol(B_r)}{r^{2-\mu}} < \infty, \qquad \text{or} \\[0.4cm]
\mu = 2 & \text{and} & \disp \qquad \liminf_{r \ra \infty} \frac{\log\vol(B_r)}{\log r} < \infty, 
\end{array}
$$
then any conformal diffeomorphism of $M$ preserving $R$ is an isometry.
\end{corollary}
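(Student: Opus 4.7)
\medskip

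\noindent \textbf{Proof proposal.}
Let $\psi:M\to M$ be a conformal diffeomorphism of the complete Riemannian manifold $(M,\metric)$ with $\psi^{*}\metric = e^{2u}\metric$ for some $u\in C^{\infty}(M)$, and assume that $R\circ\psi=R$. The plan is to convert the hypothesis $\psi^{*}R=R$ into a differential inequality of the form $(P_{\ge})$ that fits the hypotheses of Theorem~\ref{teo_main}, then to derive $u\le 0$ on $M$, and finally to apply the same argument to $\psi^{-1}$ in order to conclude $u\equiv 0$. First I would recall the transformation rule of the scalar curvature under the conformal change $\bar\metric=e^{2u}\metric$, namely
\[
R \;=\; e^{2u}\bar R \,+\, 2(m-1)\Delta u \,+\, (m-1)(m-2)|\nabla u|^{2}.
\]
Since $\bar R=R\circ\psi=R$ by assumption, this becomes
\[
2(m-1)\Delta u \,+\, (m-1)(m-2)|\nabla u|^{2} \;=\; -R\,(e^{2u}-1).
\]

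For $m\ge 3$, the natural move is to pass to $v=e^{(m-2)u/2}$, which is smooth and positive on $M$. A direct computation (using $\nabla v=\tfrac{m-2}{2}v\nabla u$ to cancel the quadratic gradient term) turns the previous identity into the familiar Yamabe-type equation
\[
\Delta v \;=\; \frac{m-2}{4(m-1)}\,(-R)\,v\bigl(v^{\frac{4}{m-2}}-1\bigr).
\]
Using the hypothesis $-R(x)\ge C(1+r(x))^{-\mu}$, on the open set $\{v>1\}=\{u>0\}$ the function $v$ weakly satisfies
\[
\Delta v \;\ge\; \tilde C\,(1+r)^{-\mu}\,v\bigl(v^{\frac{4}{m-2}}-1\bigr).
\]
Set $w=v-1$ and define $f(t)=(1+t)\bigl((1+t)^{4/(m-2)}-1\bigr)$ for $t\ge 0$, extended by $0$ to $t\le 0$; then $f$ is continuous on $\R$, $f(0)=0$, $f>0$ on $\R^{+}$, and $f(t)\asymp t^{(m+2)/(m-2)}$ as $t\to\infty$. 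The inequality above reads $\Delta w\ge \tilde C(1+r)^{-\mu}f(w)$ on $\{w>0\}$, and since $f(0)l(0)=0$ with $l\equiv 1$, the Pasting Lemma~\ref{lem_pasting} allows me to replace $w$ with $w_{+}$ and obtain a global $\lip_{\loc}$ solution on $M$.

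At this point I am precisely in the setting of Theorem~\ref{teo_main} with $\varphi(t)=t$, $p=\bar p=2$, $l\equiv 1$, $\chi=1$, $\omega=(m+2)/(m-2)>1=\chi$, and $b(x)=\tilde C(1+r)^{-\mu}$. Note that $l\equiv 1$ satisfies $l(t)\ge \varphi(t)/t^{\chi}$ exactly for $\chi=1$, and our volume-growth hypothesis is the one in \eqref{volgrowth_sigmazero_Linfty_inthetheorem} with $\chi+1-\mu=2-\mu$ in the case $\mu<2$, and with $\chi+1=2$ (no $V_\infty\le p$ restriction since $\chi>0$) in the case $\mu=2$. The theorem then yields $w_{+}^{*}<\infty$ and $f(w_{+}^{*})\le 0$, which forces $w_{+}^{*}=0$; hence $u\le 0$ on $M$.

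For the symmetric conclusion, I would observe that $\psi^{-1}$ is also a conformal diffeomorphism preserving $R$, and $(\psi^{-1})^{*}\metric = e^{-2u\circ\psi^{-1}}\metric$; repeating the argument gives $-u\circ\psi^{-1}\le 0$ on $M$, i.e. $u\ge 0$ on $M$. Combining, $u\equiv 0$, so $\psi^{*}\metric=\metric$ and $\psi$ is an isometry. The case $m=2$ is strictly simpler: the transformation rule $\bar R=e^{-2u}(R-2\Delta u)$ gives directly $\Delta u=\tfrac{-R}{2}(e^{2u}-1)$, and the same argument with $f(t)=e^{2t}-1$ (which dominates any polynomial as $t\to\infty$, so any $\omega>\chi=1$ works) applies verbatim. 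The only mildly delicate point in the whole plan is the choice $\chi=1$ for the gradient exponent in Theorem~\ref{teo_main}: it is forced by the compatibility condition $l(t)\ge C\varphi(t)/t^{\chi}$ with $l\equiv 1$ and $\varphi(t)=t$, and it is exactly what enables us to reach the full range $\mu\le 2$ predicted by the statement.
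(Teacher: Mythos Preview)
Your approach is essentially the paper's: reduce to the Yamabe-type equation for the conformal factor and feed it into Theorem~\ref{teo_main} with $\varphi(t)=t$, $l\equiv 1$, $\chi=1$, $\omega=(m+2)/(m-2)$ (resp.\ any $\omega>1$ when $m=2$), so that the volume hypotheses become exactly those in the statement.

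Two remarks on the execution. First, the pasting step is both unnecessary and not quite justified as written. Lemma~\ref{lem_pasting} requires $w$ to solve $(P_{\ge})$ on all of $M$, but with your truncated $f$ one has $\Delta w<0$ on $\{w<0\}$ while the right-hand side vanishes there, so $w$ is not a global subsolution and the lemma does not apply directly. The fix is immediate and is what the paper does: since $v$ satisfies the \emph{equation} $\Delta v = b(x)\,h(v)$ on all of $M$ with $b(x)=\tfrac{m-2}{4(m-1)}(-R(x))\ge C'(1+r)^{-\mu}$ and $h(t)=t^{(m+2)/(m-2)}-t$, and $h(t)\ge C''t^{(m+2)/(m-2)}$ for $t\gg 1$, Theorem~\ref{teo_main} applies directly to $v$ to give $h(v^*)\le 0$, i.e.\ $v^*\le 1$. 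No shift and no pasting are needed. Second, your use of $\psi^{-1}$ to obtain the reverse inequality is cleaner than the paper's shorthand ``apply to $-u$'': for $m\ge 3$ the function $-v$ is negative and one must extend $h$ to $\R$ with some care before invoking the theorem again, whereas passing to $\psi^{-1}$ simply reruns the identical argument with conformal exponent $-u\circ\psi^{-1}$.
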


\begin{proof}
Let $T : (M,\metric) \ra (M,\metric)$ be a conformal diffeomorphism, and let $\metricN = T^*\metric = \lambda^2 \metric$ be the conformally deformed metric, for $0< \lambda \in C^\infty(M)$. If $m \ge 3$, writing $\lambda= u^{\frac{2}{m-2}}$ then it is well known that $u$ solves
$$
\Delta u = \frac{R}{c_m}u - \frac{\bar R}{c_m} u^{\frac{m+2}{m-2}} \qquad \text{on } \, M, 
$$
where $\bar R$ is the scalar curvature of $\metricN$, $\Delta$ is the Laplacian of the background metric $\metric$, and $c_m = \frac{4(m-1)}{m-2}$. On the other hand, if $m= 2$, writing $\lambda = e^u$ it holds
$$
2\Delta u = R - \bar R e^{2u} \qquad \text{on } \, M.
$$
Therefore, if $T$ preserves the scalar curvature, 
$$
\Delta u = -R(x)f(u), \qquad \text{with} \qquad f(u) = \left\{ \begin{array}{ll}
\disp \frac{1}{c_m} \left[ u^{\frac{m+2}{m-2}}-u\right] & \quad \text{if } \, m\ge 3, \\[0.4cm]
\disp \frac{1}{2} \left[ e^{2u}-1\right] & \quad \text{if } \, m=2.
\end{array}\right.
$$
We now apply Theorem \ref{teo_main} with $b(x) = -R(x)$, $\varphi(t)=t$ and $\chi=1$ both to $u$ and to $-u$ to deduce that $u$ is bounded and $f(u^*) \le 0 \le f(u_*)$. Hence, $u \equiv 1$ if $m \ge 3$, respectively $u\equiv 0$ if $m =2$, and $T$ is therefore an isometry. 
\end{proof}

For many other applications to Geometry, we refer the reader to \cite{prsmemoirs, AMR_book}. Next, we focus on the mean curvature operator.

\subsubsection{The capillarity equation}\label{subsub_capillarity}

As observed in the Introduction, global solutions $u : \R^m \ra \R$ of the capillary equation

\begin{equation}\label{capillarity_2}
\diver \left( \frac{\nabla u}{\sqrt{1+|\nabla u|^2}}\right) = \kappa(x) u
\end{equation}
have been considered in \cite{tkachev, nu}, with subsequent improvements in \cite{Serrin_4, farinaserrin1}. Combing their results, $u$ must vanish identically provided that 
\begin{equation}\label{lower_kappa_capi}
\kappa(x) \ge C\big( 1+ r(x)\big)^{-\mu}
\end{equation}
on $\R^m$, for some constants $C>0$ and $\mu<2$. In fact, in \cite{farinaserrin1} the authors investigated a more general class of equations including
\begin{equation}\label{capillarity_general}
\diver \left( \frac{\nabla u}{\sqrt{1+|\nabla u|^2}}\right) = \kappa(x) |u|^{\omega-1}u
\end{equation}
on $\R^m$, with $\omega>0$ and $\kappa(x)$ enjoying \eqref{ipo_k_capillarity}, see also Section 5 in \cite{pucciserrin_2}. Applying the Corollary at p. 4387 in \cite{farinaserrin1}, $u \equiv 0$ on $\R^m$ whenever either
\begin{equation}\label{bound_capi_general}
\left\{ \begin{array}{l}
\omega>1, \quad \mu \le 2, \qquad \text{or} \\[0.3cm]
\omega \in (0,1], \quad \mu < \omega+1.
\end{array}\right.
\end{equation}
The upper bound $\mu<2$ is readily recovered for the capillarity equation ($\omega=1$). In a manifold setting, the case $\omega>1$ and $\mu<2$ was already considered in \cite[Thm. 4.8]{prsmemoirs}: with the aid of Theorem \ref{teo_main}, we can improve on it by describing the full range $\omega >0$. In particular, specifying the next theorem to the capillarity problem yields Theorem \ref{teo_capillarity_2} in the Introduction.

\begin{theorem}\label{teo_capillarity_2}
Suppose that $M$ is complete, fix $\omega >0$ and let $\kappa \in C(M)$ satisfying 
\begin{equation}\label{ipo_k_capillarity_2}
\kappa(x) \ge C\big( 1+ r(x)\big)^{-\mu} \qquad \text{on } \, M, 
\end{equation}
for some constants $C >0$ and $\mu \in \R$. Then, the only solution of \eqref{capillarity_general} on $M$ is $u \equiv 0$ whenever one of the following cases occur: 
\begin{equation}\label{ipo_volumecapillarity_2}
\begin{array}{rlll}
(i) & \disp \omega> 1, & \quad \mu < 2 & \quad \text{and} \qquad \disp \liminf_{r \ra \infty} \frac{\log\vol B_r}{r^{2-\mu}} < \infty; \\[0.5cm]
(ii) & \disp \omega> 1, & \quad \mu = 2 & \quad \text{and} \qquad \disp \liminf_{r \ra \infty} \frac{\log\vol B_r}{\log r} < \infty; \\[0.5cm]
(iii) & \disp \omega \in (0,1], & \quad \mu < \omega+1 & \quad \text{and} \qquad \disp \liminf_{r \ra \infty} \frac{\log\vol B_r}{r^{\omega+1 - \mu -\eps}} < \infty,
\end{array}
\end{equation}
for some $\eps>0$. 
\end{theorem}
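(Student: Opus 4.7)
The strategy is to reduce the equation \eqref{capillarity_general} to an inequality of the form $(P_\ge)$ to which Theorem \ref{teo_main} applies, and then to iterate the argument with $-u$ in place of $u$. Set $\varphi(t)=t/\sqrt{1+t^2}$, $b(x)=\kappa(x)$, $f(t)=|t|^{\omega-1}t$ and $l\equiv 1$. Then $\varphi$ satisfies \eqref{assumptions} together with the growth bound \eqref{assu_WPC_solop1} with $p=2$, and $f(t)\ge t^\omega$ for $t\gg 1$. Moreover, the inequality $l(t)\ge C\,\varphi(t)/t^\chi$ needed in \eqref{assum_main_2_fondo} holds on $\R^+$ (for some $C>0$) for every $\chi\in[0,1]$, since $\varphi(t)/t=(1+t^2)^{-1/2}\le 1$. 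Finally, because $\varphi$ is odd, $v=-u$ solves $\Delta_\varphi v=\kappa(x)|v|^{\omega-1}v$ as well.

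I plan to choose $\chi$ according to the case at hand so as to match the three admissible ranges in Theorem \ref{teo_main}. In case (i), take $\chi=1$: then $\mu<2=\chi+1$, $\omega>1=\chi$, and the hypothesis $\liminf\log\vol(B_r)/r^{2-\mu}<\infty$ is exactly the first line of \eqref{volgrowth_sigmazero_Linfty_inthetheorem}. In case (ii), again take $\chi=1$: now $\mu=2=\chi+1$ with $\chi>0$, and the hypothesis $\liminf\log\vol(B_r)/\log r<\infty$ is the second line of \eqref{volgrowth_sigmazero_Linfty_inthetheorem}. In case (iii), pick
\[
\chi=\omega-\eps_1,\qquad \eps_1=\tfrac{1}{2}\min\{\eps,\omega,\omega+1-\mu\}\in(0,\omega),
\]
so that $0<\chi<\omega$, $\mu<\omega+1-\eps_1=\chi+1$, and $\chi+1-\mu=\omega+1-\mu-\eps_1>\omega+1-\mu-\eps$. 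Since the exponent in the denominator increases, the corresponding $\liminf$ is bounded above by the one assumed in (iii), so the first line of \eqref{volgrowth_sigmazero_Linfty_inthetheorem} again holds.

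Assume, by contradiction, that $u$ is non-constant. In each of the three cases, applying Theorem \ref{teo_main} to the equation $\Delta_\varphi u=\kappa(x)f(u)$ yields $u^*<\infty$ and $f(u^*)\le 0$, hence $u^*\le 0$ because $f$ is strictly increasing with $f(0)=0$. Since $v=-u$ is non-constant and solves $\Delta_\varphi v=\kappa(x)f(v)$ under identical assumptions on $\varphi,\kappa,f$, the same application of Theorem \ref{teo_main} (with the same choice of $\chi$) gives $v^*\le 0$, i.e.\ $u_*\ge 0$. Combining, $u\equiv 0$, contradicting non-constancy. Therefore $u$ is constant, and plugging into \eqref{capillarity_general} forces $\kappa(x)|u|^{\omega-1}u\equiv 0$, whence $u\equiv 0$ by the positivity of $\kappa$.

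The only real choice to make is the value of $\chi$ in case (iii); the $\eps_1$ above is tailored so that the three parameter constraints $\chi\in[0,\omega)$, $\mu\le\chi+1$ and the volume control at exponent $\chi+1-\mu$ are simultaneously satisfied. I do not anticipate any technical obstruction beyond this bookkeeping, since all heavy lifting is hidden in Theorem \ref{teo_main} (whose proof in turn relies on Proposition \ref{lem_importante} and Theorem \ref{teo_main_2}).
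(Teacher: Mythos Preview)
Your proposal is correct and follows essentially the same approach as the paper: in each case you choose $\chi=1$ (for $\omega>1$) or $\chi=\omega-\eps_1$ (for $\omega\in(0,1]$) and apply Theorem \ref{teo_main} to $u$ and then to $-u$. The only cosmetic difference is that the paper explicitly inserts the artificial gradient factor $|\nabla u|^{1-\chi}/\sqrt{1+|\nabla u|^2}$ into the right-hand side before invoking Theorem \ref{teo_main}, whereas you keep $l\equiv 1$ and verify the bound $l(t)\ge C\varphi(t)/t^\chi$ directly; these are equivalent routes to the same application.
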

\begin{remark}
\emph{Case $(i)$ is due to \cite[Thm. 4.8]{prsmemoirs}. From \eqref{ipo_volumecapillarity_2}, we readily  deduce \eqref{bound_capi_general} in the Euclidean setting.
}
\end{remark}

\begin{proof}
Clearly, $u \equiv 0$ is the only constant solution. Suppose that \eqref{capillarity_general} admits a non-constant solution, set $p=2$ and define $\chi=1$ if $\omega>1$, while $\chi = \omega -\eps$ if $\omega \in (0,1]$. Up to reducing $\eps$, we can assume that $\chi \in (0,1]$. Therefore, the boundedness of $t^\chi/\sqrt{1+t^2}$ on $\R$ guarantees the existence of a constant $C_1>0$ depending on $\chi$ such that 
$$
\diver \left( \frac{\nabla u}{\sqrt{1+|\nabla u|^2}}\right) = \kappa(x) |u|^{\omega-1}u \ge C_1 \kappa(x) |u|^{\omega-1}u \frac{|\nabla u|^{1-\chi}}{\sqrt{1+|\nabla u|^2}} 
$$
on $M$. Since $\omega > \chi$, applying Theorem \ref{teo_main} with the choices $b(x)= C_1 \kappa(x)$, $f(t) = |t|^{\omega-1}t$  we deduce $u^* \le 0$. To conclude we then apply Theorem \ref{teo_main} to $-u$ to get $u \equiv 0$, contradiction. 
\end{proof}

\subsection{Other ranges of parameters}
In our investigation of problem $(P_\ge)$, we mostly assumed \eqref{assum_main_2_fondo} in the parameter range
$$
\chi \ge 0, \qquad \mu \le \chi+1.
$$
The main reason for this choice was the possibility to obtain maximum principles at infinity for the operator $(bl)^{-1}\Delta_\varphi$. However, in the recent literature some interesting results in Euclidean space give subtle hints to grasp how Geometry comes into play for other ranges of $\chi,\mu$. To our knowledge, the problem is still completely open in a manifold setting. 
\begin{remark}[\textbf{The range $\mu > \chi+1$}]
\emph{This case is considered in \cite{farinaserrin1, farinaserrin2, pucciserrin_2}. In particular, we quote \cite[Thm. 3]{farinaserrin2} where the authors establish a Liouville theorem under the restriction
\begin{equation}\label{conditions_finali}
\omega> \max\{\chi,0\}, \qquad \frac{\mu-\chi -1}{\omega- \chi} < \frac{p-m}{p-1},
\end{equation}
see also Thm. 2 and Ex. 3 in \cite{farinaserrin1}. Note that $\mu > \chi+1$ may enjoy \eqref{conditions_finali} only if $p> m$. Further results for large $\mu$ can be found in Theorems 4, 8 and 12 in \cite{farinaserrin2}, Thm. C in \cite{farinaserrin1}, Thms. 1.3 and 5.3 in \cite{pucciserrin_2}.
}
\end{remark}
\begin{remark}[\textbf{The range $\chi < 0$}]\label{rem_chiminorzero}
\emph{This corresponds to a gradient dependence $l$ that is allowed to vanish with high order in $t=0$, and we quote \cite[Thm. 11.4]{dambrosiomitidieri_2}. There, the conclusions of Theorem \ref{teo_main} are shown to hold when \eqref{assum_main_2_fondo} holds with $l(t) \ge Ct^{p-1-\chi}$ and $\omega = 0$, provided that
\begin{equation}\label{paracomchi}
\mu < 1, \qquad -\left[\frac{1-\mu}{m-1}\right](p-1) \le \chi < 0.
\end{equation}
Note that, as shown in Remark 11.8 of \cite{dambrosiomitidieri_2}, when $\mu=0$ the value $\frac{1-\mu}{m-1}(p-1)$ in \eqref{paracomchi} is sharp. A similar bound also appears in Thms. 2 and 7 in \cite{farinaserrin2}. Related interesting results, for possibly singular $b(x)$ and still in the range $\chi <0$, are given in \cite{lili}.
}
\end{remark}

\section{Appendix: models and comparisons}
Comparison theory in Riemannian geometry allows to deduce the behaviour of relevant geometric quantities on $M$ from the knowledge of the corresponding ones on a simpler, rotationally symmetric model example, provided that the curvatures of $M$ are controlled by those of the model. For $\lambda>0$, let $\Sph^{m-1}_\lambda = (\Sph^{m-1}, \metricN_\lambda)$ denote the round sphere of radius $1/\lambda$. Given $0<g \in C^2(\R^+)$, a model $(M_{g,\lambda}, \di s^2_{g,\lambda})$ is topologically
\begin{equation}\label{model}
M_{g,\lambda} = \R^+ \times \Sph^{m-1}_\lambda,
\end{equation}
endowed with the rotationally symmetric, warped product metric which in coordinates $(r,\theta) \in \R^+\times \Sph^{m-1}_\lambda$ writes
$$
\di s_{g,\lambda}^2 = \di r^2 + g(r)^2 \metricN_\lambda.
$$
When $\lambda =1$, $g(0)=0$ and $g'(0)=1$, $\di s_{g,\lambda}^2$ extends to a $C^2$-metric on the completion of \eqref{model}, which is topologically $\R^m$. Examples include
\begin{itemize}
\item[-] $\R^m$, recovered for $g(r) = r$;
\item[-] the hyperbolic space $\HH^m_\kappa$ of curvature $-\kappa^2$, for which $g(r) = \kappa^{-1} \sinh(\kappa r)$.
\end{itemize}
However, in the study of $\csp$ we shall also use models whose function $g$ extends smoothly to a positive value at $r=0$, that is, the completion $\overline{M}_{g,\lambda}$ is a manifold with boundary. In both the cases $r$ coincides with the distance to $\{r=0\}$, and a direct computation shows
$$
K_\rad = - \frac{g''(r)}{g(r)}, \qquad \nabla \di r = \frac{g'(r)}{g(r)} \Big( \di s_{g,\lambda}^2 - \di r \otimes \di r \Big), \qquad \Delta r = (m-1)\frac{g'(r)}{g(r)}.
$$
In particular, if $\overline{M}_{g,\lambda}$ has a boundary, the second and third imply that second fundamental form $\II_{-\nabla r}$ of $\partial \overline{M}_{g, \lambda}$ and its (unnormalized) mean curvature $H_{-\nabla r}$ in the direction $-\nabla r$ satisfy
$$
\II_{-\nabla r} = \frac{g'(0)}{g(0)} \di s_{g, \lambda}^2, \qquad H_{-\nabla r} = (m-1)\frac{g'(0)}{g(0)}.
$$
Although the comparison theorems are well-known, we prefer to write down explicitly the results with (sketchy) proofs, for the convenience of the reader. We derive our main estimates in the less standard case when the model has a boundary.
%

%
%
%
%
%
%
%
%
Let $(M^m,\metric)$ be a complete manifold, and fix an origin $\mathcal{O} \subset M$. The set $\cal O$ can be either a single point, or a open subset with smooth boundary, and here we focus on the second case. Denote with $D_{\cal O}$ the maximal domain where the normal exponential map
$$
\exp^\perp \ \ :  \ \ \mathscr{D}_\mathcal{O} \subset T \mathcal{O}^\perp \longrightarrow \mathcal{D}_{\mathcal{O}} \subset M \backslash \mathcal{O}
$$
is the inverse of a chart ($T \mathcal{O}^\perp$ is the subset of the normal bundle of $\partial \mathcal{O}$ made of outwards pointing normal vectors). It is known that $\mathcal{D}_\mathcal{O}$ is open and that $r(x) = \dist(x,\cal O)$ is smooth\footnote{Observe that, since $\partial \mathcal{O}$ is a smooth hypersurface, $r$ is smooth up to $\partial \mathcal{O}$.} on $\mathcal{D}_{\cal O}$. Its complementary $M \backslash ( \mathcal{O} \cup D_{\mathcal{O}})$ is a closed set of measure zero, called the cut-locus of $\mathcal{O}$ and denoted with $\cut(\mathcal{O})$. \par
The starting point of comparison theory is the following construction: for each $x \in D_{\cal O}$, let $\gamma : [0, r(x)] \ra M$ be the unique unit speed, minimizing geodesic normal to $\cal O$, starting from $\partial \cal O$ and ending at $x$. Let $R$ denote tha curvature tensor of $M$, fix a parallel, orthonormal basis $\{\gamma', E_2, \ldots E_m\}$ along $\gamma$ and note that $\{E_\alpha(0)\}_{\alpha \ge 2}$ span $T_{\gamma(0)} \partial \cal O$. Differentiating twice the identity $|\nabla r|^2=1$, using the Ricci commutation rules and contracting with respect to $\{E_\alpha\}$, it turns out that the matrix function
$$
B : [0, r(x)] \ra \sym^2(\R^{m-1}), \qquad B_{\alpha\beta}(t) = \nabla \di r\big(E_\alpha(t), E_\beta(t)\big)
$$
solves the matrix Riccati equation
\begin{equation}\label{matrixriccati}
B' + B^2 + R_\gamma = 0, \qquad \text{where} \qquad (R_\gamma)_{\alpha \beta}(t) = R\big(\nabla r, E_\alpha(t), \nabla r, E_\beta(t)\big),
\end{equation}
with initial condition
\begin{equation}\label{initialcond_riccati}
B(0)_{\alpha\beta} = \II_{-\nabla r}\big( E_\alpha(0), E_\beta(0)\big),
\end{equation}
where $\II_{-\nabla r}$ is the second fundamental form of $\partial \cal O$ in the inward pointing direction $-\nabla r$. We recall the matrix Riccati comparison theorem, as stated in \cite{eschenburghheinze} (see also \cite[Thm. 1.14]{bmr2}).

\begin{theorem}\label{teo_matrixricca}
Let $R_1, R_2 : [0,T] \ra \sym^2(\R^{m-1})$ be continuous, and let $B_1,B_2 : (0,T] \ra \sym^2(\R^n)$ solve
$$
B_1' + B_1^2 + R_1 \le 0, \qquad B_2' + B_2^2 + R_2 \ge 0 \quad \text{on } \, (0,T],
$$
with initial condition $(B_1-B_2)'(0^+) \le 0$. If $R_1 \ge R_2$ on $[0,T]$, then
$$
B_1 \le B_2 \qquad \text{on } \, (0,T],
$$
and $\dim \ker(B_2-B_1)$ is non-increasing. In particular, if $B_1(t_0)=B_2(t_0)$ for some $t_0$, then $B_1\equiv B_2$ on $[0, t_0]$.
\end{theorem}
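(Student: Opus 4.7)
The plan is to reduce the matrix Riccati inequality to a monotonicity statement for a gauged quantity. First I would set $D := B_2 - B_1$ and $S := B_1 + B_2$ and verify the elementary identity
$$B_1^2 - B_2^2 \;=\; -\tfrac{1}{2}\big( SD + DS \big),$$
which holds because $B_1, B_2$ are symmetric (expand the anticommutator $SD+DS$ and cancel cross terms). Subtracting the two Riccati inequalities then yields the Lyapunov-type differential inequality
$$D'(t) + \tfrac{1}{2}\bigl( S(t)D(t) + D(t) S(t) \bigr) \;\ge\; R_1(t) - R_2(t) \;\ge\; 0 \qquad \text{on } (0,T].$$

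The key step is to remove the first-order term by an integrating factor. Let $Y : (0,T] \to \mathrm{GL}(m-1, \R)$ be the fundamental solution of the linear matrix ODE $Y' = \tfrac{1}{2} S Y$ with, say, $Y(t_0) = I$ at some $t_0 \in (0,T]$; since $S$ is continuous on $(0,T]$, $Y$ exists and is invertible throughout. Using $S^T = S$ one computes
$$\frac{d}{dt}\bigl( Y^{T} D Y \bigr) \;=\; Y^{T}\Bigl[ D' + \tfrac{1}{2}(SD+DS) \Bigr] Y \;\ge\; Y^{T}(R_1 - R_2) Y \;\ge\; 0,$$
so $Y^{T} D Y$ is monotonically non-decreasing as a quadratic form on $(0,T]$. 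To extract the sign of $D$ itself I would feed in the initial condition $(B_1-B_2)'(0^+) \le 0$: in the geometric setting to which the theorem is applied, the two Riccati solutions share the same singular behaviour at $t=0^+$, so $D(0^+)$ exists, and the derivative sign together with a limiting choice $t_0 \to 0^+$ of the normalisation of $Y$ forces $Y^{T} D Y \ge 0$ near zero, and hence (by monotonicity) on all of $(0,T]$. Invertibility of $Y$ then gives $D \ge 0$, i.e.\ $B_1 \le B_2$.

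For the kernel statement, invertibility of $Y$ yields $\ker D(t) = Y(t)\,\ker\bigl(Y^{T} D Y\bigr)(t)$, so it suffices to prove that if $A : (0,T] \to \sym^2(\R^{m-1})$ satisfies $A \ge 0$ and is non-decreasing as a quadratic form, then $\dim \ker A(t)$ is non-increasing. This is a one-line argument: for $t_1 < t_2$ and $v \in \ker A(t_2)$ one has
$$0 \;\le\; \langle A(t_1) v, v \rangle \;\le\; \langle A(t_2) v, v \rangle \;=\; 0,$$
so $\langle A(t_1) v, v \rangle = 0$, and positive-semidefiniteness of $A(t_1)$ forces $A(t_1) v = 0$; hence $\ker A(t_2) \subseteq \ker A(t_1)$. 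The ``rigidity'' assertion (if $B_1(t_0) = B_2(t_0)$ then $B_1 \equiv B_2$ on $[0,t_0]$) drops out by applying the same inclusion with $v$ ranging over a basis.

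The main obstacle I expect is the rigorous handling of the initial condition, which in the theorem is phrased at the level of the \emph{derivative} $(B_1-B_2)'(0^+) \le 0$ rather than at the level of values. The Riccati solutions arising in comparison geometry typically exhibit a pole singularity at $t=0$ (they behave like $I/t$ when the origin is a point, or like the second fundamental form when the origin is a hypersurface), and one has to ensure that these singular asymptotics cancel in the difference $D$ so that $D(0^+)$ makes sense and is non-negative. Making this passage precise, so that the monotonicity of $Y^{T} D Y$ can be propagated from $0^+$ rather than from some positive time, is the delicate technical heart of the Eschenburg–Heintze argument, and would take some care to adapt from the point-origin case to the hypersurface-origin case relevant here.
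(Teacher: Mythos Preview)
The paper does not actually prove this theorem: it is merely recalled from \cite{eschenburghheinze} (and \cite[Thm.~1.14]{bmr2}) and then applied. So there is no ``paper's own proof'' to compare against; your proposal supplies an argument where the paper only cites one.

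Your approach is essentially the Eschenburg--Heintze one: gauge the difference $D = B_2 - B_1$ by the fundamental solution $Y$ of $Y' = \tfrac{1}{2}SY$ to reduce the Lyapunov inequality to monotonicity of the quadratic form $Y^T D Y$, and then read off both the sign of $D$ and the kernel inclusion from positive-semidefiniteness. The algebra and the kernel argument are correct as you state them. Your caveat about the initial condition is well placed: as phrased in the paper the hypothesis is on $(B_1-B_2)'(0^+)$, but if you look at how the theorem is actually invoked (Theorems~\ref{teo_hessiancomp} and~\ref{teo_laplaciancomp}), the matching is done at the level of values, $B(0) \ge \bar B(0^+)$, which is the hypersurface-origin case you mention and for which your argument goes through cleanly with $Y$ normalised at $t_0 = 0$ and no singular asymptotics to cancel.
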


The Hessian comparison theorem is a direct corollary. We recall that the radial sectional curvature $K_\rad$ of $M$ is the sectional curvature restricted to $2$-planes containing $\nabla r$.
\begin{theorem}[Hessian comparison from below]\label{teo_hessiancomp}
Let $(M^ m,\metric)$ be a complete manifold, let $\cal O$, $D_{\cal O}$, $r$ be as above and suppose that
\begin{equation}\label{ipo_sect_compa}
\mathrm{K}_\rad(\pi_x) \le - G\big(r(x)\big) \qquad \forall  \, x \in D_{\cal O}, \ \pi_x \subseteq T_xM \ \text{ $2$-plane containing $\nabla r(x)$,}
\end{equation}
for some $G \in C^2(\R^+_0)$. Fix $\underline{\lambda} \in \R$ such that
\begin{equation}\label{ipo_second_compa}
\inf_{\partial \cal O} \II_{-\nabla r} \ge \underline{\lambda},
\end{equation}
consider the solution $g$ of
\begin{equation}\label{compa_sect}
\left\{ \begin{array}{l}
g'' - Gg \le 0 \qquad \text{on } \, \R^+ \\[0.2cm]
g(0)=1, \quad g'(0^+) \le \underline{\lambda},
\end{array}\right.
\end{equation}
and let $[0,R)$ be maximal interval where $g >0$. Then,
\begin{equation}\label{lowerbound_hess}
\nabla \di r(x) \ge \frac{g'(r(x))}{g(r(x))} \Big( \metric - \di r \otimes \di r \Big) \qquad \text{for } \, x \in D_{\cal O} \cap B_R(\cal O).
\end{equation}
\end{theorem}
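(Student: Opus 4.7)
Fix $x \in D_{\mathcal{O}} \cap B_R(\mathcal{O})$ and let $\gamma : [0, r(x)] \to M$ be the unit speed minimizing geodesic starting at $\partial\mathcal{O}$ and ending at $x$, with $\gamma'(0)$ pointing outward. Because $x$ lies in the injectivity domain, $r$ is smooth in a neighborhood of $\gamma((0,r(x)])$ and the previous discussion applies: choosing a parallel orthonormal frame $\{\gamma', E_2,\dots,E_m\}$ along $\gamma$, the matrix $B(t) = \big(\nabla \di r(E_\alpha(t),E_\beta(t))\big)_{\alpha,\beta \ge 2}$ solves the Riccati equation \eqref{matrixriccati} on $(0,r(x)]$ with the initial condition \eqref{initialcond_riccati}. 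The plan is to compare $B$ with the scalar multiple of the identity $B_2(t) = (g'(t)/g(t))\,I_{m-1}$, which is well defined on $[0,r(x)]$ since $r(x) < R$ and $g>0$ on $[0,R)$, and then translate the matrix inequality into the pointwise lower bound \eqref{lowerbound_hess}.

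Next, I would check that $B$ and $B_2$ fit into the hypotheses of Theorem \ref{teo_matrixricca}. The curvature assumption \eqref{ipo_sect_compa} says that for each $t$ the symmetric operator $R_\gamma(t)$ satisfies $R_\gamma(t) \le -G(t)\,I$ (every unit radial sectional curvature $R(\nabla r,X,\nabla r,X) = K_{\rad}$ is $\le -G$), whence the Riccati identity gives
\begin{equation*}
B'(t) + B(t)^2 \;=\; -R_\gamma(t) \;\ge\; G(t)\,I.
\end{equation*}
On the other hand, direct differentiation yields $B_2' + B_2^2 = (g''/g)\,I \le G\,I$ by \eqref{compa_sect}. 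Thus $B_2$ plays the role of the subsolution and $B$ the role of the supersolution in Theorem \ref{teo_matrixricca} (with the common curvature term $R_1 = R_2 = -G\,I$). For the initial data, \eqref{initialcond_riccati} combined with \eqref{ipo_second_compa} gives $B(0) \ge \underline\lambda\,I$, while the condition $g'(0^+) \le \underline\lambda$ and $g(0)=1$ yields $B_2(0^+) \le \underline\lambda\,I$; this provides the comparison of initial values (and, in the derivative form stated in Theorem \ref{teo_matrixricca}, of the one-sided derivative of the difference $B_2 - B$ at $0^+$) needed to apply the theorem.

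Invoking Theorem \ref{teo_matrixricca} on $[0,r(x)]$, I conclude $B_2(t) \le B(t)$ as symmetric matrices for every $t \in (0,r(x)]$. Evaluating at $t = r(x)$ and unwrapping the definition of $B$ gives
\begin{equation*}
\nabla \di r (E_\alpha(r(x)),E_\beta(r(x))) \;\ge\; \frac{g'(r(x))}{g(r(x))}\,\delta_{\alpha\beta},
\end{equation*}
and since $\{E_\alpha(r(x))\}_{\alpha\ge 2}$ is an orthonormal basis of $(\nabla r)^\perp$ at $x$ while $\nabla \di r(\nabla r, \cdot) \equiv 0$, this is precisely the tensorial inequality \eqref{lowerbound_hess} at $x$. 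As $x$ was arbitrary in $D_{\mathcal{O}} \cap B_R(\mathcal{O})$, the conclusion follows.

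The argument is essentially a verification of hypotheses, so the only delicate point I anticipate is the behavior at the endpoint $t=0$: the scalar function $g'/g$ may tend to a finite value (when $\underline\lambda$ is finite, as here) or to $-\infty$ (in the borderline $\underline\lambda=-\infty$ interpretation), and one must match this carefully against the boundary Riccati datum $\II_{-\nabla r}$. Since $g(0)=1>0$ here, $B_2$ is continuous up to $0$ and the comparison of initial values is straightforward; the subtlety that usually arises in the pointed case $\mathcal{O}=\{o\}$ (where $g(0)=0$ forces $B_2(0^+)=+\infty$ and one compares asymptotic expansions) does not appear in the present setting with smooth boundary.
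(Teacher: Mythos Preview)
Your proof is correct and follows essentially the same route as the paper: both set $B_2=(g'/g)\,I_{m-1}$, verify via \eqref{compa_sect} that it is a Riccati subsolution while $B$ is a supersolution thanks to the curvature bound \eqref{ipo_sect_compa} and the initial condition \eqref{ipo_second_compa}, invoke Theorem~\ref{teo_matrixricca} to get $B\ge (g'/g)I$, and then use $\nabla\di r(\nabla r,\cdot)=0$ to upgrade to \eqref{lowerbound_hess}. The only cosmetic difference is a sign convention in passing from $K_{\rad}\le -G$ to the inequality on $R_\gamma$ (you write $R_\gamma\le -G\,I$, the paper writes $R_\gamma\ge G\,I$), but both yield the same differential inequality $B'+B^2\ge G\,I$ needed for the comparison.
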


\begin{proof}
Let $x \in D_{\cal O}$ and let $\gamma, B, R_\gamma$ be as above. Clearly, by \eqref{ipo_sect_compa} $R_\gamma \ge G(r)I_{m-1}$ and $B(0)\ge \underline{\lambda} I_{m-1}$. Since the function $\bar B = g'/g I_{m-1}$ solves
\begin{equation}
\left\{ \begin{array}{l}
\bar B' + \bar B^2 - G(t) \le 0 \qquad \text{on } \, (0,R), \\[0.2cm]
\bar B(0^+) \le \underline{\lambda} I_{m-1},
\end{array}\right.
\end{equation}
by Riccati comparison we get $B \ge \bar B$ on $[0, \min\{r(x),R\})$. In other words, $\nabla \di r \ge \frac{g'(r)}{g(r)} \metric$ on $\nabla r^\perp$. Taking into account that $\nabla \di r(\nabla r, \cdot)=0$ (differentiate $|\nabla r|^ 2=1$), the estimate \eqref{lowerbound_hess} follows at once.
\end{proof}

The Laplacian comparison from below simply follows by taking traces in \eqref{lowerbound_hess}, and the Hessian comparison from above by reversing all the inequalities in \eqref{ipo_sect_compa}, \eqref{ipo_second_compa} (that is, assume $\sup_{\partial \cal O} \II_{-\nabla r} \le \underline{\lambda}$), \eqref{compa_sect} and \eqref{lowerbound_hess}. As a matter of fact, for the Hessian comparison from above, one can also prove that $D_{\cal O} \subset B_R(\cal O)$ and that \eqref{lowerbound_hess} (with the reversed sign) holds on all of $M \backslash \cal O$ in the support sense (Calabi sense, see \cite{petersen}).\\
The Laplacian comparison from above, on the other hand, requires a milder curvature requirement and an initial estimate just involving the unnormalized mean curvature $H_{-\nabla r}$ of $\partial \cal O$.

\begin{theorem}[Laplacian comparison from above]\label{teo_laplaciancomp}
Let $(M^ m,\metric)$ be a complete manifold, let $\cal O$, $D_{\cal O}$, $r$ be as above and suppose that
\begin{equation}\label{ipo_ricc_compa}
\Ricc(\nabla r, \nabla r)(x) \ge - (m-1)G\big(r(x)\big) \qquad \forall  \, x \in D_{\cal O},
\end{equation}
for some $G \in C^2(\R^+_0)$. Fix $\overline{\lambda} \in \R$ such that
\begin{equation}\label{ipo_media_compa}
\sup_{\partial \cal O} H_{-\nabla r} \le (m-1)\overline{\lambda},
\end{equation}
consider the solution $g$ of
\begin{equation}\label{compa_ricci}
\left\{ \begin{array}{l}
g'' - Gg \ge 0 \qquad \text{on } \, \R^+ \\[0.2cm]
g(0)=1, \quad g'(0^+) \ge \overline{\lambda},
\end{array}\right.
\end{equation}
and let $[0,R)$ be maximal interval where $g >0$. Then, $D_{\cal O} \subset B_R(\cal O)$ and
\begin{equation}\label{upperbound_lapla}
\Delta r(x) \le (m-1)\frac{g'(r(x))}{g(r(x))}
\end{equation}
holds pointwise on $D_{\cal O}$ and weakly on $M \backslash \mathcal{O}$.
\end{theorem}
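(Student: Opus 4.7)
The plan is to reduce everything to a scalar Riccati comparison by tracing the matrix Riccati equation \eqref{matrixriccati} along a unit-speed minimizing geodesic $\gamma \colon [0,r(x)] \to M$ normal to $\partial\mathcal{O}$ and ending at a given $x \in D_{\mathcal{O}}$. Setting $\phi(t) = \mathrm{tr}\,B(t) = \Delta r(\gamma(t))$ and taking traces in $B' + B^2 + R_\gamma = 0$, together with the elementary Cauchy--Schwarz inequality $|B|^2 = \mathrm{tr}(B^2) \ge (\mathrm{tr}\,B)^2/(m-1)$ for $(m-1)\times(m-1)$ symmetric matrices and the Ricci lower bound \eqref{ipo_ricc_compa}, one obtains the scalar differential inequality
\[
\phi' + \frac{\phi^2}{m-1} \le (m-1)\,G(t) \qquad \text{on } (0, r(x)).
\]
On the other hand, setting $\psi(t) = (m-1) g'(t)/g(t)$ on $[0,R)$ and using $g'' \ge Gg$ from \eqref{compa_ricci}, a direct computation gives $\psi' + \psi^2/(m-1) \ge (m-1)G(t)$. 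The initial datum \eqref{ipo_media_compa} together with $g'(0^+) \ge \bar\lambda$ yields $\phi(0) = H_{-\nabla r}(\gamma(0)) \le (m-1)\bar\lambda \le \psi(0^+)$.

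Next I would invoke the standard scalar Riccati comparison (a Grönwall-type argument on the difference $\phi-\psi$, using that $\phi + \psi$ is locally bounded on compacta of $(0,R)$) to conclude
\[
\phi(t) \le \psi(t) \qquad \text{on } [0,\min\{r(x), R\}),
\]
which is exactly the pointwise bound \eqref{upperbound_lapla} as long as $r(x) < R$. To rule out $r(x) \ge R$, I argue by contradiction: if some $x \in D_{\mathcal{O}}$ satisfied $r(x) \ge R$, then $\gamma$ would be smooth and minimizing up to $t = r(x)$, so $\phi$ would remain smooth and finite on a neighbourhood of $t = R$; however, by maximality of $[0,R)$ we have $g(R^-)=0$ with $g>0$ before, forcing $g'(R^-) \le 0$ and hence $\psi(t) \to -\infty$ as $t \to R^-$, which is incompatible with $\phi \le \psi$ on $[0,R)$. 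Therefore $D_{\mathcal{O}} \subset B_R(\mathcal{O})$ and the pointwise inequality holds throughout $D_{\mathcal{O}}$.

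To extend \eqref{upperbound_lapla} as a weak inequality on all of $M \setminus \mathcal{O}$, I would use Calabi's upper-barrier trick at cut points. Given $x_0 \in \mathrm{cut}(\mathcal{O}) \setminus \mathcal{O}$, fix a unit-speed minimizing normal geodesic $\gamma$ from $\partial\mathcal{O}$ to $x_0$, and for small $\epsilon>0$ define $r_\epsilon(y) := \epsilon + \mathrm{dist}(y,\gamma(\epsilon))$. Then $r_\epsilon \ge r$ in a neighbourhood of $x_0$ by the triangle inequality, $r_\epsilon(x_0)=r(x_0)$, and $r_\epsilon$ is smooth at $x_0$ since $x_0 \notin \mathrm{cut}(\gamma(\epsilon))$ for $\epsilon>0$ small. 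The pointwise comparison already established, applied to the distance from $\gamma(\epsilon)$ (treated as a point origin via Jacobi fields along $\gamma_{\upharpoonright[\epsilon,r(x_0)]}$ and the induced initial conditions, with $g$ replaced by a shifted model solution $g_\epsilon$ that converges to $g$ as $\epsilon \to 0$ by continuous dependence on initial data), yields $\Delta r_\epsilon(x_0) \le (m-1) g'_\epsilon(r(x_0)-\epsilon)/g_\epsilon(r(x_0)-\epsilon)$; passing to the limit $\epsilon \to 0^+$ gives the desired bound in the barrier sense at $x_0$, which by a standard argument (integrating against non-negative test functions and using that $r$ admits smooth upper barriers satisfying the pointwise bound up to an arbitrarily small error) upgrades to the weak inequality on $M \setminus \mathcal{O}$.

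The main obstacle is the third step: adapting Calabi's trick cleanly to the case where the origin $\mathcal{O}$ is a domain rather than a point, in particular verifying that the model datum effectively gets ``shifted'' in the right way, and checking that the resulting barriers $r_\epsilon$ truly give the correct limiting bound. The scalar Riccati comparison itself and the exclusion of $r(x)\ge R$ are standard once the traced inequality is set up.
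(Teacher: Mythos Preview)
Your pointwise argument (tracing the matrix Riccati equation, applying Cauchy--Schwarz/Newton, and comparing the resulting scalar Riccati inequality with $\psi = (m-1)g'/g$) is correct and is exactly the paper's approach, modulo normalization: the paper sets $u = \Delta r/(m-1)$, $\bar u = g'/g$ and then applies the matrix Riccati comparison Theorem~\ref{teo_matrixricca} to the diagonal matrices $B_1 = u\,I_{m-1}$, $B_2 = \bar u\,I_{m-1}$, which is equivalent to your direct scalar comparison. The blow-up argument for $D_{\mathcal O}\subset B_R(\mathcal O)$ is also the same.

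Where the two diverge is the weak inequality on $M\setminus\mathcal O$. The paper gives no argument at all and simply cites \cite[Lem.~2.5]{prs} and \cite[Thm.~1.19]{bmr2}. Your proposed Calabi trick, however, does not work as written, and the concern you flag is a real one, not merely a technicality. When you replace $\mathcal O$ by the point $\gamma(\epsilon)$, the relevant model function $h_\epsilon$ has the \emph{point} initial data $h_\epsilon(0)=0$, $h_\epsilon'(0)=1$, so $h_\epsilon'/h_\epsilon$ has a pole at $0$ and there is no continuous-dependence mechanism making $h_\epsilon'(\,\cdot\,-\epsilon)/h_\epsilon(\,\cdot\,-\epsilon)$ converge to $g'/g$ as $\epsilon\to 0$; the limit would instead be $h'/h$ for the solution of $h''=Gh$ with $h(0)=0$, $h'(0)=1$, which in general differs from $g'/g$ when $\mathcal O$ has boundary with finite mean curvature $\le (m-1)\bar\lambda$. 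A clean route that avoids this is to use the structure of $\Delta r$ as a Radon measure: by \eqref{carat_cuto} (from \cite{mantemasceural}) the singular part of $\Delta r$, concentrated on $\cut(\mathcal O)$, is \emph{non-positive}, so the pointwise upper bound on the full-measure set $D_{\mathcal O}$ immediately propagates to the distributional inequality on $M\setminus\mathcal O$. Alternatively, the references the paper cites integrate by parts on $D_{\mathcal O}$ and control the boundary contribution on $\partial D_{\mathcal O}\cap\cut(\mathcal O)$ directly.
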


\begin{proof}
Taking traces in \eqref{matrixriccati} and applying Newton's inequality $\mathrm{Tr}(B^2) \ge \frac{(\mathrm{Tr}(B))^2}{m-1}$ one deduces that the function
$$
u(t) = \frac{\mathrm{Tr} B(t)}{m-1} = \frac{\Delta r(\gamma(t))}{m-1}
$$
solves
$$
u' + u^2 + \Ricc(\gamma',\gamma') \le 0, \qquad u(0) = \frac{1}{m-1}H_{-\nabla r}\big(\gamma(0)\big) \le \overline{\lambda}.
$$
On the other hand, $\bar u = g'/g$ satisfy
$$
\bar u' + \bar u^2 -G \bar u \ge 0, \qquad \bar u(0^+) \ge \overline{\lambda}.
$$
Riccati comparison now applied to $B_1 = uI_{m-1}$ and $B_2 = \bar u I_{m-1}$ implies $u \le \bar u$ on $[0,\min\{R, r(x)\})$, whence \eqref{upperbound_lapla} holds on $D_{\cal O} \cap B_R$. However, $\bar u \ra -\infty$ as $t \ra R^-$, so necessarily $u$ is unbounded from below as $t \ra R^-$, which imples $r(x) < \bar R$. Hence, $D_{\cal O} \subset B_R$. The weak inequality can be proved as in \cite[Lem. 2.5]{prs} (see also \cite[Thm. 1.19]{bmr2})
\end{proof}


\begin{example} \label{ex_modellishrinking}
\emph{The initial condition satisfied by $g(r)$ is crucial for the validity of the Hessian and Laplacian comparison theorems, as illustrated by the following example. Fix $\delta \ge 1$ and consider the model $M_\delta$ with metric
$$
\di s_\delta^2 = \di t^2+ g_\delta(t)^2 \metricN_1 \qquad \text{where} \quad \left\{ \begin{array}{ll}
g_\delta \in C^2(\R^+_0) & g_\delta>0 \quad \text{on } \, \R^+ \\[0.2cm]
g_\delta(t)=t & \text{if } \, t \le 1/2 \\[0.2cm]
g_\delta(t) = \exp\{-t^\delta\} & \text{if } \, t \ge 1.
\end{array}\right.
$$
Define $\mathcal{O}= \{t<1\}$ and $G_\delta = g_\delta''(t)/g_\delta(t)$. Note that $r=t-1$ is the distance from $\cal O$, and that on $M \backslash \mathcal{O}$,
$$
\begin{array}{l}
\disp \II_{-\nabla r} = -\delta \di s_\delta^2, \qquad \Delta_\delta r = (m-1) \frac{g_\delta'(1+r)}{g_\delta(1+r)} = -\delta(m-1)(1+r)^{\delta-1} \\[0.3cm]
\Ricc_\delta (\nabla_\delta r, \nabla_\delta r) = (m-1)K_{\rad}(r) \\[0.2cm]
\qquad \qquad \qquad \quad =  -(m-1)G_\delta(r) = -(m-1)\delta \big[ -(\delta-1)(1+r)^{\delta-2}+\delta(1+r)^{2\delta-2}\big].
\end{array}
$$
Observe that $\Ricc_\delta(\nabla_\delta r,\nabla_\delta r)$ is a decreasing function of $\delta$, but also $\Delta_\delta r$ is so. This is, however, not in contradiction with Theorem \ref{teo_laplaciancomp}. Indeed, to apply the latter with $M = M_\delta$ and $G = G_{\bar \delta}$, $\delta \neq \bar \delta$, condition \eqref{ipo_ricc_compa} would imply $\delta < \bar \delta$, while \eqref{ipo_media_compa} gives $-\delta \le \overline{\lambda}$. In this way, the function $g_{\bar \delta}$ does not solve \eqref{compa_ricci} because $g_{\bar \delta}'(0) = -\bar \delta < \overline{\lambda}$.
}
\end{example}

For most manifolds the trace of inequality \eqref{lowerbound_hess}, that is,
\begin{equation}\label{Lapladasotto}
\Delta r \ge (m-1) \frac{g'(r)}{g(r)}
\end{equation}
does not hold weakly on $M \backslash \mathcal{O}$ even if $g'(r)/g(r)$ is well defined on $M \backslash \mathcal{O}$, that is, if $R = \infty$, since the singular part of the distribution $\Delta r$ acts as a negative Radon measure concentrated on the cut-locus $\cut(\mathcal{O})$. More precisely, by \cite{mantemasceural} the distribution $\Delta r$ is a Radon measure that can be written as
\begin{equation}\label{carat_cuto}
\Delta r = (\Delta r)_{AC} \di V - |\nabla^+r(x) - \nabla^-r(x)| \haus^{m-1}\llcorner\cut(\mathcal{O}),
\end{equation}
where $\haus^{m-1}$ is the $(m-1)$-dimensional Haurdorff measure, and
\begin{itemize}
\item $(\Delta r)_{AC}$ coincides with the $L^1_\loc(M\backslash \mathcal{O})$ function given by $\Delta r$ outside $\cut(\mathcal{O})$;
\item $|\nabla^+r(x) - \nabla^-r(x)|$ is a function defined on the normal cut-locus, that is, the set of non-conjugate points $x \in \cut(\mathcal{O})$ where exactly $2$ minimizing geodesics meet, and $\nabla^+r(x)$ and $\nabla^-r(x)$ are the tangent vectors of the two
geodesics at $x$.
\end{itemize}

\begin{remark}
\emph{The result in \cite{mantemasceural} is stated for $\mathcal{O}$ being a point, but the proof for smooth $\mathcal{O}$ follows verbatim, see also \cite{mantegazzamennucci} and Section 3.9 of \cite{ambrosiofuscopallara}.
}
\end{remark}

As a consequence of work of various authors (see the account in Section 1.1 of \cite{bmr2}), the complementary of the normal cut-locus has Hausdorff dimension at most $(m-2)$, and the normal cut-locus is dense in the set of non-conjugate (i.e., non-focal) cut-points. Therefore, $\Delta r$ is an absolutely continuous measure if and only if $\cut(\mathcal{O})$ consists only of conjugate points, and in this case \eqref{Lapladasotto} holds weakly on the whole of $M\backslash\mathcal{O}$. In the Introduction, and in particular in Theorem \ref{teo_CSP}, we claimed that \eqref{M1} implies that $o$ be a pole of $M$. We prove this statement in the next

%

\begin{proposition}\label{prop_pole}
In the above notation, suppose that the negative part $(\Delta r)_-$ of the measure $\Delta r$ satisfies
\begin{equation}\label{weak_laplasotto}
(\Delta r)_-  \in L^\infty_\loc(M\backslash \mathcal{O}).
\end{equation}
Then, $\mathcal{O}$ is a pole of~$M$.
\end{proposition}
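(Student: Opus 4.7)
The plan is to argue by contradiction: assume $\mathrm{cut}(\mathcal{O}) \neq \emptyset$ and derive a contradiction with the hypothesis $(\Delta r)_- \in L^\infty_\loc(M\backslash \mathcal{O})$. First I would exploit the decomposition \eqref{carat_cuto}: writing
$$
\Delta r = (\Delta r)_{AC}\,\di V \; - \; |\nabla^+ r - \nabla^- r|\,\haus^{m-1}\llcorner\cut(\mathcal{O}),
$$
the singular term is non-positive and supported on the normal cut-locus $\cut_n(\mathcal{O})$ (where $|\nabla^+ r - \nabla^- r|$ is defined), so the negative part of the measure reads
$$
(\Delta r)_- \;=\; \big((\Delta r)_{AC}\big)_-\,\di V \;+\; |\nabla^+ r - \nabla^- r|\,\haus^{m-1}\llcorner \cut_n(\mathcal{O}).
$$
The assumption forces $(\Delta r)_-$ to be absolutely continuous with respect to $\di V$, with locally bounded density. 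Since $\cut(\mathcal{O})$ has $\di V$-measure zero, this forces the singular summand to vanish identically, i.e.\ $|\nabla^+ r - \nabla^- r|\,\haus^{m-1}\llcorner\cut_n(\mathcal{O}) = 0$.

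Next I would use that, at a normal cut-point $x$, the vectors $\nabla^+ r(x)$ and $\nabla^- r(x)$ are by definition two \emph{distinct} unit tangent vectors, hence $|\nabla^+ r(x) - \nabla^- r(x)|>0$ pointwise on $\cut_n(\mathcal{O})$. This gives the sharper consequence
$$
\haus^{m-1}\big(\cut_n(\mathcal{O})\big) = 0.
$$
The crucial step is then to upgrade this measure-theoretic statement into the topological statement $\cut_n(\mathcal{O})=\emptyset$. For this I would show that $\cut_n(\mathcal{O})$ is locally a smooth $(m-1)$-dimensional hypersurface: at $x\in\cut_n(\mathcal{O})$, the two preimages $v_1,v_2\in T\mathcal{O}^\perp$ under $\exp^\perp$ are non-focal, so $\exp^\perp$ is a local diffeomorphism near each $v_i$; one can therefore define near $x$ two smooth ``branch length'' functions $d_1,d_2$ with $r=\min\{d_1,d_2\}$, and $\cut_n(\mathcal{O})=\{d_1=d_2\}$ locally. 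Since $\nabla(d_1-d_2)(x) = \nabla^+ r(x) - \nabla^- r(x)\neq 0$, the implicit function theorem realizes $\cut_n(\mathcal{O})$ as a smooth hypersurface in a neighborhood of $x$, which would have strictly positive $\haus^{m-1}$-measure, contradicting the previous display. Hence $\cut_n(\mathcal{O})=\emptyset$.

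Finally I would combine this with the classical structural description of the cut-locus recalled just before the statement: the focal cut-points together with the non-focal cut-points where at least three minimizing geodesics meet form a set of Hausdorff dimension at most $m-2$, and the normal cut-locus $\cut_n(\mathcal{O})$ is dense in $\cut(\mathcal{O})$. Consequently $\cut_n(\mathcal{O})=\emptyset$ forces $\cut(\mathcal{O})=\emptyset$, which is exactly the statement that $\exp^\perp$ is a global diffeomorphism, i.e.\ that $\mathcal{O}$ is a pole. The main obstacle I foresee is not the analytical part (which is essentially bookkeeping on the decomposition \eqref{carat_cuto}) but this last density/structure input: one must be careful that no cut-point can be isolated from $\cut_n(\mathcal{O})$, and this is where the quoted results of Mantegazza--Mennucci and related authors enter in an essential way.
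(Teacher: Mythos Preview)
Your argument is correct up to and including the conclusion $\cut_n(\mathcal{O})=\emptyset$; this is essentially what the paper does as well. The gap is in the very last step. The density statement you invoke, ``$\cut_n(\mathcal{O})$ is dense in $\cut(\mathcal{O})$'', is false in general: on the round sphere $\Sph^m$ with $\mathcal{O}$ a point, the cut-locus is the single antipodal point, which is purely conjugate, and $\cut_n(\mathcal{O})=\emptyset$. What the structural results actually give (and what the paper quotes) is that $\cut_n(\mathcal{O})$ is dense only in the set of \emph{non-focal} cut-points. Hence from $\cut_n(\mathcal{O})=\emptyset$ you can only deduce that every cut-point is focal (conjugate), not that $\cut(\mathcal{O})=\emptyset$. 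The Hausdorff-dimension bound $\dim_{\haus}\big(\cut(\mathcal{O})\setminus\cut_n(\mathcal{O})\big)\le m-2$ does not help here either, since it does not preclude $\cut(\mathcal{O})$ itself from being a low-dimensional nonempty set.

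The missing ingredient is a separate argument ruling out conjugate cut-points, and this is exactly what the paper supplies: writing the metric in Fermi (normal) coordinates $(r,\theta)$ on $\mathcal{D}_\mathcal{O}$ and using $\Delta r = \tfrac{1}{2}\partial_r\log g(r,\theta)$, the determinant $g(r,\theta)$ vanishes along a geodesic as it reaches a first focal point, so the absolutely continuous part $(\Delta r)_{AC}$ diverges to $-\infty$ there. This contradicts $(\Delta r)_-\in L^\infty_\loc$. Once you add this step, your proof coincides with the paper's.
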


\begin{proof}
Inequality \eqref{upperbound_lapla} coming from the Laplacian comparison from above implies that the positive part $(\Delta r)_+ \in L^\infty_\loc(M \backslash \mathcal{O})$. Because of \eqref{weak_laplasotto}, $\Delta r$ is absolutely continuous and represented by a locally bounded function and thus, by \eqref{carat_cuto} and the discussion above, $\cut(\mathcal{O})$ has just conjugate points. On the other hand, if $x_0 \in \cut(\mathcal{O})$ is conjugate to $\mathcal{O}$ and denoting with $g(r,\theta)$ the determinant of $\metric$ in normal coordinates $(r, \theta) \in \mathscr{D}_\mathcal{O} \subset \R^+_0 \times  \partial \mathcal{O}$ for $\mathcal{D}_\mathcal{O}$, by the identity
$$
\Delta r = \frac{1}{2} \partial_r \log g(r, \theta)
$$
we see that $\Delta r(y) \ra -\infty$ as $y \in \mathcal{D}_\mathcal{O}$, $y \ra x$. Hence, $\Delta r$ is not bounded in a neighbourhood of $x$, a contradiction which shows that $\cut(\mathcal{O})$ is in fact empty, equivalently, that $\mathcal{O}$ is a pole.
\end{proof}

The volume comparison theorems can be deduced by integration. For $R>0$, we define
$$
\partial B_R(\mathcal{O}) = \Big\{ x \in M : r(x) =R \Big\}, \qquad  B_R(\mathcal{O}) = \Big\{ x \in M : r(x) \in (0,R)\Big\}.
$$
Note that $\mathcal{O} \not \subseteq B_R(\mathcal{O})$, and thus $\vol(B_R(\mathcal{O})) \ra 0$ as $R \ra 0$. Given $0<g \in C^2(\R^+_0)$ we set
$$
v_{g,\lambda}(t) = \vol(\Sph^{m-1}_\lambda) g(t)^{m-1}, \qquad \text{and} \qquad V_{g,\lambda}(t) = \int_0^t v_{g,\lambda}(s)\di s,
$$
which are, respectively, the volume of a geodesic sphere $\{r=t\}$ and ball $\{r < t\}$ in $M_{g,\lambda}$. The proof of the next result follows verbatim the version in \cite[Thm. 2.14]{prs} (see also \cite[Thm. 1.24]{bmr2}).

\begin{theorem}[Volume comparison]\label{teo_compavol_appe}
Let $(M^m, \metric)$ be a complete manifold, $\mathcal{O}, D_{\mathcal{O}}, r$ be as above.
\begin{itemize}
\item[(1)] In the assumptions of Theorem \ref{teo_hessiancomp}, the functions
$$
\frac{\vol(\partial B_r(\cal O))}{v_{g,\lambda}(r)}, \qquad \frac{\vol(B_r(\mathcal{O}))-\vol(B_{r_0}(\mathcal{O}))}{V_{g,\lambda}(r)-V_{g,\lambda}(r_0)}
$$
are non-decreasing in $r$ provided that $r_0 \le r < R$ and $B_r(\cal O) \subset D_{\cal O}$. In particular, there exists $C>0$ such that for all such $r$
$$
\vol\big( \partial B_r(\mathcal{O})\big) \ge C v_{g,\lambda}(r), \qquad \vol\big( B_r(\mathcal{O})\big) - \vol\big( B_{r_0}(\mathcal{O})\big) \ge C \big(V_{g,\lambda}(r)-V_{g,\lambda}(r_0)\big).
$$
\item[(2)] In the assumptions of Theorem \ref{teo_laplaciancomp}, the functions
$$
\frac{\vol(\partial B_r(\cal O))}{v_{g,\lambda}(r)}, \qquad \frac{\vol(B_r(\mathcal{O}))-\vol(B_{r_0}(\mathcal{O}))}{V_{g,\lambda}(r)-V_{g,\lambda}(r_0)}
$$
are non-increasing in $r$ for each $r \ge r_0$ (a.e. $r$ for the first one). In particular, there exists $C>0$ such that for all such $r$
$$
\vol\big( \partial B_r(\mathcal{O})\big) \le C v_{g,\lambda}(r), \qquad \vol\big( B_r(\mathcal{O})\big)-\vol\big( B_{r_0}(\mathcal{O})\big) \le C \big(V_{g,\lambda}(r)-V_{g,\lambda}(r_0)\big).
$$
\end{itemize}
\end{theorem}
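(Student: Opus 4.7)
The proof will follow the classical Bishop--Gromov template as in \cite[Thm. 2.14]{prs} and \cite[Thm. 1.24]{bmr2}, adapted to the situation where the origin $\mathcal{O}$ is a relatively compact set with smooth boundary rather than a point. The starting ingredient is the area formula in normal coordinates based on the normal exponential map $\exp^\perp : \mathscr{D}_\mathcal{O} \to \mathcal{D}_\mathcal{O}$: writing points of $\mathcal{D}_\mathcal{O}$ as $\exp^\perp_\theta(r \cdot (-\nabla r)(\theta))$ with $\theta \in \partial \mathcal{O}$, the Riemannian volume form reads $J(r,\theta)\,\di r\,\di \sigma(\theta)$, where $\di\sigma$ is the induced area element on $\partial \mathcal{O}$ and $J(0,\theta)\equiv 1$.

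First I would record the standard Jacobi identity $\partial_r \log J(r,\theta) = \Delta r$ along the normal geodesic through $\theta$, valid on $\mathcal{D}_\mathcal{O}$; this is obtained by taking the trace of the matrix $B$ introduced in the proofs of Theorems \ref{teo_hessiancomp} and \ref{teo_laplaciancomp}. In case $(1)$, Theorem \ref{teo_hessiancomp} together with the hypothesis $B_r(\mathcal{O}) \subset \mathcal{D}_\mathcal{O}$ yields the pointwise lower bound $\Delta r \ge (m-1)g'(r)/g(r) = (\log v_{g,\lambda})'(r)$ on $B_r(\mathcal{O})\setminus\mathcal{O}$. Hence $\partial_r \log\bigl(J(r,\theta)/v_{g,\lambda}(r)\bigr)\ge 0$ for each admissible $\theta$; integrating first in $r$ (against a fixed $\theta$) and then over $\partial \mathcal{O}$ gives the monotonicity of $\vol(\partial B_r(\mathcal{O}))/v_{g,\lambda}(r)$.

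In case $(2)$ the cut-locus must be handled. Introduce $\Theta(r) \subset \partial \mathcal{O}$, the set of $\theta$ whose outward normal geodesic reaches distance $r$ without meeting $\cut(\mathcal{O})$: it is well known that $\Theta(r)$ is non-increasing in $r$. Theorem \ref{teo_laplaciancomp} furnishes the pointwise reverse Jacobi bound $\partial_r \log J(r,\theta) \le (\log v_{g,\lambda})'(r)$ on the complement of $\cut(\mathcal{O})$, so $J(r,\theta)/v_{g,\lambda}(r)$ is non-increasing in $r$ for each fixed $\theta \in \Theta(r)$. Since $\vol(\partial B_r(\mathcal{O})) = \int_{\Theta(r)} J(r,\theta)\,\di\sigma(\theta)$ for a.e.\ $r$, combining the pointwise decay of the Jacobian with the shrinking of $\Theta(r)$ delivers the a.e. monotonicity statement for $\vol(\partial B_r(\mathcal{O}))/v_{g,\lambda}(r)$.

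The ball versions in both cases follow from the elementary calculus lemma: if $\varphi,\psi>0$ and $\varphi/\psi$ is monotone in one direction on an interval, then so is $r \mapsto \int_{r_0}^r \varphi / \int_{r_0}^r \psi$ in the same direction. Applied with $\varphi(r) = \vol(\partial B_r(\mathcal{O}))$ and $\psi(r)= v_{g,\lambda}(r)$, together with the coarea identity $\vol(B_r(\mathcal{O})) - \vol(B_{r_0}(\mathcal{O})) = \int_{r_0}^r \vol(\partial B_s(\mathcal{O}))\,\di s$, this yields the stated behaviour of $(\vol(B_r(\mathcal{O}))-\vol(B_{r_0}(\mathcal{O})))/(V_{g,\lambda}(r)-V_{g,\lambda}(r_0))$ and, in particular, the quantitative inequalities with some constant $C>0$. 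The main delicate point is the rigorous treatment of the cut-locus coupled with the merely weak validity of \eqref{upperbound_lapla}; this is handled by working inside the regular set $\mathcal{D}_\mathcal{O}$ (where $J$ is smooth and positive), restricting integration to $\Theta(r)$, and then exploiting $\vol(\cut(\mathcal{O}))=0$, exactly as in the point-origin version recalled above.
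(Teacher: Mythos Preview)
Your proposal is correct and follows exactly the classical Bishop--Gromov argument that the paper invokes by reference (the paper gives no detailed proof, stating only that it ``follows verbatim the version in \cite[Thm.~2.14]{prs}''); your adaptation to an origin $\mathcal{O}$ with smooth boundary, via the Jacobian $J(r,\theta)$ of $\exp^\perp$ and the shrinking domain $\Theta(r)$ to absorb the cut-locus, is the standard one. One cosmetic slip: the outward unit normal along $\partial\mathcal{O}$ is $+\nabla r$, not $-\nabla r$, so the parametrization should read $\exp^\perp_\theta(r\,\nabla r(\theta))$.
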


The comparison theorem from above, in $(2)$, is due to Bishop-Gromov, see \cite[Sec.2]{prs} for references. In the particular case 
\begin{equation}\label{ricciassu_appe}
\Ricc (\nabla r, \nabla r) \ge -(m-1)\kappa^2\big( 1+r^2\big)^{\alpha/2},
\end{equation}
for some $\kappa \ge 0$ and $\alpha \ge -2$, and when $\mathcal{O}$ is a pole, detailed computations of the asymptotic behaviour of a suitable solution $g$ of 
$$
\left\{ \begin{array}{l}
g'' - \kappa^2(1+t^2)^{\alpha/2} \ge 0 \qquad \text{on } \, \R^+, \\[0.2cm]
g(0)=0, \qquad g'(0) \ge 1 
\end{array}\right.
$$
can be found in \cite[Prop. 2.1]{prs}: more precisely, the above inequality admits a solution $g$ with 
\begin{equation}\label{asintog_appe}
g(r) \asymp \left\{ \begin{array}{ll}
\exp \left\{ \frac{2\kappa}{2+\alpha} (1+r)^{ 1+ \frac{\alpha}{2}}\right\} & \qquad \text{if } \, \alpha \ge 0 \\[0.4cm]
r^{-\frac{\alpha}{4}} \exp \left\{ \frac{2\kappa}{2+\alpha} r^{ 1+ \frac{\alpha}{2}}\right\} & \qquad \text{if } \, \alpha \in (-2,0) \\[0.4cm]
r^{\bar\kappa }, \quad \bar\kappa  = \frac{1+ \sqrt{1+4\kappa^2}}{2} & \qquad \text{if } \, \alpha = -2
\end{array}
\right.
\end{equation}

as $r \ra \infty$. In particular, setting $v_g(r)$ as above, from
\begin{equation}\label{crescitevol_appe}
\log \int_{r_0}^r v_g \sim \left\{ \begin{array}{ll} \frac{2\kappa(m-1)}{2+\alpha} r^{1+ \frac \alpha 2}& \quad \text{if } \, \alpha > -2; \\[0.2cm]
\frac{2\kappa(m-1)}{2+\alpha} r^{1+ \frac \alpha 2} - \frac{\alpha(m-1)}{4} \log r & \quad \text{if } \,\alpha \in (-2,0); \\[0.2cm]
\big[(m-1)\bar\kappa +1\big] \log r & \quad \text{if } \, \alpha = -2
\end{array}\right.
\end{equation}

and Theorem \ref{teo_compavol_appe} we deduce
\begin{equation}\label{riccievol_appe}
\begin{array}{ll}
\disp \limsup_{r \ra \infty} \frac{\log\vol B_r}{r^{1+\alpha/2}} < \infty & \quad \text{if } \, \alpha > -2, \\[0.4cm]
\disp \limsup_{r \ra \infty} \frac{\log\vol B_r}{\log r} \le (m-1)\bar\kappa +1 & \quad \text{if } \, \alpha = -2.
\end{array}
\end{equation}

We conclude by extending the examples in \eqref{asintog_appe} to a larger class of solutions of \eqref{compa_sect} and \eqref{compa_ricci}, that enables to include more general initial conditions. When $\mathcal{O}$ reduced to a point, further examples can be found in the appendix of \cite{bmr3}. The proof of the next lemma is by a direct computation.
\begin{lemma}\label{lem_ODE_infty_increasing}
Let $G \in C^1(\R^+)\cap C(\R^+_0)$ be non-negative, set
$$
\theta_* = \inf_{\R^+} \frac{G'}{2G^{3/2}}, \qquad \theta^* = \sup_{\R^+} \frac{G'}{2G^{3/2}}, \qquad D_\pm (t) = \frac{1}{2}\left(-t \pm \sqrt{t^2+4}\right).
$$
For constants $C>0$, $D \in \R$ consider the function
\begin{equation}\label{def_g_increasing}
g(t) = 1 + C \left\{ \exp\left( D \int_0^t \sqrt{G(s)} \di s\right) -1 \right\}.
\end{equation}
Then, for a fixed $\lambda \in \R$,
\begin{itemize}
\item[(1)] $g$ solves
$$
\left\{ \begin{array}{l}
g'' -Gg \ge 0 \qquad \text{on } \, \R^+ \\[0.1cm]
g(0)=1, \qquad g'(0) \ge \lambda
\end{array}\right.
$$
provided that
$$
C \ge 1, \qquad C D \sqrt{G(0)} \ge \lambda, \qquad D \in \big( -\infty, D_-(\theta^*)\big] \ \cup \ \big[ D_+(\theta_*), +\infty \big);
$$
\item[(2)] $g$ solves
$$
\left\{ \begin{array}{l}
g'' -Gg \le 0 \qquad \text{on } \, \R^+ \\[0.1cm]
g(0)=1, \qquad g'(0) \le \lambda
\end{array}\right.
$$
provided that
$$
C \in (0,1], \qquad C D \sqrt{G(0)} \le \lambda, \qquad D \in \big[ D_-(\theta_*) , D_+(\theta^*) \big].
$$
\end{itemize}
In both $(1)$ and $(2)$, if $\theta^*$ or $\theta_*$ are infinite then $D_\pm(\theta_*), D_\pm(\theta^*)$ are intended in the limit sense and shall be excluded from the range of $D$.
\end{lemma}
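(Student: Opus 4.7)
\textbf{Proof plan for Lemma \ref{lem_ODE_infty_increasing}.} The proof is a direct computation, and the whole content is bookkeeping: rewriting the defect $g''-Gg$ as (a known-sign factor) times a quadratic in $D$, and then intersecting the sign regions over $t\in\R^+$.

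First I would set $\Phi(t)=\int_0^t\sqrt{G(s)}\,ds$ and $\psi(t)=G'(t)/\bigl(2G(t)^{3/2}\bigr)$, so that $g(t)=(1-C)+Ce^{D\Phi(t)}$. Differentiating twice gives
\begin{equation*}
g'(t)=CD\sqrt{G(t)}\,e^{D\Phi(t)},\qquad g''(t)=Ce^{D\Phi(t)}\Bigl[D^{2}G(t)+D\,\tfrac{G'(t)}{2\sqrt{G(t)}}\Bigr],
\end{equation*}
and substituting into $g''-Gg$ one obtains the key identity
\begin{equation}\label{eq_key_plan}
g''(t)-G(t)g(t)=Ce^{D\Phi(t)}G(t)\bigl[D^{2}-1+D\psi(t)\bigr]-(1-C)G(t).
\end{equation}
The initial conditions $g(0)=1$ and $g'(0)=CD\sqrt{G(0)}$ are immediate, and so the stated constraints $CD\sqrt{G(0)}\gtreqless\lambda$ take care of the prescribed value of $g'(0)$.

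Next I would analyse \eqref{eq_key_plan} case by case. In case (1), the sign of $-(1-C)G$ is $\ge 0$ precisely when $C\ge 1$, so it suffices to make the first summand non-negative, i.e.\ to impose $D^{2}-1+D\psi(t)\ge 0$ for every $t\in\R^+$. Viewing $Q_t(D):=D^{2}+D\psi(t)-1$ as a convex quadratic in $D$ with roots $D_{\pm}(\psi(t))$, this is equivalent to
\begin{equation*}
D\le D_{-}(\psi(t))\quad\text{or}\quad D\ge D_{+}(\psi(t))\qquad\text{for every }t.
\end{equation*}
A short computation shows that both maps $\theta\mapsto D_{\pm}(\theta)$ are \emph{strictly decreasing} on $\R$, because
\begin{equation*}
D'_{\pm}(\theta)=\tfrac{1}{2}\Bigl(-1\pm\tfrac{\theta}{\sqrt{\theta^{2}+4}}\Bigr)<0,
\end{equation*}
so $\sup_{t}D_{+}(\psi(t))=D_{+}(\theta_{*})$ and $\inf_{t}D_{-}(\psi(t))=D_{-}(\theta^{*})$. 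Hence the admissible set for $D$ is exactly $(-\infty,D_{-}(\theta^{*})]\cup[D_{+}(\theta_{*}),+\infty)$, which is the range stated in (1). Case (2) is completely symmetric: now $C\in(0,1]$ makes $-(1-C)G\le 0$, so one requires $D^{2}-1+D\psi(t)\le 0$ for every $t$, i.e.\ $D\in[D_{-}(\psi(t)),D_{+}(\psi(t))]$ for every $t$; intersecting over $t$, and using again that $D_{\pm}$ are decreasing, gives $D\in[D_{-}(\theta_{*}),D_{+}(\theta^{*})]$.

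There is essentially no analytic obstacle here; the only point requiring a little care is the correct matching of $\theta_{*}$ versus $\theta^{*}$ on the two sides of the union/intersection, which is entirely governed by the monotonicity of $D_{\pm}$ observed above. Finally, when $\theta_{*}=-\infty$ or $\theta^{*}=+\infty$ the quantities $D_{\pm}(\theta_{*}),D_{\pm}(\theta^{*})$ are to be understood as the corresponding limits $\lim_{\theta\to\pm\infty}D_{\pm}(\theta)\in\{0,\mp\infty\}$, and the admissible $D$'s approach these values only in the limit, which is the reason for excluding them from the range of $D$ as stated.
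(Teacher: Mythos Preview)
Your proof is correct and follows exactly the direct computation the paper alludes to (the paper itself gives no details beyond ``by a direct computation''). The only step you might make slightly more explicit is why, in case~(1), the pointwise alternative ``$D\le D_-(\psi(t))$ or $D\ge D_+(\psi(t))$ for every $t$'' yields the union $(-\infty,D_-(\theta^*)]\cup[D_+(\theta_*),+\infty)$: this is because $D_-(\theta)<0<D_+(\theta)$ for every $\theta$ (their product is $-1$), so the sign of $D$ forces the same branch of the alternative for all $t$.
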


\section*{Acknowledgments}
The second author is supported by the grants SNS17\_B\_MARI and SNS\_RB\_MARI of the Scuola Normale Superiore. The third  author is partly supported by the Italian MIUR project titled
{\em Variational and perturbative aspects of nonlinear differential problems}
(201274FYK7) and is a member of the {\em Gruppo Nazionale per
l'Analisi Matematica, la Probabilit\`a e le loro Applicazioni} (GNAMPA)
of the {\em Istituto Nazionale di Alta Matematica} (INdAM). The
manuscript was realized within the auspices of the INdAM--GNAMPA Project
2015 titled {\em Modelli ed equazioni nonlocali di tipo frazionario} (Prot\_2015\_000368).

\bibliographystyle{amsplain}

\end{document}